\theoremstyle{plain}
\newtheorem*{theorem*}{Theorem}
\newtheorem*{lemma*}{Lemma}
\newtheorem*{corollary*}{Corollary}
\newtheorem*{corollary-1}{Corollary 1}
\newtheorem*{corollary-2}{Corollary 2}
\newtheorem*{proposition*}{Proposition}
\newtheorem*{proposition-1}{Proposition 1}
\newtheorem*{proposition-2}{Proposition 2}
\newtheorem*{proposition-3}{Proposition 3}
\newtheorem{conjecture*}{Conjecture}
\newtheorem{theorem}{Theorem}[section]
\newtheorem{lemma}[theorem]{Lemma}
\newtheorem{observation}[theorem]{Observation}
\newtheorem{corollary}[theorem]{Corollary}
\newtheorem{proposition}[theorem]{Proposition}
\newtheorem{conjecture}[theorem]{Conjecture}
\newtheorem{question}[theorem]{Question}
\newtheorem{questions}[theorem]{Questions}
\theoremstyle{remark}
\newtheorem*{remark}{Remark}
\newtheorem*{convention}{Convention}
\newtheorem*{remarks}{Remarks}
\newtheorem*{definition}{Definition}
\newtheorem*{example}{Example}
\theoremstyle{definition}
\def\charc{\mbox{char}}
\def\out{\operatorname{Out}}
\def\PP{\mathcal{P}}
\def\MM{\mathcal{M}}
\def\pd{\operatorname{PD}}
\def\Isom{\operatorname{Isom}}
\def\Wh{\operatorname{Wh}}
\def\op{\operatorname}
\def\rp{\R\operatorname{P}}
\def\rp{\mathbb{R}P}
\def\gl{\operatorname{GL}} \def\sl{\operatorname{SL}} \def\Q{\mathbb{Q}} \def\F{\mathbb{F}} \def\Z{\mathbb{Z}} \def\R{\mathbb{R}} \def\C{\mathbb{C}} \def\H{\mathbb{H}}
\def\N{\mathbb{N}}  \def\l{\lambda} \def\ll{\langle} \def\rr{\rangle}
 \def\a{\alpha}   \def\bp{\begin{pmatrix}}
\def\smallsetminus{\setminus} \def\ep{\end{pmatrix}} \def\bn{\begin{enumerate}} 
   \def\en{\end{enumerate}}
\def\ba{\begin{array}} \def\ea{\end{array}}    \def\a{\alpha}  \def\ti{\tilde} \def\wti{\widetilde}
\def\id{\operatorname{id}} \def\Aut{\operatorname{Aut}} \def\im{\operatorname{Im}} 
\def\ker{\operatorname{Ker}}\def\be{\begin{equation}} \def\ee{\end{equation}} 
 \def\hom{\operatorname{Hom}}  
 \def\aut{\operatorname{Aut}}  \def\eps{\epsilon}
 \def\dim{\operatorname{dim}} 
   \def\K{\mathbb{K}}
\def\w{\omega}
\def\ol{\overline}
\def\G{\Gamma}
\def\ti{\tilde}
\def\G{\Gamma}
\def\roots{\operatorname{roots}}
\def\psl{\operatorname{PSL}}
\def\Sol{\mathrm{Sol}}
\def\Nil{\mathrm{Nil}}
\renewcommand\epsilon{\varepsilon}
\DeclareMathAlphabet{\mathbf}{OML}{cmm}{b}{it}
\numberwithin{equation}{section}
\begin{document}

\title{$3$-manifold groups}
\author{Matthias Aschenbrenner}
\address{University of California, Los Angeles, California, USA}
\email{matthias@math.ucla.edu}
\thanks{The first author was partially supported by NSF grant DMS-0556197.}

\author{Stefan Friedl}
\address{Mathematisches Institut\\ Universit\"at zu K\"oln\\   Germany}
\email{sfriedl@gmail.com}

\author{Henry Wilton}
\address{Department of Mathematics, University College London, UK}
\email{hwilton@math.ucl.ac.uk}
\thanks{The third author was partially supported by an EPSRC Career Acceleration Fellowship.}

\begin{abstract}
We  summarize properties of $3$-manifold groups, with a particular focus on the consequences of the recent results  of Ian Agol,
Jeremy Kahn, Vladimir Markovic and  Dani Wise.
\end{abstract}

\maketitle

\section*{Introduction}

\noindent
In this survey we give an overview of properties of fundamental groups of compact $3$-manifolds.
This class of groups sits between the class of fundamental groups of surfaces, which for the most part are well understood, and the class of fundamental groups of higher dimensional manifolds, which are very badly understood for the simple reason that given any finitely presented group $\pi$ and any $n\geq 4$, there exists a closed $n$-manifold with fundamental group $\pi$. (See \cite[Theorem~5.1.1]{CZi93} or \cite[Section~52]{SeT80} for a proof.)
This basic fact about high-dimensional manifolds is the root of many problems; for example, the  unsolvability of the isomorphism problem for finitely presented groups \cite{Ady55,Rab58} implies that closed manifolds of dimensions greater than three cannot be classified \cite{Mav58,Mav60}.

The study of $3$-manifold groups is also of great interest since for the most part, $3$-manifolds are determined by their fundamental groups.
More precisely, a closed, irreducible, non-spherical $3$-manifold is uniquely determined by its fundamental group (see Theorem~\ref{thm:p1determinen}).

Our account of $3$-manifold groups is based on the following building blocks:
\newcounter{itemcounter}
\begin{list}
{{(\arabic{itemcounter})}}
{\usecounter{itemcounter}\leftmargin=2em}
\item If $N$ is an irreducible $3$-manifold with infinite fundamental group, then the Sphere Theorem (see (C.\ref{C.sphere}) below), proved by Papakyriakopoulos \cite{Pap57a}, implies that $N$ is in fact an
 Eilenberg--Mac~Lane space.
 It follows, for example, that $\pi_1(N)$ is torsion-free.
\item The work of Waldhausen \cite{Wan68a,Wan68b} produced many results on the fundamental groups of Haken $3$-manifolds, e.g., the solution to the word problem.
\item The Jaco--Shalen--Johannson (JSJ) decomposition \cite{JS79,Jon79a} of an irreducible $3$-manifold with incompressible boundary gave insight into the subgroup structure of the fundamental groups of Haken $3$-manifolds and prefigured Thurston's Geometrization Conjecture.
\item The formulation of the Geometrization Conjecture and its proof for Haken $3$-manifolds by Thurston \cite{Thu82a} and in the general case by Perelman \cite{Per02,Per03a,Per03b}.  In particular, it became possible to prove that $3$-manifold groups share many properties with linear groups: they are residually finite~\cite{Hem87}, they satisfy the Tits Alternative (see (C.\ref{C.tits}) and (K.\ref{K.tits}) below), etc.
\item The solutions to Marden's Tameness Conjecture by Agol \cite{Ag07} and Cale\-ga\-ri--Gabai \cite{CaG06}, combined with Canary's Covering Theorem \cite{Cay96} implies the Subgroup Tameness Theorem (see Theorem~\ref{thm:subgroupdichotomy} below), which describes the finitely generated, geometrically infinite subgroups of fundamental groups of finite-volume hyperbolic $3$-manifolds. As a result, in order to understand the finitely generated subgroups of such hyperbolic $3$-manifold groups, one can  mainly restrict attention to the geometrically finite case.
\item The results announced by  Wise \cite{Wis09}, with proofs provided in the preprint \cite{Wis12a} (see also \cite{Wis12b}), revolutionized the field.  First and foremost,
 together with Agol's Virtual Fibering Theorem \cite{Ag08} they imply the Virtually Fibered Conjecture for Haken hyperbolic $3$-manifolds.  Wise in fact proves something stronger, namely that if $N$ is a hyperbolic $3$-manifold with an embedded geometrically finite surface, then $\pi_1(N)$ is \emph{virtually compact special}---see Section~\ref{section:specialcc} for the definition. As well as virtual fibering, this also implies that $\pi_1(N)$ is LERF and large, and has some unexpected corollaries: for instance, $\pi_1(N)$ is linear over $\Z$.
\item Agol \cite{Ag12}, building on the proof of the Surface Subgroup Conjecture by Kahn--Markovic \cite{KM12} and the aforementioned work of Wise, recently gave a proof of the Virtually Haken Conjecture. Indeed, he proves that the fundamental group of any closed hyperbolic $3$-manifold is virtually compact special.
\item Przytycki--Wise \cite{PW12a} showed that fundamental groups of compact irreducible $3$-manifolds with empty or toroidal boundary which are neither graph manifolds nor Seifert fibered are virtually special.
In particular such manifolds are virtually fibered and their fundamental groups are linear over $\Z$. The combination of the results of Agol and Przytycki--Wise and a theorem of Liu \cite{Liu11} implies that
the fundamental group of a compact, orientable, aspherical $3$-manifold $N$ with empty or toroidal boundary is virtually special if and only if $N$ is non-positively curved.
\end{list}

\noindent
Despite the great interest in $3$-manifold groups, survey papers seem to be few and far between.
We refer to \cite{Neh65}, \cite{Sta71}, \cite{Neh74}, \cite{Hem76}, \cite{Thu82a}, \cite[Section~5]{CZi93}, \cite{Ki97} for some results on $3$-manifold groups
and lists of open questions.

The goal of this survey is to fill what we perceive as a gap in the literature, and to give an extensive overview of results on fundamental groups of compact $3$-manifolds with a particular emphasis on the
impact of the Geometrization Theorem of Perelman, the Tameness Theorem of Agol, Calegari-Gabai, and the Virtually Compact Special Theorem of  Agol \cite{Ag12}, Kahn--Markovic \cite{KM12} and Wise \cite{Wis12a}. Our approach is to summarize many of the results in several diagrams and to provide detailed references for each implication appearing in these diagrams.
We will mostly consider results of a `combinatorial group theory' nature that hold for fundamental groups of $3$-manifolds which are either closed or have toroidal boundary. We do not make any claims to originality---all results are either already in the literature, or simple consequences of established facts, or well known to the experts.


\medskip

As with any survey, this one  reflects the tastes and biases of the authors.  The following lists some of the topics which we leave basically untouched:
\newcounter{itemcountera}
\begin{list}
{{(\arabic{itemcountera})}}
{\usecounter{itemcountera}\leftmargin=2em}
\item Fundamental groups of non-compact $3$-manifolds. Note though that Scott \cite{Sco73b} showed that given a $3$-manifold $M$ with finitely generated fundamental group, there exists a compact $3$-manifold with the same fundamental group as $M$.
\item  `Geometric' and `large scale' properties of $3$-manifold groups; see, e.g., \cite{Ge94a,KaL97,KaL98,BN08,BN10,Sis11} for some results in this direction.  We also leave aside automaticity, formal languages, Dehn functions and combings: see, for instance, \cite{Brd93,BrGi96,Sho92,CEHLPT92}.
\item  Three-dimensional Poincar\'e duality groups; see, e.g.,~\cite{Tho95,Davb00,Hil11} for further information.
\item Specific properties of fundamental groups of knot complements (known as `knot groups').
We note that in general, irreducible $3$-manifolds with non-trivial boundary are not determined by their fundamental groups, but interestingly, prime knots in $S^3$ are in fact determined by their groups  \cite{CGLS85,CGLS87,GLu89,Whn87}. Knot groups were some of the earliest and most popular examples of $3$-manifold groups to be studied.
\item Fundamental groups of distinguished classes of $3$-manifolds.  For instance, arithmetic hyperbolic $3$-manifold groups exhibit many special features. (See, for example, \cite{MaR03, Lac11, Red07} for more on arithmetic $3$-mani\-folds).
\item The representation theory of $3$-manifolds is a substantial field in its own right, which fortunately is served well by Shalen's survey paper \cite{Shn02}.
\end{list}
We conclude the paper with a discussion of some outstanding open problems in the theory of $3$-manifold groups.

\medskip

This survey is not intended as a leisurely introduction to $3$-manifolds. Even though most terms will be defined, we will assume that the reader is already somewhat acquainted with $3$-manifold topology. We refer to \cite{Hem76, Hat, JS79, Ja80} for background material.
Another gap we perceive is the lack of a post-Geometrization-Theorem $3$-manifold book. We hope that  somebody else will step forward and fill this gaping hole.

\subsection*{Conventions and notations}
All spaces are assumed to be connected and compact and all groups are assumed to be finitely presented, unless it is specifically stated otherwise. All rings have an identity.
We denote the cyclic group with $n$ elements by $\Z/n$.
If $N$ is a $3$-manifold and $S\subseteq N$ a submanifold, then we denote by $\nu S\subseteq N$ a tubular neighborhood of $S$. When we write `a manifold with boundary' then we also include the case that the boundary is empty. If we want to ensure that the boundary is in fact non-empty, then we will write `a manifold with non-empty boundary'.

\subsection*{Acknowledgments}
 The authors would like to thank Ian Agol, Igor Belegradek, Mladen Bestvina, Michel Boileau, Steve Boyer, Martin Bridson, Jack Button, Danny Calegari, Jim Davis,
Daryl Cooper, Dave Futer, Cameron Gordon, Pierre de la Harpe, Matt Hedden, John Hempel, Jonathan Hillman, Jim Howie,
Thomas Koberda, Tao Li, Viktor Kulikov, Marc Lackenby, Mayer A. Landau, Wolfgang L\"uck,
Curtis McMullen, Piotr Przytycki, Alan Reid, Saul Schleimer, Dan Silver, Stefano Vidussi, Liam Watson and Susan Williams for helpful comments, discussions and suggestions.  We are also grateful for the extensive feedback we got from many other people on earlier versions of this survey.
Finally we also would like to thank Anton Geraschenko for bringing the authors together.

\newpage

\tableofcontents

\newpage

\section{Decomposition Theorems}

\subsection{The Prime Decomposition Theorem}\label{section:prelim1}

A $3$-manifold $N$ is called \emph{prime} \index{$3$-manifold!prime} if $N$ cannot be written as a non-trivial connected sum of two manifolds,
i.e., if $N=N_1\# N_2$, then $N_1=S^3$ or $N_2=S^3$.
Furthermore $N$ is called \emph{irreducible}  \index{$3$-manifold!irreducible} if every embedded $S^2$ bounds a $3$-ball. Note that an irreducible $3$-manifold is prime.
Also, if $N$ is an orientable prime $3$-manifold with no spherical boundary components, then by \cite[Lemma~3.13]{Hem76} either $N$ is irreducible or $N=S^1\times S^2$. The following theorem is due to Kneser \cite{Kn29}, Haken \cite[p.~441f]{Hak61} and Milnor \cite[Theorem~1]{Mil62} (see also \cite[Chapter~III]{Sco74}, \cite[Chapter~3]{Hem76} and \cite{HM08}).
We also refer to \cite{Grs69,Grs70,Swp70,Prz79} for more decomposition theorems in the bounded cases.
 \index{decomposition!prime} \index{theorems!Prime Decomposition Theorem}

\begin{theorem} \label{thm:prime}\textbf{\emph{(Prime Decomposition Theorem)}}
Let $N$ be a compact, oriented $3$-manifold with no spherical boundary components.
\newcounter{itemcounterd}
\begin{itemize}
\item[(1)] There exists a decomposition $N\cong N_1\# \cdots \# N_r$ where the $3$-manifolds $N_1,\dots,N_r$ are oriented prime $3$-manifolds.
\item[(2)] If $N\cong N_1\# \cdots \# N_r$ and $N\cong N_1'\# \cdots \# N_s'$ where the $3$-manifolds $N_i$ and $N_i'$  are oriented prime $3$-manifolds, then $r=s$ and \textup{(}possibly after reordering\textup{)} there exist orientation-preserving diffeo\-mor\-phisms $N_i\to N_i'$.
\end{itemize}
In particular, $\pi_1(N)=\pi_1(N_1)*\dots * \pi_1(N_r)$ is the free product of fundamental groups of prime $3$-manifolds.
\end{theorem}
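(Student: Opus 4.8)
The plan is to establish the three assertions in order, following the classical arguments of Kneser, Haken and Milnor. For existence (part (1)) the engine is Kneser's finiteness theorem. Fix a smooth triangulation of $N$ and call a finite system $\mathcal{S}$ of pairwise disjoint embedded $2$-spheres in $N$ \emph{reduced} if no component of $\mathcal{S}$ bounds a $3$-ball and no two components cobound a region diffeomorphic to $S^2\times[0,1]$. The first step is to show, via normal surface theory, that the number of components of a reduced system is bounded above by a constant depending only on the number of tetrahedra: isotope $\mathcal{S}$ to be normal with respect to the triangulation, and observe that once $|\mathcal{S}|$ exceeds this bound, some tetrahedron is cut by so many parallel normal disks that two of the spheres must be parallel. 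Now choose $\mathcal{S}$ reduced of maximal size, cut $N$ along $\mathcal{S}$, cap off every resulting boundary sphere with a $3$-ball, and discard any $S^3$ summands; this produces $N_1,\dots,N_r$. Each $N_i$ is prime, for a nontrivial connected-sum sphere inside $N_i$ can be pushed off the capping balls to give a sphere enlarging $\mathcal{S}$, contradicting maximality. Finally $N\cong N_1\#\cdots\# N_r$ by tracking the cut-and-cap operation: cutting along a separating sphere and capping realizes a connected sum, while cutting along a non-separating sphere and capping yields $N'$ with $N\cong N'\#(S^1\times S^2)$, and $S^1\times S^2$ is itself prime.

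For uniqueness (part (2)) I would realize the two given decompositions by sphere systems $\mathcal{S}$ and $\mathcal{S}'$ in $N$, made transverse so that $\mathcal{S}\cap\mathcal{S}'$ is a disjoint union of circles. The heart of the matter is an innermost-circle argument: an innermost circle of $\mathcal{S}\cap\mathcal{S}'$ on some component $S'$ of $\mathcal{S}'$ bounds a disk $D\subseteq S'$ with $D\cap\mathcal{S}=\partial D$; this $\partial D$ lies on a component $S$ of $\mathcal{S}$ and splits it into disks $D_1,D_2$, and after a small push-off the spheres $D\cup D_1$ and $D\cup D_2$ lie in the complement of $\mathcal{S}$. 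Since each complementary piece is prime with no spherical boundary, hence irreducible or $S^1\times S^2$ (Lemma~3.13 of \cite{Hem76}, cited above), one of these spheres bounds a ball --- with the $S^1\times S^2$ case handled separately, using that it contains essentially one non-separating sphere and no essential separating sphere --- and this ball guides an isotopy that either removes the intersection circle or deletes an inessential sphere component. Iterating makes $\mathcal{S}$ and $\mathcal{S}'$ disjoint; a final comparison of the pieces of $N$ cut along $\mathcal{S}\cup\mathcal{S}'$, again discarding inessential and parallel spheres using primeness, matches the summands up to orientation-preserving diffeomorphism and forces $r=s$.

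The free product statement then follows from the Seifert--van Kampen theorem applied to $N=N_1\#\cdots\# N_r$: removing an open $3$-ball from a compact $3$-manifold does not change its fundamental group (a loop and a nullhomotopy can be pushed off a codimension-$0$ ball, since $S^2$ is simply connected), and the connecting spheres are simply connected, so induction on $r$ gives $\pi_1(N)\cong\pi_1(N_1\setminus B^3)*\cdots*\pi_1(N_r\setminus B^3)\cong\pi_1(N_1)*\cdots*\pi_1(N_r)$. The main obstacle is part (2): the Kneser bound and the van Kampen computation are essentially formal, but the innermost-disk argument must be organized with real care around the non-separating spheres and the $S^1\times S^2$ factors, where the cut-and-cap bookkeeping is most delicate --- this is exactly where Haken's and Milnor's contributions enter, and it is the crux of the proof.
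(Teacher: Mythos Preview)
The paper does not give its own proof of this theorem: it is a survey, and the Prime Decomposition Theorem is stated as a classical result with attributions to Kneser, Haken and Milnor and with pointers to the standard textbook accounts (Hempel, Scott). Your outline is precisely the classical argument carried by those references --- Kneser's normal-surface finiteness bound for existence, and the innermost-disk/sphere-swap technology of Haken and Milnor for uniqueness --- so there is nothing to compare against, and your plan is the expected one. One small remark: your ``final comparison'' after making $\mathcal{S}$ and $\mathcal{S}'$ disjoint hides most of the work, since (as the paper itself notes immediately after the statement) the decomposing spheres are \emph{not} unique up to isotopy; Milnor's actual inductive organization, proving that cutting along any single essential sphere and capping preserves the multiset of prime factors, is a cleaner way to package exactly the delicacy you flag in your last sentence.
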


Note that the uniqueness concerns the homeomorphism types of the prime components. The decomposing spheres are not unique up to isotopy, but two different sets of decomposing spheres
are related by `slide homeomorphisms'. We refer to \cite[Theorem~3]{CdS79}, \cite{HL84} and \cite[Section~3]{McC86} for details.

\subsection{The Loop Theorem and the Sphere Theorem}
The life of $3$-manifold topology as a flourishing subject started with the proof of the Loop Theorem and the Sphere Theorem by Papakyriakopoulos. We first state the Loop Theorem.

\begin{theorem} \textbf{\emph{(Loop Theorem)}}\label{thm:loop}
Let $N$ be a compact $3$-manifold and $F\subseteq \partial N$ a subsurface.
If $\ker(\pi_1(F)\to \pi_1(N))$ is non-trivial, then there exists a proper embedding $g\colon (D^2,\partial D^2)\to (N,F)$
such that $g(\partial D^2)$ represents a non-trivial element in $\ker(\pi_1(F)\to \pi_1(N))$.
\end{theorem}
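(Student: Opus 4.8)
The proof goes via the \emph{tower construction} of Papakyriakopoulos \cite{Pap57a}; I only sketch it here (see \cite[Chapter~4]{Hem76} or \cite[Chapter~III]{Ja80} for full details). First I would put the singular disk into normal form. By hypothesis there is a map $f\colon(D^2,\partial D^2)\to(N,F)$ whose boundary curve represents a nontrivial element of $\ker(\pi_1(F)\to\pi_1(N))$; after a homotopy I may assume $f$ is a nondegenerate PL map in general position with respect to a triangulation of $N$ adapted to a collar of $\partial N$, that $f$ is an embedding on a collar of $\partial D^2$ into a collar of $F$, and that the singular set of $f$ (the closure of the set of points with more than one preimage) is disjoint from this collar and consists of finitely many double-point arcs and circles together with finitely many triple points.

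\textbf{The tower.} Set $N_0=\nu(f(D^2))\subseteq N$, a compact $3$-submanifold through which $f$ factors. Inductively, suppose I have $(N_i,f_i)$ with $f_i\colon(D^2,\partial D^2)\to(N_i,F_i)$ a normal-form singular disk, $F_i\subseteq\partial N_i$ the relevant subsurface. If $N_i$ admits a connected double cover, then since $D^2$ is simply connected $f_i$ lifts to it; I let $f_{i+1}$ be the lift, put $N_{i+1}=\nu(f_{i+1}(D^2))$, and let $p_{i+1}\colon N_{i+1}\to N_i$ be the restricted covering map. If $N_i$ has no connected double cover I stop. A complexity argument—the number of $3$-simplices (equivalently, of $2$-simplices met by the disk) in an adapted triangulation of $N_i$ strictly decreases at each step—shows the tower $N_n\xrightarrow{p_n}N_{n-1}\to\cdots\to N_1\xrightarrow{p_1}N_0\hookrightarrow N$ is finite.

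\textbf{Top of the tower.} Since $N_n$ has no connected double cover, $H_1(N_n;\Z/2)=0$. Combined with the fact that $N_n$ is a regular neighborhood of the $2$-complex $f_n(D^2)$, I can perform innermost cut-and-paste surgery along the double curves of $f_n$: each surgery keeps a valid map of pairs and leaves the boundary curve (which lies in the embedded collar) unchanged, and the mod-$2$ homological vanishing is exactly what guarantees the process does not stall before all double curves are removed. This produces a proper \emph{embedding} $\Delta_n\colon(D^2,\partial D^2)\hookrightarrow(N_n,F_n)$ with $\Delta_n(\partial D^2)=f_n(\partial D^2)$; this boundary curve is nontrivial in $\ker(\pi_1(F_n)\to\pi_1(N_n))$ because $f(\partial D^2)$ lifts through the tower and covering maps are $\pi_1$-injective.

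\textbf{Pushing down, and the main obstacle.} Given an embedded disk $\Delta_i\subseteq N_i$ with boundary in the embedded collar of $F_i$, its image $p_i(\Delta_i)\subseteq N_{i-1}$ is an immersed disk whose self-intersections arise only from distinct sheets of the cover coming together; a further round of innermost-disk cut-and-paste (away from the collar) yields an embedded disk $\Delta_{i-1}\subseteq N_{i-1}$ with the same boundary curve. Iterating down to $N_0\subseteq N$ gives the desired proper embedding $g\colon(D^2,\partial D^2)\to(N,F)$, with $g(\partial D^2)$ homotopic in $F$ to $f(\partial D^2)$ and hence representing the same nontrivial element of $\ker(\pi_1(F)\to\pi_1(N))$. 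I expect the genuine difficulty to be entirely in the top-of-the-tower step: showing that $H_1(N_n;\Z/2)=0$ really does prevent the double-curve surgery from terminating prematurely (this is in essence Dehn's Lemma, and is the reason the tower is needed at all); the normalizations and the descent are then routine bookkeeping.
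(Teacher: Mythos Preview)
The paper does not give a proof of the Loop Theorem; it is quoted as a classical result with references to Papakyriakopoulos and to \cite[Chapter~4]{Hem76}. Your tower outline is the right framework, but there is a real gap.

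You assume that $f$ may be taken to be an embedding on a collar of $\partial D^2$. This cannot be arranged in general: the given class in $\ker(\pi_1(F)\to\pi_1(N))$ need not be representable by a simple closed curve in $F$ (for instance, a proper power never is). Hence the singular set of $f$ will typically contain double \emph{arcs} with endpoints on $\partial D^2$, and cut-and-paste along such an arc alters the boundary curve. The same occurs during descent: the image of an embedded boundary curve under the covering map need not be embedded in $F_{i-1}$. Your assertion that $g(\partial D^2)$ is homotopic in $F$ to $f(\partial D^2)$ is therefore false in general.

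What the argument actually requires is that after each surgery at least one of the resulting disks still has boundary representing a nontrivial element of $\ker(\pi_1(F)\to\pi_1(N))$. This is the substance of Stallings' normal-subgroup refinement \cite{Sta60} and is where the genuine bookkeeping lies: without it the tower could terminate in an embedded disk whose boundary is null-homotopic in $F$, which proves nothing. You have located the difficulty in the wrong place---the $H_1(\,\cdot\,;\Z/2)=0$ step at the top of the tower is short; controlling the boundary class through surgery is the part you have omitted.
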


\index{theorems!Loop Theorem}
\index{theorems!Dehn's Lemma}

A somewhat weaker version (usually called `Dehn's Lemma') of this theorem was first stated by Dehn \cite{De10,De87} in 1910, but
Kneser \cite[p.~260]{Kn29} found a gap in the proof provided by Dehn. The Loop Theorem was finally proved by
 Papakyriakopoulos  \cite{Pap57a,Pap57b} building on work of Johansson \cite{Jos35}. We refer to  \cite{Hom57,SpW58,Sta60,Wan67b,Gon99,Bin83,Jon94,AiR04} and \cite[Chapter~4]{Hem76} for more details and several extensions.
We now turn to the Sphere Theorem.

\begin{theorem} \textbf{\emph{(Sphere Theorem)}} \label{thm:sphere}
Let $N$ be an orientable $3$-manifold with $\pi_2(N)\ne 0$.  Then $N$ contains an embedded $2$-sphere which is homotopically non-trivial.
\end{theorem}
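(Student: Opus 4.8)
The plan is to run Papakyriakopoulos's \emph{tower construction} --- the same machine that proves the Loop Theorem (Theorem~\ref{thm:loop}) --- which reduces the problem to a simply-connected manifold. First I would pick a nonzero element of $\pi_2(N)$ and represent it by a PL map $f_0\colon S^2\to N$ in general position, so that $f_0(S^2)$ is a compact $2$-complex whose singular set is a finite graph of double curves and triple points. Let $M_0\subseteq N$ be a regular neighbourhood of $f_0(S^2)$: this is a compact orientable $3$-manifold with non-empty boundary, $f_0$ factors through it, and $f_0\colon S^2\to M_0$ is still essential, since a null-homotopy in $M_0$ would be one in $N$.

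The tower is built inductively. Given a compact orientable $3$-manifold $M_i$ with an essential general-position map $f_i\colon S^2\to M_i$, if $\pi_1(M_i)\neq 1$ I pass to the universal cover $p_{i+1}\colon\widehat M_i\to M_i$; since $S^2$ is simply connected, $f_i$ lifts to $\widehat f_i\colon S^2\to\widehat M_i$, which is again essential (a null-homotopy in the simply-connected $\widehat M_i$ would project to one of $f_i$), and I let $M_{i+1}$ be a regular neighbourhood of $\widehat f_i(S^2)$, with $f_{i+1}=\widehat f_i$. One must then show that this process terminates: each $M_i$ is compact, and a suitable complexity of the singular image --- for instance the rank of $H_1(M_i;\Z/2)$, or the number of $2$-simplices in a simplicial model --- strictly decreases while the intervening covers are nontrivial, so after finitely many steps $\pi_1(M_n)=1$. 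Making this finiteness precise is one of the technical points of the proof; see \cite[Chapter~4]{Hem76}.

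At the top, $M_n$ is a compact orientable \emph{simply-connected} $3$-manifold with non-empty boundary carrying an essential map $f_n\colon S^2\to M_n$, so by Hurewicz $0\neq[f_n]\in\pi_2(M_n)\cong H_2(M_n;\Z)$. Here the singular sphere can be replaced by embedded ones: performing cut-and-paste surgery along an innermost double curve of $f_n$ reduces the number of double curves, and --- crucially using that $M_n$, like every $M_i$ and $N$, is orientable --- the $\pi_2$-classes of the resulting singular spheres add up to $[f_n]$; iterating yields embedded $2$-spheres in $M_n$ whose classes sum to $[f_n]\neq 0$, so at least one is homotopically non-trivial. (Alternatively, one could represent $[f_n]$ by an embedded surface and compress it, using the Loop Theorem to find embedded compressing disks together with the fact that in the simply-connected $M_n$ a two-sided incompressible surface must be a sphere.)

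It remains to transport an embedded essential sphere back down the tower to $N$, and I expect this to be the main obstacle. One carries it out level by level: sitting an embedded essential sphere of $\widehat M_{i-1}$ inside that simply-connected manifold and projecting by the covering map $p_{i-1}$ --- which is an isomorphism on $\pi_2$ --- turns it into an essential \emph{immersed} sphere in $M_{i-1}$, and re-running the cut-and-paste surgery recovers an embedded essential sphere one step further down. The delicate issue throughout is the homological bookkeeping: one must keep track of the $\pi_2$-classes in the ambient manifold at each stage --- ultimately in $\pi_2(N)$, where the relevant class is nonzero by hypothesis --- so that the repeated surgeries and projections never destroy non-triviality. This, together with the termination of the tower, is the technical core of Papakyriakopoulos's argument; see \cite[Chapter~4]{Hem76}. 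An alternative route that avoids the tower entirely is the minimal-surface proof of the (equivariant) Sphere Theorem of Meeks and Yau, in which an essential sphere is realised as a least-area map and the Meeks--Yau exchange argument forces it to be embedded.
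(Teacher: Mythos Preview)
The paper does not give its own proof of the Sphere Theorem; it only records that it was proved by Papakyriakopoulos \cite{Pap57a} (under a technical hypothesis later removed by Whitehead \cite{Whd58a}) and points to \cite[Chapter~4]{Hem76} for details. Your proposal is precisely the classical tower argument found in those references, so there is no competing proof in the paper to compare against---you are supplying what the paper deliberately left to citations.

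One concrete correction: your suggested complexity ``rank of $H_1(M_i;\Z/2)$'' does not control termination of the universal-cover tower. Taking a regular neighbourhood of $\widehat f_i(S^2)$ inside a simply-connected manifold can produce an $M_{i+1}$ with $H_1$ of arbitrary rank. The complexity that actually drops is the singular set of $f_i$ (for instance, the number of double curves in $S^2$): since $M_i$ is a regular neighbourhood of $f_i(S^2)$, the group $\pi_1(M_i)$ is generated by the double-point loops of $f_i$, so if $\pi_1(M_i)\neq 1$ then at least one such loop is essential and the corresponding double curve separates in the universal cover, giving $\Sigma(f_{i+1})\subsetneq\Sigma(f_i)$. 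Your outline of the top-of-tower step (replacing the singular sphere by embedded ones, which is easier here since a simply-connected compact orientable $3$-manifold with boundary has only spheres in its boundary, and these already generate $\pi_2$) and of the descent with $\pi_2(N)$-bookkeeping is correct.
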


\index{theorems!Sphere Theorem}

This theorem was proved by Papakyriakopoulos \cite{Pap57a} under a technical assumption which was removed by Whitehead \cite{Whd58a}.
(We also refer to \cite{Whd58b,Bat71,Gon99,Bin83} and \cite[Theorem~4.3]{Hem76} for extensions and more information.)
Gabai (see \cite[p.~487]{Gab83a} and \cite[p.~79]{Gab83b}) proved that for $3$-manifolds the Thurston norm equals the Gromov norm. (See Section~\ref{section:thurstonnorm} below for more on the Thurston norm.)
 This result can be viewed as a higher-genus analogue of the Loop Theorem and the Sphere Theorem.

\subsection{Preliminary observations about $3$-manifold groups}\label{section:prelim2}

The main subject of this survey  are the properties of fundamental groups of compact $3$-manifolds.
In this section we argue that for most purposes it suffices to study the fundamental groups of compact, orientable, irreducible $3$-manifolds whose boundary is either empty or toroidal.

\medskip

We start out with  the following  basic observation.

\begin{observation}\label{obs:s2}
Let $N$ be a compact $3$-manifold.
\newcounter{itemcounterc}
\begin{itemize}
\item[(1)] Denote by $\widehat{N}$ the $3$-manifold obtained from $N$ by gluing $3$-balls to all spherical components of $\partial N$.
Then $\pi_1(\widehat{N})=\pi_1(N)$.
\item[(2)] If $N$ is non-orientable, then there exists a  double cover which is orientable.
\end{itemize}
\end{observation}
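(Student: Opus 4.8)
The statement has two parts, both of which are standard facts. Here is how I would prove each.

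For part (1), the plan is to show that gluing a $3$-ball to a $2$-sphere boundary component does not change the fundamental group. Let $S \cong S^2$ be a spherical component of $\partial N$, and let $\widehat N = N \cup_S B^3$. I would apply van Kampen's theorem to the decomposition $\widehat N = N \cup B^3$ with intersection $\nu S$, a collar neighborhood which deformation retracts to $S^2$ and is hence simply connected. Since $\pi_1(S^2) = 1$ and $\pi_1(B^3) = 1$, van Kampen gives $\pi_1(\widehat N) = \pi_1(N) *_{\pi_1(S^2)} \pi_1(B^3) = \pi_1(N) * 1 = \pi_1(N)$, provided everything in sight is connected; one should note that a collar of a single boundary sphere is connected, so the hypotheses of van Kampen apply. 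Iterating over all spherical components of $\partial N$ (finitely many, by compactness) yields the claim. A minor point to address is that $\widehat N$ is again connected since $N$ is.

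For part (2), the plan is the classical orientation double cover construction. Given a non-orientable manifold $N$, consider the set $\widetilde N$ of pairs $(x, o_x)$ where $x \in N$ and $o_x$ is one of the two local orientations of $N$ at $x$; equivalently, $\widetilde N$ is the orientation bundle, the $\Z/2$-bundle associated to the first Stiefel--Whitney class $w_1(N) \in H^1(N;\Z/2) = \Hom(\pi_1(N),\Z/2)$. The projection $\widetilde N \to N$ is a double covering, $\widetilde N$ carries a canonical orientation by construction, and $\widetilde N$ is connected precisely because $w_1(N) \neq 0$, i.e. because $N$ is non-orientable (if $w_1$ vanished, $N$ would already be orientable). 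Alternatively, and perhaps most cleanly for a survey, one identifies $\widetilde N$ with the cover corresponding to the index-two subgroup $\ker(w_1) \leq \pi_1(N)$; this subgroup is proper exactly when $N$ is non-orientable, so the cover is connected and two-sheeted, and it is orientable because $w_1$ pulls back to zero.

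Neither part presents a genuine obstacle --- these are textbook facts (see, e.g., \cite[Chapter~1]{Hem76}), and in a survey one might reasonably state them with only a one-line indication of proof. If forced to name the most delicate point, it is purely bookkeeping: checking that connectedness is preserved in part (1) (so that $\pi_1$ is well-defined and van Kampen applies) and that the orientation cover in part (2) is connected rather than two disjoint copies of $N$, which is exactly where non-orientability is used.
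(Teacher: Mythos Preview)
Your proof is correct and gives the standard arguments for both parts. The paper itself does not prove this statement at all---it is labeled an ``Observation'' and treated as a basic fact, with the text moving directly on to consequences. Your own remark that these are textbook facts one might reasonably state without proof is exactly the attitude the paper takes.
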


Most properties of groups of interest to us are preserved under going to free products of groups
(see, e.g., \cite{Nis40} and \cite[Proposition~1.3]{Shn79} for linearity and \cite{Rom69,Bus71} for being LERF) and similarly most properties of groups are preserved under passing to an index-two supergroup
(see, e.g., (H.\ref{H.retracttozgreen}) to  (H.\ref{H.csgreen}) below).
Note though that this is not true for all properties; for example, conjugacy separability does not in general pass to degree-two extensions \cite{CMi77,Goa86}.

In light of Theorem~\ref{thm:prime} and Observation~\ref{obs:s2},
we therefore generally restrict ourselves to the study of orientable, irreducible $3$-manifolds with no spherical boundary components.

\medskip

An embedded surface $\Sigma\subseteq N$ with components $\Sigma_1,\dots,\Sigma_k$ is \emph{incompressible}
if for each $i=1,\dots,k$ we have $\Sigma_i\ne S^2,D^2$ and the map $\pi_1(\Sigma_i)\to \pi_1(N)$ is injective.  The following lemma is a well known consequence of the Loop Theorem.

\index{surface!incompressible}

\begin{lemma} \label{lem:incomp}
Let $N$ be a compact $3$-manifold. Then there exist $3$-manifolds $N_1,\dots,N_k$ whose boundary components are incompressible, and a free group $F$
such that $\pi_1(N)\cong \pi_1(N_1)*\dots *\pi_1(N_k)*F$.
\end{lemma}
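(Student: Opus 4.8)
The plan is to compress $\partial N$ repeatedly, using the Loop Theorem, until every boundary component is incompressible, reading off a free-product splitting of $\pi_1(N)$ at each stage via van Kampen's theorem, and to control termination by a complexity of $N$. So the argument is an induction whose inductive step is a single boundary compression.

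First, by Observation~\ref{obs:s2}(1) we may cap off all spherical boundary components with $3$-balls without changing $\pi_1(N)$, so we assume $N$ has no spherical boundary components. If every boundary component is incompressible we are done, with $k=1$, $N_1=N$ and $F$ trivial. Otherwise some boundary component $\Sigma$ is compressible, i.e.\ $\ker(\pi_1(\Sigma)\to\pi_1(N))\neq 1$ (and $\Sigma\neq S^2$). Applying Theorem~\ref{thm:loop} with $F=\Sigma$ yields a properly embedded disk $D\subseteq N$ with $\partial D\subseteq\Sigma$ representing a nontrivial element of this kernel; in particular $\partial D$ is an embedded circle which is essential in $\Sigma$ (an embedded circle in a surface is null-homotopic iff it bounds a disk). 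Let $N'$ be $N$ cut along $D$: then $N=N'\cup\nu D$ with $\nu D\cong D\times[-1,1]$ a $3$-ball meeting $N'$ in the two disjoint disks $D\times\{\pm1\}\subseteq\partial N'$, so $N$ is obtained from $N'$ by attaching a $1$-handle. Van Kampen's theorem (with this disconnected gluing region) gives: if $N'$ is connected then $\pi_1(N)\cong\pi_1(N')\ast\Z$, and if $N'=N_1'\sqcup N_2'$ then $\pi_1(N)\cong\pi_1(N_1')\ast\pi_1(N_2')$ (connectedness of $N$ forces the two disks into different components in the latter case). Either way $\pi_1(N)$ is a free product of the fundamental groups of the connected pieces of $N'$ with a possibly trivial free factor. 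Cap off any spherical boundary components produced by the cut, again without changing fundamental groups, and recurse on the resulting pieces.

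It remains to see the recursion terminates, after which reassembling the splittings gives $\pi_1(N)\cong\pi_1(N_1)\ast\dots\ast\pi_1(N_k)\ast F$ with the $N_j$ the terminal pieces (which have incompressible boundary and no spherical boundary) and $F$ the free product of the $\Z$-factors collected along the way. For a compact $3$-manifold $M$ with no spherical boundary components set $c(M)=\big({-\chi(\partial M)},\,b_0(\partial M)\big)\in\N^2$, with $b_0$ the number of boundary components, ordered lexicographically. Surgering $\Sigma$ along the \emph{essential} curve $\partial D$ raises the total Euler characteristic of the boundary by $2$, so ${-\chi(\partial\,\cdot\,)}$ drops strictly on each new piece --- the one exception being $\Sigma$ a torus compressed along a nonseparating curve, where the new component is an $S^2$ that gets capped off and $b_0$ drops instead; a brief case check (using that an essential separating curve cuts $\Sigma$ into two non-disk pieces) confirms $c$ strictly decreases in every case. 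As $c$ is bounded below, the induction is well-founded. The one genuine subtlety is exactly this monotone-complexity bookkeeping, together with the van Kampen computation in the separating case; for non-orientable $N$ the same scheme works after enlarging the complexity to record $\mathbb{R}P^2$ boundary components (which are incompressible), and we suppress those routine modifications. The compression step itself is just the Loop Theorem, which is why the lemma is ``well known.''
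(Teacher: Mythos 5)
Your proof is correct and follows essentially the same route as the paper's: cap off spherical boundary components, compress a compressible boundary component along a Loop Theorem disk, split $\pi_1$ via van Kampen into either $\pi_1(N')\ast\Z$ or $\pi_1(N_1')\ast\pi_1(N_2')$, and induct on the lexicographically ordered pair $(-\chi(\partial N),b_0(\partial N))$. The termination bookkeeping you give (including the torus/sphere exception where only $b_0$ drops) is exactly the paper's, so there is nothing further to add.
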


\begin{proof}
By the above observation we can without loss of generality assume that $N$ has no spherical boundary components.
Let $\Sigma\subseteq \partial N$ be a  component such that $\pi_1(\Sigma)\to \pi_1(N)$ is not injective. By the Loop Theorem
(see Theorem~\ref{thm:loop})
there exists a properly  embedded disk $D\subseteq N$ such that the curve $c=\partial D \subseteq \Sigma$ is  essential.
 Here a curve $c$ is called \emph{essential} if $c$ does not bound an embedded disk in $\Sigma$.
 \index{curve!essential}

Let $N'$ be the result of capping off the spherical boundary components of $N\smallsetminus \nu D$  by $3$-balls.  If $N'$ is connected, then $\pi_1(N)\cong \pi_1(N')\ast \Z$; otherwise $\pi_1(N)\cong \pi_1(N_1)*\pi_1(N_2)$ where $N_1$, $N_2$ are the two components of $N'$. The lemma now follows by induction on the lexicographically ordered pair $(-\chi(\partial N),b_0(\partial N))$ since we have either that $-\chi(\partial N')<-\chi(\partial N)$ (in the case that $\Sigma$ is not a torus), or that $\chi(\partial N')=\chi(\partial N)$ and $b_0(\partial N')<b_0(\partial N)$ (in the case that $\Sigma$ is a torus).
\end{proof}

We say that a group $A$ is a \emph{retract} of a group $B$ if there exist group homomorphisms $\varphi\colon A\to B$ and $\psi\colon B\to A$ such that $\psi\circ\varphi=\id_A$.  In particular, in this case $\varphi$ is injective and we can then view $A$ as a subgroup of $B$.

\index{subgroup!retract}

\begin{lemma} \label{lem:closed}
Let $N$ be a compact $3$-manifold with non-empty boundary. Then $\pi_1(N)$ is a retract of the fundamental group of a  closed $3$-manifold.\end{lemma}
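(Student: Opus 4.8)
The plan is to realize $\pi_1(N)$ as a retract of the fundamental group of the \emph{double} of $N$. First I would form the closed $3$-manifold $DN := N\cup_{\partial N} N$, obtained by gluing two copies of $N$ along the identity map of their common boundary $\partial N$. Since $N$ is compact and $\partial N\ne \emptyset$, the double $DN$ is a \emph{connected} closed $3$-manifold; that it inherits a smooth structure is standard via the collar neighbourhood theorem, and connectedness is precisely where the hypothesis $\partial N\ne\emptyset$ is used (if $\partial N=\emptyset$ the ``double'' would be two disjoint copies of $N$).

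Next I would write down the \emph{folding map} $r\colon DN\to N$ which restricts to the identity on each of the two copies of $N$. This is well defined and continuous because the two prescriptions agree on the gluing locus $\partial N$. Let $i\colon N\hookrightarrow DN$ be the inclusion of the first copy, and choose the basepoint of $N$ on $\partial N$ so that both $i$ and $r$ are basepoint-preserving. Then $r\circ i=\id_N$, hence on fundamental groups $r_*\circ i_*=\id_{\pi_1(N)}$. Setting $\varphi=i_*\colon\pi_1(N)\to\pi_1(DN)$ and $\psi=r_*\colon\pi_1(DN)\to\pi_1(N)$ exhibits $\pi_1(N)$ as a retract of $\pi_1(DN)$, the fundamental group of a closed $3$-manifold, which is exactly the assertion.

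There is essentially no hard step here; the construction works regardless of whether $N$ is orientable or whether $\partial N$ has spherical components. The only points deserving a word of care are the connectedness of $DN$ noted above and the choice of a basepoint on $\partial N$ ensuring that $i$ and $r$ are based maps, after which the conclusion follows purely from the functoriality of $\pi_1$.
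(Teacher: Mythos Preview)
Your proof is correct and is essentially identical to the paper's argument: both form the double $DN=N\cup_{\partial N}N$, take the inclusion of one copy and the folding map as the required homomorphisms, and conclude from $r\circ i=\id_N$. You add a few words about basepoints and connectedness that the paper omits, but the approach is the same.
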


\begin{proof}
 Denote by $M$ the double of $N$, i.e., $M=N\cup_{\partial N}N$. Note that $M$ is a closed $3$-manifold.  Let $f$ be the canonical inclusion of $N$ into $M$ and let $g \colon M\to N$ be the map which restricts to the identity on the two copies of $N$ in $M$.
 Clearly $g\circ f=\id_N$ and hence $g_*\circ f_*=\id_{\pi_1(N)}$.
 \end{proof}

Many properties of groups are preserved under retracts and taking free products; this way, many problems on $3$-manifold groups can be reduced to the study of fundamental groups of closed $3$-manifolds. Due to the important role played by $3$-manifolds with toroidal boundary components we will be slightly less restrictive, and in the remainder we study fundamental groups of compact, orientable, irreducible $3$-manifolds $N$ such that the boundary is either empty or toroidal.

\subsection{The JSJ Decomposition Theorem} \index{decomposition!JSJ}\label{decomposition!JSJ}

In the previous section we saw that an oriented, compact $3$-manifold with no spherical boundary components admits a decomposition along spheres such that the set of resulting pieces are unique up to diffeomorphism.  In the following we say  that a $3$-manifold $N$ is  \emph{atoroidal} \index{$3$-manifold!atoroidal} if any map $T\to N$ from a torus to~$N$ which induces a monomorphism $\pi_1(T)\to \pi_1(N)$ can be homotoped into the boundary of $N$.
(Note that in the literature some authors refer to a $3$-manifold as atoroidal if the above condition holds for any embedded torus. These two notions differ only for certain Seifert fibered $3$-manifolds where the base orbifold is a genus $0$ surface such that the number of boundary components together with the number of cone points equals three.)
 There exist orientable irreducible $3$-manifolds which cannot be cut  into atoroidal pieces in a unique way (e.g., the $3$-torus). Nonetheless, any orientable irreducible $3$-manifold  admits a canonical decomposition along tori, but to formulate this result we need the notion of a Seifert fibered manifold.

A \emph{Seifert fibered manifold} \index{Seifert fibered manifold} \index{$3$-manifold!Seifert fibered}
is a $3$-manifold $N$ together with a decomposition into disjoint simple closed curves (called \emph{Seifert fibers}) such that each Seifert fiber has a tubular neighborhood that forms a standard fibered torus.
The \emph{standard fibered torus} corresponding to a pair of coprime integers $(a,b)$ with $a>0$ is the surface bundle of the automorphism of a disk given by rotation by an angle of $2\pi b/a$, equipped with the natural fibering by circles.
If $a>1$, then the middle Seifert fiber is called \emph{singular}. A compact Seifert fibered manifold has only a finite number of singular fibers.
It is often useful  to think of a Seifert fibered manifold as a circle bundle over a $2$-dimensional orbifold.
We refer to \cite{Sei33a,Or72,Hem76,Ja80,JD83,Sco83a,Brn93,LRa10} for   further information and for the classification of Seifert fibered manifolds. \index{Seifert fibered manifold!standard fibered torus} \index{Seifert fibered manifold!Seifert fiber} \index{Seifert fibered manifold!Seifert fiber!singular}

Some $3$-manifolds (e.g., lens spaces) admit distinct Seifert fibered structures; generally, however, this will not be of importance to us (but see, e.g., \cite[Theorem~VI.17]{Ja80}).  Sometimes, later in the text, we will slightly abuse language and say that a $3$-manifold is Seifert fibered if it admits the structure of a Seifert fibered manifold.

\begin{remark}
\mbox{}

\bn
\item
The only orientable non-prime  Seifert fibered manifold is $\rp^3\#\rp^3$
(see, e.g., \cite[Proposition~1.12]{Hat} or \cite[Lemma~VI.7]{Ja80}).
\item
By Epstein's Theorem \cite[p.~81]{Ep72}, a $3$-manifold $N$ which is not homeomorphic to the solid Klein bottle
 admits a Seifert fibered structure if and only if it admits a foliation by circles. \index{theorems!Epstein's Theorem}
 \en
\end{remark}

The following theorem was first announced by Waldhausen \cite{Wan69} and was proved independently by Jaco--Shalen \cite[p.~157]{JS79} and Johannson \cite{Jon79a}. \index{theorems!JSJ Decomposition Theorem} \index{JSJ!decomposition}
In the case of knot complements the JSJ decomposition theorem was foreshadowed by the work of Schubert \cite{Sct49,Sct53,Sct54}.

\begin{theorem}\label{thm:jsj}  \textbf{\emph{(JSJ Decomposition  Theorem)}}
Let $N$ be a compact, orientable, irreducible $3$-manifold with empty or toroidal boundary. Then there exists a collection of disjointly embedded  incompressible tori $T_1,\dots,T_k$ such that each component of $N$ cut along $T_1\cup \dots \cup T_k$ is  atoroidal  or Seifert fibered. Furthermore any such collection of tori with a minimal number of components is unique up to isotopy.
\end{theorem}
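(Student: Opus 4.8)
For existence, I would argue by induction on a complexity invariant of $N$, for instance the Haken hierarchy length or the number of disjointly embedded incompressible tori one can place in $N$ (which is finite by Haken's finiteness theorem, since $N$ is compact, orientable and irreducible with incompressible boundary after capping). If $N$ is already atoroidal or Seifert fibered, there is nothing to do. Otherwise, by definition of non-atoroidal there is an essential embedded incompressible torus $T\subseteq N$ that cannot be homotoped (equivalently, by standard surgery-and-innermost-disk arguments using irreducibility and the Loop Theorem, isotoped) into $\partial N$. Cutting $N$ along $T$ yields one or two pieces, each again compact, orientable, irreducible (the new torus boundary components are incompressible, again by the Loop Theorem and irreducibility), and each of strictly smaller complexity. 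By induction each piece carries a JSJ system; taking the union of $T$ with all these tori gives a collection of disjointly embedded incompressible tori along which $N$ decomposes into atoroidal or Seifert fibered pieces. One then has to reduce this collection to a minimal one: discard any torus that is boundary-parallel or that separates two Seifert fibered pieces whose fibrations match up along it (so their union is still Seifert fibered), and discard parallel duplicates. The technical heart here is the \emph{characteristic pair} or \emph{enclosing} technology of Jaco--Shalen and Johannson: one shows there is a maximal (up to isotopy) Seifert fibered submanifold $W\subseteq N$ — the characteristic submanifold — such that every essential annulus and torus can be isotoped into $W$, and the JSJ tori are exactly the frontier components of $W$ that are not boundary-parallel. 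Establishing the existence and well-definedness of this characteristic submanifold is the main obstacle, and I would invoke it as the key input rather than reprove it, citing \cite{JS79,Jon79a}.

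For uniqueness up to isotopy of a minimal collection, suppose $\mathcal{T}=T_1\cup\dots\cup T_k$ and $\mathcal{T}'=T_1'\cup\dots\cup T_l'$ are two minimal systems. First isotope $\mathcal{T}'$ so as to minimize $|\mathcal{T}\cap\mathcal{T}'|$; a standard innermost-disk argument using irreducibility removes all intersection circles that bound disks, and an innermost-annulus / outermost argument together with incompressibility of both families removes the rest, so one may assume $\mathcal{T}$ and $\mathcal{T}'$ are disjoint after isotopy. Now each $T_i$ lies in some component $X'$ of $N\setminus\mathcal{T}'$. If $X'$ is atoroidal then $T_i$ is boundary-parallel in $X'$, hence (by minimality, after an isotopy pushing $T_i$ across the product region) can be isotoped off $\mathcal{T}'$ entirely or made parallel to some $T_j'$. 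If $X'$ is Seifert fibered, then by the classification of incompressible surfaces in Seifert fibered spaces $T_i$ is isotopic either to a boundary-parallel torus or to a vertical torus (a union of fibers over an essential curve in the base orbifold); a vertical torus that genuinely subdivides the base would, by minimality of $\mathcal{T}$, contradict that the piece of $N\setminus\mathcal{T}$ on either side of $T_i$ is a single atoroidal-or-SFS piece — unless both sides are SFS with compatible fiberings, which again contradicts minimality of $\mathcal{T}'$. Iterating, one matches up $\mathcal{T}$ and $\mathcal{T}'$ torus by torus, concluding $k=l$ and that after isotopy $\mathcal{T}=\mathcal{T}'$.

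The exceptional small Seifert fibered cases flagged in the remark before the theorem (base a disk or annulus with few exceptional fibers, the two notions of atoroidal diverging) need to be checked by hand, as does the $3$-torus and other closed flat or $\mathrm{Nil}$/$\mathrm{Sol}$ manifolds where the ``minimal'' system may be empty or non-unique as an unoriented embedded family in naive terms — these are handled by observing such manifolds are themselves Seifert fibered (or, for $\mathrm{Sol}$, torus bundles whose fiber is the unique JSJ torus), so the empty or one-torus system is indeed the canonical one. In the write-up I would state that the proof is a combination of Haken's finiteness theorem, the Loop and Sphere Theorems (Theorems~\ref{thm:loop} and~\ref{thm:sphere}) for the cut-and-paste steps, the classification of incompressible surfaces in Seifert fibered spaces, and the characteristic submanifold theory of Jaco--Shalen and Johannson, and refer to \cite{JS79,Jon79a} for the full details, since reproducing the characteristic pair machinery is well beyond the scope of a survey.
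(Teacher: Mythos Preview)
The paper does not prove this theorem at all: it is stated as a named result and attributed to Waldhausen's announcement \cite{Wan69} and the independent proofs of Jaco--Shalen \cite{JS79} and Johannson \cite{Jon79a}, with no argument given. Your decision to sketch the existence/uniqueness strategy and then defer to \cite{JS79,Jon79a} for the characteristic-submanifold machinery is therefore entirely in keeping with the paper's treatment, and indeed goes further than the paper does.

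One genuine gap in your sketch worth flagging: in the uniqueness argument you assert that after minimizing $|\mathcal{T}\cap\mathcal{T}'|$, an ``innermost-annulus / outermost argument together with incompressibility'' makes the two families disjoint. Innermost-disk arguments do remove intersection circles that are inessential on either torus, but once every intersection circle is essential on both tori there is no purely local annulus trick that eliminates them --- two incompressible tori in an irreducible $3$-manifold can genuinely intersect in essential curves. The way disjointness (or rather, parallelism) is actually achieved is by using the structure of the pieces of one decomposition: a component of $T_j'$ cut by $\mathcal T$ lies in an atoroidal or Seifert fibered piece, and one invokes the classification of essential annuli and tori there. So the analysis you place \emph{after} disjointness is really what is needed to \emph{obtain} it. Since you explicitly hand off to \cite{JS79,Jon79a} for the details this is not fatal, but the order of the argument as written is misleading.
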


In the following we refer to the tori $T_1,\dots,T_k$ as the \emph{JSJ tori} \index{JSJ!tori} and we will refer to the components of $N$ cut along $\bigcup_{i=1}^k T_i$ as the \emph{JSJ components of~$N$}. \index{JSJ!components}
Let~$M$ be a JSJ component of $N$. After picking base points for $N$ and $M$ and a path connecting these base points, the inclusion $M\subseteq N$ induces a map  on the level of fundamental groups.  This map is injective since the tori we cut along are incompressible.
(We refer to \cite[Chapter~IV.4]{LyS77} for details.)
  We can thus view $\pi_1(M)$ as a subgroup of $\pi_1(N)$, which is well defined up to the above choices, i.e., well defined up to conjugacy.
Furthermore we can view $\pi_1(N)$ as the fundamental group of a graph of groups with vertex groups the fundamental groups of the JSJ components and with edge groups the fundamental groups of the JSJ tori. We refer to \cite{Ser80,Bas93} for more on graphs of groups.

\medskip

We need the following definition due to Jaco--Shalen:

\begin{definition}
Let $N$ be a compact, orientable, irreducible $3$-manifold with empty or toroidal boundary.
The \emph{characteristic submanifold of $N$} \index{submanifold!characteristic} is the union of the following submanifolds:
\bn
\item all Seifert fibered pieces in the JSJ decomposition;
\item all boundary tori which cobound an atoroidal JSJ component;
\item all JSJ tori which do not cobound a Seifert fibered JSJ component.
\en
\end{definition}

The following theorem is a consequence of the `Characteristic Pair Theorem' of Jaco--Shalen~\cite[p.~138]{JS79}.
\index{theorems!Characteristic Pair Theorem}

\begin{theorem} \label{thm:charsub}
Let $N$ be a compact, orientable, irreducible $3$-manifold with empty or toroidal boundary which admits at least one JSJ torus.
If  $f\colon M\to N$ is a map from a Seifert fibered manifold $M$ to $N$ which is $\pi_1$-injective and
if $M\ne S^1\times D^2$ and $M\ne S^1\times S^2$,  then $f$ is homotopic to a  map $g \colon M\to N$
such that $g(M)$ lies in a component of the characteristic submanifold of $N$.
\end{theorem}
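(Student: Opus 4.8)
The plan is to deduce this statement from the Characteristic Pair Theorem of Jaco--Shalen \cite[p.~138]{JS79}, which in its strongest form asserts that the characteristic submanifold $\Sigma$ of $N$ (equipped with its Seifert fibered structure on the relevant pieces) is \emph{essential} and \emph{characteristic} in the following sense: any nondegenerate map of an $I$-bundle or Seifert fibered pair $(M,\partial_0 M)\to (N,\partial N)$ that is $\pi_1$-injective can be homotoped into $\Sigma$, and $\Sigma$ is unique up to ambient isotopy with this property. First I would recall how our definition of the characteristic submanifold (the union of the Seifert fibered JSJ pieces, the boundary tori cobounding atoroidal pieces, and the JSJ tori not cobounding a Seifert fibered piece) coincides with the Jaco--Shalen characteristic submanifold in the case at hand, where $N$ is compact, orientable, irreducible, with empty or toroidal boundary and at least one JSJ torus; in this setting the $I$-bundle components of the Jaco--Shalen characteristic submanifold are all twisted $I$-bundles over the torus or Klein bottle, i.e.\ Seifert fibered, so everything is subsumed under the Seifert fibered part, plus the collar neighborhoods of the distinguished boundary and JSJ tori.

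The key steps, in order, are as follows. (1) Given the $\pi_1$-injective map $f\colon M\to N$ from a Seifert fibered $M$ with $M\ne S^1\times D^2,\ S^1\times S^2$, first homotope $f$ so that it is transverse to the union of JSJ tori $T_1\cup\dots\cup T_k$, and consider the preimages of the $T_i$ in $M$. (2) Using the incompressibility of the $T_i$ and the $\pi_1$-injectivity of $f$, together with an innermost-disk / outermost-arc argument (the Loop Theorem, Theorem~\ref{thm:loop}, applied to compressions of components of $f^{-1}(T_i)$ in $M$), remove trivial intersection curves and arcs, so that every component of $f^{-1}(\bigcup T_i)$ is an incompressible surface in $M$. (3) Since $M$ is a Seifert fibered manifold that is not $S^1\times D^2$ or $S^1\times S^2$, its incompressible surfaces are, up to isotopy, either vertical (unions of fibers) or horizontal; apply the classification of incompressible surfaces in Seifert fibered spaces to put $f^{-1}(\bigcup T_i)$ into such a normal form. (4) Now each component $M_j$ of $M$ cut along $f^{-1}(\bigcup T_i)$ maps into a single JSJ component $N_{\iota(j)}$ of $N$, with $\partial$-pattern carried to the cutting tori; if $N_{\iota(j)}$ is itself Seifert fibered we are done for that piece, and if $N_{\iota(j)}$ is atoroidal then $\pi_1(M_j)\to\pi_1(N_{\iota(j)})$ being $\pi_1$-injective from a Seifert piece forces $M_j$ to map (up to homotopy rel boundary) into a boundary collar, i.e.\ into the characteristic submanifold, by the Characteristic Pair Theorem applied to the pair $(N_{\iota(j)},\partial N_{\iota(j)})$. (5) Finally, reassemble: the homotopies on the pieces $M_j$, which agree along $f^{-1}(\bigcup T_i)$ after an isotopy, glue to a homotopy of $f$ to a map $g$ whose image lies in $\Sigma$; here one uses that the characteristic submanifold contains a neighborhood of each JSJ torus (or at least each JSJ torus not cobounding a Seifert piece), so the pieces lying in distinct vertex pieces can be connected through $\Sigma$.

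I expect the main obstacle to be step (5), the reassembly: one must take the homotopies constructed piece-by-piece — each of which a priori moves the map only within one JSJ component and only rel the cutting surface up to isotopy — and combine them into a single global homotopy of $f$, which requires matching the boundary behavior across the JSJ tori and handling the Seifert structure on $M$ coherently (so that vertical pieces of $f^{-1}(\bigcup T_i)$ glue Seifert-fiber-preserving homotopies). A secondary subtlety is the normal-form step (3): the classification of incompressible surfaces must be invoked in the form that applies to properly embedded surfaces with boundary on the fibered torus neighborhoods, and one must rule out the exceptional small Seifert pieces — but these are precisely excluded by the hypotheses $M\ne S^1\times D^2, S^1\times S^2$ together with $N$ being irreducible with at least one JSJ torus. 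In fact, rather than redo all of this, the cleanest route is simply to observe that the statement is an immediate reformulation of \cite[Theorem~V.2.1 and Corollary~V.5.1]{JS79} once one identifies the two notions of characteristic submanifold as above, so the real content of the proof is the translation dictionary plus the remark that all $I$-bundle pieces here are Seifert fibered.
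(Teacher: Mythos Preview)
Your proposal is correct and in fact does more than the paper: the paper gives no proof at all, merely stating the theorem as ``a consequence of the `Characteristic Pair Theorem' of Jaco--Shalen~\cite[p.~138]{JS79}'' and referring to \cite[Lemma~IX.10]{Ja80} for a stronger statement. Your final paragraph---that the result is an immediate reformulation of Jaco--Shalen once the two notions of characteristic submanifold are identified---is exactly the paper's position, and your sketch of steps (1)--(5) is a reasonable outline of the underlying argument in \cite{JS79} should one want to unpack it.
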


We refer to \cite[Lemma~IX.10]{Ja80} for a somewhat stronger statement.
The next proposition is  an immediate consequence of Theorem~\ref{thm:charsub}, and gives a useful criterion for showing that a collection of tori are  the JSJ tori of a given $3$-manifold.

\begin{proposition}\label{prop:charpair}
Let $N$ be a compact, orientable, irreducible $3$-manifold with empty or toroidal boundary.
Let $T_1,\dots,T_k$ be disjointly embedded tori in $N$. Suppose the following hold:
\begin{itemize}
\item[(1)] the components $M_1,\dots,M_l$ of $N$ cut along $T_1\cup \dots \cup T_k$ are either Seifert fibered or atoroidal; and
\item[(2)] if a torus $T_i$ cobounds two Seifert fibered components $M_r$ and $M_s$ \textup{(}where it is possible that $r=s$\textup{)}, then the regular fibers of $M_r$ and $M_s$ do not define the same element in $H_1(T_i)$.
\end{itemize}
Then $T_1,\dots,T_k$ are the JSJ tori of $N$.
\end{proposition}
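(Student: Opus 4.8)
The plan is to show that the given collection $T_1,\dots,T_k$ satisfies the characterization of the JSJ tori, namely that it is a collection of disjointly embedded incompressible tori cutting $N$ into atoroidal or Seifert fibered pieces which is minimal (with respect to number of components), and then invoke the uniqueness clause of Theorem~\ref{thm:jsj}. Hypothesis~(1) already says the pieces $M_1,\dots,M_l$ are Seifert fibered or atoroidal, so the only work is to verify incompressibility of the $T_i$ and minimality of the collection.

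First I would dispose of incompressibility. Since $N$ is irreducible, a compressible embedded torus in $N$ either bounds a solid torus or lies in a ball; in either case one of the adjacent pieces would be a solid torus $S^1\times D^2$ or contained in a ball, and a routine argument (compressing the torus and re-examining the pieces) shows that such a $T_i$ could be removed without destroying property~(1), so we may assume from the start that each $T_i$ is incompressible — alternatively, one argues directly that if some $T_i$ were compressible then after compressing we would contradict hypothesis~(2) or reduce to a smaller configuration. Either way we reduce to the case where all $T_i$ are incompressible.

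The heart of the proof is minimality, and this is where I expect the main obstacle to lie. Let $S_1,\dots,S_m$ be the genuine JSJ tori of $N$, which exist and are unique up to isotopy by Theorem~\ref{thm:jsj}. Isotope the $S_j$ to meet $\bigcup T_i$ minimally; since both are incompressible and $N$ is irreducible, an innermost-disk / standard cut-and-paste argument lets us assume the two families are disjoint. Now consider any piece $M_r$ that is Seifert fibered. If $M_r$ met a JSJ torus $S_j$ in its interior, then since $M_r$ is $\pi_1$-injective and Seifert fibered with $M_r\neq S^1\times D^2, S^1\times S^2$, Theorem~\ref{thm:charsub} lets us homotope (hence, by incompressibility and standard $3$-manifold isotopy results, isotope) the inclusion of $M_r$ into a component of the characteristic submanifold; it follows that $M_r$ contains no JSJ torus in its interior, so each Seifert fibered $M_r$ sits inside a single JSJ piece. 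An atoroidal $M_r$ likewise contains no essential torus, hence no JSJ torus, in its interior. Therefore every $S_j$ is isotopic into $\bigcup_i T_i$, i.e. $\{S_j\}\subseteq\{T_i\}$ up to isotopy; the reverse inclusion would fail only if some $T_i$ is ``redundant'', meaning the two pieces $M_r,M_s$ it separates (or the two sides if $r=s$) glue up to a piece that is still Seifert fibered or atoroidal. If $M_r$ and $M_s$ are both atoroidal then gluing along $T_i$ generally produces a toroidal manifold (the torus $T_i$ is essential in the union, being incompressible and, by atoroidality of the sides, non-boundary-parallel), so $T_i$ is not redundant. If at least one side is Seifert fibered, redundancy forces the union to be Seifert fibered, which in turn forces the Seifert fibrations of $M_r$ and $M_s$ to match up along $T_i$ — in particular the regular fibers agree in $H_1(T_i)$ — contradicting hypothesis~(2). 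Hence no $T_i$ is redundant, the collection $\{T_i\}$ is minimal, and by the uniqueness part of Theorem~\ref{thm:jsj} it coincides (up to isotopy) with the JSJ tori.

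I would expect the delicate point to be the ``can glue up while staying in the class'' analysis in the last step: one must rule out every way a single torus could be unnecessary, and the Seifert fibered case is exactly the one hypothesis~(2) is designed to handle, while the atoroidal-atoroidal and Seifert--atoroidal cases need the observation that an incompressible torus between two atoroidal pieces cannot become inessential. The cut-and-paste/minimal-position arguments that let us assume the two torus families are disjoint are standard but should be cited (e.g. to \cite{Hem76} or \cite{JS79}) rather than reproved. Everything else is bookkeeping with Theorem~\ref{thm:charsub} and the uniqueness clause of Theorem~\ref{thm:jsj}.
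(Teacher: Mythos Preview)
Your strategy matches the paper's: it simply declares the proposition an immediate consequence of Theorem~\ref{thm:charsub}, and your use of that theorem to show each Seifert piece $M_r$ lands in a single JSJ piece (hence contains no $S_j$ in its interior) is exactly the intended mechanism. The comparison with the actual JSJ tori and the case-by-case minimality check are the natural way to flesh this out.

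The gap is in your redundancy analysis. In the ``both atoroidal'' case you argue the union is toroidal, but toroidal does not preclude Seifert fibered, so you have not actually ruled out redundancy; the missing step is that an incompressible torus in a Seifert manifold is vertical or horizontal, and either possibility forces the two sides to be Seifert (vertical) or to be $T^2\times I$ (horizontal), contradicting ``atoroidal and not Seifert''. More seriously, in the ``at least one side Seifert'' case your claim that a redundant union must be Seifert is false: if $M_r\cong T^2\times I$ and $M_s$ is hyperbolic then $M_r\cup_{T_i}M_s\cong M_s$ is atoroidal, yet hypothesis~(2) is silent since only one side is Seifert. This actually shows the proposition as literally stated admits a counterexample (take $T_1$ to be a boundary-parallel torus in a cusped hyperbolic manifold); one must add the standing assumption, tacit in all the paper's applications (Theorems~\ref{thm:jsjlift} and~\ref{thm:jsjdouble}), that no piece is a product $T^2\times I$. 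With that extra hypothesis your argument goes through cleanly: a redundant atoroidal union would force a $T^2\times I$ piece, while a redundant Seifert union makes $T_i$ vertical and hence matches the regular fibers on both sides, contradicting hypothesis~(2).
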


\subsection{The Geometrization Theorem}
We now turn to the study of atoroidal $3$-manifolds.  We say that a closed $3$-manifold is \emph{spherical} \index{$3$-manifold!spherical} if it admits a complete metric of constant positive curvature.  Note that fundamental groups of spherical $3$-manifolds are finite; in particular spherical $3$-manifolds are atoroidal.

\medskip

In the following  we say that a compact $3$-manifold is \emph{hyperbolic} \index{$3$-manifold!hyperbolic}  if its interior admits a complete metric of constant negative curvature $-1$.
The following theorem is due to Mostow \cite[Theorem~12.1]{Mos68} in the closed case and due to  Prasad \cite[Theorem~B]{Pra73} and Marden \cite{Man74} independently in the case of non-empty boundary.
(See also \cite[Section~6]{Thu79}, \cite{Mu80}, \cite[Chapter~C]{BP92}, \cite[Chapter~11]{Rat06} and \cite[Corollary~1]{BBI12} for alternative proofs.)

\index{theorems!Mostow--Prasad--Marden Rigidity Theorem}

\begin{theorem}\emph{\textbf{(Mostow--Prasad--Marden Rigidity Theorem)}}\label{thm:mostow-prasad}
Let $M$ and $N$ be finite volume hyperbolic $3$-manifolds.  Any isomorphism $\pi_1(M)\to \pi_1(N)$
is induced by a unique isometry $M\to N$.
\end{theorem}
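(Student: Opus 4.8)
The plan is to establish the Mostow--Prasad--Marden Rigidity Theorem in two stages: first the existence of an isometry realizing the given isomorphism, then its uniqueness. Throughout I would work with the universal covers $\widetilde M = \widetilde N = \H^3$, identifying $\pi_1(M)$ and $\pi_1(N)$ with discrete subgroups $\Gamma_M, \Gamma_N$ of $\Isom(\H^3)$ via the developing maps. An isomorphism $\varphi\colon \pi_1(M)\to\pi_1(N)$ becomes an abstract isomorphism $\Gamma_M\to\Gamma_N$, and the goal is to show it is realized by conjugation by an isometry of $\H^3$, which then descends to the desired isometry $M\to N$.

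First I would treat the closed case. The key tool is the construction of a $\varphi$-equivariant quasi-isometry $\H^3\to\H^3$: since $M$ and $N$ are closed, $\Gamma_M$ and $\Gamma_N$ act cocompactly, so by the Milnor--\v{S}varc lemma both are quasi-isometric to $\H^3$, and $\varphi$ induces a quasi-isometry $f\colon\H^3\to\H^3$ intertwining the two actions. Next I would invoke the fact that a quasi-isometry of $\H^3$ extends to a homeomorphism of the boundary sphere $\partial\H^3 = S^2$, yielding a $\varphi$-equivariant homeomorphism $\partial f\colon S^2\to S^2$. The crucial step---and the main obstacle---is showing that $\partial f$ is in fact conformal (a Möbius transformation). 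This is the analytic heart of Mostow rigidity: one shows $\partial f$ is quasiconformal, hence differentiable almost everywhere with the Beltrami coefficient well-defined, and then uses an ergodicity argument (the action of $\Gamma_M$ on $S^2\times S^2$, or equivalently on the geodesic flow, is ergodic) together with the equivariance to force the Beltrami differential to be $\Gamma_M$-invariant, hence constant, hence zero almost everywhere. By Weyl's lemma / the measurable Riemann mapping theorem this forces $\partial f$ to agree a.e.\ with a Möbius map, and continuity upgrades this to an honest conformal map $g\in\Isom(\H^3)$ with $g\Gamma_M g^{-1} = \Gamma_N$ realizing $\varphi$.

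For the finite-volume case with cusps one cannot use cocompactness directly, so I would instead proceed via the thick-thin decomposition: the $\varepsilon$-thick parts of $M$ and $N$ are compact, and $\varphi$ sends peripheral (cusp) subgroups to peripheral subgroups---this requires recognizing parabolic subgroups algebraically, e.g.\ as the maximal subgroups isomorphic to $\Z^2$ sitting inside $\Gamma$, which by the structure of discrete subgroups of $\Isom(\H^3)$ are exactly the cusp stabilizers. One then builds the equivariant quasi-isometry of $\H^3$ by first defining it on the thick part (using coarse geometry of the compact core) and extending across the horoballs using that $\varphi$ conjugates the corresponding parabolic groups. From there the argument proceeds exactly as before: boundary extension, quasiconformality, ergodicity of the $\Gamma_M$-action on $S^2$, and the conclusion that $\partial f$ is Möbius. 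I should note that this is the step where Prasad's and Marden's contributions enter, handling the cusped geometry carefully.

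Finally, uniqueness is comparatively easy. If $g_1, g_2\colon M\to N$ are two isometries inducing $\varphi$, then $h = g_2^{-1}\circ g_1\colon M\to M$ is an isometry inducing the identity on $\pi_1(M)$. Lifting $h$ to $\widetilde h\colon\H^3\to\H^3$, it commutes with every element of $\Gamma_M$. Since $\Gamma_M$ is a discrete group with limit set all of $S^2$ (it is a lattice, hence non-elementary), the only isometries of $\H^3$ commuting with all of $\Gamma_M$ form the centralizer, which is trivial because $\Gamma_M$ contains two hyperbolic elements with distinct, non-shared fixed-point pairs---an isometry commuting with a hyperbolic element preserves its axis, and preserving two such axes forces the identity. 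Hence $\widetilde h = \id$, so $h = \id$ and $g_1 = g_2$. I expect the genuinely hard part to be the conformality of the boundary map via the ergodicity argument; everything else is either coarse geometry or elementary hyperbolic trigonometry.
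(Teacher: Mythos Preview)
The paper does not actually prove this theorem: it is a survey, and the Rigidity Theorem is stated with citations to Mostow, Prasad, and Marden (and to several textbook accounts) but no argument is given. So there is no ``paper's own proof'' to compare your proposal against.

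That said, your outline is a faithful sketch of the standard Mostow--Prasad approach and is essentially correct. A few minor remarks: the ergodicity step in dimension three is usually phrased as ergodicity of the geodesic flow (equivalently, ergodicity on the space of geodesics, a subset of $S^2\times S^2$), and one should note that in the cusped case the thick part is compact only after truncating the cusps, so the quasi-isometry is built on a compact core and extended over horoballs as you indicate. Your identification of parabolic subgroups as the maximal $\Z^2$ subgroups is fine for orientable finite-volume manifolds (this is essentially Theorem~\ref{thm:hypatoroidal} in the paper). The uniqueness argument is correct as stated.
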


\begin{remarks} \mbox{}

\bn
\item
 This theorem implies in particular that the geometry of finite volume hyperbolic $3$-manifolds is determined by their topology.
 This is not the case if we drop the finite-volume condition.
 More precisely, the Ending Lamination Theorem states that hyperbolic $3$-manifolds with finitely generated fundamental groups
 are determined by their topology and by their `ending laminations'. The Ending Lamination Theorem
 was conjectured by Thurston~\cite{Thu82a} and was proved by Brock--Canary--Minsky~\cite{BCM04, Miy10}.
 We also refer to \cite{Miy94,Miy03,Miy06,Ji12} for more background information and to \cite{Bow11a,Bow11b}, \cite{Ree08} and \cite{Som10} for alternative approaches. \index{theorems!Ending Lamination Theorem}
\item If we apply the Rigidity Theorem to a $3$-manifold equipped with two different finite volume hyperbolic structures, then the theorem says that the two hyperbolic structures are the same up to an isometry
which is \emph{homotopic} to the identity. This does not imply that the set of hyperbolic metrics on a finite volume $3$-manifold is path connected. The path connectedness was later shown
by Gabai--Meyerhoff--N. Thurston \cite[Theorem~0.1]{GMT03} building on earlier work of Gabai \cite{Gab94a,Gab97}.
\item Gabai \cite[Theorem~1.1]{Gab01} showed that if $N$ is a closed hyperbolic $3$-manifold, then the inclusion of the isometry group $\op{Isom}(N)$ into the diffeomorphism group $\op{Diff}(N)$ is a homotopy equivalence.
For Haken manifolds, and in particular for non-compact finite volume hyperbolic $3$-manifolds, the statement was proved
by Hatcher \cite{Hat76,Hat83} and Ivanov \cite{Iva76}.
\en
\end{remarks}

\medskip

A hyperbolic $3$-manifold has finite volume if and only if it is either closed or has toroidal boundary (see \cite[Theorem~5.11.1]{Thu79} or \cite[Theorem~2.9]{Bon02}). Since in this survey we are mainly interested in $3$-manifolds with empty or toroidal boundary,
we henceforth restrict ourselves to hyperbolic $3$-manifolds with finite volume.
We will therefore work with the following understanding.

\begin{convention}
Unless we say explicitly otherwise, in the remainder of the survey, a hyperbolic $3$-manifold is always  understood to have finite volume.
\end{convention}

With this convention, hyperbolic $3$-manifolds are atoroidal; in fact, the following slightly stronger statement holds (see \cite[Proposition~6.4]{Man74}, \cite[Proposition~5.4.4]{Thu79} and also \cite[Corollary~4.6]{Sco83a}):

\begin{theorem}\label{thm:hypatoroidal}
Let $N$ be a hyperbolic $3$-manifold. If $\G\leq \pi_1(N)$ is abelian and not cyclic,
then  there exists a boundary torus $S$ and $h\in \pi_1(N)$ such that
\[ \G\subseteq h\,\pi_1(S)\,h^{-1}.\]
\end{theorem}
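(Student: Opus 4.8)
The plan is to work in the universal cover $\widetilde N = \mathbb{H}^3$ and exploit the correspondence between $\pi_1(N)$ and a discrete, torsion-free subgroup $\Gamma_0$ of $\mathrm{Isom}^+(\mathbb{H}^3) \cong \mathrm{PSL}_2(\mathbb{C})$ acting properly discontinuously with finite-volume quotient. Since a hyperbolic $3$-manifold in our convention has finite volume, it is either closed or has toroidal cusps. The first step is to show that an abelian non-cyclic subgroup $\Gamma \leq \pi_1(N)$ must consist entirely of parabolic elements sharing a common fixed point on $\partial_\infty \mathbb{H}^3 = S^2_\infty$. To see this, one classifies the elements of $\Gamma$ (viewed inside $\mathrm{PSL}_2(\mathbb{C})$) as elliptic, loxodromic, or parabolic: elliptics are excluded because $\pi_1(N)$ is torsion-free (discrete subgroups of $\mathrm{PSL}_2(\mathbb{C})$ have no parabolic-and-elliptic mixing, and a nontrivial elliptic element has finite order). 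If $\Gamma$ contained a loxodromic element $g$, its axis and its two fixed points $\{g^+, g^-\}$ on $S^2_\infty$ would have to be preserved setwise by every element of $\Gamma$ commuting with $g$; an element commuting with a loxodromic either shares both fixed points (hence lies in the abelian group of loxodromics/elliptics with that axis) or swaps them, and in a discrete torsion-free group the stabilizer of an axis is infinite cyclic — forcing $\Gamma$ cyclic, contradiction. Hence every nontrivial element of $\Gamma$ is parabolic.

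The second step is to observe that two commuting parabolic elements of $\mathrm{PSL}_2(\mathbb{C})$ necessarily have the same fixed point $\xi \in S^2_\infty$: if $g$ is parabolic fixing $\xi$, conjugate so that $\xi = \infty$ and $g: z \mapsto z+1$; any $h$ commuting with $g$ is upper triangular, and being parabolic (trace $\pm 2$, not the identity) it is also of the form $z \mapsto z + c$, hence fixes $\infty$ too. Therefore all of $\Gamma$ fixes a single point $\xi \in S^2_\infty$, so $\Gamma$ is contained in the full parabolic subgroup $P_\xi = \mathrm{Stab}_{\pi_1(N)}(\xi)$.

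The third step identifies $P_\xi$ with a peripheral subgroup. By the thick-thin decomposition of the finite-volume hyperbolic manifold $N$ (Margulis' lemma), the maximal parabolic subgroups of $\pi_1(N)$ are, up to conjugacy, exactly the fundamental groups of the cusp cross-sections, which are tori; in particular $P_\xi$ is conjugate — via some $h \in \pi_1(N)$ moving $\xi$ to the distinguished parabolic fixed point of a standard cusp — into $\pi_1(S)$ for a boundary torus $S$. (Equivalently, one invokes Theorem~\ref{thm:hypatoroidal}'s stated ancestors \cite[Proposition~6.4]{Man74} and \cite[Proposition~5.4.4]{Thu79}, whose content is precisely that every non-cyclic abelian, indeed every rank-$\geq 2$ parabolic, subgroup is peripheral.) Combining, $\Gamma \subseteq P_\xi \subseteq h\,\pi_1(S)\,h^{-1}$, which is the claim.

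The main obstacle is the third step: controlling the maximal parabolic subgroups and matching them with the boundary tori. This is where finite volume is essential — without it a geometrically infinite end could carry exotic parabolics, or there might be no torus cusp at all — and it is exactly the point at which Margulis' lemma (a horoball of definite size around a parabolic fixed point is precisely invariant, so its image embeds as a rank-$2$ cusp) does the real work. The first two steps are elementary linear algebra in $\mathrm{PSL}_2(\mathbb{C})$ together with discreteness and torsion-freeness; the geometric input of the rigidity/cusp structure is the substantive ingredient, and in a survey one would simply cite it.
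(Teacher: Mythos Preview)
Your argument is correct and is essentially the standard proof found in the references the paper cites. Note, however, that the paper itself does \emph{not} prove this theorem: it simply states the result and points to \cite[Proposition~6.4]{Man74}, \cite[Proposition~5.4.4]{Thu79}, and \cite[Corollary~4.6]{Sco83a}. So there is nothing to compare your approach against beyond those sources, and you have faithfully reproduced their content: the linear-algebra dichotomy in $\psl(2,\C)$ (torsion-freeness rules out elliptics; discreteness forces the centraliser of a loxodromic to be cyclic, since a discrete torsion-free subgroup of the axis stabiliser $\C^*\cong\R\times S^1$ injects into $\R$ with discrete image), followed by the identification of maximal parabolic subgroups with cusp subgroups via the Margulis lemma and the thick--thin decomposition. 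Your diagnosis that the third step is where finite volume is genuinely used, and that the first two are elementary, is exactly right; indeed the paper later recycles precisely this elementary part in its proof of Theorem~\ref{thm:centgen}.
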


The Elliptization  Theorem and the Hyperbolization Theorem (Theorems~\ref{thm:ellips} and \ref{thm:hyp} below) together imply  that every atoroidal $3$-manifold is either spherical or hyperbolic.  Both theorems  were conjectured by Thurston \cite{Thu82a,Thu82b} and the latter was foreshadowed by the work of Riley \cite{Ril75a,Ril75b,Ril13}.
The Hyperbolization Theorem was proved by Thurston for Haken manifolds (see \cite{Thu86c,Mor84,Su81,McM96,Ot96,Ot01} for the fibered case and
\cite{Thu86b,Thu86d,Mor84,McM92,Ot98,Kap01} for the non-fibered case).  The full proof of both theorems was first given by Perelman in his seminal papers \cite{Per02,Per03a,Per03b} building on earlier work of R. Hamilton \cite{Hamc82,Hamc95,Hamc99}.  We refer to \cite{MTi07} for full details and  to  \cite{CZ06a,CZ06b,KlL08,BBBMP10} for further information on the proof. Finally we refer to \cite{Mil03,Anb04,Ben06,Bei07,McM11} for expository accounts.

\begin{theorem}\label{thm:ellips}  \textbf{\emph{(Elliptization Theorem)}}
Every closed,  orientable $3$-manifold with finite fundamental group is spherical.
\end{theorem}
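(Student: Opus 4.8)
The plan is to reduce, via the Prime Decomposition Theorem, to a single prime manifold, and then to appeal to Ricci flow with surgery for the essential point. First I would use Theorem~\ref{thm:prime} to write $N \cong N_1 \# \cdots \# N_r$ with each $N_i$ oriented and prime, so that $\pi_1(N) \cong \pi_1(N_1) * \cdots * \pi_1(N_r)$. A nontrivial free product is always infinite --- the product $ab$ of nontrivial elements $a \in \pi_1(N_i)$, $b \in \pi_1(N_j)$ with $i \ne j$ has infinite order by the normal form theorem --- so finiteness of $\pi_1(N)$ forces all but at most one of the $\pi_1(N_i)$ to be trivial. A prime closed $3$-manifold with trivial fundamental group is a homotopy $3$-sphere, hence is $S^3$ by the Poincar\'e Conjecture (the case $\pi_1(N) = 1$ of the present theorem, itself due to Perelman); discarding these $S^3$-summands I may assume $N$ is prime. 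If $\pi_1(N) = 1$ we are done; otherwise $\pi_1(N)$ is finite nontrivial, so it has no infinite cyclic subgroup, whence $N \ne S^1 \times S^2$ and therefore $N$ is irreducible by \cite[Lemma~3.13]{Hem76}.

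It then remains to show that a closed, orientable, irreducible $3$-manifold $N$ with finite nontrivial fundamental group is spherical, and here the analytic input is unavoidable. I would put an arbitrary Riemannian metric on $N$ and run Hamilton's Ricci flow with surgery; its construction and the control on the surgeries used below constitute precisely Perelman's work \cite{Per02,Per03a,Per03b}, building on Hamilton \cite{Hamc82,Hamc95,Hamc99} (see \cite{MTi07} for a detailed account). Since $\pi_1(N)$ is finite it is \emph{a fortiori} a free product of finite and cyclic groups, so the finite extinction time theorem \cite{Per03b} applies and the flow disappears after finitely many surgeries. Reversing the surgeries starting from the empty manifold exhibits $N$ as a connected sum of spherical space forms and $S^2$-bundles over $S^1$; since $N$ is prime there is in fact a single summand, and it cannot be an $S^2$-bundle over $S^1$ (that would make $\pi_1(N)$ infinite cyclic), so $N$ is a spherical space form, i.e., it carries a metric of constant positive curvature.

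The hard part is entirely the second paragraph: it rests on the full machinery of Ricci flow with surgery and on the finite extinction time estimate, for which there is no elementary substitute --- the Elliptization Theorem is really a corollary of Perelman's proof of Geometrization rather than something derivable from the earlier structural results alone. The first paragraph is routine bookkeeping: the Prime Decomposition Theorem, the infinitude of nontrivial free products, and the exclusion of $S^1 \times S^2$ via \cite[Lemma~3.13]{Hem76} are exactly what is needed to pass between the connected-sum statement supplied by Ricci flow and the assertion about a single closed manifold.
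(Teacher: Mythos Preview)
The paper does not prove the Elliptization Theorem: it is stated as Theorem~\ref{thm:ellips} and attributed to Perelman, with references to \cite{Per02,Per03a,Per03b}, \cite{Hamc82,Hamc95,Hamc99}, and \cite{MTi07,CZ06a,CZ06b,KlL08,BBBMP10} for the proof. This is one of the major external inputs to the survey, not something derived within it. Your sketch is essentially an outline of the Perelman argument that the paper is citing, so in that sense you are in agreement with the paper's ``approach'' (namely: invoke Perelman).

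One remark on your write-up: the first paragraph is both slightly circular and unnecessary. It is circular because you invoke the Poincar\'e Conjecture to discard simply connected prime summands, but Poincar\'e is the case $\pi_1=1$ of the very theorem you are proving. It is unnecessary because the Ricci-flow argument of your second paragraph applies directly to any closed orientable $N$ with finite $\pi_1$, without first reducing to the prime/irreducible case: finite extinction time and the surgery analysis already yield that $N$ is a connected sum of spherical space forms and $S^2$-bundles over $S^1$, and the finiteness of $\pi_1(N)$ then forces a single spherical summand. So the cleaner version drops paragraph one entirely and runs the flow on $N$ itself.
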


\index{theorems!Elliptization Theorem}

It is well known that $S^3$ equipped with the canonical metric is the only spherical simply connected $3$-manifold.
It follows  that the Elliptization Theorem  implies the Poincar\'e Conjecture: the $3$-sphere $S^3$ is the only  simply connected, closed $3$-manifold. We thus see  that a $3$-manifold $N$ is spherical if and only if  it is the quotient of $S^3$ by a finite group, which acts freely and isometrically.  In particular, we can view $\pi_1(N)$ as a finite subgroup of $\operatorname{SO}(4)$
which acts freely on $S^3$. By Hopf \cite[\S~2]{Hop26} (see also \cite{SeT30,SeT33},\cite[Theorem~2]{Mil57} and \cite[Chapter~6,~Theorem~1]{Or72})
such a group is isomorphic to precisely one of the following types of groups:
\bn
\item the trivial group,
\item $Q_{4n}:= \ll x,y\,|\,x^2=(xy)^2=y^n\rr$, $n\geq 2$, which is an extension of the dihedral group $D_{2n}$ by $\Z/2$,
\item $P_{48}:=\ll x,y\,|\, x^2=(xy)^3=y^4,x^4=1\rr$, which is an extension of the octahedral group by $\Z/2$,
\item $P_{120}:=\ll x,y\,|\, x^2=(xy)^3=y^5,x^4=1\rr$, which is an extension of the icosahedral group by $\Z/2$,
\item the dihedral group $D_{2^k(2n+1)}:=\ll x,y\,|\,x^{2^k}=1,y^{2n+1}=1,xyx^{-1}=y^{-1}\rr$, where $k\geq 2$ and $n\geq 1$,
\item $P'_{8\cdot 3^k}:=\ll x,y,z\,|\,x^2=(xy)^2=y^2,zxz^{-1}=y,zyz^{-1}=xy,z^{3^k}=1\rr$, where $k\geq 1$,
\item the direct product of any of the above groups with a cyclic group of relatively prime order.
\en
Note that spherical $3$-manifolds are in fact Seifert fibered (see \cite[\S 7, Hauptsatz]{SeT33}, \cite[Chapter~6,~Theorem~2]{Or72}, \cite[\S~4]{Sco83a} or \cite[Theorem~2.8]{Bon02}).
By \cite[Theorem~3.1]{EvM72} the fundamental group of a spherical $3$-manifold $N$
is solvable unless $\pi_1(N)$  is isomorphic to the binary dodecahedra1 group $P_{120}$
or  the direct sum of $P_{120}$ with a cyclic group of order relatively prime to 120.
In particular the Poincar\'e homology sphere,
\index{$3$-manifold!Poincar\'e homology sphere}
 the $3$-manifold with fundamental group  $P_{120}$, is the only homology sphere with finite fundamental group. Finally we  refer to  \cite{Mil57,Lee73,Tho79,Dava83,Tho86,Rub01}
for some `pre-Geometrization' results on the classification of finite fundamental groups of $3$-manifolds.
\index{$3$-manifold group!finite}

\medskip

We now turn to atoroidal $3$-manifolds with infinite fundamental groups.

\begin{theorem}\label{thm:hyp}  \textbf{\emph{(Hyperbolization Theorem)}}
Let $N$ be a compact, orientable, irreducible $3$-manifold with empty or toroidal boundary. If $N$ is atoroidal and $\pi_1(N)$ is infinite, then $N$ is hyperbolic.
\end{theorem}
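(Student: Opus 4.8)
The plan is to reduce the statement to the Hyperbolization Theorem for Haken manifolds, proved by Thurston, together with the Geometrization Theorem of Perelman in the remaining non-Haken case. Concretely, I would first observe that a compact, orientable, irreducible $3$-manifold $N$ with empty or toroidal boundary and infinite fundamental group either has non-empty (toroidal) boundary or is closed. If $\partial N\ne\varnothing$, then $N$ is Haken: an incompressible torus boundary component gives a properly embedded incompressible surface (after the Loop Theorem argument of Lemma~\ref{lem:incomp} one may assume the boundary is incompressible, since capping a compressible torus either reduces to a lens-space-type or $S^1\times S^2$-type piece with finite or virtually $\Z$ fundamental group, both excluded by our hypotheses). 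Thurston's Hyperbolization Theorem for Haken manifolds (see \cite{Thu86b,Thu86c,Thu86d,Mor84,Ot96,Ot98,Kap01} and the fibered/non-fibered references cited above) then applies directly: an atoroidal, irreducible Haken $3$-manifold with infinite $\pi_1$ admits a complete hyperbolic structure on its interior.

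Next I would handle the closed case. If $N$ is closed, orientable, irreducible, atoroidal, with $\pi_1(N)$ infinite, apply the Geometrization Theorem (Theorems~\ref{thm:ellips} and \ref{thm:hyp} rest on Perelman's papers \cite{Per02,Per03a,Per03b}; for the closed case one invokes the geometric decomposition directly). Since $N$ is closed, irreducible, and $\pi_1(N)$ is infinite, $N$ is aspherical by the Sphere Theorem (Theorem~\ref{thm:sphere}) combined with the argument in building block (1). Geometrization says $N$ is modelled on one of the eight Thurston geometries; the atoroidality hypothesis plus infinite $\pi_1$ rules out all of them except $\mathbb{H}^3$. Indeed, the geometries $S^3$ and $S^2\times\R$ force finite or virtually-$\Z$ fundamental group; the geometries $E^3$, $\Nil$, $\Sol$, $\mathbb{H}^2\times\R$, and $\widetilde{\op{SL}_2}$ all yield manifolds containing an incompressible torus (they are Seifert fibered or $\Sol$-manifolds, hence contain an essential torus when $\pi_1$ is infinite and the manifold is not covered by a torus-trivial case), contradicting atoroidality. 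This leaves only the hyperbolic geometry $\mathbb{H}^3$, giving the conclusion. For the bounded case the same elimination argument works using the geometric decomposition for manifolds with toroidal boundary, but it is cleaner to just cite Thurston as above.

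The main obstacle is, of course, that the proof is not self-contained: it genuinely rests on two deep external inputs — Thurston's Hyperbolization Theorem for Haken manifolds and Perelman's resolution of Geometrization — and the ``proof'' here is really a bookkeeping argument showing that the hypotheses (irreducible, atoroidal, infinite $\pi_1$, empty or toroidal boundary) are exactly what is needed to land in the hyperbolic case of the geometric classification. The subtle points requiring care are: (i) the distinction between the ``map from a torus'' notion of atoroidal used in this survey and the ``embedded torus'' notion, which matters precisely for the small Seifert fibered pieces flagged in Section~\ref{decomposition!JSJ} — one must check these exceptional small Seifert fibered manifolds are not counterexamples (they are Seifert fibered, hence not hyperbolic, but they also fail to be atoroidal in the map-sense, or have finite/virtually-cyclic $\pi_1$); and (ii) the reduction ensuring that, in the bounded case, after discarding compressible boundary we remain within the stated hypothesis class and do not trivially fall into the $S^1\times S^2$ or solid-torus cases. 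Once these are dispatched, the theorem follows by combining Theorems~\ref{thm:ellips}, the Hyperbolization Theorem for Haken manifolds, and Perelman's Geometrization.

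\begin{remark}
We emphasize that, as with the Elliptization Theorem, this result is a consequence of the full Geometrization Theorem of Perelman \cite{Per02,Per03a,Per03b}; in the Haken case it had been established earlier by Thurston \cite{Thu86b,Thu86c,Thu86d}.
\end{remark}
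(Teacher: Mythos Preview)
The paper does not give a proof of this theorem at all: it is stated as one of the deep external inputs to the survey, with attribution to Thurston in the Haken case and to Perelman in general, together with a list of references for the proof and for expository accounts. Your proposal is entirely consistent with this---you correctly identify that the statement rests on Thurston's Hyperbolization for Haken manifolds plus Perelman's Geometrization, and you explicitly flag that the argument is not self-contained. The extra bookkeeping you supply (splitting into bounded versus closed, eliminating the seven non-hyperbolic geometries, and flagging the small-Seifert-fibered subtlety for the homotopic-versus-embedded notions of atoroidal) is correct and goes somewhat beyond what the paper does, which is simply to cite the result.

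One small point of care: in the closed case you write ``Geometrization says $N$ is modelled on one of the eight Thurston geometries,'' but what Perelman proves is the geometric \emph{decomposition}; you then need the (easy) observation that atoroidality in the map sense forces the JSJ decomposition to be trivial, so that $N$ itself is a single geometric piece. You implicitly use this, but it would be cleaner to say it explicitly rather than suggesting Perelman directly hands you a global geometry. Also, your Remark at the end is circular as written (``as with the Elliptization Theorem, this result is a consequence of the full Geometrization Theorem''): in the paper's logical structure Theorem~\ref{thm:geom} is \emph{deduced from} the Elliptization and Hyperbolization Theorems, not the other way around; what you mean is that it is a consequence of Perelman's proof of the Geometrization \emph{Conjecture}.
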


\index{theorems!Hyperbolization Theorem}

Combining the JSJ Decomposition Theorem with the Elliptization  Theorem and the Hyperbolization Theorem we now obtain the following:

\begin{theorem}\label{thm:geom}  \textbf{\emph{(Geometrization Theorem)}}
Let $N$ be a compact, orientable, irreducible $3$-manifold with empty or toroidal boundary.  Then there exists a collection of disjointly embedded  incompressible tori $T_1,\dots,T_k$ such that each component of $N$ cut along $T_1\cup \dots \cup T_k$ is  hyperbolic  or Seifert fibered. Furthermore any such collection of tori with a minimal number of components is unique up to isotopy.
\end{theorem}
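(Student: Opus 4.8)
The plan is to derive the statement from the JSJ Decomposition Theorem (Theorem~\ref{thm:jsj}) together with the Elliptization and Hyperbolization Theorems (Theorems~\ref{thm:ellips} and~\ref{thm:hyp}). The organizing principle is that, for a compact orientable irreducible $3$-manifold with empty or toroidal boundary, the class of collections of incompressible tori that cut $N$ into atoroidal-or-Seifert-fibered pieces coincides with the class that cut $N$ into hyperbolic-or-Seifert-fibered pieces; once this is established, both existence and uniqueness are immediate from Theorem~\ref{thm:jsj}.

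First I would dispose of a degenerate case: if some boundary component of $N$ is a compressible torus, then the Loop Theorem (Theorem~\ref{thm:loop}) together with irreducibility forces $N\cong S^1\times D^2$, which is Seifert fibered, so the empty collection of tori works and is trivially the unique minimal one. Hence I may assume every boundary component of $N$ is an incompressible torus. Now apply Theorem~\ref{thm:jsj} to get a minimal collection $T_1,\dots,T_k$ of disjointly embedded incompressible tori so that each component $M$ of $N$ cut along $T_1\cup\dots\cup T_k$ is atoroidal or Seifert fibered; cutting preserves compactness, orientability and irreducibility, and since the $T_i$ and the boundary tori of $N$ are incompressible, every boundary component of every piece $M$ is again an incompressible torus, so $\pi_1(M)$ contains $\Z^2$ whenever $M$ has nonempty boundary.

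Next I would upgrade each atoroidal piece. Let $M$ be an atoroidal JSJ component. If $\pi_1(M)$ is infinite, Theorem~\ref{thm:hyp} applies and $M$ is hyperbolic. If $\pi_1(M)$ is finite, then by the previous paragraph $M$ has empty boundary, hence $M=N$, and the Elliptization Theorem shows $N$ is spherical, hence Seifert fibered (as recalled after Theorem~\ref{thm:ellips}). In every case each piece is hyperbolic or Seifert fibered, which establishes the existence assertion with the collection $T_1,\dots,T_k$.

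For uniqueness I would argue that a collection of incompressible tori realizes the conclusion of the present theorem if and only if it realizes the conclusion of Theorem~\ref{thm:jsj}: hyperbolic $3$-manifolds are atoroidal (immediate from Theorem~\ref{thm:hypatoroidal} in the cusped case, and because closed hyperbolic $3$-manifold groups contain no $\Z^2$ in the closed case), so every ``geometric'' collection is a JSJ collection, while the argument of the previous paragraph shows conversely that every JSJ collection is a geometric collection. Thus the two families of admissible collections agree, so their minimal members agree, and uniqueness up to isotopy of the minimal collection follows from the corresponding clause of Theorem~\ref{thm:jsj}. The bulk of this is bookkeeping; the one point genuinely needing care — and the place I would expect a reader to stumble — is that one must use throughout the map-theoretic definition of ``atoroidal'' fixed earlier in the paper: with the embedded-torus version certain small Seifert fibered pieces would be atoroidal with infinite fundamental group yet not hyperbolic, breaking the equivalence, whereas with the chosen convention Theorem~\ref{thm:hyp} applies exactly to the atoroidal pieces that are not Seifert fibered.
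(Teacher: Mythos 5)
Your proposal is correct and follows exactly the route the paper takes: the paper gives no separate proof of this theorem, stating only that it is obtained by combining the JSJ Decomposition Theorem with the Elliptization and Hyperbolization Theorems, and your write-up is a faithful and complete fleshing-out of that combination (including the correct observations that boundary-compressibility forces $S^1\times D^2$, that finite fundamental group forces a closed spherical, hence Seifert fibered, manifold, and that uniqueness transfers because hyperbolic pieces are atoroidal). Your closing remark about the map-theoretic versus embedded-torus definition of atoroidal is well taken, matching the caveat the paper itself makes in its definition.
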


\begin{remark}
The Geometrization Conjecture has also been formulated for non-orien\-table $3$-manifolds; we refer to \cite[Conjecture~4.1]{Bon02} for details.
To the best of our knowledge this has not been fully proved yet.
Note though that by (D.\ref{D.nonorientable}), a non-orientable $3$-manifold has infinite fundamental group, i.e., it can not be spherical.
It follows from \cite[Theorem~H]{DL09} that a closed atoroidal $3$-manifold with infinite fundamental group is hyperbolic.
\end{remark}

\index{theorems!Geometrization Theorem}

We finish this subsection with two further theorems related to the JSJ decomposition.
The first theorem says that the JSJ decomposition behaves well under passing to finite covers:

\begin{theorem}\label{thm:jsjlift}
Let $N$ be a compact, orientable, irreducible $3$-manifold with empty or toroidal boundary. Let $N'\to N$ be a finite cover.
Then $N'$ is irreducible and the pre-images of the JSJ tori of $N$ under the projection map are the JSJ tori of $N'$.
Furthermore $N'$ is hyperbolic \textup{(}respectively Seifert fibered\textup{)} if and only if  $N$ is  hyperbolic \textup{(}respectively Seifert fibered\textup{)}.
\end{theorem}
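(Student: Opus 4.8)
\emph{Strategy.} The plan is to reduce the statement to the uniqueness clauses of the JSJ Decomposition Theorem (Theorem~\ref{thm:jsj}) and to the recognition criterion of Proposition~\ref{prop:charpair}, by checking that the preimages of the JSJ tori of $N$ are already in JSJ position in $N'$. I would first deal with irreducibility. If $\pi_1(N)$ is finite then $N$ is closed and, by the Elliptization Theorem (Theorem~\ref{thm:ellips}), spherical, so $N'$ is spherical and hence irreducible. If $\pi_1(N)$ is infinite then, since $N$ is irreducible, the Sphere Theorem (Theorem~\ref{thm:sphere}) yields $\pi_2(N)=0$, and a Hurewicz argument shows that $N$, and therefore also $N'$, is aspherical; hence $\pi_2(N')=0$ and $N'$ has no essential embedded $2$-spheres. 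One then concludes that $N'$ is irreducible by the usual argument: a non-trivial prime decomposition of $N'$ would, after discarding $S^3$-summands (by the Poincar\'e Conjecture, a consequence of Theorem~\ref{thm:ellips}) and $S^1\times S^2$-summands (ruled out by $\pi_2(N')=0$), exhibit $\pi_1(N')$---and hence, by commensurability invariance of the number of ends, $\pi_1(N)$---as a non-trivial free product of infinite groups, contradicting the irreducibility of $N$ via Kneser's Conjecture; alternatively one may invoke Meeks--Simon--Yau.

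\emph{Lifting the JSJ tori.} Let $T_1,\dots,T_k$ be the JSJ tori of $N$, let $p\colon N'\to N$ be the covering, and put $T_i':=p^{-1}(T_i)$. Every component $S$ of $T_i'$ is a finite cover of the torus $T_i$, hence a torus, and $\pi_1(S)\to\pi_1(N')$ is injective, since $\pi_1(T_i)\to\pi_1(N)$ and $\pi_1(N')\to\pi_1(N)$ are; thus the $T_i'$ are disjointly embedded incompressible tori. Cutting $N'$ along $\bigcup_i T_i'$ is the pullback under $p$ of cutting $N$ along $\bigcup_i T_i$, so every resulting piece $M'$ is a finite cover of some JSJ piece $M$ of $N$. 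If $M$ is Seifert fibered, then lifting its foliation by circles and applying Epstein's Theorem shows that $M'$ is Seifert fibered, and one may take the fibration so that its regular fiber maps to a positive multiple of the regular fiber of $M$ on each boundary torus. If $M$ is atoroidal, then $M$ carries an incompressible boundary torus and so is a compact, orientable, irreducible $3$-manifold with toroidal boundary and infinite fundamental group; by the Hyperbolization Theorem (Theorem~\ref{thm:hyp}) $M$ is hyperbolic, hence so is $M'$ (pull back the metric), hence $M'$ is atoroidal. This establishes condition~(1) of Proposition~\ref{prop:charpair} for the collection $T_1',\dots,T_k'$.

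\emph{Minimality and the dichotomy.} For condition~(2) of Proposition~\ref{prop:charpair}, suppose a component $S$ of some $T_i'$ cobounds Seifert fibered pieces $M'_r$ and $M'_s$ of $N'$ (with possibly $r=s$). The JSJ pieces $M_r,M_s$ of $N$ that they cover cannot be atoroidal, for (by the previous paragraph) an atoroidal JSJ piece of $N$ is hyperbolic, and a finite cover of a hyperbolic manifold is hyperbolic, not Seifert fibered; so $M_r,M_s$ are Seifert fibered pieces meeting along the JSJ torus $T_i$, and by minimality of the JSJ collection of $N$ their regular fibers determine distinct slopes in $H_1(T_i)$. Since $H_1(S)\to H_1(T_i)$ is injective with finite cokernel and sends the regular fiber of $M'_r$ (resp.\ $M'_s$) to a positive multiple of the regular fiber of $M_r$ (resp.\ $M_s$), the regular fibers of $M'_r$ and $M'_s$ determine distinct slopes, in particular distinct elements, of $H_1(S)$. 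Proposition~\ref{prop:charpair} now gives that $T_1',\dots,T_k'$ are the JSJ tori of $N'$; when $k=0$ one instead argues directly that $N$, hence $N'$, is hyperbolic or Seifert fibered, so that the JSJ collection of $N'$ is empty. Finally, the equality of JSJ collections shows that $N$ has a JSJ torus if and only if $N'$ does. When neither does, each of $N$ and $N'$ is hyperbolic or Seifert fibered (Theorem~\ref{thm:geom}, spherical manifolds being Seifert fibered), and these alternatives are mutually exclusive, since a Seifert fibered manifold with infinite fundamental group has a normal infinite cyclic subgroup while a finite-volume hyperbolic $3$-manifold group has none. As pulling back the hyperbolic metric sends ``$N$ hyperbolic'' to ``$N'$ hyperbolic'' and lifting the Seifert fibration sends ``$N$ Seifert fibered'' to ``$N'$ Seifert fibered'', both equivalences follow; and when $N$ does have a JSJ torus, both sides of each equivalence are false.

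\emph{Main obstacle.} I expect the crux to be the control of Seifert fibrations under finite covers in both directions: the fact that a finite cover of a Seifert fibered manifold is Seifert fibered via a fibration lifting the given one (so that regular-fiber slopes on boundary tori are controlled), and the fact that an atoroidal JSJ piece of $N$ cannot acquire a Seifert fibered finite cover. These are precisely what make the slope comparison in condition~(2) of Proposition~\ref{prop:charpair}---and hence the minimality of $\{T_1',\dots,T_k'\}$---go through. The irreducibility of $N'$, although elementary in spirit, is the other step that quietly requires the full Poincar\'e Conjecture.
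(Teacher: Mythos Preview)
Your approach is essentially that of the paper: it states that irreducibility of $N'$ follows from the Equivariant Sphere Theorem of Meeks--Simon--Yau (which you list as your alternative), and that the remaining claims are ``straightforward consequences of Proposition~\ref{prop:charpair} and the Hyperbolization Theorem,'' exactly the two ingredients you use. Your primary irreducibility argument via asphericity and Kneser's Conjecture is also fine, but to pass from ``$\pi_1(N')$ is a nontrivial free product of infinite groups'' to a splitting of $\pi_1(N)$ you should insert Stallings' Ends Theorem (ends are a commensurability invariant, so $\pi_1(N)$ has infinitely many ends, hence splits over a finite---therefore trivial---subgroup); the paper sidesteps this by citing \cite{MSY82} directly.
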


The fact that $N'$ is again irreducible follows from the Equivariant Sphere Theorem (see \cite[p.~647]{MSY82} and see also \cite{Duw85,Ed86,JR89}).
(The assumption that $N$ is orientable is necessary, see, e.g., \cite[Theorem~5]{Row72}.) The other statements are straightforward consequences of Proposition~\ref{prop:charpair} and the Hyperbolization Theorem.  Alternatively we  refer to  \cite[p.~290]{MeS86} and \cite{JR89} for details.

\medskip

Finally, the following theorem, which is an immediate consequence of Proposition~\ref{prop:charpair},  often allows us to reduce proofs to the closed case:

\begin{theorem}\label{thm:jsjdouble}
Let $N$ be a compact, orientable, irreducible $3$-manifold with non-trivial toroidal boundary. We denote the boundary tori by $S_1,\dots,S_k$ and we denote the JSJ tori by $T_1,\dots,T_l$. Let $M=N\cup_{\partial N}N$ be the double of $N$ along the boundary.  Then the two copies of $T_i$  for $i=1,\dots,l$ together with the $S_i$ which bound hyperbolic components are the JSJ tori for $M$.
\end{theorem}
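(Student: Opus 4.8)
The plan is to apply Proposition~\ref{prop:charpair} to the closed $3$-manifold $M$ and the collection $\mathcal{T}$ of (disjointly embedded) tori consisting of both copies of each JSJ torus $T_1,\dots,T_l$ of $N$ together with those boundary tori $S_j$ that are boundary components of a \emph{hyperbolic} JSJ component of $N$. Before invoking the proposition one checks that $M$ is irreducible: assuming, as we may, that $N$ is not a solid torus (the excluded case $N\cong S^1\times D^2$, with $M\cong S^1\times S^2$, being the one genuinely degenerate situation), every boundary torus of $N$ is incompressible in $N$ by the Loop Theorem (Theorem~\ref{thm:loop}) and the irreducibility of $N$, and the double of an irreducible $3$-manifold with incompressible boundary is irreducible by a standard innermost-circle argument. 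Since $M$ is also compact and orientable, the hypotheses of Proposition~\ref{prop:charpair} on the ambient manifold are met.

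First I would identify the pieces obtained by cutting $M$ along $\mathcal{T}$. By the Geometrization Theorem (Theorem~\ref{thm:geom}) every JSJ component of $N$ is hyperbolic or Seifert fibered. If $X$ is a hyperbolic JSJ component, then every boundary torus of $X$ lies in $\mathcal{T}$---its adjacent JSJ tori by construction, and its boundary tori in $\partial N$ precisely because $X$ is hyperbolic---so $X$ and its mirror copy appear in $M$ as two disjoint copies of $X$, which are atoroidal. If $Y$ is a Seifert fibered JSJ component, then cutting $M$ along $\mathcal{T}$ cuts along the JSJ tori in $\partial Y$ but not along $\partial_0 Y:=\partial Y\cap\partial N$, so the resulting piece is the double of $Y$ along $\partial_0 Y$ (and equals $Y$ when $\partial_0 Y=\emptyset$). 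Since $\partial_0 Y$ is saturated by regular fibers of the Seifert fibration of $Y$ and the doubling is by the identity, the Seifert fibration extends across the doubling locus, so this piece is again Seifert fibered. This establishes condition~(1) of Proposition~\ref{prop:charpair}.

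For condition~(2), a torus of $\mathcal{T}$ of the form $S_j$ separates two copies of a hyperbolic JSJ component, so there is nothing to verify there. A copy of a JSJ torus $T_i$ is adjacent in $M$ to the pieces arising from the two (possibly coinciding) JSJ components $A,B$ of $N$ that $T_i$ separates; if both are Seifert fibered, then---since doubling along $\partial_0$ leaves the fibration unchanged near the $T_i$-part of the boundary---the regular fibers of these two pieces along this copy of $T_i$ agree in $H_1(T_i)$ with the regular fibers of $A$ and of $B$ along $T_i$ in $N$. These two slopes are distinct, because $T_1,\dots,T_l$ is the \emph{minimal} collection realizing the JSJ decomposition of $N$: were they equal, the Seifert fibrations of $A$ and $B$ would match along $T_i$, the fibration would extend over $T_i$, and $T_i$ could be deleted, contradicting minimality. (The same argument covers the case $A=B$ of a single Seifert fibered piece glued to itself along $T_i$; it is precisely condition~(2) of Proposition~\ref{prop:charpair} applied to $N$.) Hence Proposition~\ref{prop:charpair} applies to $M$ and $\mathcal{T}$, and shows that $\mathcal{T}$ is the collection of JSJ tori of $M$, as asserted.

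The point I expect to be most delicate is the bookkeeping in condition~(2): one must carefully track the regular fiber of a doubled Seifert fibered piece on a boundary torus that is a copy of some $T_i$, confirm that it is the slope inherited from the Seifert fibration of the adjacent JSJ component of $N$, and make sure the minimality argument transfers verbatim---including when the two sides of $T_i$ lie on a single JSJ component. Establishing the irreducibility of $M$ and isolating the solid-torus exception are the only other points requiring comment.
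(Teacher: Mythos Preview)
Your proposal is correct and follows exactly the approach the paper intends: the paper merely states that the theorem ``is an immediate consequence of Proposition~\ref{prop:charpair}'' without spelling out any details, and you have supplied the verification of conditions~(1) and~(2) along with the irreducibility of the double and the solid-torus caveat. Your treatment is more thorough than the paper's one-line justification, but the route is identical.
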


\subsection{The Geometric Decomposition Theorem} \label{section:jsj}
The decomposition in Theorem~\ref{thm:geom} can be viewed as somewhat \emph{ad hoc} (`Seifert fibered vs.~hyperbolic').
The geometric point of view introduced by Thurston gives rise to an elegant reformulation of Theorem~\ref{thm:geom}.
Thurston introduced the notion of a \emph{geometry of a $3$-manifold} and of a \emph{geometric $3$-manifold}.
 We will now give a quick summary of the definitions and the most relevant results. We refer to the expository papers by Scott \cite{Sco83a} and  Bonahon \cite{Bon02} and to Thurston's book \cite{Thu97} for proofs and further references.



A \emph{$3$-dimensional geometry} $X$ is a smooth, simply connected $3$-manifold which is equipped with a smooth, transitive action of a Lie group $G$ by diffeo\-mor\-phisms on $X$, with compact point stabilizers.  The Lie group $G$ is called the \emph{group of isometries} of~$X$.  A \emph{geometric structure} on a $3$-manifold $N$ is a diffeomorphism from the interior of $N$ to $X/\pi$, where $\pi$ is a discrete subgroup of $G$ acting freely on~$X$.  The geometry~$X$ is said to \emph{model} $N$, and $N$ is said to \emph{admit an $X$-structure}, or just to be an \emph{$X$-manifold}.   There are also two technical conditions, which rule out redundant examples of geometries: the group of isometries is required to be maximal among Lie groups acting transitively on $X$ with compact point stabilizers; and  $X$ is required to have a compact model.

\index{geometry!$3$-dimensional}
\index{$3$-manifold!geometric structure}



Thurston showed that, up to a certain equivalence,  there exist precisely eight $3$-dimensional geometries that model compact $3$-manifolds. These geometries are: the $3$-sphere, Euclidean $3$-space, hyperbolic $3$-space, $S^2\times \R$, $\H^2\times \R$, the universal cover $\widetilde{\sl(2,\R)}$ of $\sl(2,\R)$, and two further geometries called $\Nil$ and $\Sol$.  We refer to \cite{Sco83a} for details.  Note that spherical and hyperbolic manifolds are precisely the type of manifolds we introduced in the previous section.  (It is well known that a $3$-manifold equipped with a complete spherical metric has to be closed.)  A $3$-manifold is called \emph{geometric} if it is an $X$-manifold for some geometry~$X$.

\index{$3$-manifold!geometric}

\medskip

The following theorem summarizes the relationship between Seifert fibered manifolds and geometric $3$-manifolds.

\begin{theorem}\label{thm:geoms}
Let $N$ be a compact, orientable $3$-manifold with empty or toroidal boundary.
We assume that $N\ne S^1\times D^2$, $N\ne S^1\times S^1\times I$, and that $N$ does not equal the twisted $I$-bundle over the Klein bottle
\textup{(}i.e., the total space of the unique non-trivial interval bundle over the Klein bottle\textup{)}. Then
$N$  is Seifert fibered if and only if $N$ admits a geometric structure based on one of the following geometries:
the $3$-sphere, Euclidean $3$-space,  $S^2\times \R$, $\H^2\times \R$, $\widetilde{\sl(2,\R)}$, $\Nil$.
\end{theorem}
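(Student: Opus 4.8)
The plan is to prove the two directions of the equivalence separately, in each direction going through the six geometries one at a time and reducing to the standard structure theory of Seifert fibrations and $2$-orbifolds, as developed in Scott \cite{Sco83a}. I would begin by disposing of the three exceptional manifolds: each of $S^1\times D^2$, $S^1\times S^1\times I$ and the twisted $I$-bundle over the Klein bottle is Seifert fibered and has interior a flat $3$-manifold, hence admits an $\mathbb{E}^3$-structure, so in fact both sides of the equivalence hold for them; they are singled out in the statement only because they destroy the uniqueness of the geometry type, so I would say no more about them.

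For the implication ``geometric $\Rightarrow$ Seifert fibered'', I would use that four of the six model spaces carry a canonical $1$-dimensional foliation invariant under their full isometry group, which therefore descends to every quotient $X/\pi$: the foliation by the $\R$-fibers in $S^2\times\R$ and in $\H^2\times\R$; the foliation by the fibers of the bundle $\widetilde{\sl(2,\R)}\to\H^2$; and the foliation by cosets of the center of the Heisenberg group $\Nil$. For $S^3$ one can simply invoke the fact recalled above that spherical manifolds are Seifert fibered. The Euclidean case is the only subtle one, since $\Isom(\mathbb{E}^3)$ preserves no line field: here I would use the Bieberbach theorems to find in $\pi$ a finite-index translation subgroup $\Z^3$ and to realize the holonomy of $\pi$ as a finite group of orthogonal transformations preserving a line, so that $\pi$ preserves a parallel line field after all. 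In each case one then checks that the induced foliation of $X/\pi$ is a genuine Seifert fibration, i.e.\ that every leaf is a circle possessing a standard fibered-solid-torus neighborhood; this is a local computation near the finitely many short leaves.

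For the converse, let $N$ be Seifert fibered and not one of the three exceptions, with base $2$-orbifold $B$ and, when $\partial B=\varnothing$, Euler number $e\in\Q$. The engine of the argument is the classification of Seifert fibered $3$-manifolds according to the pair $\bigl(\sign(\chi^{\mathrm{orb}}(B)),\ e=0\text{ or }e\neq0\bigr)$, which matches the six geometries: $\chi^{\mathrm{orb}}>0$ gives $S^3$ if $e\neq0$ and $S^2\times\R$ if $e=0$; $\chi^{\mathrm{orb}}=0$ gives $\Nil$ if $e\neq0$ and $\mathbb{E}^3$ if $e=0$; $\chi^{\mathrm{orb}}<0$ gives $\widetilde{\sl(2,\R)}$ if $e\neq0$ and $\H^2\times\R$ if $e=0$; and there is a parallel picture when $\partial B\neq\varnothing$, where excluding the three exceptions forces $\chi^{\mathrm{orb}}(B)<0$ and the structure is modeled on $\H^2\times\R$ or $\widetilde{\sl(2,\R)}$. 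To build the structure I would equip $B$ with the appropriate $2$-dimensional geometry ($S^2$, $\mathbb{E}^2$ or $\H^2$ according to the sign of $\chi^{\mathrm{orb}}$), invoking the good-orbifold case of $2$-dimensional uniformization --- the bad base orbifolds (teardrops, spindles and their non-orientable analogues) yield only lens spaces and other quotients of $S^3$ or $S^2\times\R$, which are handled directly --- and then lift this, together with the Euler data, to a proper free isometric action of $\pi_1(N)$ on the corresponding $3$-dimensional model whose quotient is diffeomorphic to the interior of $N$.

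The step I expect to be the main obstacle is this last one: producing from the combinatorial Seifert data an honest lattice in $\Isom(X)$. One has to choose the isometric action on the fibers so that it is compatible with the orbifold structure of the base and realizes the prescribed (possibly nonzero) Euler class, and then check that the resulting action is proper and free; concretely this amounts to realizing the fiber extension $1\to\langle h\rangle\to\pi_1(N)\to\pi_1^{\mathrm{orb}}(B)\to1$, with $h$ a regular fiber, inside the corresponding exact sequence of isometry groups of $X$, which is the heart of the matter. The bookkeeping for base orbifolds with boundary, where $e$ is not an invariant of $N$ on its own, is the other delicate point, and is precisely where the three exceptional manifolds --- Seifert fibered over the disc, the annulus and the M\"obius band --- would otherwise interfere.
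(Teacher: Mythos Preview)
The paper does not actually give a proof of this theorem: immediately after the statement it simply says ``We refer to \cite[Theorem~4.1]{Bon02} and \cite[Theorems~2.5,~2.7,~2.8]{Bon02} for the proof and for references (see also \cite[Theorem~5.3]{Sco83a} and \cite[Lecture~31]{FoM10}).'' So there is no in-house argument to compare your proposal against.

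That said, your sketch is exactly the standard argument one finds in Scott \cite{Sco83a}, which is one of the references the paper points to: the six-cell table indexed by $(\operatorname{sign}\chi^{\mathrm{orb}}(B),\,e=0\text{ or }e\neq 0)$ is precisely Scott's organizing principle, and the construction of the lattice in $\operatorname{Isom}(X)$ from the Seifert data, together with the descent of the canonical $1$-dimensional foliation for the converse, is how the cited sources proceed. Your identification of the main technical hurdle --- realizing the central extension $1\to\langle h\rangle\to\pi_1(N)\to\pi_1^{\mathrm{orb}}(B)\to 1$ inside the corresponding extension of isometry groups --- is also accurate. One small point to tighten in the Euclidean case: the claim that the holonomy ``preserves a line'' is not automatic for an arbitrary finite subgroup of $SO(3)$ (the tetrahedral, octahedral and icosahedral groups do not), so you should invoke the classification of closed flat $3$-manifolds to see that only cyclic and $(\Z/2)^2$ holonomy actually occur, and check the $(\Z/2)^2$ (Hantzsche--Wendt) case by hand. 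Your remark on the three excluded manifolds is correct: the biconditional in fact holds for them as well, and they are set aside for reasons of non-uniqueness rather than because the equivalence fails.
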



We refer to \cite[Theorem~4.1]{Bon02} and \cite[Theorems~2.5,~2.7,~2.8]{Bon02} for the proof and for references (see also \cite[Theorem~5.3]{Sco83a} and \cite[Lecture~31]{FoM10}).  (Note that in \cite{Bon02} the geometries $\widetilde{\sl(2,\R)}$ and $\Nil$  are referred to as $H^2 \wti{\times} E^1$ and $E^2 \wti{\times} E^1$, respectively.)

\medskip


By a \emph{torus bundle} we mean  an oriented $3$-manifold which is a fiber  bundle over $S^1$ with fiber the $2$-torus $T$.  The action of the monodromy on $H_1(T;\Z)$ defines an element in $\op{SAut}(H_1(T;\Z))\cong \sl(2,\Z)$. Note that if $A\in \sl(2,\Z)$ is a matrix, then it follows from an elementary linear algebraic argument that one of the following occurs: \index{torus bundle}
\bn
\item $A^n=\id$ for some $n\in \{1,2,4,6\}$,  or
\item $A$ is non-diagonalizable but has eigenvalue $\pm 1$, or
\item $A$ has two distinct real eigenvalues.
\en
In the first case we say that the matrix is \emph{periodic}, in the second case we say it is \emph{nilpotent} and in the remaining case we say it is \emph{Anosov}.
If $N$ is a torus bundle with monodromy $\varphi$, then  $N$ is Seifert fibered if and only if $\varphi_*\in \op{SAut}(H_1(T;\Z))$  is periodic or nilpotent (see \cite{Sco83a}).

The following theorem (see \cite[Theorem~5.3]{Sco83a}  or \cite{Dub88}) now gives a complete classification of $\Sol$-manifolds.  \index{monodromy!abelian} \index{monodromy!nilpotent} \index{monodromy!Anosov}

\begin{theorem} \label{thm:sol}
Let $N$ be a compact, orientable $3$-manifold. Then $N$ is a $\Sol$-manifold if and only if one of the following occurs:
\begin{itemize}
\item[(1)] $N$ is a torus bundle with Anosov monodromy, or
\item[(2)] $N$ is a double of the twisted $I$-bundle $M$ over the Klein bottle with Anosov gluing map, i.e.,
\[ N= M\times 1\cup_{\varphi} M\times 2\]
such that the map $H_2(\partial M\times 2;\Z)=H_1(\partial M\times 1;\Z)\xrightarrow{\varphi_*} H_1(\partial M\times 2;\Z)$ is Anosov.
\end{itemize}
\end{theorem}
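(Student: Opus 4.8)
\emph{Plan of attack.} The plan is to move between the metric statement ``$N$ is a $\Sol$-manifold'' and the topological dichotomy (1)/(2) via the structure theory of discrete cocompact subgroups of $\Isom(\Sol)$, treating the two implications in turn.

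\emph{Forward direction.} Suppose $N$ is a $\Sol$-manifold; then $N$ is closed and $\pi:=\pi_1(N)$ is a torsion-free cocompact discrete subgroup of $G:=\Isom(\Sol)$. First I would record the structure of $G$: its identity component $G^\circ$ is $\Sol$ itself acting by left translations, $[G:G^\circ]=8$, and the orientation-preserving subgroup $G^+$ satisfies $G^+/G^\circ\cong\Z/2\times\Z/2$; in suitable coordinates $(x,y,z)$ on $\Sol=\R^2\rtimes_\phi\R$ with $\phi(t)=\op{diag}(e^t,e^{-t})$, the point stabilizers act by the obvious sign changes and the swap $(x,y,z)\mapsto(y,x,-z)$. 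Then $\pi_0:=\pi\cap\Sol$ is a cocompact lattice in $\Sol$ of finite index in $\pi$. Using $[\Sol,\Sol]=\R^2$ one shows $L:=\pi_0\cap\R^2\cong\Z^2$, that $\pi_0/L\cong\Z$, and that a generator conjugates by $\phi(t_0)$ for some $t_0\neq0$; preserving $L$ forces $\phi(t_0)|_L$ to be a matrix $A\in\gl_2(\Z)$ with $\det A=1$ and $\op{tr}A=2\cosh t_0>2$, so $A$ is Anosov and $\Sol/\pi_0$ is the torus bundle with monodromy $A$. Now $L$ is the Fitting subgroup of $\pi_0$ (its unique maximal normal abelian subgroup), hence characteristic; after replacing $\pi_0$ by the intersection of its $\pi$-conjugates---still a lattice of this form---we may take $L\triangleleft\pi$. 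Set $Q:=\pi/L$, an infinite virtually cyclic group. A case analysis now uses two elementary facts: a torsion-free extension of a nontrivial finite group by $\Z^2$ is a Klein-bottle group, and a torsion-free virtually-$\Z^2$ group contains $\Z^2$ with index at most $2$; together with the orientability of $N$ (which excludes Klein-bottle bundles over $S^1$), this forces $Q$ to be either $\Z$ or the infinite dihedral group. If $Q\cong\Z$ then $\pi=\Z^2\rtimes_B\Z$; orientability gives $\det B=1$, and a power of $B$ is conjugate to the Anosov matrix $A$, so $B$ has distinct real eigenvalues---case (1). If $Q\cong D_\infty=\Z/2*\Z/2$ then $\pi=\pi_1(K)*_{\Z^2}\pi_1(K)$ with the edge group the canonical index-$2$ subgroup of each Klein-bottle group, which geometrically identifies $N$ with the union of two twisted $I$-bundles over the Klein bottle along their boundary; the Anosov condition on the index-$2$ torus-bundle cover $\Sol/\pi_0$ then translates into the gluing being Anosov---case (2).

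\emph{Reverse direction.} In case (1), let $A\in\sl_2(\Z)$ be Anosov with real eigenvalues $\mu,1/\mu$, $\mu\neq\pm1$. If $\op{tr}A>2$ both are positive: take $L\subset\R^2$ spanned by eigenvectors of $A$ and $t_0:=\log\mu$, and send $\pi_1(N)=\Z^2\rtimes_A\Z$ into $\Sol$ by $\Z^2\to L$, generator $\mapsto(0,t_0)$; this is a cocompact lattice with $\Sol/\pi_1(N)=N$. If $\op{tr}A<-2$ the eigenvalues are negative: send the generator instead to $(0,t_0)\cdot\rho$ with $t_0=\log|\mu|$ and $\rho\colon(x,y,z)\mapsto(-x,-y,z)$ the orientation-preserving point-stabilizer isometry, again obtaining a $\Sol$-lattice (now using $G^+\setminus G^\circ$). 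In case (2), $N=M\times1\cup_\varphi M\times2$ has an evident double cover $\tilde N$, obtained by replacing each copy of $M$ by its double cover $T^2\times I$ (the one pulled back from the orientation double cover $T^2\to K$); gluing the two copies of $T^2\times I$ along their ends yields a torus bundle over $S^1$ with monodromy $\varphi_*^{-1}J\varphi_*J$, where $J=\op{diag}(1,-1)$ is the action of the Klein-bottle generator on $\pi_1(\partial M)=\Z^2$, and a one-line computation gives $\op{tr}(\varphi_*^{-1}J\varphi_*J)=2+4\det(\varphi_*)\,qr$ for $\varphi_*=\bp p&q\\r&s\ep$; the hypothesis that $\varphi$ is an Anosov gluing is precisely that this monodromy is Anosov, so by case (1) $\tilde N$ is a $\Sol$-manifold. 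Finally, $N$ is closed, orientable, aspherical (it is Haken, assembled from the aspherical pieces $M$ along the $\pi_1$-injective torus) and finitely covered by a $\Sol$-manifold, so the Geometrization Theorem (Theorem~\ref{thm:geom}) forces $N$ itself to be a $\Sol$-manifold; alternatively one extends the lattice embedding $\pi_0\hookrightarrow\Sol$ directly to $\pi_1(N)\hookrightarrow\Isom(\Sol)$ by sending a lift of the extra $\Z/2$ to a suitable combination of a translation and a point-stabilizer reflection.

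\emph{Main obstacle.} The crux is the case analysis in the forward direction: correctly identifying $\Isom(\Sol)$ together with its finite point stabilizers, and then showing that torsion-freeness and orientability pin the virtually cyclic quotient $\pi/L$ down to exactly $\Z$ or $D_\infty$---excluding all other virtually cyclic groups and matching the $D_\infty$ case with the double of the twisted $I$-bundle over the Klein bottle rather than, say, a Klein-bottle bundle over the circle. The reverse direction is comparatively routine, the only delicate points being the use of the finite part of $\Isom(\Sol)$ when $\op{tr}A<-2$ and the bookkeeping in the double-cover monodromy computation in case (2).
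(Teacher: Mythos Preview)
The paper does not prove this theorem: it simply states the result and cites \cite[Theorem~5.3]{Sco83a} and \cite{Dub88}. So there is no proof in the paper to compare against; you have supplied an argument where the paper supplies only references.

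Your approach---classify torsion-free cocompact lattices $\pi\subset\Isom(\Sol)$ by first intersecting with $\Sol\cong G^\circ$ to obtain a lattice $\pi_0$ with $\pi_0\cap\R^2\cong\Z^2$ and Anosov quotient action, then analyze the virtually cyclic quotient $\pi/L$ using torsion-freeness and orientability---is exactly the standard route taken in Scott's survey. The reverse direction via explicit lattice construction (using the $-\id$ symmetry of $\Sol$ when $\op{tr}A<-2$) is also the expected argument. Two small remarks. First, the statement in the paper that ``$\varphi_*$ is Anosov'' is a condition on a map between the first homologies of two \emph{different} boundary tori, so it implicitly presupposes the canonical identifications coming from the Klein-bottle structure on each $M\times i$; your monodromy bookkeeping in case~(2) of the reverse direction is correct once those identifications are fixed, but this deserves to be said explicitly rather than left inside a ``one-line computation''. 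Second, invoking the full Geometrization Theorem to pass from the $\Sol$ structure on the double cover $\tilde N$ back to $N$ is anachronistic for a result that predates Perelman; your alternative of directly extending the lattice embedding $\pi_0\hookrightarrow\Sol$ to $\pi\hookrightarrow\Isom(\Sol)$ is the honest way to finish and is not hard once you have identified the $D_\infty$ quotient.
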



The following is now the `geometric version' of Theorem~\ref{thm:geom} (see \cite[Conjecture~2.2.1]{Mor05} and \cite[p.~5]{FoM10}).

\index{decomposition!geometric}
\index{theorems!Geometric Decomposition Theorem}

\begin{theorem}\label{thm:geom2}  \textbf{\emph{(Geometric Decomposition Theorem)}}
Let $N$ be a compact, orientable, irreducible $3$-manifold with empty or toroidal boundary.
We assume that $N\ne S^1\times D^2$, $N\ne S^1\times S^1\times I$, and that $N$ does not equal the twisted $I$-bundle over the Klein bottle. Then there exists a  collection of disjointly embedded  incompressible
surfaces $S_1,\dots,S_k$ which are either tori or Klein bottles,  such that each component of  $N$ cut along $S_1\cup \dots \cup S_k$  is geometric.
Furthermore, any such collection with a minimal number of components is unique up to isotopy.
\end{theorem}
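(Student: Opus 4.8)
The plan is to deduce the Geometric Decomposition Theorem from the Geometrization Theorem (Theorem~\ref{thm:geom}) by translating the ``Seifert fibered vs.~hyperbolic'' dichotomy into the language of the eight geometries, with a little extra care about those Seifert pieces whose natural geometric model forces the decomposing surface to be a Klein bottle rather than a torus. First I would apply Theorem~\ref{thm:geom} to obtain a minimal collection of disjointly embedded incompressible tori $T_1,\dots,T_k$ so that each component of $N$ cut along $\bigcup T_i$ is hyperbolic or Seifert fibered. The hyperbolic pieces are already geometric, so the work is entirely on the Seifert fibered pieces. For each such piece $M$, one of the following holds: either $M$ is one of the three excluded small manifolds $S^1\times D^2$, $S^1\times S^1\times I$, or the twisted $I$-bundle over the Klein bottle, or else by Theorem~\ref{thm:geoms} $M$ admits a geometric structure modeled on $S^3$, $E^3$, $S^2\times\R$, $\H^2\times\R$, $\widetilde{\sl(2,\R)}$, or $\Nil$. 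In the former two cases, because $N$ itself is not one of these three manifolds, the exceptional piece must be glued to something along a JSJ torus; I would handle each of the three exceptional types separately, observing that $S^1\times D^2$ cannot be a JSJ piece of an irreducible manifold with incompressible JSJ tori unless $N=S^1\times D^2$ (excluded), that $S^1\times S^1\times I$ as a JSJ piece would be absorbable into an adjacent piece contradicting minimality, and that the twisted $I$-bundle over the Klein bottle $K$ is itself a geometric $E^3$-manifold, so it causes no problem. Thus after this bookkeeping, every piece in the decomposition along the $T_i$ is geometric.

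Next I would address uniqueness and the appearance of Klein bottles. The subtlety is that the canonical minimal tori of Theorem~\ref{thm:geom} may include a torus $T_i$ that is the orientation double cover of an embedded one-sided Klein bottle $S$ in $N$, with $\nu S$ a twisted $I$-bundle over $K$; replacing $T_i$ by $S$ can reduce the number of decomposing surfaces. So the geometric decomposition is obtained from the JSJ torus decomposition by performing exactly these replacements: whenever two adjacent Seifert pieces are glued along a $T_i$ that bounds a twisted $I$-bundle neighborhood of a Klein bottle on one side (equivalently, when $T_i$ is parallel to the boundary of such a twisted $I$-bundle contained in one of the Seifert pieces), cut along the core Klein bottle $S$ instead and amalgamate the twisted $I$-bundle into the adjacent geometric piece — this amalgamation stays geometric precisely because the relevant geometries ($E^3$, $\Nil$, $\H^2\times\R$, $\widetilde{\sl(2,\R)}$) are preserved under gluing on a twisted $I$-bundle over $K$ along the fiber-compatible torus, which is where Theorem~\ref{thm:geoms} and the classification of $\Sol$-manifolds (Theorem~\ref{thm:sol}) are invoked. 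Minimality of the resulting surface collection then follows from minimality of the JSJ tori together with the observation that each Klein-bottle replacement strictly decreases the count, and two different minimal geometric collections pull back to isotopic JSJ torus systems, giving uniqueness up to isotopy.

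The main obstacle I expect is precisely the interface between the two formulations: carefully identifying, among the JSJ tori, exactly which ones can and must be replaced by Klein bottles, and verifying that after such replacements the adjacent pieces remain geometric and the surface collection is genuinely minimal. This requires knowing the complete list of geometric $3$-manifolds that contain an embedded one-sided incompressible Klein bottle (these are exactly certain $E^3$-, $\Nil$-, $\H^2\times\R$-, $\widetilde{\sl(2,\R)}$- and $\Sol$-manifolds, and the twisted $I$-bundle over $K$ itself), and checking the gluing compatibility of Seifert fiberings across such a Klein bottle using Proposition~\ref{prop:charpair}. The rest — that hyperbolic and Seifert pieces are geometric, and that the small exceptional manifolds do not arise as proper JSJ pieces under the stated hypotheses — is routine given Theorems~\ref{thm:geom}, \ref{thm:geoms} and \ref{thm:sol}. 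I would cite \cite[Theorem~4.1]{Bon02} and \cite{Sco83a} for the precise geometric-structure statements and \cite{Dub88} for the $\Sol$ case.
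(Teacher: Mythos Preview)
Your overall strategy---start from the JSJ tori of Theorem~\ref{thm:geom}, deal with the three excluded small Seifert pieces, then swap certain tori for Klein bottles---is the same as the paper's. But several of the key steps are mis-stated, and one of them is a genuine gap.

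First, the twisted $I$-bundle over the Klein bottle is \emph{not} geometric in the sense used here: a geometric $3$-manifold with nonempty toroidal boundary must have geometry $\H^2\times\R$ or $\H^3$ (see the discussion after Table~\ref{tablegeoms}), and this manifold is neither. So ``it causes no problem'' is exactly backwards---this piece is the whole reason Klein bottles enter the statement. Second, your description of the Klein-bottle replacement is off. When the JSJ torus $T_i$ bounds a twisted $I$-bundle $M$ over a Klein bottle $K$, that $M$ \emph{is} a regular neighborhood $\nu K$; cutting $N$ along $K$ yields $N\setminus\operatorname{int}(\nu K)$, which simply deletes $M$ from the list of pieces and leaves the adjacent piece unchanged with torus boundary $T_i=\partial\nu K$. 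There is no ``amalgamation'' and no need to check that any glued-up piece remains geometric. Consequently your minimality argument is wrong too: replacing $T_i$ by $K$ does not decrease the number of decomposing surfaces (it is one-for-one); what it does is eliminate the one non-geometric piece. Finally, for $S^1\times S^1\times I$: it is not that it can be absorbed into a neighbour, but that if it survives in a \emph{minimal} JSJ collection and $N\ne T^2\times I$, then both boundary tori of this piece must be glued to each other, so $N$ is a torus bundle with non-trivial JSJ and hence a $\Sol$-manifold by Theorem~\ref{thm:sol}---already geometric with $k=0$. Once you correct these three points (and note, using the multiple Seifert fiberings of the twisted $I$-bundle over $K$, that the piece on the other side of $T_i$ must be hyperbolic rather than Seifert), the existence argument goes through exactly as in the paper.
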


We will quickly outline the existence of such a decomposition, assuming Theorems~\ref{thm:geom},~\ref{thm:geoms} and~\ref{thm:sol}.

\begin{proof}
Let $N$ be an orientable, irreducible $3$-manifold with empty or toroidal boundary such that $N\ne S^1\times D^2$, $N\ne S^1\times S^1\times I$, and such that $N$ does not equal the twisted $I$-bundle over the Klein bottle.
 By Theorem~\ref{thm:geom} there exists a minimal collection of disjointly embedded  incompressible tori $T_1,\dots,T_k$ such that each component of $N$ cut along $T_1\cup \dots \cup T_k$  is either hyperbolic or Seifert fibered.
 We denote the components of $N$ cut along $T_1\cup \dots \cup T_k$ by $M_1,\dots,M_r$.
 Note that $M_i\ne S^1\times D^2$ since the JSJ tori are incompressible and since $N\ne S^1\times D^2$.
 Now suppose that one of the $M_i$ is $S^1\times S^1\times I$. By the minimality of the number of tori and by our assumption that $N\ne S^1\times S^1\times I$, it follows easily
 that $N$ is a torus bundle with a non-trivial JSJ decomposition. By Theorem~\ref{thm:sol} and the discussion preceding it we see that $N$ is a $\Sol$ manifold, hence already geometric.

In view of  Theorem~\ref{thm:sol} we can  assume that $N$ is not the double of the twisted $I$-bundle over the Klein bottle.
For any $i$ such that the JSJ torus $T_i$ bounds a twisted $I$-bundle over the Klein bottle we now replace $T_i$ by the Klein bottle which is the core of the twisted $I$-bundle.

It is now straightforward to verify (using Theorem~\ref{thm:geoms}) that  the resulting collection of tori and Klein bottles decomposition has the required properties.
\end{proof}

\begin{remark}
The proof of Theorem~\ref{thm:geom2} also shows  how to obtain the decomposition postulated by Theorem~\ref{thm:geom2} from the decomposition given by Theorem~\ref{thm:geom}. Let $N$ be a compact, orientable, irreducible $3$-manifold with empty or toroidal boundary.
If $N$ is a $\Sol$-manifold, then $N$ has one JSJ torus, namely a surface fiber, but $N$ is geometric.
Now suppose that $N$ is not a $\Sol$-manifold.
Denote by  $T_1,\dots,T_l$ the JSJ tori of $N$. We assume that they are ordered
such that $T_1,\dots,T_r$ are precisely the tori which do not bound twisted $I$-bundles over a Klein bottle.
For $i={r+1},\dots,l$, each $T_i$ cobounds a twisted $I$-bundle over a Klein bottle $K_i$ and a hyperbolic JSJ component.
The decomposition of Theorem~\ref{thm:geom2} is then given by $T_1\cup \dots \cup T_{r}\cup K_{r+1}\cup \dots \cup K_l$.
\end{remark}

\begin{remark}
Let $\Sigma$ be a compact surface. We denote by $M(\Sigma)$ the mapping class group
\index{mapping class group}
of $S$, that is, the group of isotopy classes of orientation preserving self-diffeo\-mor\-phisms of $\Sigma$.
If $\Sigma$ is a torus, then $M(\Sigma)$ is canonically isomorphic to $\op{SAut}(H_1(\Sigma;\Z))$, see, e.g., \cite[Theorem~2.5]{FaM12}.
In the discussion before Theorem~\ref{thm:sol} we saw that the elements of $\op{SAut}(H_1(\Sigma;\Z))$ fall naturally into three distinct classes.
If $\chi(\Sigma)<0$ then
the Nielsen--Thurston Classification Theorem
\index{theorems!Nielsen--Thurston Classification Theorem}
says that a similar trichotomy appears for $M(\Sigma)$.
More precisely,  any class $f\in M(\Sigma)$ is either
\bn
\item periodic, i.e., $f$ is represented by $\varphi$ with $\varphi^n=\id_\Sigma$ for some $n\geq 1$, or
\index{surface self-diffeomorphism!periodic}
\item pseudo-Anosov, i.e., there exists $\varphi\colon \Sigma \to \Sigma$ which represents $f$ and a pair of transverse measured foliations and a $\l>1$ such that $\varphi$ stretches one measured foliation
 by  $\l$ and the other one by $\l^{-1}$, or
 \index{surface self-diffeomorphism!pseudo-Anosov}
\item reducible, i.e.\
there exists $\varphi\colon \Sigma \to \Sigma$ which represents $f$ and a minimal non-empty embedded $1$-manifold $\G$ in $\Sigma$
with a $\varphi$-invariant tubular neighborhood $\nu \G$ such that on each $\varphi$-orbit of $\Sigma\setminus \nu \G$ the restriction of $\varphi$
is either finite order or pseudo-Anosov.
\index{surface self-diffeomorphism!reducible}
\en
We refer to \cite{Nie44,BlC88,Thu88}, \cite[Chapter~13]{FaM12}, \cite{FLP79a,FLP79b} and \cite[Theorem~2.15]{CSW11} for details,
to \cite[Theorem~1]{Iva92} for an extension, and to  \cite{Gin81,Milb82,HnTh85} for the connection between the work of Nielsen and Thurston.  Thurston also determined the geometric structure of
the mapping torus $N$ of $\psi\colon \Sigma\to \Sigma$ in terms of $[\psi]\in M(\Sigma)$ as follows (see \cite{Thu86c} and \cite{Ot96,Ot01}).
\bn
\item If $[\psi]$ is periodic, then $N$ admits an $\H^2\times \R$ structure.
\item If $[\psi]$ is pseudo-Anosov, then $N$ is hyperbolic.
\item If $[\psi]$ is reducible, then $N$ admits a non-trivial JSJ decomposition
where the JSJ tori are given by the $\varphi$-orbits of the $1$-manifold $\G$, here $\varphi$ and $\G$ are as in the definition of a reducible element in the mapping class group.
\en
\end{remark}

\begin{remark}
A `generic' $3$-manifold is hyperbolic. This statement can be made precise in various ways.
\bn
\item Let $N$ be a hyperbolic $3$-manifold with one boundary component.
Thurs\-ton's Hyperbolic Dehn Surgery Theorem  \cite{Thu79} \index{theorems!Hyperbolic Dehn Surgery Theorem} says that
at most finitely many Dehn fillings are exceptional, i.e., do  not give hyperbolic $3$-manifolds.
Considerable effort has been expended on computing bounds for the number of exceptional fillings (see, e.g., \cite{Ag00,Ag10a,BGZ01,BCSZ08,FP07,BlH96,Lac00,Ter06,HK05} and the survey papers \cite{Boy02,Gon98}).  Lackenby--Meyerhoff \cite{LaM13} showed that there exist at most 10 Dehn fillings that are not hyperbolic.
\item Maher \cite{Mah11}, Rivin (see \cite[Section~8]{Riv12} and \cite{Riv08,Riv09,Riv10}), Lubotzky--Meiri \cite{LM11},
 Atalan--Korkmaz \cite{AK10} and Malestein--Souto \cite{MlS12} made precise the statement that a generic element in the mapping class group is pseudo-Anosov.
 \item The work of Maher \cite[Theorem~1.1]{Mah10} together with work of Hempel \cite{Hem01} and Kobayashi \cite{Koi88}
 and the Geometrization Theorem implies that `most' closed manifolds produced from
Heegaard splittings of a fixed genus are hyperbolic.
\en
\end{remark}

Before we continue our discussion of geometric $3$-manifolds we introduce a definition. Given a property $\PP$ of groups we say that a group $\pi$ is \emph{virtually $\PP$} if $\pi$ admits a (not necessarily normal) subgroup of finite index that satisfies $\PP$. \index{group!virtually $\PP$}

\medskip

In Table~\ref{tablegeoms} we  summarize some of the key properties of geometric $3$-manifolds.
Given a geometric $3$-manifold, the first column lists the geometry type, the second describes the fundamental group of $N$ and  the third describes the topology of $N$ (or a finite-sheeted cover).

If the geometry is neither $\Sol$ nor hyperbolic, then by Theorem~\ref{thm:geoms} the manifold $N$ is Seifert fibered. One can think of a Seifert fibered manifold as an $S^1$-bundle over an orbifold.
We denote by $\chi$ the orbifold Euler characteristic of the base orbifold and we denote by $e$ the Euler number. We refer to \cite[p.~427 and p.~436]{Sco83a} for the precise definitions.


\begin{table}[htbp]
   \centering
   \begin{tabular}{lllcc} 
      \toprule
 Geometry & Fundamental group & Topology &$ \chi$ & $e$ \\
      \midrule
	Spherical              & $\pi$ is finite & finitely covered by $S^3$ & $>0$ & $\neq 0$\\[1mm]
	$S^2\times \R$         & $\pi=\Z$ or $\pi$ is the & $N$ or a double cover & $>0$ & $=0$\\
	& infinite dihedral group & equals $S^1\times S^2$ & & \\[1mm]
	Euclidean               & $\pi$ is virtually $\Z^3$ & $N$ finitely covered by &0&0\\
	&& $S^1\times S^1\times S^1$ &&\\[1mm]
	Nil                    & $\pi$ is virtually nilpotent & $N$ finitely covered by & 0 & $\ne 0$ \\
& but not virtually $\Z^3$ & a torus bundle with &&\\
&& nilpotent monodromy &&\\[1mm]  
\mbox{Sol}                     & $\pi$ is solvable& $N$ or a double cover&& \\
& but not virtually & is a torus bundle with &&\\
& nilpotent & Anosov monodromy &&\\[1mm]
$\H^2\times \R$                   &  $\pi$ is virtually a & $N$ finitely covered by & $<0$ &0\\
& product $\Z\times F$ with &  $S^1\times \Sigma$ where $\Sigma$ is a &&\\
& $F$ a non-cyclic free group & surface with $\chi(\Sigma)<0$ &&\\[1mm]
$\widetilde{\sl(2,\R)}$         &  $\pi$ is a non-split extension & $N$ finitely covered by &  $<0$ &$\ne 0$\\
& of a non-cyclic & a non-trivial $S^1$-bundle &&\\
& free group $F$ by $\Z$   & over a surface $\Sigma$ with &&\\
&& $\chi(\Sigma)<0$ && \\[1mm]
hyperbolic              & $\pi$ infinite and $\pi$ does &$N$ is atoroidal &&\\
& not contain a non-trivial &&&\\
& abelian normal subgroup &&&\\[1mm]

      \bottomrule \\[2mm]
   \end{tabular}
   \label{tab:booktabs}
   \caption{Geometries of $3$-manifolds.}\label{tablegeoms}
\end{table}

We now give the references for Table~\ref{tablegeoms}.
We refer to \cite[p.~478]{Sco83a} for the last two columns.  For the first three rows we refer to \cite[p.~449,~p.~457,~p.~448]{Sco83a}.  We refer to \cite[p.~467]{Sco83a} for details regarding Nil
and we refer to \cite[Theorem~2.11]{Bon02} and \cite[Theorem~5.3]{Sco83a} for details regarding $\Sol$.
Finally we refer to \cite[p.~459,~p.~462,~p.~448]{Sco83a} for details regarding the last three geometries.
The fact that the fundamental group of a hyperbolic $3$-manifold does not contain a non-trivial abelian normal subgroup will be shown in Theorem~\ref{thm:infcyclicnormal}.

If $N$ is a non-spherical Seifert fibered manifold, then the Seifert fiber subgroup is infinite cyclic and normal in $\pi_1(N)$ (see \cite[Lemma~II.4.2]{JS79} for details). It now follows from the above table that the geometry of a geometric manifold can be read off from its fundamental group. In particular, if a $3$-manifold admits a geometric structure, then the type of that geometric structure is unique (see  also \cite[Theorem~5.2]{Sco83a} and  \cite[Section~2.5]{Bon02}).  Some of these geometries are very rare:  there exist only finitely many $3$-manifolds with  Euclidean  geometry or $S^2\times \R$ geometry \cite[p.~459]{Sco83a}.
Finally, note that the geometry of a geometric $3$-manifold with non-empty boundary  is either $\H^2\times \R$ or hyperbolic.

\subsection{$3$-manifolds with (virtually) solvable fundamental group}\label{section:solv}

The above discussion can be used to classify the abelian, nilpotent and solvable groups which appear as fundamental groups of $3$-manifolds.

\index{$3$-manifold group!solvable}
\index{$3$-manifold group!virtually solvable}

\begin{theorem}\label{thm:solv}\label{thm:virtsolv}
Let $N$ be an  orientable, non-spherical $3$-manifold which is either closed or has toroidal boundary.
Then the following are equivalent:
\begin{itemize}
\item[(1)]  $\pi_1(N)$ is solvable;
\item[(2)]  $\pi_1(N)$ is virtually solvable;
\item[(3)] $N$ is one of the following six types of manifolds:
\begin{itemize}
\item[(a)] $N=\rp^3\# \rp^3$;
\item[(b)] $N=S^1\times D^2$;
\item[(c)] $N=S^1\times S^2$;
\item[(d)] $N$ admits a finite solvable cover which is a torus bundle;
\item[(e)] $N=S^1\times S^1\times I$, where $I$ is the standard interval $I=[0,1]$;
\item[(f)] $N$ is the  twisted $I$-bundle over the Klein bottle.
\end{itemize}
\end{itemize}
\end{theorem}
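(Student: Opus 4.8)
The equivalence $(1)\Rightarrow(2)$ is trivial, and $(3)\Rightarrow(1)$ is a routine check that each of the six listed manifolds has solvable fundamental group: $\pi_1(\rp^3\#\rp^3)\cong\Z/2 * \Z/2$ is infinite dihedral hence solvable; $\pi_1(S^1\times D^2)\cong\Z$ and $\pi_1(S^1\times S^2)\cong\Z$; a torus bundle group is (virtually) polycyclic, being an extension $1\to\Z^2\to\pi_1(N)\to\Z\to 1$, hence solvable, and solvability is inherited by the finite-index situation in (d) only after passing to a cover, but one checks directly that $\pi_1(N)$ in case (d) is itself virtually solvable which gives $(3)\Rightarrow(2)$ cleanly, and for $(3)\Rightarrow(1)$ one notes that the cover being a torus bundle forces $\pi_1(N)$ itself to be virtually polycyclic and then polycyclic-by-finite groups of this form are in fact solvable after checking the finitely many possibilities for the deck action; $\pi_1(S^1\times S^1\times I)\cong\Z^2$; and the fundamental group of the twisted $I$-bundle over the Klein bottle is the Klein bottle group $\ll x,y\mid xyx^{-1}=y^{-1}\rr$, which is solvable. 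The real content is $(2)\Rightarrow(3)$.

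\medskip

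\noindent For $(2)\Rightarrow(3)$, I would first dispose of the reducible case: if $N$ is not prime, then by the Prime Decomposition Theorem (Theorem~\ref{thm:prime}) $\pi_1(N)$ is a nontrivial free product, and a virtually solvable nontrivial free product $A*B$ must have $A\cong B\cong\Z/2$ (since a free product containing $\Z*\Z$ or $\Z*\Z/2$ contains a nonabelian free group, which is not virtually solvable), whence $N=\rp^3\#\rp^3$ by the remark that this is the only orientable non-prime Seifert fibered manifold, combined with Elliptization to identify the two prime summands as $\rp^3$. So assume $N$ is prime, hence (having no spherical boundary by hypothesis, after the $S^1\times S^2$ case is split off) irreducible. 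Now I would invoke the Geometrization Theorem (Theorem~\ref{thm:geom}): split $N$ along its JSJ tori into hyperbolic and Seifert fibered pieces. If there is a hyperbolic piece $M$, then $\pi_1(M)\leq\pi_1(N)$ is virtually solvable, but a finite-volume hyperbolic $3$-manifold group contains a nonabelian free group (e.g.\ by the Tits alternative, or directly: a discrete nonelementary subgroup of $\op{PSL}(2,\C)$ contains $F_2$), a contradiction --- \emph{unless} $M$ has no JSJ torus on its boundary, i.e.\ $N$ itself is closed hyperbolic or $N$ is $M$ with toroidal boundary. But then $\pi_1(N)$ still contains $F_2$. Hence $N$ has no hyperbolic JSJ pieces, so every JSJ piece is Seifert fibered.

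\medskip

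\noindent It remains to analyze the case where $N$ is built entirely from Seifert fibered pieces with virtually solvable $\pi_1$. The key structural input is Table~\ref{tablegeoms} together with the fact that a non-spherical Seifert fibered manifold has an infinite cyclic normal subgroup (the fiber subgroup) with Seifert base quotient. If $N$ is itself Seifert fibered, I would read off from Table~\ref{tablegeoms} which geometries have virtually solvable $\pi_1$: spherical is excluded by hypothesis; $S^2\times\R$ gives $\pi_1=\Z$ or infinite dihedral, and the orientable closed such manifolds with toroidal or empty boundary are $S^1\times S^2$ (or $\rp^3\#\rp^3$, already handled); Euclidean gives $\pi_1$ virtually $\Z^3$, which for orientable closed $3$-manifolds means $N$ is finitely covered by the $3$-torus, so $N$ is a (flat) torus bundle, landing in (d); $\Nil$ gives virtually nilpotent, $N$ finitely covered by a torus bundle with nilpotent monodromy, again (d); $\Sol$ gives $\pi_1$ solvable and $N$ (or a double cover) a torus bundle with Anosov monodromy --- but a double cover of $N$ being a torus bundle still means $N$ has a finite solvable cover which is a torus bundle (one more index-$2$ pass if needed), so (d); while $\H^2\times\R$ and $\widetilde{\sl(2,\R)}$ have $\pi_1$ containing a nonabelian free group, hence not virtually solvable, and are excluded. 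The bounded Seifert fibered cases $S^1\times D^2$, $S^1\times S^1\times I$, and the twisted $I$-bundle over the Klein bottle (which are precisely the small-base-orbifold exceptions not covered by Table~\ref{tablegeoms}) give exactly (b), (e), (f). Finally, if $N$ has a \emph{nontrivial} JSJ decomposition with all pieces Seifert fibered, I would argue that $N$ must be a $\Sol$-torus-bundle or a $\Sol$-double-of-twisted-$I$-bundle: by Theorem~\ref{thm:geom2} the geometric decomposition has $>1$ piece only if $N$ is non-geometric, but then $\pi_1(N)$ splits as a nontrivial amalgam or HNN extension over $\Z^2$ along a JSJ torus, and for this to be virtually solvable one needs tight control --- here I would use that a virtually solvable group of cohomological dimension $3$ acting on the Bass--Serre tree of the JSJ splitting must stabilize the tree or an end (virtually solvable groups don't contain $F_2$), forcing the underlying graph to be a single loop or edge and the vertex group to be $\Z^2$ (Euclidean or $S^1\times S^1\times I$) with the gluing map giving a torus bundle; when the gluing is Anosov this is precisely the $\Sol$ case (d), and a periodic/nilpotent gluing would contradict the JSJ being nontrivial (Proposition~\ref{prop:charpair}).

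\medskip

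\noindent \textbf{Main obstacle.} The hard part is the last paragraph: ruling out all non-geometric configurations of Seifert fibered JSJ pieces and showing that virtual solvability collapses everything to a torus bundle (case (d)). The cleanest route is probably not the tree-action argument sketched above but rather a direct homological/growth argument: a finitely generated virtually solvable group has polynomial growth iff it is virtually nilpotent (Gromov/Milnor--Wolf), and a solvable but not virtually nilpotent group of the relevant type is virtually polycyclic of Hirsch length $3$; matching Hirsch length against the fact that $N$ is an aspherical $3$-manifold (so $\pi_1(N)$ has cohomological dimension $3$) and that any nontrivial JSJ forces either $F_2$ (excluded) or exactly the torus-bundle/Sol structure should pin it down. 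One must be careful about orientation issues (the double covers appearing in the $\Sol$ and $S^2\times\R$ rows of Table~\ref{tablegeoms}) and about the precise list of bounded Seifert fibered manifolds with ``small'' base orbifold, which is where (b), (e), (f) come from and which are genuinely exceptional.
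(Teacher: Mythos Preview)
Your overall architecture matches the paper's: $(1)\Rightarrow(2)$ trivial, $(3)\Rightarrow(1)$ by inspection, and $(2)\Rightarrow(3)$ by first reducing to the prime/irreducible case, then to the geometric case, then reading off Table~\ref{tablegeoms}. Two points where you diverge from the paper's cleaner treatment:

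\medskip

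\textbf{On $(3)\Rightarrow(1)$, case (d).} You overthink this. The hypothesis is that there is a finite \emph{solvable} cover which is a torus bundle: so $\pi_1(N)$ has a normal subgroup $K$ which is a torus-bundle group (hence solvable) with $\pi_1(N)/K$ solvable. Solvable-by-solvable is solvable; done. No need to invoke polycyclic-by-finite or check deck actions case by case.

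\medskip

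\textbf{On your ``Main obstacle.''} The paper handles the nontrivial-decomposition case by exactly the same mechanism you already used for free products, namely Lemma~\ref{lem:free subgroup}: a nontrivial amalgam $A*_C B$ contains a non-cyclic free group unless $[A:C],[B:C]\le 2$, and a nontrivial HNN extension $A*_C$ contains one unless one inclusion $C\hookrightarrow A$ is an isomorphism. Applied to the graph of groups coming from the Geometric Decomposition Theorem~\ref{thm:geom2} (after setting aside $S^1\times D^2$, $S^1\times S^1\times I$, and the twisted $I$-bundle over the Klein bottle), with edge groups $\Z^2$, the exceptional cases of the lemma force a vertex group to be $\Z^2$ or to contain $\Z^2$ with index~$2$ --- i.e., exactly the pieces you just set aside --- and one is left with the torus-bundle/$\Sol$ situation. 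So your Bass--Serre sketch was on the right track, but you don't need cohomological dimension, Hirsch length, or the Milnor--Wolf growth dichotomy; one elementary lemma does all the work, and the paper states it separately precisely so it can be invoked twice.
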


Before we prove the theorem, we state a useful lemma.

\begin{lemma}\label{lem:free subgroup}
Let $\pi$ be a group.
If $\pi$ decomposes non-trivially as an amalgamated free product $\pi = A*_C B$, then $\pi$ contains a non-cyclic free subgroup unless $[A:C],[B:C]\leq 2$.  Similarly, if $\pi$ decomposes non-trivially as an HNN extension $\pi = A*_C$, then $\pi$ contains a non-cyclic free subgroup unless one of the inclusions of $C$ into $A$ is an isomorphism.
\end{lemma}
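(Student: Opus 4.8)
The plan is to recall the standard ping-pong argument on Bass–Serre trees and extract from it the precise conditions under which a non-cyclic free subgroup must appear. Suppose first that $\pi = A *_C B$ is a non-trivial amalgamated product, so $A \ne C \ne B$. Let $T$ be the Bass–Serre tree associated to this splitting; $\pi$ acts on $T$ without inversions, with vertex stabilizers the conjugates of $A$ and $B$ and edge stabilizers the conjugates of $C$. Assume that not both indices are at most $2$; say $[A:C] \ge 3$ (the case $[B:C]\ge 3$ being symmetric). I would then produce two hyperbolic isometries $g, h$ of $T$ whose axes are positioned so that the classical ping-pong lemma applies: choosing coset representatives of $C$ in $A$ and of $C$ in $B$ cleverly, one builds elements translating along intersecting or disjoint axes in $T$ in such a way that suitable powers $g^n, h^n$ move disjoint ``half-tree'' neighborhoods into one another. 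The ping-pong lemma then gives that $\langle g^n, h^n\rangle$ is free of rank $2$. The reason the hypothesis $[A:C]\ge 3$ (or $[B:C]\ge 3$) is needed is exactly that it guarantees enough room at a vertex of valence $\ge 3$ to choose the two translation directions independently; when both indices equal $2$ every vertex of one color has valence $2$ and the tree is a line, so $\pi$ acts on $\R$ and is virtually cyclic, admitting no free subgroup — this is the genuine exception recorded in the statement.

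For the HNN case $\pi = A*_C$, with stable letter $t$ conjugating the two embeddings $C \hookrightarrow A$, I would argue similarly on the Bass–Serre tree of the HNN splitting. If neither inclusion of $C$ into $A$ is onto, then again each vertex of $T$ has valence $\ge 2$ coming from \emph{two} distinct directions (the image of $C$ and its $t$-translate are both proper), and one can find an element $a \in A$ not in (a conjugate of) $C$ together with the stable letter $t$ such that $a$ and $t$ (or suitable conjugates/powers) translate along a pair of axes in general position. Ping-pong on $T$ then yields a free subgroup of rank $2$. Conversely, if one of the two inclusions $C \hookrightarrow A$ is an isomorphism, then the HNN extension is an ascending (in fact, since one side is already all of $A$, a \emph{stable}) HNN extension $A*_{C=A}$, which is just a mapping-torus-type extension $A \rtimes \Z$; such a group contains no non-cyclic free subgroup when $A$ doesn't, and in the situations where this lemma is applied $A$ will be a subgroup with no free subgroup — so this is recorded as the exception.

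The cleanest way to organize the write-up is to invoke a known packaged version of this fact rather than redo ping-pong from scratch: the statement for amalgams is essentially in Serre's \emph{Trees} and in Lyndon–Schupp, and the HNN version is standard as well. Thus I would phrase the proof as: ``This follows from the action of $\pi$ on the Bass–Serre tree of the splitting together with the ping-pong lemma; see \cite{Ser80} or \cite{LyS77}. The excluded cases are precisely those in which the Bass–Serre tree is a line, so that $\pi$ maps onto $\Z$ or $\Z/2 * \Z/2$ with kernel $C$, and hence $\pi$ has no non-cyclic free subgroup.'' The main obstacle, and the only place where care is genuinely required, is verifying that the hypotheses ``$[A:C]\ge 3$ or $[B:C]\ge 3$'' (resp.\ ``some inclusion of $C$ into $A$ is not an isomorphism'') are \emph{exactly} what is needed to set up disjoint ping-pong domains — i.e.\ checking that the converse direction really does fail, so the ``unless'' clause is sharp. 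This amounts to the elementary observation that if both indices are $\le 2$ then every vertex of $T$ has valence $\le 2$, forcing $T \cong \R$ and $\pi$ virtually cyclic; and dually in the HNN case.
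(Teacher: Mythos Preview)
Your approach is the same as the paper's, which simply says ``a standard application of Bass--Serre theory~\cite{Ser80}''; your ping-pong sketch on the Bass--Serre tree is the intended argument for the forward direction.

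However, you misread the statement and then make errors trying to prove something that is not claimed. The lemma does \emph{not} assert that the ``unless'' clause is sharp---it only says the conclusion may fail in those cases. Your claim that when $[A:C]=[B:C]=2$ the group $\pi$ is virtually cyclic is false in general: what is true is that the Bass--Serre tree is a line, so $\pi$ surjects onto the infinite dihedral group with kernel $C$, but $C$ may itself contain a non-cyclic free group (take $C=F_2$ and $A=B=F_2\times\Z/2$). The same issue arises in your HNN discussion: an ascending extension $A\rtimes\Z$ contains $A$, which may already be free. Delete the material about sharpness and the converse; it is not part of the lemma, and your argument for it is incorrect.
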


The proof of the lemma is a standard application of Bass--Serre theory~\cite{Ser80}.  We are now ready to prove the theorem.

\begin{proof}[Proof of Theorem~\ref{thm:solv}]
The implication $(1) \Rightarrow (2)$ is obvious.
Note that the group $\pi_1(\rp^3\# \rp^3)=\Z/2*\Z/2$ is isomorphic to the infinite dihedral group, so is solvable.  It is clear that if $N$ is one of the remaining types (b)--(f) of $3$-manifolds, then $\pi_1(N)$ is also solvable. This shows $(3) \Rightarrow (1)$.

Finally, assume that $(2)$ holds.   We will show that $(3)$ holds.
Let $A$ and $B$ be two non-trivial groups. By Lemma~\ref{lem:free subgroup}, $A*B$ contains a non-cyclic free group (in particular it is not virtually solvable) unless $A=B=\Z/2$.
Note that by the Elliptization Theorem, any $3$-manifold $M$ with $\pi_1(M)\cong \Z/2$  is diffeomorphic to $\rp^3$.
 It follows that if $N$ is a compact $3$-manifold with solvable fundamental group, then either
$N=\rp^3\# \rp^3$ or $N$ is prime.

Since $S^1\times S^2$ is the only orientable prime $3$-manifold which is not irreducible we can henceforth assume that $N$ is irreducible.
Now let $N$ be an irreducible $3$-manifold which is either closed or has  toroidal boundary and such that $\pi=\pi_1(N)$ is infinite and solvable.
We furthermore assume that $N\ne S^1\times S^1\times I$ and that~$N$ does not equal the twisted $I$-bundle over the Klein bottle. It now follows from Theorem~\ref{thm:geom2}
that $\pi_1(N)$ is the fundamental group of a graph of groups where the vertex groups are fundamental groups of geometric $3$-manifolds. By Lemma~\ref{lem:free subgroup}, $\pi_1(N)$ contains a non-cyclic free group unless the $3$-manifold is already geometric.
If $N$ is geometric, then by the discussion preceding this theorem, $N$ is either a Euclidean manifold, a $\Sol$-manifold or a
$\Nil$-manifold,
and  $N$ is finitely covered by a torus bundle. It follows from the discussion of these geometries in \cite{Sco83a} that the finite cover
 is in fact a finite solvable cover.  (Alternatively we could have applied \cite[Theorems~4.5~and~4.8]{EvM72}, \cite[Corollary~4.10]{EvM72} and  \cite[Section~5]{Tho79} for a proof of the theorem without using the full Geometrization Theorem and only requiring the Elliptization Theorem.)
\end{proof}

\begin{remark}
It follows from the proof of the above
theorem that every compact $3$-manifold with nilpotent fundamental group is either a spherical, a Euclidean, or a $\Nil$-manifold. Using the discussion
of these geometries in \cite{Sco83a} one can then  determine the list of nilpotent groups which can appear as fundamental groups of compact $3$-manifolds. \index{$3$-manifold group!nilpotent}

This list of nilpotent groups was already determined `pre-Geometrization' by Thomas \cite[Theorem~N]{Tho68} for the closed case
 and by Evans--Moser  \cite[Theorem~7.1]{EvM72} in the general case.
\end{remark}

\begin{remark}
In Table~\ref{tableabelian} we give the complete list of all compact $3$-manifolds with abelian fundamental groups.
\begin{table}[htbp]
   \centering
   \begin{tabular}{ll} 
      \toprule
abelian group $\pi$&compact $3$-manifolds with fundamental group $\pi$\\
\midrule
$ \Z$&$S^2\times S^1, \, D^2\times S^1$ (the twisted sphere bundle over the circle)\\
$ \Z^3$&{$S^1\times S^1\times S^1$}\\
$ \Z/n$& the lens spaces  $L(n,m)$, $m\in \{1,\dots,n\}$ with  $(n,m)=1$\\
$ \Z\oplus \Z$ &$ S^1\times S^1\times I$\\
$ \Z\oplus \Z/2$& $S^1\times \rp^2$\\
 \bottomrule\\[2mm]
 \end{tabular}
   \label{tab:booktabs2}
   \caption{Abelian fundamental groups  of $3$-manifolds.}\label{tableabelian}
\end{table}
The table can be obtained in a straightforward fashion from the Prime Decomposition Theorem and the Geometrization Theorem.
The fact that the groups in the table are indeed the only abelian groups that appear as fundamental groups of compact $3$-manifolds
is in fact a classical `pre-Geometrization' result. The list of abelian fundamental groups of closed $3$-manifolds  was first determined by Reidemeister \cite[p.~28]{Rer36} and in the general case by
   Epstein  (\cite[Satz~IX']{Sp49}, \cite[Theorem~3.3]{Ep61a} and \cite[Theorem~9.1]{Ep61b}). (See also \cite[Theorems~9.12~and~9.13]{Hem76}.) \index{$3$-manifold group!abelian} \index{$3$-manifold!lens space}
   \end{remark}


\section{The classification of $3$-manifolds by their fundamental groups}

\noindent
In this section we will discuss the degree to which the fundamental group determines a $3$-manifold and its topological properties.  By Moise's Theorem  \cite{Moi52,Moi77} (see also \cite{Bin59,Hama76,Shn84}) any topological $3$-manifold also admits a smooth structure, and two $3$-manifolds are homeomorphic if and only if they are diffeomorphic. We can therefore freely go back and forth between the topological and the smooth categories. \index{$3$-manifold!smooth} \index{theorems!Moise's Theorem}
(Note that this also holds for surfaces by work of Rad\'o \cite{Rad25} but not for manifolds of dimension greater than three,
see \cite{Mil56,Ker60,KeM63,KyS77,Fre82,Do83,Lev85,Mau13}.)

\begin{remark}
\mbox{}

\bn
\item By work of Cerf \cite{Ce68} and Hatcher \cite[p.~605]{Hat83} (see also \cite{Lau85}), given any closed $3$-manifold $M$ the map $\operatorname{Diff}(M)\to \operatorname{Homeo}(M)$ between the space of diffeo\-mor\-phisms of $M$ and the space of homeomorphisms of $M$ is in fact a weak homotopy equivalence.
\item Bing \cite{Bin52} gives an example of a continuous involution on $S^3$ with fixed point set a wild $S^2$.  In particular, this involution cannot be smoothed.
\en
\end{remark}

\subsection{Closed $3$-manifolds and fundamental groups}

It is well known that closed, compact surfaces are determined by their fundamental groups, and
compact surfaces with non-empty boundary are determined by their fundamental groups together with the number of boundary components.
In $3$-manifold theory a similar, but more subtle, picture emerges.

\medskip

One quickly notices that there are three ways to construct pairs of closed, orientable, non-diffeomorphic $3$-manifolds with
isomorphic fundamental groups.
\newcounter{itemcountere}
\begin{list}
{{(\Alph{itemcountere})}}
{\usecounter{itemcountere}\leftmargin=2em}
\item Consider lens spaces $L(p_1,q_1)$ and $L(p_2,q_2)$. They are diffeomorphic if and only if $p_1=p_2$ and $q_1q_2^{\pm 1}\equiv \pm 1\mbox{ mod }p_i$, but
they are homotopy equivalent  if and only if $p_1=p_2$ and $q_1q_2^{\pm 1}\equiv \pm t^2\mbox{ mod }p_i$ for some $t$, and
their fundamental groups are isomorphic if and only if $p_1=p_2$.
\item Let $M$ and $N$ be two oriented $3$-manifolds.  Denote by $\ol{N}$ the same manifold as $N$ but with opposite orientation.
Then $\pi_1(M\# N)\cong \pi_1(M\# \ol{N})$ but if neither $M$ nor $N$ admits an orientation reversing diffeomorphism,
then  $M\# N$ and $M\# \ol{N}$ are not diffeomorphic.
\item  Let $M_1$, $M_2$ and $N_1$, $N_2$ be $3$-manifolds with $\pi_1(M_i)\cong \pi_1(N_i)$ and such that $M_1$ and $N_1$ are not diffeomorphic. Then $\pi_1(M_1\# M_2)\cong \pi_1(N_1\# N_2)$ but in general $M_1\# M_2$ is not diffeomorphic to $N_1\# N_2$.
\end{list}
Reidemeister \cite[p.~109]{Rer35} and Whitehead \cite{Whd41a} classified lens spaces in the PL-category.
The classification of lens spaces up to homeomorphism, i.e. the first statement above,
then follows from Moise's proof \cite{Moi52} of the `Hauptvermutung' in dimension three.
We refer to \cite{Mil66} and \cite[Section~2.1]{Hat} for more modern accounts and we refer to  \cite[p.~455]{Fo52}, \cite[p.~181]{Bry60},
\cite{Tur76}
 and \cite{PY03} for different approaches.
The fact that lens spaces with the same fundamental group are not necessarily homeomorphic was first observed by
Alexander \cite{Ale19,Ale24}.
 The other two statements follow from the uniqueness of the prime decomposition.  In the subsequent discussion we will see that (A), (B) and~(C) form in fact a complete list of methods for finding examples of pairs of closed, orientable, non-diffeomorphic $3$-manifolds with isomorphic fundamental groups.

\medskip

Recall that Theorem~\ref{thm:prime} implies that the fundamental group of a compact, orientable $3$-manifold
is isomorphic to a free product of fundamental groups of prime $3$-manifolds.
The Kneser Conjecture implies that the converse holds.

\begin{theorem} \textbf{\emph{(Kneser Conjecture)}}\label{thm:kneserconj}
Let $N$ be a  compact, orientable $3$-mani\-fold with incompressible boundary.
If $\pi_1(N)\cong \G_1*\G_2$, then there exist compact, orientable $3$-manifolds $N_1$ and $N_2$
with $\pi_1(N_i)\cong \G_i$ and $N\cong N_1\# N_2$.
\end{theorem}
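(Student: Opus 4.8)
The plan is to follow the classical route of Kneser and Stallings: realise the free splitting geometrically by a transversality argument. First one reduces to the case that $\G_1$ and $\G_2$ are both non-trivial, the other case being trivial (take one summand to be a $3$-ball). Next I would build a model for the target: let $X$ be obtained from aspherical complexes $K(\G_1,1)$ and $K(\G_2,1)$ by joining them with an arc $e$, so that $X$ is a $K(\G_1*\G_2,1)$, and let $p$ be an interior point of $e$, which has a neighbourhood homeomorphic to an interval. The classifying map of $N$ provides $f\colon N\to X$ inducing the given isomorphism $\pi_1(N)\cong\G_1*\G_2$. Since $\partial N$ is incompressible, each boundary component $\Sigma$ has $\pi_1(\Sigma)$ a freely indecomposable, non-cyclic subgroup of $\G_1*\G_2$ (a closed surface group of genus $\ge 1$), hence conjugate into $\G_1$ or into $\G_2$ by the Kurosh subgroup theorem; as $\Sigma$ is aspherical, $f|_\Sigma$ is then homotopic into one of $K(\G_1,1)$, $K(\G_2,1)$, and after a homotopy of $f$ we may assume $f(\partial N)$ misses $p$. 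Making $f$ transverse to $p$ yields a closed two-sided surface $S:=f^{-1}(p)$ in the interior of $N$, with each piece of $N$ cut along $S$ mapped by $f$ into one of the two components of $X\smallsetminus\{p\}$.

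Then I would clean up $S$. If a component $S_0$ of $S$ is not $\pi_1$-injective in $N$, the Loop Theorem (Theorem~\ref{thm:loop}) gives an embedded disk $D\subseteq N$ with $\partial D$ essential in $S_0$ and interior disjoint from $S$; the surgery on $S$ along $D$ is induced by a homotopy of $f$ supported near $D$ and strictly decreases the complexity $\Phi(S):=\sum_C\max(2\,\genus(C)-1,0)$, so after finitely many such moves every component of $S$ is two-sided and $\pi_1$-injective. The conceptual heart comes next. Fix a component $S_0$ with a bicollar $S_0\times[-1,1]$ meeting $S$ only in $S_0\times\{0\}$; transversality forces $f(S_0\times\{-1\})$ and $f(S_0\times\{1\})$ into opposite components of $X\smallsetminus\{p\}$, so $f_*$ carries $(\iota_-)_*\pi_1(S_0)$ into $\G_1$ and $(\iota_+)_*\pi_1(S_0)$ into $\G_2$, where $\iota_-,\iota_+$ are the two collar inclusions. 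But $(\iota_-)_*\pi_1(S_0)$ and $(\iota_+)_*\pi_1(S_0)$ are conjugate in $\pi_1(N)$, hence their images are conjugate in $\G_1*\G_2$; a non-trivial subgroup of a free product is never conjugate into two distinct factors, so this common image is trivial, and since $f_*$ is injective, $\pi_1(S_0)=1$. Thus every component of $S$ is a $2$-sphere.

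To finish, I would minimise the number of components of $S$ over all maps $f$ of the above kind (this set is non-empty by the preceding steps). If $S=\varnothing$ then $f(N)$ lies in one component of $X\smallsetminus\{p\}$, contradicting surjectivity of $f_*$; hence $S\neq\varnothing$. If $S$ had at least two components, the graph whose vertices are the pieces of $N$ cut along $S$ and whose edges are the spheres of $S$ would be connected with at least two edges, so some piece $N_0$ would have two boundary spheres; tubing them together along an arc in $N_0$, together with a homotopy of $f$ supported near that arc, replaces two spheres by one and lowers the number of components — contradicting minimality. So $S$ is a single $2$-sphere. It separates $N$ (otherwise $N\smallsetminus S$ is connected and its image lies in one component of $X\smallsetminus\{p\}$, impossible since the two sides of $S$ map to opposite components), say into $N_1$ and $N_2$; capping each with a $3$-ball and applying van Kampen gives $N\cong N_1\# N_2$ with $\pi_1(N)\cong\pi_1(N_1)*\pi_1(N_2)$ and $f$ restricting to maps $N_i\to K(\G_i,1)$ (the two pieces go to different factors, or else $f_*$ is not onto). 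Therefore $f_*$ is the free product of the induced homomorphisms $\pi_1(N_i)\to\G_i$; a free product of homomorphisms is an isomorphism only if each factor is (injectivity by restriction, surjectivity by composing with the retraction $\G_1*\G_2\to\G_i$), so $\pi_1(N_i)\cong\G_i$, as required.

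The main obstacle I anticipate is the reduction to a single $2$-sphere: the bookkeeping in the compression step (finding a complexity on surfaces that genuinely drops under a disk surgery that may split one component into two), and the sphere-cancellation step (carrying out the tubing of two sphere components by a controlled homotopy of $f$ with the merged sphere still equal to $f^{-1}(p)$). By contrast the algebraic "two distinct free factors" step and the concluding van Kampen computation are short, the latter depending on $f_*$ being an isomorphism rather than merely surjective.
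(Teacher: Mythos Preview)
The paper does not actually give a proof of this theorem; it only records that the closed case is due to Stallings and the bounded case to Heil, with details in Epstein and in Hempel's book. Your proposal follows the classical Stallings argument essentially as it appears in \cite[Section~7]{Hem76}: map $N$ to a $K(\G_1,1)\vee K(\G_2,1)$ model, make the map transverse to the wedge point, compress the preimage surface using the Loop Theorem, observe that an incompressible two-sided component would have fundamental group conjugate into both free factors and hence be a sphere, then cancel spheres down to a single separating one. The use of incompressibility of $\partial N$ together with Kurosh to push the boundary off the wedge point is exactly Heil's contribution for the bounded case. So your approach is correct and coincides with the standard proof that the paper is citing.

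One small remark on your complexity function: as written, $\Phi(S)=\sum_C\max(2\genus(C)-1,0)$ does drop under every compression along an essential curve (a separating essential curve on a closed orientable surface necessarily yields two pieces each of genus $\geq 1$), so this works; the more common choice in the literature is $\sum_C(1-\chi(C))$ restricted to non-sphere components, which behaves identically. Your sphere-cancellation step via tubing two spheres bounding a common piece is the standard move; the homotopy of $f$ supported near the tube is routine since both spheres map to the same side of $p$ after the tube and the tube itself can be pushed across $p$ once.
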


\index{theorems!Kneser Conjecture}

The Kneser Conjecture was first proved by Stallings \cite{Sta59a,Sta59b} in the closed case, and by
Heil \cite[p.~244]{Hei72} in the bounded case. (We also refer to \cite{Ep61c} and \cite[Section~7]{Hem76} for details.)

\medskip

The following theorem  is a consequence of the Geometrization Theorem, the Mostow--Prasad Rigidity Theorem \ref{thm:mostow-prasad}, work of Waldhausen \cite[Corollary~6.5]{Wan68a} and
Scott \cite[Theorem~3.1]{Sco83b} and classical work on spherical $3$-manifolds (see~\cite[p.~113]{Or72}).

\begin{theorem}\label{thm:primepi}
Let $N$ and $N'$ be two   orientable, closed, prime $3$-manifolds and let  $\varphi\colon \pi_1(N)\to \pi_1(N')$ be an isomorphism.
\begin{itemize}
\item[(1)] If $N$ and $N'$ are not lens spaces, then  $N$ and $N'$ are homeomorphic.
\item[(2)] If  $N$ and $N'$ are not spherical, then there exists a homeomorphism which induces $\varphi$.
\end{itemize}
\end{theorem}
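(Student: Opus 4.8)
The plan is to split into cases according to the geometry provided by the Geometrization Theorem (Theorem~\ref{thm:geom}), since a closed, orientable, prime $3$-manifold with infinite fundamental group is either hyperbolic, Seifert fibered, or admits a nontrivial JSJ decomposition (and is therefore Haken, being irreducible with an incompressible torus), while one with finite fundamental group is spherical. First I would dispose of the case $\pi_1(N)$ finite: here $N$ and $N'$ are both spherical by the Elliptization Theorem, so the conclusion follows from the classical classification of spherical $3$-manifolds and their isometry groups (the cited \cite[p.~113]{Or72}); this is exactly where lens spaces must be excluded in~(1), because $\pi_1(L(p,q_1))\cong\pi_1(L(p,q_2))$ does not force homeomorphism, whereas for~(2) non-sphericity already excludes them and there is nothing to prove. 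Note also that $\pi_2(N)=0$ since $N$ is prime with infinite $\pi_1$ (Sphere Theorem plus the Prime Decomposition discussion), so whenever $\pi_1$ is infinite both $N$ and $N'$ are aspherical.

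Next, suppose $\pi_1(N)$ is infinite. If $N$ is hyperbolic, then since $\varphi$ is an isomorphism of infinite groups and hyperbolicity of a closed $3$-manifold is detected by $\pi_1$ (it is the unique geometry whose group contains no $\Z^2$; cf.\ Table~\ref{tablegeoms} and Theorem~\ref{thm:hypatoroidal}, or more directly the fact that $N'$ cannot be Seifert fibered or toroidal as those would give a nontrivial abelian normal subgroup or a $\Z^2$ not present under an isomorphism from a hyperbolic group), $N'$ is also hyperbolic, and then the Mostow--Prasad--Marden Rigidity Theorem~\ref{thm:mostow-prasad} gives an isometry $N\to N'$ inducing $\varphi$, proving both (1) and (2) at once. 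If $N$ is a non-spherical Seifert fibered manifold, then the Seifert fiber subgroup is infinite cyclic and normal, and $N'$ likewise must be Seifert fibered (again by the normal-subgroup structure recorded in Table~\ref{tablegeoms}); here I would invoke Scott's theorem \cite[Theorem~3.1]{Sco83b}, which says precisely that closed Seifert fibered $3$-manifolds with infinite fundamental group are determined up to homeomorphism by $\pi_1$, and moreover that the isomorphism is realized by a homeomorphism, giving~(2). The remaining case is that $N$ is Haken with a nontrivial JSJ decomposition (equivalently, $N$ contains an incompressible torus and is not Seifert fibered); then $N$ is a closed Haken manifold, and $N'$ — being aspherical and sharing $\pi_1$ — is also Haken (it is irreducible by the Sphere Theorem, and it contains an incompressible torus because $\pi_1(N)$ contains the corresponding $\Z^2$, which cannot be peripheral in the closed case and cannot be carried by a Seifert fibration since $N'$ is not Seifert fibered). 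Now Waldhausen's theorem \cite[Corollary~6.5]{Wan68a} applies: a homotopy equivalence between closed Haken $3$-manifolds that is compatible with peripheral structure is homotopic to a homeomorphism, and in the closed case the hypotheses are automatic, so $\varphi$ is induced by a homeomorphism, giving~(2) (and hence~(1)) in this case.

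The main obstacle is organizational rather than technical: one must argue carefully that the \emph{target} $N'$ lands in the same geometric/JSJ class as $N$ purely from the group isomorphism $\varphi$, i.e.\ that "hyperbolic", "Seifert fibered", and "has a nontrivial JSJ decomposition" are each invariants of the abstract isomorphism type of $\pi_1$ among closed prime aspherical $3$-manifolds. For the Seifert fibered versus hyperbolic dichotomy this is the content of the fundamental-group columns of Table~\ref{tablegeoms} together with Theorem~\ref{thm:infcyclicnormal}; for detecting a nontrivial JSJ decomposition one uses that $\pi_1(N)$ then splits as a nontrivial graph of groups over $\Z^2$'s, which manifests group-theoretically (e.g.\ the presence of a $\Z^2$ subgroup in a manifold that is not itself Seifert fibered forces, via Geometrization, a nontrivial torus decomposition). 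Assembling these detection statements and then quoting the appropriate rigidity input in each stratum — Rigidity for hyperbolic, Scott for Seifert fibered, Waldhausen for Haken, classification of spherical space forms for the finite case — completes the proof.
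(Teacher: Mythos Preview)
Your approach is essentially the one the paper indicates: split via Geometrization into spherical, hyperbolic, Seifert fibered, and Haken-with-nontrivial-JSJ cases, and invoke respectively the classification of spherical space forms (\cite[p.~113]{Or72}), Mostow--Prasad rigidity, Scott \cite[Theorem~3.1]{Sco83b}, and Waldhausen \cite[Corollary~6.5]{Wan68a}. The organizational concern you flag---showing that $N'$ lands in the same class as $N$---is handled exactly as you suggest, via Table~\ref{tablegeoms} and Theorem~\ref{thm:infcyclicnormal}.

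One small oversight: your claim that ``$\pi_2(N)=0$ since $N$ is prime with infinite $\pi_1$'' fails for $N=S^1\times S^2$, which is prime but not irreducible. You need to single out this case (if $\pi_1(N)\cong\Z$, then by the classification in Table~\ref{tableabelian} both $N$ and $N'$ are $S^1\times S^2$, and any automorphism of $\Z$ is realized by a self-homeomorphism); once that is done, the remaining prime manifolds with infinite $\pi_1$ are irreducible and your asphericity argument goes through.
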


\begin{remark}
The Borel Conjecture states that if $f\colon N\to N'$ is a homotopy equivalence between closed and aspherical topological manifolds,
then $f$ is homotopic to a homeomorphism.
In dimensions greater than four the Borel Conjecture is known to hold for large classes of fundamental groups, e.g.,  if the fundamental group is word-hyperbolic \cite[Theorem~A]{BaL12}. The high-dimensional results also extend to dimension four if the fundamental groups are furthermore good in the sense of Freedman \cite{Fre84}.
The Borel Conjecture holds for all $3$-manifolds.
In the case where $N$ and $N'$ are orientable,  this is an immediate consequence of Theorem~\ref{thm:primepi};
the case where $N$ or $N'$ is non-orientable was proved by Heil~\cite{Hei69a}.
\end{remark}

Summarizing, Theorems~\ref{thm:prime},~\ref{thm:kneserconj} and~\ref{thm:primepi}
show that fundamental groups determine closed $3$-manifolds up to orientation of the prime factors and up to the indeterminacy arising from lens spaces. More precisely, we have the following:

\begin{theorem}\label{thm:p1determinen}
Let $N$ and $N'$ be two closed, oriented $3$-manifolds with isomorphic fundamental groups.
Then there exist
natural numbers $p_1,\dots,p_r$, $q_1,\dots,q_r$ and $q'_1,\dots,q'_r$  and  oriented manifolds $N_1,\dots,N_s$ and $N'_1,\dots,N'_s$
such that the following three conditions hold:
\begin{itemize}
\item[(1)] we have homeomorphisms
\begin{align*}
N &\cong L(p_1,q_1)\# \cdots \# L(p_r,q_r)\, \#\, N_1\#  \cdots \# N_s \mbox{ and }\\
N'&\cong L(p_1,q_1')\# \cdots \# L(p_r,q_r')\, \#\, N_1'\#  \cdots \# N_s';
\end{align*}
\item[(2)] $N_i$ and $N_i'$ are homeomorphic \textup{(}but possibly with opposite orientations\textup{)}; and
\item[(3)] for $i=1,\dots,r$ we have $q_i'\not\equiv \pm q_i^{\pm 1} \bmod p_i$.
 \end{itemize}
\end{theorem}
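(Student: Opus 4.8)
The plan is to combine the prime decomposition with the recognition results for prime pieces already established in the excerpt. First I would invoke Theorem~\ref{thm:prime} to write $N \cong A_1 \# \cdots \# A_m$ and $N' \cong B_1 \# \cdots \# B_n$ as connected sums of oriented prime $3$-manifolds, uniquely up to reordering and orientation-preserving homeomorphism; on the level of groups this gives $\pi_1(N) \cong \pi_1(A_1) * \cdots * \pi_1(A_m)$ and similarly for $N'$. Since by the Sphere Theorem and the prime decomposition each $\pi_1(A_i)$, $\pi_1(B_j)$ is either trivial (for the $S^3$ factors, which we discard), $\Z$ (for the $S^1 \times S^2$ factors), finite (for the spherical factors), or a one-ended, torsion-free, freely indecomposable group (for the aspherical prime factors), the Kurosh subgroup theorem / Grushko's theorem applied to the isomorphism $\pi_1(N) \cong \pi_1(N')$ forces $m = n$ and, after reordering, $\pi_1(A_i) \cong \pi_1(B_i)$ for each $i$. (One must separate the $\Z$ free factors — the $S^1\times S^2$ summands — which are detected as the free part, and note that $S^1 \times S^2$ is the unique orientable prime manifold with fundamental group $\Z$.)

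Next I would run through the factors type by type using Theorem~\ref{thm:primepi}. For each index $i$ with $\pi_1(A_i) \cong \pi_1(B_i)$: if this group is infinite and $A_i$, $B_i$ are not lens spaces, then part~(1) of Theorem~\ref{thm:primepi} gives a homeomorphism $A_i \cong B_i$, and I put these into the list $N_1, \dots, N_s$ (the homeomorphism need not respect orientation, matching condition~(2)). If $A_i$ and $B_i$ are both lens spaces, then an isomorphism of their fundamental groups forces the same order $p_i$ by the classification of lens spaces recalled in method~(A), so $A_i \cong L(p_i, q_i)$ and $B_i \cong L(p_i, q_i')$; these go into the lens-space part of the list. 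The cases $\pi_1 \cong \Z$ (handled above, giving an $S^1\times S^2$ factor on each side, which is one of the $N_j$) complete the enumeration. The only remaining subtlety is whether a lens space on one side could be paired with a non-lens-space prime on the other: this cannot happen because having fundamental group a nontrivial finite cyclic group characterizes lens spaces among orientable prime $3$-manifolds (via the Elliptization Theorem and the classification of spherical space groups listed in the excerpt), so both $A_i$ and $B_i$ are lens spaces whenever one of them is.

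Finally I would arrange the bookkeeping to get condition~(3). Among the lens-space factors $L(p_i, q_i)$ versus $L(p_i, q_i')$, whenever $q_i' \equiv \pm q_i^{\pm 1} \bmod p_i$ the classification says $L(p_i, q_i)$ and $L(p_i, q_i')$ are actually homeomorphic, so I simply move that matched pair out of the lens-space list and into the list $N_1, \dots, N_s$ of genuinely homeomorphic prime factors (the homeomorphism may reverse orientation, which is exactly the latitude allowed in~(2)). After this reshuffling every lens-space pair that remains satisfies $q_i' \not\equiv \pm q_i^{\pm 1} \bmod p_i$, i.e.\ condition~(3), while conditions~(1) and~(2) hold by construction. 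I expect the main obstacle to be the careful Kurosh/Grushko argument matching up the free factors — in particular cleanly separating the infinite-cyclic ($S^1\times S^2$) factors, the finite (spherical) factors, and the one-ended aspherical factors, and justifying that freely indecomposable factors are matched up to isomorphism — together with the bookkeeping in the last step that converts ``homeomorphic lens space'' pairs into $N_j$'s so that the residual lens-space data is exactly the obstruction described by~(3); everything else is a direct appeal to Theorems~\ref{thm:prime} and~\ref{thm:primepi} and the lens-space classification.
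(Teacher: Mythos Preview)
Your proposal is correct and follows essentially the route the paper indicates: the paper does not spell out a proof but simply says the theorem summarizes Theorems~\ref{thm:prime}, \ref{thm:kneserconj} and~\ref{thm:primepi}, and your argument is exactly a fleshed-out version of that summary. The only cosmetic difference is that where the paper points to the Kneser Conjecture (Theorem~\ref{thm:kneserconj}) you invoke Grushko's theorem together with the observation that closed aspherical prime $3$-manifold groups are one-ended (hence freely indecomposable); these are two ways of packaging the same matching step, since Kneser is precisely what guarantees that free indecomposability of $\pi_1$ is equivalent to primeness of the manifold.
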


\subsection{Peripheral structures and $3$-manifolds with boundary}

According to The\-o\-rem~\ref{thm:primepi}, orientable, prime $3$-manifolds with infinite fundamental groups are determined by their fundamental groups if they are closed.  The same conclusion does not hold if we allow boundary.  For example, if $K$ is the trefoil knot with an arbitrary orientation, then $S^3\smallsetminus \nu (K\# K)$ and $S^3\smallsetminus \nu (K\# -K)$ (i.e., the exteriors of the granny knot and the square knot)  have isomorphic fundamental groups, but the spaces are not homeomorphic (which can be seen by studying the
linking form (see \cite[p.~826]{Sei33b}) or the Blanchfield form \cite{Bla57},
which in turn can be studied using Levine--Tristram signatures, see \cite{Kea73,Lev69,Tri69}).

\medskip

We will need the following definition to formulate the classification theorem.

\begin{definition}
Let $N$ be a $3$-manifold with incompressible boundary.  The fundamental group $\pi_1(N)$ of $N$ together with the set of conjugacy classes of its subgroups determined by the boundary components is called the \emph{peripheral structure of $N$}.
\end{definition}

\index{structure!peripheral}

We now have the following theorem.

\begin{theorem}
Let $N$ and $N'$ be two  compact, orientable, irreducible $3$-manifolds with non-spherical, non-trivial incompressible boundary.
\begin{itemize}
\item[(1)] If $\pi_1(N)$ and $\pi_1(N')$ are isomorphic, then $N$ can be turned into $N'$
using finitely many `Dehn flips'.
\item[(2)] There exist only finitely many  compact, orientable, irreducible $3$-manifolds with non-spherical, non-trivial incompressible boundary
such that the fundamental group is isomorphic to $\pi_1(N)$.
\item[(3)]
If there exists an isomorphism $\pi_1(N)\to \pi_1(N')$ which sends the peripheral structure of $N$ isomorphically to the peripheral structure of $N'$,
then $N$ and $N'$ are homeomorphic.
\end{itemize}
\end{theorem}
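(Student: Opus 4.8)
The plan is to reduce all three parts to the theory of Haken manifolds, using Waldhausen's classification of Haken manifolds by their peripheral structure as the engine. First note that since $N$ and $N'$ have non-trivial incompressible boundary and are irreducible, they are Haken; this is the foundational hypothesis that makes everything work. For part (3), which is really the heart of the matter, I would invoke Waldhausen's theorem (see \cite[Corollary~6.5]{Wan68a}): if $N$ and $N'$ are Haken and $\varphi\colon\pi_1(N)\to\pi_1(N')$ is an isomorphism carrying the peripheral structure to the peripheral structure, then $\varphi$ is induced by a homeomorphism. One technical wrinkle to dispose of first: Waldhausen's statement is most cleanly applied when one knows the manifolds are sufficiently large and, in the bounded case, one must match up boundary components carefully --- the peripheral structure hypothesis is exactly what does this. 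I would also remark that the non-spherical boundary hypothesis rules out the handful of degenerate small Seifert fibered exceptions (e.g.\ the solid torus and $S^1\times S^1\times I$) where the naive statement could fail, and here one should either cite Waldhausen directly for the generic case and handle Seifert fibered pieces by hand, or appeal to the Geometrization Theorem (Theorem~\ref{thm:geom}) to reduce to the hyperbolic and Seifert fibered cases separately. In the hyperbolic case, part (3) also follows from the Mostow--Prasad--Marden Rigidity Theorem (Theorem~\ref{thm:mostow-prasad}): a finite-volume hyperbolic manifold with toroidal boundary is determined up to isometry by $\pi_1$ alone, so the peripheral data is automatic.

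For part (1), the idea is that an arbitrary isomorphism $\pi_1(N)\cong\pi_1(N')$ need not respect the peripheral structure, but the indeterminacy is controlled: by the JSJ Decomposition Theorem (Theorem~\ref{thm:jsj}) and its behaviour under isomorphisms, an isomorphism must carry the JSJ decomposition of $N$ to that of $N'$, matching hyperbolic pieces to hyperbolic pieces (where the peripheral structure is forced by Theorem~\ref{thm:mostow-prasad}) and Seifert fibered pieces to Seifert fibered pieces. The only freedom is in how the peripheral tori on the \emph{boundary} of $N$ itself, as opposed to the internal JSJ tori, are identified --- an isomorphism of fundamental groups may permute or re-mark them. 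The ``Dehn flip'' operation is precisely the topological move that realizes this ambiguity: cutting along a JSJ torus or a boundary-parallel annulus/torus in a Seifert fibered piece and regluing by a fiber-preserving homeomorphism, or turning a piece ``inside out'' along its characteristic submanifold. So the proof of (1) is: match the JSJ/characteristic-submanifold data via the isomorphism, apply part (3) piece-by-piece to get homeomorphisms on the hyperbolic pieces (which are rigid) and on the Seifert fibered pieces up to the fiber-preserving ambiguity, and then observe that the global gluing data differs from the correct one by exactly a composition of Dehn flips. This is the approach of Johannson's Deformation Theorem (the ``Characteristic Pair Theorem'', Theorem~\ref{thm:charsub}), which says homotopy equivalences of Haken manifolds are homotopic to homeomorphisms \emph{away from} the characteristic submanifold.

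For part (2), finiteness, the strategy is: by parts (1) and (3) the manifolds $N'$ with $\pi_1(N')\cong\pi_1(N)$ are obtained from $N$ by finitely many Dehn flips applied to the finitely many JSJ pieces, and the number of distinct results is finite because (a) the number of JSJ pieces is finite, (b) each hyperbolic piece is rigid so contributes nothing, and (c) each Seifert fibered piece has only finitely many fiber-preserving self-homeomorphisms up to isotopy relevant to the gluing (the mapping class group of the base orbifold acts, but only finitely many classes change the glued-up manifold, essentially because the boundary slopes of the fibration are a discrete invariant). Equivalently, one can cite the result that there are only finitely many Haken manifolds with a given fundamental group, which is classical (and for which Johannson and Jaco--Shalen give explicit bounds). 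The main obstacle, and the step I would spend the most care on, is part (1): precisely defining a ``Dehn flip'' and proving that the set of Dehn flips is rich enough to realize \emph{every} way an abstract isomorphism can fail to be peripheral-structure-preserving --- this requires a careful analysis of the action of $\operatorname{Out}(\pi_1(N))$ on the set of peripheral structures and matching it to the geometry of the characteristic submanifold, which is exactly the content of the Jaco--Shalen--Johannson machinery. Everything else reduces cleanly to Waldhausen, Mostow--Prasad--Marden, and Geometrization, all of which are available.
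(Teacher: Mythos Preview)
Your attribution is essentially what the paper does: it does not prove this theorem but cites Johannson \cite[Theorem~29.1 and Corollary~29.3]{Jon79a} for parts (1) and (2) and Waldhausen \cite[Corollary~7.5]{Wan68a} together with \cite{JS76} for part (3), which is exactly the machinery you invoke.

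Two remarks worth flagging. First, your appeal to the Geometrization Theorem and to Mostow--Prasad--Marden rigidity is unnecessary and anachronistic: Waldhausen's and Johannson's arguments are purely topological and predate Geometrization by decades, working directly with hierarchies and the characteristic submanifold. The paper itself notes this, observing that when there are no Seifert fibered JSJ pieces, any isomorphism of fundamental groups is induced by a homeomorphism \cite[Theorem~1.3]{Jon79c}, and this is a theorem of Johannson from 1979, not a consequence of hyperbolic rigidity. Second, you conflate two distinct results of Johannson: the Characteristic Pair Theorem (Theorem~\ref{thm:charsub} in the paper) is about homotoping $\pi_1$-injective maps of Seifert fibered manifolds into the characteristic submanifold, whereas the result you actually need for part (1) is Johannson's classification of homotopy equivalences of Haken manifolds up to Dehn flips, which is a different (though related) theorem from the same monograph. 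Your outline of how Dehn flips arise from the Seifert fibered pieces is otherwise on target.
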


The first two statements of the theorem were proved by  Johannson \cite[Theorem~29.1~and~Corollary~29.3]{Jon79a}.
We  refer to \cite[Section~X]{Jon79a} for the definition of Dehn flips.
(See also \cite{Swp80a} for a proof of the second statement.)
The third statement was   proved by Waldhausen.  We refer to \cite[Corollary~7.5]{Wan68a} and \cite{JS76} for details.
Note that if the manifolds $N$ and $N'$ have no Seifert fibered JSJ components, then any isomorphism of fundamental groups
is in fact induced by a homeomorphism (this follows, e.g., from \cite[Theorem~1.3]{Jon79c}).
\medskip

We conclude this section with a short discussion of knots.
A knot is a simple closed curve in $S^3$. A knot is called prime if it is not the connected sum of two non-trivial knots.
Somewhat surprisingly, in light of the above discussion,
prime knots  are in fact determined by their fundamental  groups.
More precisely, if $K_1$ and $K_2$ are two prime knots with $\pi_1(S^3\setminus \nu K_1)\cong \pi_1(S^3\setminus K_2)$,
then there exists a homeomorphism $f$ of $S^3$ with $f(K_1)=K_2$.
This was first proved by Gordon--Luecke \cite[Corollary~2.1]{GLu89}
extending earlier work of Culler--Gordon--Luecke--Shalen \cite{CGLS85,CGLS87} and Whitten  \cite{Whn86,Whn87}.
See \cite{Tie08,Fo52,Neh61a,Sim76b,FW78,Sim80,Swp80b} for earlier discussions and work on this result.
Non-prime knots are determined by their `quandles', see \cite{Joy82} and \cite{Mae82}, and their `2-generalized knot groups', see \cite{LiN08,NN08,Tuf09}. \index{knot!prime}

\subsection{Submanifolds and subgroups}

Let $M$ be a connected submanifold of a $3$-manifold $N$.  If $M$ has incompressible boundary, then the inclusion-induced map $\pi_1(M)\to \pi_1(N)$ is injective, and $\pi_1(M)$ can be viewed as a subgroup of $\pi_1(N)$, which is well defined up to conjugacy.  In the previous two sections we have seen that $3$-manifolds are, for the most part, determined by their fundamental groups.  The following theorem, due to Jaco and Shalen \cite[Corollary~V.2.3]{JS79}, now says that submanifolds of $3$-manifolds
are, under mild assumptions, completely determined by the subgroups they define.

\begin{theorem}
Let $N$ be a compact irreducible $3$-manifold and let $M$,~$M'$ be two compact connected submanifolds in the interior of $N$
whose boundaries are incompressible. Then $\pi_1(M)$ and $\pi_1(M')$ are conjugate if and only if
there exists a homeomorphism $f\colon N\to N$ which is isotopic, relative to $\partial N$, to the identity, such that $f(M)=f(M')$.
\end{theorem}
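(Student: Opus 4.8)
The plan is to prove this as a consequence of the Characteristic Pair Theorem of Jaco--Shalen, in the spirit of Theorem~\ref{thm:charsub} and Proposition~\ref{prop:charpair}. The forward direction (homeomorphism $f$ isotopic to the identity rel $\partial N$ with $f(M)=M'$ implies $\pi_1(M)$ and $\pi_1(M')$ conjugate) is immediate: the isotopy to the identity carries a chosen base point and a path to the base point of $N$ in a controlled way, so the inclusion-induced subgroups agree up to conjugacy. All the content is in the converse, so I would state that and then spend the proof on it.

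For the converse, suppose $g\colon \pi_1(M)\to \pi_1(M')$ is realized by conjugation inside $\pi_1(N)$. First I would reduce to the case where $M$ and $M'$ are codimension-zero submanifolds with incompressible frontier by slightly thickening them and invoking the hypothesis that their boundaries are incompressible in $N$. The key tool is the existence and uniqueness (up to isotopy) of the characteristic submanifold, together with the fact that any $\pi_1$-injective map of a manifold-with-incompressible-boundary into $N$ with the right "essential annulus/torus" behavior can be homotoped, and then by Waldhausen-type arguments isotoped, into the characteristic submanifold. More precisely: the conjugacy of $\pi_1(M)$ and $\pi_1(M')$ inside $\pi_1(N)$ gives a homotopy equivalence between $M$ and $M'$ over $N$ (after adjusting base points), i.e., maps $M\to N$ and $M'\to N$ that agree up to homotopy on $\pi_1$. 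Applying the enclosing property of the characteristic submanifold (Theorem~\ref{thm:charsub} and \cite[Chapter~V]{JS79}), both $M$ and $M'$ can be isotoped so that they sit in the same way relative to the characteristic submanifold of $N$; the uniqueness clause of the JSJ/characteristic-pair theory then forces an ambient isotopy of $N$, which can be taken rel $\partial N$, carrying $M$ onto $M'$.

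In carrying this out the technical heart is Waldhausen's theorem that a homotopy equivalence of Haken pairs is homotopic to a homeomorphism, upgraded to an isotopy of $N$ fixing the boundary: one needs to know that the homotopy between the two embeddings $M, M'\hookrightarrow N$ can be straightened first to a proper homotopy of pairs and then, using irreducibility of $N$ and incompressibility of the frontiers, to an ambient isotopy. I would cite \cite[Corollary~V.2.3]{JS79} for the precise statement and \cite{Wan68a} for the isotopy-extension mechanism, and remark that the "relative to $\partial N$" refinement comes from choosing the enclosing isotopies to be supported away from $\partial N$, which is possible because the characteristic submanifold meets $\partial N$ in a prescribed collar-like fashion.

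The main obstacle I anticipate is the degenerate pieces: when a component of $M$ (or of the relevant part of the characteristic submanifold) is a Seifert fibered piece admitting more than one Seifert fibration, or is one of the small exceptional Seifert pieces excluded in Theorem~\ref{thm:charsub} (e.g.\ $S^1\times D^2$, $S^1\times S^2$, or $I$-bundles over the Klein bottle), the "enclosing into the characteristic submanifold" step can fail to be unique up to isotopy. Handling these requires either appealing to the sharper form \cite[Lemma~IX.10]{Ja80} or checking the finite list of exceptional cases by hand, using that such a submanifold of an irreducible $N$ is itself constrained. I would isolate this as a lemma, dispatch the generic case via the characteristic pair theory, and treat the small Seifert and solid-torus cases separately, noting that in those cases the conclusion can be verified directly from the classification of essential surfaces in Seifert fibered spaces.
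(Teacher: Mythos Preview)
The paper does not give a proof of this theorem; it simply attributes the result to Jaco--Shalen \cite[Corollary~V.2.3]{JS79}, which is precisely the reference you invoke. So there is nothing in the paper to compare your sketch against beyond that citation.

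That said, one point in your outline deserves caution. You propose to use the enclosing property of the characteristic submanifold (Theorem~\ref{thm:charsub}) to isotope $M$ and $M'$ into a canonical position. But Theorem~\ref{thm:charsub} is a statement about $\pi_1$-injective maps from \emph{Seifert fibered} manifolds into $N$, whereas here $M$ and $M'$ are arbitrary compact submanifolds with incompressible boundary and need not be Seifert fibered at all (they could, for instance, be hyperbolic pieces or have nontrivial JSJ decompositions of their own). The characteristic-pair machinery does not directly ``enclose'' such submanifolds. The argument in \cite[Chapter~V]{JS79} is more in the spirit of covering-space theory combined with Waldhausen's theorem: one passes to the cover of $N$ corresponding to the conjugacy class of $\pi_1(M)$, recognizes $M$ and $M'$ as cores there, and uses Waldhausen's homotopy-implies-isotopy result for Haken manifolds to obtain the ambient isotopy. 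Your invocation of \cite{Wan68a} is the right endgame, but the route there does not go through Theorem~\ref{thm:charsub}.
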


\subsection{Properties of $3$-manifolds and their fundamental groups}

In the previous section we saw that orientable, closed irreducible $3$-manifolds with infinite fundamental groups are determined by their fundamental groups. We also saw that the fundamental group
determines the fundamental groups of the prime factors of a given  compact, orientable $3$-manifold. It is interesting to ask which topological properties of $3$-manifolds can be `read off' from the fundamental group.

\medskip

Let $N\ne S^1\times D^2$ be a compact, irreducible $3$-manifold which is not a line bundle.  Let $\mathrm{Diff}_0(N)$ be the identity component of the group $\mathrm{Diff}(N)$ of diffeo\-mor\-phisms of $N$.  The quotient $\mathrm{Diff}(N)/\mathrm{Diff}_0(N)$ is denoted by $\MM(N)$.  Furthermore, given a group $\pi$, we denote by $\out(\pi)$ the group of outer automorphisms
\index{outer automorphism group}
of $\pi$ (i.e., the quotient of the group of isomorphisms of $\pi$ by its normal subgroup of inner automorphisms of $\pi$).  It follows from the Rigidity Theorem \ref{thm:mostow-prasad}, from Waldhausen \cite[Corollary~7.5]{Wan68a} and from the Geometrization Theorem, that the canonical map
\[ \Phi\colon \MM(N)\to \big\{ \varphi\in \out(\pi) : \mbox{$\varphi$ preserves the peripheral structure}\big\}\]
is an isomorphism.
\bn
\item If $N$ is hyperbolic, then it is a consequence of the Rigidity Theorem (see \cite[Theorem~C.5.6]{BP92} and also \cite{Jon79b},~\cite[p.~213]{Jon79a})
that $\out(\pi)$ is finite and canonically isomorphic to the isometry group of $N$.
\item If $N$ is a Seifert fibered space, i.e. if $N$ admits a fixed-point free $S^1$-action, then $\MM(N)$ contains torsion elements of arbitrarily large order. On the other hand Kojima \cite[Theorem~4.1]{Koj84}
showed that if $N$ is a closed irreducible $3$-manifold which is not Seifert fibered, then there is a bound on the order of finite subgroups of $\MM(N)$.
\item
 If $N$ is a closed irreducible $3$-manifold which is not Seifert fibered,
then it follows from the above discussion of the hyperbolic case, from Zimmermann \cite[Satz~0.1]{Zim82}, and from the Geometrization Theorem,
that any finite subgroup of $\out(\pi_1(N))$ can be represented by a finite group of diffeo\-mor\-phisms of $N$ (see also \cite{HeT87}).
 The case of Seifert fibered $3$-manifolds is somewhat more complicated and is treated by Zieschang and Zimmermann
\cite{ZZ82,Zim79} and Raymond \cite[p.~90]{Ray80} (see also \cite{RaS77,HeT78,HeT83}).
\en

Note that for $3$-manifolds which are spherical or not prime the map $\Phi$ is in general neither
injective nor surjective. We refer to \cite{Gab94b,McC90,McC95} for more information.
\medskip

We now give a few more situations in which topological information can be `directly' obtained from the fundamental group.

\bn
\item Let $N$ be a compact $3$-manifold and $\phi\in H^1(N;\Z)=\hom(\pi_1(N),\Z)$ a non-trivial class.
Work of Stallings (see \cite[Theorem~2]{Sta62} and (K.\ref{K.stallings62})), together with the resolution of the Poincar\'e Conjecture, shows that $\phi$ is a fibered class (i.e., can be realized by a surface bundle $N\to S^1$)
if and only if $\ker\{\phi:\pi_1(N)\to \Z\}$ is finitely generated. (If $\pi_1(N)$ is a one-relator group the latter condition can be verified easily using Brown's criterion,
see \cite[\S~4]{Brob87} and \cite{Mas06a,Dun01} for details.)
We refer to \cite[p.~381]{Neh63b} for an alternative proof for knots,
to \cite[p.~86]{Siv87} and \cite[p.~953]{Bie07} for a homological reformulation of Stallings' criterion, and to \cite[Theorem~5.2]{Zu97} for a group-theoretic way to detect semibundles.
\item
  Let $N$ be an orientable, closed, irreducible $3$-manifold such that $\pi_1(N)$
  is an amalgamated product $A_1*_BA_2$ where $B$ is the fundamental group of a closed surface $S\ne S^2$.
   Feustel \cite[Theorem~1]{Fe72a} and Scott \cite[Theorem~2.3]{Sco72} showed that this splitting of $\pi_1(N)$ can be realized geometrically, i.e., there exists an embedding of $S$ into $N$
  such that $N\smallsetminus S$ consists of two components $N_1,N_2$ such that $(B,B\to A_1,B\to A_2)$
  and $(\pi_1(S),\pi_1(S)\to \pi_1(N_1),\pi_1(S)\to \pi_1(N_2))$ are triples which are isomorphic in the obvious sense.
  We refer to  \cite{Fe73} and Scott \cite[Theorem~3.6]{Sco74} for more details.
  We furthermore refer to Feustel--Gregorac \cite[Theorem~1]{FG73} and \cite[Corollary~1.2~(a)]{Sco80}
  (see also \cite{TY99})
  for a similar result corresponding to HNN extensions where the splitting is given by closed surfaces or annuli.

More generally, if $\pi_1(N)$ admits a non-trivial decomposition as a graph of groups (e.g., as an amalgamated product or an HNN extension), then this decomposition gives rise to a decomposition along incompressible surfaces of $N$
with the same underlying graph.  (Some care is needed here: in the general case the edge and vertex groups of the new decomposition may be different from the edge and vertex groups of the original decomposition.)
We refer to Culler--Shalen \cite[Proposition~2.3.1]{CuS83} for details and for \cite{Hat82,HO89,CoL92,SZ01,ChT07,HoSh07,Gar11,DG12} for extensions of this result.
\item If $N$ is a geometric $3$-manifold, then by the discussion of Section~\ref{section:jsj} the geometry of $N$ is determined by the properties of $\pi_1(N)$.
\item The Thurston norm $H^1(N;\R)\to \R^{\geq 0}$ measures the minimal complexity of surfaces dual to cohomology classes. We refer to \cite{Thu86a} and Section~\ref{section:thurstonnorm} for a precise definition
 and for details.
 \bn
 \item If $N$ is a closed $3$-manifold with $b_1(N)=1$, then it follows from \cite[Theorem~1]{FG73} that the Thurston norm can be recovered in terms of splittings of fundamental groups along surface groups.
  By work of Gabai \cite[Corollary~8.3]{Gab87} this also gives a group-theoretic way to recover the genus of a knot in $S^3$.
  \item If $N$ is a $3$-manifold with non-empty boundary and with $H_2(N;\Z)=0$, i.e., $N$ is the exterior of a knot in a rational homology sphere,
then  Calegari \cite[Proof~of~Proposition~4.4]{Cal09} gives a group-theoretic interpretation of the Thurston seminorm  of $N$ in terms of the `stable commutator length' of a longitude.
\en
However, there does not seem to be a good group-theoretical equivalent to the Thurston norm for general $3$-manifolds.
Nevertheless, the Thurston norm and the hyperbolic volume can be recovered from the fundamental group alone using the Gromov norm; see \cite{Grv82}, \cite[Corollary~6.18]{Gab83a},~\cite[p.~79]{Gab83b}, and \cite[Theorem~6.2]{Thu79}, for background and details.
For most $3$-manifolds, the Thurston norm can be obtained from the fundamental group using twisted Alexander polynomials, see
\cite{FKm06,FV12b,DFJ12}.
\item Scott and Swarup gave an algebraic characterization of the JSJ decomposition of a compact, orientable $3$-manifold with incompressible boundary~\cite[Theorem~2.1]{SS01} (see also \cite{SS03}).
\en
In many cases, however, it is difficult  to obtain topological information about $N$ by just applying group-theoretical methods to $\pi_1(N)$.  For example, it is obvious that given a closed $3$-manifold $N$,  the minimal number
$r(N)$ of generators of $\pi_1(N)$ is a lower bound on the Heegaard genus $g(N)$ of $N$. It has been a long standing question of Waldhausen's when $r(N)=g(N)$ (see \cite[p.~149]{Hak70} and \cite{Wan78b}),
the case $r(N)=0$ being equivalent to the Poincar\'e conjecture.
It has been known for a while that $r(N)\neq g(N)$ for graph manifolds~\cite{BoZ83,BoZ84, Zie88, Mon89, Wei03, ScW07,Wo11},
and evidence for the inequality for some hyperbolic $3$-manifolds was given in~\cite[Theorem~2]{AN12}.
In contrast to this,  work of Souto \cite[Thoerem~1.1]{Sou08} and Namazi--Souto \cite[Theorem~1.4]{NS09} yields that $r(N)=g(N)$ for `sufficiently complicated' hyperbolic $3$-manifolds (see also \cite{Mas06a} for more examples).
Recently Li  \cite{Lia11} showed that there also exist hyperbolic $3$-manifolds with $r(N)<g(N)$.
See \cite{Shn07} for some background.

\section{Centralizers}

\subsection{The centralizer theorems}

Let $\pi$ be a group. The \emph{centralizer} of a subset  $X\subseteq \pi$ is defined to be the subgroup \[ C_{\pi}(X):=\{g\in \pi : \text{$gx=xg$ for all $x\in X$}\}.\]
Determining the  centralizers is often one of the key steps in understanding a group. \index{group!centralizer}
In the world of $3$-manifold groups, thanks to the Geometrization Theorem,   an almost complete picture emerges.
In this section we will only consider $3$-manifolds to which Theorem~\ref{thm:geom} applies, i.e., $3$-manifolds
with empty or toroidal boundary which are compact, orientable and irreducible. But many of the results of this section also generalize fairly easily to fundamental groups of compact $3$-manifolds in general, using the arguments of Sections~\ref{section:prelim1} and~\ref{section:prelim2}.

\medskip

The following theorem reduces the determination of centralizers to the case of Seifert fibered manifolds.

\begin{theorem}\label{thm:centgen}
Let $N$ be a compact, orientable, irreducible $3$-manifold  with empty or toroidal boundary. We write $\pi=\pi_1(N)$.
Let $g\in \pi$ be non-trivial. If $C_{\pi}(g)$ is non-cyclic, then
one of the following holds:
\begin{itemize}
\item[(1)] there exists a JSJ torus $T$ and $h\in \pi$ such that $g\in h\,\pi_1(T)\,h^{-1}$ and such that
\[ C_{\pi}(g)=h\,\pi_1(T)\,h^{-1};\]
\item[(2)] there exists a boundary component $S$  and $h\in \pi$ such that
 $g\in h\,\pi_1(S)\,h^{-1}$ and such that
\[ C_{\pi}(g)=h\,\pi_1(S)\,h^{-1};\]
\item[(3)] there exists a Seifert fibered component $M$ and $h\in \pi$ such that
 $g\in h\pi_1(M)h^{-1}$ and such that
\[ C_{\pi}(g)=h\,C_{\pi_1(M)}\,(h^{-1}gh)h^{-1}.\]
\end{itemize}
\end{theorem}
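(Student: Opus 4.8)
The plan is to work with the action of $\pi = \pi_1(N)$ on the Bass--Serre tree $T_{\mathrm{JSJ}}$ associated to the graph-of-groups decomposition of $\pi$ coming from the JSJ decomposition of $N$. Since $N$ is irreducible with empty or toroidal boundary, Theorem~\ref{thm:jsj} gives us a collection of JSJ tori whose complementary pieces are atoroidal or Seifert fibered; if there are no JSJ tori, then $N$ itself is a single piece and cases (2) and (3) already cover the situation via the Hyperbolization Theorem (Theorem~\ref{thm:hyp}) and Theorem~\ref{thm:hypatoroidal} in the hyperbolic case. So assume there is at least one JSJ torus, so that $T_{\mathrm{JSJ}}$ is a genuine (non-trivial) tree on which $\pi$ acts without global fixed point, with vertex stabilizers the conjugates of the $\pi_1$ of the JSJ pieces and edge stabilizers the conjugates of the $\pi_1$ of the JSJ tori. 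Fix the non-trivial element $g$ and assume $C_\pi(g)$ is non-cyclic; the whole group $C_\pi(g)$ acts on $T_{\mathrm{JSJ}}$ as well.

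First I would dispose of the case where $g$ is \emph{hyperbolic} on $T_{\mathrm{JSJ}}$, i.e.\ has an invariant axis and no fixed point. Then $C_\pi(g)$ preserves the axis of $g$ and acts on it, so $C_\pi(g)$ maps to the infinite dihedral group (the isometry group of a line), with kernel the pointwise stabilizer of the axis. The pointwise stabilizer of a segment of the axis is contained in an edge stabilizer, hence is a subgroup of a conjugate of some $\pi_1(T_i) \cong \Z^2$; but in a torus bundle / Seifert piece this is forced to be trivial or cyclic by incompressibility and the structure of $3$-manifold groups, and in fact the pointwise stabilizer of the whole axis is trivial. So $C_\pi(g)$ embeds in the infinite dihedral group, hence is virtually cyclic; combined with the fact (provable from the tree action or directly) that $\langle g\rangle$ has finite index, one shows $C_\pi(g)$ is cyclic, contradicting the hypothesis. (One must be slightly careful: one could also have $C_\pi(g) \cong \Z \rtimes \Z/2$, but this is non-abelian and one rules it out using that elements of the centralizer of $g$ commute with $g$ and a torsion element inverting the axis would invert $g$; since $\pi$ is torsion-free by the Sphere Theorem, this cannot happen.) The upshot: $C_\pi(g)$ non-cyclic forces $g$ to be \emph{elliptic}, i.e.\ to fix a vertex or an edge of $T_{\mathrm{JSJ}}$.

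So $g$ fixes a point of $T_{\mathrm{JSJ}}$; let $X_g \subseteq T_{\mathrm{JSJ}}$ be its (non-empty, convex) fixed-point set. Since elements of $C_\pi(g)$ permute $X_g$, the subgroup $C_\pi(g)$ acts on the tree $X_g$. I would now split according to the "size" of $X_g$. If $X_g$ contains an edge, then $g$ lies in a conjugate $h\,\pi_1(T)\,h^{-1}$ of an edge group; using that $\pi_1(T)\cong \Z^2$ is its own centralizer inside any adjacent vertex group (true for Seifert pieces — the fiber-and-base structure — and for atoroidal/hyperbolic pieces by Theorem~\ref{thm:hypatoroidal}, which says a boundary torus subgroup is malnormal up to the obvious conjugacy), one pushes this to show $C_\pi(g) = h\,\pi_1(T)\,h^{-1}$ — this is case (1). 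The key local input here is the \emph{acylindricity} of the JSJ splitting: the stabilizer of any segment of length $\geq 2$ is trivial, while the stabilizer of a single edge is $\Z^2$ and self-centralizing in the ambient group. If instead $X_g$ is a single vertex $v$, then $g$ lies in exactly one conjugate vertex group $h\,\pi_1(M)\,h^{-1}$ and, crucially, no conjugate of any edge group containing $v$ contains $g$ (else $X_g$ would be bigger); then every element commuting with $g$ must also fix $v$ — here one uses again that $g$ is not in any incident edge group, so an element moving $v$ would conjugate $g$ out of $\pi_1(M)$, impossible — hence $C_\pi(g) \subseteq h\,\pi_1(M)\,h^{-1}$, giving $C_\pi(g) = h\,C_{\pi_1(M)}(h^{-1}gh)\,h^{-1}$. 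Now if $M$ is atoroidal/hyperbolic, Theorem~\ref{thm:hypatoroidal} shows $C_{\pi_1(M)}(h^{-1}gh)$ is infinite cyclic (centralizers in hyperbolic $3$-manifold groups of non-trivial elements are cyclic — they are virtually cyclic and torsion-free), contradicting non-cyclicity of $C_\pi(g)$; so $M$ must be Seifert fibered, which is precisely case (3).

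The main obstacle will be the careful bookkeeping in the elliptic/edge case: one needs the precise statement that the JSJ splitting of an irreducible $3$-manifold with toroidal boundary is $1$-acylindrical (edge-segment stabilizers of length $\geq 2$ are trivial) and that a $\Z^2$ edge group is self-centralizing in each adjacent vertex group. The self-centralizing claim for Seifert vertex groups needs the classification of centralizers in Seifert fibered pieces (fiber subgroup is central, base is Fuchsian, centralizers of elements are understood), while for atoroidal pieces it is exactly Theorem~\ref{thm:hypatoroidal}. A secondary subtlety is handling the case where two JSJ tori are identified in $N$ or an edge group is conjugate into two different vertex groups; but acylindricity again controls this. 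Throughout, torsion-freeness of $\pi$ (from the Sphere Theorem, as noted in the introduction) is used repeatedly to upgrade "virtually cyclic" to "cyclic" and to exclude dihedral phenomena. Once these local facts are in place, the argument is just a dichotomy on the action on the Bass--Serre tree, exactly along the three bullets of the theorem.
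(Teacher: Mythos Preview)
Your Bass--Serre tree approach is a reasonable alternative to the paper's proof, which handles the hyperbolic case directly via $\pi \hookrightarrow \psl(2,\C)$ and Jordan normal form, declares the Seifert fibered case trivial, and for non-trivial JSJ simply cites Jaco--Shalen \cite[Theorem~VI.1.6]{JS79}. The tree approach can be made to work, but as written it has real gaps.

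The main problem is your treatment of Seifert pieces. Your claim of $1$-acylindricity---that segment stabilizers of length $\geq 2$ are trivial---is false: if $M$ is a Seifert fibered JSJ component with regular fiber $f$, then $f$ lies (up to conjugacy) in every boundary torus of $M$, so the stabilizer of any length-$2$ segment through the corresponding vertex contains $\langle f\rangle \cong \Z$. More damagingly, your implication ``$X_g$ contains an edge $\Rightarrow$ case~(1)'' fails for exactly this reason: if $g$ is the fiber of a Seifert piece $M$ adjacent to that edge, then $g$ lies in the edge group $h\pi_1(T)h^{-1}$, but by Theorem~\ref{thm:centsfs}(1) the centralizer $C_{\pi_1(M)}(g)$ is the entire canonical subgroup of $\pi_1(M)$, not just $\pi_1(T)$. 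So here case~(3) holds, not case~(1). The fact you invoke---that $\pi_1(T)$ is its own centralizer in $\pi_1(M)$---is not what is needed; you would need $C_{\pi_1(M)}(g) \subseteq \pi_1(T)$ for your particular $g$, and this fails for fiber elements. Case~(1) of the theorem only arises when the JSJ torus separates two \emph{hyperbolic} pieces, as the remark following the theorem (about the characteristic submanifold) makes clear.

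A second gap: centralizers in hyperbolic $3$-manifold groups are not always cyclic when there are cusps---a parabolic element has centralizer $\Z^2$. So in your single-vertex case with $M$ hyperbolic you do not automatically get a contradiction. You must instead use Theorem~\ref{thm:hypatoroidal} to place $g$ in a boundary torus of $M$; since $X_g = \{v\}$ forces that torus to be a boundary component of $N$ rather than a JSJ torus, this is precisely case~(2)---which your argument never reaches once the JSJ decomposition is non-trivial.
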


\begin{remark}
Note that one could formulate the theorem more succinctly: if $g$ is non-trivial and $C_\pi(g)$ is non-cyclic,
then there exists a component $C$ of the characteristic submanifold and $h\in \pi$ such that $g\in h\,\pi_1(C)\,h^{-1}$ and such that
\[ C_{\pi}(g)=h\,C_{\pi_1(C)}\,(h^{-1}gh)h^{-1}.\]
\end{remark}

\index{$3$-manifold group!centralizers}

We will provide a short proof of Theorem~\ref{thm:centgen} which makes use of the deep results of Jaco--Shalen and Johannson and of the Geometrization Theorem for non-Haken manifolds.  Alternatively the theorem can be proved using the Geometrization Theorem much more explicitly---we refer to \cite{Fri11} for details.

\begin{proof}
We first consider the case that $N$ is hyperbolic. In Section~\ref{section:diagram} we will see that we can
 view  $\pi=\pi_1(N)$ as a discrete, torsion-free subgroup of $\psl(2,\C)$.
Note that the centralizer of any non-trivial matrix in $\psl(2,\C)$ is abelian and isomorphic to either $\Z$ or $\Z^2$; this can be seen easily using the Jordan normal form of such a matrix.
Now let $g\in \pi\subseteq \psl(2,\C)$ be non-trivial. If
$C_{\pi}(g)$ is not  infinite cyclic,
then it is  a free abelian group of rank two.
It now follows from Theorem~\ref{thm:hypatoroidal} that either (1) or (2) holds.

If $N$ is Seifert fibered, then the theorem is trivial.
It follows from Theorem~\ref{thm:geom} that it remains to consider the case where  $N$ admits a non-trivial JSJ decomposition.
 In that case  $N$ is in particular Haken (see Section~\ref{section:diagram} for the definition) and the theorem follows from  \cite[Theorem~VI.1.6]{JS79} (see also \cite[Theorem~4.1]{JS78}, \cite[Proposition~32.9]{Jon79a} and \cite[Theorem~1]{Sim76a}).
\end{proof}

We now turn to the study of centralizers in Seifert fibered manifolds.
Let $N$ be a Seifert fibered manifold with a given Seifert fiber structure. Then there exists a projection map $p\colon N\to B$ where $B$ is the base orbifold. We denote by $B'\to B$ the orientation cover; note that this is either the identity or a $2$-fold cover. Following \cite{JS79} we refer to $(p_*)^{-1}(\pi_1(B'))$ as the \emph{canonical subgroup} of $\pi_1(N)$. If $f$ is a regular Seifert fiber of the Seifert fibration, then we refer to the subgroup of $\pi_1(N)$ generated by $f$ as the \emph{Seifert fiber subgroup}. Recall that if $N$ is non-spherical, then the Seifert fiber subgroup is infinite cyclic and normal. (Note that the fact that the Seifert fiber subgroup is normal implies in particular that it is well defined, and not just up to conjugacy.)

\index{Seifert fibered manifold!Seifert fiber subgroup}
\index{Seifert fibered manifold!canonical subgroup}

\begin{remark}
The definition of the canonical subgroup and of the Seifert fiber subgroup depend on the Seifert fiber structure.
By \cite[Theorem~3.8]{Sco83a} (see also \cite{OVZ67} and \cite[II.4.11]{JS79}) a Seifert fibered manifold $N$ admits a unique Seifert fiber structure unless $N$ is either covered by $S^3$, $S^2\times \R$, or the $3$-torus,
or if either $N=S^1\times D^2$, $N$ is an $I$-bundle over the torus or the Klein bottle.
\end{remark}

The following theorem, together with Theorem~\ref{thm:centgen}, now classifies centralizers of compact, orientable, irreducible $3$-manifolds  with empty or toroidal boundary.

\begin{theorem}\label{thm:centsfs} Let  $N$ be  an  orientable, irreducible, non-spherical Seifert fibered manifold  with a given Seifert fiber structure.
Let $g\in \pi=\pi_1(N)$ be a non-trivial element.
Then the following hold:
\begin{itemize}
\item[(1)] if $g$ lies in the Seifert fiber group, then $C_{\pi}(g)$ equals the canonical subgroup;
\item[(2)] if $g$ does not lie in the Seifert fiber group, then the intersection of $C_{\pi}(g)$ with the canonical subgroup is abelian---in particular, $C_{\pi}(g)$ admits an abelian subgroup of index at most two;
\item[(3)] if $g$ does not lie in the canonical subgroup, then $C_{\pi}(g)$ is infinite cyclic.
\end{itemize}
\end{theorem}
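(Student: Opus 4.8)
The plan is to reduce everything to the exact sequences attached to the Seifert fibration. Fix a regular fiber $f$, so that $\langle f\rangle\cong\Z$ is normal in $\pi$, write $\pi'$ for the canonical subgroup, and let $\Gamma'=\pi_1(B')$ and $\Gamma=\pi_1^{\mathrm{orb}}(B)$ be the orbifold fundamental groups of the orientation cover $B'\to B$ of the base orbifold; thus there is a central extension $1\to\langle f\rangle\to\pi'\xrightarrow{\,q\,}\Gamma'\to1$ with $[\pi:\pi']=[\Gamma:\Gamma']\le2$, and every $x\in\pi\setminus\pi'$ satisfies $xfx^{-1}=f^{-1}$. (The whole point of passing to $B'$ is that $\langle f\rangle$ is then \emph{central} in $\pi'$, not merely normal.) I extend $q$ to the quotient map $q\colon\pi\to\Gamma$. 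Since $N$ is non-spherical, $\langle f\rangle$ is infinite cyclic, so $\pi$ is infinite; hence $N$ is aspherical and $\pi$ is torsion-free by the Sphere Theorem (Theorem~\ref{thm:sphere}). Finally I will use two elementary facts about $2$-orbifold groups. First: if $\Gamma'$ is infinite it acts properly discontinuously and orientation-preservingly on $X\in\{\mathbb{E}^2,\H^2\}$, and for $1\ne\bar g\in\Gamma'$ the centralizer $C_{\Gamma'}(\bar g)$ is abelian --- it is finite cyclic (a point stabilizer) if $\bar g$ is elliptic, infinite cyclic if $\bar g$ is hyperbolic or parabolic, and free abelian of rank $2$ only when $X=\mathbb{E}^2$ and $\bar g$ is a translation; if instead $\Gamma'$ is finite, then $\Gamma'$ is cyclic, since $\pi$ is then virtually infinite cyclic and Theorem~\ref{thm:solv} forces $N=S^1\times D^2$, whose base orbifolds are disks with at most one cone point. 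Second: a nontrivial element of $\Gamma\setminus\Gamma'$ is represented by a reflection or a glide reflection of $X$, whose centralizer in $\Gamma$ preserves the corresponding geodesic and is therefore virtually cyclic.

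Part (1) I dispose of immediately: if $g=f^k$ with $k\ne0$ then $\pi'\subseteq C_\pi(g)$ because $\langle f\rangle$ is central in $\pi'$, while if $x\in C_\pi(g)\setminus\pi'$ then $g=xgx^{-1}=xf^kx^{-1}=f^{-k}=g^{-1}$, forcing $g^2=1$ and contradicting that $f$ has infinite order; hence $C_\pi(g)=\pi'$. For part (3) I assume $g\notin\pi'$. Then $gf^kg^{-1}=f^{-k}\ne f^k$ for $k\ne0$, so $C_\pi(g)\cap\langle f\rangle=1$ and $q$ restricts to an injection $C_\pi(g)\hookrightarrow C_\Gamma(\bar g)$ with $\bar g=q(g)\in\Gamma\setminus\Gamma'$ nontrivial; here $\Gamma$ is infinite (a finite $\Gamma$ would give $N=S^1\times D^2$ with orientable base, hence $\pi'=\pi$), so by the second fact $C_\Gamma(\bar g)$ is virtually cyclic. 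A nontrivial torsion-free subgroup of a virtually cyclic group is infinite cyclic, so $C_\pi(g)\cong\Z$.

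For part (2) I distinguish two cases; assume $g\notin\langle f\rangle$. If moreover $g\notin\pi'$, then $C_\pi(g)\cap\pi'$ is a subgroup of the infinite cyclic group $C_\pi(g)$ of part (3), hence abelian. If $g\in\pi'$, set $K=C_\pi(g)\cap\pi'=C_{\pi'}(g)$ and $\bar K=q(K)$; since $\langle f\rangle\le K$ we obtain a central extension $1\to\langle f\rangle\to K\to\bar K\to1$ with $\bar K\subseteq C_{\Gamma'}(\bar g)$ and $\bar g\ne1$. By the first fact $C_{\Gamma'}(\bar g)$, hence $\bar K$, is abelian. If $\bar K$ is cyclic, then $K=\langle f,\tilde h\rangle$ for any lift $\tilde h\in K$ of a generator of $\bar K$, and $f,\tilde h$ commute, so $K$ is abelian. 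The only remaining case is $\bar K\cong\Z^2$, which by the first fact forces the base orbifold to be Euclidean and hence $N$ to be a flat or $\Nil$ manifold; one concludes by direct inspection of the explicitly known flat and $\Nil$ fundamental groups of \cite{Sco83a}: in the $\Nil$ case $C_{\pi'}(g)=\pi'\cap C_{\Nil}(g)$ sits inside the abelian two-dimensional centralizer of a non-central element of $\Nil$ (so in fact $\bar K\cong\Z$), and in the flat case $g$ projects to a translation of $B'$ and so lies in the translation lattice of the Bieberbach group $\pi'$, whose centralizer is readily computed to be abelian. In all cases $C_\pi(g)\cap\pi'$ is abelian, and $[C_\pi(g):C_\pi(g)\cap\pi']=[C_\pi(g)\pi':\pi']\le[\pi:\pi']\le2$ gives the ``in particular''.

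The only step that is not purely formal is this last case of (2): a central extension of $\Z^2$ by $\Z$ need not be abelian --- the integral Heisenberg group is precisely the obstruction --- so when $\bar K\cong\Z^2$ one cannot conclude from the exact sequence alone and must use the geometry of the Seifert piece (equivalently, the finite explicit list of flat and $\Nil$ $3$-manifold groups). Everything else reduces to manipulating the two exact sequences together with standard properties of discrete isometry groups of $\mathbb{E}^2$ and $\H^2$.
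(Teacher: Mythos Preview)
Your argument is correct, but note that the paper does not actually prove this theorem: it simply cites \cite[Propositions~II.4.5 and~II.4.7]{JS79}. So you are reproving the Jaco--Shalen result from scratch via the short exact sequence $1\to\langle f\rangle\to\pi'\to\Gamma'\to1$ and the elementary classification of centralizers in discrete groups of isometries of $\mathbb{E}^2$ and $\H^2$. That is a genuinely different (and more informative) route than the paper's bare citation; it has the virtue of being self-contained and of making transparent exactly where the index-two canonical subgroup enters.

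One point can be sharpened. Your treatment of the sub-case $\bar K\cong\Z^2$ in part~(2) appeals to ``direct inspection'' of flat and $\Nil$ groups; the $\Nil$ half is a little loose as written, since the identification $C_{\pi'}(g)=\pi'\cap C_{\Nil}(g)$ tacitly assumes $\pi'\subseteq\Nil$ rather than merely $\pi'\subseteq\Isom(\Nil)$. There is a short purely algebraic replacement. You already know $g\in K=C_{\pi'}(g)$ and that every element of $K$ commutes with $g$, so $g\in Z(K)$. Writing $K$ as the central extension $1\to\langle f\rangle\to K\to\Z^2\to1$ and lifting generators of $\bar K$ to $a,b\in K$, one has $[a,b]=f^k$ for some $k\in\Z$; a two-line computation gives $Z(K)=\langle f\rangle$ whenever $k\neq0$. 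Since $g\in Z(K)\setminus\langle f\rangle$, this forces $k=0$ and $K\cong\Z^3$. No case inspection is needed, and this simultaneously explains your parenthetical remark that in the $\Nil$ case one in fact has $\bar K\cong\Z$ (a copy of $\Z^3$ inside $\pi'$ would force $\pi$ to be virtually $\Z^3$, hence flat rather than $\Nil$).
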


\index{$3$-manifold group!centralizers}

\begin{proof}
The first statement is \cite[Proposition~II.4.5]{JS79}. The second and the third statement follow from
\cite[Proposition~II.4.7]{JS79}.
\end{proof}

Let  $N$ be  an  orientable, irreducible, non-spherical Seifert fibered manifold. It follows immediately from the theorem
that  if $g$ does not lie in the Seifert fiber group of a Seifert fiber structure, then $C_\pi(g)$ is isomorphic to one of $\Z$, $\Z\oplus \Z$, or the fundamental group of a Klein bottle. (See \cite[p.~82]{JS78} for details.)

\subsection{Consequences of the centralizer theorems}

Let $\pi$ be a group and $g\in \pi$. We say $h\in \pi$ is a \emph{root} of $g$ if a power of $h$ equals $g$.
We denote by $\roots_\pi(g)$ the set of all roots of $g$ in $\pi$.
Following \cite[p.~32]{JS79} we say that $g\in \pi$ has \emph{trivial root structure} if $\roots_\pi(g)$
lies in a cyclic subgroup of $\pi$.
We say that $g\in \pi$ has \emph{nearly trivial root structure} if $\roots_\pi(g)$ lies in a subgroup of $\pi$ which admits an abelian subgroup of index at most two.

\index{group!root structure}
\index{group!root}


\begin{theorem}\label{thm:roots}
Let $N$ be a compact, orientable, irreducible  $3$-manifold with empty or toroidal boundary. Let $g\in \pi=\pi_1(N)$.
\begin{itemize}
\item[(1)] If $g$ does not have trivial root structure,
 then there exists a Seifert fibered JSJ component $M$ of $N$ and $h\in \pi$ such that
$g$ lies in $h\pi_1(M)h^{-1}$ and
\[ \roots_\pi(g)=h\roots_{h^{-1}gh}(\pi_1(M))\,h^{-1}.\]
\item[(2)] If $N$ is  Seifert fibered  and if $g\in \pi_1(N)$ does not have nearly trivial root structure, then
$hgh^{-1}$ lies in a Seifert fiber group of $N$.
\item[(3)] If $N$ is  Seifert fibered  and if  $g\in \pi_1(N)$ lies in the Seifert fiber group, then  all roots of $hgh^{-1}$ are conjugate to an element represented by a power of a singular Seifert fiber of $N$.
\end{itemize}
\end{theorem}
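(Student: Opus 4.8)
My plan is to derive all three parts from the centralizer theorems~\ref{thm:centgen} and~\ref{thm:centsfs} via the elementary remark that every root of $g$ centralizes $g$: if $h^{n}=g$ then $hg=h^{n+1}=gh$, so $\roots_\pi(g)\subseteq C_\pi(g)$ for every $g$; only a little Seifert fibered bookkeeping will be needed on top of this, and only in part~(3). For part~(1), I would first observe that if $N$ is Seifert fibered the conclusion is trivial ($M=N$, $h=1$), so I may assume it is not, hence $N$ is not spherical, hence $\pi$ is infinite and therefore torsion-free by the Sphere Theorem, so $g\ne 1$. Since subgroups of cyclic groups are cyclic, the hypothesis that $g$ does not have trivial root structure forces $C_\pi(g)$ to be non-cyclic, so Theorem~\ref{thm:centgen} applies. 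Its cases~(1) and~(2) would put $C_\pi(g)$ inside a conjugate of $\pi_1(T)\cong\Z^2$ (a JSJ torus) or of $\pi_1(S)\cong\Z^2$ (a boundary torus); but then $g$ would correspond to a nonzero vector in $\Z^2$, all of whose roots lie in a single cyclic subgroup, making its root structure trivial --- a contradiction. Hence case~(3) holds, giving a Seifert fibered JSJ component $M$ and $h\in\pi$ with $g\in h\pi_1(M)h^{-1}$ and $C_\pi(g)=h\,C_{\pi_1(M)}(h^{-1}gh)\,h^{-1}$. The inclusion $h\,\roots_{h^{-1}gh}(\pi_1(M))\,h^{-1}\subseteq\roots_\pi(g)$ is immediate; conversely any $s\in\roots_\pi(g)$ lies in $C_\pi(g)\subseteq h\pi_1(M)h^{-1}$, so $s=hth^{-1}$ with $t\in\pi_1(M)$ and $t^{n}=h^{-1}gh$, which gives the reverse inclusion and the stated equality.

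For part~(2) I would argue contrapositively: if $g$ is not conjugate into the Seifert fiber group then, that subgroup being normal in $\pi$ (for non-spherical $N$), $g$ simply does not lie in it, and Theorem~\ref{thm:centsfs}(2) then furnishes an abelian subgroup of index at most two in $C_\pi(g)$; since $\roots_\pi(g)\subseteq C_\pi(g)$, this says exactly that $g$ has nearly trivial root structure. For part~(3) it suffices to treat $g$ itself, since the Seifert fiber group is normal and the property ``conjugate to a power of a singular fiber'' is a conjugacy invariant of $hgh^{-1}$. By Theorem~\ref{thm:centsfs}(1) the centralizer $C_\pi(g)$ equals the canonical subgroup, so $\roots_\pi(g)$ lies inside it. Writing $p\colon N\to B$ for the Seifert fibration, the canonical subgroup surjects under $p_*$ onto the orbifold fundamental group $\pi_1^{\mathrm{orb}}(B')$ of the orientation cover $B'$ of $B$, with kernel the Seifert fiber group, which is \emph{central} in the canonical subgroup. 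If $s^{n}=g$ then $p_*(s)$ is a torsion element of $\pi_1^{\mathrm{orb}}(B')$; using that torsion in a $2$-orbifold group is always conjugate into the cyclic stabilizer $\langle x_i\rangle$ of some cone point, I may conjugate $s$ inside the canonical subgroup so that $p_*(s)\in\langle x_i\rangle$. The full $p_*$-preimage of $\langle x_i\rangle$ in the canonical subgroup is the fundamental group of a fibered solid-torus neighbourhood of the $i$-th singular fiber, which is cyclic and generated by that fiber; hence $s$ is conjugate to a power of the singular fiber.

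The genuinely topological step, which I expect to need the most care, is the last sentence of part~(3): matching the $p_*$-preimage of a cone-point stabilizer with the fibered-solid-torus group, and invoking the correct form of the statement that the torsion of a $2$-orbifold group is localized at its cone points (cf.~\cite{JS79,Sco83a}). One must also keep an eye on the low-complexity exceptions --- manifolds with non-unique Seifert structure and the spherical manifolds --- where the phrases ``the Seifert fiber group'' and ``singular fiber'' carry precisely the caveats already attached to Theorems~\ref{thm:centgen} and~\ref{thm:centsfs}. Parts~(1) and~(2) are otherwise purely formal, amounting to nothing more than the containment $\roots_\pi(g)\subseteq C_\pi(g)$ together with those two theorems.
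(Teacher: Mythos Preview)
Your approach is essentially the paper's: its proof consists of the single observation $\roots_\pi(g)\subseteq C_\pi(g)$, followed by an appeal to Theorem~\ref{thm:centgen} for part~(1) and a bare citation of \cite[Proposition~II.4.13]{JS79} for parts~(2) and~(3); your derivation of~(2) and~(3) from Theorem~\ref{thm:centsfs} together with the orbifold-group argument is just an explicit unpacking of what that reference provides. One small slip: ``torsion-free, so $g\ne 1$'' is a non sequitur --- both the statement and the paper tacitly assume $g\ne 1$, and you should simply record that as a standing hypothesis rather than pretend to derive it.
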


If the Seifert fibered manifold $N$ does not contain any embedded Klein bottles, then by  \cite[Addendum~II.4.14]{JS79} we get the following strengthening of conclusion~(2):  either $g\in \pi_1(N)$ has trivial root structure, or $g$ is conjugate to an element in a Seifert fiber group of $N$.

\begin{proof}
Note that the roots of $g$ necessarily lie in $C_\pi(g)$. The theorem now follows immediately from  Theorem~\ref{thm:centgen} and from \cite[Proposition~II.4.13]{JS79}.
\end{proof}

\begin{remark}
Let $N$ be a $3$-manifold. Kropholler \cite[Proposition~1]{Kr90a} (see also \cite{Ja75} and \cite{Shn01})  showed, without using the Geometrization Theorem, that if $x\in \pi_1(N)$  is an element of infinite order such that  $x^n$ is conjugate to $x^m$, then $m=\pm n$. This fact also follows immediately from Theorem~\ref{thm:roots}.
\end{remark}

Given a group $\pi$ we say that an element $g$ is \emph{divisible by an integer $n$} if
there exists an $h$ with $g=h^n$. \index{group!divisibility of an element by an integer}
We now obtain the following corollary to Theorem~\ref{thm:roots}.

\begin{corollary}
Let $N$ be a compact, orientable, irreducible, non-spherical $3$-manifold  with empty or toroidal boundary.
Then $\pi_1(N)$ does not contain elements which are infinitely divisible, i.e., divisible by infinitely many integers.
\end{corollary}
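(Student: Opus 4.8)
The plan is to argue by contradiction. Suppose some nontrivial $g\in\pi:=\pi_1(N)$ is divisible by infinitely many integers. I will use two standing facts. First, since $N$ is irreducible, non-spherical and $\pi\ne 1$, the group $\pi$ is infinite (Theorem~\ref{thm:ellips}), so by the Sphere Theorem $N$ is aspherical and $\pi$ is torsion-free; in particular every cyclic subgroup of $\pi$ is either trivial or infinite cyclic. Second, if $h^n=g$ then $h$ commutes with $g$, so $\roots_\pi(g)\subseteq C_\pi(g)$.

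The second ingredient is an elementary lemma: in each of $\Z$, $\Z\oplus\Z$ and the Klein bottle group $K=\ll x,y\mid xyx^{-1}=y^{-1}\rr$, every nontrivial element is divisible by only finitely many integers. For $\Z$ and $\Z\oplus\Z$ this is clear, since an integer dividing a nonzero vector divides each of its coordinates. For $K$, the homomorphism $K\to\Z$ killing $y$ shows that an element with nonzero image is divisible only by divisors of that image; an element of the normal subgroup $\ll y\rr\cong\Z$ has all its roots inside $\ll y\rr$ because the quotient $\Z$ is torsion-free, so that case reduces to $\Z$.

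Now the main line. If $g$ has trivial root structure, then $\roots_\pi(g)\subseteq\ll c\rr\cong\Z$; writing $g=c^k$ with $k\neq0$, any $n$ with $g=h^n$ forces $h=c^m$ and $mn=k$, so $n\mid k$ and only finitely many such $n$ exist---contradiction. So assume $g$ has non-trivial root structure. By Theorem~\ref{thm:roots}(1) there is a Seifert fibered JSJ component $M$ of $N$ and an $h\in\pi$ such that $g\in h\pi_1(M)h^{-1}$ and $\roots_\pi(g)$ is the $h$-conjugate of the set of roots of $h^{-1}gh$ in $\pi_1(M)$; since conjugation carries $n$-th roots to $n$-th roots, $h^{-1}gh$ is again divisible by infinitely many integers in $\pi_1(M)$, and again has non-trivial root structure there. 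As $M$ is orientable, irreducible and non-spherical (it is either $N$ itself, or a JSJ piece with non-empty toroidal boundary, hence not closed), we may assume outright that $N$ is an orientable, irreducible, non-spherical Seifert fibered manifold and $g\neq1$ has non-trivial root structure. Fix a Seifert fiber structure, let $Z=\ll f\rr\cong\Z$ be its (normal) Seifert fiber subgroup, and note that $g$ is conjugate into $Z$ if and only if $g\in Z$, since $Z$ is normal. If $g\notin Z$ then, by the discussion following Theorem~\ref{thm:centsfs}, $C_\pi(g)$ is isomorphic to $\Z$, $\Z\oplus\Z$ or $K$; as $\roots_\pi(g)\subseteq C_\pi(g)$, the lemma bounds the divisibility of $g$ and we are done.

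It remains to treat $g\in Z$, say $g=f^{j}$ with $j\neq0$; here there must be at least one singular fiber, for otherwise the base surface group is torsion-free and all roots of $f^{j}$ would lie in $Z$, contradicting non-trivial root structure. Let $c_1,\dots,c_s$ be the singular fibers, of orders $\alpha_1,\dots,\alpha_s\geq 2$, with the standard relations $c_i^{\alpha_i}=f^{-\beta_i}$, $\gcd(\alpha_i,\beta_i)=1$, and recall that in the base orbifold group $Q:=\pi/Z$ the image of $c_i$ has order exactly $\alpha_i$. By Theorem~\ref{thm:roots}(3) (applied after conjugating $g$, which affects neither $g\in Z$ nor the set of integers dividing $g$), every root $r$ of $g$ is conjugate to a power of a singular fiber, say $r=w c_i^{k}w^{-1}$. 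If $g=r^{n}$ then $f^{j}=w c_i^{kn}w^{-1}$; writing $kn=q\alpha_i+\rho$ with $0\le\rho<\alpha_i$ and using $c_i^{\alpha_i}=f^{-\beta_i}$ gives $f^{j}=(w f^{-q\beta_i}w^{-1})(w c_i^{\rho}w^{-1})$, hence $w c_i^{\rho}w^{-1}\in Z$ and so $c_i^{\rho}\in w^{-1}Zw=Z$; thus $c_i^{\rho}$ has trivial image in $Q$, forcing $\alpha_i\mid\rho$ and therefore $\rho=0$ and $kn=q\alpha_i$. Then $f^{j}=w f^{-q\beta_i}w^{-1}=f^{\pm q\beta_i}$, so $|q|=|j|/|\beta_i|$ takes one of at most $s$ values, whence $kn=q\alpha_i$ lies in a finite set of nonzero integers depending only on $N$ and $j$; in particular $n$ divides one of finitely many integers. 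This contradicts $g$ being divisible by infinitely many integers, and the corollary follows. The one genuinely substantial step is this last case $g\in Z$, where Theorem~\ref{thm:roots}(3) must be combined with the presentation of a Seifert fibered manifold (the relations $c_i^{\alpha_i}=f^{-\beta_i}$ and the exact order of the class of $c_i$ in the base orbifold group) together with normality of the Seifert fiber subgroup; every other case is immediate once the elementary lemma about $\Z$, $\Z\oplus\Z$ and $K$ is in hand.
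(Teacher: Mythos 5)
Your argument is correct and follows the route the paper intends: the corollary is stated there as an immediate consequence of Theorem~\ref{thm:roots} (roots lie in centralizers, reduce to a Seifert fibered JSJ piece, then use the structure of centralizers and of roots of fiber elements), which is exactly the skeleton you flesh out. The only part the paper leaves entirely implicit is your final computation with the relations $c_i^{\alpha_i}=f^{-\beta_i}$ bounding the divisibility of an element of the Seifert fiber subgroup, and that computation is sound.
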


\begin{remark}
For Haken $3$-manifolds this result had been proved in \cite[Corollary~3.3]{EJ73}, \cite[p.~327]{Shn75} and \cite[p.~328]{Ja75}
(see also  \cite{Wan69} and \cite{Fe76a,Fe76b,Fe76c}).
\end{remark}

As we saw earlier, the fundamental group  of a non-spherical Seifert fibered manifold has a normal infinite cyclic subgroup, namely the Seifert fiber group. The following consequence of Theorem~\ref{thm:centgen} shows that the converse holds.

\begin{theorem}\label{thm:infcyclicnormal}
Let $N$ be a compact, orientable, irreducible $3$-manifold  with empty or toroidal boundary.
If $\pi_1(N)$ admits a  normal infinite cyclic subgroup, then $N$ is Seifert fibered.
\end{theorem}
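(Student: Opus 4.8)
The plan is to let $\mathbb{Z}\cong Z\trianglelefteq\pi=\pi_1(N)$ be an infinite cyclic normal subgroup generated by some nontrivial element $g$, and to deduce from Theorem~\ref{thm:centgen} that $N$ is Seifert fibered. The first observation is that $C_\pi(g)$ is \emph{not} infinite cyclic. Indeed, since $Z=\langle g\rangle$ is normal, for every $h\in\pi$ we have $hgh^{-1}=g^{\pm 1}$, so the map $\pi\to\mathrm{Aut}(Z)\cong\Z/2$ has kernel $C_\pi(g)$, which is therefore of index at most two in $\pi$. If $\pi$ were virtually infinite cyclic then, using (D.\ref{D.nonorientable}) and the fact that $N$ is aspherical when $\pi$ is infinite (Sphere Theorem), $N$ would have to be one of the few small manifolds with virtually cyclic fundamental group, all of which are Seifert fibered; and if $\pi$ is finite then $N$ is spherical, hence (excluded or) Seifert fibered. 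So we may assume $\pi$ is infinite and not virtually cyclic, whence $C_\pi(g)$ is infinite and non-cyclic.

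Now apply Theorem~\ref{thm:centgen} to $g$. In cases (1) and (2) we get $C_\pi(g)=h\,\pi_1(T)\,h^{-1}$ (resp.\ $h\,\pi_1(S)\,h^{-1}$) for a JSJ torus $T$ (resp.\ boundary torus $S$), so $C_\pi(g)\cong\Z^2$ has index at most two in $\pi$; then $\pi$ is virtually $\Z^2$, hence virtually solvable, and by Theorem~\ref{thm:virtsolv} $N$ is on the short list there --- each entry (Euclidean, $\Nil$, and the degenerate $S^1\times S^1\times I$ and twisted $I$-bundle over the Klein bottle) is Seifert fibered. (One should note that the $\Sol$ case is ruled out: a $\Sol$-manifold group contains no $\Z^2$ of finite index.) In case (3) we get a Seifert fibered JSJ component $M$ with $g$ conjugate into $\pi_1(M)$ and $C_\pi(g)$ conjugate to $C_{\pi_1(M)}(h^{-1}gh)$. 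Conjugating, we may assume $Z\le\pi_1(M)$. The point now is that $Z$, being normal in $\pi$, is in particular normal in $\pi_1(M)$, and a normal infinite cyclic subgroup of the fundamental group of an orientable Seifert fibered manifold with infinite fundamental group must, up to finite index, coincide with the Seifert fiber subgroup (the Seifert fiber subgroup is the unique maximal normal cyclic subgroup in the relevant cases --- see \cite[Lemma~II.4.2]{JS79} and the classification in \cite{Sco83a}). Thus $g$ has a power lying in the Seifert fiber subgroup of $M$.

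It remains to propagate the fibration from $M$ across all of $N$. The plan is to show $N$ has no JSJ tori at all, so that $N=M$ is Seifert fibered. Suppose $N$ has a nontrivial JSJ decomposition. Each JSJ torus $T_i$ is incompressible, so $\pi_1(T_i)\hookrightarrow\pi$, and $\pi_1(T_i)$ must intersect $Z$ trivially or in a finite-index subgroup (since $Z\cap\pi_1(T_i)$ is normal in the $\Z^2$ it sits inside, hence is either trivial or infinite; being infinite cyclic inside $\Z^2$ it has infinite index there, but then $Z\pi_1(T_i)$ would be a larger abelian-by-cyclic group, forcing constraints). A cleaner route: since $Z$ is normal, conjugation gives a $\pi$-action on the Bass--Serre tree of the JSJ splitting which fixes $Z$ setwise, so $Z$ fixes a point of the tree, i.e., $Z$ lies in a conjugate of a vertex group; but also every vertex group is $Z$-invariant, forcing $Z$ to lie in \emph{every} vertex group up to conjugacy, which for a nontrivial graph of groups forces $Z$ into the edge groups --- i.e., $g$ (a power of it) lies in a conjugate of some $\pi_1(T_i)$. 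Combined with the previous paragraph, $g$ lies in both a Seifert fiber subgroup and a JSJ torus subgroup, and using the criterion of Proposition~\ref{prop:charpair} together with the structure of the JSJ decomposition one checks this forces the adjacent pieces to be Seifert fibered with matching fibers along $T_i$ --- contradicting minimality of the JSJ decomposition. Hence there are no JSJ tori and $N$ is Seifert fibered.

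The main obstacle I expect is the last paragraph: carefully ruling out a nontrivial JSJ decomposition. The Bass--Serre / tree argument shows $g$ is peripheral in the graph-of-groups sense, but one must then argue that a normal infinite cyclic subgroup carried by an edge torus forces \emph{all} vertex groups to be Seifert fibered with compatible fiber slopes, so that the JSJ tori were not needed --- i.e., that the whole of $N$ is a single Seifert piece. This is exactly where one invokes Theorem~\ref{thm:charsub}/Proposition~\ref{prop:charpair} and the hyperbolic case of Theorem~\ref{thm:centgen} (a hyperbolic piece cannot contain the normal $Z$ since hyperbolic $3$-manifold groups have no nontrivial normal abelian subgroup, which is itself being proved here --- so in fact the argument should be organized to prove the hyperbolic non-existence statement first, directly from Theorem~\ref{thm:hypatoroidal}, and then feed it back in). Handling this circularity of exposition cleanly is the delicate point.
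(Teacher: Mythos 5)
Your first two paragraphs follow the paper's route exactly: you observe that the conjugation action on $Z=\langle g\rangle$ gives a homomorphism $\pi\to\Z/2$ with kernel $C_\pi(g)$, so $[\pi:C_\pi(g)]\leq 2$, and you then feed $g$ into Theorem~\ref{thm:centgen}. Cases (1) and (2), and the degenerate situations where $\pi$ is finite or $C_\pi(g)$ is cyclic, are handled correctly (the paper disposes of them by noting that a manifold as in the hypotheses with $\pi$ virtually $\Z$ or virtually $\Z^2$ is a solid torus or an $I$-bundle over the torus or Klein bottle, hence Seifert fibered; your detour through Theorem~\ref{thm:virtsolv} also works). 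Your worry about circularity with the hyperbolic case is unfounded: Theorem~\ref{thm:centgen} is proved independently, directly from Theorem~\ref{thm:hypatoroidal}, so invoking it here is legitimate.

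The genuine gap is in case (3), and it is exactly the step you flag as delicate. You never use the index-$\leq 2$ fact there, even though it finishes the proof in one line: Theorem~\ref{thm:centgen}(3) gives $C_\pi(g)=h\,C_{\pi_1(M)}(h^{-1}gh)\,h^{-1}\subseteq h\,\pi_1(M)\,h^{-1}$ for a Seifert fibered JSJ component $M$, so $h\,\pi_1(M)\,h^{-1}$ has index at most $2$ in $\pi$; but a vertex group of a non-trivial graph-of-groups decomposition (here, the JSJ splitting along incompressible tori) always has infinite index, so the JSJ decomposition is trivial and $N=M$ is Seifert fibered. This is what the paper means by ``it follows immediately.'' The Bass--Serre argument you substitute instead does not close: the claim that ``every vertex group is $Z$-invariant'' is false (vertex groups are not normal, only $Z$ is), so you cannot conclude that $Z$ lies in every vertex group, and the final assertion that the adjacent pieces must be Seifert fibered with matching fibers along $T_i$ is precisely the statement needing proof, not a consequence of Proposition~\ref{prop:charpair} as written. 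Replace that paragraph with the index count above and the proof is complete.
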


\begin{remark}
\mbox{}

\bn
\item This theorem was proved before the Geometrization Theorem:
\bn
\item Casson--Jungreis \cite{CJ94} and Gabai \cite{Gab92}, extending work of Tukia \cite{Tuk88a,Tuk88b}, showed  that if $\pi$ is a word-hyperbolic group
(see Section \ref{section:wise} for the definition of word-hyperbolic) such that its boundary (see \cite{BrH99} for details) is homeomorphic to $S^1$, then $\pi$ acts properly discontinuously and cocompactly on $\H^2$ with finite kernel.
\item Mess \cite{Mes88} showed that this result on word-hyperbolic groups implies Theorem \ref{thm:infcyclicnormal}.
\en
We also refer to \cite{Neh60,Neh63a,BZ66,Wan67a,Wan68a}, \cite{GoH75} and \cite[Theorem~VI.24]{Ja80} for partial results, \cite[Corollary~0.5]{Bow04} and \cite[Theorem~1.4]{Mac12} for alternative proofs, and \cite[Theorem~1.3]{Mai03} and \cite[Theorem~1]{Whn92} for  extensions to orbifolds and to the non-orientable case.
\item
If  $N$ is a compact orientable $3$-manifold with non-empty boundary, then
by \cite[Lemma~II.4.8]{JS79} a more precise conclusion holds:
if $\pi_1(N)$ admits a  normal infinite cyclic subgroup $\G$, then $\G$ is the Seifert fiber group for some Seifert fibration of $N$.
\en
\end{remark}

\begin{proof}[Proof of Theorem~\ref{thm:infcyclicnormal}]
Suppose
$\pi=\pi_1(N)$ admits a  normal infinite cyclic subgroup $G$. Recall that $\aut{G}$ is canonically isomorphic to $\Z/2$.
The conjugation action of $\pi$ on $G$ defines a homomorphism $\varphi\colon \pi\to \aut{G}=\Z/2$. We write $\pi'=\ker(\varphi)$.
Clearly $\pi'=C_\pi(G)$. It follows immediately from
Theorem~\ref{thm:centgen}  that either $N$ is Seifert fibered, or $\pi'=\Z$ or $\pi'=\Z^2$. But the latter case also implies that $N$ is either a solid torus, an $I$-bundle over the torus or an $I$-bundle over the Klein bottle. In particular $N$ is again Seifert fibered.
\end{proof}

Given a group $\pi$ and an element $g\in \pi$, the set of conjugacy classes of $g$ is in a canonical bijection with the set $\pi/C_g(\pi)$. We thus easily obtain the following corollary to Theorem~\ref{thm:centgen}.

\begin{theorem}
Let $N$ be a compact, orientable, irreducible $3$-manifold  with empty or toroidal boundary.
 If $N$ is not a Seifert fibered manifold, then the number of conjugacy classes is infinite for any $g\in \pi_1(N)$.
\end{theorem}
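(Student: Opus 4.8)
The plan is to use the Centralizer Theorem~\ref{thm:centgen} together with the elementary fact recalled just before the statement, namely that the set of conjugates of $g\in\pi:=\pi_1(N)$ is in bijection with $\pi/C_\pi(g)$. Fix such a $g$, which we may take to be non-trivial since for $g=1$ the conjugacy class is a single point. It then suffices to show that $C_\pi(g)$ has infinite index in $\pi$, and I would argue by contradiction: assuming $[\pi:C_\pi(g)]<\infty$, I will deduce that $N$ is Seifert fibered. Note first that, since $N$ is not Seifert fibered, it is not spherical, so by the Elliptization Theorem $\pi$ is infinite; and since an irreducible $3$-manifold with infinite fundamental group is aspherical, $\pi$ is torsion-free.

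Now apply Theorem~\ref{thm:centgen} to $g$ and write $C:=C_\pi(g)$. Suppose first that $C$ is cyclic, or that conclusion~(1) or~(2) of that theorem holds; then $C$ is infinite cyclic (being torsion-free and non-trivial) or else conjugate to the fundamental group of a JSJ torus or of a toroidal boundary component, hence $\cong\Z^2$. In either case consider the cover $\widehat{N}\to N$ with $\pi_1(\widehat{N})=C$; since $[\pi:C]<\infty$ this is a finite cover, so by Theorem~\ref{thm:jsjlift} $\widehat{N}$ is again a compact, orientable, irreducible $3$-manifold with empty or toroidal boundary, and $\widehat{N}$ is Seifert fibered if and only if $N$ is. But $\widehat{N}$ has abelian fundamental group $\Z$ or $\Z^2$, so by Table~\ref{tableabelian} — discarding $S^2\times S^1$, which is not irreducible — we get $\widehat{N}\cong D^2\times S^1$ or $\widehat{N}\cong S^1\times S^1\times I$, both of which are Seifert fibered. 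Hence $N$ is Seifert fibered, a contradiction.

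It remains to treat conclusion~(3) of Theorem~\ref{thm:centgen}, in which $C\subseteq h\,\pi_1(M)\,h^{-1}$ for some $h\in\pi$ and some Seifert fibered JSJ component $M$ of $N$. As $N$ is not itself Seifert fibered, $M$ is a proper JSJ piece, so $N$ admits a non-trivial JSJ decomposition and correspondingly $\pi$ is the fundamental group of a graph of groups, with one edge for each JSJ torus, in which every edge group is a copy of $\Z^2$ that is a proper subgroup of each adjacent vertex group. By minimality of the JSJ system this graph-of-groups splitting is non-trivial (no edge inclusion is an isomorphism, and no edge group equals a vertex group), so by Bass--Serre theory the associated tree is infinite and every vertex group, in particular $\pi_1(M)$, has infinite index in $\pi$. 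Then $[\pi:C]\ge[\pi:h\,\pi_1(M)\,h^{-1}]=\infty$, contradicting the hypothesis. This exhausts the possibilities of Theorem~\ref{thm:centgen}, so $C_\pi(g)$ has infinite index for every non-trivial $g$, which proves the statement.

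I expect the one point needing genuine care to be the Bass--Serre bookkeeping in the last paragraph, i.e.\ verifying that a proper JSJ piece $M\subsetneq N$ always satisfies $[\pi_1(N):\pi_1(M)]=\infty$. This reduces to ruling out the degenerate graph-of-groups splittings — an HNN extension in which one inclusion of the edge group is an isomorphism, or an amalgam in which an edge group equals a vertex group — which is exactly where minimality of the JSJ collection of tori is used, together with a direct check of the borderline pieces (a $T^2\times I$ component coming from a $\Sol$-torus-bundle, or a twisted $I$-bundle over the Klein bottle appearing inside a doubled $\Sol$-manifold), in which one confirms by hand that the relevant copy of $\Z^2$, or its index-two overgroup, still has infinite index. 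An alternative to invoking Table~\ref{tableabelian} in the second paragraph would be to route the same argument through Theorem~\ref{thm:solv}, using that $\Sol$-manifold groups and torus-bundle groups are never virtually $\Z^{\le 2}$.
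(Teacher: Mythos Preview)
Your proof is correct and follows exactly the route the paper indicates: the paper simply states that the result ``is an immediate consequence of Theorem~\ref{thm:centgen}'' after recalling the bijection between conjugates of $g$ and $\pi/C_\pi(g)$, and you have spelled out precisely those details. Your case analysis (cyclic or $\Z^2$ centralizer handled via Table~\ref{tableabelian} and Theorem~\ref{thm:jsjlift}; Seifert piece handled via infinite index of vertex groups in the JSJ graph of groups) is the natural way to unpack this, and your closing remarks about the degenerate Bass--Serre cases (the $\Sol$ torus bundle with its $T^2\times I$ piece, and the doubled twisted $I$-bundle over the Klein bottle) correctly identify the only places where any genuine checking is needed.
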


This result was proved (in slightly greater generality) by de la Harpe--Pr\'eaux \cite[p.~563]{dlHP07} using different methods.
We  refer to \cite{dlHP07} for an application of this result to the von Neu\-mann algebra $W_\l^*(\pi_1(N))$.

\medskip

The following was  shown by Hempel~\cite[p.~390]{Hem87} (generalizing work of Noga \cite{No67}) without using the Geometrization Theorem.

\begin{theorem}
Let $N$ be a compact, orientable, irreducible $3$-manifold  with toroi\-dal boundary, and let $S$ be a JSJ torus or a boundary component.
Then $\pi_1(S)$ is a maximal abelian subgroup of $\pi_1(N)$.
\end{theorem}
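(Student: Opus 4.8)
The plan is to show that a JSJ torus or boundary torus $S$ gives a subgroup $\pi_1(S)\cong\Z^2$ which is abelian, and then to verify maximality by examining the centralizer of a nontrivial element of $\pi_1(S)$. First I would observe that if $A$ is any abelian subgroup of $\pi_1(N)$ containing $\pi_1(S)$, then, picking any nontrivial $g\in\pi_1(S)$, we have $A\subseteq C_\pi(g)$ where $\pi=\pi_1(N)$; so it suffices to prove that $C_\pi(g)=\pi_1(S)$ for every nontrivial $g\in\pi_1(S)$ (the inclusion $\pi_1(S)\subseteq C_\pi(g)$ being clear since $\pi_1(S)$ is abelian), as then $A\subseteq\pi_1(S)\subseteq A$.

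The main tool is Theorem~\ref{thm:centgen}. Since $\pi_1(S)\cong\Z^2$ is non-cyclic and contains $g$, the centralizer $C_\pi(g)$ is non-cyclic, so one of the three alternatives of Theorem~\ref{thm:centgen} holds. In case (1) there is a JSJ torus $T$ and $h\in\pi$ with $g\in h\,\pi_1(T)\,h^{-1}$ and $C_\pi(g)=h\,\pi_1(T)\,h^{-1}$; in case (2) likewise with a boundary torus. In either case $C_\pi(g)=h\,\pi_1(S')\,h^{-1}$ for some JSJ torus or boundary torus $S'$, and we must identify this conjugate of $\pi_1(S')$ with $\pi_1(S)$ itself. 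This is where I expect the real work: $g$ lies both in $\pi_1(S)$ and in $h\,\pi_1(S')\,h^{-1}$, and I want to conclude $\pi_1(S)=h\,\pi_1(S')\,h^{-1}$. Since $g$ has infinite order, $g$ and its powers determine an infinite cyclic subgroup lying in two copies of $\Z^2$ coming from incompressible tori in $N$; the graph-of-groups structure of $\pi$ along the JSJ tori forces these two $\Z^2$'s to be conjugate-equal. Concretely, in the Bass--Serre tree for the JSJ splitting, a maximal-$\Z^2$-like vertex/edge group containing a given infinite-order $g$ that is already known to sit in a torus subgroup is unique; one uses that distinct incompressible JSJ tori (and the boundary tori) are non-isotopic and their fundamental groups are not conjugate into one another except in the obvious way, together with the fact that $\pi_1(S)$ is its own normalizer-intersection behaviour under the tree action. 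Alternatively, one can invoke that $\pi_1(S)$ already centralizes $g$, so $\pi_1(S)\subseteq C_\pi(g)=h\pi_1(S')h^{-1}$, and since both sides are isomorphic to $\Z^2$ and $\Z^2$ has no finite-index proper subgroup containing a given primitive-direction element together with a transverse generator equal to such a subgroup only if they coincide—more carefully, any inclusion $\Z^2\hookrightarrow\Z^2$ which is onto in one rank-one direction and whose image still properly contains would contradict... — so I would instead argue that the index $[h\pi_1(S')h^{-1}:\pi_1(S)]$ is finite, forcing the two tori to be isotopic by standard $3$-manifold arguments (two $\pi_1$-injective tori with commensurable fundamental groups are isotopic in an irreducible atoroidal-piece-wise manifold), whence equality.

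It remains to rule out alternative (3) of Theorem~\ref{thm:centgen}: that there is a Seifert fibered JSJ component $M$ and $h\in\pi$ with $g\in h\pi_1(M)h^{-1}$ and $C_\pi(g)=h\,C_{\pi_1(M)}(h^{-1}gh)\,h^{-1}$. Here I would argue that $g\in\pi_1(S)$ with $S$ a JSJ torus or boundary torus, combined with $g\in h\pi_1(M)h^{-1}$, already implies (after conjugating) that $g$ lies in the fundamental group of a boundary torus of the Seifert piece $M$; by Theorem~\ref{thm:centsfs}, if $g$ lies in a peripheral torus subgroup of a Seifert fibered $M$ then $C_{\pi_1(M)}(g)$ is exactly that $\Z^2$ peripheral subgroup (it contains it, and by part (2) or (3) of Theorem~\ref{thm:centsfs} it cannot be larger, since the peripheral subgroup is maximal abelian in the Seifert piece and $g$ generically does not lie in the Seifert fiber group or does so only when the fiber direction is peripheral, which is exactly the JSJ-torus situation). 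So case (3) collapses to cases (1)/(2) anyway: $C_\pi(g)$ is the $\Z^2$ of the torus $S$. In all cases we obtain $C_\pi(g)=\pi_1(S)$, and by the reduction of the first paragraph, $\pi_1(S)$ is maximal abelian. I expect the identification of the two torus subgroups in the JSJ graph of groups (the passage from "conjugate of some torus subgroup" to "equal to $\pi_1(S)$") to be the only genuinely nontrivial point; everything else is bookkeeping over Theorems~\ref{thm:centgen} and~\ref{thm:centsfs}.
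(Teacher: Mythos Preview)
Your overall strategy---apply Theorem~\ref{thm:centgen} and reduce to the Seifert fibered case---is the same as the paper's. The paper's proof is just one line: the Seifert case is well known, and the general case follows immediately from Theorem~\ref{thm:centgen}. But your execution contains a genuine error.

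The claim that $C_\pi(g)=\pi_1(S)$ for \emph{every} nontrivial $g\in\pi_1(S)$ is false. Suppose $S$ is a boundary torus of a Seifert fibered JSJ piece $M$. Then the regular fiber $f$ of $M$ lies in $\pi_1(S)$, and by Theorem~\ref{thm:centsfs}(1) its centralizer $C_{\pi_1(M)}(f)$ is the entire canonical subgroup of $\pi_1(M)$, which strictly contains $\pi_1(S)$. Hence $C_\pi(f)\supsetneq\pi_1(S)$. Your treatment of case~(3) dismisses this with the phrase ``$g$ generically does not lie in the Seifert fiber group,'' but the fiber always lies in every peripheral torus of $M$, so this is not a generic exception---it occurs whenever $S$ abuts a Seifert piece. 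You also write that maximal abelianness of $\pi_1(S)$ in $\pi_1(M)$ forces $C_{\pi_1(M)}(g)=\pi_1(S)$; this is a non sequitur, since the centralizer need not be abelian.

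The fix is not to compute $C_\pi(g)$ exactly, but to use it only as a container. Given an abelian $A\supseteq\pi_1(S)$ and any nontrivial $g\in\pi_1(S)$, you have $A\subseteq C_\pi(g)$. In cases~(1) and~(2) of Theorem~\ref{thm:centgen}, $A\subseteq h\pi_1(T)h^{-1}$, and your finite-index/isotopy sketch for identifying $h\pi_1(T)h^{-1}$ with $\pi_1(S)$ is the right idea. In case~(3), $A\subseteq h\pi_1(M)h^{-1}$ for a Seifert piece $M$, and $\pi_1(S)\subseteq h\pi_1(M)h^{-1}$ forces $S$ to be (isotopic to) a boundary torus of $M$; now invoke the well-known Seifert case directly to conclude that $\pi_1(S)$ is maximal abelian in $h\pi_1(M)h^{-1}$, hence $A=\pi_1(S)$. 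This is precisely how the paper's ``immediately'' is meant to unpack.
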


\begin{proof}
The result is well known to hold for Seifert fibered manifolds. The general case follows immediately from  Theorem~\ref{thm:centgen}.
\end{proof}

Recall that a subgroup $A$ of a group $\pi$ is called \emph{malnormal} if $A\cap gAg^{-1}=1$ for all $g\in \pi\setminus A$. \index{subgroup!malnormal}

\begin{theorem}\label{thm:malnormal}
Let $N$ be a compact, orientable, irreducible $3$-manifold  with empty or toroi\-dal boundary.
\begin{itemize}
\item[(1)]  Let $S$ be a boundary component. If the JSJ component which contains $S$ is hyperbolic, then $\pi_1(S)$ is a malnormal subgroup of $\pi_1(N)$.
\item[(2)] Let $T$ be a JSJ torus. If both of the JSJ components abutting $T$ are hyperbolic, then $\pi_1(T)$ is a malnormal subgroup of $\pi_1(N)$.
\end{itemize}
\end{theorem}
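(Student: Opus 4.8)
The plan is to read off malnormality from the action of $\pi=\pi_1(N)$ on the Bass--Serre tree $\mathcal T$ of the graph-of-groups decomposition coming from the JSJ decomposition (Theorem~\ref{thm:jsj}): each vertex stabilizer is a conjugate of some $\pi_1(M_j)$, each edge stabilizer a conjugate of some $\pi_1(T_i)$, and (as noted after Theorem~\ref{thm:jsj}) all of these inject into $\pi$. The only external input needed is the standard fact that for a finite-volume hyperbolic $3$-manifold $M$ the family of peripheral subgroups $\{\pi_1(R):R\subseteq\partial M\}$ is \emph{malnormal} in $\pi_1(M)$: if $\pi_1(R)\cap g\,\pi_1(R')\,g^{-1}\neq 1$ for boundary tori $R,R'$ and $g\in\pi_1(M)$, then $R=R'$ and $g\in\pi_1(R)$. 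Realizing $\pi_1(M)$ as a torsion-free lattice in $\psl(2,\C)$, as in the proof of Theorem~\ref{thm:centgen}, the $g\,\pi_1(R)\,g^{-1}$ are exactly the maximal parabolic subgroups, each equal to the full $\pi_1(M)$-stabilizer of the single point of $\partial\H^3$ it fixes, and distinct cusps fix distinct points; alternatively one combines Theorem~\ref{thm:hypatoroidal} with the fact that centralizers in $\psl(2,\C)$ of nontrivial elements are abelian.

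The crux is to show that the relevant subgroup $A$---equal to $\pi_1(T)$ in part~(2) and to $\pi_1(S)$ in part~(1)---fixes a \emph{unique} simplex $\sigma$ of $\mathcal T$, namely the edge carrying $\pi_1(T)$, respectively the vertex carrying the hyperbolic component $M$ that contains the boundary torus $S$. In part~(2), let $\sigma$ be an edge with $\mathrm{Stab}(\sigma)=\pi_1(T)$ and endpoints $u_1,u_2$ with stabilizers conjugate to the (hyperbolic) $\pi_1(M_1),\pi_1(M_2)$; if $\pi_1(T)$ fixed a further simplex it would fix some edge $e'\neq\sigma$ at, say, $u_1$, and then inside $\mathrm{Stab}(u_1)$ both $\mathrm{Stab}(\sigma)$ and $\mathrm{Stab}(e')$ are conjugates of peripheral subgroups of $M_1$---distinct ones, because distinct edges of $\mathcal T$ at $u_1$ give distinct cosets and, by malnormality, distinct cosets give distinct conjugates---so by malnormality $\mathrm{Stab}(\sigma)\cap\mathrm{Stab}(e')=1$, contradicting $\pi_1(T)\neq 1$. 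In part~(1), $\pi_1(S)$ sits inside $\pi_1(M)$ as a peripheral subgroup; since $S$ is a boundary torus of $N$ and not a JSJ torus, it is not one of the edge groups at the vertex $\sigma$, and every edge group there is a conjugate of a \emph{different} peripheral subgroup of $M$, so again malnormality gives $\pi_1(S)\cap\mathrm{Stab}(e')=1$ for every incident edge $e'$; since $\pi_1(S)\neq 1$ fixes $\sigma$ but no incident edge, and any nontrivial element fixing two distinct vertices of a tree fixes an edge between them, $\sigma$ is the only simplex fixed by $\pi_1(S)$.

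Granting the uniqueness of $\sigma$, malnormality of $A$ in $\pi$ is immediate: if $1\neq x\in A\cap\gamma A\gamma^{-1}$, then $x$ fixes $\sigma$ and also $\gamma\sigma$ (the unique simplex fixed by $\gamma A\gamma^{-1}$), so if $\sigma\neq\gamma\sigma$ then $x$ fixes the geodesic from $\sigma$ to $\gamma\sigma$, hence a simplex other than $\sigma$---contradiction; thus $\gamma\sigma=\sigma$ and $\gamma\in\mathrm{Stab}(\sigma)$. In part~(2), $\mathrm{Stab}(\sigma)=\pi_1(T)$, so $\gamma\in\pi_1(T)$ and we are done; in part~(1), $\gamma\in\pi_1(M)$, whereupon malnormality of the single peripheral subgroup $\pi_1(S)$ in the hyperbolic $\pi_1(M)$ gives $\gamma\in\pi_1(S)$. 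Degenerate configurations cause no trouble: if $N$ itself is hyperbolic the tree is a point and the argument collapses to malnormality inside $\pi_1(M)$, and if a hyperbolic JSJ piece is glued to $T$ on both sides the two endpoints of $\sigma$ simply carry conjugate vertex groups. The part of this plan I expect to demand the most care is precisely the uniqueness lemma in part~(2)---keeping track of which conjugates of peripheral subgroups of $M_1$ (and $M_2$) occur as edge stabilizers at a given vertex, especially in self-gluing patterns. A more computational alternative would be to substitute $x$ directly into the centralizer trichotomy of Theorem~\ref{thm:centgen} and rule out case~(3), but controlling the conjugating elements there seems less transparent, so I would keep the tree argument as the main line.
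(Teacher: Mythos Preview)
Your argument is correct. The paper itself does not give a proof of Theorem~\ref{thm:malnormal}; it simply cites de~la~Harpe--Weber \cite{dlHW11} for part~(1) and \cite{Fri11} for an alternative, remarking that part~(2) ``can be proved using the same techniques.'' Your Bass--Serre tree argument is a clean, self-contained route that the paper does not spell out: you reduce malnormality in $\pi_1(N)$ to the malnormality of the peripheral family inside each hyperbolic vertex group, and the latter is the standard parabolic-fixed-point fact you state. This is exactly the kind of combination argument one expects, and it handles the self-gluing and trivial-tree cases correctly.

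One small expository point worth tightening: your ``uniqueness of $\sigma$'' paragraph is phrased as a statement about the whole subgroup $A$ (``if $\pi_1(T)$ fixed a further simplex\ldots''), but what you actually need---and what your computation $\mathrm{Stab}(\sigma)\cap\mathrm{Stab}(e')=1$ actually proves---is the element-wise statement that no \emph{nontrivial element} of $A$ fixes any simplex other than $\sigma$. The final malnormality step invokes precisely this element-wise version when you say ``$x$ fixes a simplex other than $\sigma$---contradiction.'' So the logic is sound; just state the uniqueness lemma for nontrivial elements rather than for the group, and the reader will not have to backtrack. Your instinct that the centralizer trichotomy of Theorem~\ref{thm:centgen} gives a messier alternative is right---the tree argument is the cleaner line here.
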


The first statement was proved by de la Harpe--Weber \cite[Theorem~3]{dlHW11} and can be viewed
as a strengthening of the previous theorem. We refer to \cite[Theorem~4.3]{Fri11} for an alternative proof.
The second statement can be proved using the same techniques.

\medskip

The following theorem was first proved by Epstein \cite{Ep61d,Ep62}:

\begin{theorem}
Let $N$ be a compact, orientable, irreducible $3$-manifold  with empty or toroi\-dal boundary.
If $\pi_1(N)\cong A\times B$ is isomorphic to a direct product of two non-trivial groups, then $N=S^1\times \Sigma$ with $\Sigma$ a surface.
\end{theorem}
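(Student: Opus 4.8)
The plan is to pass to the aspherical setting via the Sphere Theorem and then exploit the rigidity of direct-product decompositions of Poincar\'e duality groups. If $\pi:=\pi_1(N)$ is finite then $N$ is spherical by the Elliptization Theorem; this case must be excluded, since otherwise the statement fails (take $N=L(6,1)$, so that $\pi_1(N)\cong\Z/2\times\Z/3$), so we assume $\pi$ is infinite. Since $N$ is orientable and irreducible, the Sphere Theorem (Theorem~\ref{thm:sphere}) gives $\pi_2(N)=0$, and a Hurewicz argument then shows that $N$ is aspherical; hence $\pi$ is torsion-free and of type $F$, and its direct factors $A$, $B$ are torsion-free of type $FP$ and infinite (nontrivial torsion-free retracts of $\pi$). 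The favourable case is when one factor, say $A$, is infinite cyclic: then $A$ is a central $\Z$ in $\pi$, so $N$ is Seifert fibered by Theorem~\ref{thm:infcyclicnormal}. The whole point, then, is to show that one factor is $\cong\Z$ and the other is a surface group.

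Suppose first that $\partial N=\emptyset$. Then $\pi$ is an orientable $\pd_3$-group, and by the standard structure theorem for Poincar\'e duality groups that split as direct products, $A$ is a $\pd_r$-group and $B$ a $\pd_s$-group with $r+s=3$; as $A,B\neq 1$ and the only $\pd_0$-group is trivial, $\{r,s\}=\{1,2\}$. So, up to swapping, $A\cong\Z$ (the unique $\pd_1$-group) and $B$ is a $\pd_2$-group; a comparison of dualizing modules, together with the orientability of $N$, forces $B$ to be an \emph{orientable} $\pd_2$-group, that is, $B\cong\pi_1(\Sigma)$ for a closed orientable surface $\Sigma$, with $\Sigma\neq S^2$ because $B\neq 1$. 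Hence $\pi_1(N)\cong\pi_1(S^1\times\Sigma)$, and since $N$ and $S^1\times\Sigma$ are both closed, orientable, prime, and non-spherical, Theorem~\ref{thm:primepi}(2) produces a homeomorphism $N\cong S^1\times\Sigma$. When $\partial N$ is non-empty and toroidal one argues along the same lines using the relative Poincar\'e duality of the pair $(\pi;\mathcal{P})$, where $\mathcal{P}$ is the family of peripheral $\Z^2$-subgroups: one obtains $\pi_1(N)\cong\Z\times F_n$ for some finitely generated free group $F_n$, whence $N$ is an aspherical Haken $3$-manifold with the same fundamental group --- and, one checks, the same peripheral structure --- as $S^1\times\Sigma$, where $\Sigma$ is the compact surface with $\pi_1(\Sigma)\cong F_n$, so that $N\cong S^1\times\Sigma$ by Waldhausen's rigidity theorem for Haken manifolds (\cite{Wan68a}).

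I expect the toroidal-boundary case to be the main obstacle. Where the closed case rested on the clean dichotomy $\{r,s\}=\{1,2\}$ for $\pd_3$-groups, here one must instead work with $\pd_3$-pairs, or equivalently with cohomological dimension: one has to know that a group of type $FP$ and cohomological dimension $2$ that decomposes as a nontrivial direct product cannot be $F_n\times F_m$ with $n,m\geq 2$. This can itself be extracted from Theorems~\ref{thm:centgen} and~\ref{thm:centsfs} --- the centralizers force $N$ to be Seifert fibered and the canonical subgroup of the fibered piece to coincide with two genuinely different subgroups, a contradiction --- but it takes some care. A second delicate point is that, in order to apply Waldhausen's theorem, one must match the \emph{peripheral structure} of $N$ with that of $S^1\times\Sigma$ and not merely the fundamental group; on the Seifert-fibered side this amounts to ruling out exceptional fibers and nontrivial Euler data for the fibration of $N$ over its (orientable) base orbifold. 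An alternative throughout is Epstein's original, more topological argument, which constructs the splitting surface $\Sigma$ directly.
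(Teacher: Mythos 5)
Your argument is correct where it is complete, but it takes a genuinely different route from the paper's. The paper disposes of the whole theorem with centralizers: for non-trivial $a\in A$ and $b\in B$ the centralizer $C_\pi(a)$ contains $\langle a\rangle\times\langle b\rangle\cong\Z^2$, so Theorem~\ref{thm:centgen} rules out every case except $N$ Seifert fibered, and an elementary analysis of the Seifert invariants (no exceptional fibers, orientable base, trivial Euler class) then gives $N=S^1\times\Sigma$. You instead settle the closed case by the product theorem for Poincar\'e duality groups together with the Eckmann--Linnell--M\"uller classification of $\pd_2$-groups and rigidity (Theorem~\ref{thm:primepi}); that is a complete and correct proof of the closed case, and it isolates cleanly why the splitting must be $\Z\times(\text{surface group})$. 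Your observation that finite fundamental groups must be excluded is also well taken: $L(6,1)$ is a genuine counterexample to the literal statement, which is only intended for $\pi_1(N)$ infinite.

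The bounded case, as you concede, is where your sketch has gaps, but they close more easily than you suggest. Since $\partial N\neq\emptyset$ and $N$ is aspherical, $\mathrm{cd}\,\pi\leq 2$; as $\mathrm{cd}(G\times\Z)=\mathrm{cd}(G)+1$ for $G$ of finite cohomological dimension, each factor has $\mathrm{cd}=1$ and is therefore a non-trivial finitely generated free group (each is a retract of $\pi$). To exclude $F_n\times F_m$ with $n,m\geq 2$ you do not need Theorems~\ref{thm:centgen} and~\ref{thm:centsfs}: $F_2\times F_2$ contains finitely generated subgroups that are not finitely presented (Stallings--Bieri), contradicting coherence of $3$-manifold groups (C.\ref{C.scottcore}). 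Hence $\pi\cong\Z\times F_n$. For the endgame, rather than matching peripheral structures so as to invoke Waldhausen, note that the $\Z$-factor is an infinite cyclic normal subgroup, so $N$ is Seifert fibered by Theorem~\ref{thm:infcyclicnormal}; by the remark following that theorem this $\Z$ is the fiber subgroup of some Seifert fibration, the base orbifold group is $\pi/\Z\cong F_n$, which is torsion-free, so the bounded base orbifold has no cone points, and an $S^1$-bundle over a surface with non-empty boundary is trivial. This is exactly the paper's ``elementary argument'' for the Seifert fibered case, so the two approaches converge there.
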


\begin{proof}
If $\pi_1(N)\cong A\times B$ is isomorphic to a direct product of two non-trivial groups, then any element in $A$ and in $B$ has a non-trivial centralizer,
it follows easily from Theorem \ref{thm:centgen} that $N$ is a Seifert fibered space.
The case of a Seifert fibered space then follows from an elementary argument.
\end{proof}

\medskip

Given a group $\pi$ we define an \emph{ascending sequence of centralizers of length $m$} to be a sequence of subgroups of the form:
\[ C_{\pi}(g_1) \varsubsetneq C_{\pi}(g_{2}) \varsubsetneq\dots \varsubsetneq C_{\pi}(g_{m}).\]
We define $m(\pi)$ to be the maximal length of an ascending sequence of centralizers.  Note that if $m(\pi)<\infty$, then $\pi$ satisfies in particular property Max-c (maximal condition on centralizers); see \cite{Kr90a} for details. If $N$ is a compact, orientable, irreducible, non-spherical $3$-manifold  with empty or toroidal boundary, then it follows from Theorems~\ref{thm:centgen} and~\ref{thm:centsfs} that $m(\pi_1(N))\leq 3$. It follows from \cite[Lemma~5]{Kr90a} that $m(\pi_1(N))\leq 16$ for any spherical $N$. It now follows from \cite[Lemma~4.2]{Kr90a}, combined with the basic facts of Sections~\ref{section:prelim1} and~\ref{section:prelim2} and some elementary arguments that  $m(\pi_1(N))\leq 17$ for any compact $3$-manifold. We refer to \cite{Kr90a} for an alternative proof of this fact which does not require the Geometrization Theorem. We also refer to \cite{Hil06} for a different approach.

\medskip

We finish this section by illustrating how the results discussed so far can be used to quickly determine all $3$-manifolds whose fundamental groups have a given interesting group-theoretic property. As an example we describe all $3$-manifold groups which are CA and CSA.
A group is said to be \emph{CA} (short for \emph{centralizer abelian}\/) if  the centralizer of any non-identity element is abelian. Equivalently, a group is CA if and only if the intersection of any two distinct  maximal abelian subgroups is trivial, if and only if ``commuting'' is an equivalence relation on the set of non-identity elements. For this reason, CA groups are also sometimes called ``commutative transitive groups'' (or CT groups, for short).

\begin{lemma}
Let $\pi$ be a CA group and $g\in\pi$, $g\neq 1$. If $C_\pi(g)$ is infinite cyclic, then $C_\pi(g)$ is self-normalizing.
\end{lemma}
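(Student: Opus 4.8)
The plan is to compute the normalizer $N_{\pi}(C)$ of $C := C_{\pi}(g)$ directly, using that $C$ is cyclic together with the fact that in a CA group ``commuting'' is transitive on non-identity elements.

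First I would record two structural facts. Since $\pi$ is CA and $g\neq 1$, the subgroup $C=C_{\pi}(g)$ is abelian, and it is moreover \emph{maximal} abelian: an abelian subgroup properly containing $C$ would contain an element commuting with $g$ yet lying outside $C_{\pi}(g)$, which is absurd. Write $C=\langle c\rangle$ with $c$ of infinite order. Next I claim $C_{\pi}(c)=C$. The inclusion $C\subseteq C_{\pi}(c)$ is immediate since $C$ is abelian. Conversely, if $x\in C_{\pi}(c)$ with $x\neq 1$, then $x$ commutes with $c\neq 1$ and $c$ commutes with $g\neq 1$ (both $c$ and $g$ lie in the abelian group $C$), so by commutative transitivity $x$ commutes with $g$, i.e.\ $x\in C_{\pi}(g)=C$; since also $1\in C$, this gives $C_{\pi}(c)\subseteq C$.

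Now take $n\in N_{\pi}(C)$. Since $n$ normalizes $C=\langle c\rangle\cong\Z$, conjugation by $n$ is an automorphism of $C$, hence $ncn^{-1}=c^{\varepsilon}$ for some $\varepsilon\in\{+1,-1\}$. If $\varepsilon=+1$, then $n\in C_{\pi}(c)=C$ and we are done. It remains to rule out $\varepsilon=-1$. In that case $n^{2}cn^{-2}=n c^{-1} n^{-1}=c$, so $n^{2}\in C_{\pi}(c)=C$; writing $n^{2}=c^{m}$ and conjugating by $n$ gives $c^{m}=nc^{m}n^{-1}=c^{-m}$, whence $c^{2m}=1$, so $m=0$ and $n^{2}=1$, while $n\notin C$ (as $ncn^{-1}=c^{-1}\neq c$). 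So a failure of the conclusion would force an involution $n$ inverting $c$; but then $C\cap nCn^{-1}=C\neq 1$ with $n\notin C$, which is excluded by malnormality of the maximal abelian subgroup $C$ (equivalently, in the torsion-free setting of the $3$-manifold groups under consideration, $\pi$ has no element of order $2$). Hence $\varepsilon=-1$ cannot occur and $N_{\pi}(C)=C$.

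The main obstacle is precisely this orientation-reversing case $ncn^{-1}=c^{-1}$: commutative transitivity alone pins down only the orientation-preserving part of the normalizer, namely $C_{\pi}(c)=C$, and killing the flip is the one step that needs the extra structural input (malnormality of maximal abelian subgroups, i.e.\ the CSA property, or absence of $2$-torsion — note $D_{\infty}$ shows the flip genuinely occurs in some CA groups). The remaining ingredients — that $C$ is maximal abelian, that $C_{\pi}(c)=C$ by transitivity of commuting, and that $\operatorname{Aut}(\Z)=\{\pm 1\}$ — are routine.
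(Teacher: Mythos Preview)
Your approach and the paper's are essentially the same: both note that a normalizer element conjugates the generator $x$ (your $c$) to $x^{\pm 1}$, square to land in the centralizer, and then invoke CA transitivity. The paper's argument is slightly slicker in that it avoids the case split: from $yxy^{-1}=x^{\pm 1}$ it observes $y^2x=xy^2$, and then uses commutative transitivity through $y^2$ (namely $x\sim y^2$ and $y^2\sim y$, hence $x\sim y$, hence $y\sim g$) to place $y$ in $C$ directly.

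You are entirely right, however, that this step tacitly requires $y^{2}\neq 1$; the paper's proof has the same unacknowledged gap. Your $D_{\infty}$ example is a genuine counterexample to the lemma as stated for arbitrary CA groups: there $C_{D_{\infty}}(r)=\langle r\rangle$ is infinite cyclic but normal, not self-normalizing. So the lemma really needs the extra hypothesis that $\pi$ has no $2$-torsion, which holds in the application (Corollary~\ref{cor:csa}) since the relevant $3$-manifold groups are torsion-free. One caution: your alternative fix via malnormality of maximal abelian subgroups (i.e.\ CSA) would be circular in the paper's context, since the lemma is precisely what is used to promote CA to CSA; the ``no $2$-torsion'' route is the correct repair.
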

\begin{proof}
Suppose $C:=C_\pi(g)$ is infinite cyclic. Let $x$ be a generator for $C$, and let $y\in \pi$ such that
$yC=Cy$. Then $yxy^{-1} = x^{\pm 1}$ and hence $y^2 x y^{-2} = x$. Thus $x$ commutes with $y^2$, and since $y^2$ commutes with $y$, we obtain that $x$ commutes with $y$. Hence $y$ commutes with $g$ and thus $y\in C_\pi(g)=C$.
\end{proof}

The class of CSA groups was introduced by  Myasnikov and Remeslennikov \cite{MyR96} as a natural (in the sense of first-order logic, universally axiomatizable) generalization of torsion-free word-hyperbolic groups (see Section~\ref{section:wise} below).
A group is said to be \emph{CSA}
(short for \emph{conjugately separated abelian}\/) if all of its maximal abelian subgroups are malnormal.
Alternatively, a group is CSA if and only if the centralizer of every non-identity element is abelian and self-normalizing. (As a consequence, every subgroup of a CSA group is again CSA.)
It is easy to see that CSA~$\Rightarrow$~CA. There are CA groups which are not CSA, e.g., the infinite dihedral group, see \cite[Remark~5]{MyR96}. But for $3$-manifold groups, we have:

\index{group!centralizer abelian (CA)}
\index{$3$-manifold group!centralizer abelian (CA)}
\index{group!conjugately separated abelian (CSA)}
\index{$3$-manifold group!conjugately separated abelian (CSA)}

\begin{corollary}\label{cor:csa}
Let $N$ be a compact, orientable, irreducible $3$-manifold with empty or toroidal boundary, and suppose $\pi=\pi_1(N)$ is non-abelian. Then the following are equivalent:
\begin{itemize}
\item[(1)] Every JSJ component of $N$ is hyperbolic.
\item[(2)] $\pi$ is CA.
\item[(3)] $\pi$ is CSA.
\end{itemize}
\end{corollary}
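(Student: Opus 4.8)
The plan is to prove the cycle $(1)\Rightarrow(3)\Rightarrow(2)\Rightarrow(1)$, using the centralizer theorems (Theorems~\ref{thm:centgen} and~\ref{thm:centsfs}) together with the malnormality statement in Theorem~\ref{thm:malnormal}. The implication $(3)\Rightarrow(2)$ is immediate from the general fact, recorded in the text preceding the corollary, that CSA implies CA. So the two substantive steps are $(1)\Rightarrow(3)$ and $(2)\Rightarrow(1)$.

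For $(1)\Rightarrow(3)$, assume every JSJ component of $N$ is hyperbolic; in particular $N$ has no Seifert fibered JSJ component. I would first handle the case where $N$ itself is hyperbolic (no JSJ tori): then $\pi\subseteq\psl(2,\C)$ is discrete and torsion-free, every non-trivial element has abelian centralizer (isomorphic to $\Z$ or $\Z^2$), and by Theorem~\ref{thm:hypatoroidal} a maximal abelian subgroup isomorphic to $\Z^2$ is a conjugate of a peripheral $\pi_1(S)$, which is malnormal by Theorem~\ref{thm:malnormal}(1); a maximal abelian subgroup $\cong\Z$ is self-normalizing because $C_\pi(x)$ is abelian and self-normalizing (a standard $\psl(2,\C)$ computation, or the Lemma just above the corollary). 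Hence $\pi$ is CSA. For the case where $N$ has at least one JSJ torus: invoke Theorem~\ref{thm:centgen}. Since no JSJ component is Seifert fibered, possibility~(3) of that theorem never occurs, so for every non-trivial $g$ with non-cyclic centralizer, $C_\pi(g)$ is a conjugate of $\pi_1(T)$ for a JSJ torus $T$ or of $\pi_1(S)$ for a boundary component $S$. In either case $C_\pi(g)\cong\Z^2$ is abelian; and it is malnormal in $\pi$ by Theorem~\ref{thm:malnormal}(1),(2), precisely because the JSJ component(s) adjacent to $T$ (resp. containing $S$) are hyperbolic. For $g$ with $C_\pi(g)$ infinite cyclic, maximality and self-normalization again follow from the Lemma. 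Thus every maximal abelian subgroup of $\pi$ is malnormal, i.e.\ $\pi$ is CSA.

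For $(2)\Rightarrow(1)$, argue contrapositively: suppose some JSJ component $M$ of $N$ is Seifert fibered, and produce a non-identity element of $\pi$ whose centralizer is non-abelian, contradicting CA. Here one uses that $M$ is orientable, irreducible and non-spherical (it is a JSJ piece of a compact orientable irreducible $N$), so by Theorem~\ref{thm:infcyclicnormal}-type considerations its Seifert fiber subgroup is infinite cyclic and normal in $\pi_1(M)$. If $N=M$ is Seifert fibered, one exhibits directly a non-abelian centralizer: take $g$ a regular fiber; by Theorem~\ref{thm:centsfs}(1), $C_\pi(g)$ is the canonical subgroup, which for a non-abelian Seifert fibered $\pi$ is non-abelian (it surjects onto $\pi_1$ of the base orbifold, or one cites the classification $C_\pi(g)\in\{\Z,\Z^2,\text{Klein bottle group}\}$ and notes that unless $\pi$ is abelian one can arrange the canonical subgroup to be the Klein bottle group or larger — more cleanly, since $\pi$ is non-abelian and the fiber subgroup is central in the canonical subgroup, if the canonical subgroup were abelian then... ). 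If instead $N$ has a non-trivial JSJ decomposition with $M$ a Seifert fibered piece, pick $g\in\pi_1(M)$ the generator of the Seifert fiber subgroup; by Theorem~\ref{thm:centgen}(3), $C_\pi(g)=C_{\pi_1(M)}(g)$ up to conjugacy, which is the canonical subgroup of $\pi_1(M)$, again non-abelian. Either way $\pi$ fails to be CA, completing the contrapositive.

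The main obstacle I anticipate is the bookkeeping in $(2)\Rightarrow(1)$: one must be careful that the relevant Seifert fibered piece $M$ really has \emph{non-abelian} canonical subgroup. This can fail only for very small Seifert fibered manifolds (solid torus, $T^2\times I$, twisted $I$-bundle over the Klein bottle, and spherical pieces), and one must check that these either cannot occur as genuine JSJ pieces in the situation at hand (the solid torus and $T^2\times I$ are excluded since JSJ tori are incompressible and essential) or, in the case where $N$ itself is one of these small manifolds, that $\pi$ is then abelian and hence excluded by hypothesis; the twisted $I$-bundle over the Klein bottle has non-abelian (Klein bottle) fundamental group with a non-abelian centralizer of the fiber, so it is genuinely a counterexample to CA, as required. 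Assembling this small-manifold analysis cleanly is the only delicate point; everything else is a direct quotation of Theorems~\ref{thm:centgen}, \ref{thm:centsfs} and \ref{thm:malnormal}.
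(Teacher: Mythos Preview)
Your proposal is correct and follows essentially the same route as the paper: $(1)\Rightarrow(3)$ via Theorem~\ref{thm:centgen} (centralizers are abelian), the preceding Lemma (cyclic centralizers are self-normalizing), and Theorem~\ref{thm:malnormal} (non-cyclic centralizers are malnormal); and $(2)\Rightarrow(1)$ by producing a non-abelian centralizer from the Seifert fiber element via Theorem~\ref{thm:centsfs}(1).

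One small simplification worth noting: for $(2)\Rightarrow(1)$ the paper bypasses your appeal to Theorem~\ref{thm:centgen}(3) entirely. Since subgroups of CA groups are CA, if $\pi$ is CA then $\pi_1(M)$ is CA for any JSJ piece $M$; so it suffices to work inside $\pi_1(M)$ directly and apply Theorem~\ref{thm:centsfs}(1) there. This sidesteps the need to identify $C_\pi(g)$ with $C_{\pi_1(M)}(g)$. Your ``small manifold bookkeeping'' concern is legitimate---one must rule out that the canonical subgroup of $\pi_1(M)$ is abelian---but this reduces to checking that $\pi_1(M)$ is not virtually abelian, which under the standing hypotheses (JSJ tori are incompressible and essential, $\pi$ is non-abelian) excludes the problematic small pieces exactly as you outline. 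The paper leaves this implicit.
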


\begin{proof}
We only need to show (1)~$\Rightarrow$~(3) and (2)~$\Rightarrow$~(1).
Suppose all JSJ components of $N$ are hyperbolic. Then by Theorem~\ref{thm:centgen}, the centralizer $C_\pi(g)$ of each $g\neq 1$ in $\pi$ is abelian (so $\pi$ is CA). It remains to show that each such $C_\pi(g)$ is self-normalizing. If $C_\pi(g)$ is cyclic, then this follows from the preceding lemma, and if $C_\pi(g)$ is not cyclic by  Theorems~\ref{thm:centgen} and \ref{thm:malnormal}.
This shows (1)~$\Rightarrow$~(3).
The implication (2)~$\Rightarrow$~(1) follows easily from Theorem~\ref{thm:centsfs},~(1) and the fact that subgroups of CA groups are CA.
\end{proof}

\section{Consequences of the Geometrization Theorem}\label{section:diagramgeom}

\noindent
In Section \ref{section:prelim2} we argued that for most purposes it suffices to study the fundamental groups of
compact, orientable,  $3$-manifolds~$N$ with empty or toroidal  boundary.
The following theorem for such $3$-manifolds is an immediate consequence of the Prime Decomposition Theorem and the Geometric Decomposition Theorem
(Theorems  \ref{thm:prime} and \ref{thm:geom2}) and Table \ref{tablegeoms}.

\begin{theorem}
Let $N$ be a compact, orientable $3$-manifold with empty or toroidal boundary. Then $N$ admits a decomposition
\[ N\cong  S_1\#\dots \# S_k \,\,\# \,\,T_1\#\dots \# T_l \,\,\#\,\, N_1\#\dots \# N_m\]
as a connected sum of orientable prime $3$-manifolds, where:
\begin{itemize}
\item[(1)] $S_1,\dots,S_k$ are spherical;
\item[(2)] for any $i=1,\dots,l$ the manifold $T_i$ is either of the form $S^1\times S^2$, $S^1\times D^2$, $S^1\times S^1\times I$, or it equals the twisted $I$-bundle over the Klein bottle, or it admits a finite solvable cover which is a torus bundle; and
\item[(3)] $N_1,\dots,N_m$ are irreducible $3$-manifolds which are either hyperbolic, or finitely covered by an $S^1$-bundle over a surface $\Sigma$ with $\chi(\Sigma)<0$, or they have a non-trivial geometric decomposition.
\end{itemize}
\end{theorem}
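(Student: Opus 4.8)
The plan is to combine the Prime Decomposition Theorem (Theorem~\ref{thm:prime}), the Geometric Decomposition Theorem (Theorem~\ref{thm:geom2}) and Table~\ref{tablegeoms}; essentially all that is required is to sort the prime summands into the three blocks, so I will just indicate how the cases match up. First I would apply Theorem~\ref{thm:prime} to write $N\cong P_1\#\dots\#P_n$ with each $P_j$ an orientable prime $3$-manifold. The connected sums are performed along $2$-spheres in the interior, so each $P_j$ is again compact and orientable and inherits a (possibly empty) subcollection of the torus boundary components of $N$; in particular each $P_j$ has empty or toroidal boundary, and I may treat the $P_j$ one at a time. If $P_j\cong S^1\times S^2$, I assign it to the block $T_1\#\dots\#T_l$. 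Otherwise, being orientable and prime with no spherical boundary components, $P_j$ is irreducible by \cite[Lemma~3.13]{Hem76}. If $P_j$ is one of $S^1\times D^2$, $S^1\times S^1\times I$ or the twisted $I$-bundle over the Klein bottle, I again assign it to the $T$-block.

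In all remaining cases $P_j$ satisfies the hypotheses of Theorem~\ref{thm:geom2}, which gives two possibilities. If $P_j$ admits a non-trivial geometric decomposition, I assign it to the block $N_1\#\dots\#N_m$; it then has the form required in~(3). Otherwise $P_j$ is geometric, and I read off its type from Table~\ref{tablegeoms} together with Theorems~\ref{thm:geoms} and~\ref{thm:sol}. If $P_j$ is spherical, I assign it to $S_1\#\dots\#S_k$. The geometry $S^2\times\R$ does not arise here: the only orientable prime $S^2\times\R$-manifolds are $S^1\times S^2$ and $\rp^3\#\rp^3$, neither of which is irreducible, and $S^1\times S^2$ was already removed. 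If $P_j$ has Euclidean, $\Nil$ or $\Sol$ geometry, then by Table~\ref{tablegeoms} (and Theorem~\ref{thm:sol} in the $\Sol$ case) $P_j$ is finitely covered by a torus bundle; since $P_j$ is none of $\rp^3\#\rp^3$, $S^1\times D^2$, $S^1\times S^2$, $S^1\times S^1\times I$ or the twisted $I$-bundle over the Klein bottle, the argument establishing Theorem~\ref{thm:virtsolv} (following \cite{Sco83a}) shows this cover can be taken to be a \emph{solvable} finite cover, so $P_j$ has the form in~(2). If $P_j$ has $\H^2\times\R$ or $\widetilde{\sl(2,\R)}$ geometry, then by Table~\ref{tablegeoms} $P_j$ is finitely covered by $S^1\times\Sigma$, respectively by a non-trivial $S^1$-bundle over $\Sigma$, for some surface $\Sigma$ with $\chi(\Sigma)<0$; either way $P_j$ is finitely covered by an $S^1$-bundle over such a surface, so it has the form in~(3). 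Finally, if $P_j$ is hyperbolic it has the form in~(3). Relabelling the $P_j$ as $S_i$, $T_i$ or $N_i$ according to the block to which they were assigned yields the asserted decomposition.

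I do not expect any serious obstacle: the argument is essentially bookkeeping on top of the Prime Decomposition Theorem, the Geometric Decomposition Theorem and Table~\ref{tablegeoms}. The only points needing attention are (i) first removing $S^1\times S^2$ and the three small exceptional Seifert manifolds, so that Theorem~\ref{thm:geom2} applies and so that $S^2\times\R$ can be excluded from the ``geometric and irreducible'' branch; and (ii) the claim that Euclidean, $\Nil$ and $\Sol$ manifolds admit a finite \emph{solvable} cover which is a torus bundle, which is precisely what is extracted from \cite{Sco83a} in the proof of Theorem~\ref{thm:virtsolv} and may simply be quoted from there.
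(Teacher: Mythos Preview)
Your proposal is correct and follows exactly the approach indicated in the paper, which simply states that the theorem ``is an immediate consequence of the Prime Decomposition Theorem and the Geometric Decomposition Theorem (Theorems~\ref{thm:prime} and~\ref{thm:geom2}) and Table~\ref{tablegeoms}.'' You have spelled out the bookkeeping that the paper leaves implicit, including the careful removal of $S^1\times S^2$ and the three exceptional Seifert pieces before invoking Theorem~\ref{thm:geom2}, and the exclusion of the $S^2\times\R$ case.
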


Note that the decomposition in the previous theorem can also be stated in terms of fundamental groups:
\bn
\item $S_1,\dots,S_k$ are the prime components of $N$ with finite fundamental groups,
\item $T_1,\dots,T_l$ are the prime components of $N$ with infinite solvable fundamental groups,
\item $N_1,\dots,N_m$ are the prime components of $N$ with fundamental groups
which are neither finite nor solvable.
\en
Since the first two types of $3$-manifolds are well understood, we will henceforth, for the most part,
restrict ourselves  to  the study of fundamental groups of  compact, orientable, irreducible $3$-manifolds~$N$ with empty or toroidal  boundary and such that $\pi_1(N)$ is neither finite nor solvable.
(Note that this implies that the boundary is incompressible, since the only irreducible $3$-manifold with compressible, toroidal boundary is $S^1\times D^2$.)

\medskip

In this section we will now summarize, in Diagram~1, various results on $3$-manifold groups which do not rely on the work of Agol, Kahn--Markovic and Wise.  Many of these results do, however, rely on the Geometrization Theorem.

\medskip

We first give some of the  definitions which we will use in  Diagram~1.
The definitions are roughly in the order that they appear in the diagram.
\newcounter{itemcounterm}

\begin{list}
{{(A.\arabic{itemcounterm})}}
{\usecounter{itemcounterm}\leftmargin=2em}


\item A space $X$ is the Eilenberg--Mac~Lane space for a group $\pi$, written as $X=K(\pi,1)$, if $\pi_1(X)\cong \pi$ and if $\pi_i(X)=0$ for $i\geq 2$.
\item The deficiency of a finite presentation $\ll g_1,\dots,g_k \, |\, r_1,\dots,r_l\rr$ of a group is defined to be $k-l$. \index{group!deficiency}
The \emph{deficiency of a finitely presented group $\pi$} is defined to be the maximum over the deficiencies of all finite presentations of $\pi$.  Note that some authors use the negative of this quantity.
\item A finitely presented group is called \emph{coherent} if each of its finitely generated subgroups is also finitely presented. \index{group!coherent}
\item
The $L^2$-Betti numbers $b_i^{(2)}(X,\a)$, for a given space $X$ and a homomorphism $\a\colon \pi_1(X)\to \Gamma$,  were introduced by Atiyah \cite{At76}; we refer to \cite{Lu02} for details of the definition. If the group homomorphism is the identity map, then we just write $b_i^{(2)}(X)=b_i^{(2)}(X,\id)$.
\item\label{A.cofinallimit}
A  \emph{cofinal \textup{(}normal\textup{)} filtration} of a group $\pi$ is a nested sequence $\{\pi_i\}_{i\in \N}$ of finite-index (normal) subgroups of $\pi$
such that $\bigcap_{i\in \N} \pi_i=\{1\}$.
\index{group!cofinal filtration}
 Let $N$ be a $3$-manifold. A  \emph{cofinal \textup{(}regular\textup{)} tower} of $N$ is a sequence $\{\ti{N}_i\}_{i\in\N}$ of connected covers
 of $N$ such that $\{\pi_1(\ti{N}_i)\}_{i\in \N}$ is a cofinal (normal) filtration of $N$.
 \index{$3$-manifold!cofinal tower}
Let $R$ be an integral domain.
If the limit
\[ \lim_{i\to \infty} \frac{b_1(\ti{N}_i;R)}{[N:\ti{N}_i]} \]
exists for any cofinal regular tower $\{\ti{N}_i\}$ of  $N$, and if all the limits agree,
then we denote this unique limit by
\[ \lim_{\ti{N}} \frac{b_1(\ti{N};R)}{[N:\ti{N}]}. \]
\item  We denote by $\ol{\Q}$  the algebraic closure  of $\Q$.
\item The \emph{Frattini subgroup $\Phi(\pi)$} of a group $\pi$ is the intersection of all maximal subgroups of $\pi$.  If $\pi$ does not admit a maximal subgroup, then we define $\Phi(\pi)=\pi$. \index{subgroup!Frattini}
(By an elementary argument, the Frattini subgroup of $\pi$ also agrees with the intersection of all normal subgroups  of $\pi$ which are not strictly contained in a proper normal subgroup of $\pi$.)
\item Let $R$ be a (commutative) ring. We say that a group $\pi$ is \emph{linear over $R$} if there exists an embedding $\pi\to \gl(n,R)$ for some $n$. Note that in this case, $\pi$ also admits an embedding into
    $\sl(n+1,R)$. \index{group!linear over a ring}
\item Let $N$ be a $3$-manifold. By a surface in  $N$ we will always mean an orientable,  compact surface, properly embedded  in $N$. Note that if $N$ is orientable, then a surface in our sense will always be  two-sided.
Let $\Sigma$ be a surface in $N$. Then $\Sigma$ is called
\index{surface!in a $3$-manifold}  \index{surface!in a $3$-manifold!separating} \index{$3$-manifold!Haken} \index{$3$-manifold!sufficiently large} \index{surface!in a $3$-manifold!non-fiber} \index{surface!in a $3$-manifold!separable}
    \bn[(a)]
    \item \emph{separating} if $N\smallsetminus \Sigma$ is disconnected;
    \item a \emph{\textup{(}non-\textup{)} fiber surface} if it is incompressible, connected, and (not) the fiber of a surface bundle map $N\to S^1$;
   \item   \emph{separable} if $\Sigma$ is connected  and  $\pi_1(\Sigma)\leq \pi_1(N)$ is separable. (See (A.\ref{(A.separable)}) for the definition of a separable subgroup.)
       \en
The $3$-manifold $N$ is \emph{Haken} (or \emph{sufficiently large}) if $N$ is compact, orientable, irreducible, and has an embedded incompressible surface.
\item A group is \emph{large} if it contains a finite-index subgroup which admits an epimorphism onto a non-cyclic free group. \index{group!large} \index{$3$-manifold!homologically large} \index{homology!non-peripheral}
\item Given $k\in \N$ we refer to
\[ \mbox{coker}\big\{H_1(\partial N;\Z)\to H_1(N;\Z)\big\}\]
as the \emph{non-peripheral homology of $N$}. Note that if $N$ has non-peripheral homology of rank $k$, then any finite cover of $N$ has non-peripheral homology of rank at least $k$.
We say that a $3$-manifold $N$ is \emph{homologically large}
if given any $k\in \N$ there exists a finite regular cover $N'$ of $N$ which has non-peripheral homology of rank at least $k$.
\item Given a group $\pi$ and an integral domain $R$ with quotient field $Q$ we write $vb_1(\pi;R)=\infty$ if for any $k$ there exists a finite-index (not necessarily normal) subgroup  $\pi'$ of $\pi$ such that \[ \mbox{rank}_R(H_1(\pi';R)):=\dim_Q(H_1(\pi';Q))\geq k.\]
In that case we say that $\pi$ has {\it infinite virtual first $R$-Betti number.}\/ \index{group!with infinite virtual first $R$-Betti number}
Given a $3$-manifold $N$ we write $vb_1(N;R)=\infty$ if $vb_1(\pi_1(N);R)=\infty$.
We will sometimes write $vb_1(N)=vb_1(N;\Z)$.

If $N$ is an irreducible, non-spherical, compact $3$-manifold with empty or toroidal boundary such that
$vb_1(\pi;R)=\infty$, then for any
$k$ there  exists also a finite-index \emph{normal} subgroup  $\pi'$ of $\pi$   such that $ \mbox{rank}_R(H_1(\pi';R))\geq k$.
Indeed, if $\charc(R)=0$, then this follows from elementary group-theoretic arguments, and if $\charc(R)\ne 0$, from \cite[Theorem~5.1]{Lac09}, since the Euler characteristic of $N=K(\pi_1(N),1)$ is zero.
(Here we used that our assumptions on $N$ imply that $N=K(\pi_1(N),1)$---see (C.\ref{bkpi1}).)
\item A group is called \emph{indicable} if  it admits an epimorphism onto $\Z$.
 A group is called \emph{locally indicable} if each of its finitely generated subgroups is indicable.\index{group!indicable}\index{group!locally indicable}
\item A group $\pi$ is called \emph{left-orderable} if it admits a strict total ordering ``$<$'' which is left-invariant, i.e., it has the property that if $g,h,k\in \pi$ with $g<h$, then $kg<kh$. A group  is called \emph{bi-orderable} if it admits a strict total ordering which is left- and right-invariant. \index{group!left-orderable}  \index{group!bi-orderable}
\item Given a property $\PP$ of groups we say that a group $\pi$ is \emph{virtually $\PP$} if $\pi$ admits a finite-index subgroup (not necessarily normal) which satisfies $\PP$. \index{group!virtually $\PP$}
\item Given a class $\PP$ of groups we say that a group $\pi$ is \emph{residually $\PP$} if given any non-trivial $g\in\pi$ there exists a surjective homomorphism $\a\colon \pi\to G$ onto a group $G\in \PP$ and such that $\a(g)$ is non-trivial.   A case of particular importance is when $\PP$ is the class of finite groups, in which case $\pi$ is said to be \emph{residually finite}.  Another important case is when $\PP$ is the class of finite $p$-groups for $p$ a prime (that is, the class of groups of $p$-power order), in which case  $\pi$ is said to be \emph{residually $p$}. \index{group!residually $\PP$} \index{group!residually finite} \index{group!residually $p$}
\item The \emph{profinite topology} on a group $\pi$ is the coarsest topology with respect to which every homomorphism from $\pi$ to a  finite group, equipped with the discrete topology, is continuous.  Note that $\pi$ is residually finite if and only if the profinite topology on $\pi$ is Hausdorff.  Similarly, the {\it pro-$p$ topology}\/ on $\pi$ is the coarsest topology with respect to which every homomorphism from $\pi$ to a finite $p$-group, equipped with the discrete topology, is continuous. \index{group!profinite topology on a group} \index{group!pro-$p$ topology on a group}
\item \label{(A.separable)} Let $\pi$ be a group. We say that a subset $S$ is \emph{separable} if $S$ is closed in the profinite topology on $\pi$; equivalently, for any $g\in \pi\smallsetminus S$, there exists a homomorphism $\a\colon\pi\to G$ to a finite group with $\a(g)\not\in \a(S)$.  The group $\pi$ is called \emph{locally extended residually finite \textup{(}LERF\textup{)}} (or \emph{subgroup separable}) if any finitely generated subgroup is separable, and \emph{$\pi$ is AERF} (or \emph{abelian subgroup separable}) if any finitely generated abelian subgroup of $\pi$ is separable.\index{group!separable subset of a group}\index{group!locally extended residually finite (LERF)}\index{group!subgroup separable}\index{group!abelian subgroup separable (AERF)}
\item Let $\pi$ be a group. We say that $\pi$ is \emph{double-coset separable} if given any two finitely generated subgroups $A,B\subseteq \pi$ and any $g\in \pi$, the subset $AgB\subseteq \pi$ is separable. Note that $AgB$ is separable if and only if $(g^{-1}Ag)B$ is separable, and therefore to prove double-coset separability it suffices to show that products of finitely generated subgroups are separable. \index{group!double-coset separable}
\item Let $\pi$ be a group and $\G$ a subgroup of $\pi$.  We say that \emph{$\pi$ induces the full profinite topology on $\Gamma$}
if the restriction of the profinite topology on $\pi$ to $\Gamma$ is the full profinite topology on $\Gamma$---equivalently, for any finite-index subgroup $\G'\subseteq \G$ there exists a finite-index subgroup $\pi'$ of $\pi$ such that $\pi'\cap \G\subseteq \G'$. \index{subgroup!induced topology is the full profinite topology}
\item Let $N$ be an orientable, irreducible $3$-manifold with empty or toroidal boundary. We will say that $N$ is \emph{efficient} if the graph of groups corresponding to the JSJ decomposition is efficient, i.e., if the following hold:
\bn
\item[(a)] $\pi_1(N)$ induces the full profinite topology on the fundamental groups of the JSJ tori and of the JSJ pieces; and
\item[(b)] the fundamental groups of the JSJ tori and the JSJ pieces, viewed as subgroups of $\pi_1(N)$, are separable.
\en
We refer to \cite{WZ10} for details.  \index{$3$-manifold!efficient}

\item Let $\pi$ be a finitely presentable group. We say that the \emph{word problem for $\pi$ is solvable} if given any finite presentation
for $\pi$ there exists an algorithm which can determine whether or not a given word in the generators is trivial.  Similarly, the  \emph{conjugacy problem for $\pi$ is solvable} if given any finite presentation
for $\pi$ there exists an algorithm to determine whether or not any two given words in the generators represent  conjugate elements of $\pi$.  We refer to  \cite[Section~D.1.1.9]{CZi93} for details. \index{group!word problem} \index{group!conjugacy problem} (Note that by \cite[Lemma~2.2]{Mila92} the word problem is solvable for one finite presentation if and only if it is solvable for every
finite presentation; similarly for the conjugacy problem.)
\item A group is called \emph{Hopfian} if it is not isomorphic to a proper quotient of itself.\index{group!Hopfian}
\end{list}

\noindent Before we move on to Diagram~1, we state a few conventions which we apply in the diagram.
\newcounter{itemcounteriig}
\begin{list}
{{(B.\arabic{itemcounteriig})}}
{\usecounter{itemcounteriig}\leftmargin=2em}
\item In Diagram~1, $N$ is a compact, orientable, irreducible $3$-manifold  such that
 its boundary consists of a (possibly empty) collection of tori. We furthermore assume throughout Diagram~1  that $\pi:=\pi_1(N)$ is neither solvable
 nor finite.
 Without these extra assumptions some of the implications do not hold.
For example, not every Seifert fibered manifold~$N$ admits a finite cover $N'$ with $b_1(N')>1$,  but this is the case if $\pi$ additionally is  neither solvable nor finite.
\item  Arrow (5) splits into three arrows, this means that precisely one of the three possible conclusion holds.
\item Red arrows indicate that the conclusion holds \emph{virtually}, e.g., if $N$ is a Seifert fibered space
such that $\pi_1(N)$ is neither finite nor solvable, then $N$ contains virtually an incompressible torus.
\item If a property $\PP$ of groups is written in green, then the following conclusion always holds:
If $N$ is a compact, orientable, irreducible $3$-manifold with empty or toroidal boundary such that the fundamental group of a finite (not necessarily regular) cover of $N$ is $\PP$, then $\pi_1(N)$ also is~$\PP$.
In most cases it is clear that the properties in Diagram~1 written in green satisfy this condition.
It follows from  Theorem~\ref{thm:jsjlift} that if
 $N'$ is a finite cover of a compact, orientable, irreducible $3$-manifold $N$ with empty or toroidal boundary, then
   $N'$ is hyperbolic (Seifert fibered, admits non-trivial JSJ decomposition) if and only if  $N$ has the same property.

\item Note that a concatenation of red and black arrows which leads  to a green property means that the initial group also has the green property.
\item An arrow with a condition next to it indicates that this conclusion only holds if this extra condition is satisfied.
\end{list}
Finally we give one last disclaimer: the diagram is  meant as a guide to the precise statements in the text and in the literature; it  should  not be used as a reference in its own right.


\begin{figure}
\psfrag{1}{\ref{C.1}}\psfrag{2}{\ref{C.2}}\psfrag{3}{\ref{C.3}}\psfrag{4}{\ref{C.4}}
\psfrag{5}{\ref{C.5}}\psfrag{6}{\ref{C.6}}\psfrag{7}{\ref{C.7}}\psfrag{8}{\ref{C.8}}
\psfrag{9}{\ref{C.9}}\psfrag{10}{\ref{C.10}}
\psfrag{11}{\ref{C.11}}\psfrag{12}{\ref{C.12}}\psfrag{13}{\ref{C.13}}\psfrag{14}{\ref{C.14}}
\psfrag{15}{\ref{C.15}}\psfrag{16}{\ref{C.16}}\psfrag{17}{\ref{C.17}}\psfrag{18}{\ref{C.18}}
\psfrag{19}{\ref{C.19}}\psfrag{20}{\ref{C.20}}\psfrag{21}{\ref{C.21}}\psfrag{22}{\ref{C.22}}
\psfrag{23}{\ref{C.23}}\psfrag{24}{\ref{C.24}}\psfrag{25}{\ref{C.25}}\psfrag{26}{\ref{C.26}}
\psfrag{27}{\ref{C.27}}\psfrag{28}{\ref{C.28}}\psfrag{29}{\ref{C.29}}\psfrag{30}{\ref{C.30}}
\psfrag{31}{\ref{C.31}}\psfrag{32}{\ref{C.32}}
\psfrag{33}{\ref{C.33}}
\hspace{-0.5cm}
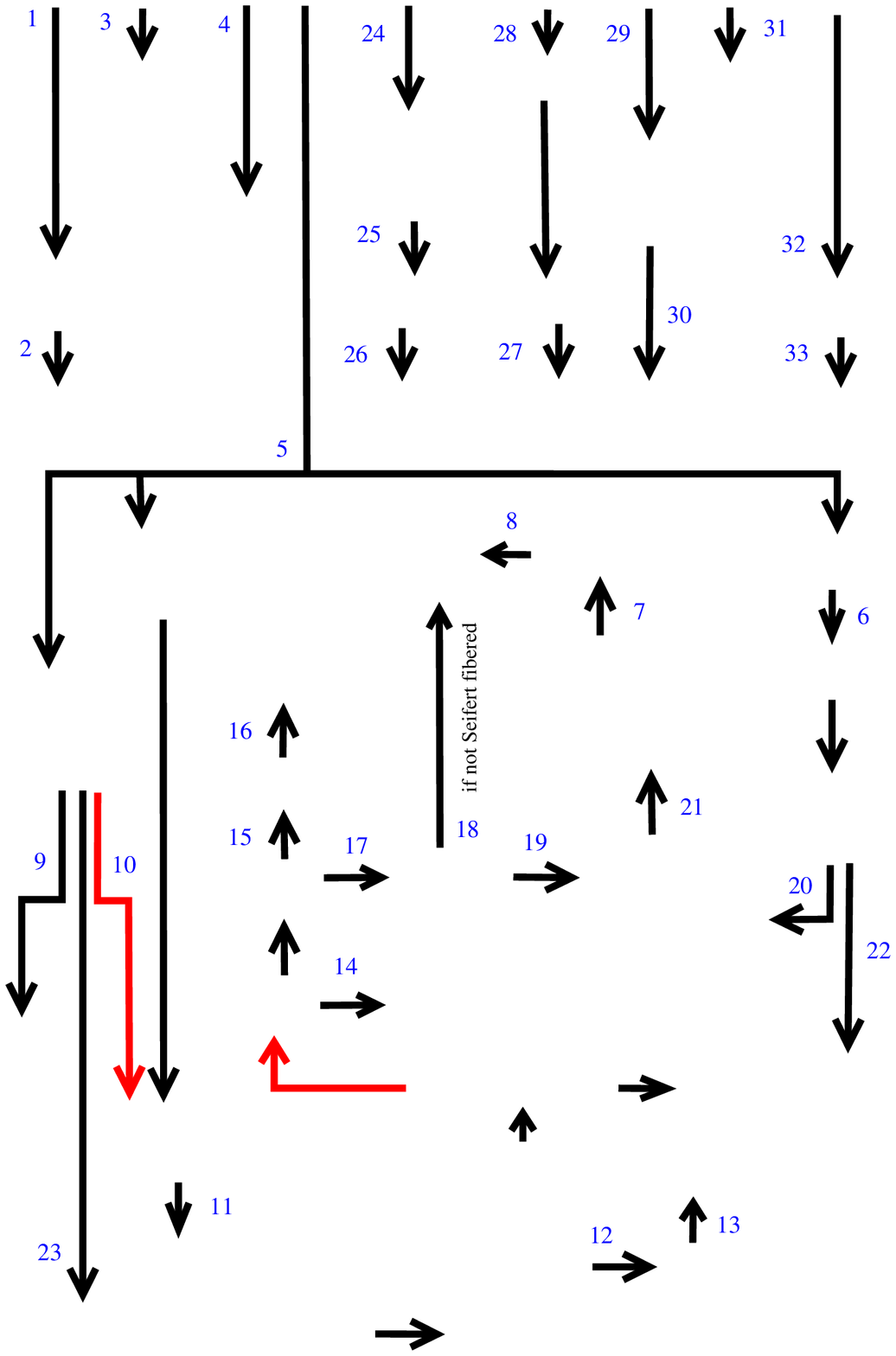
\end{figure}

\medskip

We now give the justifications for the implications of Diagram~1. In the subsequent discussion we strive for maximal generality; in particular, unless we say otherwise, we will only assume that $N$ is a connected $3$-manifold. We will give the required references and arguments for the general case, so each justification can be read independently of all the other steps.
We will also give further information and background material to put the statements in context.
\newcounter{itemcounterol}
\begin{list}
{{(C.\arabic{itemcounterol})}}
{\usecounter{itemcounterol}\leftmargin=2em}
\item \label{C.1}\label{bkpi1} \label{C.sphere}
Let $N$ be an irreducible,  orientable $3$-manifold with infinite fundamental group. It follows from the irreducibility of $N$ and the Sphere Theorem
(see Theorem~\ref{thm:sphere}) that   $\pi_2(N)=0$. Since $\pi_1(N)$ is  infinite,  it follows from the Hurewicz theorem that $\pi_i(N)=0$ for any $i>2$, i.e.,  $N$ is an Eilenberg--Mac~Lane space.
(This result was first proved by Aumann\footnote{Robert Aumann won a Nobel Memorial Prize in Economic Sciences in 2005.}
 \cite{Aum56} for the exterior of an alternating knot in $S^3$.)
If $N$ has non-trivial boundary, then $N$ admits a deformation retract to the 2-skeleton, i.e., $\pi_1(N)$ has a $2$-dimensional
Eilenberg--Mac~Lane space.

An argument as on \cite[p.~458]{FJR11} shows that
if $N$ is an irreducible $3$-manifold with non-trivial toroidal boundary and if $P$ is a presentation of deficiency one,
 then  the $2$-complex $X$ corresponding to $P$ is also an Eilenberg--Mac~Lane space for $\pi$.
 (This argument relies on the fact that $\pi_1(N)$ is locally indicable, see (C.\ref{C.15}).)  In fact $X$ is simple homotopy equivalent to $N$.

\item \label{C.2}\label{C.pi1torsionfree} Let $N$ be an orientable, irreducible $3$-manifold with infinite fundamental group.
By (C.\ref{bkpi1}) we have $N=K(\pi_1(N),1)$. Since the Eilenberg--Mac~Lane space is finite-dimensional it follows from standard arguments,
 see \cite[Proposition~2.45]{Hat02}) that $\pi_1(N)$ is torsion-free.
(Indeed, if $g\in \pi_1(N)$ is an element of finite order $k$, then consider $G=\ll g\rr \leq \pi_1(N)$ and denote by $\ti{N}$ the corresponding covering space of $N$.
Then the $3$-manifold $\ti{N}$ is an Eilenberg--Mac~Lane space for $\Z/k$, hence $H_*(\Z/k;\Z)=H_*(\ti{N};\Z)$, but the only cyclic group with finite homology is the trivial group.)

\item \label{C.3}Let $N$ be an irreducible $3$-manifold with empty or toroidal boundary and with infinite fundamental group. By
  (C.\ref{bkpi1}), $N$ is an Eilenberg--Mac~Lane space. It now follows from work of Epstein
  (see \cite[Lemmas~2.2~and~2.3]{Ep61a} and     \cite[Theorem~2.5]{Ep61a}) that the deficiency of $\pi_1(N)$ equals $1-b_3(N)$.
Hence $\pi_1(N)$ has a balanced presentation, i.e., a presentation of deficiency zero.
The question which groups with a balanced presentation are $3$-manifold groups is studied   in \cite{Neh68,Neh70,OsS74,OsS77a,OsS77b,Osb78,Sts75,Hog00}).
\item\label{C.4}\label{C.scottcore} Scott \cite{Sco73b} (see also \cite{Sco73a,Sco74,Sta77,RS90})
proved the Core Theorem, \index{theorems!Scott's Core Theorem}
which states  that if $N$ is any $3$-manifold  such that $\pi_1(N)$ is finitely generated,
then $N$ has a compact submanifold $M$ such that $\pi_1(M)\to \pi_1(N)$ is an isomorphism.
In particular, $\pi_1(N)$ is finitely presented. It now follows easily that the fundamental group of any $3$-manifold is coherent.
\item\label{C.5} The  Geometrization Theorem (see Theorem~\ref{thm:geom}) implies that any compact, orientable, irreducible $3$-manifold $N$
with empty or toroidal boundary satisfies one of the following:
\bn[(a)]
\item $N$ is Seifert fibered, or
\item $N$ is hyperbolic, or
\item $N$ admits an incompressible torus.
\en
\item \label{C.6}\label{C.sl2c} The fundamental group of an orientable hyperbolic $3$-manifold $N$ admits a faithful discrete representation $\pi_1(N)\to \operatorname{Isom}^+({\mathbb{H}}^3)$,
where $\operatorname{Isom}^+({\mathbb{H}}^3)$ denotes the group of orientation preserving isometries of $3$-dimensional hyperbolic space.
There is a well known identification of $\operatorname{Isom}^+({\mathbb{H}}^3)$ with $\psl(2,\C)$, which thus gives rise  to a faithful discrete representation $\pi_1(N)\to \psl(2,\C)$.  As a consequence of the Rigidity Theorem \ref{thm:mostow-prasad}, this representation is unique up to conjugation and complex conjugation. Another consequence of rigidity is that there exists in fact a faithful discrete representation $\rho\colon\pi_1(N)\to \psl(2,\ol{\Q})$  over the algebraic closure $\ol{\Q}$ of~$\Q$~\cite[Corollary~3.2.4]{MaR03}.
  Thurston (see \cite[Corollary~2.2]{Cu86} and \cite[Section~1.6]{Shn02}) showed that the representation $\rho$  lifts  to a faithful discrete representation $\ti{\rho}\colon  \pi_1(N)\to \sl(2,\ol{\Q})$.
The set of lifts of $\rho$  to a  representation $\pi_1(N)\to \sl(2,\ol{\Q})$
is in a natural one-to-one correspondence with the set of Spin-structures on $N$; see \cite[Section~2]{MFP11} for details.

If $T$ is a boundary component, then $\ti{\rho}(\pi_1(T))$ is a discrete subgroup isomorphic to $\Z^2$. It follows that, up to conjugation, we have
\[ \ti{\rho}(\pi_1(T)) \subseteq \left\{ \left.\bp \eps & a \\ 0&\eps \ep \, \right. : \, \eps\in \{-1,1\},\ a\in \ol{\Q}\right\}.\]
By \cite[Corollary~2.4]{Cal06} we have $\operatorname{tr}(\ti{\rho}(a))=-2$ if $a\in \pi_1(T)$ is represented by a curve on $T$ which cobounds a surface in $N$.

Button \cite{But12b} studied the question of which non-hyperbolic $3$-manifolds  admit (non-discrete) embeddings of their  fundamental groups  into $\sl(2,\C)$. For example, he showed that if $N$ is given by gluing two Figure-8 knot complements along their boundary,
then there are some gluings for which such an embedding exists, and there are some for which it doesn't.

\item \label{C.7}Long--Reid \cite[Theorem~1.2]{LoR98} showed that if a subgroup $\pi$ of $\sl(2,\ol{\Q})$
 is isomorphic to the fundamental group of a compact, orientable, non-spherical $3$-manifold, then $\pi$ is residually finite simple.
(Note that the assumption that $\pi$ is a non-spherical $3$-manifold group is necessary since  not all subgroups of $\sl(2,\ol{\Q})$ are residually simple.)
 \index{$3$-manifold group!residually finite simple}
Reading the proof of  \cite[Theorem~1.2]{LoR98} shows that under the same hypothesis as above a slightly stronger conclusion holds: $\pi$ is {\it fully residually simple,}\/ i.e., given    $1\neq g_1,\dots,g_k\in \pi$ there exists a morphism
$\a\colon \pi\to G$ onto a finite simple group with $\a(g_1),\dots,\a(g_k)\neq 1$.\index{$3$-manifold group!fully residually simple}

We refer to \cite{But11b} for more results on $3$-manifold groups (virtually) surjecting onto finite simple groups.
\item \label{C.8}It is easy to see that residually simple groups have trivial  Frattini subgroup.
\item \label{C.9}The fundamental group of a Seifert fibered manifold is well known to be linear over $\Z$.
 We provide a proof, suggested to us by Boyer, in Theorem~\ref{thm:Seifert linear}.
\item \label{C.10}Let $N$ be a Seifert fibered manifold. It follows from Lemma~\ref{lem:seifert} that  $N$ is  finitely covered by an $S^1$-bundle over a connected orientable surface $F$.
If  $\pi_1(N)$ is neither solvable nor finite, then $\chi(F)<0$. The surface $F$ thus admits an essential curve $c$, the $S^1$--bundle over~$c$ is
an incompressible torus.
\item \label{C.11}Let $N$ be a compact, orientable $3$-manifold which admits an incompressible torus $T$.
(Note that $T$ could be any incompressible boundary torus.)
By \cite[Theorem~2.1]{LoN91} (see also (C.\ref{C.AERF}) for a more general statement) the subgroup $\pi_1(T)\subseteq \pi_1(N)$ is separable,
i.e., $T$ is a separable surface.
If $\pi_1(N)$ is furthermore not  solvable, then the torus is not a fiber surface.
\item \label{C.12}\label{C.large}
Let $N$ be a compact, connected irreducible $3$-manifold with non-empty  incompressible boundary.
 Cooper--Long--Reid \cite[Theorem~1.3]{CLR97} (see also  \cite[Corollary~6]{But04} and \cite[Theorem~2.1]{Lac07a}) have shown that in that case either $N$ is covered by $S^1\times S^1\times I$ or  $\pi_1(N)$ is large.

     Now let $N$ be a closed $3$-manifold.
Let $\Sigma$ be a separable non-fiber surface, i.e., $\Sigma$ is a connected incompressible surface in $N$ which is not a fiber surface.
By Stallings' Fibration Theorem \cite{Sta62} (see also (K.\ref{K.stallings62}) and \cite[Theorem~10.5]{Hem76}) \index{theorems!Stallings' Theorem}
there exists a $g\in \pi_1(N\smallsetminus \nu \Sigma)\smallsetminus \pi_1(\Sigma)$. Since $\pi_1(\Sigma)$ is separable by assumption, we can separate $g$ from $\pi_1(\Sigma)$.
A standard argument shows that in the corresponding  finite cover $N'$ of $N$  the preimage of $\Sigma$ consists of at least two, non null-homologous and non-homologous orientable surfaces. Any two such surfaces give rise to an epimorphism from $\pi_1(N')$ onto a free group with two generators.
    (See also \cite[Proof~of~Theorem~3.2.4]{LoR05}.)

The \emph{co-rank $c(\pi)$} \index{group!co-rank} of a group $\pi$ is defined as the maximal integer $c$ such that
$\pi$ admits an epimorphism onto the free group on $c$ generators. If $N$ is a $3$-manifold, then
$c(N):=c(\pi_1(N))$  equals  the `cut number of $N$'; we refer to \cite[Proposition~1.1]{Har02} for details.
Jaco \cite[Theorem~2.3]{Ja72} showed that the cut number is additive under connected sum.
Clearly $b_1(N)\geq c(N)$. On the other hand, Harvey \cite[Corollary~3.2]{Har02} (see also \cite{LRe02} and \cite{Sik05}) showed that, given any $b$, there exists a closed hyperbolic $3$-manifold $N$ with $b_1(N)\geq b$ and $c(N)=1$. See \cite[Theorem~15.1]{GiM07} and \cite[Theorem~1.5]{Gil09} for upper bounds on $c(N)$ in terms of quantum invariants.
\item\label{C.13}\label{C.homlarge} Let $N$ be a compact $3$-manifold with trivial or toroidal boundary. Let $\varphi\colon \pi_1(N)\to F$ be a morphism onto a non-cyclic free group.
Then $\pi_1(N)$ is homologically large, i.e.,  given any $k\in \N$ there is a finite cover $N'$ of $N$ with
\[\mbox{rank}_\Z\, \mbox{coker}\{H_1(\partial N';\Z)\to H_1(N';\Z)\}\geq k.\]
Indeed, denote by $S_1,\dots,S_m$ (respectively $T_1,\dots,T_n$) the boundary components of $N$ which have the property that $\varphi$ restricted to the boundary torus is trivial (respectively non-trivial). Note that the image of $\pi_1(T_i)\subseteq F$ is a non-trivial infinite cyclic group generated by some $a_i\in F$.
Given $k\in\N$,
we now pick a prime number $p$ with $p\geq 2n+k$. Since free groups are residually~$p$~\cite{Iw43,Neh61b}, we can take an epimorphism $\a\colon F\to P$ onto a $p$-group $P$ with $\a(a_i)\neq 1$ for $i=1,\dots,n$. Let $F'=\ker(\a)$ and  denote by $q\colon N'\to N$ the covering of $N$ corresponding to $\a\circ \varphi$. If $S'$ is any boundary component of $N'$ which covers one of the $S_i$, then $\pi_1(S')\to \pi_1(N')\to F'$ is the trivial map. Using this observation we now calculate that
\begin{align*}
& \hskip1.2em \mbox{rank}_\Z\, \mbox{coker}\{H_1(\partial N';\Z)\to H_1(N';\Z)\}\\
 &\geq  \mbox{rank}_\Z\, \mbox{coker}\{H_1(\partial N';\Z)\to H_1(F';\Z)\}\\
       &\geq   b_1(F')-\textstyle\sum\limits_{i=1}^n b_1(q^{-1}(T_i)) \geq   b_1(F')-2\textstyle\sum\limits_{i=1}^n b_0(q^{-1}(T_i))\\
    &\geq  |P|(b_1(F)-1)+1-2n\textstyle\frac{|P|}{p}\geq |P|-2n\textstyle\frac{|P|}{p}= \textstyle\frac{|P|}{p}(p-2n)\geq k.
\end{align*}
    (See also  \cite[Corollary~2.9]{CLR97} for a related argument.)
\item \label{C.14}\label{C.b1atleast2}
A straightforward Thurston-norm argument (see \cite{Thu86a} or \cite[Corollary~10.5.11]{CdC03}) shows that
if $N$ is a compact, orientable, irreducible $3$-manifold with empty or toroidal boundary and  $b_1(N)\geq 2$,
then either $N$ is a torus bundle
(in which case $\pi_1(N)$ is solvable),
or $N$ admits a  homologically essential non-fiber surface.
A surface is called \emph{homologically essential} if it represents a non-trivial homology class. \index{surface!homologically essential}
(This surface is necessarily  non-separating.)

Note that since $\Sigma$ is homologically essential it follows from standard arguments (e.g., using Stallings' Fibration Theorem, see \cite{Sta62} and (K.\ref{K.stallings62}))
 that  $\Sigma$ does in fact not lift to the fiber of a surface bundle in any finite cover.
\item \label{C.15}\label{C.locallyindicable} Howie \cite[Proof~of~Theorem~6.1]{How82} (see also \cite[Lemma~2]{HoS85})
 used Scott's Core Theorem (C.\ref{C.scottcore}) and the fact that a $3$-manifold with non-trivial non-spherical boundary has positive first Betti number
 to  show that if $N$ is a compact, orientable, irreducible $3$-manifold
and $\G\subseteq \pi_1(N)$ a finitely generated subgroup of infinite index, then $b_1(\G)\geq 1$.

Furthermore a standard transfer argument shows that if $G$ is a finite-index subgroup of a group $H$, then $b_1(G)\geq b_1(H)$.
Combining these two facts it follows that if $N$ is a compact, orientable, irreducible $3$-manifold with $b_1(N)\geq 1$,
then any finitely generated subgroup $\G$ of $\pi_1(N)$ has the property that $b_1(\G)\geq 1$, i.e., $\pi_1(N)$ is locally indicable. \index{$3$-manifold group!locally indicable}

\item \label{C.16}\label{C.left-orderable} Burns--Hale \cite[Corollary~2]{BHa72} have shown that a locally indicable group is left-order\-able.
Note that left-orderability is not a `green property', i.e., there exist compact $3$-manifolds with non-left-orderable fundamental groups which admit left-orderable  finite-index subgroups (see, e.g., \cite[Proposition~9.1]{BRW05}  and  \cite{DPT05}).

\item \label{C.17}\label{C.haken} Let $N\ne S^1\times D^2$ be a compact, orientable, irreducible $3$-manifold. If~$N$ has toroidal boundary, then each boundary component is incompressible and hence $N$ is Haken. If $N$ is closed and $b_1(N)\geq 1$,
then $H_2(N;\Z)$ is non-trivial. Let $\Sigma$ be an oriented surface representing a non-trivial element $\phi\in H_2(N;\Z)$.
Since $N$ is irreducible we can assume that $\Sigma$ has no spherical components
and that $\Sigma$ has no component which bounds a solid torus. Among all such surfaces we take a surface of maximal Euler characteristic. It now follows from
an extension of the Loop Theorem to embedded surfaces (see \cite[Corollary~3.1]{Sco74}
 and Theorem~\ref{thm:loop})
 that any component of such a surface is incompressible; thus, $N$ is Haken. (See also \cite[Lemma~6.6]{Hem76}.)

If $\phi$ is a fibered class (see (E.\ref{E.fibered}) for the definition), then the surface $\Sigma$ is unique up to isotopy \cite[Lemma~5.1]{EdL83}.  On the other hand, if $\phi$ is not a fibered class, then $\Sigma$ is at times unique up to isotopy (see \cite{Ly74a,Koi89,CtC93,HiS97,Kak05,GI06,Brt08,Ju08,Ban11}),
but in general it is not;
see, e.g., \cite{Scf67,Alf70,AS70,Ly74b,Ein76b,Ein77b,ScT88,Gus81,Kak91,Kak92,HJS13,Alt12} for examples and more precise statements.

Results of Dunfield--Thurston \cite[Corollary~8.5]{DnTb06}, Kowalski \cite[Section~6.2]{Kow08} (see also \cite[p.~4]{Sar12}) and Ma \cite[Corollary~1.2]{Ma12} suggest that a `generic' closed, orientable $3$-manifold $N$ is a rational homology sphere, i.e., that $b_1(N)=0$.

The work of Hatcher \cite{Hat82} together with \cite{CJR82,Men84}, \cite[Theorem~2(b)]{HaTh85},
\cite[Theorem~1.1]{FlH82}, \cite[Theorem~A]{Lop92} and \cite[Theorem~A]{Lop93}
shows that almost all Dehn surgeries on large classes of $3$-manifolds with toroidal boundary are non-Haken.
See also \cite{Thu79,Oe84,Ag03,BRT12} for more examples of non-Haken manifolds.
\index{$3$-manifold!examples which are non-Haken}
Jaco--Oertel \cite[Theorem~4.3]{JO84} (see also \cite{BCT12}) found an algorithm which can decide whether or not a given closed irreducible $3$-manifold is Haken.

Finally, let $N$ be a compact, orientable, irreducible $3$-manifold with non-trivial boundary.
We say that a surface $\Sigma$ in $N$ is \emph{essential} \index{surface!essential} if $\Sigma$ is incompressible and not isotopic to a boundary component.
If $H_2(N;\Z)\ne 0$, then it follows from the above that $N$ contains an essential closed surface.
On the other hand, if $H_2(N;\Z)=0$, then Culler--Shalen \cite[Theorem~1]{CuS84} showed that $N$ contains an essential, separating surface with non-trivial boundary. Furthermore, if $H_2(N;\Z)=0$, then in some cases $N$ will contain an incompressible, connected, non-boundary parallel surface (see \cite{Ly71,Sht85,Gus94,FiM99,FiM00,MQ05,Lib09,Ozb09})
and in some cases it will not (see \cite[Corollary~1.2]{GLi84}, \cite{HaTh85}, \cite[Corollary~4]{Oe84}, \cite{Lop93,Mad04,QW04,Ozb08,Ozb10}).

 \item\label{C.18}
Let $N$ be a compact, orientable, irreducible $3$-manifold with empty or toroidal boundary.
It follows from the work of Allenby--Boler--Evans--Moser--Tang
\cite[Theorems~2.9~and~4.7]{ABEMT79}  that
if $N$
is Haken and not a closed Seifert fibered manifold, then the Frattini group of $\pi_1(N)$ is trivial.
On the other hand, if $N$ is a closed Seifert fibered manifold
and $\pi_1(N)$ is infinite, then the Frattini group of $\pi_1(N)$ is a (possibly trivial) subgroup of the infinite cyclic subgroup generated by a regular Seifert fiber  (see  \cite[Lemma~4.6]{ABEMT79}).

\item \label{C.19}\label{C.em72free} Evans--Moser \cite[Corollary~4.10]{EvM72} showed that
if the fundamental group of an  irreducible Haken $3$-manifold is non-solvable, then  it contains a non-cyclic free group.
\item \label{C.20}\label{C.tits}
Tits \cite{Tit72}  showed that  a group which is linear over $\C$ is either virtually solvable or contains a non-cyclic free group;
this dichotomy  is commonly referred to as the Tits Alternative. \index{theorems!Tits Alternative}
(Recall  that as in Diagram~1 we assumed that $\pi$ is neither finite nor solvable, it follows from  Theorem~\ref{thm:virtsolv} that $\pi$ is not virtually  solvable.)

The combination of the above and of (C.\ref{C.em72free}) shows that the fundamental group of a compact $3$-manifold
with empty or toroidal boundary is either virtually solvable or contains a non-cyclic free group.
This dichotomy is a weak version of the Tits Alternative for $3$-manifold groups.  We refer to
(K.\ref{K.tits}) for a stronger version of this for $3$-manifold groups, and to \cite{Par92,ShW92,KZ07} for `pre-geometrization' results on the Tits Alternative.

Aoun \cite{Ao11} showed that `most' two generator subgroups of a group which is linear over $\C$ and not virtually solvable,  are in fact free.

\item \label{C.21}A group which contains a non-abelian free group is non-amenable.
Indeed,  it is well known that  any subgroup and any finite-index supergroup of an amenable group is also amenable. On the other hand, non-cyclic free groups are not amenable. \index{group!amenable}
In (I.\ref{I.weaklyamenable}) we will, in contrast, see that most $3$-manifold groups are weakly amenable.

\item \label{C.22}\label{C.Lubotzky alternative} A consequence of the Lubotzky Alternative
(cf.~\cite[Window~9,~Corollary~18]{LuSe03}) asserts that  a finitely generated group which is linear
over $\C$ either is virtually solvable or, for any prime $p$, has infinite virtual first $\F_p$-Betti number
(see also \cite[Theorem 1.3]{Lac09}  and \cite[Section~3]{Lac11}).

We refer to \cite[Example~5.7]{CE11},  \cite[Theorems 1.7~and~1.8]{Lac09}, \cite{ShW92,Walb09} and \cite[Section~4]{Lac11} for more on the growth of  $\F_p$-Betti numbers of finite covers of hyperbolic $3$-manifolds.
See \cite[Proposition~3]{Mes90} for a `pre-Perelman' result regarding the $\F_p$-homology of finite covers of $3$-manifolds.
\item \label{C.23}\label{C.sfssubgroupseparable}
Let $N$ be a Seifert fibered manifold. Niblo \cite[Corollary~5.1]{Nib92} showed  that $\pi_1(N)$ is
 double-coset separable. In particular $\pi_1(N)$ is LERF.
 It follows from work of Hall \cite[Theorem~5.1]{Hal49} that fundamental groups of Seifert fibered spaces with non-empty boundary
 are LERF. Scott \cite[Theorem~4.1]{Sco78},~\cite{Sco85} showed that fundamental groups of closed Seifert fibered spaces are LERF.
 We  refer to \cite{BBS84,Nib90,Tre90,Lop94,Git97,LoR05,Wil07,BaC12,Pat12} for alternative proofs and extensions of Scott's theorem.

\item \label{C.24}\label{C.AF} Let $N$ be any compact $3$-manifold. In \cite{AF10} it is shown that, for all but finitely many primes $p$, the group $\pi_1(N)$ is virtually residually $p$.

If $N$ is a graph manifold
(i.e., if all its JSJ components are Seifert fibered manifolds), then by \cite[Proposition~2]{AF10}  a slightly stronger statement holds:
for any prime $p$ the group    $\pi_1(N)$ is virtually residually $p$.
Also note that for hyperbolic $3$-manifolds, or more generally for $3$-manifolds~$N$ such that $\pi_1(N)$ is linear over $\C$, it follows from
\cite{Pla68} (see also \cite[Theorem~4.7]{Weh73}) that for  all but finitely many primes~$p$, the group $\pi_1(N)$ is virtually residually $p$.
\item \label{C.25}\label{C.resfinite} The well known argument in (H.\ref{H.resfinitegreen}) below can be used to show that a group which is virtually residually $p$ is also residually finite.
The residual finiteness of fundamental groups of compact  $3$-manifolds was first shown by Hempel \cite{Hem87} and Thurston \cite[Theorem~3.3]{Thu82a}.

Some pre-Geometrization results on the residual finiteness of fundamental groups of knot exteriors were obtained by
Mayland, Murasugi, and Stebe \cite{May72,May74,May75a,May75b,MMi76,Ste68}.

Residual finiteness of $\pi$ implies that if we equip $\pi$ with its profinite topology, then  $\pi$ is homeomorphic to the rationals.
We refer to \cite{ClS84} for details.

Given an oriented hyperbolic $3$-manifold $N$ the residual finiteness of $\pi_1(N)$ can be seen using congruence subgroups.
  By (C.\ref{C.sl2c}) there is an embedding $\pi_1(N)\hookrightarrow\sl(2,\overline{\Q})$ with discrete image. We write $\G:=\rho(\pi_1(N))$.
We say that $H\leq \G$ is a \emph{congruence subgroup of $\G$} if there exists a  ring $R$
 which is  obtained from the ring of
integers of a number field by inverting a finite number of elements and a maximal ideal $\mathfrak m$ of  $R$
such that
$\G\subseteq \sl(2,R)$  and
\[ \ker\{\G\to \sl(2,R)\to \sl(2,R/\mathfrak m)\}\leq H.\]
Congruence subgroups have finite index (see, e.g., \cite[Theorem~4.1]{Weh73}) and the intersection of all congruence subgroups is trivial (see, e.g., \cite{Mal40} and \cite[Theorem~4.3]{Weh73}). This implies that $\pi_1(N)\cong \G$ is residually finite.

\index{subgroup!congruence}

 Lubotzky \cite[p.~116]{Lub83} showed that in general
  not every finite-index subgroup of $\G$ is a congruence subgroup.
 We refer to \cite{Lub95,CLT09,Lac09} and \cite[Section~3]{Lac11} for further results.

The fact that $3$-manifold groups are residually finite together with the Loop Theorem shows in particular that a non-trivial knot
admits a finite index subgroup such that the quotient is not cyclic. Broaddus \cite{Brs05} (see also \cite{Kup11}) gives an explicit upper bound on the index of such a subgroup in terms of the crossing number of the knot.
\item \label{C.26} Mal'cev \cite{Mal40} (see also \cite[Theorem~VII]{Mal65}) showed that every finitely generated residually finite group is Hopfian.
Here are some related properties of a group $\pi$: one says that $\pi$ is
\bn
\item[(a)] \emph{co-Hopfian} if it is not isomorphic to any proper subgroup of itself;
\index{group!co-Hopfian}
\item[(b)] \emph{cofinitely Hopfian}\index{group!cofinitely Hopfian} if  every endomorphism of $\pi$ whose image is of finite index in $\pi$ is in fact an automorphism;
\item[(c)]  \emph{hyper-Hopfian} \index{group!hyper-Hopfian} if every homomorphism $\varphi\colon\pi\to \pi$ such that $\varphi(\pi)$ is normal in $\pi$ with $\pi/\varphi(\pi)$ cyclic, is in fact an automorphism.
\en
If $\Sigma$ is a surface then $\pi_1(S^1\times \Sigma)=\Z\times \pi_1(\Sigma)$ is neither co-Hopfian, nor cofinitely Hopfian nor hyper-Hopfian.
\bn
\item[(a)]
Let $N$ be a compact, orientable, irreducible $3$-manifold. Wang and Yu \cite[Theorem~8.7]{WY94} showed that, if $N$ is closed, then $\pi_1(N)$ is
co-Hopfian if and only if $N$ has no finite cover that is either a direct product $S^1\times \Sigma$ or a torus bundle over $S^1$.
Gonz\'alez-Acu\~na--Whitten \cite[Theorem~2.5]{GW92} showed that, if $N$  has non-trivial toroidal boundary, then $\pi_1(N)$ is co-Hopfian if and only if $\pi_1(N)\ne \Z^2$ and if  no non-trivial Seifert fibred piece of the JSJ decomposition of $N$ meets~$\partial N$.
\item[(b)]  Bridson, Groves, Hillman and Martin \cite[Theorems A~and~C]{BGHM10} showed that
fundamental groups of hyperbolic $3$-manifolds are cofinitely Hopfian and also
that if $K\subseteq S^3$ is not a torus knot, then $\pi_1(S^3\setminus \nu K)$ is cofinitely Hopfian.
\item[(c)] Silver \cite{Sil96} (see also \cite[Corollary~7.2]{BGHM10}) showed that,
if $K\subseteq S^3$ is not a torus knot, then $\pi_1(S^3\setminus \nu K)$ is hyper-Hopfian.
\en
We also refer to  \cite{GW87,Dam91,GW92,GW94,GLW94,WW94,WY99,PV00} for more details and related results.

\item \label{C.27}We refer to \cite[Theorem~IV.4.6]{LyS77} for a proof of the fact that finitely presented groups which are residually finite have solvable word problem.

In fact, a more precise statement can be made: the fundamental group of a compact $3$-manifold has an exponential Dehn function; see \cite{CEHLPT92} for details. Waldhausen~\cite{Wan68b} showed that the word problem for fundamental groups of $3$-manifolds which are virtually Haken is solvable.
\item \label{C.28}\label{C.AERF} E. Hamilton \cite{Hamb01} showed that the fundamental group of any compact, orientable $3$-manifold  is AERF. Earlier results are in   \cite[Theorem~2]{LoN91} and  \cite{AH99}.
\item \label{C.29}\label{C.conjugacyproblem}  The conjugacy problem has been solved for all $3$-manifolds with incompressible boundary by Pr\'eaux \cite{Pre05,Pre06},
 building on ideas of Sela \cite{Sel93}.

\item \label{C.30} Every algorithm solving the conjugacy problem in a given group,  applied to the conjugacy class of the identity,  also solves the word problem.
\item \label{C.31}\label{C.efficient} Wilton--Zalesskii \cite[Theorem~A]{WZ10} showed that closed orientable prime $3$-manifolds are efficient. If $N$ is a prime $3$-manifold with toroidal boundary, then we denote by $W$ the result of gluing exteriors of hyperbolic knots to the boundary components of $N$.
    It follows from Proposition~\ref{prop:charpair} that the JSJ tori of $W$ consist of the JSJ tori of $N$ and the boundary tori of $N$. Since $W$ is efficient by \cite[Theorem~A]{WZ10} it now follows  that $N$ is also efficient.
See also  \cite[Chapter~5]{AF10} for a discussion of the question whether closed orientable prime $3$-manifolds are, for all but finitely many primes $p$, virtually $p$-efficient. (Here $p$-efficiency is the natural analogue of efficiency for the pro-$p$-topology; cf.~\cite[Section~5.1]{AF10}.) \index{$3$-manifold!$p$-efficient}

\item \label{C.32}\label{C.l2betti} Lott--L\"uck \cite[Theorem~0.1]{LoL95} (see also \cite[Section~4.2]{Lu02}) showed that if~$N$ is a compact irreducible non-spherical $3$-manifold with empty or toroidal boundary, then $b_i^{(2)}(N)=b_i^{(2)}(\pi)=0$ for any $i$. We refer to these papers for the calculation of $L^2$-Betti numbers of any compact $3$-manifold.

\item \label{C.33}\label{C.l2bettilimit}   It follows from work of L\"uck   \cite[Theorem~0.1]{Lu94} that
given a topological space $X$ and a homomorphism $\pi_1(X)\to \G$ to a residually finite group $\G$,
 the $L^2$-Betti numbers $b_i^{(2)}(X,\pi_1(X)\to \G)$ can be viewed
    as a limit of ordinary Betti numbers of finite regular covers of $X$.
    Combining this result with (C.\ref{C.l2betti}) we see that  if $N$ is a compact irreducible non-spherical $3$-manifold with empty
    or toroidal boundary, then
\[  \lim_{\ti{N}} \frac{b_1(\ti{N};\Z)}{[N:\ti{N}]} =0.\]
See \cite[Theorem~0.1]{CrW03} for more information on the rate of convergence of the limit
and see \cite[Th\'eor\`eme~0.1]{ABBGNRS11} for a generalization for closed hyperbolic $3$-manifolds.
Note that the assumption that the finite covers are regular is necessary.
In fact Gir\~{a}o \cite{Gir10} (see proof of \cite[Theorem~3.1]{Gir10}) gives an
example of a hyperbolic $3$-manifold with non-trivial boundary together with a cofinal filtration of  $\{\pi_i\}_{i\in \N}$  of $\pi=\pi_1(N)$ such that
\[ \lim_{i\to \infty} \frac{ b_1(\pi_i)}{[\pi:\pi_i]}>0.\]
See \cite{BeG04} for further results on the limits of Betti numbers in finite irregular covers.

The study of the growth of various complexities of groups (e.g., first Betti number, rank, size of torsion homology, etc.)
in filtrations of $3$-manifold groups has garnered a lot of interest in recent years.
We refer to \cite{BD13,BE06,BV13,CD06,Gir10,Gir13,KiS12,Lu12,Rai12b,Sen11,Sen12} for more results.

\end{list}

\begin{remark}
In Diagram~1, statements (C.1)--(C.4) do not rely on the Geometrization Theorem.
Statement (C.5) is a variation on the Geometrization Theorem,
whereas statements (C.6)--(C.\ref{C.sfssubgroupseparable}) gain their relevance from the Geometrization Theorem.
The general statements (C.\ref{C.AF})--(C.\ref{C.l2bettilimit}) rely directly on the  Geometrization Theorem.
In particular the results of Hempel~\cite{Hem87} and Hamilton~\cite{Hamb01} were proved for $3$-manifolds `for which geometri\-zation works'; by the work of Perelman these results then hold in the above generality.
\end{remark}

There are a few arrows and results on $3$-manifold groups which can be proved using the Geometrization Theorem,
and which we left out of the diagrams:
\newcounter{itemcounterrs}
\begin{list}
{{(D.\arabic{itemcounterrs})}}
{\usecounter{itemcounterrs}\leftmargin=2em}
\item Let $N$ be a compact, orientable, irreducible $3$-manifold. Kojima~\cite[p.~744]{Koj87}  and Luecke~\cite[Theorem~1.1]{Lue88} first showed that if $N$ contains an incompressible, non-boundary parallel torus and  $\pi_1(N)$ is not  solvable,  then $vb_1(N;\Z)=\infty$. In spirit their proof is rather similar to the  steps we provide.
\item Let $N$ be a compact $3$-manifold with incompressible boundary and no spherical boundary components, which is not a product on a boundary component.
(i.e., there does not exist a component $\Sigma$ of $\partial N$ such that $N\cong S\times [0,1]$.)
 It follows from standard
arguments (e.g., boundary subgroup separability, see (L.\ref{L.ln91}) for details) that for any $k$ there exists
a finite cover $\ti{N}\to N$ such that~$\ti{N}$ has at least $k$ boundary components.
In particular a Poincar\'e duality argument immediately implies that  $vb_1(N;\Z)=\infty$.
\index{$3$-manifold group!with infinite virtual $\Z$-Betti number}

\item Wilton \cite[Corollary~2.10]{Wil08} determined the closed $3$-manifolds with residually free fundamental group. In particular, it is shown that if $N$ is an orientable, prime $3$-manifold with empty or toroidal boundary such that $\pi_1(N)$ is residually free, then $N$ is the product of a circle with a connected surface.\index{$3$-manifold group!residually free}
\item
Boyer--Rolfsen--Wiest  \cite[Corollary~1.6]{BRW05} showed that the fundamental groups of  Seifert fibered manifolds are virtually bi-orderable.
Perron--Rolfsen \cite[Theorem~1.1]{PR03} and \cite[Corollary~2.4]{PR06} have shown that fundamental groups of many fibered $3$-manifolds
(i.e., $3$-manifolds which fiber over $S^1$) are bi-orderable.
In the other direction, Smythe (see \cite[p.~228]{Neh74}) proved that the fundamental group of the trefoil complement is not bi-orderable.  Thus not all fundamental groups of fibered $3$-manifolds are bi-orderable.
We   refer to  Clay--Rolfsen \cite{CR12} for many more examples, including some fibered hyperbolic $3$-manifolds with non bi-orderable fundamental groups. \index{$3$-manifold group!bi-orderable} \index{$3$-manifold group!virtually bi-orderable}

\item The proof of \cite[Proposition~4.16]{AF10} shows that
the fundamental group of  a Seifert fibered manifold has a finite-index residually torsion-free nilpotent subgroup.
    By (G.\ref{G.gruenberg}) and (G.\ref{G.rhemtulla}) this gives an alternative proof that  fundamental groups of  Seifert fibered manifolds are virtually bi-orderable.
\item\label{D.nonorientable} If $N$ is a closed $3$-manifold which is not orientable, then a Poincar\'e duality argument shows that $b_1(N)\geq 1$,
see, e.g., \cite[Lemma~3.3]{BRW05} for a proof.
\item
Teichner \cite{Tei97} showed that if the lower central series of the fundamental group $\pi$ of a closed $3$-manifold stabilizes,
then the maximal nilpotent quotient of $\pi$ is the fundamental group of a closed $3$-manifold (and such groups were determined in \cite[Theorem~N]{Tho68}). \index{$3$-manifold group!lower central series}
The lower central series and nilpotent quotients of $3$-manifold groups were also  studied by Cochran--Orr \cite[Corollary~8.2]{CoO98},
Cha--Orr \cite[Theorem~1.3]{ChO12}, Freedman--Hain--Teichner \cite[Theorem~3]{FHT97}, Putinar \cite{Pu98} and Turaev \cite{Tur82}.

\item \label{D.Boyer} The fact that Seifert fibered manifolds admit a geometric structure can in most cases be used to give an alternative proof of the fact that their fundamental groups are linear over $\C$.
More precisely, if $N$ admits a geometry $X$, then $\pi_1(N)$ is a discrete subgroup
of $\Isom(X)$. By \cite{Boy} the isometry groups of the following geometries are subgroups of $\gl(4,\R)$: spherical geometry, $S^2\times \R$, Euclidean geometry, Nil, $\Sol$ and hyperbolic geometry.
Furthermore, the fundamental group of an $\H^2\times \R$--manifold is a subgroup of $\gl(5,\R)$.  On the other hand, the isometry group of the universal covering group of $\sl(2,\R)$ is not linear (see, e.g., \cite[p.~170]{Di77}).

Groups which are virtually polycyclic are linear over $\Z$ by the Auslander--Swan Theorem (see \cite{Swn67} and \cite[Theorem~2]{Aus67}) and (H.\ref{H.lineargreen}). This implies in particular
that fundamental groups of $\Sol$-manifolds are linear over $\Z$.

\item The \emph{Whitehead group $\operatorname{Wh}(\pi)$} of a group $\pi$ is defined as the quotient of $K_1(\Z[\pi])$ by $\pm \pi$.
Here $K_1(\Z[\pi])$ is the abelianization of $\lim_{n\to \infty} \mbox{GL}(n,\Z[\pi])$, i.e., it is the abelianization of  the direct limit of the general linear groups over $\Z[\pi]$.
We refer to \cite{Mil66} for details. \index{group!Whitehead group of a group}

  The Whitehead group of the fundamental group of a compact, orientable, non-spherical irreducible $3$-manifold is trivial.  This follows from the Geometrization Theorem together with the work of Farrell--Jones \cite[Corollary~1]{FJ86}, Waldhausen \cite[Theorem~17.5]{Wan78a}, Farrell--Hsiang \cite{FaH81} and Plotnick \cite{Plo80}. We also refer to \cite{FJ87} for extensions of this result.\index{$3$-manifold group!Whitehead group}

Using this fact, and building on work of Turaev \cite{Tur88}, Kreck and L\"uck \cite[Theorem~0.7]{KrL09}
showed that if $f\colon M\to N$ is an orientation preserving homotopy equivalence
between closed, oriented, connected $3$-manifolds and if $\pi_1(N)$ is torsion-free, then  $f$  is homotopic to a homeomorphism.

    Two homotopy equivalent manifolds $M$ and $M'$ are simple homotopy equivalent if $\Wh(\pi_1(M'))$ is trivial.
    It follows in particular that two compact, orientable, non-spherical irreducible $3$-manifolds which are homotopy equivalent are in fact simple homotopy equivalent.
    On the other hand, homotopy equivalent lens spaces are not necessarily simple homotopy equivalent.
    We refer to \cite{Mil66,Coh73,Rou11} and \cite[p.~119]{Ki97} for more details.

     Bartels--Farrell--L\"uck \cite{BFL11}, continuing earlier investigations by Roushon \cite{Rou08a,Rou08b}, showed that the fundamental group of any $3$-manifold  satisfies the Farrell--Jones Conjecture from algebraic $K$-theory.
      The Farrell--Jones Conjecture for $3$-manifold groups implies in particular the following (see, e.g., \cite[p.~4]{BFL11}), for each $3$-manifold $N$:
      \bn
      \item[(a)] an alternative proof that   $\Wh(\pi_1(N))$ is trivial if $\pi_1(N)$ is torsion-free.
      \item[(b)] if $\pi_1(N)$ is torsion-free, then $\pi_1(N)$ satisfies the Kaplansky Conjecture, i.e., the group ring $\Z[\pi_1(N)]$ has no non-trivial idempotents. \index{theorems!Farrell--Jones Conjecture} \index{theorems!Kaplansky Conjecture}
      \item[(c)] the Novikov Conjecture holds for $\pi_1(N)$.
     \en
Matthey--Oyono-Oyono--Pitsch \cite[Theorem~1.1]{MOP08} showed that the fundamental group of any orientable $3$-manifold satisfies the Baum--Connes
\index{theorems!Baum--Connes Conjecture}
Conjecture, which gives an alternative proof for the Novikov  and  Kaplansky Conjectures for $3$-manifold groups
(see \cite[Theorem~1.13]{MOP08}).

\item
We say that a group has Property $U$ if it contains uncountably many maximal subgroups of infinite index.
Margulis--Soifer \cite[Theorem~4]{MrS81} showed that every finitely generated group which is linear over $\C$ and not virtually solvable has Property $U$.
Using the fact that free groups are linear, one can use this result to show that in fact any large group also has Property~$U$.
Tracing through Diagram~1 now implies that the fundamental group of any compact, orientable, aspherical $3$-manifold $N$ with empty or toroidal boundary has Property $U$, unless $\pi_1(N)$ is solvable.
It follows from \cite[Corollary~1.2]{GSS10} that  any maximal subgroup of infinite index of the fundamental group of a hyperbolic $3$-manifold is in fact infinitely generated.
\index{group!with Property $U$}

\item Let $\pi=\pi_1(N)$ be the fundamental group of a closed $3$-manifold which is
also the fundamental group of a  K\"ahler manifold. By Gromov \cite{Grv89} the group $\pi$ is not the free product
of non-trivial groups, which implies that $N$ is a prime $3$-manifold.  Kotschick \cite[Theorem~4]{Kot12} showed that $vb_1(N)=0$. It now follows from (C.\ref{C.11}), (C.\ref{C.12}) and (C.\ref{C.13})
 that  $\pi$ is finite.
This  result was first obtained by Dimca--Suciu \cite{DiS09} and an alternative proof is given in  \cite[Theorem~2.26]{BMS12}.
We refer to \cite{CaT89,DPS11,FS12,BiM12,Kot13} for other approaches and extensions of these results. The question which  groups are at the same time fundamental groups of $3$-manifolds and of quasi-projective manifolds is discussed in \cite{FS12}.
\item Ruberman \cite[Theorem~2.4]{Rub01} compared the behavior of the Atiyah--Patodi--Singer $\eta$-invariant \cite{APS75a,APS75b} and the Chern--Simons invariants \cite{ChS74} under finite coverings to give an obstruction
to a group being a $3$-manifold group.
\item A group is called \emph{$k$-free} if every subgroup  generated by at most $k$ elements is a free group.
For an orientable, closed hyperbolic $3$-manifold $N$ such that $\pi_1(N)$ is $k$-free for $k=3,4$ or $5$
 the results of \cite[Theorem~9.6]{ACS10} and \cite{CuS08b,Guz12} give lower bounds on the volume of $N$.
 The growth of $k$-freeness in a filtration of an arithmetic $3$-manifold was studied in \cite{Bel12}.\index{group!$k$-free}
\item Milnor \cite[Corollary~1]{Mil57} gave restrictions on  finite groups which can act freely on an integral homology sphere
(see also \cite{MZ04,MZ06,Reni01,Zim02}). On the other hand, Cooper and Long \cite{CoL00} showed that for each finite group $G$ there is a rational homology sphere with a free $G$-action.
 Kojima \cite{Koj88} (see also \cite[Theorem~1.1]{BeL05}) showed that every finite group also appears as the full isometry group of a closed hyperbolic $3$-manifold.
\item Let $N$  be  a compact orientable $3$-manifold
with no spherical boundary components.  De la Harpe and Pr\'eaux \cite[Proposition~8]{dlHP11} showed that if $N$ is neither a Seifert manifold nor a $\Sol$-manifold,
then $\pi_1(N)$ is a `Powers group', which by \cite{Pow75} implies that $\pi_1(N)$ is $C^*$--simple.
Here a group  is called \emph{$C^*$-simple} if it is infinite and if its reduced $C^*$-algebra has
no non-trivial two-sided ideals. We refer to \cite{Dan96} for background.
\item
Let $N$ be a  closed, orientable, irreducible $3$-manifold
which has $k$ hyperbolic pieces in its JSJ-decomposition. Weidmann \cite[Theorem~2]{Wei02}
showed that the minimal number of generators of $\pi_1(N)$
is bounded below by $k+1$.
\item Let $N$ be a compact orientable $3$-manifold such that every loop in $N$ is freely homotopic to a loop in a boundary component.
Brin--Johannson--Scott \cite[Theorem~1.1]{BJS85} (see also \cite[\S~2]{MMt79} with $\rho=1$) showed that there exists a boundary component $F$ such that $\pi_1(F)\to \pi_1(N)$ is surjective.
\item
Let $\pi$  be a finitely generated group and let $S$ be a finite generating set of $\pi$. The exponential growth rate of $(\pi,S)$
is defined as
\[ \omega(\pi,S):=\lim_{k\to \infty}\sqrt[k]{\# \{\mbox{elements in $\pi$ with word length $\leq k$}\}},\]
where the word length is taken with respect to $S$.
 The uniform exponential growth rate of $\pi$ is defined as
 \[ \w(\pi):=\inf \{ \omega(\pi,S) :\text{$S$ finite generating set of $\pi$}\}.\]
It follows from work of Leeb \cite[Theorem~3.3]{Leb95}
and di Cerbo \cite[Theorem~2.1]{dCe09} that there exists a $C>1$ such that for any closed irreducible $3$-manifold which is not a graph manifold
 we have $\w(\pi_1(N))>C$. This result builds on and extends earlier work of Milnor \cite{Mil68}, Avez \cite{Av70},
  Besson--Courtois--Gallot \cite{BCG11}
  and
  Bucher--de la Harpe \cite{BdlH00}.
 \item  It is a classical fact that every closed $3$-manifold is the boundary of a smoooth 4-manifold (see, e.g., \cite[p.~277]{Rol90} for a proof).
 Hausmann \cite[p.~122]{Hau81} (see also  \cite{FR12}) showed that given any closed $3$-manifold $N$ there exists in fact a smooth 4-manifold $W$
 such that $\pi_1(N)\to \pi_1(W)$ is injective.
\end{list}

\section{The Work of Agol, Kahn--Markovic, and Wise}
\label{section:akmw}

\noindent
The Geometrization Theorem resolves the Poincar\'e Conjecture and, more generally, the classification of $3$-manifolds with finite fundamental group.  For $3$-manifolds with infinite fundamental group, the Geometrization Theorem can be viewed as asserting that the key
problem is to understand hyperbolic $3$-manifolds.

\medskip

In this section we first discuss the Tameness Theorem, proved independently by Agol \cite{Ag07} and by Calegari--Gabai \cite{CaG06}, which implies an essential dichotomy for finitely generated subgroups of hyperbolic $3$-manifolds. We then turn to the Virtually Compact Special Theorem of Agol \cite{Ag12}, Kahn--Markovic \cite{KM12} and Wise \cite{Wis12a}. This theorem, together with the Tameness Theorem and further work of Agol \cite{Ag08} and Haglund \cite{Hag08} and  many others, resolves many hitherto intractable questions about hyperbolic $3$-manifolds.

\subsection{The Tameness Theorem}\label{section:tameness}

Agol \cite{Ag07} and Calegari--Gabai  \cite[Theorem~0.4]{CaG06} independently proved  the following theorem in 2004, which was first conjectured by Marden \cite{Man74} in 1974:
\index{theorems!Tameness Theorem}

\begin{theorem}\textbf{\emph{(Tameness Theorem)}}
 Let $N$ be a hyperbolic $3$-manifold, not necessarily of finite volume.  If $\pi_1(N)$ is finitely generated, then $N$ is topologically tame, i.e., $N$ is homeomorphic to the interior of a compact $3$-manifold.
\end{theorem}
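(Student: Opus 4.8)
The plan is to reduce tameness of $N$ to a statement about its ends, one at a time, and then to realize each end as a product neighbourhood. By Scott's Core Theorem (C.\ref{C.scottcore}), in its relative (peripheral) form, there is a compact core $M \subseteq N$ with $\pi_1(M) \to \pi_1(N)$ an isomorphism and with $M$ meeting each cusp in its boundary torus or annulus, so that $N \setminus M$ is a disjoint union of neighbourhoods of the ends of $N$. The cusped (parabolic) ends are tame by the Margulis Lemma, since a small horoball cross-section cuts off a product neighbourhood, and the geometrically finite ends are tame by the classical deformation theory of Kleinian groups. Moreover, the case in which $\pi_1(N)$ is freely indecomposable relative to the cusps is already covered by Bonahon's theorem, which shows every such end is geometrically tame. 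Hence the essential difficulty — and the heart of the Agol and Calegari--Gabai arguments — is a geometrically infinite end $E$ whose associated surface $\Sigma$ (the component of $\partial M$ facing $E$) is \emph{compressible} in $N$; for such an $E$ the goal is a homeomorphism from a neighbourhood of $E$ onto $\Sigma \times [0,\infty)$.

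The central construction is \emph{shrinkwrapping}. Because $E$ is geometrically infinite, one can choose a sequence of closed geodesics $\gamma_1,\gamma_2,\dots$ in $N$ that exit $E$ (eventually leave every compact set), realizing a divergent sequence of homotopy classes; set $\Gamma_i = \gamma_1 \cup \cdots \cup \gamma_i$. Starting from an embedded copy of $\Sigma$ pushed into $E$, I would homotope it, keeping it embedded, to a surface $S_i \subseteq N \setminus \Gamma_i$ that is pulled as tight as possible around $\Gamma_i$: a least-area (equivalently, $\mathrm{CAT}(-1)$) surface in the path metric on $N \setminus \Gamma_i$, obtained as a limit of minimal surfaces for smooth metrics approximating that singular metric and pinched near $\Gamma_i$. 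The properties to extract are: (a) $S_i$ is embedded; (b) $S_i$ is incompressible in $N \setminus \Gamma_i$, since a compressing disc could be used to decrease area or push $S_i$ off $\Gamma_i$; (c) a Gauss--Bonnet estimate bounds the area of $S_i$ by a constant times $|\chi(\Sigma)|$, and hence, via the thick--thin decomposition, bounds its diameter off the Margulis-thin part; and (d) the homotopy traps $\Gamma_i$ behind $S_i$, so $S_i$ separates $\Gamma_i$ from $M$. Properties (c) and (d) together force the $S_i$ themselves to exit $E$.

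Finally I would convert ``embedded surfaces exiting $E$'' into ``$E$ is a product.'' Passing to a subsequence, consecutive surfaces $S_i, S_{i+1}$ are disjoint and cobound a compact submanifold $W_i \subseteq N$ with $\partial W_i = S_i \sqcup S_{i+1}$. After the standard interpolation/surgery step upgrading incompressibility in $N \setminus \Gamma_i$ to the needed $\pi_1$-statement in $W_i$, the inclusions $S_i \hookrightarrow W_i$ are $\pi_1$-injective homotopy equivalences of Haken manifolds, so Waldhausen's theorem together with Stallings' fibration criterion yields $W_i \cong \Sigma \times I$. Concatenating the product structures on the $W_i$ exhibits a neighbourhood of $E$ as $\Sigma \times [0,\infty)$; applying this to every geometrically infinite end completes the proof.

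I expect the main obstacle to be the shrinkwrapping step itself, and within it two points in particular. First, making rigorous the ``least-area surface in the singular metric on $N \setminus \Gamma_i$'' requires a delicate limiting argument with smooth approximating metrics, uniform area and curvature bounds, and a removable-singularity analysis along $\Gamma_i$. Second, the compressible case genuinely forces one to work with an end reduction of $E$ in the sense of Brin--Thickstun (as in Calegari--Gabai), or with Agol's hierarchy/``ground'' machinery, to guarantee that the shrinkwrapped surfaces have the correct topological type and actually sweep out the end rather than getting stuck on a bounded subset of it.
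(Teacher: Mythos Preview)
The paper does not prove the Tameness Theorem at all: this is a survey article, and immediately after stating the theorem the authors write ``We refer to \cite{Cho06,Som06,Cay08,Gab09,Bow10,Man07} for further details regarding the statement and alternative approaches to the proof,'' citing Agol \cite{Ag07} and Calegari--Gabai \cite{CaG06} for the original arguments. So there is nothing in the paper to compare your proposal against; the authors treat the result as a black box and move on to its consequence, the Subgroup Tameness Theorem.

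That said, what you have written is a reasonable high-level outline of the Calegari--Gabai shrinkwrapping strategy (with the Scott core, Bonahon's theorem for the indecomposable case, and the end-reduction machinery all in their correct places). Your self-identified obstacles are exactly the hard parts of \cite{CaG06}: the existence and regularity of shrinkwrapped surfaces in the singular metric, and the topological control in the compressible case via end reductions. One point you gloss over in the final paragraph is that the surfaces $S_i$ produced by shrinkwrapping need not be homeomorphic to $\Sigma$ a priori, and the passage from ``incompressible in $N\setminus\Gamma_i$'' to ``$\pi_1$-injective in $W_i$'' and then to ``$W_i\cong\Sigma\times I$'' requires more care than a single sentence invoking Waldhausen and Stallings; in the actual argument this is where most of the end-reduction and interpolation work is spent. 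But since the paper itself offers no proof, your sketch is already more than what is required here.
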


We refer to \cite{Cho06,Som06,Cay08,Gab09,Bow10,Man07} for further details regarding the statement and alternative approaches
to the proof. We especially refer to \cite[Section~6]{Cay08} for a detailed discussion of earlier results leading towards the proof of the Tameness Theorem.

\medskip

In the context of this survey,  the main application of the Tameness Theorem is the Subgroup Tameness Theorem below.
In order to formulate this theorem we need a few more definitions.
\bn
\item A \emph{surface group} is  the fundamental group of a closed, orientable surface of genus at least one.
\item  Let $\Gamma$ be a \emph{Kleinian group}, i.e., a discrete subgroup of $\psl(2,\C)$.   The subgroup $\Gamma$ is called \emph{geometrically finite} if $\Gamma$ acts cocompactly on the convex hull of its limit set; see, for instance, \cite[Chapter 3]{LoR05} for details.
 (Note that a geometrically finite Kleinian group is necessarily finitely generated.) Now let $N$ be an orientable hyperbolic $3$-manifold. We can identify $\pi_1(N)$ with a discrete subgroup of $\psl(2,\C)$ which is well defined up to conjugation (see \cite[Section~1.6]{Shn02} and (C.\ref{C.sl2c})).  We say that a subgroup $\G\subseteq \pi_1(N)\subseteq \psl(2,\C)$ is \emph{geometrically finite} if $\G\subseteq \psl(2,\C)$ is a geometrically finite Kleinian group. We refer to \cite{Bow93} for a discussion of various different equivalent definitions of `geometrically finite'.
 We say that a surface $\Sigma\subseteq N$ is \emph{geometrically finite} if $\Sigma$ is incompressible
 and if the subgroup $\pi_1(\Sigma)\subseteq \pi_1(N)$ is geometrically finite. \index{surface!in a $3$-manifold!geometrically finite}
\item We say that a  $3$-manifold $N$ is \emph{fibered} if $N$ admits the structure of a surface bundle over $S^1$. \index{$3$-manifold!fibered}
By a \emph{surface fiber} in a $3$-manifold $N$ we mean the fiber of a surface bundle $N\to S^1$.
We say that $\G\subseteq \pi_1(N)$ is a \emph{surface fiber subgroup} if there exists a surface fiber $\Sigma$ such that $\G=\pi_1(\Sigma)$.
We say $\G\subseteq \pi_1(N)$ is a \emph{virtual surface fiber subgroup} if $N$ admits a finite cover $N'\to N$ such that $\G\subseteq \pi_1(N')$ and   $\G$ is a surface fiber subgroup of $N'$.
\en

\index{surface group}
\index{surface group!quasi-Fuchsian}\index{Kleinian group!geometrically finite}
\index{subgroup!surface fiber} \index{subgroup!virtual surface fiber}

\noindent
We can now state the Subgroup Tameness Theorem, which follows from combining the Tameness Theorem  with Canary's Covering Theorem (see \cite[Section~4]{Cay94}, \cite{Cay96} and \cite[Corollary~8.1]{Cay08}): \index{theorems!Subgroup Tameness Theorem} \index{theorems!Canary's Covering Theorem}

\begin{theorem}\textbf{\emph{(Subgroup Tameness Theorem)}}\label{thm:subgroupdichotomy}
 Let $N$ be a hyperbolic $3$-manifold and let
$\G\subseteq \pi_1(N)$ be a finitely generated subgroup. Then either
\begin{itemize}
\item[(1)] $\G$ is a virtual surface fiber group, or
\item[(2)] $\G$ is  geometrically finite.
\end{itemize}
\end{theorem}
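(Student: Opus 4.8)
The plan is to deduce the Subgroup Tameness Theorem by feeding the Tameness Theorem into Canary's Covering Theorem. The two genuine inputs are: first, that a finitely generated subgroup $\G\subseteq\pi_1(N)$ is itself the fundamental group of a hyperbolic $3$-manifold, and second, a dichotomy between geometric finiteness and degeneracy that is controlled by the Covering Theorem.

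First I would pass to the cover $N_\G\to N$ corresponding to $\G$; that is, $N_\G=\HH^3/\G$, where we regard $\pi_1(N)$ and hence $\G$ as a Kleinian group via the faithful discrete representation $\pi_1(N)\hookrightarrow\psl(2,\C)$ coming from (C.\ref{C.sl2c}). Since $\G$ is finitely generated, the Tameness Theorem applies to $N_\G$ and tells us that $N_\G$ is topologically tame, i.e. homeomorphic to the interior of a compact $3$-manifold $\overline{N_\G}$. This is the crucial step: tameness rules out the pathological `wild' ends, so that each end of $N_\G$ is either geometrically finite or \emph{simply degenerate}, the latter meaning the end has a neighbourhood swept out by a sequence of pleated (or simplicial hyperbolic) surfaces exiting the end. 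If \emph{every} end of $N_\G$ is geometrically finite, then $\G$ is a geometrically finite Kleinian group and we are in case (2); so we may assume $N_\G$ has at least one simply degenerate end $E$.

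Next I would invoke Canary's Covering Theorem for the covering $N_\G\to N$ with $N$ of finite volume: if $N_\G$ has a simply degenerate end $E$, then $E$ has a neighbourhood $U$ that covers a neighbourhood of an end of a finite cover $N'\to N$ finitely-to-one, and moreover $N'$ is itself fibered, with the degenerate end corresponding to the fiber. Concretely, the Covering Theorem forces $\G$ to be commensurable with (in fact, equal up to finite index to) a virtual surface fiber subgroup: $N$ admits a finite cover $N'\to N$ such that $\G\subseteq\pi_1(N')$ and $\G=\pi_1(\Sigma)$ for a fiber $\Sigma$ of a surface-bundle structure $N'\to S^1$. A subsidiary point to check here is that because $N$ has finite volume, a finitely generated subgroup of infinite index whose associated cover has a degenerate end cannot also have other, geometrically infinite behaviour that escapes this description — but this is exactly what the Covering Theorem packages, so the argument is: degenerate end $\Rightarrow$ virtual fiber; hence case (1).

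The main obstacle is not really ours to overcome: it is precisely the Tameness Theorem itself (Agol \cite{Ag07}, Calegari--Gabai \cite{CaG06}), which upgrades `$\G$ finitely generated' to `$N_\G$ homeomorphic to the interior of a compact manifold' and thereby makes the ends analyzable; without it one cannot exclude wild ends and the clean geometrically-finite-versus-virtual-fiber dichotomy fails. The second, more technical, ingredient is Canary's Covering Theorem \cite{Cay94,Cay96}, which is what converts `simply degenerate end of an infinite cover of a finite-volume manifold' into `virtual surface fiber'; verifying that its hypotheses are met (finite volume of $N$, tameness of $N_\G$, the covering being infinite-sheeted precisely when $\G$ has infinite index) is routine given the statements quoted above. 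So the proof is essentially: Tameness Theorem $+$ classification of ends of tame hyperbolic $3$-manifolds $+$ Canary's Covering Theorem, assembled in that order.
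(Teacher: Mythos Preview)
Your proposal is correct and follows exactly the approach the paper indicates: the paper does not give a detailed proof but simply states that the theorem ``follows from combining the Tameness Theorem with Canary's Covering Theorem'' with references to \cite[Section~4]{Cay94}, \cite{Cay96} and \cite[Corollary~8.1]{Cay08}. Your write-up is a faithful expansion of this outline---pass to the cover $N_\G$, apply Tameness to get a tame end structure, and then use Canary's Covering Theorem to force any simply degenerate end to come from a virtual fibering.
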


The importance of this theorem will become fully apparent in Sections~\ref{section:diagram} and~\ref{section:subgroups}.

\subsection{The Virtually Compact Special Theorem}\label{section:vcsthm}

In his landmark 1982 article~\cite{Thu82a}, Thurston posed twenty-four questions, which illustrated the limited understanding of hyperbolic $3$-manifolds at that point.  These questions guided research into hyperbolic $3$-manifolds in the following years.
Huge progress towards answering these questions has been made since.
For example, Perelman's proof of  the Geometrization Theorem answered Thurston's Question~1
and the proof by Agol and Calegari--Gabai of the Tameness Theorem answered Question~5.

By early 2012, all but five of Thurston's questions had been answered.
Of the open problems, Question~23 plays a special role:
Thurston conjectured that not all volumes of hyperbolic $3$-manifolds are rationally related.
This is a very difficult question which in nature is much closer to deep problems in number theory than to topology
or differential geometry.
We  list the  remaining four questions  (with the original numbering):

\begin{questions}\text{\emph{(Thurston, 1982)}}
\begin{itemize}
\item [(15)] Are fundamental groups of hyperbolic $3$-manifolds LERF?
\item [(16)] Is  every hyperbolic $3$-manifold virtually Haken?
\item [(17)] Does every hyperbolic  $3$-manifold have a finite-sheeted cover with
positive first Betti number?
\item[(18)] Is every hyperbolic $3$-manifold virtually fibered?
\end{itemize}
\end{questions}

(It is clear that a positive answer to Question 18 implies a positive answer to Question~17,
and in (C.\ref{C.haken}) we saw that a positive answer to Question 17 implies a positive answer to Question~16.)
There has been a tremendous effort to resolve these four questions over the last three decades.  (See Section~\ref{section:history} for an overview of previous results.)  Nonetheless, progress  has been slow for the better part of the period.  In fact opinions on Question 18 were split. Regarding this particular question,  Thurston  himself famously wrote  `this dubious-sounding question seems to have a definite chance for a positive answer' \cite[p.~380]{Thu82a}.

\medskip

A stunning burst of creativity during the years 2007--2012 has lead to the following theorem, which was proved by Agol \cite{Ag12}, Kahn--Markovic \cite{KM12} and Wise \cite{Wis12a}, with major contributions from  Agol--Groves--Manning \cite{Ag12}, Bergeron--Wise \cite{BeW12}, Haglund--Wise \cite{HaW08,HaW12}, Hsu--Wise \cite{HsW12} and Sageev \cite{Sag95,Sag97}.

\index{theorems!Virtually Compact Special Theorem}

\begin{theorem}\textbf{\emph{(Virtually Compact Special Theorem)}}\label{thm:akmw}
If $N$ is a hyperbolic $3$-manifold, then $\pi_1(N)$ is virtually compact special.
\end{theorem}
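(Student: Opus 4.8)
The plan is to split into the two cases allowed by the standing convention that hyperbolic $3$-manifolds have finite volume: $N$ closed, or $N$ non-compact with toroidal cusps. A finite-index overgroup of a virtually compact special group is again virtually compact special, so Observation~\ref{obs:s2}(2) lets me assume $N$ is orientable; in the closed case $\pi_1(N)$ is then word-hyperbolic, since it contains no $\Z^2$ by Theorem~\ref{thm:hypatoroidal}.

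For the cusped case, which is due to Wise \cite{Wis12a}, I would first observe that $N$ is Haken and contains an embedded incompressible surface $\Sigma$ that is not a (virtual) fiber; by the Subgroup Tameness Theorem (Theorem~\ref{thm:subgroupdichotomy}) such a $\Sigma$ is geometrically finite, so $\pi_1(\Sigma)$ is quasiconvex in $\pi_1(N)$, viewed as a group hyperbolic relative to its cusp subgroups $\cong\Z^2$. Cutting $N$ along $\Sigma$ produces hyperbolic pieces of smaller complexity, each again carrying such a surface, so that iterating this endows $\pi_1(N)$ with a \emph{quasiconvex hierarchy}; Wise's Quasiconvex Hierarchy Theorem \cite{Wis12a} then yields at once that $\pi_1(N)$ is virtually compact special.

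The substantial case is $N$ closed, which is Agol's theorem \cite{Ag12}, and I would run it in two stages. \emph{Stage 1 (cubulation).} By the Surface Subgroup Theorem of Kahn--Markovic \cite{KM12}, $\pi_1(N)$ contains a closed quasi-Fuchsian surface subgroup, and a refinement of their construction (as exploited by Bergeron--Wise \cite{BeW12}) produces, for every pair of distinct points of the sphere at infinity $\partial_\infty\H^3$, such a surface subgroup whose limit circle separates the pair. Each quasiconvex surface subgroup is codimension one, so Sageev's construction \cite{Sag95,Sag97} turns the resulting wallspace into an action of $\pi_1(N)$ on a CAT(0) cube complex $X$; the separation-at-infinity property makes the action proper (hence free, as $\pi_1(N)$ is torsion-free by (C.\ref{C.pi1torsionfree})), and the cocompactness criterion for Sageev complexes of hyperbolic groups (finitely many quasiconvex codimension-one subgroups, up to conjugacy, whose walls fill) makes it cocompact \cite{BeW12}. \emph{Stage 2 (virtual specialness).} I would then apply Agol's theorem that a word-hyperbolic group acting freely and cocompactly on a CAT(0) cube complex $X$ admits a finite-index subgroup $F$ with $X/F$ a compact special cube complex. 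Its proof combines Wise's Malnormal Special Quotient Theorem \cite{Wis12a}, the Haglund--Wise theory of special cube complexes and their canonical completion and retraction \cite{HaW08}, the Hsu--Wise cubulation of malnormal amalgams \cite{HsW12}, the Weak Separation Theorem of Agol--Groves--Manning \cite{Ag12}, and Agol's combinatorial argument constructing an equivariant cubical coloring that simultaneously removes every hyperplane pathology (self-intersection, one-sidedness, self-osculation, inter-osculation). Applied to $\pi_1(N)$ acting on the complex $X$ of Stage 1, this shows $\pi_1(N)$ is virtually compact special.

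The hard part is Stage 2: Agol's virtual specialness theorem, and beneath it Wise's Malnormal Special Quotient Theorem, are the deep ingredients and constitute the bulk of \cite{Ag12} and \cite{Wis12a}, whereas Stage 1 is comparatively formal once the Kahn--Markovic surface subgroups and Sageev's machinery are in hand, and the cusped case is a direct appeal to Wise's hierarchy theorem. For a survey it suffices to assemble these results in the order above; the only genuinely routine verifications are the stability of \emph{virtually compact special} under passage to finite-index overgroups (used in the orientability reduction) and the bookkeeping needed to set up the quasiconvex hierarchy in the cusped case.
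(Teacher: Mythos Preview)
Your proposal is correct and follows essentially the same route as the paper: the cusped case via Wise, and the closed case via Kahn--Markovic surfaces, the Bergeron--Wise/Sageev cubulation (Theorems~\ref{thm:km} and~\ref{thm:sbw}), and Agol's theorem (Theorem~\ref{thm:ag12}) that word-hyperbolic cubulated groups are virtually compact special. One caveat on the cusped case: the Quasiconvex Hierarchy Theorem you invoke (Theorem~\ref{thm:wise}) is stated only for word-hyperbolic groups, whereas $\pi_1(N)$ is merely relatively hyperbolic when $N$ has cusps; the paper handles this by citing Wise's relatively hyperbolic extension (Theorem~\ref{thm:wiseboundary}) as a black box rather than via the hierarchy sketch you give.
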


\begin{remarks}

\mbox{}

\bn
\item
We will give the definition of `virtually compact special' in Section~\ref{section:specialcc}.  In that section we will also state the theorem of Haglund and Wise (see Corollary~\ref{cor:HW compact special}) which gives an alternative formulation of the Virtually Compact Special Theorem in terms of subgroups of right-angled Artin groups.
\item
In the case that $N$ is closed and admits a geometrically finite surface, a proof was first given by Wise \cite[Theorem~14.1]{Wis12a}.
Wise also gave a proof in the case that $N$ has non-empty boundary (see Theorem~\ref{thm:wiseboundary}). Finally, for the case that $N$ is closed and does not admit a geometrically finite surface,  the decisive ingredients of the proof were given by the work of Kahn--Markovic \cite{KM12} and Agol \cite{Ag12}.   The latter builds heavily on the ideas and results of \cite{Wis12a}.  See Diagram~2 for further details.
\item Recall that, according to our conventions, a hyperbolic $3$-manifold is assumed to be of finite volume.  This agrees with the theorems stated by Agol \cite[Theorems 9.1 and 9.2]{Ag12} and Wise \cite[Theorem~14.29]{Wis12a}.  In fact, it follows that the fundamental group of any compact hyperbolic $3$-manifold with (possibly non-toroidal) incompressible boundary is virtually compact special.  This is well known to the experts, but as far as we are aware does not appear in the literature.  Thus, in Section~\ref{ss: Non-toroidal} below, we explain how to deduce the infinite-volume case from Wise's results.
\en
\end{remarks}

We will discuss the consequences of the Virtually Compact Special Theorem in detail in Section~\ref{section:diagram}, but as an \emph{amuse-bouche} we mention that it gives affirmative answers to Thurston's Questions~15--18.  More precisely, Theorem~\ref{thm:akmw} together with the Tameness Theorem, work of Haglund \cite{Hag08}, Haglund--Wise \cite{HaW08} and Agol \cite{Ag08} implies the following corollary.

\begin{corollary}\label{cor:akmw}
If $N$ is a hyperbolic $3$-manifold, then
\begin{itemize}
\item[(1)] $\pi_1(N)$ is LERF;
\item[(2)] $N$ is virtually Haken;
\item[(3)] $vb_1(N)=\infty$; and
\item[(4)] $N$ is virtually fibered.
\end{itemize}
\end{corollary}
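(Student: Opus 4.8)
The plan is to deduce the four statements from the Virtually Compact Special Theorem (Theorem~\ref{thm:akmw}), the Subgroup Tameness Theorem (Theorem~\ref{thm:subgroupdichotomy}), the separability and virtual-retract theorems of Haglund~\cite{Hag08} and Haglund--Wise~\cite{HaW08} (and their extension to the relatively hyperbolic setting), the Surface Subgroup Theorem of Kahn--Markovic~\cite{KM12}, and Agol's Virtual Fibering Theorem~\cite{Ag08}. Write $\pi=\pi_1(N)$. By Theorem~\ref{thm:akmw} there is a finite-index subgroup $\pi_0\leq\pi$ that is the fundamental group of a compact special cube complex, so by Haglund--Wise $\pi_0$ embeds as a subgroup of a right-angled Artin group. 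Recall also that $\pi$ is word-hyperbolic when $N$ is closed, and relatively hyperbolic with peripheral subgroups the cusp subgroups ($\cong\Z^2$) when $N$ has toroidal boundary, and that in either case a finitely generated subgroup of $\pi$ is (relatively) quasiconvex precisely when it is geometrically finite as a Kleinian subgroup.

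For statement~(1) it suffices to separate an arbitrary finitely generated subgroup $H\leq\pi$. By the Subgroup Tameness Theorem, either $H$ is a virtual surface fiber subgroup or $H$ is geometrically finite. In the first case there is a finite cover $N'\to N$ with $H\leq\pi_1(N')$ and $H=\ker\bigl(\pi_1(N')\to\Z\bigr)$ the kernel of the bundle map of $N'$; such a kernel is an intersection of finite-index subgroups, hence separable in $\pi_1(N')$, and since $\pi$ is residually finite (see (C.\ref{C.resfinite})) and $\pi_1(N')$ has finite index in $\pi$, separability of $H$ transfers to $\pi$. In the second case $H$ is (relatively) quasiconvex in $\pi$, so, applying the Haglund--Wise separability theorem for virtually (sparse) special groups to $\pi_0$ and transferring across the inclusion $\pi_0\leq\pi$, the subgroup $H$ is in fact a virtual retract of $\pi$; in particular it is separable. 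Hence $\pi$ is LERF.

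Next I would prove~(3), which yields~(2). If $N$ has toroidal boundary then its boundary tori are incompressible, so $N$ is already Haken, and passing to finite covers with more cusps (using separability of the peripheral subgroups) shows $vb_1(N;\Z)=\infty$; so assume henceforth that $N$ is closed. By the Surface Subgroup Theorem~\cite{KM12}, $\pi$ contains a subgroup $S$ isomorphic to a closed quasi-Fuchsian surface group $\pi_1(\Sigma)$ with $\Sigma$ of genus at least two. Being quasiconvex, $S$ is a virtual retract of $\pi$ by the argument above, so there is a finite-index subgroup $G\leq\pi$ containing $S$ together with a retraction $G\to S$. Since $S$ surjects onto a non-cyclic free group (quotient $\pi_1(\Sigma)$ by the normal closure of half of its standard generators), so does $G$; thus $\pi$ is large, and hence $vb_1(N;\Z)=\infty$ (a finite-index subgroup of $\pi$ surjects onto a non-cyclic free group, which in turn has finite-index subgroups of arbitrarily large rank; cf.\ (C.\ref{C.homlarge}) and the transfer inequality in (C.\ref{C.locallyindicable})). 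In particular some finite cover of $N$ has positive first Betti number, whence $N$ is virtually Haken by (C.\ref{C.haken}), giving~(2). (Alternatively one obtains~(2) directly by using LERF to promote the quasi-Fuchsian immersion to an embedded incompressible surface in a finite cover, following Scott.)

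Finally, for~(4): right-angled Artin groups are RFRS (Agol~\cite{Ag08}), the RFRS property passes to subgroups, and $\pi_0$ embeds in a right-angled Artin group, so $\pi_0$ is RFRS and $\pi$ is virtually RFRS. Since $vb_1(N)>0$ by~(3), after replacing $N$ by a suitable finite cover (intersect $\pi_0$ with a finite-index subgroup of positive first Betti number, using the transfer bound $b_1(\pi')\geq b_1(\pi)$ for finite-index subgroups) we may assume $b_1(N)>0$ while $\pi_1(N)$ is RFRS and $N$ is still hyperbolic, hence irreducible; Agol's Virtual Fibering Theorem~\cite{Ag08} then provides a finite cover of $N$ that fibers over $S^1$, so $N$ is virtually fibered. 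The genuinely deep inputs here are black-boxed---Theorem~\ref{thm:akmw} and the Kahn--Markovic surface subgroups---and the main remaining technical obstacle in this outline is making the Haglund--Wise machinery available in the cusped case: one needs the separability and virtual-retract statements for \emph{relatively} quasiconvex subgroups of (virtually) sparse special groups, together with the identification of geometric finiteness with relative quasiconvexity for $\pi_1(N)$, and one must keep careful track of the fact that LERF, separability of a given subgroup, and RFRS all transfer correctly across the finite-index inclusions forced by Theorem~\ref{thm:akmw}.
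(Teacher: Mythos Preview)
Your proposal is correct and follows essentially the same route as the paper (via Diagrams~2 and~4 and the justifications (G.5)--(G.20)): virtually compact special $\Rightarrow$ quasiconvex subgroup of a RAAG $\Rightarrow$ virtual retraction onto geometrically finite subgroups $\Rightarrow$ GFERF $\Rightarrow$ LERF (via Subgroup Tameness), together with subgroup of a RAAG $\Rightarrow$ virtually RFRS $\Rightarrow$ virtually fibered, and largeness $\Rightarrow$ $vb_1=\infty$ $\Rightarrow$ virtually Haken.

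Two small points where you take a longer path than the paper. For largeness in the closed case you invoke Kahn--Markovic to produce a quasi-Fuchsian surface subgroup and then retract onto it; the paper (see (G.13)) instead observes that any non-elementary Kleinian group already contains a Schottky subgroup, which is a geometrically finite non-cyclic free group, so the virtual retraction onto geometrically finite subgroups immediately yields a finite-index subgroup mapping onto a non-cyclic free group---no separate appeal to Kahn--Markovic is needed (and Kahn--Markovic is in any case already consumed inside the proof of Theorem~\ref{thm:akmw}). For~(4), your detour arranging $b_1>0$ before applying Agol's criterion is unnecessary: Agol's theorem~\cite[Theorem~5.1]{Ag08} takes as input only that $N$ is aspherical with toroidal boundary and $\pi_1(N)$ is (virtually) RFRS; moreover RFRS for an infinite group already forces $b_1>0$, since otherwise the filtration stabilises at the first step.
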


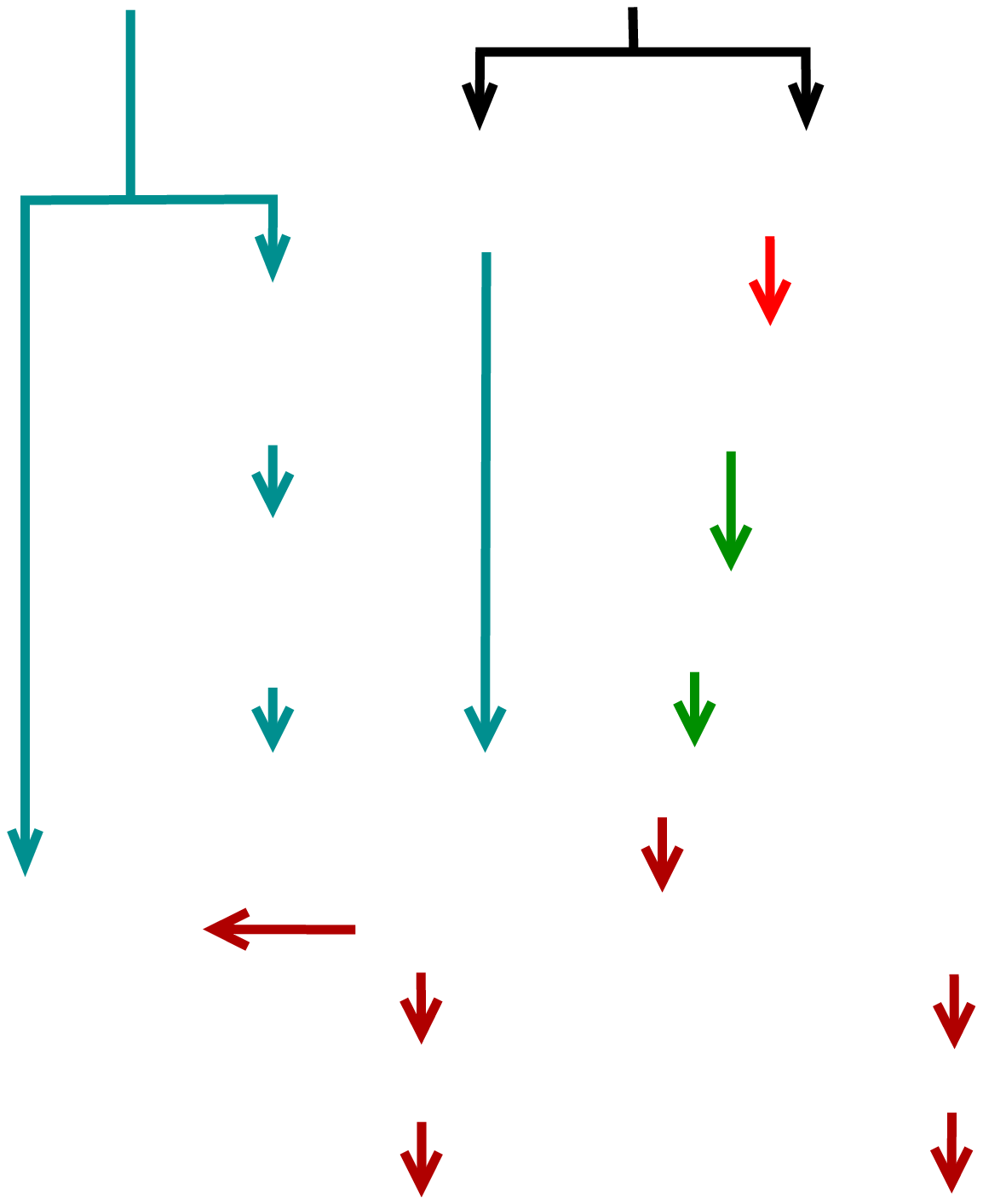

Diagram~2 summarizes the various contributions to the proof of Theorem~\ref{thm:akmw}.
The diagram can also  be viewed as a guide to the next sections.
More precisely we use the following color code.
\bn
\item Turquoise arrows correspond to Section~\ref{section:wise}.
\item The red arrow is treated in Section~\ref{section:km}.
\item The green arrows are covered in Section~\ref{section:agol}.
\item Finally, the brown arrows correspond to the consequences of Theorem~\ref{thm:akmw}. They are treated in detail in Section~\ref{section:corakmw}.
\en

\subsection{Special cube complexes}\label{section:specialcc}

The idea of applying non-positively curved cube complexes to the study of $3$-manifolds originated with the work of Sageev \cite{Sag95}.      Haglund and Wise's definition of a \emph{special} cube complex was a major step forward, and sparked the recent surge of activity \cite{HaW08}.
In this section, we give rough definitions that are designed to give a flavor of the material. The reader is referred to \cite{HaW08} for a precise treatment. For most applications, Corollary~\ref{cor: HW special} or Corollary~\ref{cor:HW compact special} can be taken as a definition.

\medskip

A \emph{cube complex} $X$ is a finite-dimensional cell complex in which each cell is a cube and the attaching maps are combinatorial isomorphisms.
 We also impose the condition, whose importance was brought to the fore by Gromov, that $ X$ should admit a locally $\op{CAT}(0)$ (i.e., non-positively curved) metric.   One of the attractions of cube complexes is that this condition can be phrased purely combinatorially.   Note that the  link of a vertex in a cube complex naturally has the structure of a simplicial complex. \index{cube complex}  \index{CAT(0)} \index{theorems!Gromov's Link Condition}

\begin{theorem}\textbf{\emph{(Gromov's Link Condition)}}\label{thm:gromov}
A cube complex $X$ admits a non-positively curved metric if and only if the link of each vertex is flag.  Recall that a simplicial cube complex is flag if every subcomplex $Y$ that is isomorphic to the boundary of an $n$-simplex \textup{(}for $n\geq 2$\textup{)} is the boundary of an $n$-simplex in $X$.
\end{theorem}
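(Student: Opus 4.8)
The plan is to prove both directions by analyzing geodesics in the universal cover $\widetilde X$, equipped with its natural piecewise-Euclidean (cube) metric. The key is the classical characterization of $\operatorname{CAT}(0)$ spaces via the absence of locally geodesic bigons together with a local-to-global principle (the Cartan--Hadamard theorem for length spaces, valid because $\widetilde X$ is complete and geodesic). So it suffices to show that $\widetilde X$ is locally $\operatorname{CAT}(0)$ if and only if the link of each vertex is flag; and since $\widetilde X \to X$ is a local isometry, this is equivalent to the corresponding statement about links of vertices of $X$.

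First I would set up the local picture. A small metric ball around a vertex $v$ of a cube complex is isometric to the open cone on $\operatorname{Lk}(v)$, where $\operatorname{Lk}(v)$ is given the all-right spherical metric (every edge has length $\pi/2$, every simplex is a spherical simplex with all dihedral angles $\pi/2$). By Gromov's local characterization, the cone on a metric space $L$ is $\operatorname{CAT}(0)$ in a neighborhood of the cone point precisely when $L$ is $\operatorname{CAT}(1)$; and because $\operatorname{CAT}(1)$ for a length space follows from the absence of closed geodesics of length $<2\pi$ (the ``Berestovskii'' / Gromov criterion for all-right spherical complexes), the question reduces to: the all-right spherical complex $\operatorname{Lk}(v)$ contains no closed geodesic of length $<2\pi$ if and only if it is flag. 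Away from vertices the space is locally a product of Euclidean factors with lower-dimensional links, so an inductive argument on dimension handles the points in the interiors of positive-dimensional cells; the only genuinely new condition arises at the vertices.

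Next I would prove the equivalence ``flag $\Leftrightarrow$ no short closed geodesic'' for all-right spherical complexes. For the contrapositive of one direction: if $\operatorname{Lk}(v)$ is \emph{not} flag, there is an empty $(n{+}1)$-clique, i.e.\ the boundary sphere of an $n$-simplex $\partial\Delta^n$ sits in $\operatorname{Lk}(v)$ without $\Delta^n$ being filled. In the all-right metric this $\partial\Delta^n$ is isometric to a round sphere $S^{n-1}$ of radius $1$... more carefully, one exhibits an explicit closed geodesic: pick two non-adjacent vertices (if $n\geq 2$ such a pair exists in $\partial\Delta^n$ after identifying it combinatorially) — actually the cleanest route is the standard one: a non-flag complex admits an isometrically embedded circle of length $3\pi/2 < 2\pi$ coming from three pairwise-non-commuting ``directions,'' or more generally one runs Gromov's argument that the link of the missing simplex produces a locally geodesic loop shorter than $2\pi$. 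Conversely, if the complex is flag, one shows directly that any locally geodesic loop must have length $\geq 2\pi$: a local geodesic enters and leaves each simplex making angle $\geq \pi$ at each vertex it passes through, and flagness guarantees that consecutive simplices it traverses do not ``span'' a common higher simplex that would allow a shortcut, so the combinatorial length (number of edges crossed, each contributing $\pi/2$) is at least $4$. This is the technical heart and I expect it to be the main obstacle: making the ``no shortcut'' step rigorous requires the precise fact that in a flag all-right spherical complex, the distance between two points is realized by a geodesic passing through a ``combinatorially convex'' chain of simplices, which is exactly where flagness is used.

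Finally I would assemble the pieces: $X$ non-positively curved $\iff$ $\widetilde X$ is $\operatorname{CAT}(0)$ (Cartan--Hadamard) $\iff$ $\widetilde X$ is locally $\operatorname{CAT}(0)$ $\iff$ the link of every vertex of $\widetilde X$ is $\operatorname{CAT}(1)$ in the all-right spherical metric $\iff$ (by the equivalence just proved) the link of every vertex of $\widetilde X$ is flag $\iff$ the link of every vertex of $X$ is flag, the last step because $\widetilde X \to X$ restricts to an isomorphism of vertex links. I would remark that for the applications in this survey only the ``flag links $\Rightarrow$ non-positively curved'' direction is strictly needed, and that this direction can also be quoted directly from Bridson--Haefliger, Theorem II.5.20.

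\medskip\noindent\emph{Remark.} In practice one cites \cite{BrH99} for this result rather than reproving it; the sketch above is included only to indicate where the flag condition enters.
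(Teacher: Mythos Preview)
The paper does not prove this theorem; it simply attributes it to Gromov \cite{Grv87} and refers the reader to \cite[Theorem~II.5.20]{BrH99} for a proof. Your sketch is essentially the standard argument found in that reference: reduce to $\operatorname{CAT}(1)$ for vertex links via the cone criterion, then characterize $\operatorname{CAT}(1)$ for all-right spherical complexes by the flag condition through the absence of closed geodesics of length $<2\pi$.

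The outline is correct, but the paragraph on ``non-flag $\Rightarrow$ short geodesic'' is muddled: in $\partial\Delta^n$ \emph{all} vertices are pairwise adjacent (that is the whole point of a missing simplex), so there are no non-adjacent vertices to pick. The clean instance is the minimal one, the empty triangle ($n=2$), giving a circle of length $3\pi/2$; checking that this circle is locally geodesic in the ambient link is exactly where the absence of the filling $2$-simplex is used. Your closing remark is on target: for a survey one cites the result rather than reproving it, and that is precisely what the paper does.
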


This theorem is due originally to Gromov \cite{Grv87}.  See also \cite[Theorem~II.5.20]{BrH99}
 for a proof, as well as many more details about CAT(0) metric spaces and cube complexes.  The next definition is due to Salvetti \cite{Sal87}.

\begin{example}[Salvetti complexes]
Let $\Sigma$ be any (finite) graph.  We build a cube complex $ S_\Sigma$ as follows:
\begin{enumerate}
\item $S_\Sigma$ has a single $0$-cell $x_0$;
\item $S_\Sigma$ has one (oriented) $1$-cell $e_v$ for each vertex $ v$ of $ \Sigma$;
\item $S_\Sigma$ has a square $2$-cell with boundary reading $e_ue_v\bar{e}_u\bar{e}_v$ whenever $u$ and $v$ are joined by an edge in $\Sigma$;
\item for $n>2$, the $n$-skeleton is defined inductively---attach an $n$-cube to any subcomplex isomorphic to the boundary of $n$-cube which does not already bound an $n$-cube.
\end{enumerate}
It is an easy exercise to check that $S_\Sigma$ satisfies Gromov's Link Condition and hence is non-positively curved. \index{cube complex!Salvetti complex}
\end{example}

\begin{definition}
The fundamental group of the Salvetti complex $ S_\Sigma$ is the \emph{right-angled Artin group \textup{(}RAAG\textup{)}} $A_\Sigma$.   Let $v_1,\dots,v_k$ be the distinct vertices of $\Sigma$. The corresponding RAAG is defined as
\[
A_\Sigma=\bigg\langle v_1,\dots,v_k : \text{$[v_i,v_j]=1$ if $v_i$ and $v_j$ are connected by an edge of $\Sigma$}\bigg\rangle.
\]
 Note: the definition of $A_\Sigma$ specifies a certain generating set.
\end{definition}

Right-angled Artin groups were introduced by Baudisch \cite{Bah81} under the name semi-free groups, but they are also sometimes referred to as \emph{graph groups} or  \emph{free partially commutative groups}.  We refer to \cite{Cha07} for a very readable survey paper on RAAGs.

\index{group!right-angled Artin group (RAAG)} \index{group!graph group} \index{group!free partially commutative}

\medskip

Cube complexes have natural immersed codimension-one subcomplexes, called \emph{hyperplanes}.  If an $n$-cube $C$ in $X$ is identified with $[-1,1]^n$, then a hyperplane of~$C$ is any intersection of $C$ with a coordinate hyperplane of $ \mathbb{R}^n$.  We then glue together hyperplanes in adjacent cubes whenever they meet, to get the hyperplanes of $ \{Y_i\}$ of $ X$, which naturally immerse into $ X$.  Pulling back the cubes in which the cells of $ Y_i$ land defines an interval bundle $ N_i$ over $ Y_i$, which also has a natural immersion $\iota_i\colon N_i\to X$.  This interval bundle has a natural boundary~$\partial N_i$, which is a $2$-to-$1$ cover of $Y_i$, and we let $N_i^o=N_i\setminus \partial N_i$.

Henceforth, although it will sometimes be convenient to consider non-compact cube complexes, we will always assume that the cube complexes we consider have only finitely many hyperplanes.

Using this language, we can write down a short list of pathologies for hyperplanes in cube complexes.

\index{cube complex!hyperplane}
\index{cube complex!hyperplane!one-sided}
\index{cube complex!hyperplane!self-intersecting}
\index{cube complex!hyperplane!directly self-osculating}
\index{cube complex!hyperplane!inter-osculating pair}

\begin{enumerate}
\item A hyperplane $ Y_i$ is \emph{one-sided} if $ N_i\to Y_i$ is not a product bundle.  Otherwise it is \emph{two-sided}.
\item A hyperplane $ Y_i$ is \emph{self-intersecting} if $\iota_i: Y_i\to X$ is not an injection.
\item A hyperplane $Y_i$ is \emph{directly self-osculating} if there are distinct vertices $x,y$ in the same component of $\partial N_i$ such that $\iota_i(x)=\iota_i(y)$ but, for some small neighborhoods $B_\epsilon(x)$ and $B_\epsilon(y)$, the restriction of $\iota_i$ to $(B_\epsilon(x)\sqcup B_\epsilon(y))\cap N_i^o$ is an injection.
\item A distinct pair of hyperplanes $ Y_i,Y_j$ is \emph{inter-osculating} if they both intersect and osculate; that is, the map $ Y_i\sqcup Y_j\to X$ is not an embedding and there are vertices $x\in\partial N_i$ and $y\in\partial N_j$ such that $\iota_i(x)=\iota_j(y)$ but, for some small neighborhoods $B_\epsilon(x)$ and $B_\epsilon(y)$, the restriction of $\iota_i\sqcup \iota_j$ to $(B_\epsilon(x)\cap N_i^o)\sqcup (B_\epsilon(y)\cap N_j^o)$ is an injection.
\end{enumerate}

In Figure 1 we give a schematic illustration
of directly self-osculating and inter-osculating hyperplanes in a cube complex.

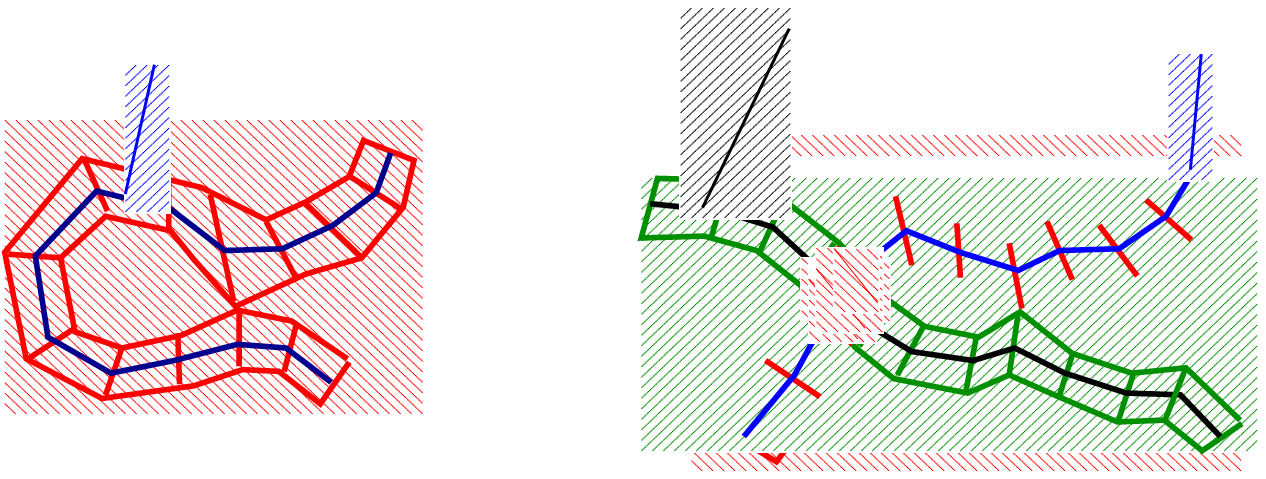

\begin{definition}[Haglund--Wise \cite{HaW08}]
A cube complex $ X$ is \emph{special} if none of the above pathologies occur.  (In fact, we have given the definition of \emph{A-special} from \cite{HaW08}.  Their definition of a \emph{special} cube complex is slightly less restrictive. However, these two definitions agree up to passing to finite covers, so the two notions of `virtually special' coincide.) \index{cube complex!special} \index{cube complex!hyperplane graph}
\end{definition}

\begin{definition}
The \emph{hyperplane graph} of a cube complex $X$ is the graph  $\Sigma(X)$ with vertex-set equal to the hyperplanes of $ X$, and with two vertices joined by an edge if and only if the corresponding hyperplanes intersect.
\end{definition}

If every hyperplane of $X$ is two-sided, then there is a natural \emph{typing map} $\phi_X\colon X\to S_{\Sigma(X)}$, which we now describe.  Each $0$-cell of $X$ maps to the unique $0$-cell $x_0$ of $S_{\Sigma(X)}$.  Each $1$-cell $e$ crosses a unique hyperplane $Y_e$ of $X$; $\phi_X$ maps $e$ to the $1$-cell $e_{Y_e}$ of $S_{\Sigma(X)}$ that corresponds to the hyperplane $Y_e$, and the two-sided-ness hypothesis ensures that orientations can be chosen consistently.  Finally, $\phi_X$ is defined inductively on higher dimensional cubes: a higher-dimensional cube $C$ is mapped to the unique cube of $S_{\Sigma(X)}$ with boundary $\phi_X(\partial C)$. \index{cube complex!typing map}

\medskip

The key observation of \cite{HaW08} is that pathologies (2)--(4) above correspond exactly to the failure of the map $\phi_X$ to be a local isometry.  We  sketch the argument.  For each $0$-cell $x$ of $X$, the typing map $\phi_X$ induces a map of links $\phi_{X*}\colon\mathrm{lk}(x)\to\mathrm{lk}(x_0)$.  This map $\phi_{X*}$ embeds $\mathrm{lk}(x)$ as an isometric subcomplex of $\mathrm{lk}(x_0)$.  Indeed, if $\phi_x$ identifies two $0$-cells of $\mathrm{lk}(x)$, then we have a self-intersection or a direct self-osculation; likewise, if there are $0$-cells $u$, $v$ of $\mathrm{lk}(x)$ that are not joined by an edge but $\phi_{X*}(u)$ and $\phi_{X*}(u)$ are joined by an edge in $\mathrm{lk}(x_0)$, then there is an inter-osculation.

\medskip

This is one direction of \cite[Theorem 4.2]{HaW08}:

\begin{theorem}\textbf{\emph{(Haglund--Wise)}}\label{thm:HW4.2}
A non-positively curved cube complex $X$ is special if and only if there is a
graph $\Sigma$ and a
local isometry $X\to S_\Sigma$.
\end{theorem}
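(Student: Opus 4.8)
The plan is to prove the two implications separately, taking $\Sigma=\Sigma(X)$ to be the hyperplane graph for the forward direction, and, for the converse, combining the fact that Salvetti complexes are themselves special with the fact that specialness is inherited by the domain of a local isometry. Throughout, the organizing tool is the link criterion: a combinatorial map between non-positively curved cube complexes is a local isometry precisely when, for every vertex $x$, the induced map of links $\phi_{*}\colon \operatorname{lk}(x)\to\operatorname{lk}(\phi(x))$ is injective with full (i.e.\ induced) image; this is immediate from Theorem~\ref{thm:gromov} and the local characterization of CAT(0) geodesics.

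For the implication ``$X$ special $\Rightarrow$ there is a local isometry $X\to S_\Sigma$,'' I would argue as follows. Since $X$ is special, no hyperplane $Y_e$ is one-sided (pathology (1)), so every hyperplane is two-sided and the typing map $\phi_X\colon X\to S_{\Sigma(X)}$ described in the text is well defined (orientations of the $1$-cells can be chosen consistently). It then suffices to check that $\phi_X$ satisfies the link criterion. Injectivity of $\phi_{X*}$ on $\operatorname{lk}(x)$ fails exactly when two $0$-cells of $\operatorname{lk}(x)$, corresponding to two oriented edges at $x$ dual to the same hyperplane, get identified; by inspection this happens precisely when $X$ has a self-intersection or a direct self-osculation at $x$ (pathologies (2)--(3)). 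Fullness of the image fails exactly when two $0$-cells $u,v$ of $\operatorname{lk}(x)$ that do not span an edge are sent to the two ends of an edge of $\operatorname{lk}(x_0)$, which is precisely an inter-osculation (pathology (4)). As $X$ is special, none of these occur, so $\phi_X$ is a local isometry and we may take $\Sigma=\Sigma(X)$.

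For the converse, ``local isometry $f\colon X\to S_\Sigma \Rightarrow X$ special,'' I would proceed in two steps. \emph{Step one}: the Salvetti complex $S_\Sigma$ is itself special. This is a direct computation from the presentation of the RAAG $A_\Sigma$: the hyperplane dual to a generator $v$ deformation retracts onto the sub-Salvetti complex spanned by the neighbours of $v$, its normal interval bundle is a genuine product (hence it is two-sided and embedded), and the defining commuting relations rule out self-osculation and inter-osculation. \emph{Step two}: specialness is inherited under local isometries. Here I would use that the link-injectivity and link-fullness of $f$ force the developing map to be an isometric embedding, so $f$ lifts to a combinatorial isometric embedding $\widetilde f\colon \widetilde X\hookrightarrow \widetilde{S_\Sigma}$ onto a convex subcomplex; in particular $f$ is locally injective and injective on each closed cube, and each hyperplane of $X$ maps to a hyperplane of $S_\Sigma$ compatibly with normal interval bundles. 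Now any hyperplane pathology in $X$ lifts to a configuration in $\widetilde X\subseteq\widetilde{S_\Sigma}$, and using convexity one reads it off, downstairs, as either a pathology of the corresponding hyperplane of $S_\Sigma$ (impossible by Step one) or a failure of $f$ to be link-injective or link-full (impossible, since $f$ is a local isometry). Hence $X$ has no hyperplane pathologies and is special.

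The main obstacle is Step two of the converse. Hyperplane pathologies such as self-intersection, direct self-osculation and inter-osculation are genuinely non-local phenomena---a hyperplane may travel through many cubes before it returns to cross or osculate itself---so it is not a formal matter that a local isometry detects them. The crux is the passage to universal covers: one must verify carefully that the link conditions guaranteed by ``local isometry'' are exactly what makes the developing map a local, hence (by CAT(0)-ness of $\widetilde{S_\Sigma}$) a global, isometric embedding with convex image, and then exploit the rigidity of the highly structured complex $\widetilde{S_\Sigma}$---in which hyperplanes separate and two hyperplanes cross only when their defining generators commute---to convert each downstairs pathology into a contradiction. Once this inheritance principle is in place, combining it with the base case of Step one, and with the forward direction from the previous paragraph, completes the proof.
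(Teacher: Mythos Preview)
Your forward direction is exactly the paper's argument: use the typing map $\phi_X\colon X\to S_{\Sigma(X)}$ and verify the link criterion, matching pathologies (2)--(4) with failures of link-injectivity and link-fullness. The paper treats this direction in the paragraph preceding the theorem.

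For the converse, the paper gives no details at all---it simply says ``the other direction of the theorem is a straightforward consequence of the results of~\cite{Hag08}.'' Your decomposition into ``$S_\Sigma$ is special'' plus ``specialness is inherited along local isometries'' is the right one and is implicit in the paper's later use of \cite[Corollaries~3.8,~3.9]{HaW08}. But your route through universal covers and convexity, while valid, is more elaborate than needed and is the point you yourself flag as the obstacle. The inheritance step has a direct argument that avoids universal covers entirely: a local isometry $f\colon X\to Y$ sends squares to squares, hence sends the equivalence relation ``opposite edges of a square'' to itself, so parallelism classes (i.e.\ hyperplanes) of $X$ map to hyperplanes of $Y$. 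Now each pathology in $X$ is witnessed at a single vertex or square together with the datum ``these edges are parallel''; link-injectivity and link-fullness of $f$ transport the local witness verbatim to $Y$, and the parallelism is preserved by the previous sentence, producing the same pathology in $Y$. Since $S_\Sigma$ is special, $X$ is special. This is presumably what the paper means by ``straightforward,'' and it sidesteps the non-locality worry you raise: the only global ingredient is that $f$ respects parallelism classes, which follows cube-by-cube.
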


The other direction of the theorem is a straightforward consequence of the results of \cite{Hag08}.

\medskip

Let $\Sigma$ be a graph and $\phi\colon X\to S_\Sigma$ be a local isometry.
Lifting the local isometry $\phi$ to universal covers,  we obtain a genuine isometric embedding of universal covers $\widetilde{X}\hookrightarrow \widetilde{S}_\Sigma$.
In particular, $\phi$ induces an injection $\phi_*\colon \pi_1(X)\to A_\Sigma$.
On the other hand, a covering space of a special cube complex is itself a special cube complex. Theorem~\ref{thm:HW4.2} therefore yields a characterization of subgroups of RAAGs. \index{group!special} \index{group!compact special}

\begin{definition}[Special group]
A group is called \emph{special} (respectively, \emph{compact special}) if it is the fundamental group of a non-positively curved special cube complex with finitely many hyperplanes (respectively,  a compact, non-positively curved special cube complex).
\end{definition}

\begin{corollary}\label{cor: HW special}
Every special group is a subgroup of a right-angled Artin group.  Conversely, every subgroup of a right-angled Artin group  is the fundamental group of a special cube complex $X$ \textup{(}although $X$ need not, in general, have finitely many hyperplanes\textup{)}.
\end{corollary}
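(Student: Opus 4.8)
The plan is to assemble the two directions from facts already recorded in the text: the Haglund--Wise characterization of special cube complexes (Theorem~\ref{thm:HW4.2}), the $\pi_1$-injectivity of a local isometry (obtained by lifting to universal covers), and the fact that a covering of a special cube complex is again special. No new group theory is needed---the statement is essentially an unpacking of the two paragraphs preceding it.

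\emph{Special groups embed in RAAGs.} Let $G$ be special, say $G\cong\pi_1(X)$ for a connected non-positively curved special cube complex $X$ with finitely many hyperplanes. By Theorem~\ref{thm:HW4.2} there are a graph $\Sigma$ and a local isometry $\phi\colon X\to S_\Sigma$. As noted above, lifting $\phi$ to universal covers gives an isometric embedding $\widetilde{X}\hookrightarrow\widetilde{S}_\Sigma$ (a local isometry between CAT(0) spaces sends local geodesics to geodesics, hence is globally isometric onto its image); in particular this lift is injective, so the induced map $\phi_*\colon\pi_1(X)\to\pi_1(S_\Sigma)=A_\Sigma$ is injective. Thus $G$ is isomorphic to a subgroup of the right-angled Artin group $A_\Sigma$.

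\emph{Subgroups of RAAGs are fundamental groups of special cube complexes.} Fix $H\leq A_\Sigma$. First observe that $S_\Sigma$ is non-positively curved (the Salvetti-complex check above) and special: Theorem~\ref{thm:HW4.2} applied to $\mathrm{id}\colon S_\Sigma\to S_\Sigma$ gives this, the relevant implication being that every cube complex admitting a local isometry to a Salvetti complex is special, which is the consequence of \cite{Hag08} mentioned after Theorem~\ref{thm:HW4.2}. Identifying $A_\Sigma=\pi_1(S_\Sigma)$, let $p\colon X_H\to S_\Sigma$ be the connected cover with $p_*(\pi_1(X_H))=H$. A cover of a non-positively curved cube complex is non-positively curved, since Gromov's Link Condition (Theorem~\ref{thm:gromov}) is local and hence lifts; and a cover of a special cube complex is special, since the four hyperplane pathologies are detected in links and are inherited by covers (both facts are part of \cite{HaW08}). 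Hence $X_H$ is a special cube complex with $\pi_1(X_H)\cong H$, as required.

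\emph{The caveat, and the substantive inputs.} The complex $S_\Sigma$ is compact, with exactly one hyperplane per vertex of $\Sigma$, but when $[A_\Sigma:H]=\infty$ the cover $X_H$ is an infinite cube complex with infinitely many hyperplanes, so $H$ need not be a special group in the strict sense of the definition above---this explains the parenthetical qualification. The only genuinely nontrivial ingredients are thus the two cited facts about special cube complexes: the $\pi_1$-injectivity of a local isometry (which rests on the statement that a local isometry between CAT(0) spaces is an isometric embedding on universal covers) and the stability of specialness under passing to covers. Neither is reproved here; both are taken from Haglund--Wise \cite{HaW08} and Haglund \cite{Hag08}. I expect the main point requiring care in a full write-up to be making the covering-space facts precise in the (allowed) non-compact, infinitely-many-hyperplanes setting.
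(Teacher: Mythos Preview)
Your proof is correct and follows essentially the same route as the paper's: Theorem~\ref{thm:HW4.2} gives a local isometry $X\to S_\Sigma$ whose lift to universal covers is an isometric embedding (hence $\pi_1$-injective), and for the converse one takes the covering of the Salvetti complex corresponding to the subgroup and invokes the fact that covers of special complexes are special. The only cosmetic difference is that the paper cites \cite[Lemma~2.11]{HaW08} for the convexity of the lifted image and \cite[Corollary~3.8]{HaW08} for specialness of covers, whereas you extract these from the surrounding discussion and from Theorem~\ref{thm:HW4.2} itself.
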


\begin{proof}
Haglund--Wise \cite[Theorem~1.1]{HaW08} showed that if a group $\pi$ is  special, then $\pi$ admits a subgroup of finite index which is a subgroup of a RAAG.  Indeed, suppose that $\pi$ is the fundamental group of a special cube complex $X$.  Take a graph $\Sigma$ and a local isometry $\phi\colon X\to S_\Sigma$, by Theorem \ref{thm:HW4.2}.  The induced map on universal covers $\tilde{\phi}\colon\widetilde{X}\to\widetilde{S}_\Sigma$ is then an isometry onto a convex subcomplex of~$\widetilde{S}_\Sigma$~\cite[Lemma 2.11]{HaW08}.  It follows that $\phi_*$ is injective.

For the partial converse, if $\pi$ is a subgroup of a RAAG $A_\Sigma$, then $\pi$ is the fundamental group of a covering space $X$ of  $S_\Sigma$; the Salvetti complex $S_\Sigma$ is special and so, by \cite[Corollary 3.8]{HaW08}, is $X$.
\end{proof}

\medskip

Arbitrary subgroups of RAAGs may exhibit quite wild behavior.  However, if the cube complex $X$ is compact, then $\pi_1(X)$ turns out to be a quasi-convex subgroup of a RAAG, and hence much better behaved.

\begin{definition}
Let $X$ be a geodesic metric space. A subspace $Y$ of $X$ is said to be  \emph{quasi-convex} if there exists $\kappa\geq 0$ such that any geodesic in $X$ with endpoints in $Y$ is contained within the $\kappa$-neighborhood of $Y$. \index{subspace!quasi-convex}
\end{definition}

\begin{definition}
Let $\pi$ be a group with a fixed generating set $S$.  A subgroup of $\pi$ is said to be \emph{quasi-convex} (with respect to $S$) if it is a quasi-convex subspace of $\mathrm{Cay_S(\pi)}$, the Cayley graph of $\pi$ with respect to the generating set $S$. \index{subgroup!quasi-convex}
\end{definition}

Note that in general the notion of  quasi-convexity depends on the choice of generating set $S$.   Recall that the definition of a RAAG as given above specifies a generating set; we will always take this given choice of generating set when we talk about a quasi-convex subgroup of a RAAG.

\begin{corollary}\label{cor:HW compact special}
A group is compact special if and only if it is a quasi-convex subgroup of a right-angled Artin group.
\end{corollary}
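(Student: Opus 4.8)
The plan is to deduce both implications from Theorem~\ref{thm:HW4.2} together with the geometry of convex subcomplexes of Salvetti complexes, using the fact already exploited in the proof of Corollary~\ref{cor: HW special}: a local isometry $\phi\colon X\to S_\Sigma$ lifts to a combinatorial isometric embedding $\widetilde\phi\colon\widetilde X\to\widetilde S_\Sigma$ whose image is a convex subcomplex (\cite[Lemma~2.11]{HaW08}). Throughout, recall that the $1$-skeleton of $\widetilde S_\Sigma$ with its graph metric is precisely the Cayley graph $\mathrm{Cay}_S(A_\Sigma)$ for the standard generating set $S$ of $A_\Sigma$, so that ``combinatorially quasi-convex'' subcomplexes and quasi-convex subgroups can be compared directly.

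For the forward implication, suppose $\pi$ is compact special, say $\pi=\pi_1(X)$ with $X$ a compact non-positively curved special cube complex. By Theorem~\ref{thm:HW4.2} there is a graph $\Sigma$ and a local isometry $\phi\colon X\to S_\Sigma$, and we identify $\Sigma$ with the defining graph of $A_\Sigma$. Lifting to universal covers yields a $\pi$-equivariant combinatorial isometric embedding $\widetilde\phi\colon\widetilde X\hookrightarrow\widetilde S_\Sigma$ with convex image $Z:=\widetilde\phi(\widetilde X)$, where $\pi\le A_\Sigma$ acts on $\widetilde S_\Sigma$ by deck transformations. Since $X$ is compact, $\pi$ acts properly and cocompactly on $\widetilde X$, hence on $Z$; fixing a vertex $x_0\in Z$, the orbit $\pi\cdot x_0$ is therefore quasi-dense in $Z^{(0)}$. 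As $Z$ is convex, its $1$-skeleton is an isometrically embedded, totally geodesic subgraph of $(\widetilde S_\Sigma)^{(1)}=\mathrm{Cay}_S(A_\Sigma)$; combining this with quasi-density shows that $\pi\cdot x_0$ is quasi-convex in $\mathrm{Cay}_S(A_\Sigma)$, i.e.\ $\pi$ is a quasi-convex subgroup of $A_\Sigma$.

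For the converse, let $\pi\le A_\Sigma$ be quasi-convex with respect to $S$, and consider the action of $\pi$ on $\widetilde S_\Sigma$. Fix a vertex $x_0$ and let $Y\subseteq\widetilde S_\Sigma$ be the combinatorial convex hull of $\pi\cdot x_0$, a $\pi$-invariant convex subcomplex. The crucial input is a result of Haglund \cite{Hag08}: in a $\op{CAT}(0)$ cube complex the combinatorial convex hull of a quasi-convex subgroup orbit lies within bounded Hausdorff distance of that orbit, so $\pi$ acts cocompactly on $Y$. Since $A_\Sigma$, and hence $\pi$, is torsion-free and acts freely on $\widetilde S_\Sigma$, the quotient $Y/\pi$ is a compact cube complex with $\pi_1(Y/\pi)\cong\pi$; it is non-positively curved because a convex subcomplex of a $\op{CAT}(0)$ cube complex is again $\op{CAT}(0)$ (it is contractible, and vertex links are full, hence flag, subcomplexes of flag complexes). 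Restricting the covering $\widetilde S_\Sigma\to S_\Sigma$ to $Y$ and passing to the quotient by $\pi$ gives a local isometry $Y/\pi\to S_\Sigma$, so by Theorem~\ref{thm:HW4.2} the complex $Y/\pi$ is special. Hence $\pi$ is compact special.

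The main obstacle is the converse direction, and specifically the cocompactness of the combinatorial convex hull of the orbit $\pi\cdot x_0$: in general $\op{CAT}(0)$ spaces convex hulls of quasi-convex sets can be badly behaved, so this step genuinely uses the combinatorial structure theory of $\op{CAT}(0)$ cube complexes developed by Haglund \cite{Hag08} (the interplay between combinatorially convex subcomplexes and quasi-convex subgroups). The remaining ingredients — the Milnor--\v Svarc identification of group orbits with the cube complex, and the Haglund--Wise characterization of special cube complexes — are already in hand from Theorem~\ref{thm:HW4.2} and the proof of Corollary~\ref{cor: HW special}.
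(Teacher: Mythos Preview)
Your proof is correct and follows essentially the same strategy as the paper's. Both directions hinge on the same key input from \cite{Hag08} (stated there as Corollary~2.29): a subgroup of $A_\Sigma$ is quasi-convex if and only if it acts cocompactly on a convex subcomplex of $\widetilde{S}_\Sigma$. For specialness of the quotient $Y/\pi$ in the converse direction, the paper cites \cite[Corollary~3.9]{HaW08} (locally convex subcomplexes of special complexes are special) rather than building the local isometry $Y/\pi\to S_\Sigma$ directly, but these are two phrasings of the same fact.
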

\begin{proof}
Let $\pi$ be the fundamental group of a compact special cube complex~$X$.    Just as in the proof of Corollary \ref{cor: HW special}, there is a graph $\Sigma$ and a map of universal covers $\tilde{\phi}:\widetilde{X}\to \widetilde{S}_\Sigma$ that maps $\widetilde{X}$ isometrically onto a convex subcomplex of $\widetilde{S}_\Sigma$.   Because $\pi_1(\widehat{X})$ acts cocompactly on $\widetilde{X}$, it follows from \cite[Corollary 2.29]{Hag08} that $\phi_*\pi_1(\widehat{X})$ is a quasi-convex subgroup of $\pi_1(S_\Sigma)=A_\Sigma$.

For the converse let $\pi$ be a subgroup of a RAAG $A_\Sigma$. As in the proof of Corollary \ref{cor: HW special},  $\pi$ is the fundamental group of a covering space $X$ of the Salvetti complex $S_\Sigma$. By \cite[Corollary 2.29]{Hag08}, $\pi$ acts cocompactly on a convex subcomplex $\widetilde{Y}$ of the universal cover of~$S_\Sigma$. The quotient $Y=\widetilde{Y}/\pi$ is a locally convex, compact subcomplex of $X$ and so is special, by \cite[Corollary 3.9]{HaW08}.
\end{proof}

\subsection{Haken hyperbolic $3$-manifolds: Wise's Theorem}\label{section:wise}

In this subsection, we discuss Wise's proof that closed, Haken hyperbolic $3$-manifolds are virtually fibered.  The starting point for Wise's work is the following theorem of Bonahon~\cite{Bon86} and Thurston (see also \cite{CEG87,CEG06}), which is a special case of the Tameness Theorem.
See Section~\ref{section:tameness} for the definition of geometrically finite surfaces.

\begin{theorem}\textbf{\emph{(Bonahon--Thurston)}}\label{thm:bon86}
Let $N$ be a closed hyperbolic $3$-manifold and let $\Sigma\subseteq N$ be  an incompressible connected surface. Then either
\begin{itemize}
\item[(1)]  $\Sigma$ lifts to a surface fiber in a finite cover, or
\item[(2)] $\Sigma$ is  geometrically finite.
\end{itemize}
\end{theorem}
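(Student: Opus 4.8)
The plan is to deduce Theorem~\ref{thm:bon86} from the Subgroup Tameness Theorem (Theorem~\ref{thm:subgroupdichotomy}); the only remaining work is a routine covering-space argument that converts the resulting statement about the subgroup $\pi_1(\Sigma)$ into the desired statement about the embedded surface $\Sigma$.

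First I would set $\Gamma=\pi_1(\Sigma)\subseteq\pi_1(N)$. Since $N$ is closed and $\Sigma$ is incompressible, $\Sigma$ is a closed orientable surface of positive genus, so $\Gamma$ is a finitely generated (surface) subgroup, injected into $\pi_1(N)$. Identifying $\pi_1(N)$ with a Kleinian group as in (C.\ref{C.sl2c}), the Subgroup Tameness Theorem applies to $\Gamma$ and gives the alternative: either $\Gamma$ is geometrically finite, or $\Gamma$ is a virtual surface fiber subgroup of $\pi_1(N)$. In the first case $\Sigma$ is, by definition, a geometrically finite surface, so we are in conclusion~(2).

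It remains to upgrade the virtual surface fiber case to conclusion~(1). By definition there is a finite cover $p\colon N'\to N$ with $\Gamma\subseteq p_*\pi_1(N')$ such that $\Gamma$ is a surface fiber subgroup of $N'$; passing to a further double cover if necessary we may assume $N'$ is orientable, and $N'$ is hyperbolic by Theorem~\ref{thm:jsjlift}. Now $p\colon p^{-1}(\Sigma)\to\Sigma$ is a finite covering of surfaces, and with basepoints chosen so that $\Gamma\subseteq p_*\pi_1(N')$, the component $\widetilde\Sigma$ of $p^{-1}(\Sigma)$ through the lifted basepoint has $\pi_1(\widetilde\Sigma)=\pi_1(\Sigma)\cap p_*\pi_1(N')=\Gamma$, so $\widetilde\Sigma\to\Sigma$ is a homeomorphism; thus $\Sigma$ lifts to an embedded incompressible surface $\widetilde\Sigma\subseteq N'$ with $\pi_1(\widetilde\Sigma)=\Gamma$. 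On the other hand there is a genuine surface fiber $F\subseteq N'$ with $\pi_1(F)$ conjugate to $\Gamma$ in $\pi_1(N')$. Since $N'$ is hyperbolic and contains the incompressible surface $\widetilde\Sigma$ it is Haken, so by Waldhausen's theorem that two two-sided incompressible surfaces in a Haken $3$-manifold with conjugate fundamental groups are freely homotopic, hence isotopic (see \cite[Corollary~6.5]{Wan68a}; in the fibered case also \cite[Lemma~5.1]{EdL83}), $\widetilde\Sigma$ is isotopic to $F$ and is therefore itself a surface fiber of $N'$. Hence $\Sigma$ lifts to a surface fiber in the finite cover $N'$, which is conclusion~(1).

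The main obstacle is not this bookkeeping but the Subgroup Tameness Theorem itself — that is, the Tameness Theorem of Agol and Calegari--Gabai together with Canary's Covering Theorem — which I have used here as a black box. Historically Theorem~\ref{thm:bon86} long predates those results: Bonahon proved it directly by studying the geometry of the cover $N_\Sigma\to N$ corresponding to $\pi_1(\Sigma)$ (a manifold homotopy equivalent to $\Sigma$), showing that such surface subgroups of hyperbolic $3$-manifolds are \emph{geometrically tame} and then extracting the dichotomy from Thurston's covering-space analysis of the ends. Carrying out that end-theoretic analysis — via geodesic laminations and the structure of the convex core of $N_\Sigma$ — in place of the black-box appeal above would be the substantive task if one wanted a proof independent of the general Tameness Theorem.
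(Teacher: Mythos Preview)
Your proposal is correct and aligns with the paper's treatment: the paper does not give a proof of Theorem~\ref{thm:bon86} but simply states it with a citation to Bonahon~\cite{Bon86} and Thurston, remarking that it ``is a special case of the Tameness Theorem.'' Your deduction from the Subgroup Tameness Theorem (Theorem~\ref{thm:subgroupdichotomy}) is exactly that observation made explicit, together with the routine covering-space and Waldhausen bookkeeping needed to pass from the subgroup statement to the embedded-surface statement; you also correctly flag that Bonahon's original argument predates and is independent of the general Tameness Theorem.
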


In particular, a closed hyperbolic Haken manifold is either virtually fibered or admits a geometrically finite surface.  Also, note that by the argument of (C.\ref{C.b1atleast2}) and by Theorem~\ref{thm:bon86}, any $3$-manifold  with $b_1(N)\geq 2$ admits a geometrically finite surface.

\medskip

Let $N$ be a closed, hyperbolic $3$-manifold that contains a  geometrically finite surface.   Thurston proved that $N$ in fact admits a hierarchy of geometrically finite surfaces (see \cite[Theorem~2.1]{Cay94}). In order to link up with Wise's results we  need to recast Thurston's result in the language of geometric group theory.

\begin{definition}
A group is called \emph{word-hyperbolic} if it acts properly discontinuously and cocompactly by isometries on a Gromov-hyperbolic space.  This notion was introduced by Gromov \cite{Grv81,Grv87}.  See \cite[Section III.$\Gamma$.2]{BrH99}, and the references therein, for details.
\end{definition}

When $\pi$ is word-hyperbolic, the quasi-convexity of a subgroup of $\pi$ does not depend on the choice of generating set \cite[Corollary III.$\Gamma$.3.6]{BrH99}, so we may speak unambiguously of a quasi-convex subgroup of a word-hyperbolic group. \index{group!word-hyperbolic}

\medskip

Next, we introduce the class $\mathcal{QH}$ of \emph{groups with a quasi-convex hierarchy}. \index{group!with a quasi-convex hierarchy}

\begin{definition}
The class $\mathcal{QH}$ is defined to be the smallest class of finitely generated groups that is closed under isomorphism and satisfies the following properties.
\begin{enumerate}
\item $1\in\mathcal{QH}$.
\item If $A,B\in \mathcal{QH}$ and the inclusion map $C\hookrightarrow A*_C B$ is a quasi-isometric embedding, then $A*_C B\in\mathcal{QH}$.
\item If $A\in \mathcal{QH}$ and the inclusion map $C\hookrightarrow A*_C $ is a quasi-isometric embedding, then $A*_C \in\mathcal{QH}$.
\end{enumerate}
\end{definition}

By, for instance, \cite[Corollary III.$\Gamma$.3.6]{BrH99}, a finitely generated subgroup of a word-hyperbolic group is quasi-isometrically embedded if and only if it is quasi-convex, which justifies the terminology.

\medskip

The next proposition now makes it possible to go from hyperbolic $3$-manifolds to the purely group-theoretic realm.

\begin{proposition}\label{prop:hypwordhyp}
Let $N$ be a closed hyperbolic $3$-manifold. Then
\begin{itemize}
\item[(1)] $\pi=\pi_1(N)$ is word-hyperbolic;
\item[(2)] a subgroup of $\pi$ is geometrically finite if and only if it is quasi-convex;
\item[(3)] if $N$ has a hierarchy of geometrically finite surfaces, then $\pi_1(N)\in\mathcal{QH}$.
\end{itemize}
\end{proposition}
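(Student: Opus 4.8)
The plan is to prove the three assertions of Proposition~\ref{prop:hypwordhyp} essentially as standard applications of the Milnor--\v{S}varc Lemma and of Thurston's hierarchy result, all of which are available in the literature.

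First I would prove~(1). Since $N$ is closed hyperbolic, $\pi=\pi_1(N)$ acts properly discontinuously and cocompactly by isometries on $\H^3$, which is a Gromov-hyperbolic metric space (it is CAT($-1$), hence $\delta$-hyperbolic). By the definition of word-hyperbolicity given above, this immediately shows $\pi$ is word-hyperbolic. Equivalently, by the Milnor--\v{S}varc Lemma, $\pi$ is quasi-isometric to $\H^3$, and Gromov-hyperbolicity is a quasi-isometry invariant among geodesic spaces, so the Cayley graph of $\pi$ is $\delta'$-hyperbolic for some $\delta'$; cf.~\cite[Section~III.$\Gamma$.1]{BrH99}.

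Next I would prove~(2). Let $\Gamma\leq\pi$. On one hand, if $\Gamma$ is geometrically finite, then $\Gamma$ acts cocompactly on the convex hull $C$ of its limit set in $\H^3$; applying Milnor--\v{S}varc to this cocompact, properly discontinuous isometric action gives a quasi-isometric embedding $\Gamma\hookrightarrow C\hookrightarrow\H^3$, and the second inclusion is a quasi-convex embedding since the convex hull is a convex subset. Composing with the quasi-isometry $\pi\to\H^3$ (from part~(1)) shows $\Gamma$ is quasi-isometrically embedded in $\pi$, hence quasi-convex (here I use that, for word-hyperbolic $\pi$, quasi-isometric embedding of a subgroup is equivalent to quasi-convexity of the subgroup, independent of generating set --- see \cite[Corollary~III.$\Gamma$.3.6]{BrH99}). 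Conversely, if $\Gamma$ is quasi-convex in $\pi$, then the orbit map $\Gamma\to\H^3$ is a quasi-isometric embedding, so $\Gamma$ acts on $\H^3$ as a quasi-convex subgroup; it is a standard fact in Kleinian group theory (see \cite[Chapter~3]{LoR05} or \cite{Bow93}) that a finitely generated subgroup of $\psl(2,\C)$ whose orbit is quasi-convex --- equivalently, whose limit set has a convex hull on which it acts cocompactly --- is geometrically finite. (One should note that in the closed, hence cusp-free, setting the subtlety about parabolic subgroups of geometrically finite groups does not arise.)

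Finally, for~(3): by Thurston's hierarchy theorem (as recast in \cite[Theorem~2.1]{Cay94}), if $N$ admits a geometrically finite surface then it admits a \emph{hierarchy} of geometrically finite surfaces, i.e.\ a finite sequence of cuts along geometrically finite incompressible surfaces (or surfaces with geometrically finite boundary pattern) terminating in a disjoint union of balls. Each such cut along a surface $\Sigma$ corresponds, on the level of fundamental groups, to expressing the relevant vertex group as an amalgamated product $A*_{\pi_1(\Sigma)}B$ or an HNN extension $A*_{\pi_1(\Sigma)}$ (according to whether $\Sigma$ is separating), where by part~(2) the edge group $\pi_1(\Sigma)$ is quasi-convex, hence quasi-isometrically embedded. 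Building up $\mathcal{QH}$ inductively from the trivial group (corresponding to the terminal balls) using clauses~(2) and~(3) of the definition of $\mathcal{QH}$, and reversing the hierarchy, shows $\pi_1(N)\in\mathcal{QH}$. The one point requiring care --- and the main obstacle --- is bookkeeping: the surfaces in a hierarchy may have boundary, so the pieces one cuts are manifolds-with-boundary and one must track the relative (boundary-pattern) version of ``geometrically finite'' through the induction, and verify that the corresponding edge groups remain quasi-convex in the successively smaller pieces. This is handled in \cite[Section~2]{Cay94} and \cite{Bon86}; I would cite those rather than reproduce the argument.
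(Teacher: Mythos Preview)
Your argument is correct and follows essentially the same route as the paper's proof, which is itself very terse: for~(1) the paper simply notes that $\H^3$ is Gromov-hyperbolic and cites \cite{BrH99}; for~(2) it refers to \cite[Theorem~1.1 and Proposition~1.3]{Swp93} and \cite[Theorem~2]{KaS96} rather than sketching the convex-hull argument as you do; and for~(3) it says only that the statement follows from~(2). Your expanded treatment is a faithful unpacking of these citations. One small point: in~(3) you invoke Thurston's theorem to produce the hierarchy, but the hypothesis of~(3) already \emph{assumes} $N$ has a hierarchy of geometrically finite surfaces, so that step is redundant (Thurston's result is what the paper uses in the theorem \emph{following} the proposition, not in the proposition itself).
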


\begin{proof}
For the first statement, note that $\H^3$ is Gromov-hyperbolic  and so the fundamental groups of closed hyperbolic manifolds are  word-hyperbolic (see \cite{BrH99} for details). We refer to \cite[Theorem 1.1 and Proposition 1.3]{Swp93} and also \cite[Theorem~2]{KaS96} for proofs of the second statement. The third statement follows from the second statement.
\end{proof}

We thus obtain the following reinterpretation of the aforementioned theorem of Thurston:

\begin{theorem}\textbf{\emph{(Thurston)}}
If $N$ is a closed, hyperbolic $3$-manifold containing a geometrically finite surface, then
$\pi_1(N)$ is   word-hyperbolic   and $\pi_1(N)\in\mathcal{QH}$.
\end{theorem}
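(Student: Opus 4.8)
The plan is to deduce this statement directly from the two results already in hand: the group-theoretic translation furnished by Proposition~\ref{prop:hypwordhyp}, and Thurston's theorem that a closed hyperbolic $3$-manifold containing a geometrically finite surface admits a \emph{hierarchy} of geometrically finite surfaces (see \cite[Theorem~2.1]{Cay94}).

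First I would dispose of word-hyperbolicity: since $N$ is closed and hyperbolic, $\pi_1(N)$ acts properly discontinuously and cocompactly by isometries on $\H^3$, which is Gromov-hyperbolic, so $\pi_1(N)$ is word-hyperbolic; this is precisely Proposition~\ref{prop:hypwordhyp}(1).

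For the membership $\pi_1(N)\in\mathcal{QH}$, I would feed Thurston's hierarchy theorem into Proposition~\ref{prop:hypwordhyp}(3). Concretely: Thurston's theorem produces, starting from the given geometrically finite surface $\Sigma\subseteq N$, a finite sequence of cuts along incompressible, geometrically finite surfaces that terminates in a disjoint union of $3$-balls, finiteness of the hierarchy being the classical Haken-hierarchy fact. Each cut along a surface $S$ expresses the fundamental group of the ambient piece as an amalgamated product $A*_C B$ (if $S$ separates) or an HNN extension $A*_C$ (if $S$ does not separate), with edge group $C=\pi_1(S)$. Geometric finiteness of $S$ together with Proposition~\ref{prop:hypwordhyp}(2) makes $C$ quasi-convex, hence quasi-isometrically embedded in the word-hyperbolic ambient group; thus the inclusions $C\hookrightarrow A*_C B$ and $C\hookrightarrow A*_C$ are quasi-isometric embeddings. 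Since $1\in\mathcal{QH}$ and $\mathcal{QH}$ is by definition closed under amalgamation and HNN extension along quasi-isometrically embedded subgroups, an induction up the hierarchy (from the trivial groups of the terminal balls back to $\pi_1(N)$) shows $\pi_1(N)\in\mathcal{QH}$. In the write-up I would simply cite Proposition~\ref{prop:hypwordhyp}(3), which packages exactly this induction.

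The hard part is entirely contained in the cited input, namely Thurston's hierarchy theorem: one must know not only that $N$ is Haken (immediate from the presence of an incompressible surface) but that the geometric-finiteness condition can be propagated through the hierarchy, even though after the first cut the pieces are compact manifolds with boundary rather than closed hyperbolic manifolds. This is the relative version of geometric finiteness and is the substantive topological content; granting it, everything else is the formal dictionary ``geometrically finite $\Leftrightarrow$ quasi-convex $\Leftrightarrow$ quasi-isometrically embedded'' applied along a hierarchy.
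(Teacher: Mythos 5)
Your proposal is correct and follows the same route as the paper: the theorem is obtained by combining Thurston's result that such an $N$ admits a hierarchy of geometrically finite surfaces (\cite[Theorem~2.1]{Cay94}) with Proposition~\ref{prop:hypwordhyp}, whose parts (1)--(3) supply word-hyperbolicity, the dictionary between geometric finiteness and quasi-convexity, and the inductive passage up the hierarchy into $\mathcal{QH}$. You spell out the induction and the amalgam/HNN bookkeeping that the paper compresses into ``the third statement follows from the second,'' but the argument is the same.
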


The main theorem of \cite{Wis12a}, Theorem~13.3, concerns word-hyperbolic groups with a quasi-convex hierarchy. \index{theorems!Wise's Quasi-Convex Hierarchy Theorem}

\begin{theorem}\textbf{\emph{(Wise)}}\label{thm:wise}
Every word-hyperbolic group in $\mathcal{QH}$ is virtually compact special.
\end{theorem}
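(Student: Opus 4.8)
The plan is to induct on the length of a quasi-convex hierarchy witnessing $G\in\mathcal{QH}$. The base case is the trivial group, which is vacuously compact special. For the inductive step, write $G=A\ast_C B$ or $G=A\ast_C$, where $C$ is quasi-isometrically embedded in $G$ --- equivalently, since $G$ is word-hyperbolic, $C$ is quasi-convex --- and where $A$ (and $B$) carry strictly shorter quasi-convex hierarchies, hence by the inductive hypothesis are virtually compact special. Note that $A$ and $B$ are themselves word-hyperbolic, being quasi-convex subgroups of the word-hyperbolic group $G$, and therefore (by Haglund's theorem, cf.\ Corollary~\ref{cor:HW compact special} and the separability properties it packages) every quasi-convex subgroup of $A$ or $B$ is separable and in fact a virtual retract.

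Using these separability and virtual-retract properties, I would first pass to finite-index subgroups and realise $A$ and $B$ as fundamental groups of compact special cube complexes $X_A$, $X_B$, together with a compact special cube complex $X_C$ (with $\pi_1$ a finite-index subgroup of $C$) admitting local isometries $X_C\to X_A$ and $X_C\to X_B$; the canonical completion-and-retraction construction of Haglund--Wise makes these choices coherent. Gluing, one obtains a non-positively curved graph of spaces $X$ with vertex spaces $X_A$, $X_B$ and edge space $X_C\times[0,1]$, so that $\pi_1(X)=G$. Equivalently, via Sageev's construction, $G$ acts on a $\operatorname{CAT}(0)$ cube complex $\widetilde X$ whose walls are the wall dual to the splitting together with the walls supplied by the cubulations of $A$, $B$ and $C$; properness follows from the abundance of walls furnished by the hierarchy, and cocompactness (for $G$ hyperbolic) is guaranteed by the Bergeron--Wise criterion combined with quasi-convexity of the wall stabilisers.

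It then remains to pass to a finite cover of $X$ that is \emph{special}, i.e.\ to remove the four hyperplane pathologies (one-sidedness, self-intersection, direct self-osculation, inter-osculation). Pathologies arising inside a single vertex space, or from a self-intersection of the gluing hyperplane, are killed in a finite cover by the separability of the relevant quasi-convex subgroups of $G$, again via Haglund's criterion. The genuinely hard pathology is osculation of distinct elevations of the gluing hyperplane, which is controlled by the edge group $C$ being \emph{malnormal}. To arrange this one performs a bounded preliminary sequence of hierarchy moves to make the peripheral structure malnormal, replaces $C$ by a suitable finite-index subgroup, and then applies the \textbf{Malnormal Special Quotient Theorem}: for $G$ hyperbolic and virtually compact special and $\{C_i\}$ a malnormal family of quasi-convex subgroups, there are finite-index subgroups $\dot C_i\le C_i$ such that the quotient of $G$ by the normal closure of any further finite-index subgroups of the $\dot C_i$ is again hyperbolic and virtually compact special. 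This feeds back into the induction and yields the finite special cover of $X$, whence $G$ is virtually compact special.

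The main obstacle is precisely the Malnormal Special Quotient Theorem, together with the reduction of a general quasi-convex hierarchy step to the malnormal situation. Its proof requires Wise's cubical small-cancellation theory (the $B(6)$ condition for cube complexes) and the relatively hyperbolic Dehn filling technology of Groves--Manning and Osin; this is where the real weight of the theorem lies, and it is what forces the word ``virtually'' in the conclusion. Everything else --- the gluing of cube complexes, the cocompactness of Sageev's complex, and the removal of the ``easy'' hyperplane pathologies --- is comparatively routine bookkeeping with separability and canonical retractions, but depends essentially on already knowing that the vertex groups are virtually compact special, which is exactly what the induction supplies.
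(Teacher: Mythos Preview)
The paper does not prove this theorem; it explicitly states that ``the proof of Theorem~\ref{thm:wise} is beyond the scope of this article'' and only records the two principal ingredients: the Malnormal Special Combination Theorem (Theorem~\ref{thm:msct}) and the Malnormal Special Quotient Theorem (Theorem~\ref{thm:msqt}). Your sketch is a reasonable high-level outline of Wise's actual strategy and correctly identifies both of these ingredients --- the former implicitly, in your discussion of gluing cube complexes and removing hyperplane pathologies when the edge group is malnormal; the latter explicitly --- and you are right that the real weight lies in the MSQT and the cubical small-cancellation machinery behind it.

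That said, your account of the reduction to the malnormal case (``a bounded preliminary sequence of hierarchy moves to make the peripheral structure malnormal'') and of how the MSQT ``feeds back into the induction'' is too vague to constitute an argument. In Wise's actual proof the interaction between the hierarchy, the special quotients, and the combination step is considerably more delicate than a straight induction on hierarchy length with a malnormalisation preprocessing step: one must control how the quotients produced by the MSQT inherit a (shorter) quasi-convex hierarchy, and the cubulation step for an HNN or amalgam with non-malnormal edge group is not handled simply by passing to finite index. So as an impressionistic summary of where the difficulties lie your proposal is accurate, and it aligns with the ingredients the paper singles out; as a proof it is --- like the paper's own treatment --- a pointer to \cite{Wis12a} rather than a self-contained argument.
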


We immediately obtain the following corollary.

\begin{corollary}\label{cor:wisehyp}
If $N$ is a closed hyperbolic $3$-manifold that contains a geometrically finite surface,
then $\pi_1(N)$ is virtually compact special.
\end{corollary}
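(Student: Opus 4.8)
The plan is to chain together the three results stated immediately before the corollary, so that no genuinely new argument is needed beyond some bookkeeping. First I would recall Thurston's refinement of Theorem~\ref{thm:bon86}: a closed hyperbolic $3$-manifold $N$ containing a geometrically finite surface in fact admits a \emph{hierarchy} of geometrically finite surfaces (see \cite[Theorem~2.1]{Cay94}). Concretely, one starts with the given geometrically finite incompressible surface $\Sigma_1\subseteq N$, cuts $N$ along $\Sigma_1$, notes that each resulting piece is again a hyperbolic manifold (with incompressible boundary) carrying a geometrically finite incompressible surface, and iterates; the process terminates with a collection of balls because a suitable complexity of the pieces strictly decreases at each stage. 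This step is essentially a citation, but it is the geometric heart of the input.

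Next I would translate this into group theory using Proposition~\ref{prop:hypwordhyp}. Since $\H^3$ is Gromov-hyperbolic and $\pi_1(N)$ acts properly discontinuously and cocompactly by isometries on it, $\pi=\pi_1(N)$ is word-hyperbolic. Moreover, by Proposition~\ref{prop:hypwordhyp}(2), a subgroup of $\pi$ is geometrically finite if and only if it is quasi-convex, equivalently quasi-isometrically embedded (by \cite[Corollary III.$\Gamma$.3.6]{BrH99}). Running through the hierarchy from Thurston's theorem, at each stage $\pi_1(N)$ (or of a piece) splits as an amalgamated product $A*_C B$ or an HNN extension $A*_C$ along the fundamental group $C$ of a geometrically finite cutting surface, and the vertex groups are fundamental groups of the smaller pieces; since $C$ is geometrically finite, hence quasi-convex, the inclusion $C\hookrightarrow A*_C B$ (resp.\ $C\hookrightarrow A*_C$) is a quasi-isometric embedding. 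The base of the induction is reached when all pieces are balls, giving the trivial group $1\in\mathcal{QH}$. Thus the closure conditions in the definition of $\mathcal{QH}$ are satisfied at every stage, and $\pi_1(N)\in\mathcal{QH}$. This is exactly Proposition~\ref{prop:hypwordhyp}(3); I would just spell out the inductive bookkeeping.

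Finally, I would invoke Wise's Quasi-Convex Hierarchy Theorem (Theorem~\ref{thm:wise}): every word-hyperbolic group in $\mathcal{QH}$ is virtually compact special. Applying this to $\pi_1(N)$, which we have shown is word-hyperbolic and lies in $\mathcal{QH}$, gives that $\pi_1(N)$ is virtually compact special, which is the assertion. The main obstacle is entirely concealed inside the appeal to Theorem~\ref{thm:wise}: Wise's theorem is the deep ingredient, proved by an intricate induction on the length of the hierarchy together with his Malnormal Special Quotient Theorem and the combination theorems for (virtually) special cube complexes. Granting everything stated earlier in the excerpt, however, the only content left to check in the corollary itself is precisely the translation of Thurston's geometrically finite hierarchy into a quasi-convex hierarchy in the technical sense of $\mathcal{QH}$, and that reduces to Proposition~\ref{prop:hypwordhyp}.
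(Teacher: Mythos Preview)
Your proposal is correct and follows exactly the route the paper takes: the corollary is stated immediately after Thurston's theorem that $\pi_1(N)$ is word-hyperbolic and lies in $\mathcal{QH}$, and Wise's Theorem~\ref{thm:wise}, and the paper simply says ``We immediately obtain the following corollary.'' You have spelled out the bookkeeping in more detail than the paper bothers to, but the argument is the same.
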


The proof of Theorem \ref{thm:wise} is beyond the scope of this article.  However, we will state two of the most important ingredients here.  Recall that a subgroup~$H$ of a group $G$ is called \emph{malnormal} if $gHg^{-1}\cap H=1$ for every $g\notin H$. A  finite set of subgroups $\{H_1,\ldots,H_n\}$ is called \emph{almost malnormal} if $|gH_ig^{-1}\cap H_j|<\infty$ whenever $g\notin H_i$ or $i\neq j$.\index{subgroup!malnormal}\index{subgroup!almost malnormal set of subgroups}

\medskip

The first ingredient is the Malnormal Special Combination Theorem of Hag\-lund--Wise \cite{HaW12}, which is a gluing theorem for virtually special cube complexes.  We use the notation of Section~\ref{section:specialcc}.  \index{theorems!Haglund--Wise's Malnormal Special Combination Theorem}\index{theorems!Malnormal Special Combination Theorem}

\begin{theorem}\textbf{\emph{(Haglund--Wise)}}\label{thm:msct}
Let $X$ be a compact non-positively curved cube complex with an embedded two-sided hyperplane $Y_i$. Suppose that $\pi_1X$ is word-hyperbolic and that $\pi_1Y_i$ is malnormal in $\pi_1X$. Suppose that each component of $X\setminus N_i^o$ is virtually special. Then $X$ is virtually special.
\end{theorem}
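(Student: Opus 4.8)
The plan is to prove the theorem by cutting $X$ along $Y_i$, upgrading each piece to a compact special finite cover \emph{compatibly} along the two copies of $Y_i$, and then regluing; the word-hyperbolicity and malnormality hypotheses are exactly what let the regluing be carried out without creating new hyperplane pathologies. First I would record the decomposition. Since $Y_i$ is embedded and two-sided, $N_i\cong Y_i\times[-1,1]$ and $\partial N_i$ consists of two copies $Y_i^{+},Y_i^{-}$ of $Y_i$; thus $X$ is a graph of spaces with a single edge space $Y_i$ whose vertex space(s) are the component(s) of $X_{\mathrm{cut}}:=X\setminus N_i^{o}$, and $\pi:=\pi_1X$ is correspondingly either an amalgam $A*_{\pi_1Y_i}B$ or an HNN extension $A*_{\pi_1Y_i}$. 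Because $X$ is a compact non-positively curved cube complex, $Y_i$ is a convex subcomplex, so $\pi_1Y_i$ acts convex-cocompactly on $\widetilde X$ and is therefore a quasi-convex subgroup of the word-hyperbolic group $\pi$; together with malnormality this gives that $\pi_1Y_i$ has finite height and that the relevant double cosets $(\pi_1Y_i)\,g\,(\pi_1Y_i)$ are separable in the pieces (using Corollary~\ref{cor: HW special} and Corollary~\ref{cor:HW compact special} for the vertex groups, which are virtually special). It suffices to construct a finite-sheeted special cover of $X$.

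Next I would produce compatible special covers of the pieces using the Haglund--Wise canonical completion and retraction. For each vertex space $X_v$, choose a compact special finite cover $X_v'\to X_v$, and by Theorem~\ref{thm:HW4.2} realize $X_v'$ as a locally convex subcomplex of a Salvetti complex $S_{\Sigma_v}$. After a first finite cover (available by separability of $\pi_1Y_i$) we may assume the elevations of $Y_i$ to $X_v'$ form a finite embedded collection $Z_v$ of two-sided, mutually non-osculating hyperplanes. Applying canonical completion and retraction to the local isometry $Z_v\hookrightarrow X_v'\to S_{\Sigma_v}$ yields a finite cover $\widehat X_v\to X_v'$ carrying an embedded copy of a finite cover of $Z_v$ together with a combinatorial retraction $\widehat X_v\to Z_v$. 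The retraction is precisely what controls the combinatorics of $\widehat X_v$ near the copies of $Y_i$: it forces every hyperplane of $\widehat X_v$ meeting a copy of $Y_i$ to do so in a product-like fashion, so in particular no copy of $Y_i$ self-intersects, directly self-osculates, or inter-osculates another copy of $Y_i$ inside $\widehat X_v$. Invoking separability of $\pi_1Y_i$ in the vertex groups once more, I would pass to a common further finite cover so that the restriction of each $\widehat X_v$ to every incident copy of $Y_i$ is isomorphic to one fixed finite cover $\widehat Y_i\to Y_i$, with the deck group of $\widehat Y_i\to Y_i$ normal in the one of the restriction so that the gluing data are consistent.

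Then I would reglue $\widehat X$ from the $\widehat X_v$ and copies of $\widehat Y_i\times[-1,1]$ along the matched boundary identifications; a sheet count shows $\widehat X\to X$ is a finite cover, and it remains to check that $\widehat X$ is special, i.e.\ that none of the four pathologies of the definition occurs. The hyperplanes of $\widehat X$ fall into three classes. (a) Those contained in a single $\widehat X_v$: nothing to check, since $\widehat X_v$ is special. (b) The ``vertical'' hyperplanes, which are copies of $\widehat Y_i$: they are two-sided because $Y_i$ is, and the canonical retractions on the two adjacent $\widehat X_v$ rule out self-intersection and direct self-osculation. (c) The ``horizontal'' hyperplanes of $\widehat X$ that cross one or more copies of $\widehat Y_i$: here malnormality is decisive. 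A direct self-osculation or an inter-osculation of such hyperplanes along a copy of $\widehat Y_i$ would produce $g\in\pi\setminus\pi_1Y_i$ with $g(\pi_1Y_i)g^{-1}\cap\pi_1Y_i$ infinite, contradicting malnormality; the finitely many residual potential pathologies occurring away from $\widehat Y_i$ (arising from osculations of horizontal hyperplanes inside the $N_i$-region that were not already resolved) are killed by a single further finite cover, available because the subgroups carrying them are quasi-convex, hence separable, in the pieces. Assembling these checks shows $\widehat X$ is special, and hence $\pi$ is virtually compact special.

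I expect the main obstacle to be the bookkeeping in the middle two steps: arranging the canonical completions on the two sides $Y_i^{+}$ and $Y_i^{-}$ to be \emph{simultaneously} compatible, so that the cover $\widehat Y_i$ can be taken the same on both sides and the graph-of-spaces structure of $\widehat X$ is genuinely well defined, and then translating the seam pathologies of class (c) into separable double-coset conditions that can be resolved in a finite cover. Word-hyperbolicity enters only to guarantee quasi-convexity of $\pi_1Y_i$ (hence finite height of its conjugates and the availability of these separability statements), while malnormality is what makes all but finitely many of the seam obstructions vanish outright and the rest disappear after one more finite cover.
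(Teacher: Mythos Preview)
The paper does not prove this theorem. It is stated as one of two black-box ingredients (``The proof of Theorem~\ref{thm:wise} is beyond the scope of this article. However, we will state two of the most important ingredients here.'') with attribution to Haglund--Wise~\cite{HaW12}, and the text moves on immediately to the next ingredient. So there is no proof in the paper to compare against.

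Your sketch is broadly aligned with the actual Haglund--Wise argument: cut along $Y_i$, use canonical completion and retraction to build compatible special finite covers of the pieces, reglue, and analyze hyperplane pathologies across the seam. A few technical points, however, are off or too loose to count as a proof. First, a hyperplane is not literally a subcomplex of $X$ (it cuts through midcubes); the correct statement is that its carrier is locally convex, and this already gives quasi-convexity of $\pi_1Y_i$ in $\pi_1X$ without appealing to word-hyperbolicity. Second, your last paragraph understates the role of word-hyperbolicity: it is used not merely to get quasi-convexity of $\pi_1Y_i$, but to make the separability and double-coset separability of quasi-convex subgroups in the \emph{vertex} groups available (the components of $X\setminus N_i^o$ are compact, so their virtually special finite covers are compact special, whence their fundamental groups are word-hyperbolic with separable quasi-convex subgroups). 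Third, the step ``the finitely many residual potential pathologies\ldots are killed by a single further finite cover'' is where most of the real work in \cite{HaW12} lies; translating each pathology (self-intersection, direct self-osculation, inter-osculation across the seam) into a separability condition that can be satisfied \emph{simultaneously} on both sides while keeping the elevations of $Y_i$ matched is delicate, and your sketch does not indicate how this is organized. So the outline is right, but as written it would not survive as a proof without substantial expansion at exactly the point you flag as the obstacle.
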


The second ingredient is Wise's Malnormal Special Quotient Theorem \cite[Theorem 12.3]{Wis12a}.  This asserts the profound fact that the result of a (group-theoretic) Dehn filling on a virtually compact special word-hyperbolic group is still virtually compact special, for all sufficiently deep (in a suitable sense) fillings.\index{theorems!Wise's Malnormal Special Quotient Theorem}\index{theorems!Malnormal Special Quotient Theorem}

\begin{theorem}\textbf{\emph{(Wise)}}\label{thm:msqt}
Suppose $\pi$ is word-hyperbolic and virtually compact special and $\{H_1,\ldots,H_n\}$ is an almost malnormal family of subgroups of $\pi$. There are subgroups of finite index $K_i\subseteq H_i$ such that, for all subgroups of finite index $L_i\subseteq K_i$, the quotient
\[
\pi/\langle\langle L_1,\ldots,L_n\rangle\rangle
\]
is word-hyperbolic and virtually compact special.
\end{theorem}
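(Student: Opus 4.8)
The plan is to separate the two conclusions. The word-hyperbolicity of the quotient is the ``soft'' part. One first observes that an almost malnormal family of \emph{quasi-convex} subgroups of a word-hyperbolic group (and in the intended applications the $H_i$ are quasi-convex, which I will assume) equips $\pi$ with the structure of a group hyperbolic relative to $\{H_1,\dots,H_n\}$, by the work of Bowditch. The group-theoretic Dehn filling theorems of Osin and of Groves--Manning then guarantee that for all sufficiently deep fillings---in particular for $L_i$ contained in a suitable finite-index subgroup $K_i\leq H_i$---the quotient $\bar\pi:=\pi/\langle\langle L_1,\dots,L_n\rangle\rangle$ is again word-hyperbolic, and moreover each $H_i/L_i$ embeds in $\bar\pi$. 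So the entire difficulty is concentrated in preserving virtual compact specialness, and the subgroups $K_i$ must be refined to achieve this as well.

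For the specialness statement I would first reduce to the case that $\pi$ itself is \emph{compact special}: pass to a finite-index torsion-free $\pi_0\leq\pi$ that is the fundamental group of a compact special cube complex $X$ (so $\widetilde X$ is $\op{CAT}(0)$), replace $\{H_i\}$ by the still finite, still almost malnormal family of $\pi_0$-conjugacy classes of the intersections $H_i^g\cap\pi_0$, and note that a quotient of $\pi_0$ of the required form is finite-index in the corresponding quotient of $\pi$, so it suffices to treat $\pi_0$. By Haglund--Wise (Corollary~\ref{cor:HW compact special}) together with Haglund's theory of convex cores, each quasi-convex $H_i$ is carried by a local isometry $Y_i\to X$ of \emph{compact} non-positively curved cube complexes. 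The heart of the proof is then to feed this data into Wise's theory of \emph{cubical presentations}: for finite-index covers $\bar Y_i\to Y_i$ corresponding to the $L_i$, form the cubical presentation $X^\ast=\langle\, X\mid \bar Y_1,\dots,\bar Y_n\,\rangle$, whose fundamental group is exactly $\bar\pi$.

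The key steps, in order, would be: (i) use almost malnormality of $\{H_i\}$---which forces any two distinct $\pi_0$-translates of the carrier of a $\widetilde{Y_i}$ to meet in a bounded set---to see that the ``pieces'' of $X^\ast$ have uniformly bounded size; (ii) choose $K_i$ deep enough that every $\bar Y_i$ is long compared to this bound, so that $X^\ast$ satisfies Wise's cubical small-cancellation condition (the $B(6)$ condition); (iii) invoke Wise's cubical small-cancellation machinery to build a $\op{CAT}(0)$ cube complex $\widetilde{X^\ast}$ on which $\bar\pi$ acts properly and cocompactly, the walls being the old hyperplanes of $\widetilde X$ together with the new ``cone walls'' from the coned-off copies of $\widetilde{\bar Y_i}$; this re-derives word-hyperbolicity and exhibits $\bar\pi$ as an NPC-cube-complex group; (iv) upgrade to \emph{special}: using that special groups are LERF on their quasi-convex subgroups (Haglund--Wise) together with the canonical completion-and-retraction, pass to a further finite-index subgroup of $\bar\pi$ in which all four hyperplane pathologies---one-sidedness, self-intersection, direct self-osculation, inter-osculation---are removed, again exploiting the depth of the filling and almost malnormality to bound the interactions among cone walls. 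The Malnormal Special Combination Theorem (Theorem~\ref{thm:msct}) is invoked wherever gluings of virtually special pieces arise, and after unwinding the reduction above one recovers the finite-index $K_i\leq H_i$ as required.

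The main obstacle I anticipate is precisely steps (iii) and (iv): producing the dual $\op{CAT}(0)$ cube complex and then promoting it to a special one. Constructing $\widetilde{X^\ast}$ requires showing that the wallspace of old hyperplanes plus cone walls genuinely separates points, has finite intervals, and satisfies the combinatorial Gauss--Bonnet / Greendlinger-type estimates that are the cubical analogue of classical small-cancellation theory---this is Wise's long technical development (essentially the content of several chapters of \cite{Wis12a}) and is not something to be reproduced quickly. Upgrading to special is equally delicate, since the cone walls can osculate and inter-osculate in patterns that must be unwound in a finite cover, and controlling this is exactly where almost malnormality of the $H_i$ and the freedom to take $L_i$ arbitrarily deep inside $K_i$ are both essential. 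Everything else---the relative-hyperbolicity input, the reduction to the compact special case, and the existence of the local isometries $Y_i\to X$---is comparatively routine given the results already quoted.
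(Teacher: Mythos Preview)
The paper does not prove this theorem: it is a survey, and Theorem~\ref{thm:msqt} is simply stated and attributed to Wise~\cite[Theorem~12.3]{Wis12a} as one of the two key ingredients feeding into Theorem~\ref{thm:wise}. There is therefore no ``paper's own proof'' to compare against.

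That said, your outline is a fair high-level sketch of how the argument actually goes in~\cite{Wis12a}: relatively hyperbolic Dehn filling for the word-hyperbolicity of the quotient, reduction to the compact special case, construction of a cubical presentation $X^\ast=\langle X\mid \bar Y_1,\dots,\bar Y_n\rangle$, verification of the cubical small-cancellation condition for deep enough fillings using almost malnormality to bound pieces, and then the long technical work of cubulating the quotient and promoting it to virtually special. You are right that steps (iii)--(iv) are where essentially all of the difficulty lies, and that they occupy a large portion of~\cite{Wis12a}; your honest identification of this as the obstacle is appropriate. One small point: you explicitly add the hypothesis that the $H_i$ are quasi-convex, and you are correct that this is needed---it is present in Wise's original statement and is implicit in the applications the survey has in mind, even if the statement here does not spell it out.
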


Wise also proved a generalization of Theorem~\ref{thm:wise} to the case of certain relatively hyperbolic groups, from which he deduces the corresponding result in the cusped case \cite[Theorem 16.28 and Corollary 14.16]{Wis12a}.

\begin{theorem}\textbf{\emph{(Wise)}}\label{thm:wiseboundary}
If $N$ is a non-closed hyperbolic $3$-manifold of finite volume, then $\pi_1(N)$ is virtually compact special.
\end{theorem}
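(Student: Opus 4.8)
The plan is to reduce Theorem~\ref{thm:wiseboundary} to Wise's theorem on relatively hyperbolic groups with a quasi-convex hierarchy, running the argument of Section~\ref{section:wise} while keeping track of the peripheral structure coming from the cusps. First I would record the structural setup. Since $N$ is a finite-volume hyperbolic $3$-manifold which is not closed, $N$ is compact, orientable and irreducible with non-empty toroidal boundary, so each boundary torus is incompressible and $N$ is Haken (cf.\ (C.\ref{C.haken})). Writing $\pi=\pi_1(N)$, let $T_1,\dots,T_k$ be the boundary tori and $\mathcal{P}=\{\pi_1(T_1),\dots,\pi_1(T_k)\}$, a collection of rank-two free abelian subgroups of $\pi$, each of which is manifestly compact special. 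By the geometry of hyperbolic cusps, $\pi$ is hyperbolic relative to $\mathcal{P}$, and the family $\mathcal{P}$ is almost malnormal (in fact malnormal, by Theorem~\ref{thm:malnormal}(1) together with the fact that distinct cusps of a hyperbolic manifold have distinct parabolic fixed points). This is precisely the input that Wise's relatively hyperbolic machinery requires.

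Next I would produce a geometrically finite incompressible surface after passing to a finite cover. By Cooper--Long--Reid (C.\ref{C.large}), since $N$ has non-empty incompressible boundary and is hyperbolic (hence not covered by $S^1\times S^1\times I$), the group $\pi$ is large; thus $vb_1(N;\Z)=\infty$ by (C.\ref{C.homlarge}), so there is a finite cover $N'\to N$ with $b_1(N')\geq 2$. Applying the Thurston-norm argument of (C.\ref{C.b1atleast2}) to $N'$ (which, being hyperbolic, is not a torus bundle) produces a homologically essential non-fiber surface $\Sigma\subseteq N'$ that does not lift to the fiber of a surface bundle in any finite cover. By the Subgroup Tameness Theorem (Theorem~\ref{thm:subgroupdichotomy}), the subgroup $\pi_1(\Sigma)\leq\pi_1(N')$ is either a virtual surface fiber subgroup or geometrically finite; the first alternative is excluded, so $\Sigma$ is geometrically finite. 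Thurston's hierarchy theorem for cusped manifolds (the analogue of the result \cite[Theorem~2.1]{Cay94} used in the closed case) then shows that $N'$ admits a hierarchy of geometrically finite, relatively quasi-convex surfaces; via the relative version of Proposition~\ref{prop:hypwordhyp} this says exactly that $\pi_1(N')$ is hyperbolic relative to its peripheral $\Z^2$ subgroups and admits a relative quasi-convex hierarchy, i.e.\ it lies in the relatively hyperbolic analogue of the class $\mathcal{QH}$.

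Finally I would invoke Wise's generalization of Theorem~\ref{thm:wise} to relatively hyperbolic groups, \cite[Theorem~16.28]{Wis12a}: a group which is hyperbolic relative to virtually compact special subgroups and admits a relative quasi-convex hierarchy is virtually compact special. Applied to $\pi_1(N')$ (whose peripheral subgroups are free abelian, hence special) this gives that $\pi_1(N')$ is virtually compact special. Since $\pi_1(N')$ has finite index in $\pi$, and being virtually compact special passes to finite-index overgroups, $\pi=\pi_1(N)$ is virtually compact special, which is Theorem~\ref{thm:wiseboundary}.

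The hard part is the last step. Wise's relatively hyperbolic Quasi-Convex Hierarchy Theorem is the technical heart of \cite{Wis12a}, and extending the word-hyperbolic argument to the cusped setting requires substantial extra machinery: relatively hyperbolic Dehn filling and the relative Malnormal Special Quotient Theorem (the analogue of Theorem~\ref{thm:msqt}), the relative Malnormal Special Combination Theorem (the analogue of Theorem~\ref{thm:msct}), and the cubulation of relatively hyperbolic groups acting on wallspaces in the spirit of Sageev \cite{Sag95} and Bergeron--Wise \cite{BeW12}. A secondary, more bookkeeping issue is that the peripheral tori must be carried through the hierarchy, so one must work throughout with relatively quasi-convex subgroups and almost malnormal peripheral families rather than the absolute notions; but since the cusp subgroups are free abelian (hence special), this causes no genuine trouble.
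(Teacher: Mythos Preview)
The paper does not prove this theorem itself; it simply attributes the result to Wise \cite[Theorem~16.28 and Corollary~14.16]{Wis12a} and remarks that it relies on extending Wise's word-hyperbolic techniques to the relatively hyperbolic setting, with foundational input from \cite{HrW12}. Your outline is a correct and more detailed expansion of exactly this route---relative hyperbolicity with respect to the cusp subgroups, a geometrically finite (relatively quasi-convex) hierarchy obtained via the Subgroup Tameness Theorem, and then Wise's relatively hyperbolic Quasi-Convex Hierarchy Theorem---and you rightly identify the last step as the substantive one.
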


This last theorem relies on extending some of Wise's techniques from the word-hyperbolic case to the relatively hyperbolic case.   Some foundational results for the relatively hyperbolic case were proved in \cite{HrW12}.

\subsection{Quasi-Fuchsian surface subgroups: the work of Kahn and Marko\-vic}\label{section:km}


As discussed in Section~\ref{section:wise}, Wise's work applies to hyperbolic $3$-manifolds with a geometrically finite hierarchy.  A non-Haken $3$-manifold, on the other hand, has no hierarchy by definition.  Likewise, although Haken hyperbolic $3$-manifolds without a geometrically finite hierarchy are virtually fibered by Theorem~\ref{thm:bon86}, Thurston's Questions 15 (LERF), as well as other important open problems such as largeness, do not follow from Wise's theorems in this case.

\medskip

The starting point for dealing with hyperbolic $3$-manifolds without a geometrically finite hierarchy is provided by Kahn and Markovic's proof of the Surface Subgroup Conjecture.  More precisely, as a key step towards answering Thurston's question in the affirmative, Kahn--Markovic \cite{KM12} showed that the  fundamental group of any  closed hyperbolic $3$-manifold contains a surface group. In fact they proved a significantly stronger statement.  In order to state their theorem precisely, we need two more definitions.  In the following discussion,~$N$ is assumed to be a closed hyperbolic $3$-manifold.
\bn
\item We refer to \cite[p.~4]{KAG86} and \cite[p.~10]{KAG86} for the definition of a \emph{quasi-Fuchsian surface group}. A surface subgroup $\Gamma$ of $\pi_1(N)$ is quasi-Fuchsian if and only if it is geometrically finite \cite[Lemma 4.66]{Oh02}.  (If $N$ has cusps then we need to add the condition that $\Gamma$ has no `accidental' parabolic elements.)
\item
We fix an identification of $\pi_1(N)$ with a discrete subgroup of $\op{Isom}(\H^3)$.
We say that $N$ \emph{contains a dense set of quasi-Fuchsian surface groups} if  for each great circle $C$ of $\partial \H^3=S^2$ there exists a sequence of $\pi_1$-injective immersions $\iota_i\colon\Sigma_i\to N$
of surfaces $\Sigma_i$ such that the following hold:
\bn
\item for each $i$ the group  $(\iota_i)_*(\pi_1(\Sigma_i))$ is a quasi-Fuchsian surface group,
\item the sequence  $(\partial \Sigma_i)$  converges to $C$ in the Hausdorff metric on $\partial \H^3$.
\en
\en
\noindent
We can now state the theorem of Kahn--Markovic \cite{KM12}.
(Note that this particular formulation is \cite[Th\'eor\`eme~5.3]{Ber12}.) \index{theorems!Kahn-Markovic Theorem}
\index{theorems!Surface Subgroup Conjecture}  \index{surface group!quasi-Fuchsian} \index{$3$-manifold!containing a dense set of quasi-Fuchsian surface groups}

\begin{theorem}\label{thm:km} \textbf{\emph{(Kahn--Markovic)}}
Every closed hyperbolic $3$-manifold contains a dense set of quasi-Fuchsian surface groups.
\end{theorem}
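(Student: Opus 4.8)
This is a deep theorem whose full proof occupies the entirety of Kahn–Markovic's paper \cite{KM12}, so I will only sketch the strategy; the write-up in the survey will present the outline and refer to \cite{KM12} (and the exposition \cite{Ber12}) for details. The goal is, given a closed hyperbolic $3$-manifold $N=\H^3/\Gamma$ and a great circle $C\subseteq S^2=\partial\H^3$, to construct a sequence of $\pi_1$-injective immersed closed surfaces $\iota_i\colon\Sigma_i\to N$ whose images are quasi-Fuchsian and whose limit sets (boundaries of the universal covers in $S^2$) Hausdorff-converge to $C$. The basic building block is a \emph{good pants}: an immersed geodesic pair of pants in $N$ all three of whose cuff geodesics have complex length very close (within $\epsilon$) to a large real number $2R$, and whose feet along each cuff point in prescribed directions up to error $\epsilon$. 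The plan is to first establish that such good pants exist in abundance and are roughly equidistributed, then to glue enormously many of them along their cuffs in a combinatorially controlled ``nearly geodesic'' fashion so that the resulting closed surface is $2\epsilon/R$-quasi-geodesic, hence (by a Morse-type stability argument for $\H^3$) quasi-Fuchsian; and finally to arrange the gluing so that the surface ``wraps around'' a geodesic plane close to $C$.

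The key steps, in order, are as follows. \textbf{(1) Exponential mixing.} Invoke the exponential mixing rate of the geodesic (more precisely, the frame) flow on the unit tangent bundle $T^1N$, a consequence of Moore's theorem / representation-theoretic decay of matrix coefficients for $\mathrm{PSL}(2,\C)$. This gives quantitative equidistribution of long closed geodesics and of the ``connection'' data needed below. \textbf{(2) Counting good pants / good curves.} Using mixing, show that for large $R$ the number of good curves of length near $2R$ with a prescribed unit normal framing is asymptotically $\sim C\,e^{2R}$ up to multiplicative error $(1+O(e^{-qR}))$, and similarly count good pants with prescribed cuff framings. The crucial output is that the feet of good pants along a fixed good cuff are \emph{nearly equidistributed} on the normal circle bundle of that cuff. \textbf{(3) The gluing / ``perfect matching'' argument.} For a fixed large $R$, consider the (finite, huge) set of all good pants. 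Along each good cuff $\gamma$, the set of feet coming from all good pants abutting $\gamma$ is an almost-uniform measure on the circle; one wants to pair up these pants across $\gamma$ so that the two feet are almost antipodal with a tiny prescribed rotational offset (so that the gluing is nearly geodesic — the ``$\tfrac{1}{R}$-bending'' condition). The near-equidistribution from step (2), together with a Hall-type / doubly-stochastic-matrix argument (the matrix of ``how often foot configuration $a$ is matched to $b$'' is close to a uniform doubly stochastic matrix, hence by Birkhoff–von Neumann is a convex combination of permutations), produces such a pairing. The glued-up union of pants is a closed surface $S$ with an explicit pants decomposition in which every cuff has length $\approx 2R$ and every gluing is $\epsilon$-nearly-geodesic. \textbf{(4) Quasi-Fuchsian-ness.} Show the developing map of $S$ into $\H^3$ is a $(1+\epsilon)$-quasi-isometric embedding: locally it is nearly a totally geodesic immersion of a hyperbolic surface, and the small bending is controlled so that the path metric is comparable to the $\H^3$-metric; then the standard stability of quasi-geodesics in $\H^3$ (word-hyperbolicity of $\Gamma$, cf.\ Proposition~\ref{prop:hypwordhyp}) shows $\pi_1(S)\to\Gamma$ is injective with quasi-convex — hence geometrically finite, hence quasi-Fuchsian — image. \textbf{(5) Density near a given circle $C$.} Refine the construction: instead of taking all good pants, take only good pants that lie within bounded distance of a chosen geodesic plane $P$ whose boundary circle is $C$ (such ``half-tube'' regions still contain enough good pants by a relative version of steps (2)--(3), using that $\Gamma$ is a lattice so $P$ projects to a dense plane in $N$). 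The resulting surfaces have limit sets contained in a small neighborhood of $C$ and, by letting the region thicken appropriately, one gets a sequence with $\partial\Sigma_i\to C$ in the Hausdorff metric. This yields the ``dense set'' conclusion.

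\textbf{Main obstacle.} The heart of the difficulty — and the step I expect to be by far the hardest — is step (3), the combinatorial gluing. It is not enough that good pants are numerous; one must glue them along \emph{all} their cuffs \emph{simultaneously} into a single closed surface while keeping every single gluing nearly geodesic. Making this work requires the sharp, exponentially small error terms in the equidistribution counts of step (2) (so that the relevant matching matrix really is close enough to doubly stochastic), and it requires the Birkhoff–von Neumann / perfect-matching mechanism to be applied uniformly across all cuffs at once. Controlling the interaction between the small exponential errors and the combinatorial matching — ensuring no cuff is ``starved'' of compatible partners — is the technical core of \cite{KM12}. Everything else (mixing, quasi-geodesic stability, the density refinement in step (5)) is comparatively standard given this machinery, though each still needs care. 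In the survey I would state Theorem~\ref{thm:km} as above, outline these five steps, flag step (3) as the decisive innovation, and direct the reader to \cite{KM12} and \cite{Ber12} for the complete argument.
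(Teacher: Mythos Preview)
The paper itself gives no proof of this theorem: it is a survey, and immediately after stating Theorem~\ref{thm:km} it simply cites \cite{KM12} and the exposition \cite[Th\'eor\`eme~5.3]{Ber12} without further argument. So there is nothing in the paper to compare your proposal against; your sketch is strictly more than what the paper provides, and is in the same spirit the paper intends (outline plus references).

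As an outline of the actual Kahn--Markovic argument your sketch is accurate in its broad strokes: exponential mixing of the frame flow, counting and equidistribution of good pants and their feet, a Hall-marriage/doubly-stochastic matching to glue pants with nearly-antipodal feet, and a quasi-geodesic stability argument to conclude quasi-Fuchsian-ness. One small inaccuracy: in step~(5) you write ``$\Gamma$ is a lattice so $P$ projects to a dense plane in $N$''. This is not true in general---by Ratner/Shah the closure of a geodesic plane in a closed hyperbolic $3$-manifold is either all of $N$ or a closed totally geodesic surface---and in any case it is not how the density is achieved. Rather, for each $R$ and $\epsilon$ the mixing estimates already give enough good pants whose lifts stay $\epsilon$-close to the chosen plane $P$, and the matching can be carried out among these; letting $R\to\infty$ and $\epsilon\to 0$ forces the limit sets to Hausdorff-converge to $C=\partial P$. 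So the density claim about $P$ should be dropped and replaced by this direct appeal to mixing near $P$.
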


\subsection{Agol's Theorem}\label{section:agol}

The following theorem of Bergeron--Wise \cite[Theorem~1.4]{BeW12}, building extensively on work of Sageev \cite{Sag95,Sag97}
makes it possible to approach hyperbolic $3$-manifolds via non-positively curved cube complexes.

\begin{theorem}\label{thm:sbw} \textbf{\emph{(Sageev, Bergeron--Wise)}}
Let $N$ be a closed hyperbolic $3$-manifold which contains a dense set of quasi-Fuchsian surface groups.
Then $\pi_1(N)$ is also the fundamental group of a compact non-positively curved cube complex.
\end{theorem}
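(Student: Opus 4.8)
The plan is to build the cube complex by Sageev's duality construction applied to a suitable \emph{finite} family of walls coming from the quasi-Fuchsian surface subgroups, and then to check that the induced $\pi_1(N)$-action is proper and cocompact. Write $\pi=\pi_1(N)$ and fix an identification of $\pi$ with a discrete subgroup of $\op{Isom}(\H^3)$. First I would produce walls: each quasi-Fuchsian surface subgroup $\Gamma\le\pi$ is, by definition, geometrically finite with round-circle limit set $C\subseteq S^2=\partial\H^3$, and the two complementary disks of $C$ exhibit $\Gamma$ as a codimension-one subgroup of $\pi$. To a $\pi$-conjugacy class of such a surface subgroup one associates, via its convex core (or a uniform quasi-convex neighbourhood of the corresponding immersed surface lifted to $\H^3$), a $\pi$-invariant collection of walls in $\H^3$; doing this for finitely many surface subgroups $\Gamma_1,\dots,\Gamma_m$ gives a $\pi$-invariant wall space $\mathcal{W}$.

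Next I would invoke Sageev's construction: to $\mathcal{W}$ it assigns a CAT(0) cube complex $\widetilde{X}=\widetilde{X}(\mathcal{W})$ whose vertices are the consistent (``ultrafilter'') orientations of the walls and whose edges record flipping one wall, with $\pi$ acting by cubical automorphisms. Setting $X=\widetilde{X}/\pi$, the remaining task is to choose the $\Gamma_i$ so that this action is proper and cocompact; then $X$ is a compact non-positively curved cube complex with $\pi_1(X)\cong\pi$, as required.

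The heart of the argument is properness, and this is where Theorem~\ref{thm:km} is used. Because the limit circles of the quasi-Fuchsian surface subgroups of $\pi$ are dense in the space of round circles in $S^2$, a compactness argument on the (compact, after quotienting by $\pi$) space of bi-infinite geodesics of $\H^3$ should yield a uniform constant $R>0$ and finitely many surface subgroups $\Gamma_1,\dots,\Gamma_m$ such that every geodesic segment of length $\ge R$ in $\H^3$ crosses some wall of $\mathcal{W}$. A Helly/separation argument of Sageev's type then forces the vertex stabilizers in $\widetilde{X}$ to be finite and the orbit map $\pi\to\widetilde{X}^{(0)}$ to be a quasi-isometric embedding, so the action is metrically proper. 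I expect this to be the main obstacle: one must upgrade the purely qualitative density statement of Kahn--Markovic to the uniform geometric statement that finitely many of their surfaces already ``fill'' $\H^3$ at a bounded scale.

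Finally, for cocompactness I would use that $\pi$ is word-hyperbolic (Proposition~\ref{prop:hypwordhyp}) and that each $\Gamma_i$, being geometrically finite, is quasi-convex in $\pi$. By Sageev's cocompactness criterion for the cube complex dual to finitely many quasi-convex codimension-one subgroups of a word-hyperbolic group (equivalently, via the bounded packing property of such subgroups), the action of $\pi$ on $\widetilde{X}$ is cocompact. Combining this with the properness from the previous step, $X=\widetilde{X}/\pi$ is a compact non-positively curved cube complex with fundamental group $\pi_1(N)$, which completes the proof.
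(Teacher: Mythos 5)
Your proposal is correct and is essentially the argument of Bergeron--Wise (building on Sageev) that the paper cites for this theorem: one extracts finitely many quasi-Fuchsian walls by a compactness argument on the space of geodesics (equivalently, pairs of distinct boundary points modulo $\pi_1(N)$) so that the separation property holds uniformly, applies Sageev's duality construction to get the dual CAT(0) cube complex, and obtains cocompactness from Sageev's criterion for finitely many quasi-convex codimension-one subgroups of a word-hyperbolic group. The only small imprecision is that a quasi-Fuchsian limit set is a quasi-circle (an embedded Jordan curve), not a round circle, but this does not affect the codimension-one/separation argument, and since $\pi_1(N)$ is torsion-free the proper action is free, so the compact quotient indeed has fundamental group $\pi_1(N)$.
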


In the previous section we saw that Kahn--Markovic showed that every closed hyperbolic $3$-manifold satisfies the hypothesis of the theorem.

\medskip
The following theorem was conjectured by Wise \cite{Wis12a} and proved recently by Agol \cite{Ag12}.

\begin{theorem}\textbf{\emph{(Agol)}}\label{thm:ag12}
Let $\pi$ be word-hyperbolic and the fundamental group of a compact, non-positively curved cube complex.  Then $\pi$ is virtually compact special.
\end{theorem}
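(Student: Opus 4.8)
The plan is to use the Haglund--Wise machinery (Theorem~\ref{thm:HW4.2}, Corollary~\ref{cor:HW compact special}) to reduce the statement to a separability problem inside $\pi$, and then to solve that problem by combining Wise's Malnormal Special Quotient Theorem (Theorem~\ref{thm:msqt}) with a new combinatorial construction on the hyperplanes. First I would record the setup. Since $X$ is a non-positively curved cube complex, Gromov's Link Condition (Theorem~\ref{thm:gromov}) together with the Cartan--Hadamard theorem shows that the universal cover $\widetilde X$ is $\op{CAT}(0)$, hence contractible; thus $X=K(\pi,1)$, so $\pi$ is torsion-free and acts freely, properly and cocompactly on $\widetilde X$ by cubical automorphisms. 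The hyperplanes $\{\widetilde Y_i\}$ of $\widetilde X$ are convex subcomplexes falling into finitely many $\pi$-orbits, and each hyperplane stabilizer $H_i\leq\pi$ acts cocompactly on $\widetilde Y_i$; since $\pi$ is word-hyperbolic, this forces every $H_i$ to be a quasi-convex subgroup of $\pi$. By Theorem~\ref{thm:HW4.2} it suffices to produce a \emph{finite} cover $\widehat X\to X$ that is special, i.e.\ one in which every hyperplane is two-sided and embedded, no hyperplane directly self-osculates, and no two hyperplanes inter-osculate. After a routine reduction we may assume the hyperplanes of $X$ are already embedded and two-sided, so the content is to kill direct self-osculations and inter-osculations in a finite cover; each such pathology is removed precisely when the finitely many relevant double cosets $H_igH_j$ are separable in $\pi$. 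So the theorem comes down to finding a single finite-index subgroup of $\pi$ that simultaneously witnesses all these double-coset separations.

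The engine for producing such a subgroup is Wise's Malnormal Special Quotient Theorem. After passing to a finite cover and regrouping the hyperplanes into commensurability classes, the family $\{H_1,\dots,H_n\}$ of hyperplane subgroups can be arranged into an almost malnormal collection of quasi-convex subgroups of the word-hyperbolic group $\pi$. Theorem~\ref{thm:msqt} then supplies finite-index subgroups $L_i\leq H_i$ such that, for all deeper fillings, the quotient $\bar\pi=\pi/\langle\langle L_1,\dots,L_n\rangle\rangle$ is word-hyperbolic and virtually compact special. In $\bar\pi$ the images of the hyperplane subgroups $H_i$, and hence of all the double cosets $H_igH_j$, become finite and are therefore trivially separable; so $\bar\pi$ has the required ``good'' hyperplane behaviour, while $\pi$ does not yet. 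The problem is thus to transfer this good behaviour from the filled quotient $\bar\pi$ back to $\pi$ itself, and to do so compatibly with the cubical geometry rather than merely at the level of abstract subgroup separation.

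The crux of Agol's argument — and the step I expect to be by far the hardest — is exactly this transfer. Concretely, one chooses the filling deep enough, picks a finite-index subgroup $\bar\pi'\leq\bar\pi$ realising the relevant separations in $\bar\pi$, pulls it back to a finite-index $\pi'\leq\pi$, and shows that in the corresponding cover $X'\to X$ the lifted hyperplanes interact only inside bounded (``filled away'') regions. Agol packages this into the existence of a finite colouring of the hyperplanes of a further finite cover such that hyperplanes carrying the same colour neither osculate nor inter-osculate, which is precisely the special condition. Establishing this colouring is where the real difficulty lies: it requires the relatively hyperbolic Dehn-filling and quasi-convex separability technology of Agol--Groves--Manning to control the kernels $\langle\langle L_i\rangle\rangle$, the cube-complex geometry of Sageev and Bergeron--Wise (compare Theorem~\ref{thm:sbw}) to interpret ``wall interactions'' geometrically, and the canonical completion--and--retraction of Haglund--Wise to promote separability of $\pi_1$ of a local-isometry subcomplex into an actual retraction. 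Glueing the resulting special pieces of the induced hierarchy back together is then handled by the Malnormal Special Combination Theorem (Theorem~\ref{thm:msct}). Once the special finite cover $\widehat X$ is in hand, Corollary~\ref{cor:HW compact special} identifies $\pi_1(\widehat X)$ as a quasi-convex subgroup of a right-angled Artin group, so $\pi$ is virtually compact special, as required.
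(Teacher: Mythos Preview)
The paper itself does not prove this theorem; it is a survey, and the statement is attributed to Agol~\cite{Ag12} with only the one-line remark that the proof ``relies heavily on results in the appendix to~\cite{Ag12}, which are due to Agol, Groves and Manning'' and that these ``extend the techniques of \cite{AGM09} to word-hyperbolic groups with torsion, and combine them with the Malnormal Special Quotient Theorem.'' So there is no paper-proof to compare against, only Agol's actual argument.

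Your sketch names most of the right ingredients but contains a genuine circularity. You propose to apply the Malnormal Special Quotient Theorem (Theorem~\ref{thm:msqt}) to $\pi$ with the almost malnormal family of hyperplane stabilizers. But look at the hypothesis of Theorem~\ref{thm:msqt}: it requires $\pi$ to be word-hyperbolic \emph{and virtually compact special}. That is precisely what you are trying to prove, so you cannot invoke it for $\pi$ directly. Relatedly, the reduction ``we may assume the hyperplanes of $X$ are already embedded and two-sided'' is not routine: producing a finite cover with embedded two-sided hyperplanes is already a substantial part of specialness, and no such cover is available a priori.

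What Agol actually does is organise the argument as an induction on the dimension of the cube complex. Each hyperplane $\widetilde Y_i$ is itself a $\op{CAT}(0)$ cube complex of strictly lower dimension on which its stabilizer $H_i$ acts properly cocompactly; since $H_i$ is quasi-convex in $\pi$ it is word-hyperbolic, so by the inductive hypothesis each $H_i$ is virtually compact special. Now the MSQT is applied \emph{to the $H_i$}, not to $\pi$, and combined with the Agol--Groves--Manning filling theorem (which handles the torsion appearing in quotients) to produce deep enough Dehn fillings of $\pi$ along the $H_i$. The genuinely new idea --- and the step you correctly flag as hardest --- is Agol's colouring argument: a measure-theoretic construction that, working in these filled quotients and pulling back, produces a finite cover of $X$ admitting a finite colouring of the hyperplanes in which same-coloured walls never osculate or inter-osculate. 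Your final paragraph gestures at this, but the inductive scaffolding that makes the MSQT legitimately available is missing from your outline, and without it the argument does not get off the ground.
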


The proof of Theorem~\ref{thm:ag12} relies heavily on results in the appendix to~\cite{Ag12}, which are due to Agol, Groves and Manning.  The results of this appendix extend the techniques of \cite{AGM09} to word-hyperbolic groups with torsion, and combine them with the Malnormal Special Quotient Theorem (Theorem~\ref{thm:msqt}). \index{theorems!Agol's Virtually Compact Special Theorem}\index{theorems!Virtual Compact Special Theorem}

\medskip

Note that the combination of Theorems~\ref{thm:km},~\ref{thm:sbw} and~\ref{thm:ag12}
now implies Theorem~\ref{thm:akmw} for closed hyperbolic $3$-manifolds.

\subsection{$3$-manifolds with non-trivial JSJ decomposition}\label{section:lpw}

Although an understanding of the hyperbolic case is key to an understanding of all $3$-manifolds, a good understanding of hyperbolic $3$-manifolds and of Seifert fibered spaces does not necessarily immediately yield the answers to questions on $3$-manifolds with non-trivial JSJ decomposition.  For example, by (C.\ref{C.sl2c}) the fundamental group of a hyperbolic $3$-manifold is linear over $\C$, but it is still an open question whether or not the fundamental group of any closed irreducible $3$-manifold is linear. (See Section~\ref{sec:linear repres} below.)

In the following we  say that a $3$-manifold $N$ with empty or toroidal boundary is \emph{non-positively curved}  if the interior of $N$ admits a complete non-positively curved Riemannian metric. \index{$3$-manifold!non-positively curved}
Furthermore, a compact orientable irreducible $3$-manifold with empty or toroidal boundary is called a \emph{graph manifold} \index{$3$-manifold!graph manifold}
if all JSJ components are Seifert fibered manifolds.

These concepts are related the following theorem.

\begin{theorem}\textbf{\emph{(Leeb, \cite{Leb95})}}\label{thm:leeb}
Let $N$ be an irreducible $3$-manifold with empty or to\-roi\-dal boundary.
If $N$ is not a closed graph manifold, then $N$ is non-positively curved.
\end{theorem}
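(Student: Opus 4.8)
The plan is to deduce Leeb's theorem by combining the Geometric Decomposition Theorem (Theorem~\ref{thm:geom2}) with known constructions of non-positively curved metrics on each geometric piece and a gluing argument. First I would reduce to the irreducible pieces of the geometric decomposition: by Theorem~\ref{thm:geom2}, after cutting $N$ along a minimal family of incompressible tori and Klein bottles, each piece $M_i$ is geometric. I would dispose of the degenerate cases ($N=S^1\times D^2$, $N=S^1\times S^1\times I$, the twisted $I$-bundle over the Klein bottle, and $\Sol$-manifolds) separately, observing that all of these admit non-positively curved metrics directly---e.g. $S^1\times S^1\times I$ and the twisted $I$-bundle over the Klein bottle are flat up to finite cover, and $\Sol$ itself is non-positively curved as a Lie group with a left-invariant metric.

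Next I would treat the hyperbolic pieces and the Seifert pieces of the JSJ decomposition. A hyperbolic piece has a complete finite-volume hyperbolic metric, hence is certainly non-positively curved; the issue is that near the cusps the metric must be modified so that the boundary tori become totally geodesic flat tori, which is a standard cusp-modification (truncate the cusp and glue on a flat product $T^2\times[0,\infty)$, smoothing the metric in a collar while keeping curvature $\le 0$). For a Seifert fibered piece with base orbifold of negative Euler characteristic, I would use the fact (see Theorem~\ref{thm:geoms}) that it carries an $\H^2\times\R$ or $\widetilde{\sl(2,\R)}$ structure; in both cases one can build a non-positively curved metric (the $\H^2\times\R$ case is immediate as a product, and the $\widetilde{\sl(2,\R)}$ case requires a bit more care but is classical), again arranged so that the boundary tori are flat and totally geodesic. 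The hypothesis that $N$ is not a closed graph manifold guarantees either that there is at least one hyperbolic piece or that $N$ itself is already one of the geometric manifolds handled above, so the troublesome case---a nontrivial JSJ decomposition all of whose pieces are Seifert fibered and which has no way to fit the fibered directions together---does not arise.

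The main step, and the one I expect to be the real obstacle, is the gluing: having equipped each piece with a non-positively curved metric whose relevant boundary components are totally geodesic flat tori, one must glue the pieces back along the JSJ tori to obtain a globally non-positively curved metric on $N$. For this I would invoke the gluing theorem for locally $\op{CAT}(0)$ (or Riemannian non-positively curved) spaces along totally geodesic flat submanifolds: if two non-positively curved manifolds with totally geodesic flat boundary are glued along an isometry of a boundary component, the result is non-positively curved. The genuine difficulty is that the two sides of a JSJ torus generally induce \emph{different} flat structures on that torus (different from the Seifert-fiber direction on each side), so one cannot glue isometrically on the nose; one must first rescale and reshape the flat metrics on the boundary tori of each piece---inserting flat product collars $T^2\times[0,\epsilon]$ with interpolating flat metrics---to make the boundary metrics match. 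This is where Leeb's condition on the fibers (encoded already in Proposition~\ref{prop:charpair}(2)) enters, and carrying out the collar interpolation while keeping curvature non-positive and the gluing loci totally geodesic is the technical heart of the argument. Once the metrics are matched, the gluing theorem applies and produces the desired complete non-positively curved metric on the interior of $N$, completing the proof.

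Finally I would remark that this argument also gives, with essentially no extra work, that $N$ is aspherical and that $\pi_1(N)$ is a $\op{CAT}(0)$ group (acts geometrically on a $\op{CAT}(0)$ space), and I would point to Leeb's paper \cite{Leb95} for the full details of the collar constructions and the verification of the gluing hypotheses, since those are exactly the routine-but-delicate computations I am not grinding through here.
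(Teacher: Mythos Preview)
The paper does not give its own proof of this theorem; it simply records Leeb's result with a citation to \cite{Leb95}. So there is no in-paper argument to compare against, and your outline should be judged on its own merits as a sketch of Leeb's proof.

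Your outline is broadly correct for the case where $N$ has at least one hyperbolic JSJ piece, but there is a genuine gap in your case analysis. You assert that ``the hypothesis that $N$ is not a closed graph manifold guarantees either that there is at least one hyperbolic piece or that $N$ itself is already one of the geometric manifolds handled above.'' This is false: a graph manifold with \emph{non-empty boundary} and a nontrivial JSJ decomposition (say two Seifert pieces glued along one torus, with at least one free boundary torus remaining) is not a closed graph manifold, has no hyperbolic piece, and is not a single geometric manifold. This case does arise and cannot be dismissed. In Leeb's paper it is handled separately (his Theorem~3.3, as opposed to Theorem~3.2 for manifolds with a hyperbolic piece): the flexibility needed to match flat metrics across the JSJ tori comes not from a hyperbolic piece but from the presence of \emph{boundary} tori on some Seifert pieces, and one has to propagate that freedom through the JSJ graph. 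Your sentence ``so the troublesome case \ldots\ does not arise'' is precisely the missing step.

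A smaller point: your claim that one can put a non-positively curved metric on an $\widetilde{\sl(2,\R)}$ piece is wrong in general---closed $\widetilde{\sl(2,\R)}$-manifolds are \emph{not} non-positively curved---but this is harmless here, since every Seifert piece appearing in a nontrivial JSJ decomposition has non-empty boundary and hence carries an $\H^2\times\R$ structure, and a closed $\widetilde{\sl(2,\R)}$-manifold would be a closed graph manifold, which is excluded by hypothesis.
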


The question of which closed graph manifolds are non-positively-curved was treated in detail by Buyalo and Svetlov \cite{BuS05}.

\begin{theorem}\textbf{\emph{(Liu)}}\label{thm:liu11}
Let $N$ be an aspherical graph manifold.  Then $\pi_1(N)$ is virtually special if and only if $N$ is non-positively curved.
\end{theorem}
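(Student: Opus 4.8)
The plan is to reduce the statement to a combinatorial condition on the JSJ graph and then prove each implication separately. By Leeb's theorem (Theorem~\ref{thm:leeb}) the only graph manifolds that can fail to be non-positively curved are closed ones, and for closed graph manifolds Buyalo--Svetlov \cite{BuS05} give a necessary and sufficient condition for the existence of a non-positively curved metric: the vanishing of a cohomological ``charge'' on the set of pairs (Seifert piece, incident JSJ torus), assembled from the Euler numbers of the pieces and the slopes of the gluing maps. So it suffices to show that $\pi_1(N)$ is virtually special if and only if $N$ is chargeless. Throughout I would pass freely to finite covers: virtual specialness is a commensurability invariant, and by Theorem~\ref{thm:jsjlift} both the class of graph manifolds and the chargeless condition are preserved and reflected by finite covers.

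For the direction ``virtually special $\Rightarrow$ non-positively curved'', suppose a finite cover $N'$ has $\pi_1(N')$ special. Then $\pi_1(N')$ embeds in a right-angled Artin group (Corollary~\ref{cor: HW special}), hence is residually torsion-free nilpotent and bi-orderable, and it acts freely, properly and cocompactly on a $\op{CAT}(0)$ cube complex. I would then show that a closed graph manifold whose fundamental group enjoys these properties must be chargeless: a non-zero charge forces a JSJ-torus subgroup $\Z^2$ to sit ``twisted'' relative to the vertical fibre subgroups of the two adjacent Seifert pieces, and I would check that this is incompatible either with the way maximal abelian subgroups stabilise flats (and flats are organised by hyperplanes) in a $\op{CAT}(0)$ cube complex, or with the existence of a bi-invariant order together with the fact that the fibre classes generate cofinal normal cyclic subgroups of the piece groups. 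By Buyalo--Svetlov this gives a non-positively curved metric on $N'$, hence on $N$.

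For the direction ``non-positively curved $\Rightarrow$ virtually special'', assume $N$ is chargeless; I would first cubulate $\pi_1(N)$ and then upgrade the cubulation to a special one. Each JSJ piece is, up to finite cover, $\Sigma_v\times S^1$ with $\chi(\Sigma_v)<0$, and carries a rich supply of \emph{horizontal} $\pi_1$-injective surfaces transverse to the circle fibres as well as \emph{vertical} ones (circle bundles over properly embedded arcs and loops in $\Sigma_v$). The chargeless hypothesis is exactly what is needed to splice (by a controlled ``spinning'' construction) horizontal surfaces of adjacent pieces along the JSJ tori into closed $\pi_1$-injective immersed surfaces of $N$ whose fundamental groups are separable; these, together with the vertical surfaces, give a family of codimension-one walls that separates points of $\widetilde N$ and has enough crossings, so Sageev's construction \cite{Sag95,Sag97} yields a $\op{CAT}(0)$ cube complex on which $\pi_1(N)$ acts properly, and a piece-by-piece count of the walls meeting a fundamental domain makes the action cocompact. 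To make the action special I cannot appeal to Agol's Theorem~\ref{thm:ag12}, since $\pi_1(N)$ is neither word-hyperbolic nor hyperbolic relative to a proper family of subgroups; instead I would verify the Haglund--Wise separability criteria \cite{HaW08} in a suitable finite cover, using that the hyperplane stabilisers are surface groups (hence themselves special) and that graph-manifold groups are subgroup separable (a strengthening of (C.\ref{C.sfssubgroupseparable})), so that one-sidedness, self-intersection, direct self-osculation and inter-osculation of hyperplanes can each be removed in a finite cover.

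I expect the main obstacle to be the first half of the non-positively-curved direction: turning the cohomological chargeless condition into an explicit, finite, crossing-rich, cocompact system of walls. The charge is precisely the obstruction to integrating the local ``fibre-slope'' data across the dual graph of the JSJ decomposition, so producing globally consistent spliced surfaces --- and then controlling their geometry well enough to get a cocompact action --- is the delicate step. A close second difficulty is the final passage from a proper cocompact action on a $\op{CAT}(0)$ cube complex to \emph{virtual} specialness outside the (relatively) hyperbolic world, where separability inputs must do the work of the Malnormal Special Quotient Theorem.
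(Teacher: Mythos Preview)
Your proposal contains a fatal error in the ``non-positively curved $\Rightarrow$ virtually special'' direction: you assert that ``graph-manifold groups are subgroup separable (a strengthening of (C.\ref{C.sfssubgroupseparable}))'', and you rely on this to remove the hyperplane pathologies in a finite cover. This is false. Fundamental groups of graph manifolds with non-trivial JSJ decomposition are in general \emph{not} LERF; see (I.\ref{I.nonlerf}) and the references there (\cite{BKS87,Mat97a,Mat97b,RW98,NW01}). In fact there are finitely generated surface subgroups of graph-manifold groups that are not contained in any proper finite-index subgroup \cite{NW98}. Since your cubulation uses immersed spliced surfaces as walls, the corresponding hyperplane subgroups are exactly the kind of subgroups for which separability can fail, so the Haglund--Wise criterion cannot be verified by the blanket separability argument you propose. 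Liu's actual proof \cite{Liu11} does not proceed this way; it builds on the machinery of \cite{Wis12a} and requires substantial new work precisely because the standard separability and (relative) hyperbolicity inputs are unavailable for graph manifolds.

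For the other direction, your idea of showing that a non-zero Buyalo--Svetlov charge obstructs bi-orderability or a nice $\op{CAT}(0)$ cubical action is plausible in spirit but remains a sketch; you have not isolated which property of RAAG subgroups is actually violated. The paper's remarks after Theorem~\ref{thm:liu11} indicate a cleaner route in the Seifert-fibered case: virtually special implies virtually RFRS (via (G.\ref{G.special=>subgroup of a RAAG}) and (G.\ref{G.RAAGvRFRS})), hence $\pi$ virtually retracts onto cyclic subgroups (G.\ref{G.RFRSretracttoz}); one then checks directly that this forces the geometry to be Euclidean or $\H^2\times\R$. A similar retraction argument underlies Proposition~\ref{prop:nonretract}, and extending it to the full charge condition would give a more concrete version of your first implication.
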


\begin{remarks}

\mbox{}

\bn
\item
 Liu \cite{Liu11}, building on the ideas and results of \cite{Wis12a}, proved the theorem  if  $N$ has a non-trivial JSJ decomposition.  The case that $N$ is a Seifert fibered $3$-manifold
  is well known to the experts and follows `by inspection.'  More precisely, let $N$ be an aspherical Seifert fibered $3$-manifold. If $\pi=\pi_1(N)$ is virtually special, then by the arguments of Section \ref{section:diagram} (see (G.\ref{G.special=>subgroup of a RAAG}), (G.\ref{G.RAAGvRFRS}) and (G.\ref{G.RFRSretracttoz})) it follows that  $\pi$ virtually retracts onto infinite cyclic subgroups.
 This implies easily that $\pi$ is virtually special
 if and only if its underlying geometry is either Euclidean or $\H\times \R$.
 On the other hand it is well known (see, e.g., \cite{Leb95}) that these are precisely the geometries of aspherical Seifert fibered $3$-manifolds which support a non-positively curved metric.
\item By Theorem \ref{thm:leeb}  a graph manifold with non-empty boundary is non-positively curved.
Liu thus showed in particular that fundamental groups of graph manifolds with non-empty boundary are virtually special; this was also obtained by  Przytycki--Wise \cite{PW11}.
\item
There exist closed  graph manifolds with non-trivial JSJ decompositions that are not virtually fibered (see, e.g., \cite[p.~86]{LuW93} and \cite[Theorem~D]{Nemb96}), and hence
by (G.\ref{G.special=>subgroup of a RAAG}), (G.\ref{G.RAAGvRFRS}) and (G.\ref{G.agol08})
are neither virtually special nor non-positively curved (see also \cite{BuK96a,BuK96b}, \cite[Example~4.2]{Leb95} and \cite{BuS05}).
There also exist fibered graph manifolds which are not virtually special; for instance, the fundamental groups of  non-trivial torus bundles are not virtually RFRS by (G.\ref{G.RFRSretracttoz})
and (G.\ref{G. vb_1=infty}) (cf.\ \cite[p.~271]{Ag08});
also see \cite[Section~2.2]{Liu11} and \cite{BuS05} for examples with non-trivial JSJ decomposition which are not torus bundles.
\en
\end{remarks}

Przytycki--Wise \cite[Theorem~1.1]{PW12a},  building on the ideas and results of \cite{Wis12a}, proved the following theorem,
which  complements  the Virtually Compact Special Theorem of Agol, Kahn--Markovic and Wise, and Liu's theorem.

\begin{theorem}\textbf{\emph{(Przytycki--Wise)}}\label{thm:pw12}
Let $N$ be an irreducible $3$-manifold with empty or toroidal boundary which is neither hyperbolic nor a graph manifold.
 Then $\pi_1(N)$ is virtually special.
\end{theorem}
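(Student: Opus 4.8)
The plan is to reduce Theorem~\ref{thm:pw12} to the cases already covered in the excerpt by analyzing the JSJ decomposition of such a manifold $N$. Let $N$ be a compact, orientable, irreducible $3$-manifold with empty or toroidal boundary which is neither hyperbolic nor a graph manifold. By the Geometrization Theorem (Theorem~\ref{thm:geom}), $N$ has a JSJ decomposition whose pieces are each either hyperbolic or Seifert fibered, and since $N$ is not a graph manifold, at least one JSJ component is hyperbolic; since $N$ is not itself hyperbolic, the decomposition is non-trivial (or $N$ is Seifert fibered over a hyperbolic base, but that case is a graph manifold, so in fact the decomposition is genuinely non-trivial and mixed). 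The idea is to glue together virtually special structures on the pieces along the JSJ tori. The hyperbolic pieces have virtually compact special fundamental groups by the Virtually Compact Special Theorem for manifolds with toroidal boundary (Theorem~\ref{thm:wiseboundary}); the Seifert fibered pieces have fundamental groups which are virtually special because they have non-empty boundary, so they are finitely covered by $S^1 \times F$ for a surface $F$ with boundary, whose fundamental group is $\Z \times F_n$ and is evidently special (as a subgroup of a RAAG).

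First I would set up the cubulation: by the work of Sageev and Bergeron--Wise (Theorem~\ref{thm:sbw}, in its relative version) together with the existence of enough quasi-Fuchsian surfaces in the hyperbolic pieces (the bounded analogue of Kahn--Markovic), one obtains that $\pi_1(N)$ acts properly and cocompactly on a CAT(0) cube complex. The group $\pi_1(N)$ is hyperbolic relative to the collection of JSJ-torus subgroups together with the Seifert fibered vertex subgroups (more precisely, one takes a relatively hyperbolic structure reflecting the graph-of-groups decomposition). Then I would invoke the relative version of Agol's theorem (Theorem~\ref{thm:ag12}), in the form proved via the Agol--Groves--Manning appendix and the relatively hyperbolic Malnormal Special Quotient Theorem of Wise, to conclude that $\pi_1(N)$ is virtually special. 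The separability properties needed — in particular that the edge groups $\pi_1(T_i)$ are separable and that $\pi_1(N)$ induces the full profinite topology on them (efficiency, cf.\ (C.\ref{C.efficient})) — are used to promote the virtually special structures on pieces to a virtually special structure on $N$; this is the content of the gluing argument, realized cube-complex-theoretically through wall structures that are compatible across the JSJ tori.

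An alternative, and probably cleaner, route that I would pursue in parallel is the direct hierarchy argument: since the hyperbolic pieces contain embedded quasi-Fuchsian (hence geometrically finite) surfaces that can be arranged to match up along the JSJ tori, $N$ contains an embedded $\pi_1$-injective surface whose complementary pieces are ``simpler'' in a suitable complexity. One would like to say that $\pi_1(N)$ is hyperbolic relative to abelian (and Seifert) subgroups and has a quasi-convex hierarchy terminating in the pieces, and then apply the relatively hyperbolic generalization of Wise's Quasi-Convex Hierarchy Theorem (Theorem~\ref{thm:wise} / Theorem~\ref{thm:wiseboundary}). The Seifert fibered leaves of the hierarchy are handled by the ``by inspection'' argument noted in the remark after Theorem~\ref{thm:liu11}: a Seifert fibered manifold with boundary is finitely covered by $S^1 \times F$ and hence has special $\pi_1$.

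The main obstacle, and the step I expect to require the most care, is the gluing across the JSJ tori: one must ensure that the virtually special (equivalently, cubulated) structures on adjacent pieces are \emph{compatible} along the common torus, i.e.\ that the wall structures induce commensurable subgroups of $\pi_1(T_i)$ of the right form so that a finite cover can be chosen in which the cubulations agree. This is delicate precisely because the Seifert fiber direction behaves differently from the hyperbolic cusp directions, and it is exactly the phenomenon that obstructs virtual fibering and virtual specialness for closed graph manifolds (as the third remark after Theorem~\ref{thm:liu11} illustrates); here it works because every mixed manifold has at least one hyperbolic piece, which supplies enough ``generic'' gluing slopes. Making this quantitative — showing that the finitely many compatibility conditions at all the JSJ tori can be satisfied simultaneously in a common finite cover — is where the argument of Przytycki--Wise does its real work, relying on the separability machinery (AERF, efficiency, and the Malnormal Special Quotient Theorem applied to the peripheral tori). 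I would therefore structure the proof so that the hard analytic input (Kahn--Markovic surfaces, Wise's theorems) is quoted as a black box, and the novel content is the bookkeeping that organizes the pieces of the JSJ decomposition into a single hierarchy or a single equivariant cubulation.
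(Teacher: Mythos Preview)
The paper does not prove this theorem: it is a survey, and Theorem~\ref{thm:pw12} is simply quoted from \cite[Theorem~1.1]{PW12a} as a black box (``building on the ideas and results of \cite{Wis12a}''). There is therefore no proof in the paper to compare your proposal against.

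That said, your outline is broadly in the spirit of what Przytycki--Wise actually do, and you correctly locate the crux in the compatibility of wall structures across the JSJ tori. A few cautions about the specifics of your sketch. First, there is no ``relative version of Agol's theorem'' in the sense you invoke: Theorem~\ref{thm:ag12} is for word-hyperbolic groups acting on compact cube complexes, and $\pi_1(N)$ here is not word-hyperbolic. The Przytycki--Wise argument does not pass through Agol's theorem; it builds the cubulation and verifies specialness more directly via a criterion adapted to the relatively hyperbolic structure of Theorem~\ref{thm:dah03}. Second, the ``bounded analogue of Kahn--Markovic'' you allude to was not available at the time of \cite{PW12a}; the surfaces used to cubulate the hyperbolic pieces come instead from the quasiconvex-hierarchy machinery of \cite{Wis12a} for cusped hyperbolic manifolds (Theorem~\ref{thm:wiseboundary}), together with explicit constructions that control how the surfaces meet the cusps so as to match slopes across JSJ tori. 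Your alternative ``hierarchy'' route is closer to the actual argument than your first route via Sageev/Bergeron--Wise plus Agol.
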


The combination of the Virtually Compact Special Theorem of Agol, Kahn--Markovic and Wise,
the results of Liu and Przytycki--Wise and the theorem of Leeb now
gives us the following succinct and beautiful statement:

\begin{theorem}\label{thm:npcvs}
Let $N$ be a  compact orientable aspherical $3$-manifold $N$ with empty or toroidal boundary.
Then $\pi_1(N)$ is virtually special if and only if $N$ is non-positively curved.
\end{theorem}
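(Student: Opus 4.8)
The plan is to assemble Theorem~\ref{thm:npcvs} from the results collected above, splitting into cases according to the geometric decomposition. First I would dispose of the trivial directions and small exceptional cases: if $N=S^1\times D^2$ or $N=S^1\times S^1\times I$ or $N$ is the twisted $I$-bundle over the Klein bottle, then $\pi_1(N)$ is virtually abelian, hence virtually special (being a subgroup of a RAAG), and $N$ is non-positively curved (it carries a flat or $\mathbb{H}^2\times\mathbb{R}$ metric), so the equivalence holds directly. Having set these aside, I would invoke Theorem~\ref{thm:geom} to write $N$ as a union along incompressible tori of pieces $M_1,\dots,M_r$, each hyperbolic or Seifert fibered.

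Next I would prove the ``$\Leftarrow$'' direction (non-positively curved $\Rightarrow$ virtually special), which is where the main content lies. If $N$ is hyperbolic, this is exactly the Virtually Compact Special Theorem (Theorem~\ref{thm:akmw}), combining the work of Wise, Kahn--Markovic and Agol; note a hyperbolic $N$ is automatically non-positively curved. If $N$ is Seifert fibered and non-positively curved, then as explained in the remark following Theorem~\ref{thm:liu11} its geometry must be Euclidean or $\mathbb{H}^2\times\mathbb{R}$, and one checks by inspection that $\pi_1(N)$ is virtually special (it virtually retracts onto infinite cyclic subgroups); alternatively this is the Seifert case of Liu's Theorem~\ref{thm:liu11}. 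If $N$ is a graph manifold, Liu's Theorem~\ref{thm:liu11} gives precisely the equivalence. Finally, if $N$ has non-trivial JSJ decomposition and is neither hyperbolic nor a graph manifold, Przytycki--Wise's Theorem~\ref{thm:pw12} shows $\pi_1(N)$ is virtually special unconditionally, while Leeb's Theorem~\ref{thm:leeb} shows $N$ is non-positively curved unconditionally (it is not a closed graph manifold), so again both sides hold.

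For the ``$\Rightarrow$'' direction (virtually special $\Rightarrow$ non-positively curved) I would argue as follows. By Leeb's Theorem~\ref{thm:leeb}, the only aspherical $N$ with empty or toroidal boundary that might fail to be non-positively curved are closed graph manifolds and Seifert fibered manifolds; for the latter, non-positive curvature fails exactly when the geometry is $\mathrm{Nil}$, $\mathrm{Sol}$, or $\widetilde{\mathrm{SL}(2,\mathbb{R})}$. In each of those remaining cases I must show $\pi_1(N)$ is \emph{not} virtually special. For $\mathrm{Nil}$ and $\mathrm{Sol}$ this is because a virtually special group is virtually a subgroup of a RAAG, hence virtually RFRS (cf. (G) in the subsequent section and \cite[p.~271]{Ag08}), but torus bundles with nilpotent or Anosov monodromy are not virtually RFRS; alternatively one uses that virtually special groups that are nilpotent or polycyclic-by-finite of the relevant type do not arise, since a virtually special group with polynomial growth would have to be virtually abelian. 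For $\widetilde{\mathrm{SL}(2,\mathbb{R})}$, $\pi_1(N)$ is a non-split central extension of a surface group by $\mathbb{Z}$; such a group is not even residually torsion-free nilpotent in a way compatible with RFRS, and in any case this is the content of the Seifert-fibered part of Liu's theorem. For non-positively-curved-failing closed graph manifolds, Liu's Theorem~\ref{thm:liu11} directly says $\pi_1(N)$ is not virtually special. The main obstacle is organizational rather than deep: one must make sure the RFRS obstruction argument is stated correctly for the $\mathrm{Nil}$/$\mathrm{Sol}$/$\widetilde{\mathrm{SL}(2,\mathbb{R})}$ Seifert cases without circularity, since the cleanest route quotes properties (G.*) that appear later in the paper; the honest fix is to cite Liu's theorem (which covers all Seifert fibered manifolds as well as graph manifolds) together with Theorem~\ref{thm:pw12} and Theorem~\ref{thm:leeb}, so that no independent verification of non-special-ness is needed beyond what those three theorems already supply.
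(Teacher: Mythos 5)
Your proposal is correct and follows essentially the same route as the paper, which deduces Theorem~\ref{thm:npcvs} directly by combining the Virtually Compact Special Theorem (Theorem~\ref{thm:akmw}) for hyperbolic $N$, Liu's Theorem~\ref{thm:liu11} for aspherical graph manifolds (where the full ``if and only if'' lives, including the Seifert fibered and $\Nil$/$\Sol$/$\widetilde{\sl(2,\R)}$ cases you worried about), Przytycki--Wise's Theorem~\ref{thm:pw12} for the mixed case, and Leeb's Theorem~\ref{thm:leeb} to see that every $N$ other than a closed graph manifold is non-positively curved. Your ``honest fix'' at the end is exactly the paper's argument, so the detour through RFRS obstructions for the individual Seifert geometries is not needed.
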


\begin{remarks}

\mbox{}

\bn
\item The connection between $\pi_1(N)$ being virtually special and $N$ being non-positively curved is very indirect.
It is an interesting question whether one can find a more direct connection between these two notions.
\item Note that by the Virtually Compact Special Theorem of Agol, Kahn--Markovic, and Wise,
the fundamental groups of hyperbolic $3$-manifolds are in fact virtually \emph{compact} special.
It is not known whether fundamental groups of non-positively curved irreducible non-hyperbolic $3$-manifolds are also virtually compact special.
\en
\end{remarks}


\subsection{$3$-manifolds with more general boundary}\label{ss: Non-toroidal}
For simplicity of exposition, we have only considered compact $3$-manifolds with empty or toroidal boundary.  However, the virtually special theorems above apply equally well in the case of general boundary, and in this section we give some details.  We emphasize that we make no claim to the originality of any of the results of this section.

The main theorem of \cite{PW12a} also applies in the case with general boundary, and so we have the following addendum to Theorem \ref{thm:npcvs}.

\begin{theorem}\textbf{\emph{(Przytycki--Wise)}}\label{thm: Non-toroidal vs}
Let $N$ be a  compact, orientable, aspherical $3$-manifold $N$ with non-empty boundary.  Then $\pi_1(N)$ is virtually special.
\end{theorem}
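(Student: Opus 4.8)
The plan is to reduce the statement to the already-established cases — the toroidal-boundary case of Theorem~\ref{thm:npcvs} and the closed case provided by Theorem~\ref{thm:akmw} and Theorem~\ref{thm:pw12} — by a doubling argument, using the fact that virtual specialness passes to subgroups. First I would perform the standard reductions. Since $N$ is aspherical with $\partial N\neq\emptyset$, the Sphere Theorem (Theorem~\ref{thm:sphere}) forces $\pi_2(N)=0$, so $N$ is prime; as $N$ is not $S^1\times S^2$ it is irreducible, and we may assume $\partial N$ has no $2$-sphere component (otherwise $N=B^3$ and $\pi_1(N)$ is trivial). If $\partial N$ is compressible, Lemma~\ref{lem:incomp}, whose proof only ever cuts along essential disks, writes $\pi_1(N)$ as a free product $\pi_1(M_1)*\dots*\pi_1(M_k)*F$ in which $F$ is free and each $M_i$ is irreducible, aspherical, and has non-empty incompressible boundary (the only closed factors it can produce are copies of $S^3$, since $N$ is irreducible and has no spherical boundary components). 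As free groups are special and the class of virtually special groups is closed under free products (apply the Kurosh subgroup theorem, together with the fact that a wedge of special cube complexes is special), it is enough to treat each $M_i$. Hence from now on I may assume $N$ itself is irreducible with non-empty incompressible boundary.

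If $\partial N$ consists of tori, I am done at once: $N$ is then an irreducible $3$-manifold with non-empty toroidal boundary, so it is not a closed graph manifold, whence Leeb's Theorem~\ref{thm:leeb} says $N$ is non-positively curved, and Theorem~\ref{thm:npcvs} gives that $\pi_1(N)$ is virtually special. So assume $\partial N$ has a component $\Sigma$ of genus at least $2$. The crucial observation is that Seifert fibered $3$-manifolds have only toroidal (or Klein-bottle) boundary; consequently the component $\Sigma$ lies on a JSJ piece of $N$ that is \emph{not} Seifert fibered, so $N$ is not a graph manifold.

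Now form the double $DN=N\cup_{\partial N}N$. Because $\partial N$ is incompressible and $N$ is aspherical, $DN$ is a closed, orientable, aspherical $3$-manifold, hence (being prime and not $S^1\times S^2$) irreducible; moreover the non-Seifert JSJ piece of $N$ found above — or its double along its geodesic boundary — appears as a JSJ piece of $DN$, so $DN$ is not a graph manifold either. Therefore $DN$ falls into the already-settled closed cases: if $DN$ is hyperbolic, then $\pi_1(DN)$ is virtually (compact) special by Theorem~\ref{thm:akmw}; otherwise $DN$ is irreducible, closed, and neither hyperbolic nor a graph manifold, so $\pi_1(DN)$ is virtually special by the Przytycki--Wise Theorem~\ref{thm:pw12}. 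In either case $\pi_1(DN)$ is virtually special. Finally I transfer this back to $N$: by Lemma~\ref{lem:closed}, $\pi_1(N)$ embeds in $\pi_1(DN)$ (indeed as a retract). Choose a finite-index special subgroup $H\leq\pi_1(DN)$; by Corollary~\ref{cor: HW special}, $H$ embeds in a right-angled Artin group $A$, hence so does the finite-index subgroup $\pi_1(N)\cap H$ of $\pi_1(N)$, and by the other half of Corollary~\ref{cor: HW special} the latter group is special. Thus $\pi_1(N)$ is virtually special.

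The step that requires genuine care — and which I regard as the main obstacle — is the dichotomy for $DN$. Because of Theorem~\ref{thm:npcvs}, closed graph manifolds that are not non-positively curved have fundamental groups that are \emph{not} virtually special, so the whole argument collapses if the doubling construction could land us there; what rescues it is precisely the remark that Seifert fibered pieces carry only toroidal boundary, so that the presence of a higher-genus boundary component of $N$ immediately excludes $N$, and with it $DN$, from being a graph manifold. The secondary — but routine — delicacy is the reduction to incompressible boundary, namely the bookkeeping that Lemma~\ref{lem:incomp} introduces only free group factors and trivial ($S^3$) factors, so that the free-product closure property of virtual specialness does all the remaining work there.
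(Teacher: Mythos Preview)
Your doubling argument has a genuine gap at exactly the point you flag as the main obstacle. The claim that $DN$ cannot be a graph manifold when $N$ has a higher-genus incompressible boundary component is false: take $N=\Sigma\times I$ with $\Sigma$ closed of genus $\ge 2$. Then $\partial N=\Sigma\sqcup\Sigma$ is incompressible, yet $DN=\Sigma\times S^1$ is Seifert fibered, hence a graph manifold. Neither Theorem~\ref{thm:akmw} (which needs $DN$ hyperbolic) nor Theorem~\ref{thm:pw12} (which needs $DN$ not a graph manifold) applies, so your dichotomy collapses in this case.

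The reasoning ``Seifert fibered pieces have only toroidal boundary, so the piece of $N$ containing $\Sigma$ is not Seifert fibered, hence its double contributes a non-Seifert JSJ piece to $DN$'' breaks down because the characteristic-submanifold theory for manifolds with higher-genus incompressible boundary includes $I$-bundles over surfaces as characteristic pieces; these are indeed not Seifert fibered, but their doubles \emph{are}. So a non-Seifert piece of $N$ can double to a Seifert piece of $DN$. To repair the argument you would need either to show that whenever $DN$ is a graph manifold the half $N$ is forced to be an $I$-bundle (so $\pi_1(N)$ is a surface group, handled directly), which requires the classification of separating closed incompressible surfaces of genus $\ge 2$ in graph manifolds, or to prove instead that $DN$ is always non-positively curved and then invoke Theorem~\ref{thm:npcvs} in the closed case. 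Neither step is supplied. The paper itself does not prove this theorem, citing \cite{PW12a}; its remark about ``compressing the boundary and doubling along a suitable subsurface'' hints that the doubling approach works but requires more care than a blanket double along all of $\partial N$.
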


\begin{remark}
Compressing the boundary and doubling along a suitable subsurface, one may also deduce that $\pi_1(N)$ is a subgroup of a RAAG directly from Theorem~\ref{thm:npcvs}.
\end{remark}

Invoking suitably general versions of the torus decomposition (for instance, \cite[Theorem 3.4]{Bon02} or \cite[Theorem 1.9]{Hat}) and Thurston's Geometrization theorem for manifolds with boundary (for instance, \cite[Theorem 1.43]{Kap01}), the proof of Theorem \ref{thm:leeb} applies equally well in this setting, and one obtains the following statement (see \cite{Bek} for further details).

\begin{theorem}\label{thm: Non-empty boundary npc}
The interior of any compact, orientable, aspherical $3$-manifold with non-empty boundary admits a complete, non-positively curved, Riemannian metric.
\end{theorem}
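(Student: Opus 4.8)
The plan is to extend Leeb's proof of Theorem~\ref{thm:leeb} to boundary components of genus $\ge 2$; as in Leeb's argument the decisive point is that a compact irreducible $3$-manifold with non-empty boundary is never a \emph{closed} graph manifold, so the sole obstruction to non-positive curvature occurring in Theorem~\ref{thm:leeb} is vacuous here. I would begin with two reductions. By the Sphere Theorem (Theorem~\ref{thm:sphere}) and the Prime Decomposition Theorem (Theorem~\ref{thm:prime}), an aspherical $3$-manifold is irreducible: asphericity forces $\pi_2(N)=0$, while a non-trivial prime decomposition would force $\pi_2(N)\neq 0$. Since an irreducible $3$-manifold with a spherical boundary component is a ball (whose interior is flat), we may assume $\partial N$ has no $2$-spheres. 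If $\partial N$ is compressible, I would compress along a maximal system of essential disks using the Loop Theorem (Theorem~\ref{thm:loop}); because $N$ is irreducible and aspherical, the resulting $1$-handle description exhibits $N$ as a boundary connected sum of a compact $3$-manifold with incompressible boundary and finitely many handlebodies (including copies of $B^3$ and $S^1\times D^2$). Interiors of handlebodies carry complete flat metrics (genus $\le 1$) or complete hyperbolic metrics of Schottky type (genus $\ge 2$), and boundary connected sum along flat, totally geodesic, product-collared disks preserves the existence of a complete non-positively curved metric on the interior; so it suffices to treat the case that every component of $\partial N$ is incompressible of genus $\ge 1$.

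Assume now $\partial N$ is incompressible. I would apply the characteristic (JSJ) decomposition of a compact irreducible $3$-manifold with incompressible boundary, cutting $N$ along a canonical family of incompressible tori and annuli into pieces that are Seifert fibered, $I$-bundles over compact surfaces, or \emph{simple} (atoroidal and anannular); see, e.g., \cite[Theorem~3.4]{Bon02} and \cite[Theorem~1.9]{Hat}. As $N$ has non-empty boundary it is Haken, so no appeal to Perelman's work is needed: by Thurston's hyperbolization theorem for Haken manifolds with boundary (\cite[Theorem~1.43]{Kap01}), each simple piece is hyperbolic, its toroidal boundary components becoming cusps and any higher-genus boundary components becoming totally geodesic boundary.

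Next I would put on each piece a complete non-positively curved Riemannian metric on its interior which, near each JSJ torus or annulus $F$, is a Riemannian product $F\times(\text{interval})$ with $F$ flat. On a simple piece one takes the complete hyperbolic metric and attaches funnels (half-pipes) to the geodesic boundary, obtaining constant curvature $-1$ on the interior. On the Seifert fibered pieces (which have only toroidal boundary) and the $I$-bundle pieces one uses the Leeb-type local models --- Euclidean, $\H^2\times\R$, and (twisted) products over surfaces with boundary --- which are again complete and non-positively curved on the interior, with any higher-genus component of $\partial N$ lying in an $I$-bundle piece absorbed into a product end. Finally, gluing the pieces back along the JSJ tori and annuli is carried out via Gromov's gluing theorem for non-positively curved Riemannian manifolds glued along isometric, totally geodesic, product-collared boundary portions (\cite[Theorem~II.11.1]{BrH99}), after rescaling the flat structures on the two sides of each cutting surface so that they match, exactly as in Leeb's proof.

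I expect the genuinely delicate part to be this last matching step in the presence of the JSJ \emph{annuli}. The boundary circles of a JSJ annulus lie on $\partial N$, along which the metric is funnelling off to infinity, so the flat model relevant near such an annulus is an infinite flat cylinder $S^1\times\R$ whose two ends must be fitted compatibly to the funnels of the two adjacent higher-genus boundary regions; moreover one must control this simultaneously at the ``corners'' where JSJ annuli abut JSJ tori and the ambient boundary. Carrying out Leeb's moduli-matching argument under these completeness constraints --- as opposed to the hyperbolization and gluing steps, which are routine --- is where the real work lies; the details are in \cite{Bek}.
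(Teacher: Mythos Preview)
Your proposal is correct and follows the same route the paper sketches: use the general characteristic (torus--annulus) decomposition for irreducible manifolds with incompressible boundary (as in \cite[Theorem~3.4]{Bon02}, \cite[Theorem~1.9]{Hat}), apply Thurston's hyperbolization for Haken pieces with boundary (as in \cite[Theorem~1.43]{Kap01}), and then run Leeb's gluing argument, deferring the fine points to \cite{Bek}. The paper's own justification is a single sentence pointing to exactly these references; you have filled in the outline with the appropriate reductions and correctly flagged the annulus-matching step as the place where real care is needed.
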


\begin{remark}
Bridson \cite[Theorem~4.3]{Brd01} proved that the interior of a compact, orientable, aspherical $3$-manifold $N$ with non-empty boundary admits an \emph{incomplete}, non-positively curved, Riemannian metric that extends to a non-positively curved (ie locally CAT(0)) metric on the whole of $N$.
\end{remark}

Combining Theorems \ref{thm: Non-toroidal vs} and \ref{thm: Non-empty boundary npc}, the hypotheses on the boundary in Theorem \ref{thm:npcvs} can be removed.  For completeness, we state the most general result here.

\begin{theorem}\textbf{\emph{(Agol, Liu, Przytycki, Wise)}}\label{thm: Any vs}
Let $N$ be a  compact, orientable, aspherical $3$-manifold with possibly empty boundary.  Then $\pi_1(N)$ is virtually special if and only if $N$ is non-positively curved.
\end{theorem}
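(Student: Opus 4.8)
\textbf{Proof proposal for Theorem~\ref{thm: Any vs}.}

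The plan is to derive this statement by assembling the three virtually-special theorems already available, and then handling the non-positive curvature direction separately by combining the geometric decomposition with Leeb's theorem and its boundary analogue. First I would dispose of the case where $N$ has non-empty boundary: then by Theorem~\ref{thm: Non-toroidal vs} (Przytycki--Wise), $\pi_1(N)$ is virtually special, and by Theorem~\ref{thm: Non-empty boundary npc} the interior of $N$ admits a complete non-positively curved Riemannian metric, so both sides of the biconditional hold and there is nothing more to prove. So for the remainder one may assume $N$ is \emph{closed}. Since $N$ is aspherical it is in particular irreducible with infinite fundamental group, and I would invoke the Geometrization Theorem (Theorem~\ref{thm:geom}): $N$ is hyperbolic, Seifert fibered (necessarily of aspherical type, i.e.\ with one of the geometries $\H^3$, $\mathbb{E}^3$, $\Nil$, $\Sol$, $\H^2\times\R$, $\widetilde{\sl(2,\R)}$), or has a non-trivial JSJ decomposition.

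Next I would treat the forward direction ($N$ non-positively curved $\Rightarrow$ $\pi_1(N)$ virtually special) case by case. If $N$ is hyperbolic, Theorem~\ref{thm:akmw} (the Virtually Compact Special Theorem of Agol, Kahn--Markovic and Wise) gives that $\pi_1(N)$ is virtually compact special, hence virtually special. If $N$ is a graph manifold, Theorem~\ref{thm:liu11} (Liu) says precisely that $\pi_1(N)$ is virtually special if and only if $N$ is non-positively curved, so the hypothesis delivers the conclusion. If $N$ has a non-trivial JSJ decomposition but is not a graph manifold---equivalently has at least one hyperbolic JSJ piece---then Theorem~\ref{thm:pw12} (Przytycki--Wise) directly gives that $\pi_1(N)$ is virtually special, with no curvature hypothesis needed. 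The only remaining closed cases are the Seifert fibered ones with no incompressible torus; here, as in Remark~(1) after Theorem~\ref{thm:liu11}, one checks ``by inspection'' that $\pi_1(N)$ is virtually special exactly when the geometry is $\mathbb{E}^3$ or $\H^2\times\R$, which are exactly the aspherical Seifert geometries admitting a non-positively curved metric, so the biconditional holds in this case too.

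For the reverse direction ($\pi_1(N)$ virtually special $\Rightarrow$ $N$ non-positively curved), again split on the Geometrization trichotomy. If $N$ is hyperbolic it is tautologically non-positively curved. If $N$ is not a closed graph manifold, Theorem~\ref{thm:leeb} (Leeb) gives that $N$ is non-positively curved unconditionally; this covers the hyperbolic case, the non-trivial-JSJ-with-a-hyperbolic-piece case, and the aspherical Seifert cases that are not graph manifolds. The genuinely delicate case---and the one I expect to be the main obstacle---is that of closed graph manifolds (including $\Sol$-manifolds and non-trivial torus bundles, which are ruled out as RFRS and hence not virtually special by the observations in Remark~(3) after Theorem~\ref{thm:liu11}): here one must appeal to the \emph{full strength} of Theorem~\ref{thm:liu11}, whose contrapositive says that if the closed graph manifold $N$ fails to be non-positively curved then $\pi_1(N)$ is not virtually special. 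I would cite Buyalo--Svetlov \cite{BuS05} for the characterization of which closed graph manifolds carry a non-positively curved metric, so that the statement ``$\pi_1(N)$ virtually special'' is genuinely equivalent to a checkable geometric condition rather than being vacuous. Stitching the two directions across all cases of the Geometrization trichotomy then yields the theorem; the bookkeeping is routine, and the only place real content enters beyond the already-quoted theorems is the ``by inspection'' verification for the low-geometry Seifert pieces, which I would carry out by exhibiting, in the Euclidean and $\H^2\times\R$ cases, a finite cover that is $S^1$-bundle over a surface splitting as a product (hence a non-positively curved cube complex), and in the remaining aspherical Seifert geometries ($\Nil$, $\widetilde{\sl(2,\R)}$) noting that the Seifert fiber subgroup obstructs the virtual retractions onto $\Z$ that a virtually special group would have to possess.
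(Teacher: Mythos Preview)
Your proposal is correct and follows essentially the same approach as the paper: the paper's proof is the one-line observation that combining Theorems~\ref{thm: Non-toroidal vs} and~\ref{thm: Non-empty boundary npc} removes the boundary hypothesis from Theorem~\ref{thm:npcvs}, and you have simply unpacked the ingredients of Theorem~\ref{thm:npcvs} (Agol--Kahn--Markovic--Wise for hyperbolic, Liu for graph manifolds, Przytycki--Wise for mixed, Leeb for the converse) explicitly. One minor redundancy: your separate treatment of the ``remaining'' closed Seifert fibered cases is already subsumed in Liu's theorem as stated (Theorem~\ref{thm:liu11} covers all aspherical graph manifolds, with the Seifert case handled ``by inspection'' per Remark~(1) following it), so that paragraph can be absorbed into the graph-manifold case.
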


Next, we turn to the case of a compact, hyperbolic $3$-manifold with at least one higher-genus boundary component.  Appealing to the theory of Kleinian groups,  the (implicit) hypotheses of Theorem \ref{thm:akmw} can be relaxed.

We start with a classical result from the theory of Kleinian groups \cite[Theorem 11.1]{Cay08}.

\begin{theorem}\label{thm: All gf}
If $N$ is compact, hyperbolic $3$-manifold $N$ with at least one higher-genus boundary component, then $\pi_1(N)$ admits a geometrically finite representation as a Kleinian group in which only the fundamental groups of toroidal boundary components are parabolic.
\end{theorem}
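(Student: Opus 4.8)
The plan is to take the given (arbitrary) complete hyperbolic metric on $\mathrm{int}(N)$ and replace it by a geometrically finite one whose parabolic locus is exactly the toroidal boundary components, using Thurston's hyperbolization theorem for Haken manifolds with boundary together with the classical Klein--Maskit combination theorems for Kleinian groups.

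First I would extract the standing consequences of the hypothesis. Since $\mathrm{int}(N)$ carries a complete hyperbolic metric, $N$ is orientable, and it is irreducible: a reducible $3$-manifold has non-trivial $\pi_2$, whereas $\mathrm{int}(N)$ is aspherical by the Sphere Theorem (Theorem~\ref{thm:sphere}) and (C.\ref{bkpi1}). Also $\pi_1(N)$ is infinite and torsion-free, and $N$ is atoroidal: by Theorem~\ref{thm:hypatoroidal} every non-cyclic abelian subgroup of $\pi_1(N)$ is conjugate into the fundamental group of a boundary torus, and, as in the proof of Theorem~\ref{thm:centgen}, any such $\mathbb{Z}^2$ is realized by parabolics in any discrete faithful representation (it cannot be elliptic, being torsion-free, nor loxodromic-generated). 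This last point already shows that in any geometrically finite representation we produce, the toroidal boundary subgroups are forced to be parabolic; the real content is to produce one in which nothing else is, i.e.\ a \emph{minimally parabolic} geometrically finite uniformization.

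Next I would reduce to incompressible boundary. If some higher-genus component of $\partial N$ is compressible, the Loop Theorem (Theorem~\ref{thm:loop}) provides a compressing disk; compressing along it, and capping any resulting spheres with balls (legitimate since $N$ is irreducible), exhibits $N$ as the result of attaching a $1$-handle to a simpler irreducible $3$-manifold. Iterating, $N$ is obtained by attaching finitely many $1$-handles to a compact $3$-manifold $N_0$ each of whose components is either a handlebody or is irreducible with incompressible, non-spherical boundary. Each handlebody component carries a Schottky uniformization, which is geometrically finite with no parabolics. Each remaining component $N_0'$ is Haken, and inherits atoroidality and irreducibility from $N$; moreover its boundary is a (possibly empty) union of tori and higher-genus surfaces. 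To such an $N_0'$ I would apply Thurston's hyperbolization theorem, in the form that yields not merely a complete hyperbolic metric but a geometrically finite one whose parabolic locus is precisely the toroidal boundary components: this is Theorem~\ref{thm:hyp} when $\partial N_0'$ consists only of tori, and its extension to higher-genus boundary (see \cite{Mor84,Ot98,Kap01}) in general. This extension is exactly the step where the characteristic submanifold is used: one cuts $N_0'$ along the essential annuli of its characteristic submanifold, uniformizes the $I$-bundle pieces as quasi-Fuchsian groups (or as thin handlebodies) and the acylindrical pieces directly, and glues the resulting geometrically finite structures. Finally I would reattach the $1$-handles one at a time. Each reattachment realizes the new fundamental group as a free product of the previous one with $\mathbb{Z}$, or of two previously-constructed groups; this is produced by a first Klein--Maskit combination along the trivial subgroup, using a loxodromic element with fixed points in a precisely invariant round disc in the domain of discontinuity (equivalently, a ``thin'' handle). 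Such combinations preserve geometric finiteness and introduce no new parabolics, and since all the handle attachments take place along discs in higher-genus boundary rather than along tori, the parabolic subgroups at the end are exactly the conjugates of the fundamental groups of the toroidal boundary components of $N$. This produces the desired geometrically finite Kleinian group $\Gamma\cong\pi_1(N)$.

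The main obstacle is the input from Thurston's hyperbolization theorem for Haken manifolds with higher-genus boundary, together with the fact that it can be arranged to be minimally parabolic and geometrically finite. This is genuinely more than what the excerpt records in Theorem~\ref{thm:hyp} (stated only for empty or toroidal boundary), and it rests on the full strength of Thurston's proof --- in particular on the characteristic-submanifold/``window'' machinery needed to handle essential annuli, which is why $N$ need not be assumed acylindrical. The remaining points are bookkeeping: organizing the boundary compressions so as not to lose track of which boundary components are toroidal, verifying that the Klein--Maskit combination theorems apply at each stage with the required control on parabolics (for which one uses at each step that the only parabolic subgroups present are the boundary-torus subgroups), and checking that the resulting structure is honestly geometrically finite rather than merely having finitely generated fundamental group --- the latter being automatic here once minimal parabolicity is known, but worth confirming via the Tameness Theorem and the standard characterization of geometric finiteness for tame hyperbolic $3$-manifolds.
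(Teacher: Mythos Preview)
The paper does not prove this statement: it is quoted as a classical result from the theory of Kleinian groups, with a bare citation to \cite[Theorem~11.1]{Cay08}. So there is no ``paper's own proof'' to compare against.

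Your outline is a correct reconstruction of one standard route to the result: compress to reduce to pieces with incompressible boundary, uniformize handlebody pieces as Schottky groups and the remaining Haken atoroidal pieces via Thurston's hyperbolization for pared manifolds, then reassemble by Klein--Maskit combination along trivial subgroups. The places you flag as ``bookkeeping'' are genuine but routine; the one that deserves a sentence more is the inheritance of atoroidality by the compressed pieces $N_0'$ --- this follows because each $\pi_1(N_0')$ is a free factor of $\pi_1(N)$, so by the Kurosh subgroup theorem any $\mathbb{Z}^2$ in $\pi_1(N_0')$ is already conjugate \emph{within} $\pi_1(N_0')$ into a boundary-torus subgroup. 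A shorter path, and the one implicitly invoked by the citation, is to observe directly that $(N,P)$, with $P$ the union of toroidal boundary components, is a pared $3$-manifold in Thurston's sense, and then apply Thurston's hyperbolization theorem for pared manifolds as stated in \cite{Mor84} or \cite[Theorem~1.43]{Kap01}; this version already allows compressible higher-genus boundary and delivers a geometrically finite structure with parabolic locus exactly $P$, making the compression-and-recombination step unnecessary.
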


We can now apply a theorem of Brooks \cite[Theorem 2]{Brk86}
to deduce that $\pi_1(N)$ can be embedded in the fundamental group of a hyperbolic $3$-manifold of finite volume.

\begin{theorem}\label{thm:Brooks}
If $N$ is a compact, hyperbolic $3$-manifold $N$ with at least one higher-genus boundary component then there is a hyperbolic $3$-manifold $M$ of finite volume such that $\pi_1(N)$ embeds into $\pi_1(M)$ as a geometrically finite subgroup and only the fundamental groups of toroidal boundary components are parabolic.
\end{theorem}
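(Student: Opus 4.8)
The plan is to deduce this from the Geometrization theorem for manifolds with boundary together with the structure theory of geometrically finite Kleinian groups, exactly in the spirit of the classical arguments developed by Marden, Thurston, and Morgan. First I would recall that, by Theorem~\ref{thm: All gf} (a standard consequence of Marden's work and Morgan's uniformization of pared manifolds, as recorded in \cite[Theorem~11.1]{Cay08}), the interior of $N$ carries a geometrically finite hyperbolic structure in which the fundamental groups of the toroidal boundary components, and only those, act parabolically. Call the resulting hyperbolic manifold $\hat N$, a complete infinite-volume hyperbolic manifold of the form $\mathbb{H}^3/\Gamma$ with $\Gamma\cong\pi_1(N)$ a geometrically finite Kleinian group whose cusps are in bijection with the toroidal boundary components of $N$ and whose conformal boundary is the union of the higher-genus boundary components.

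Next I would invoke Brooks' theorem \cite[Theorem~2]{Brk86}, which allows one to modify a geometrically finite Kleinian group by gluing in finite-volume pieces along the higher-genus ends: given such a $\Gamma$, there exists a finite-volume Kleinian group $\Gamma'\cong\pi_1(M)$ containing $\Gamma$ as a geometrically finite subgroup, and moreover the parabolic subgroups of $\Gamma$ remain parabolic in $\Gamma'$ (the only parabolics of $\Gamma$ being the peripheral $\mathbb{Z}^2$'s associated to the toroidal boundary tori). Thus $M$ is a finite-volume hyperbolic $3$-manifold, i.e. hyperbolic in the sense of our convention, and $\pi_1(N)\hookrightarrow\pi_1(M)$ is a geometrically finite embedding with only the toroidal peripheral subgroups parabolic. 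This is precisely the conclusion of Theorem~\ref{thm:Brooks}. I would spell out that `geometrically finite subgroup of $\pi_1(M)$' makes sense because $\pi_1(M)$ is identified with a discrete subgroup of $\psl(2,\mathbb{C})$ well defined up to conjugacy, as in (C.\ref{C.sl2c}), so the notion of a geometrically finite Kleinian subgroup is unambiguous.

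The step I expect to be the main obstacle — or at least the one requiring the most care to cite correctly — is verifying that Brooks' construction preserves the parabolicity/non-parabolicity dichotomy exactly as stated: one must ensure that no accidental parabolics are introduced and that the cusps of $N$ are not filled in when passing to the finite-volume $M$. Brooks works by attaching geometrically finite pieces along the higher-genus conformal boundary components only, leaving the cusped ends untouched, so the bijection between toroidal boundary components of $N$ and the corresponding parabolic peripheral subgroups survives; but this requires one to quote the precise form of his theorem (the relative version, keeping a distinguished cusped part fixed) rather than the bare statement that a geometrically finite group embeds in a lattice. Once that is in place the theorem follows immediately, since the composite $\pi_1(N)\leq\pi_1(M)$ is geometrically finite by Brooks and the peripheral structure is as claimed. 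Finally I would remark that combining this with the Virtually Compact Special Theorem (Theorem~\ref{thm:akmw}), applied to the finite-volume manifold $M$, and with the fact that virtual compact specialness and separability properties pass to geometrically finite (hence quasi-convex, by Proposition~\ref{prop:hypwordhyp} in the closed case or its relatively hyperbolic analogue) subgroups, yields that $\pi_1(N)$ is itself virtually compact special — which is the payoff advertised in Remark~3 following Theorem~\ref{thm:akmw}.
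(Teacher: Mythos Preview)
Your approach is exactly the paper's: the paper does not give a separate proof of Theorem~\ref{thm:Brooks} but simply states it as the result of applying Brooks' theorem \cite[Theorem~2]{Brk86} to the geometrically finite representation furnished by Theorem~\ref{thm: All gf}. Your final paragraph about deducing virtual compact specialness is extraneous here---that is the content of the \emph{next} theorem in the paper (Theorem~\ref{thm: Non-toroidal vcs}), not of Theorem~\ref{thm:Brooks} itself.
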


It now follows that $\pi_1(N)$ is virtually compact special \cite[Corollary~14.33]{Wis12a}.

\begin{theorem}\textbf{\emph{(Wise)}}\label{thm: Non-toroidal vcs}
Let $N$ be a compact, hyperbolic $3$-manifold with at least one higher genus boundary component. Then $\pi_1(N)$ is virtually compact special.
\end{theorem}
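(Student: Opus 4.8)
The plan is to reduce the statement to Wise's theorem for finite-volume hyperbolic $3$-manifolds (Theorem~\ref{thm:wiseboundary}) by embedding $\pi_1(N)$ into a finite-volume hyperbolic $3$-manifold group as a sufficiently well-behaved subgroup, and then invoking a subgroup-inheritance result for virtual compact specialness. First I would appeal to Theorem~\ref{thm:Brooks}: since $N$ has at least one higher-genus boundary component, there is a finite-volume hyperbolic $3$-manifold $M$ and an embedding $\pi_1(N)\hookrightarrow\pi_1(M)$ realizing $\pi_1(N)$ as a geometrically finite subgroup in which only the fundamental groups of the toroidal boundary components of $N$ are parabolic. (Theorem~\ref{thm:Brooks} in turn rests on Theorem~\ref{thm: All gf} from the theory of Kleinian groups, which produces a geometrically finite representation with parabolicity confined to the toroidal cusps, together with Brooks's doubling-type construction \cite{Brk86} that fills in the higher-genus ends to obtain a finite-volume ambient manifold.)

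Next, Theorem~\ref{thm:wiseboundary} (Wise) tells us that $\pi_1(M)$ is virtually compact special, since $M$ is a non-closed hyperbolic $3$-manifold of finite volume. It remains to transfer this property from $\pi_1(M)$ to the geometrically finite subgroup $\pi_1(N)$. Because $M$ is cusped, $\pi_1(M)$ is not word-hyperbolic, so one cannot simply quote quasi-convexity in a hyperbolic group; instead one works in the relatively hyperbolic setting, viewing $\pi_1(M)$ as hyperbolic relative to its cusp subgroups. Under the dictionary between geometric finiteness of a Kleinian group and relative quasi-convexity, the subgroup $\pi_1(N)$ is relatively quasi-convex in $\pi_1(M)$, with peripheral structure given precisely by the $\Z^2$'s coming from the toroidal boundary tori of $N$ (these being the only parabolic subgroups, by the conclusion of Theorem~\ref{thm:Brooks}). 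The inheritance statement to invoke is Wise's \cite[Corollary~14.33]{Wis12a}: a relatively quasi-convex subgroup of a virtually compact special group that is hyperbolic relative to virtually compact special peripheral subgroups is itself virtually compact special. Since the peripheral subgroups in question are free abelian, hence trivially virtually compact special, applying this to $\pi_1(N)\leq\pi_1(M)$ yields the conclusion.

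The main point requiring care — the only real obstacle, the rest being a formal assembly of cited results — is the last step: verifying that the geometric finiteness of the Kleinian representation of $\pi_1(N)$ supplied by Theorem~\ref{thm:Brooks} is exactly the relative quasi-convexity hypothesis needed to feed into \cite[Corollary~14.33]{Wis12a}, and that the peripheral data match up so that every parabolic subgroup of $\pi_1(N)$ is accounted for by the virtually-compact-special hypothesis on peripheral subgroups. Once this translation between the topological/Kleinian language and the relatively hyperbolic group-theoretic language is in place, the theorem follows immediately from Theorems~\ref{thm:Brooks} and~\ref{thm:wiseboundary} together with Wise's corollary.
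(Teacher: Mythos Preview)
Your proposal is correct and follows the paper's approach: embed via Theorem~\ref{thm:Brooks}, apply Theorem~\ref{thm:wiseboundary}, and pass virtual compact specialness down to the relatively quasiconvex subgroup $\pi_1(N)$. The only difference is in the final inheritance step: the paper makes explicit that $\pi_1(N)$ is \emph{fully} relatively quasiconvex (since only the toroidal boundary components are parabolic, $\pi_1(N)$ contains any cusp subgroup of $\pi_1(M)$ that it meets non-trivially) and then invokes \cite[Proposition~5.5]{CDW12} or \cite[Theorem~1.1]{SaW12} rather than \cite[Corollary~14.33]{Wis12a}---this is exactly the ``peripheral data match up'' issue you flag in your last paragraph, just given its standard name.
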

\begin{proof}
Let $M$ be as in Theorem \ref{thm:Brooks}. By Theorem \ref{thm:wiseboundary}, $\pi_1(M)$ is virtually compact special.  We will now argue that $\pi_1(N)$ is virtually compact special as well.  Indeed, $\pi_1(N)$ is a geometrically finite, and hence relatively quasiconvex, subgroup of $\pi_1(M)$ (K.\ref{K.qc}).  As $\pi_1(N)$ contains any cusp subgroup that it intersects non-trivially, it is in fact a \emph{fully} relatively quasiconvex subgroup of $\pi_1(M)$, and is therefore virtually compact special by \cite[Proposition 5.5]{CDW12} or \cite[Theorem 1.1]{SaW12}.
\end{proof}

We now summarize  some properties of $3$-manifolds with general boundary, which are a consequence of
Theorem~\ref{thm: Non-toroidal vs} and the discussion in Section~\ref{section:diagram}.

\begin{corollary}\label{cor: Non-toroidal properties}
Let $N$ be a compact, orientable, aspherical
$3$-manifold with non-empty boundary. Then
\begin{itemize}
\item[(1)] $\pi_1(N)$ is linear over $\mathbb{Z}$;
\item[(2)] $\pi_1(N)$ is RFRS; and
\item[(3)] if $N$ is hyperbolic, then $\pi_1(N)$ is LERF.
\end{itemize}
\end{corollary}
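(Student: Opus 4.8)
The plan is to deduce all three statements from the single fact that $\pi:=\pi_1(N)$ is a subgroup of a right-angled Artin group. First I would invoke Theorem~\ref{thm: Non-toroidal vs} (Przytycki--Wise), which says $\pi$ is virtually special; as explained in the remark following that theorem — compress the compressible boundary components and double along a suitable subsurface, then apply Theorem~\ref{thm:npcvs} — this can be upgraded to the statement that $\pi$ itself embeds into a right-angled Artin group $A$ (equivalently, $\pi$ is special, by Corollary~\ref{cor: HW special}). With this in hand, (1) and (2) become purely formal, and only (3) will require additional geometric input.

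For (1) I would use that every right-angled Artin group is linear over $\mathbb{Z}$ — for instance via the embedding of $A$ into a right-angled Coxeter group, which carries a faithful integral reflection representation, or via Humphries's explicit integral representations — and then note that linearity over $\mathbb{Z}$ passes to subgroups, so the inclusion $\pi\hookrightarrow A$ finishes the argument. (Alternatively, $\pi$ is \emph{virtually} linear over $\mathbb{Z}$ by Corollary~\ref{cor: HW special}, and one promotes this to $\pi$ by the induced-representation argument recalled in Section~\ref{section:diagram}.) For (2) I would invoke Agol's theorem that every right-angled Artin group is RFRS (which follows from RAAGs being residually torsion-free nilpotent), together with the fact that RFRS is inherited by subgroups: if $G$ is RFRS via a cofinal chain $\{G_i\}$ of finite-index normal subgroups with each $G_i/G_{i+1}$ torsion-free abelian, then for $H\leq G$ the chain $\{H\cap G_i\}$ works, since $H\cap G_i/H\cap G_{i+1}$ embeds in $G_i/G_{i+1}$ (hence is torsion-free abelian) and every torsion-free abelian quotient of a group factors through its maximal torsion-free abelian quotient. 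Applying this to $\pi\hookrightarrow A$ gives that $\pi$ is RFRS.

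For (3), suppose $N$ is hyperbolic (allowing totally geodesic boundary). If $\partial N$ is toroidal, then $N$ has finite volume and $\pi$ is LERF by Corollary~\ref{cor:akmw}(1). If $\partial N$ has a component of genus at least two, I would use Theorem~\ref{thm:Brooks} to embed $\pi$ into $\Gamma=\pi_1(M)$ for some finite-volume hyperbolic $3$-manifold $M$, as a geometrically finite — indeed fully relatively quasi-convex — subgroup. Since $\Gamma$ is virtually compact special (Theorem~\ref{thm:wiseboundary}), the relatively hyperbolic form of the Haglund--Wise canonical completion and retraction — exactly the tool used in the proof of Theorem~\ref{thm: Non-toroidal vcs} — shows that $\Gamma$ virtually retracts onto $\pi$: there are a finite-index subgroup $\Gamma_0\leq\Gamma$ with $\pi\leq\Gamma_0$ and a retraction $\rho\colon\Gamma_0\to\pi$. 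Then $\Gamma_0$ is LERF, being of finite index in the LERF group $\Gamma$ (Corollary~\ref{cor:akmw}(1)); and a retract of a LERF group is LERF, because for a retraction $\rho\colon G\to H$ the profinite topology on $H$ coincides with the subspace topology from $G$ (any finite-index $H'\leq H$ pulls back to a finite-index $\rho^{-1}(H')\leq G$ with $\rho^{-1}(H')\cap H=H'$), so any finitely generated subgroup of $H$, being separable in $G$, is separable in $H$. Hence $\pi$ is LERF.

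I expect the main obstacle to be part (3). The reductions above are formal, but they rest on genuinely deep inputs: that finite-volume hyperbolic $3$-manifold groups are LERF (Corollary~\ref{cor:akmw}), that $\pi$ and $\Gamma$ are virtually compact special in the bounded case (Theorems~\ref{thm: Non-toroidal vcs} and~\ref{thm:wiseboundary}), and the relatively hyperbolic version of the retraction theorem for virtually special groups. In particular one must check carefully that Theorem~\ref{thm:Brooks} supplies a \emph{fully} relatively quasi-convex embedding, so that this machinery applies — which is precisely the point addressed in the proof of Theorem~\ref{thm: Non-toroidal vcs}.
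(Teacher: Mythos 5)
Your proposal is correct, and for parts (1) and (2) it is essentially the paper's own argument: invoke Theorem~\ref{thm: Non-toroidal vs} (together with the remark following it, which upgrades ``virtually special'' to ``$\pi_1(N)$ embeds in a RAAG'') and then quote (G.\ref{G.special=>subgroup of a RAAG}), (G.\ref{G.24}) and (G.\ref{G.RAAGvRFRS}). One small caution: your parenthetical that RFRS for RAAGs ``follows from RAAGs being residually torsion-free nilpotent'' is not a valid deduction --- residual torsion-free nilpotence gives no cofinal chain of \emph{finite-index} subgroups with the required factorization property, so it does not imply RFRS. The correct citation is simply Agol's result as recorded in (G.\ref{G.RAAGvRFRS}) (and note that to get RFRS on the nose, rather than virtually RFRS, you do need the honest embedding $\pi\hookrightarrow A_\Sigma$ together with the fact that RAAGs themselves are RFRS, exactly as you set it up).

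For part (3) your argument works but is a substantial detour. The paper's proof is two lines: by Theorem~\ref{thm:Brooks}, $\pi_1(N)$ embeds in $\pi_1(M)$ for a finite-volume hyperbolic $M$; $\pi_1(M)$ is LERF by (G.\ref{G.tameness}); and LERF passes to \emph{arbitrary} subgroups (a finitely generated $K\leq H\leq G$ is separable in $G$, and intersecting the separating finite-index subgroup with $H$ separates $K$ in $H$). No virtual retraction, no canonical completion, and no full relative quasi-convexity are needed --- those ingredients are what the paper uses to prove the strictly stronger statement that $\pi_1(N)$ is virtually \emph{compact} special (Theorem~\ref{thm: Non-toroidal vcs}), which is where your worry about Theorem~\ref{thm:Brooks} supplying a \emph{fully} relatively quasi-convex embedding genuinely matters. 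For LERF alone, the bare embedding suffices, so the ``main obstacle'' you identify dissolves.
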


\begin{proof}
It follows from Theorem \ref{thm: Non-toroidal vs} that $\pi_1(N)$ is virtually special. Linearity over $\mathbb{Z}$ and RFRS now both follow from Theorem \ref{thm: Non-toroidal vs}: see (G.\ref{G.special=>subgroup of a RAAG}), (G.\ref{G.RAAGvRFRS}) and (G.\ref{G.24}) in Section~\ref{section:diagram} for details.

If $N$ is hyperbolic, then by Theorem \ref{thm:Brooks}, $\pi_1(N)$ is a subgroup of $\pi_1(M)$, where $M$ is a hyperbolic $3$-manifold of finite volume.  Because $\pi_1(M)$ is LERF (G.\ref{G.tameness}), it follows that $\pi_1(N)$ is also LERF.
\end{proof}

\subsection{Summary of previous research on the virtual conjectures}\label{section:history}

Questions~15--18 of Thurston, stated in Section~\ref{section:vcsthm} above, have been a central area of research in $3$-manifold topology over the last 30~years.  The study of these questions lead to various other questions and conjectures. Perhaps the most important of these is the Lubotzky--Sarnak Conjecture (see \cite[Conjecture~4.2]{Lub96a}) that there is no  closed hyperbolic $3$-manifold~$N$ such that $\pi_1(N)$ has  Property~($\tau$).
(We refer to \cite[Definition~4.3.1]{Lub94} and \cite{LuZ03} for the definition of Property~($\tau$).) \index{conjectures!Lubotzky--Sarnak Conjecture}
\index{Property ($\tau$)}

%
%
%

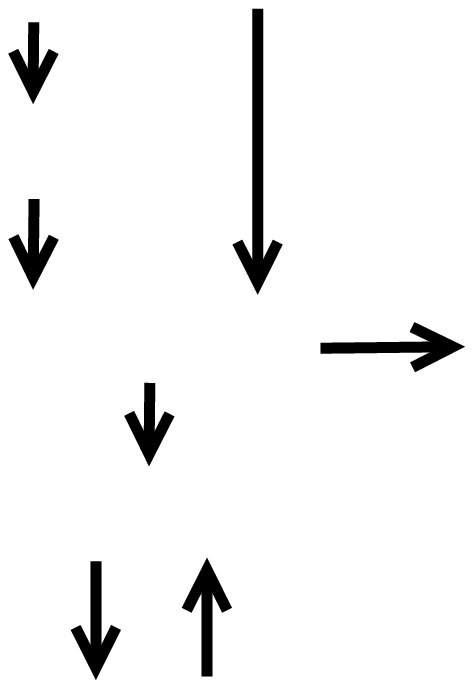

In Diagram~3 we list various (virtual) properties of $3$-manifold groups and logical implications between them.
Some of the implications are obvious, and two implications follow from
(C.\ref{C.homlarge}) and (C.\ref{C.haken}).
Also note that if a $3$-manifold~$N$ contains a surface group, then it admits a $\pi_1$-injective map $\pi_1(\Sigma)\to \pi_1(N)$
of a closed surface $\Sigma$ with genus at least one. If $\pi_1(N)$ is furthermore LERF, then there exists a finite cover of $N$ such that the immersion lifts to an embedding (see \cite[Lemma~1.4]{Sco78} for details).
Finally note that if $vb_1(N;\Z)\geq 1$, then by \cite[p.~444]{Lub96a} the group $\pi_1(N)$ does not have Property~($\tau$).

We will now survey some of the work in the past on Thurston's questions and the properties of Diagram~3.  The literature is so extensive that we cannot hope to achieve completeness.  Beyond the summary below we also refer to the survey papers by Long--Reid \cite{LoR05} and Lackenby \cite{Lac11} for further details and references.

We arrange this survey by grouping references under the question that they address.

\begin{question}\label{qu: Surface subgroups}\textbf{\emph{(Surface Subgroup Conjecture)}}
Let $N$ be a closed hyperbolic $3$-manifold.
Does $\pi_1(N)$ contain a \textup{(}quasi-Fuchsian\textup{)} surface group? \index{conjectures!Surface Subgroup Conjecture}
 \end{question}

The following papers attack Question~\ref{qu: Surface subgroups}.

\bn
\item  Cooper--Long--Reid  \cite[Theorem~1.5]{CLR94}  showed that if $N$ is a
 closed hyperbolic $3$-manifold which fibers over $S^1$, then  there exists a $\pi_1$-injective immersion of a quasi-Fuchsian surface into $N$.  We note one important consequence: if $N$ is any hyperbolic $3$-manifold such that $\pi_1(N)$ is LERF and contains a surface subgroup, then $\pi_1(N)$ is large (cf.~(C.\ref{C.large})).
\item The work of \cite{CLR94} was extended by Masters \cite[Theorem~1.1]{Mas06b}, which in turn allowed Dufour \cite[p.~6]{Duf12} to show that if $N$ is a closed hyperbolic $3$-manifold which is virtually fibered, then $\pi_1(N)$ is also the fundamental group of a compact non-positively curved cube complex. This proof does not require the Surface Subgroup Theorem \ref{thm:km} of Kahn--Markovic \cite{KM12}.
\item Li \cite{Li02}, Cooper--Long \cite{CoL01} and Wu \cite{Wu04} showed that in many cases the Dehn surgery on a hyperbolic $3$-manifold contains a surface group.  \item
  Lackenby \cite[Theorem~1.2]{Lac10} showed that closed arithmetic hyperbolic $3$-manifolds
 contain surface groups.
 \item Bowen \cite{Bowe04} attacked the Surface Subgroup Conjecture with methods which foreshadowed the approach taken by Kahn--Markovic \cite{KM12}.

\en

\begin{question}\textbf{\emph{(Virtually Haken Conjecture)}}
Is every closed hyperbolic $3$-manifold virtually Haken? \index{conjectures!Virtually Haken Conjecture}
\end{question}

Here is a summary of approaches towards the Virtually Haken Conjecture.

\bn
\item
Thurston~\cite{Thu79} showed that all but finitely many Dehn fillings of the Figure 8 knot complement are not Haken.
For this reason, there has  been  considerable interest in studying the Virtually Haken Conjecture for fillings of $3$-manifolds.
Much work in this direction was done by
Aitchison--Rubinstein \cite{AiR99b},
Aitchi\-son--Mat\-su\-moti--Rubinstein \cite{AMR97,AMR99},
Baker \cite{Bak88, Bak89, Bak90, Bak91}, Boyer--Zhang \cite{BrZ00}, Cooper--Long \cite{CoL99} (building on \cite{FF98}), Cooper--Walsh \cite{CrW06a,CrW06b}, Hempel \cite{Hem90},
Kojima--Long \cite{KL88},
Masters \cite{Mas00,Mas07}, Masters--Menasco--Zhang \cite{MMZ04,MMZ09}, Morita \cite{Moa86} and X. Zhang \cite{Zha05} and Y. Zhang \cite{Zhb12}.
\item Hempel \cite{Hem82,Hem84,Hem85a} and Wang \cite{Wag90} \cite[p.~192]{Wag93} studied the Virtually Haken Conjecture for $3$-manifolds which admit an orientation reversing involution.
\item Long \cite{Lo87} (see also \cite[Corollary~1.2]{Zha05}) showed that if $N$ is a hyperbolic $3$-manifold which  admits a totally geodesic immersion of a closed surface, then
$N$ is virtually Haken.
\item
 We refer to Millson \cite{Mis76}, Clozel \cite{Cl87}, Labesse--Schwermer \cite{LaS86}, Xue \cite{Xu92}, Li--Millson \cite{LiM93}, Rajan \cite{Raj04}, Reid \cite{Red07} and Schwermer \cite{Scr04,Scr10} for details of approaches to the Virtually Haken Conjecture for arithmetic hyperbolic $3$-manifolds using  number theoretic methods.
\item Reznikov \cite{Rez97} studied hyperbolic $3$-manifolds $N$ with $vb_1(N)=0$.
\item Experimental evidence towards the validity of the conjecture was provided by Dunfield--Thurston \cite{DnTb03}.
\item We refer to Lubotzky \cite{Lub96b} and Lackenby \cite{Lac06,Lac07b,Lac09} for work towards the stronger conjecture that fundamental groups of hyperbolic $3$-manifolds are large. (See Question~\ref{qu:vbn} below.)
\en


\begin{question}\label{qu: LERF}
Let $N$ be a hyperbolic $3$-manifold.  Is $\pi_1(N)$ LERF? \index{conjectures!LERF Conjecture}
\end{question}

The following papers gave evidence for an affirmative answer to Question~\ref{qu: LERF}.  Note that, by the Subgroup Tameness Theorem, $\pi_1(N)$ is LERF if and only if every geometrically finite subgroup is separable, i.e., $\pi_1(N)$ is \emph{GFERF}.  See (G.\ref{G.LERF}) for details.

\bn
\item Let $N$ be a compact hyperbolic $3$-manifold and $\Sigma$ a totally geodesic immersed surface in $N$. Long  \cite{Lo87} proved that $\pi_1(\Sigma)$ is separable in $\pi_1(N)$.

\item
The first examples of hyperbolic $3$-manifolds with LERF fundamental groups were given by Gitik \cite{Git99b}.
\item
Agol--Long--Reid \cite{ALR01} showed that geometrically finite subgroups of Bianchi groups are separable.
\item
Wise \cite{Wis06} showed that the fundamental group of the Figure~8 knot complement is LERF.
\item
Agol--Groves--Manning \cite{AGM09} showed that fundamental groups of hyperbolic $3$-manifolds are LERF if every word-hyperbolic group is residually finite.
\item After the definition of special complexes was given in \cite{HaW08}, it was shown that various classes of hyperbolic $3$-manifolds had virtually special fundamental groups, and hence were LERF (and virtually fibered).  The following were shown to be virtually compact special:
\begin{enumerate}
\item `standard' arithmetic $3$-manifolds \cite{BHW11};
\item certain branched covers of the figure-eight knot \cite[Theorem~1.1]{Ber08};
\item manifolds built from gluing all-right ideal polyhedra, such as augmented link complements \cite{CDW12}.
\end{enumerate}

\en

\begin{question}\textbf{\emph{(Lubotzky--Sarnak Conjecture)}}
Let $N$ be a  closed hyperbolic $3$-manifold.
Is it true that $\pi_1(N)$ does not have Property $(\tau)$? \index{conjectures!Lubotzky--Sarnak Conjecture}
\end{question}

The following represents some of the major work on the Lubotzky--Sarnak Conjecture.  We also refer to \cite[Section~7]{Lac11} and \cite{LuZ03} for further details.

\bn
\item Lubotzky \cite{Lub96a} stated the conjecture and proved that certain arithmetic $3$-manifolds have positive virtual first Betti number, extending the above-mentioned work of Millson \cite{Mis76} and Clozel \cite{Cl87}.
\item Lackenby \cite[Theorem~1.7]{Lac06} showed that the Lubotzky--Sarnak Conjecture, together with a conjecture about Heegaard gradients, implies the Virtually Haken Conjecture.
\item Long--Lubotzky--Reid \cite{LLuR08} proved that the fundamental group of every hyperbolic $3$-manifold has Property $(\tau)$ with respect to some cofinal regular filtration of $\pi_1(N)$.
\item  Lackenby--Long--Reid \cite{LaLR08b} proved that if the fundamental group of a hyperbolic $3$-manifold $N$ is LERF, then $\pi_1(N)$ does not have Property~$(\tau)$.
\en

\begin{questions}\label{qu:vbn}
Let $N$ be a hyperbolic $3$-manifold with $b_1(N)\geq 1$.
\begin{itemize}
\item[(1)] Does $N$ admit a finite cover $N'$ with $b_1(N')\geq 2$?
\item[(2)] Is $vb_1(N)=\infty$?
\item[(3)] Is $\pi_1(N)$ large?
\end{itemize}
\end{questions}

The virtual Betti numbers of hyperbolic $3$-manifolds in particular were studied by the following authors:
\bn
\item
Cooper--Long--Reid \cite[Theorem~1.3]{CLR97} have shown that if $N$ is a compact, irreducible $3$-manifold with non-trivial incompressible boundary,
then  either $N$ is covered
by a product $N= S^1\times S^1\times I$, or  $\pi_1(N)$ is large. (See also  \cite[Corollary~6]{But04} and \cite[Theorem~2.1]{Lac07a}.)
\item
Cooper--Long--Reid  \cite[Theorem~1.3]{CLR07}, Venkataramana \cite[Corollary~1]{Ve08} and Agol \cite[Theorem~0.2]{Ag06} proved the fact that if $N$ is an arithmetic $3$-manifold, then
$vb_1(N)\geq 1$ implies that $vb_1(N)=\infty$. In fact by further work of Lackenby--Long--Reid \cite{LaLR08a} it follows that if $vb_1(N)\geq 1$, then $\pi_1(N)$ is large.
\item  Long and Oertel \cite[Theorem~2.5]{LO97} gave many examples of fibered $3$-manifolds with $vb_1(N;\Z)=\infty$.  Masters \cite[Corollary~1.2]{Mas02}  showed that if $N$ is a fibered $3$-manifold such that the genus of the fiber is 2, then $vb_1(N;\Z)=\infty$.
\item Kionke--Schwermer \cite{KiS12} showed that certain arithmetic hyperbolic $3$-manifolds admit a cofinal tower  with rapid growth of first Betti numbers.
\item Cochran and Masters \cite{CMa06} studied the growth of Betti numbers in abelian covers of $3$-manifolds with Betti number equal to two or three.
\item Button \cite{But11a} gave computational evidence towards the conjecture that the fundamental group of any hyperbolic $3$-manifold $N$ with $b_1(N)\geq 1$ is large.
\item Koberda \cite{Kob12a,Kob12b} gives a detailed study of Betti numbers of finite covers of fibered $3$-manifolds.
\en

\begin{question}\textbf{\emph{(Virtually Fibered Conjecture)}}
Is every hyperbolic $3$-mani\-fold virtually fibered? \index{conjectures!Virtually Fibered Conjecture}
\end{question}

The following papers deal with the Virtually Fibered Conjecture:
\bn
\item
An affirmative answer to the question was given for specific classes of $3$-manifolds, e.g., certain knot and link complements,
 by  Agol--Boyer--Zhang~\cite{ABZ08}, Aitchison--Rubinstein~\cite{AiR99a}, DeBlois~\cite{DeB10},
 Ga\-bai~\cite{Gab86}, Guo--Zhang~\cite{GZ09},  Reid~\cite{Red95}, Leininger~\cite{Ler02}, and  Walsh~\cite{Wah05}.
\item
 Button \cite{But05} gave  computational evidence towards  an affirmative answer to the Virtually Fibered Conjecture.
 \item Long--Reid \cite{LoR08b} (see also  Dunfield--Ramakrishnan \cite{DR10} and \cite[Theorem~7.1]{Ag08}) showed that  arithmetic hyperbolic $3$-manifolds
which are fibered admit in fact finite covers with arbitrarily many fibered faces in the Thurston norm ball.
\item
Lackenby \cite[p.~320]{Lac06} (see also \cite{Lac11}) gave an approach to the Virtually Fibered Conjecture using
`Heegaard gradients'. This approach was further developed by Lackenby \cite{Lac04}, Maher \cite{Mah05} and Renard \cite{Ren09,Ren10}.
The latter author gave another approach~\cite{Ren11,Ren12}  to the conjecture.
\item
 Agol \cite[Theorem~5.1]{Ag08} showed that aspherical $3$-manifolds with virtually RFRS fundamental groups are virtually fibered.
 (See (E.\ref{E.RFRS}) for the definition of RFRS.)
 The first examples of $3$-manifolds with virtually RFRS fundamental groups were given by Agol \cite[Corollary~2.3]{Ag08}, Bergeron \cite[Theorem~1.1]{Ber08}, Bergeron--Haglund--Wise \cite{BHW11} and Chesebro--DeBlois--Wilton \cite{CDW12}.
\en

\section{Consequences of being virtually (compact) special}\label{section:closed}\label{section:diagram}
\label{section:corakmw}

\noindent
In this section we summarize various consequences of the fundamental group of a $3$-manifold being virtually (compact) special.
As in Section~\ref{section:diagramgeom} we present the results in a diagram.

\medskip

We start out with  further  definitions needed for Diagram~4.
Again the definitions are roughly in the order that they appear in the diagram.

\begin{list}
{{(E.\arabic{itemcounterm})}}
{\usecounter{itemcounterm}\leftmargin=2em}
\item We say that a group $\pi$ \emph{virtually retracts} onto a subgroup $A\subseteq \pi$
if there exists a  finite-index subgroup $\pi'\subseteq \pi$  that contains $A$ and a homomorphism $\pi'\to A$
which is the identity on~$A$.  In this case, we say that $A$ \emph{is a virtual retract of $\pi$}. \index{subgroup!retract!virtual}
\item\label{E.defconjugacyseparable} A group $\pi$ is called \emph{conjugacy separable} if for any two non-conjugate elements $g,h\in \pi$
there exists an epimorphism $\a\colon \pi\to G$ onto a finite group $G$ such that $\a(g)$ and $\a(h)$ are not conjugate. A group $\pi$ is called \emph{hereditarily conjugacy separable} if any (not necessarily normal) finite-index subgroup of $\pi$ is conjugacy separable. \index{group!conjugacy separable} \index{group!hereditarily conjugacy separable}

\item For $N$ a hyperbolic $3$-manifold, we say that $\pi_1(N)$ is GFERF if all geometrically finite subgroups are separable. \index{$3$-manifold group!all geometrically finite subgroups are separable (GFERF)}
\item\label{E.RFRS}
A group $\pi$ is called \emph{residually finite rationally solvable \textup{(}RFRS\textup{)}} if there
exists a filtration of $\pi$ by subgroups $\pi=\pi_0\supseteq \pi_1 \supseteq \pi_2\cdots $
such that: \index{group!residually finite rationally solvable (RFRS)}
\newcounter{itemcountern}
\begin{list}
{{(\arabic{itemcountern})}}
{\usecounter{itemcountern}\leftmargin=2em}
\item $\bigcap_i \pi_i=\{1\}$;
\item   $\pi_i$ is a normal, finite-index subgroup of  $\pi$, for any $i$;
\item for any $i$ the map $\pi_i\to \pi_i/\pi_{i+1}$ factors through $\pi_i\to H_1(\pi_i;\Z)/\mbox{torsion}$.
\end{list}
\item \label{E.fibered} Let $N$ be a compact, orientable 3--manifold.
We say $N$ is \emph{fibered} if $N$ admits the structure of a
surface bundle over $S^1$.\index{$3$-manifold!fibered}\index{fibered!$3$-manifold}
We say that $\phi\in H^1(N;\R)$ is \emph{fibered},\index{fibered!cohomology class}
 if $\phi$ can be represented by a non-degenerate closed 1-form.
Note that by \cite{Tis70} an integral class  $\phi\in H^1(N;\Z) = \mbox{Hom}(\pi_1(N),\Z)$ is fibered if and only if there exists
 a surface bundle $p\colon N\to S^1$ such that the induced map $p_*\colon \pi_1(N)\to \pi_1(S^1)=\mathbb{Z}$ coincides with $\phi$.
\item A group $\pi$ has the \emph{finitely generated intersection property} (or \emph{f.g.i.p.}\ for short) if the intersection of any two finitely generated subgroups of $\pi$ is also finitely generated. \index{group!with the finitely generated intersection property (f.g.i.p.)}
 \item A group $\pi$ is called \emph{poly-free} if it admits a finite sequence of subgroups
\[ \pi=\pi_0\rhd \pi_1\rhd  \pi_2\rhd \dots \rhd \pi_k=\{1\}\]
such that for any $i\in \{0,\dots,k-1\}$ the quotient group $\pi_i/\pi_{i+1}$ is a (not necessarily finitely-generated) free group. \index{group!poly-free}
   \item Let $\pi$ be a  group. We denote its profinite completion by $\widehat{\pi}$.
The group $\pi$ is called \emph{good} if the map $H^*(\widehat{\pi};A)\to H^*(\pi;A)$
is an isomorphism for any finite $\pi$--module $A$. (See \cite[D.2.6~Exercise~2]{Ser97}.)\index{group!good}
\item The unitary dual of a group is defined to be the set of equivalence classes of its irreducible unitary representations.
The unitary dual can be viewed as a topological space with respect to the Fell topology.
A group $\pi$ is said to have \emph{Property FD} if the finite representations of $\pi$ are dense in its unitary dual.
We refer to \cite[Appendix~F.2]{BdlHV08} and \cite{LuSh04} for details.\index{group!with Property FD}
\item
A  group $\pi$ is called \emph{potent} if for any non-trivial $g\in \pi$ and any $n\in \Z$ there exists an epimorphism
$\a\colon \pi\to G$ onto a finite group $G$ such that $\a(g)$ has order $n$. \index{group!potent}
\item A subgroup of a group $\pi$ is called \emph{characteristic} if it is preserved by every automorphism of $\pi$.
Every characteristic subgroup is normal.
 A group $\pi$ is called \emph{characteristically potent}, if given any non-trivial $g\in \pi$ and any $n\in \N$ there exists a finite index characteristic subgroup $\pi'\subseteq \pi$ such that $g$ has order $n$ in $\pi/\pi'$. \index{subgroup!characteristic} \index{group!characteristically potent}
\item A  group $\pi$ is called \emph{weakly characteristically potent} if for any non-trivial $g\in \pi$
 there exists an $r\in \N$ such that for  any $n\in \N$ there exists a characteristic finite-index subgroup $\pi'\subseteq \pi$ such that $g\pi'$ has order $rn$ in $\pi/\pi'$.
\item \label{E.independent}
Let $\pi$ be a torsion-free group. We say that a collection of  elements
\[
g_1,\dots,g_n\in  \pi
\]
is \emph{independent} if distinct pairs of elements do not have conjugate non-trivial powers; that is, if there are $k,l\in \Z\smallsetminus \{0\}$ and $c\in \pi$ with $cg_i^kc^{-1}=g_j^l$, then $i=j$.
The group $\pi$ is called \emph{omnipotent} if given any  independent collection
\[
g_1,\dots,g_n\in \pi
\]
there exists $k\in \N$ such that for any $l_1,\dots,l_n\in \N$  there exists a homomorphism $\a\colon \pi\to G$ to a finite group $G$ such that the order of $\a(g_i)\in G$ is $kl_i$.  This definition was introduced by Wise in \cite[Definition~3.2]{Wis00}. \index{group!omnipotent}
\end{list}

\noindent
Diagram~4 is supposed to be read in the same manner as Diagram~1. For the reader's convenience we recall some of the conventions.

\newcounter{itemcounterii}
\begin{list}
{{(F.\arabic{itemcounterii})}}
{\usecounter{itemcounterii}\leftmargin=2em}
\item In Diagram~4 we mean by  $N$ a compact, orientable, irreducible $3$-manifold  such that
 its boundary consists of a (possibly empty) collection of tori. We furthermore assume throughout Diagram~4  that $\pi:=\pi_1(N)$ is neither solvable
 nor finite.
 Note that without these extra assumptions some of the implications do not hold.
For example the fundamental group of the $3$-torus~$T$ is a RAAG, but $\pi_1(T)$ is not large.
\item In the diagram the top arrow splits into several arrows. In this case exactly one of the possible three conclusions holds.
\item Red arrows indicate that the conclusion holds \emph{virtually}, e.g., if $\pi$ is RFRS, then $N$ is virtually fibered.
\item If a property $\PP$ of groups is written in green, then the following conclusion always holds:
If $N$ is a compact, orientable, irreducible $3$-manifold with empty or toroidal boundary such that the fundamental group of a finite (not necessarily regular) cover of $N$ has Property $\PP$, then $\pi_1(N)$ also has Property~$\PP$.
In (H.1) to (H.8) below we will show that the various properties written in green do indeed have the above property.
\item Note that a concatenation of red and black arrows which leads  to a green property means that the initial group also has the green property.
\item An arrow with a condition next to it indicates that this conclusion only holds if this extra condition is satisfied.
\end{list}

\begin{figure}
\psfrag{1}{\ref{G.1}}\psfrag{2}{\ref{G.2}}\psfrag{3}{\ref{G.3}}\psfrag{4}{\ref{G.4}}
\psfrag{5}{\ref{G.5}}\psfrag{6}{\ref{G.6}}\psfrag{7}{\ref{G.7}}\psfrag{8}{\ref{G.8}}
\psfrag{9}{\ref{G.9}}\psfrag{10}{\ref{G.10}}
\psfrag{11}{\ref{G.11}}\psfrag{12}{\ref{G.12}}\psfrag{13}{\ref{G.13}}\psfrag{14}{\ref{G.14}}
\psfrag{15}{\ref{G.15}}\psfrag{16}{\ref{G.16}}\psfrag{17}{\ref{G.17}}\psfrag{18}{\ref{G.18}}
\psfrag{19}{\ref{G.19}}\psfrag{20}{\ref{G.20}}\psfrag{21}{\ref{G.21}}\psfrag{22}{\ref{G.22}}
\psfrag{23}{\ref{G.23}}\psfrag{24}{\ref{G.24}}\psfrag{25}{\ref{G.25}}\psfrag{26}{\ref{G.26}}
\psfrag{27}{\ref{G.27}}\psfrag{28}{\ref{G.28}}\psfrag{29}{\ref{G.29}}\psfrag{31}{\ref{G.good}}\psfrag{30}{\ref{G.propfd}}
\hspace{-1cm}
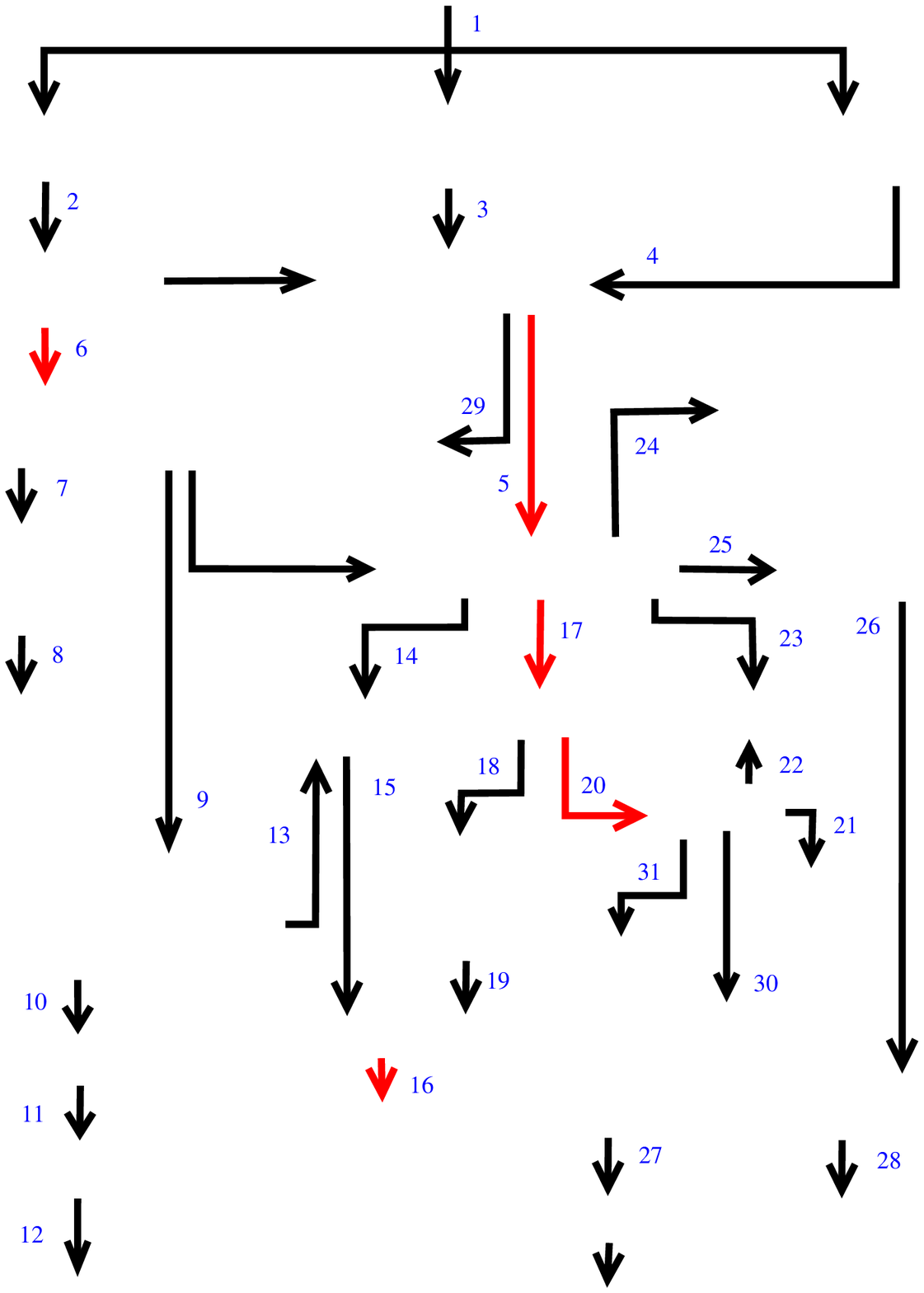
\end{figure}

We now give the justifications for the implications of Diagram~4.
As in Diagram~1 we strive for maximal generality. Unless we say otherwise, we will therefore only assume that $N$ is a connected $3$-manifold and each justification can be read independently of all the other steps.

\newcounter{itemcountero}
\begin{list}
{{(G.\arabic{itemcountero})}}
{\usecounter{itemcountero}\leftmargin=2em}
\item \label{G.1}
Let $N$ be a compact, orientable, irreducible $3$-manifold with empty or toroidal boundary.
 It follows from the Geometrization Theorem (see Theorem~\ref{thm:geom}) that $N$ is either hyperbolic or  a graph manifold,
 or $N$ admits a non-trivial JSJ decomposition with at least one hyperbolic JSJ component.
\item \label{G.2}\label{G.akmw} Let $N$ be a hyperbolic $3$-manifold. The Virtually Compact Special Theorem of  Agol \cite{Ag12}, Kahn--Markovic \cite{KM12} and Wise \cite{Wis12a}
implies that $\pi_1(N)$ is virtually compact special.
We refer to Section~\ref{section:akmw} for details.
\item \label{G.3}\label{G.PW12a}
Let $N$ be an irreducible $3$-manifold with empty or toroidal boundary which is neither hyperbolic nor a graph manifold.
Then by the theorem of Przytycki--Wise (see \cite[Theorem~1.1]{PW12a} and Theorem \ref{thm:pw12})
it follows that $\pi_1(N)$ is virtually special.

\item\label{G.4}\label{G.liu11}
Let $N$ be an aspherical graph manifold.
Liu \cite{Liu11} (see  Theorem \ref{thm:liu11}) showed that $\pi_1(N)$ is virtually special if and only if $N$ is non-positively curved.
By Theorem \ref{thm:leeb} a graph manifold with non-trivial boundary is non-positively curved.  Przytycki--Wise \cite{PW11}
gave an alternative proof that fundamental groups of graph manifolds with non-trivial boundary are virtually special.

\item \label{G.5}\label{G.special=>subgroup of a RAAG} See Corollary \ref{cor: HW special}.

\item \label{G.6} \label{G.compact special=>qc subgroup of a RAAG} See Corollary \ref{cor:HW compact special}.

\item \label{G.7}\label{G.vr of RAAG} Haglund \cite[Theorem F]{Hag08} showed that quasi-convex subgroups of RAAGs are virtual retracts.  In fact, he proved that quasi-convex subgroups of Right-Angled Coxeter Groups are virtual retracts, generalizing earlier results of  Scott \cite{Sco78} (the reflection group of the right-angled hyperbolic pentagon) and Agol--Long--Reid \cite[Theorem~3.1]{ALR01} (reflection groups of arbitrary right-angled hyperbolic polyhedra).
\item \label{G.8} \label{G.minasyan} Minasyan \cite[Theorem~1.1]{Min12} has shown that any  RAAG is hereditarily conjugacy separable.  It follows immediately that virtual retracts of RAAGs are hereditarily conjugacy separable.

Note that combination of Minasyan's result with (G.\ref{G.akmw}), (G.\ref{G.compact special=>qc subgroup of a RAAG}), (G.\ref{G.vr of RAAG})  and  (H.\ref{H.csgreen})
implies that fundamental groups of hyperbolic $3$-manifolds are conjugacy separable.
    In (I.\ref{I.hwz11}) we will see that this  is a key ingredient in the proof of Hamilton--Wilton--Zalesskii \cite{HWZ13} that the fundamental group of any  orientable closed irreducible $3$-manifold   is conjugacy separable.

\item \label{G.9}\label{G.virtualretract} Suppose that $N$ is a hyperbolic $3$-manifold of finite volume and $\pi=\pi_1(N)$ is a quasi-convex subgroup
 of a RAAG $A_\Sigma$.   Let $\Gamma$ be a geometrically finite subgroup of $\pi$.  The idea is that $\Gamma$ should be a quasi-convex subgroup of~$A_\Sigma$.  One could then apply \cite[Theorem F]{Hag08} to deduce that $\Gamma$ is a virtual retract of $A_\Sigma$ and hence of $\pi$.  However, it is not true in full generality that a quasi-convex subgroup of a quasi-convex subgroup is again quasi-convex, and so a careful argument is needed.  In the closed case, the required technical result is \cite[Corollary 2.29]{Hag08}.  In the cusped case, in fact it turns out that $\Gamma$ may not be a quasi-convex subgroup of $A_\Sigma$.  Nevertheless, it is possible to circumvent this difficulty.  We now give detailed references.

If $N$ is closed, then $\pi$ is word-hyperbolic and $\Gamma$ is a quasi-convex subgroup of $\pi$ (see (K.\ref{K.qc})).  The group $\pi$ acts by isometries on $\widetilde{S}_\Sigma$, the universal cover of the Salvetti complex of $A_\Sigma$. Fix a base $0$-cell $x_0\in \widetilde{S}_\Sigma$.  The 1-skeleton of $\widetilde{S}_\Sigma$ is precisely the Cayley graph of $A_\Sigma$ with respect to its standard generating set, and so, by hypothesis, the orbit $\pi.x_0$ is a quasi-convex subset of $\widetilde{S}_\Sigma^{(1)}$.    By \cite[Corollary 2.29]{Hag08}, $\pi$ acts cocompactly on some convex subcomplex $\widetilde{X}\subseteq \widetilde{S}_\Sigma$.  Using the Morse Lemma for geodesics in hyperbolic spaces \cite[Theorem III.D.1.7]{BrH99}, the orbit $\Gamma.x_0$ is a quasi-convex subset of $\widetilde{X}^{(1)}$.  Using \cite[Corollary 2.29]{Hag08} again, it follows that $\Gamma$ acts cocompactly on a convex subcomplex $\widetilde{Y}\subseteq \widetilde{X}$.  The complex $\widetilde{Y}$ is also a convex subcomplex of $\widetilde{S}_\Sigma$, which by a final application of \cite[Corollary 2.29]{Hag08} implies that $\Gamma.x_0$ is a quasi-convex subset of $\widetilde{S}_\Sigma^{(1)}$, or, equivalently, that $\Gamma$ is a quasi-convex subgroup of $A_\Sigma$.  Hence, by \cite[Theorem F]{Hag08}, $\Gamma$ is a virtual retract of $A_\Sigma$ and hence of $\pi$.

If $N$ is not closed, then $\pi$ is not word-hyperbolic, but in any case it is \emph{relatively hyperbolic} and $\Gamma$ is a \emph{relatively quasi-convex} subgroup (see (K.\ref{K.qc}) below for a reference).   One can show in this case that $\Gamma$ is again a virtual retract of $A_\Sigma$ and hence of $\pi$.  The argument is rather more involved than the argument in the word-hyperbolic case; in particular, it is not necessarily true that $\Gamma$ is a quasi-convex subgroup of $A_\Sigma$.  See \cite[Theorem 1.3]{CDW12} for the details; the proof again relies on \cite[Theorem F]{Hag08} together with work of Manning--Martinez-Pedrosa \cite{MMP10}.  See also \cite[Theorem 7.3]{SaW12} for an alternative argument.

\item \label{G.10} \label{G.GFERF}  The following well known argument shows that a virtual retract $G$ of a residually finite group $\pi$ is separable (cf.~\cite[Section~3.4]{Hag08}).  Let $\rho\colon \pi_0\to G$ be a retraction onto $G$ from a subgroup $\pi_0$ of finite index in $\pi$.  Define a map $\delta\colon\pi_0\to \pi_0$ by $g\mapsto g^{-1}\rho(g)$.  It is easy to check that $\delta$ is continuous in the profinite topology on $\pi_0$, and so $G=\delta^{-1}(1)$ is closed. That is to say, $G$ is separable in $\pi_0$, and hence in $\pi$.  In particular, if $N$ is a compact $3$-manifold, then $\pi=\pi_1(N)$ is residually finite by (C.\ref{C.resfinite}), and so if $\pi$ virtually retracts onto geometrically finite subgroups, then $\pi$ is GFERF.
\item \label{G.11}\label{G.LERF}\label{G.tameness}
Now let $N$ be a hyperbolic $3$-manifold such that $\pi=\pi_1(N)$ is GFERF and let $\G\subseteq \pi$ be a finitely generated subgroup.
We want to show that $\G$ is separable.
By the Subgroup Tameness Theorem (see Theorem~\ref{thm:subgroupdichotomy}) and by our assumption we only have to deal with the case that $\G$ is a virtual surface fiber group.  But an  elementary argument shows that in that case $\G$ is separable (see, e.g., (K.\ref{K.fibersep}) for more details).
\item \label{G.12}Let $\pi$ be a group which is word-hyperbolic
with every quasi-convex subgroup separable. Minasyan \cite[Theorem~1.1]{Min06} showed that then any product of finitely many quasi-convex subgroups of $\pi$ is separable. If $\pi=\pi_1(N)$ where $N$ is a closed hyperbolic $3$-manifold and  $\pi$ is GFERF, then because the quasi-convex subgroups are precisely the geometrically finite subgroups  (see (K.\ref{K.qc})), it follows that any product of geometrically finite subgroups is separable.  A direct argument using part (ii) of \cite[Proposition 2.2]{Nib92} shows that any  product of a subgroup with a virtual surface fiber group is separable.  Therefore, by
the Subgroup Tameness Theorem (see Theorem~\ref{thm:subgroupdichotomy}) the group $\pi_1(N)$ is double-coset separable.

    It is expected that the analogue of Minasyan's theorem holds in the relatively hyperbolic setting, in which case the same argument would yield double-coset separability for GFERF fundamental groups of cusped hyperbolic manifolds.  Note that separability of double cosets of abelian subgroups of finite-volume Kleinian groups was proved in \cite{HWZ13}.

\item \label{G.13}Let $N$ be a hyperbolic $3$-manifold such that $\pi=\pi_1(N)$ virtually retracts onto each one of its geometrically finite subgroups.
Let $F\subseteq \pi$ be a geometrically finite non-cyclic free subgroup, such as a Schottky subgroup. (That every non-elementary Kleinian group contains a  Schottky subgroup was first observed by Myrberg \cite{Myr41}.)
 Then by assumption there exists a finite-index subgroup of $\pi$ with
an epimorphism onto $F$. This shows fundamental groups of hyperbolic $3$-manifolds are  large.

\item\label{G.14}\label{G.AM11} Antol\'in--Minasyan \cite[Corollary~1.6]{AM11} showed that every
(not necessarily finitely generated) subgroup of a RAAG is either free abelian of finite rank or maps onto a non-cyclic free group.  This implies directly the fact that if $N$ is a $3$-manifold
    and if $\pi_1(N)$ is virtually special, then either $\pi_1(N)$ is either virtually solvable  or $\pi_1(N)$ is large.
    (Recall that in the diagram we assume throughout that $\pi_1(N)$ is neither finite nor solvable, and     Theorem~\ref{thm:solv} yields that $\pi_1(N)$ is also not virtually solvable.)

\item \label{G.15}We already saw in (C.\ref{C.homlarge}) that a group $\pi$ which is large is homologically large, in particular it has the property that $vb_1(\pi;\Z)=\infty$.
\item \label{G.16}In (C.\ref{C.haken}) we showed that every irreducible, compact $3$-manifold $N$ satisfying $vb_1(N;\Z)\geq 1$ is virtually Haken.
(Furthermore, we saw in (C.\ref{C.locallyindicable}) and (C.\ref{C.left-orderable}) that $\pi_1(N)$ is virtually locally indicable and virtually left-or\-derable.)
\item\label{G.17}\label{G.RAAGvRFRS}  Agol \cite[Theorem~2.2]{Ag08} showed that a RAAG is virtually RFRS.
\index{theorems!Agol's Virtually Fibered Criterion}
It is clear that a subgroup of a RFRS group is again RFRS.
A close inspection of Agol's proof using \cite[Section~1]{DJ00} in fact implies that a RAAG is already RFRS. We will not make use of this fact.
\item \label{G.18}\label{G.RFRSretracttoz} Let $\pi$ be RFRS. It follows easily from the definition that, given any cyclic subgroup $\ll g\rr$, there exists
a finite-index subgroup $\pi'$ such that $g\in \pi'$ and such that $g$ represents a non-trivial element $[g]$ in the torsion-free abelian group $H':=H_1(\pi';\Z)/\mbox{torsion}$.
  There exists a finite-index subgroup $H''$ of $H'$ which contains $g$ and such that $g$ represents a primitive element in $H''$. In particular there exists a homomorphism $\varphi\colon H''\to \Z$ such that $\varphi(g)=1$.  Now
  \[
\ker\{\pi'\to H'/H''\}\to H''\xrightarrow{\varphi}\Z\xrightarrow{1\mapsto g}\ll g\rr
\]
is a virtual retraction onto the cyclic group generated by $g$.

Note that the above together with the argument of (G.\ref{G.GFERF}) and (H.\ref{H.resfinitegreen}) shows that infinite cyclic subgroups
of virtually RFRS groups are separable.
If we combine this observation with (G.\ref{G.akmw}), (G.\ref{G.PW12a}) and (G.\ref{G.liu11}) we see that infinite cyclic subgroups of
compact, orientable $3$-manifolds with empty or toroidal boundary which are not graph manifolds are separable. This was proved earlier for all $3$-manifolds by E.~Hamilton  (see \cite{Hamb01}).

In Proposition~\ref{prop:nonretract} we  show that there exist Seifert fibered manifolds, and also graph manifolds with non-trivial JSJ decomposition, whose fundamental group does not virtually retract onto all cyclic subgroups.

\item \label{G.19}\label{G. vb_1=infty} Let $\pi$ be an infinite torsion-free group which is not virtually abelian and which retracts virtually  onto its cyclic subgroups. An elementary argument using the transfer map shows that    $vb_1(\pi;\Z)=\infty$. (See, e.g., \cite[Theorem~2.14]{LoR08a} for a proof.)
\item \label{G.20}\label{G.agol08}  Let $N$ be a compact aspherical $3$-manifold with empty or toroidal boundary such that $\pi_1(N)$ is  RFRS.
Agol \cite[Theorem~5.1]{Ag08}
(see also \cite[Theorem~5.1]{FKt12}) showed that $N$ is virtually fibered. In fact Agol proved a more refined statement. If $\phi\in H^1(N;\Q)$ is a non-fibered class, then there exists a finite solvable cover $p\colon N'\to N$
(in fact a cover which corresponds to one of the $\pi_i$ in the definition of RFRS) such that
$p^*(\phi)\in H^1(N';\Q)$ lies on the boundary of a fibered cone of the Thurston norm ball of $N'$.
(We refer to \cite{Thu86a} and Section \ref{section:thurstonnorm} for background on the Thurston norm and fibered cones.)

Let $N$ be an irreducible  $3$-manifold with empty or toroidal boundary, which is   not a graph manifold. We will show
in Proposition~\ref{prop:manyfiberedfaces} (see also \cite[Theorem~7.2]{Ag08} for the hyperbolic case) that  there exist  finite covers of $N$ with arbitrarily many  inequivalent fibered faces.

Agol \cite[Theorem~6.1]{Ag08} also proves a corresponding theorem for $3$-manifolds with non-toroidal boundary.
More precisely, if $(N,\gamma)$ is a connected taut-sutured manifold such that $\pi_1(N)$ is virtually RFRS then there exists a finite-sheeted cover $(\ti{N},\ti{\gamma})$ of $(N,\gamma)$ with a depth-one taut-oriented foliation. We refer to \cite{Gab83a,Ag08,CdC03} for background information and precise definitions.

Let $p\colon N\to S^1$ be a fibration with surface fiber $\Sigma$. We obtain a short exact sequence
\[ 1\to \G:=\pi_1(\Sigma)\to \pi=\pi_1(N)\to \Z=\pi_1(S^1)\to 1.\]
This sequence splits and we see that $\pi$ is isomorphic to the semidirect product\index{group!semidirect product}
\[ \Z\ltimes_\varphi \G:=\ll \pi,t\,|\, tgt^{-1}=\varphi(g), g\in \G\rr,\quad\text{for some $\varphi\in \Aut(\G)$.}\]
Let $\varphi,\psi\in \Aut(\G)$. A straightforward argument shows that there exists
an isomorphism $\Z\ltimes_\varphi \G\to \Z\ltimes_\psi \G$ which commutes with the canonical projections to $\Z$
if and only if there exists an $h\in \G$ and a $\a\in \Aut(\G)$ such that
$h\varphi(g)h^{-1}=(\a\circ \psi\circ \a^{-1})(g)$ for all $g \in \G$.

In (C.\ref{C.haken}) we saw that a `generic' closed, orientable $3$-manifold is a rational homology sphere, in particular not fibered.
Dunfield and D. Thurston \cite{DnTb06} showed that a random tunnel number one $3$-manifold, which has one toroidal boundary component, does not fiber over the circle.
\item\label{G.21}
     Let $N$ be a virtually fibered $3$-manifold such that $\pi_1(N)$ is not virtually solvable. Jaco--Evans~\cite[p.~76]{Ja80} showed that $\pi_1(N)$ does not have the~f.g.i.p.

Combining this result with the ones above and with work of So\-ma~\cite{Som92}, we obtain the following:
Let $N$ be a compact $3$-manifold with empty or toroidal boundary.  Then $\pi_1(N)$ has the f.g.i.p.~if and only if $\pi_1(N)$ is finite or solvable.  Indeed, if $\pi_1(N)$ is finite or solvable, then $\pi_1(N)$ is virtually polycyclic and so every subgroup is finitely generated.
(See \cite[Lemma~2]{Som92} for details.) If $N$ is Seifert fibered and $\pi_1(N)$ is neither finite nor solvable,
then  $\pi_1(N)$ does not have  the f.g.i.p.~(See again \cite[Proposition~3]{Som92} for details.) It follows from
the combination of (G.\ref{G.akmw}), (G.\ref{G.special=>subgroup of a RAAG}), (G.\ref{G.RAAGvRFRS}), (G.\ref{G.agol08})
and the above mentioned result of Jaco and Evans, that if $N$ is hyperbolic, then $\pi_1(N)$ does not have the f.g.i.p.\
Finally, if $N$ has non-trivial JSJ decomposition, then by the above already the fundamental group of a JSJ component
does not have the f.g.i.p., hence $\pi_1(N)$ does not have the f.g.i.p.

We refer to \cite[Theorem~1.3]{Hem85b} for examples of $3$-manifolds with non-toroidal boundary which have the f.g.i.p.

\item \label{G.22} Let $N$ be a fibered $3$-manifold. Then there exists an epimorphism $\pi_1(N)\to \Z$ whose kernel equals $\pi_1(\Sigma)$, where $\Sigma$ is a compact surface.
If $\Sigma$ has boundary, then $\pi_1(\Sigma)$ is free and $\pi_1(N)$ is poly-free. If $\Sigma$ is closed, then the kernel of any epimorphism $\pi_1(\Sigma)\to \Z$ is a free group. It follows easily that again $\pi_1(N)$ is poly-free.

\item \label{G.23}Hermiller--\v{S}uni\'c \cite[Theorem~A]{HeS07} have shown that any RAAG is poly-free.  It is clear that any subgroup of a poly-free group is also poly-free.
\item \label{G.good}
If $N$ is a fibered $3$-manifold, then $\pi_1(N)$ is the semidirect product of $\Z$ with a surface group,
and so $\pi_1(N)$ is good by Propositions~3.5 and~3.6 of \cite{GJZ08}.

Let $N$ be a compact, orientable,  irreducible $3$-manifold with empty or toroidal boundary. Wilton--Zalesskii \cite[Corollary~C]{WZ10}  showed that if  $N$ is
a graph manifold, then $\pi_1(N)$ is good.
It follows from (H.\ref{H.goodgreen}), (G.\ref{G.2}), (G.\ref{G.3}), (G.\ref{G.5}), (G.\ref{G.17}) and (G.\ref{G.20}) that if $N$ is
 not a graph manifold, then $\pi_1(N)$ is good.

 Cavendish \cite[Section~3.5,~Lemma~3.7.1]{Cav12}, building on the results of Wise, showed that the fundamental group of any compact $3$-manifold is good.

\item \label{G.propfd} If $N$ is a fibered $3$-manifold, then $\pi_1(N)$ is a semidirect product of $\Z$ with a surface group, and
\cite[Theorem~2.8]{LuSh04} implies that $\pi_1(N)$ has Property~FD.
It follows from (H.\ref{H.fdgreen}), (G.\ref{G.2}), (G.\ref{G.3}), (G.\ref{G.5}), (G.\ref{G.17}) and (G.\ref{G.20}) that if $N$ is a compact, orientable,  irreducible $3$-manifold with empty or toroidal boundary, which
is
not a closed graph manifold, then
 $\pi_1(N)$ has Property~FD.

\item \label{G.24}Hsu and Wise \cite[Corollary~3.6]{HsW99} showed that any RAAG is linear over~$\Z$ (see also \cite[p.~231]{DJ00}). The
idea of the proof is  that any RAAG
embeds in a  right angled Coxeter group, and these are known to be linear over $\Z$ (see for example
 \cite[Chapitre~V,~\S~4,~Section~4]{Bou81}).

\item \label{G.25}\label{G.dk92} The lower central series $(\pi_n)$ of a group $\pi$ is defined inductively via $\pi_1:=\pi$ and $\pi_{n+1}=[\pi,\pi_{n}]$. \index{group!lower central series}
If $\pi$ is a RAAG, then the lower central series $(\pi_n)$ of $\pi$ intersects to the trivial group and the successive quotients $\pi_n/\pi_{n+1}$ are free abelian groups.  This was proved by Duchamp--Krob \cite[p.~387 and p.~389]{DK92} (see also  \cite[Section~III]{Dr83}).
This implies that any RAAG (and hence any subgroup of a RAAG) is residually torsion-free nilpotent.

\item \label{G.26}\label{G.gruenberg} Gruenberg \cite[Theorem~2.1]{Gru57} showed that every torsion-free nilpotent group is residually $p$ for any prime $p$.

\item \label{G.27}Any group $\pi$ which is residually $p$ for all primes $p$ is characteristically potent
(see for example \cite[Proposition~2.2]{BuM06}).
  We  refer to \cite[Section~10]{ADL11} for more information and references on potent groups.

\item \label{G.28}\label{G.rhemtulla} Rhemtulla \cite{Rh73} showed that a group which is residually $p$ for infinitely many primes $p$ is bi-orderable.

Note that the combination of (G.\ref{G.dk92}) and (G.\ref{G.gruenberg}) with \cite{Rh73} implies that
RAAGs are bi-orderable. This result was also proved directly by  Duchamp--Thibon \cite{DpT92}.

Let  $N$ be a compact irreducible orientable $3$-manifold with empty or toroidal boundary.
Boyer--Rolfsen--Wiest \cite[Question~1.10]{BRW05} asked whether
$3$-manifold groups are virtually bi-orderable. Chasing through the diagram we see that the question has an affirmative answer if $N$ is a non-positively curved $3$-manifold.
By Theorem \ref{thm:leeb} it thus remains to address the question for graph manifolds which are not  non-positively curved.
\item \label{G.29}\label{G.omnipotent} Theorem 14.26 of \cite{Wis12a} asserts that word-hyperbolic groups (in particular fundamental groups of closed hyperbolic $3$-manifolds) which are virtually special are omnipotent.

Wise observes in \cite[Corollary~3.15]{Wis00} that if $\pi$ is an omnipotent, torsion-free group and if $g,h\in \pi$
are two elements with $g$ not conjugate to $h^{\pm 1}$, then there exists an epimorphism $\a\colon\pi\to G$ to a finite group  such that
the orders of $\a(g)$ and $\a(h)$ are different. This can be viewed as strong form of conjugacy separability for pairs of elements
$g$, $h$ with $g$ not conjugate to $h^{\pm 1}$.

Wise also states that a corresponding result holds in the cusped case \cite[Remark 14.27]{Wis12a}.  However, it is not the case that cusped hyperbolic manifolds necessarily satisfy the definition of omnipotence given in (E.\ref{E.independent}).  Indeed, it is easy to see that $\Z^2$ is not omnipotent (see \cite[Remark 3.3]{Wis00}), and also that a retract of an omnipotent group is omnipotent.  However, there are many examples of cusped hyperbolic $3$-manifolds $N$ such that $\pi_1(N)$ retracts onto a cusp subgroup (see, for instance, (G.\ref{G.virtualretract})).  Therefore, the fundamental group of such a $3$-manifold $N$ is not omnipotent.
\end{list}


Most of the `green properties' are either green by definition or for elementary reasons.
We thus will only justify the following  statements.
\newcounter{itemcounterp}
\begin{list}
{{(H.\arabic{itemcounterp})}}
{\usecounter{itemcounterp}\leftmargin=2em}
\item \label{H.retracttozgreen} Let  $\pi$ be any group. Long--Reid  \cite[Proof~of~Theorem~4.1.4]{LoR05}
 (or \cite[Proof~of~Theorem~2.10]{LoR08a}) showed that the  ability to retract onto linear subgroups  of a group $\pi$ extends to finite index supergroups of $\pi$. We get the following conclusions:
\bn[(a)]
 \item Since cyclic subgroups are linear it follows that if a finite-index subgroup of $\pi$ retracts onto cyclic subgroups, then $\pi$ also retracts onto cyclic subgroups.
 \item If $N$ is hyperbolic and if $N$ admits a finite cover $N'$ such that $\pi'=\pi_1(N')$ retracts onto geometrically finite subgroups,
 then it follows from the above and from the linearity of $\pi=\pi_1(N)$ that $\pi$ also retracts onto geometrically finite subgroups.
 \en
\item \label{H.resfinitegreen}
 Let $\pi$ be a group which admits a finite-index subgroup $\pi'$ which is LERF; then $\pi$ is LERF itself.
 Indeed, let $\G\subseteq \pi$ be a finitely generated subgroup. Then $\G\cap \pi'\subseteq \pi'$ is separable, i.e., closed in (the profinite topology  of) $\pi'$. It then follows that $\G\cap \pi'$ is closed in $\pi$. Finally  $\G$,
 which can be written as a union of finitely many translates of $\G\cap \pi'$, is also closed in $\pi$, i.e., $\G$ is separable in $\pi$.

The same argument shows that the fundamental group of a hyperbolic $3$-manifold, having a  finite-index subgroup which is GFERF, is GFERF.

\item Niblo \cite[Proposition~2.2]{Nib92} showed that a finite-index subgroup of a group $\pi$ is double-coset separable if and only if $\pi$ is double-coset separable.

\item \label{H.lineargreen}
Let $R$ be a commutative ring and $\pi$ be a group which is linear over $R$. Suppose that $\pi$ is a subgroup of finite index of a group   $\pi'$.
Let  $\a\colon \pi\to \gl(n,R)$ be a faithful representation. Then $\pi'$ acts faithfully on $R[\pi']\otimes_{R[\pi]}R^n\cong R^{n[\pi':\pi]}$ by left-multiplication. It follows that $\pi'$ is also linear over $R$.

\item \label{H.goodgreen} It follows from \cite[Lemma~3.2]{GJZ08} that a group is good if it admits a finite-index subgroup which is good.

\item \label{H.fdgreen} By \cite[Corollary~2.5]{LuSh04}, a group with   a finite-index subgroup which has Property~FD also has Property~FD.

\item Let $\pi$ be a group which admits a finite-index subgroup $\pi'$ which is weakly characteristically potent.
Then $\pi$ is also weakly characteristically potent. To see this,
since subgroups of weakly characteristically potent groups are weakly characteristically potent we can by a standard argument assume that $\pi'$ is in fact a characteristic finite-index subgroup of $\pi$.
Now let $g\in \pi$. We denote by $k\in \N$ the minimal number such that $g^k\in \pi'$.
Since $\pi'$  is weakly characteristically potent there exists an $r'\in \N$ such that for any $n\in \N$ there exists
a characteristic finite-index subgroup $\pi_n\subseteq\pi'$ such that $g^k\pi_n$ has order $rn$ in $\pi'/\pi_n$.
We now let $r=r'k$. Note that $\pi_n\subseteq \pi$ is normal since $\pi_n\subseteq\pi'$ is characteristic.
Clearly $g^{rn}=1\in \pi/\pi_n$. Furthermore, if $m$ is such that $g^m\in \pi_n$, then $g^m\in \pi'$,
hence $m=km'$. It now follows easily that $m$ divides $rn=kr'n$. Finally note that $\pi_n$ is characteristic in $\pi$ since $\pi_n\subseteq \pi'$ and $\pi'\subseteq \pi$ are characteristic. This shows that $\pi$ is  weakly characteristically potent.

\item \label{H.csgreen} In Theorem~\ref{t: Hereditarily CuS upwards} we  show that
the fundamental group of an irreducible $3$-manifold with empty or toroidal boundary, which has a  hereditarily conjugacy separable subgroup  of finite index, is also hereditarily conjugacy separable.
\end{list}

The following gives a list of  further results and alternative arguments which we left out of Diagram~4.
\newcounter{itemcounterq}
\begin{list}
{{(I.\arabic{itemcounterq})}}
{\usecounter{itemcounterq}\leftmargin=2em}
\item \label{I.hwz11}
 Hamilton--Wilton--Zalesskii \cite[Theorem~1.2]{HWZ13} showed that if $N$ is an orientable closed irreducible $3$-manifold   such that the fundamental group of every JSJ piece is conjugacy separable, then $\pi_1(N)$ is conjugacy separable.  By doubling along the boundary and appealing to Lemma~\ref{lem:closed}, the same result holds for compact, irreducible $3$-manifolds with toroidal boundary.

It follows from (G.\ref{G.akmw}), (G.\ref{G.compact special=>qc subgroup of a RAAG}), (G.\ref{G.vr of RAAG}), (G.\ref{G.minasyan}),
and (H.\ref{H.csgreen}),
that fundamental groups of hyperbolic $3$-manifolds are conjugacy separable.
Moreover, fundamental groups of Seifert fibered manifolds are conjugacy separable
 (see \cite{Mao07,AKT05,AKT10}). The aforementioned result from~\cite{HWZ13} now implies that the fundamental group of any orientable, irreducible $3$-manifold with empty or toroidal boundary is conjugacy separable.

Finally note that if a finitely presented group is conjugacy separable (see (E.\ref{E.defconjugacyseparable}) for the definition), then the argument of \cite[Theorem~IV.4.6]{LyS77} also shows that the conjugacy problem is solvable.
The above results thus give another solution to the Conjugacy Problem first solved by Pr\'eaux (see~(C.\ref{C.conjugacyproblem})).

 \item
Let $N$ be an irreducible, non-spherical compact, orientable $3$-manifold with empty or toroidal boundary.
Tracing through the arguments of Diagram~1 and Diagram~4 shows that $vb_1(N)\geq 1$.
It follows from  \cite[p.~444]{Lub96a} that the group $\pi_1(N)$ does not have Property ($\tau$).

This answers in particular the Lubotzky-Sarnak Conjecture (see \cite{Lub96a} and \cite{Lac11} and also Section \ref{section:history} for details) in the affirmative which states
that there exists no hyperbolic $3$-manifold such that its fundamental group has Property~($\tau$).

Note that a group which does not have Property ($\tau$) also does not have Kazhdan's Property~($T$),
see; e.g., \cite[p.~444]{Lub96a} for details and see \cite{BdlHV08} for background on Kazhdan's Property~($T$). This shows that
 the fundamental group of a compact, orientable, irreducible, non-spherical $3$-manifold with empty or toroidal boundary  does not satisfy Kazhdan's Property~($T$). This result was first obtained by Fujiwara \cite{Fuj99}. \index{group!with Property ($T$)}

\item It follows from  the combination of
(G.\ref{G.liu11}), (G.\ref{G.special=>subgroup of a RAAG}), (G.\ref{G.RAAGvRFRS}) and (\ref{G.agol08})
that
non-positively curved graph manifolds (e.g., graph manifolds with non-empty boundary, see \cite[Theorem~3.2]{Leb95}) are virtually fibered.
Wang--Yu \cite[Theorem~0.1]{WY97} proved directly that graph manifolds with non-empty boundary are virtually fibered
(see also \cite{Nemb96}), and Svetlov \cite{Sv04} proved that non-positively curved graph manifolds are virtually fibered.
\item Baudisch \cite[Theorem~1.2]{Bah81} showed that if $\G$ is a $2$-generator subgroup of a RAAG, then $\G$ is either a free abelian group or a free group.
\item \label{I.nonlerf} Fundamental groups of graph manifolds are in general not LERF (see, e.g.,  \cite{BKS87}, \cite[Theorem~5.5]{Mat97a}, \cite[Theorem~2.4]{Mat97b}, \cite{RW98}, \cite[Theorem~4.2]{NW01} and Section~\ref{section:questionsep}). In fact, there are finitely generated surface subgroups of graph manifold groups that are not contained in any proper subgroup of finite index \cite[Theorem~1]{NW98}. On the other hand, Przytycki--Wise \cite[Theorem~1.1]{PW11} have shown that if $N$ is a graph manifold and $\Sigma$ is an oriented incompressible surface which is embedded in~$N$, then $\pi_1(\Sigma)$ is separable in $\pi_1(N)$.
\item Several results on fundamental groups of hyperbolic $3$-manifolds with non-empty boundary can be deduced from the closed case. (Recall that, according to our convention, we only consider hyperbolic $3$-manifolds of finite volume.)  More precisely, the following hold:
\bn
\item[(a)]
Every hyperbolic $3$-manifold $N$ has a closed hyperbolic Dehn filling~$M$, and so $\pi_1(N)$ surjects onto $\pi_1(M)$.  In particular, the fact that the fundamental group of every closed hyperbolic $3$-manifold is large gives a new proof of the theorem of Cooper--Long--Reid that the same is true for fundamental groups of hyperbolic $3$-manifolds with non-empty boundary \cite{CLR97}.
\item[(b)] Further, it follows from the work of Groves--Manning \cite[Corollary~9.7]{GrM08} or Osin \cite[Theorem~1.1]{Osi07}
that given any hyperbolic $3$-manifold $N$ with non-empty boundary and given any finite set $A\subseteq \pi_1(N)$, there exists a hyperbolic Dehn filling $M$ of $N$ such that the induced map $\pi_1(N)\to \pi_1(M)$ is injective when restricted to $A$.
\item[(c)] Manning--Martinez-Pedroza \cite[Proposition~5.1]{MMP10} showed that if the fundamental groups of all closed hyperbolic $3$-manifolds are LERF, then the fundamental groups of all hyperbolic $3$-manifolds with non-empty boundary are also LERF.
 \en
\item Droms \cite[Theorem~2]{Dr87} showed that a RAAG group corresponding to a graph $G$ is the fundamental group of a $3$-dimensional manifold if and only if each connected component of $G$ is either a tree or a triangle.
\item \label{I.fibresp} If $N$ is a fibered $3$-manifold, then $\pi_1(N)=\Z\ltimes F$ where $F$ is  a surface group. Surface groups are well known to be residually $p$, and it is also well known that the semidirect product of a residually~$p$ group with $\Z$ is virtually residually~$p$.
 We refer to \cite[Corollary~4.32]{AF10} and \cite{Kob10} for a full proof.
 \item Bridson \cite[Corollary~5.2]{Brd12} (see also \cite[Proposition~1.3]{Kob12c}) showed that if a  group  has a subgroup of finite index that embeds in a RAAG, then it embeds in the mapping class
 group for infinitely many closed surfaces. By the above results this applies in particular to fundamental groups of
 compact, orientable, closed, irreducible $3$-manifolds with empty or toroidal boundary which are not closed graph manifolds. `Most' $3$-manifold groups thus can be viewed as subgroups of mapping class groups.
\item\label{I.grothendieckrigid}
A finitely generated group $\pi$ and a proper subgroup $\Gamma$ form a \emph{Grothendieck pair} $(\pi,\Gamma)$ if the inclusion map $\Gamma\hookrightarrow\pi$ induces an isomorphism of profinite completions.  A finitely generated group $\pi$ is called \emph{Grothendieck rigid} if $(\pi,\Gamma)$ is never a Grothendieck pair, for each finitely generated subgroup $\Gamma$ of $\pi$. \index{subgroup!Grothendieck pair} \index{group!Grothendieck rigid}

Platonov and Tavgen' exhibited a residually finite group which is not Grothendieck rigid \cite{PT86}.  Bridson and Grunewald \cite{BrGd04} answered a question of Grothendieck \cite[p.~384]{Grk70} by giving an example of a Grothen\-dieck pair $(\pi,\Gamma)$ such that both $\pi$ and $\Gamma$ are residually finite and finitely presented.

Note that LERF groups are Grothendieck rigid: if $\Gamma$ is finitely generated and a proper subgroup of $\pi$ then the inclusion map $\G\hookrightarrow\pi$ does not induce a surjection on profinite completions. It thus follows from the above (see in particular (C.\ref{C.sfssubgroupseparable}) and (G.\ref{G.LERF})) that fundamental groups of Seifert fibered spaces and hyperbolic $3$-manifolds are Grothendieck rigid. This result was first obtained by Long and Reid \cite{LoR11}.  Cavendish \cite[Proposition~3.7.1]{Cav12} used (G.\ref{G.good}) to show that the fundamental group of any closed prime $3$-manifold is Grothendieck rigid.
\item
\label{I.weaklyamenable}
In (C.\ref{C.21}) we saw that most $3$-manifold groups are not amenable.  On the other hand, if $N$ is an irreducible $3$-manifold which is not a closed graph manifold, then it follows from Theorems \ref{thm:akmw}, \ref{thm:liu11} and \ref{thm:pw12} together with work of Mizuta \cite[Theorem~3]{Miz08} and Guentner--Higson \cite{GuH10} \index{group!weakly amenable} that $\pi_1(N)$ is `weakly amenable'.  For closed hyperbolic $3$-manifolds this also follows from~\cite{Oza08}.

\end{list}

\section{Subgroups of $3$-manifold groups}\label{section:subgroups}

\noindent
In this section we collect properties of finitely generated infinite-index subgroups of $3$-manifold groups in a diagram. The study of $3$-manifold groups and the study of their subgroups go hand in hand, and the content of this section therefore partly overlaps with the results mentioned in the previous sections.

\medskip

Most of the definitions required for understanding Diagram~5 have been introduced above.
We therefore need to introduce only the following new definitions.
\newcounter{itemcountertj}
\begin{list}
{{(J.\arabic{itemcountertj})}}
{\usecounter{itemcountertj}\leftmargin=2em}
\item Let $N$ be a $3$-manifold. Let $\G\subseteq \pi_1(N)$ be a subgroup and $X\subseteq N$ a connected subspace.
We say that \emph{$\G$ is carried by $X$} if $\G$ is a subgroup of $\im\{\pi_1(X)\to \pi_1(N)\}$ (up to conjugation). \index{subgroup!carried by a subspace}

\item Let $\pi$ be a finitely generated group and $\G$ be a finitely generated subgroup of $\pi$. We say that the \emph{membership  problem is solvable for $\G$} if, given a finite generating set $g_1,\dots,g_k$ for $\pi$,  there exists an algorithm which can determine whether or not an input word in the generators $g_1,\dots,g_k$ defines an element of $\G$. \index{subgroup!membership problem}

\item Let $N$ be a $3$-manifold. We say that a connected compact surface $\Sigma\subseteq N$ is a \emph{semifiber} if $N$ is the union
of two twisted $I$-bundles over the non-orientable surface $\Sigma$ along their $S^0$-bundles.
(Note that in the literature usually the surface given by the $S^0$-bundle is referred to as a `semifiber'.)
 Note that if $\Sigma\subseteq N$ is a semifiber, then there exists a double cover $p\colon\wti{N}\to N$ such that $p^{-1}(\Sigma)$ consists of two components, each of which is a surface fiber. \index{surface!semifiber} \index{semifiber}

\item  Let $\G$ be  a subgroup of a group $\pi$.  The \emph{width} of $\G$ in $\pi$ was defined in~\cite{GMRS98}.  We say that $g_1,\dots ,g_n\in \pi$ are \emph{essentially distinct} (with respect to $\G$) if $\G g_i\not=\G g_j$ whenever $i\ne j$. Conjugates of $\G$ by essentially distinct elements are called \emph{essentially distinct conjugates}. The \emph{width} of $\G$ in $\pi$ is the maximal $n\in \N\cup\{\infty\}$ such that there exists a collection of $n$ essentially distinct conjugates of $\G$ with the property that the intersection of any two elements of the collection is infinite. The width of $\G$ is $1$ if $\G$ is malnormal.  If $\Gamma$ is normal and infinite, then the width of $\Gamma$ equals its index.\index{subgroup!width}

\item Let $\G$ be a subgroup of a group $\pi$. \index{subgroup!commensurator}
We define the commensurator subgroup of $\G$ to be the subgroup
\[ \operatorname{Comm}_\pi(\G):=\big\{ g\in \pi : \text{$\G \cap g\G g^{-1}$ has finite index in $\G$} \big\}.\]

\item Let $\PP$ be a property of subgroups of a given group.
 We say that a subgroup $\G$ of a group $\pi$ is \emph{virtually $\PP$} (in $\pi$) if $\pi$ admits a (not necessarily normal) subgroup $\pi'$ of finite index
 which contains $\G$ and such that $\G$, viewed as subgroup of $\pi'$,  satisfies $\PP$.
\end{list}
As in Diagrams~1 and~4 we use the convention
that if an arrows splits into several arrows, then exactly one of the possible conclusions holds.
Furthermore, if  an arrow is decorated with a condition, then the conclusion  holds if that condition is satisfied.

\begin{figure}
\psfrag{1}{\ref{K.1}}\psfrag{2}{\ref{K.2}}\psfrag{3}{\ref{K.3}}\psfrag{4}{\ref{K.4}}
\psfrag{5}{\ref{K.5}}\psfrag{6}{\ref{K.6}}\psfrag{7}{\ref{K.7}}\psfrag{8}{\ref{K.8}}
\psfrag{9}{\ref{K.9}}\psfrag{10}{\ref{K.10}}
\psfrag{11}{\ref{K.11}}\psfrag{12}{\ref{K.12}}\psfrag{13}{\ref{K.13}}\psfrag{14}{\ref{K.14}}
\psfrag{15}{\ref{K.15}}\psfrag{16}{\ref{K.16}}\psfrag{17}{\ref{K.17}}\psfrag{18}{\ref{K.18}}
\psfrag{19}{\ref{K.19}}\psfrag{20}{\ref{K.20}}
\psfrag{21}{\ref{K.21}}\psfrag{22}{\ref{K.22}}
\hspace{-1cm}
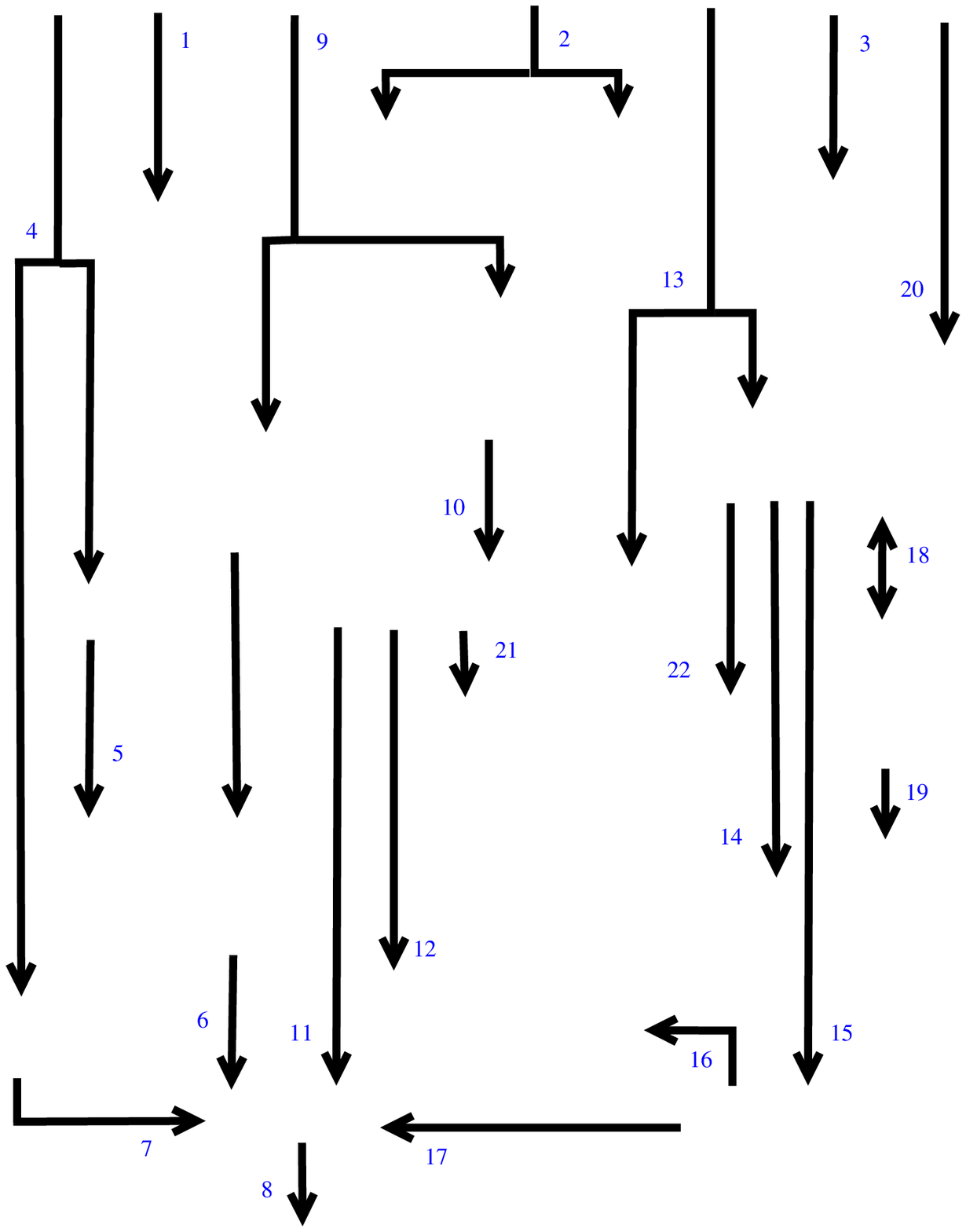
\end{figure}

 In Diagram~5 we put several restrictions on the $3$-manifold~$N$ which we consider.
 Below, in the justifications for the arrows in Diagram~5, we will only assume that~$N$ is connected,
 and we will not put any other blanket restrictions on $N$.
 Before we give the justifications we point out that only (K.\ref{K.virtualretract}) depends on the Virtually Compact Special Theorem.
\newcounter{itemcountert}
\begin{list}
{{(K.\arabic{itemcountert})}}
{\usecounter{itemcountert}\leftmargin=2em}
\item \label{K.1} It follows from the  Sphere Theorem (see Theorem~\ref{thm:sphere}) that each  compact, irreducible $3$-manifold with empty or toroidal boundary, whose fundamental group has
a non-trivial finite subgroup, is spherical.  See (C.\ref{C.pi1torsionfree}) for details.
\item \label{K.2} \label{K.tits} Let $N$ be any compact $3$-manifold and let $\G$ be a finitely generated
subgroup of $\pi=\pi_1(N)$. Then
\bn[(a)]
\item either $\G$ is virtually solvable, or
\item $\G$ contains a non-cyclic free subgroup.
\en
(In other words, $\pi$ satisfies the `Tits Alternative'.) \index{$3$-manifold group!Tits Alternative} Indeed, if $\G$ is a finitely generated subgroup of $\pi=\pi_1(N)$, then by Scott's Core Theorem
  (C.\ref{C.scottcore}), applied to the covering of $N$ corresponding to $\G$,
  there exists a compact $3$-manifold $M$ with $\pi_1(M)=\G$.
  Suppose that $\pi_1(M)$ is not virtually solvable.
  It follows easily from Theorem~\ref{thm:prime}, Lemma~\ref{lem:incomp}, Lemma ~\ref{lem:free subgroup}
  combined with (C.\ref{C.em72free}) and (C.\ref{C.tits}) that $\pi_1(M)$ contains a non-cyclic free subgroup.

\item \label{K.3} \label{K.scottcore} Scott \cite{Sco73b} proved that any finitely generated $3$-manifold group is also finitely presented.  See (C.\ref{C.scottcore}) for more information.

\item \label{K.4} Let $N$ be a compact, orientable, irreducible $3$-manifold with empty or toroidal boundary. Let $\G\subseteq \pi_1(N)$ be an abelian subgroup. It follows either from
Theorems~\ref{thm:centgen} and~\ref{thm:centsfs}, or alternatively from
the  remark after the proof of Theorem~\ref{thm:solv} and the Core Theorem  (C.\ref{C.scottcore}),
that $\G$ is either  cyclic or $\G\cong \Z^2$ or $\G\cong \Z^3$. In the latter case it follows from
the discussion in Section~\ref{section:solv} that $N$ is the $3$-torus and that $\G$ is a finite-index subgroup of $\pi_1(N)$.
\item \label{K.5} Let $N$ be a compact, orientable, irreducible, irreducible $3$-manifold with empty or toroidal boundary
and let $\G\subseteq \pi_1(N)$ be a subgroup isomorphic to $\Z^2$.
 Then there exists
a singular map $f\colon T\to N$ from the $2$-torus to $N$ such that $f_*(\pi_1(T))=\G$.
It now follows from Theorem ~\ref{thm:charsub} that $\G$ is carried by a characteristic submanifold. \index{theorems!Torus Theorem}

The above statement is also known as the `Torus Theorem.'  It was announced by Waldhausen \cite{Wan69}
and the first proof was given by Feustel \cite[p.~29]{Fe76a} and \cite[p.~56]{Fe76b}. We refer to \cite{Wan69,CF76,Milb84,Sco80,Sco84} for information on the closely related `Annulus Theorem.'
Both theorems can be viewed as predecessors of the Characteristic Pair Theorem.

\item \label{K.6} Let $N$ be a compact orientable $3$-manifold with empty or toroidal boundary. Let $M\subseteq N$ be a characteristic submanifold
and let $\G\subseteq \pi_1(M)$ be a finitely generated subgroup.
Then $\G$ is separable in $\pi_1(M)$ by Scott's theorem \cite[Theorem~4.1]{Sco78} (see also  (C.\ref{C.sfssubgroupseparable})),
and $\pi_1(M)$ is separable in $\pi_1(N)$ by  Wilton--Zalesskii \cite[Theorem~A]{WZ10} (see also (C.\ref{C.efficient})).  It follows that $\G\subseteq \pi_1(N)$ is separable.  Note that the same argument also generalizes to hyperbolic JSJ components with LERF fundamental groups.  More precisely, if $M$ is a hyperbolic JSJ component of $N$ such that $\pi_1(M)$ is LERF and if $\G\subseteq \pi_1(M)$ is a finitely generated subgroup, then $\G\subseteq \pi_1(N)$ is separable.

\item \label{K.7}  E. Hamilton \cite{Hamb01} showed that the fundamental group of any compact, orientable $3$-manifold  is abelian subgroup separable.  In particular, any cyclic subgroup is separable.  See also (C.\ref{C.AERF}) for more information.

It follows from (G.\ref{G.RFRSretracttoz}) that
if $\pi_1(N)$ is virtually RFRS, then every infinite cyclic subgroup $\G$ of $\pi_1(N)$ is a virtual retract of $\pi_1(N)$; by (K.\ref{K.retractseparable}), this gives another proof that $\G$ is separable.

\item \label{K.8} \label{K.membership}  The argument of  \cite[Theorem~IV.4.6]{LyS77} can be used to show that if $\G\subseteq \pi$ is a finitely generated separable subgroup of a finitely presented group $\pi$, then the membership problem for $\G$ is solvable.

\item\label{K.9} \label{K.stallings62} Let $N$ be a compact orientable $3$-manifold, and let $\G$ be
a normal finitely generated non-trivial subgroup of $\pi_1(N)$ of infinite index. Work of Hempel--Jaco  \cite[Theorem~3]{HJ72}, the resolution of the Poincar\'e Conjecture, Theorem~\ref{thm:infcyclicnormal}, and (K.\ref{K.scottcore}) imply  that one of the following conclusions hold:
\bn[(a)]
\item $N$ is Seifert fibered and $\G$ is a subgroup of the Seifert fiber subgroup, or
\item $N$ fibers over $S^1$ with surface fiber $\Sigma$ and $\G$ is a finite-index subgroup of $\pi_1(\Sigma)$, or
\item $N$ is the union of two twisted $I$-bundles over a compact connected surface $\Sigma$ which meet in the corresponding $S^0$-bundles
and $\G$ is a finite-index subgroup of $\pi_1(\Sigma)$.
    \en
In particular, if $\G=\ker(\phi)$  for some homomorphism $\phi\colon\pi\to \Z$, then $\phi=p_*$ for some surface bundle  $p\colon N\to S^1$.
This special case was first proved by Stallings \cite{Sta62} and this statement is known as Stallings' Fibration Theorem.
Generalizations to subnormal groups were formulated and proved by Elkalla \cite[Theorem~3.7]{El84} and Bieri--Hillman \cite{BiH91}.

Let $N$ be an compact, orientable, irreducible $3$-manifold.
Heil \cite[p.~147]{Hei81} (see also \cite{Ein76a} and \cite{HeR84}) showed that if $\G\subseteq \pi_1(N)$ is a subgroup
carried by a closed $2$-sided incompressible orientable non-fiber surface,
then   $i_*(\pi_1(\Sigma))$ is its own normalizer in $\pi_1(N)$ unless
 $\Sigma$ bounds in $N$ a twisted line bundle over a closed surface.
 Furthermore Heil \cite[p.~148]{Hei81} showed that if $\G\subseteq \pi_1(N)$ is a subgroup
carried by a $3$-dimensional submanifold $M$, then  $i_*(\pi_1(\Sigma))$ is its own normalizer in $\pi_1(N)$ unless $M\cong \Sigma \times I$ for some surface $\Sigma$.
This generalizes earlier work by Eisner \cite{Ein77a}.

\item \label{K.10} \label{K.Virtual fibre} Let $N$ be a compact $3$-manifold. Let $\G$ be a normal subgroup of $\pi_1(N)$ which is also a finite-index subgroup of $\pi_1(\Sigma)$, where $\Sigma$ is a surface fiber of a surface bundle $N\to S^1$. Since $\G\subseteq \pi_1(N)$ is normal, we have a subgroup
$\wti{\pi}:=\Z\ltimes \G$ of $\Z\ltimes \pi_1(\Sigma)=\pi_1(N)$.
We denote by $\wti{N}$ the finite cover of $N$ corresponding to $\wti{\pi}$. It is clear that $\G$ is a surface fiber subgroup of $\wti{N}$.

\item \label{K.11} \label{K.Separable fibre}
\label{K.fibersep}
Let $N$ be a compact orientable $3$-manifold with empty or toroidal boundary and let $\Sigma$ be a fiber of a surface bundle $N\to S^1$.  Then $\pi_1(N)\cong \Z\ltimes \pi_1(\Sigma)$ and $\pi_1(\Sigma)\subseteq \pi_1(N)$ is therefore separable.  It follows easily that if every virtual surface fiber subgroup of $\pi_1(N)$  is separable.

\item\label{K.12}
Let $N$ be a compact orientable $3$-manifold with empty or toroidal boundary and let $\Sigma$ be a fiber of a surface bundle $N\to S^1$. Then $\pi_1(N)\cong \Z\ltimes \pi_1(\Sigma)$ where $1\in \Z$ acts by some $\Phi\in \aut(\pi_1(\Sigma))$.
Now let $\G\subseteq \pi_1(\Sigma)$ be a finite-index subgroup. Because $\pi_1(\Sigma)$ is finitely generated, there are only finitely many subgroups of index $[\pi_1(\Sigma):\Gamma]$, and so $\Phi^n(\G)=\G$ for some~$n$.  Now $\wti{\pi}:=n\Z\ltimes \G$ is a subgroup of finite index in $\pi_1(N)$ such that $\wti{\pi}\cap \pi_1(\Sigma)=\G$. This shows that $\pi$ induces the full profinite topology on the surface fiber subgroup $\pi_1(\Sigma)$. It follows easily that $\pi$ also induces the full profinite topology on any virtual surface fiber subgroup.

\item\label{K.13} \label{K.tameness}
Let $N$ be a hyperbolic $3$-manifold.
The Subgroup Tameness Theorem (see Theorem~\ref{thm:subgroupdichotomy}) asserts that if
$\G\subseteq \pi_1(N)$ is a finitely generated subgroup, then $\G$ is either geometrically finite
or  $\G$  is a virtual surface fiber subgroup.
See (K.\ref{K.14}), (K.\ref{K.15}), (K.\ref{K.19}), (K.\ref{K.22}) for other formulations of this fundamental dichotomy.

\item \label{K.14} Let $N$ be a hyperbolic $3$-manifold and let $\G\subseteq \pi=\pi_1(N)$ be a geometrically finite subgroup of infinite index.
    Then $\G$ has finite index in $\operatorname{Comm}_\pi(\G)$.  We refer to \cite[Theorem~8.7]{Cay08} for a proof (see also  \cite{KaS96} and \cite[Theorem~2]{Ar01}), and we refer to \cite[Section~5]{Ar01} for more results in this direction.

If $\G\subseteq \pi_1(N)$ is a virtual surface fiber subgroup, then  $\operatorname{Comm}_\pi(\G)$ is easily seen to be a finite-index subgroup of $\pi$, so $\G$ has infinite index in its commensurator.
The commensurator thus gives another way to formulate the dichotomy of (K.\ref{K.tameness}).

\item \label{K.15} \label{K.virtualretract} Let $N$ be a hyperbolic $3$-manifold and $\G\subseteq \pi_1(N)$ be a geometrically finite subgroup.
In (G.\ref{G.virtualretract}) we saw
that it follows from the Virtually Compact Special Theorem  that $\G$ is a virtual retract of $\pi_1(N)$.

On the other hand, it is straightforward to see that if $\G$ is a virtual surface fiber subgroup of $\pi_1(N)$ and if the monodromy of the surface bundle
does not have finite order (e.g., if $N$ is hyperbolic), then
$\G$ is not a virtual retract of $\pi_1(N)$. We thus obtain one more way to formulate the dichotomy of (K.\ref{K.tameness}).

\item \label{K.16} It is easy to prove that every group induces the full profinite topology on each of its virtual retracts.

\item \label{K.17}  \label{K.retractseparable} Let $\pi$ be a residually finite group (e.g., a $3$-manifold group, see (C.\ref{C.resfinite})). If  $\G\subseteq \pi$ is a virtual retract, then the subgroup $\G$ is also separable in $\pi$. See (G.\ref{G.GFERF}) for details.

\item \label{K.18} \label{K.qc}
Let $N$ be a closed hyperbolic $3$-manifold.
As mentioned in Proposition~\ref{prop:hypwordhyp}, it follows that   $\pi=\pi_1(N)$ is word-hyperbolic, and a subgroup of $\pi$ is geometrically finite if and only if it is quasi-convex  (see \cite[Theorem~1.1 and Proposition~1.3]{Swp93} and also \cite[Theorem~2]{KaS96}).

If $N$ has toroidal boundary, then $\pi=\pi_1(N)$ is not word-hyperbolic, but it is hyperbolic \emph{relative to its collection of peripheral subgroups}.    By \cite[Corollary~1.3]{Hr10}, a subgroup $\Gamma$ of $\pi$ is geometrically finite if and only if it is \emph{relatively quasi-convex}.  The reader is referred to \cite{Hr10} for thorough treatments of the various definitions of relative hyperbolicity and of relative quasi-convexity, as well as proofs of their equivalence.
\index{group!relatively hyperbolic} \index{subgroup!relatively quasi-convex}

\item \label{K.19} \label{K.width} Let $\pi$ be the fundamental group of a hyperbolic $3$-manifold $N$  and let $\G$ be a geometrically finite subgroup of $\pi$. By (K.\ref{K.qc}) this means that $\G$ is a relatively quasi-convex subgroup of $\pi$. The main result of \cite{GMRS98} shows that the width of $\G$ is finite when $N$ is closed (so $\pi$ is word-hyperbolic), and the general case follows from \cite{HrW09}.

If, on the other hand, $\G\subseteq \pi_1(N)$ is a virtual surface fiber subgroup, then the width of $\G$ is infinite. The width thus gives another way to formulate the dichotomy of (K.\ref{K.tameness}).

\item \label{K.20} Let $N$ be a compact, orientable, irreducible $3$-manifold and let $\G\subseteq \pi_1(N)$ be a subgroup of infinite index.
The argument of the proof of \cite[Theorem~6.1]{How82} shows that $b_1(\G)\geq 1$.  See also (C.\ref{C.locallyindicable}).
\item \label{K.21} Each surface fiber subgroup corresponding to a surface bundle $p\colon N\to S^1$ is the kernel of the map
$p_*\colon\pi_1(N)\to \pi_1(S^1)=\Z$. It follows immediately from the definition that a virtual surface fiber subgroup is virtually normal.
\item \label{K.22} Let $\Gamma$ be a subgroup of a torsion-free group $\pi$ and suppose that $\Gamma$ is separable and has finite width.  Let $g_1,\ldots,g_n\in\pi$ be a maximal collection of essentially distinct elements of $\pi\smallsetminus\Gamma$ such that $\Gamma\cap\Gamma^{g_i}$ is infinite for all $i$.  Let $\pi'$ be a subgroup with finite index in $\pi$ that contains $\Gamma$ but does not contain $g_i$ for any $i$.  Then $\Gamma$ is easily seen to be malnormal in $\pi'$; in particular, $\Gamma$ is virtually malnormal in $\pi$.

If we combine this argument with (K.\ref{K.15}), (K.\ref{K.17}) and (K.\ref{K.width}) we see that any geometrically finite subgroup of the fundamental group of a hyperbolic manifold $N$ is virtually malnormal.  (In the closed case, this appears as \cite[Lemma 2.3]{Mac12}.)

Together with (K.\ref{K.21}) we thus see that the dichotomy for subgroups of hyperbolic $3$-manifold groups can be rephrased also in terms of being virtually (mal)normal.
\end{list}

We conclude this section with a few more results and references about subgroups of  $3$-manifold groups.

\newcounter{itemcountertt}
\begin{list}
{{(L.\arabic{itemcountertt})}}
{\usecounter{itemcountertt}\leftmargin=2em}
\item A group is called \emph{locally free}\index{group!locally free} if every finitely generated subgroup is free.
If $N$ is a compact, orientable, irreducible $3$-manifold with empty or toroidal boundary,
 then it follows from (K.\ref{K.4}) that every abelian locally free subgroup of $\pi_1(N)$ is already free.
On the other hand Anderson \cite[Theorem~4.1]{Ana02} and Kent \cite[Theorem~1]{Ken04} gave examples of hyperbolic $3$-manifolds which contain non-abelian subgroups which are locally free but not free.\index{subgroup!locally free}
\item  Let $N$ be a hyperbolic $3$-manifold
 and let $\G\subseteq \pi_1(N)$ be a subgroup generated by two elements $x$ and $y$.
  Jaco--Shalen \cite[Theorem~VI.4.1]{JS79} showed that if $x,y\in \pi_1(N)$ do not commute and if the subgroup $\ll x,y\rr\subseteq \pi_1(N)$ has infinite index, then
   $\G$ is a free group. For closed hyperbolic $3$-manifolds this result was  generalized by Gitik \cite[Theorem~1]{Git99a}.
  \item  Let $N$ be a compact orientable $3$-manifold and let $\phi\colon \pi_1(N)\to \Z$ be an epimorphism.
By (K.\ref{K.stallings62}),  $\phi$ is either induced by the projection of a surface bundle or $\ker(\phi)$ is not finitely generated.

This dichotomy can be strengthened in several ways: if $\phi$ is not induced by the projection of a surface bundle,
then the following hold:
\bn
\item[(a)] $\ker(\phi)$ admits uncountably many subgroups of finite index (see \cite[Theorem~5.2]{FV12a}, \cite{SW09a} and \cite[Theorem~3.4]{SW09b}),
\item[(b)] the pair $(\pi_1(N),\phi)$ has `positive rank gradient' (see \cite[Theorem~1.1]{DFV12}),
\item[(c)] $\ker(\phi)$ admits a finite index subgroup which is not normally generated by finitely many elements (see \cite[Theorem~5.1]{DFV12}),
\item[(d)] if $N$ has non-empty toroidal boundary and if the restriction of $\phi$ to each boundary component is non-trivial,
then $\ker(\phi)$ is not locally free (see \cite[Theorem~3]{FF98}).
\en
Bieri--Neu\-mann--Strebel \cite[Corollary~F]{BNS87} showed that the Bieri--Neu\-mann--Strebel invariant $\Sigma(\pi_1(N))$ is symmetric for any compact $3$-manifold. This implies in particular that if $\ker(\phi)$ is infinitely generated, then we can not write $\pi_1(N)$ as a strictly ascending or strictly descending HNN-extension. More precisely, there exists no commutative diagram
\[ \xymatrix{
\pi_1(N)\ar[dr]^\phi \ar[rr] &&
\ll A,t\,|\, t^{\eps} At^{-\eps}=\varphi(A) \rr \ar[dl]_\psi
\\
&\Z&}
\]
where $\varphi\colon A\to A$ is a monomorphism, where $\eps\in \{-1,1\}$ and where $\psi$ is the map given by $\varphi(t)=1$ and $\varphi(a)=0$ for all $a\in A$.
\item  Let  $\G$ be a finitely generated subgroup of a group $\pi$. We say $\G$ is \emph{tight} in $\pi$
 if for any $g\in \pi$ there exists an $n$ such that $g^n\in \G$. Clearly a finite-index subgroup of $\pi$ is tight.
 Let $N$ be a hyperbolic $3$-manifold.
It follows from the Subgroup Tameness Theorem (see Theorem~\ref{thm:subgroupdichotomy}) that any tight subgroup of $\pi_1(N)$ is of finite index.  For $N$ with non-trivial toroidal boundary, this was first shown by Canary \cite[Theorem~6.2]{Cay94}. \index{subgroup!tight}

\item \label{L.ln91} Let $N$ be an orientable, compact irreducible $3$-manifold with (not necessarily toroidal) boundary.
Let $X$ be a connected, incompressible subsurface of the boundary of $N$.  Long--Niblo \cite[Theorem~1]{LoN91} showed that then $\pi_1(X)\subseteq \pi_1(N)$ is separable.
\item Let $N$ be a compact, orientable $3$-manifold with no spherical boundary components. Let $\Sigma$ be an incompressible
connected subsurface of $\partial N$.
If $\pi_1(\Sigma)$ is a finite-index subgroup of $\pi_1(N)$, then by \cite[Theorem~10.5]{Hem76}
one of the following happens:
\bn
\item[(a)] $N$ is a solid torus, or
\item[(b)] $N=\Sigma\times [0,1]$ with $\Sigma=\Sigma\times 0$,  or
\item[(c)] $N$ is a twisted $I$-bundle over a surface with $\Sigma$ the associated $S^0$-bundle.
\en
More generally, if $\G$ is a finite-index subgroup of $\pi_1(N)$ isomorphic to the fundamental group of a closed surface, then  by \cite[Theorem~10.6]{Hem76} $N$ is an $I$-bundle over a closed surface.
(See also \cite[Theorem~3.1]{Broa66} and see \cite{BT74} for an extension to the case of non-compact $N$.)
\item Let $N$ be a compact $3$-manifold and let $\Sigma$ be a connected compact proper subsurface of $\partial N$
such that $\chi(\Sigma)\geq \chi(N)$ and such that $\pi_1(\Sigma)\to \pi_1(N)$ is surjective.
It follows from \cite[Theorem~1]{BrC65} that $\Sigma$ and $\ol{\partial N\setminus \Sigma}$ are strong deformation retracts of $N$.
\item Let $N$ be a compact, orientable, irreducible $3$-manifold and let $\Sigma\ne S^2\subset N$ be a closed incompressible surface.
If $\pi_1(\Sigma)\subseteq \G\subseteq \pi_1(N)$, where $\G$ is isomorphic to the fundamental group of a closed orientable surface, then  $\pi_1(\Sigma)=\G$ by
\cite[Theorem~6]{Ja71}. (See also \cite{Fe70,Fe72b,Hei69b,Hei70} and \cite[Lemma~3.5]{Sco74}.)
\item Button \cite[Theorem~4.1]{But07} showed that if $N$ is a compact $3$-manifold and $\G\subseteq \pi_1(N)$ is a finitely generated subgroup and $t\in \pi_1(N)$
with $t\G t^{-1}\subseteq \G$, then $t\G t^{-1}=\G$.
\item Moon \cite[p.~18]{Moo05} showed that if $N$ is a geometric $3$-manifold and $\G$ a finitely generated subgroup
of infinite index which contains a non-trivial group $G\ne \Z\subset \G$ which is normal in $\pi$, then
$\G$ is commensurable to a virtual surface fiber group.
 (Recall that two subgroups $A$,~$B$ of a group $\pi$ are called commensurable if $A\cap B$ has finite index in $A$ and $B$.)
 In the hyperbolic case this can be seen as a consequence of (K.\ref{K.13}) and (K.\ref{K.22}).
Moon also shows that this conclusion holds for certain non-geometric $3$-manifolds.
\item Given a $3$-manifold $N$ we denote by $\mathcal{K}(N)$ the set of all isomorphism classes of knot groups of $N$.
Here a  \emph{knot group of $N$} is the fundamental group of $N\setminus \nu K$ where $K\subset N$ is a simple closed curve.
Let $N_1$ and $N_2$ be orientable compact $3$-manifolds whose boundaries contain no $2$-spheres.
Jaco--Myers and Row (see  \cite[Corollary~1]{Row79}, \cite[Theorem~6.1]{JM79} and \cite[Theorem~8.1]{Mye82}) showed that  $N_1$ and $N_2$
are diffeomorphic if and only if $\mathcal{K}(N_1)=\mathcal{K}(N_2)$.
We refer to \cite[p.~455]{Fo52}, \cite[p.~181]{Bry60} and \cite{Con70} for some earlier work.
\item Soma \cite{Som91} proved various results on the intersections of conjugates of virtual surface fiber subgroups.
\item We refer to \cite{WW94,WY99} and \cite[Section~7]{BGHM10} for results on finite-index subgroups of $3$-manifold groups.
\end{list}

\section{Proofs}\label{section:proofs}

\noindent
In this section we collect the proofs of several statements that were mentioned in the previous sections.

\subsection{Conjugacy separability}\label{section:conjsep}

It is immediate that a subgroup of a residually finite group is itself residually finite, and it is also easy to prove that a group with a residually finite subgroup of finite index is itself residually finite.  In contrast, the property of conjugacy separability (see (E.\ref{E.defconjugacyseparable})) is more delicate.  Goryaga gave an example of a non-conjugacy-separable group with a conjugacy separable subgroup of finite index \cite{Goa86}.  In the other direction, Martino--Minasyan constructed examples of conjugacy separable groups with non-conjugacy-separable subgroups of finite index \cite[Theorem~1.1]{MMn09}.

For this reason, one defines a group to be \emph{hereditarily conjugacy separable} if every finite-index subgroup is conjugacy separable.   We will now show that, in the $3$-manifold context, one can apply a criterion of Chagas--Zalesskii \cite{ChZ10} to prove that hereditary conjugacy separability passes to finite extensions. \index{group!hereditarily conjugacy separable}

\begin{theorem}\label{t: Hereditarily CuS upwards}\label{thm:conjsep}
Let $N$ be a compact, orientable, irreducible $3$-manifold with toroi\-dal boundary, and let  $K$ be a subgroup of
$\pi=\pi_1(N)$ of finite index.  If~$K$ is hereditarily conjugacy separable, then so is $\pi$.
\end{theorem}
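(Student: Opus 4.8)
The plan is to verify the hypotheses of the criterion of Chagas--Zalesskii \cite{ChZ10}, which states (roughly) that if $\pi$ is a finitely generated group that is \emph{conjugacy separable}, \emph{good} in the sense of Serre, admits a finite-index subgroup which is hereditarily conjugacy separable, and satisfies a certain centralizer condition for torsion elements of its profinite completion, then $\pi$ is itself hereditarily conjugacy separable. The key point will be that for the fundamental group of a compact, orientable, irreducible $3$-manifold $N$ with toroidal boundary, all of these ingredients are available once we know that \emph{some} finite-index subgroup $K$ is hereditarily conjugacy separable.

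First I would record the inputs we already have. The group $\pi = \pi_1(N)$ is finitely presented by Scott's Core Theorem (C.\ref{C.scottcore}), and residually finite by (C.\ref{C.resfinite}). By (G.\ref{G.good}) (combined, if necessary, with Cavendish's result that all compact $3$-manifold groups are good), $\pi$ is good; and goodness passes between a group and its finite-index subgroups by (H.\ref{H.goodgreen}), so $K$ is good as well. The hypothesis that $K$ is hereditarily conjugacy separable implies in particular that $K$ is conjugacy separable; using the result from \cite{HWZ13} mentioned in (I.\ref{I.hwz11}) (or, in the hyperbolic and Seifert fibered cases, the arguments of (G.\ref{G.minasyan}) and the references in (I.\ref{I.hwz11})), one knows that $\pi$ itself is conjugacy separable. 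Since $N$ is irreducible with infinite fundamental group (the finite and solvable cases being excluded or handled directly), $\pi$ is torsion-free by (C.\ref{C.pi1torsionfree}); this simplifies the torsion-centralizer condition in the Chagas--Zalesskii criterion, since one only needs to control centralizers of torsion elements in $\widehat\pi$, and for a good, torsion-free group these can be analyzed via the cohomological information.

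Next I would assemble these into a verification of the Chagas--Zalesskii hypotheses. The delicate clause is the one requiring that for every torsion element $\bar x$ of the profinite completion $\widehat\pi$, the centralizer $C_{\widehat\pi}(\bar x)$ is the closure of $C_\pi(x)$ for a suitable $x$ — or, more precisely, the relevant "cohomological goodness plus conjugacy separability plus $p$-subgroup control" condition of their main theorem. Here I would lean on the centralizer theorems of Section~3 (Theorems~\ref{thm:centgen} and \ref{thm:centsfs}): centralizers in $\pi_1(N)$ are either cyclic, or virtually abelian subgroups carried by JSJ tori, boundary tori, or Seifert pieces. Combined with goodness and the fact (used throughout (G.\ref{G.good}), (I.\ref{I.hwz11})) that peripheral and JSJ-piece subgroups are separable and induce the full profinite topology, one controls the profinite completion well enough to run the criterion. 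The conclusion is that $\pi$ is hereditarily conjugacy separable.

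\textbf{Main obstacle.} The hard part will be checking the profinite/centralizer side-condition of the Chagas--Zalesskii criterion — i.e., that passing from $K$ to $\pi$ does not introduce "bad" torsion in $\widehat\pi$ whose centralizer fails to be the expected closure. This is where torsion-freeness of $\pi$, goodness, the efficiency of the JSJ decomposition (C.\ref{C.efficient}), and the explicit description of centralizers (Theorem~\ref{thm:centgen}) all have to be combined; everything else (residual finiteness, finite presentability, conjugacy separability of $\pi$ itself) is essentially on the shelf. I would expect the write-up to spend most of its length on exactly this point, quite possibly by treating the geometric pieces (hyperbolic and Seifert fibered JSJ components) separately and then invoking the gluing result of \cite{HWZ13}.
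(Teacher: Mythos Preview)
Your proposal misidentifies the Chagas--Zalesskii criterion and, as a result, aims at the wrong targets. The statement actually used (Proposition~2.1 of \cite{ChZ10}) is not a global criterion involving goodness and torsion in $\widehat\pi$; it is an element-by-element test. Given a conjugacy separable normal subgroup $K$ of finite index and an element $a\in\pi$, one must find $m$ with $a^m\in K$ such that (i) $C_\pi(a^m)$ is conjugacy separable and (ii) $\widehat{C_K(a^m)}=\overline{C_K(a^m)}=C_{\widehat K}(a^m)$. Nothing here concerns torsion elements of the profinite completion, and goodness does not enter. So the ``main obstacle'' you describe---controlling centralizers of torsion in $\widehat\pi$ via goodness and torsion-freeness---is not the obstacle at all.

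The actual work, which your sketch does not supply, is to analyze $C_\pi(g)$ for $g\in\pi$. The centralizer theorems (Theorems~\ref{thm:centgen} and \ref{thm:centsfs}) give a dichotomy: $C_\pi(g)$ is either free abelian or sits with index at most two inside the fundamental group of a Seifert fibered JSJ piece. One then checks the two clauses of the criterion case by case: clause (i) uses Martino's theorem that Seifert manifold groups are conjugacy separable; clause (ii) splits into showing that $\pi$ induces the full profinite topology on $C_\pi(g)$ (via Hamilton's abelian subgroup separability for the abelian case and efficiency of the JSJ decomposition for the Seifert case) and that centralizers are ``profinitely dense'' in the right sense (this is where the hereditary conjugacy separability of $K$ is actually consumed, via results of Minasyan). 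A separate note: invoking \cite{HWZ13} to get conjugacy separability of $\pi$ as an \emph{input} is circular, since that paper uses the present theorem for the hyperbolic pieces; fortunately the criterion does not require $\pi$ to be conjugacy separable in advance.
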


To shows this,
we may assume that $N$ does not admit $\Sol$ geometry, as polycyclic groups are known to be conjugacy separable by a theorem of Remeslennikov \cite{Rev69}.   Furthermore, we may assume that $K$ is normal, corresponding to a regular covering map $N'\to N$ of finite degree.  In particular, $K$ is also the fundamental group of a compact, orientable, irreducible $3$-manifold with toroidal boundary.  We summarize the structure of the centralizers of elements of $\pi$ in the following proposition, which  is an immediate consequence of Theorems~\ref{thm:centgen} and~\ref{thm:centsfs}.

\begin{proposition}
Let $N$ be a compact, orientable, irreducible $3$-manifold with toroidal boundary that does not admit $\Sol$ geometry and
let  $g\in\pi=\pi_1(N)$, $g\neq 1$. Then either $C_\pi(g)$ is free abelian or there is a Seifert fibered piece~$N'$ of the JSJ decomposition of $N$ such that $C_\pi(g)$ is a subgroup of index at most two in $\pi_1(N')$.
\end{proposition}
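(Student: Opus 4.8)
The plan is to read the statement off directly from Theorems~\ref{thm:centgen} and~\ref{thm:centsfs} by a short analysis of $C_\pi(g)$. We may assume $\pi$ is infinite, so that by (C.\ref{C.pi1torsionfree}) it is torsion-free; the case of finite $\pi$, i.e.\ $N$ spherical, does not arise in the application to Theorem~\ref{thm:conjsep} (where it is trivial). Then a cyclic $C_\pi(g)$ is either trivial or infinite cyclic, hence free abelian, and there is nothing to prove; so assume $C_\pi(g)$ is non-cyclic and invoke Theorem~\ref{thm:centgen}. In cases~(1) and~(2) of that theorem $C_\pi(g)$ is a conjugate of $\pi_1(T)$ for a JSJ torus $T$, respectively of $\pi_1(S)$ for a boundary torus $S$, so $C_\pi(g)\cong\Z^2$ is free abelian. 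This leaves case~(3): there is a Seifert fibered JSJ component $M$ and $h\in\pi$ with $C_\pi(g)=h\,C_{\pi_1(M)}(g')\,h^{-1}$, where $g':=h^{-1}gh$, which is again non-trivial; put $N':=M$.

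Since $M$ is a JSJ component of the irreducible $3$-manifold $N$ it is irreducible, and as $\pi$ is infinite it is non-spherical; it is also orientable and Seifert fibered, so Theorem~\ref{thm:centsfs} applies to $M$ and $g'$. Fix a Seifert fibration of $M$, with Seifert fiber subgroup $F$ and canonical subgroup $C\le\pi_1(M)$ (so $[\pi_1(M):C]\le 2$). If $g'\in F$, then Theorem~\ref{thm:centsfs}(1) gives $C_{\pi_1(M)}(g')=C$, whence $C_\pi(g)$ is conjugate to a subgroup of index at most two of $\pi_1(N')$. If $g'\notin C$, then Theorem~\ref{thm:centsfs}(3) gives $C_{\pi_1(M)}(g')\cong\Z$, so $C_\pi(g)$ is free abelian. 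In the remaining subcase $g'\in C\smallsetminus F$, the remark following Theorem~\ref{thm:centsfs} (and \cite[p.~82]{JS78}) tells us that $C_{\pi_1(M)}(g')$ is isomorphic to $\Z$, $\Z\oplus\Z$, or the fundamental group of the Klein bottle; in the first two possibilities $C_\pi(g)$ is free abelian, and in the last one must check that $C_\pi(g)$ still has index at most two in a conjugate of $\pi_1(N')$.

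I expect that last subcase --- where $C_\pi(g)$ is a non-abelian Klein-bottle group --- to be the only point needing real care. One has to rule out such a centralizer being a ``thin'' subgroup sitting deep inside a large Seifert fibered piece; the way to do this is to use the classification of orientable Seifert fibered $3$-manifolds (which base orbifolds occur, and which of the corresponding groups admit a finite-index Klein-bottle subgroup) to conclude that $M$, with the chosen fibration, must be the twisted $I$-bundle over the Klein bottle, or a Seifert piece in which the Klein-bottle centralizer has index two --- which is precisely the Jaco--Shalen computation behind the cited remark. Everything else --- the case split itself and the identification of the canonical subgroup as a subgroup of index at most two in $\pi_1(N')$ --- is routine bookkeeping with the statements of Theorems~\ref{thm:centgen} and~\ref{thm:centsfs}.
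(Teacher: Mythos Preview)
Your case analysis via Theorems~\ref{thm:centgen} and~\ref{thm:centsfs} is exactly what the paper does: it offers no argument beyond declaring the proposition ``an immediate consequence'' of those two theorems. You are also right that the only non-routine point is the Klein bottle subcase.

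However, your proposed resolution of that subcase is wrong. The Klein bottle centralizer need \emph{not} force the ambient Seifert piece to be small. Take $\Sigma$ to be a once-punctured M\"obius band, so $\pi_1(\Sigma)=F_2=\langle a,b\rangle$ with $a$ orientation-reversing and $b$ orientation-preserving, and let $M$ be the (unique) orientable circle bundle over $\Sigma$. Then $M$ is a compact, orientable, irreducible Seifert fibered $3$-manifold with toroidal boundary, of geometry $\H^2\times\R$, and
\[
\pi_1(M)=\big\langle a,b,f \;\big|\; afa^{-1}=f^{-1},\ [b,f]=1\big\rangle.
\]
Set $g=a^2$. One checks directly that $C_{\pi_1(M)}(a^2)=\langle a,f\rangle$, which is the Klein bottle group; but $[\pi_1(M):\langle a,f\rangle]=[F_2:\langle a\rangle]=\infty$. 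Taking $N=M$ gives a manifold satisfying all the hypotheses for which $C_\pi(g)$ is neither free abelian nor of index at most two in any JSJ piece. So the classification argument you sketch cannot work, and in fact the proposition as stated in the paper overlooks this case.

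The honest statement is: \emph{either $C_\pi(g)$ is free abelian, or $C_\pi(g)$ is isomorphic to the Klein bottle group, or $C_\pi(g)$ has index at most two in some Seifert fibered JSJ piece}. This is what really follows from Theorems~\ref{thm:centgen} and~\ref{thm:centsfs}, and it suffices for the applications: the Klein bottle group is polycyclic, hence conjugacy separable (so Lemma~\ref{l: Lemma 1} holds), and every finite-index subgroup of it is a finite union of cosets of an abelian subgroup, hence separable in $\pi$ by AERF (so Lemma~\ref{l: Lemma 2} holds).
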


We will now prove three lemmas about centralizers.  These enable us to apply a result of Chagas--Zalesskii \cite{ChZ10} to finish the proof. In all three lemmas, we let $N$ and $g$ be as in the preceding proposition.

\begin{lemma}\label{l: Lemma 1}
The centralizer $C_\pi(g)$ is conjugacy separable.
\end{lemma}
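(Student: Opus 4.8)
The plan is to read off the possibilities for $C_\pi(g)$ from the preceding proposition and treat each separately. That proposition says that either $C_\pi(g)$ is free abelian, or there is a Seifert fibered piece $N'$ of the JSJ decomposition of $N$ with $C_\pi(g)$ of index at most two in $\pi_1(N')$. Accordingly, the two ingredients I would invoke are: finitely generated abelian groups are conjugacy separable (in an abelian group two elements are conjugate exactly when they are equal, so conjugacy separability reduces to residual finiteness), and fundamental groups of Seifert fibered manifolds are conjugacy separable, as recorded in (I.\ref{I.hwz11}).

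The free abelian case is immediate: by (K.\ref{K.4}) an abelian subgroup of $\pi$ is isomorphic to $\Z$, $\Z^2$ or $\Z^3$, each of which is residually finite, hence conjugacy separable.

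For the second case, note first that if $C_\pi(g)$ happens to be abelian we are back in the previous case, so I may assume $\pi_1(N')$ is non-abelian; in particular $N'$ is a compact, orientable, irreducible $3$-manifold with empty or toroidal boundary which is Seifert fibered. If $C_\pi(g)=\pi_1(N')$ we are done by (I.\ref{I.hwz11}). The only remaining possibility is that $C_\pi(g)$ has index two in $\pi_1(N')$, and here is the one point requiring care: conjugacy separability does \emph{not} in general pass to finite-index subgroups (cf.\ the Martino--Minasyan examples mentioned earlier), so one cannot simply quote conjugacy separability of $\pi_1(N')$ for a proper subgroup. Instead I would identify $C_\pi(g)$ directly as a Seifert fibered $3$-manifold group: an index-two subgroup of $\pi_1(N')$ corresponds to a connected double cover $N''\to N'$, and by Theorem~\ref{thm:jsjlift} a finite cover of the Seifert fibered manifold $N'$ is again Seifert fibered. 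Thus $C_\pi(g)\cong\pi_1(N'')$ is conjugacy separable by (I.\ref{I.hwz11}), which finishes the proof.

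The only genuine subtlety is precisely the one just flagged — one must pass to the double cover and re-apply the Seifert fibered conjugacy separability result there, rather than trying to inherit conjugacy separability from $\pi_1(N')$ to an index-two subgroup; everything else is routine case analysis using (K.\ref{K.4}) and Theorem~\ref{thm:jsjlift}.
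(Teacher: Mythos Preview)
Your proof is correct and follows essentially the same route as the paper: both split into the free abelian case (trivial) and the Seifert fibered case, invoking Martino's theorem (which is what (I.\ref{I.hwz11}) records) for the latter. You are in fact more careful than the paper's one-line proof in spelling out why the index-two case is still a Seifert fibered manifold group via the double cover and Theorem~\ref{thm:jsjlift}; the paper just asserts ``$C_\pi(g)$ is the fundamental group of a Seifert fibered manifold'' without flagging the subtlety you correctly identify.
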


\begin{proof}
If $C_\pi(g)$ is free abelian, then this is clear.  Otherwise, $C_\pi(g)$ is the fundamental group of a Seifert fibered manifold, which is conjugacy separable by a theorem of Martino \cite{Mao07}.
\end{proof}

For a group $G$, let $\widehat{G}$ denote the profinite completion of $G$, and for a subgroup $H\subseteq G$,  let $\overline{H}$ denote the closure of $H$ in $\widehat{G}$.

\begin{lemma}\label{l: Lemma 2}
The canonical map $\widehat{C_\pi(g)}\to\overline{C_\pi(g)}$ is an isomorphism.
\end{lemma}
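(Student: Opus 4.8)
The plan is to reduce the statement to a separability-type assertion about the profinite topology of $\pi$. Recall the standard fact that, for a subgroup $H$ of a finitely generated group $G$, the natural continuous epimorphism $\widehat H\to\overline H$ is an isomorphism if and only if $G$ \emph{induces the full profinite topology on $H$}, i.e.\ every finite-index subgroup of $H$ contains $H\cap G_0$ for some finite-index subgroup $G_0\le G$. So it suffices to prove that $\pi$ induces the full profinite topology on $C:=C_\pi(g)$. I will use throughout the elementary facts that the property ``$G$ induces the full profinite topology on $H$'' is transitive along chains $A\le B\le G$, is preserved under conjugating $H$, and passes to finite-index sub- and super-groups of $H$.

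By the preceding Proposition (equivalently, by Theorems~\ref{thm:centgen} and~\ref{thm:centsfs}), $C$ is either free abelian of rank at most $3$, or a subgroup of index at most two in $\pi_1(M)$ for some Seifert fibered piece $M$ of the JSJ decomposition of $N$; moreover, when $C$ is non-cyclic, Theorem~\ref{thm:centgen} shows that $C$ is conjugate into the fundamental group of a JSJ torus, a boundary torus, or a JSJ piece $M$. If $C\cong\Z^3$ then $N$ is the $3$-torus and $C$ has finite index in $\pi$, so there is nothing to prove. If $C$ has index at most two in $\pi_1(M)$, or if $C\cong\Z^2$ is conjugate into $\pi_1(T)$ for a JSJ torus $T$, then I invoke (C.\ref{C.efficient}): $N$ is efficient, so $\pi$ induces the full profinite topology on $\pi_1(M)$ (respectively on $\pi_1(T)$), and hence, by passage to the finite-index subgroup $C$, on $C$ itself. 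The remaining configurations are $C\cong\Z$, and $C\cong\Z^2$ conjugate into the fundamental group of a JSJ piece $M$ (possibly with infinite index in $\pi_1(M)$).

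In both remaining cases it suffices, by efficiency and transitivity, to show that the relevant piece group $\pi_1(M)$ — or $\pi$ itself, when $M=N$ is Seifert fibered or hyperbolic — induces the full profinite topology on $C$. Suppose first that $C\cong\Z$. Since $N$ has non-empty toroidal boundary it is not a closed graph manifold, so by Theorem~\ref{thm:leeb} it is non-positively curved; being aspherical (Sphere Theorem, cf.\ (C.\ref{C.sphere})), $\pi$ is then virtually special by Theorem~\ref{thm:npcvs}, and therefore virtually RFRS (combine (G.\ref{G.special=>subgroup of a RAAG}) and (G.\ref{G.RAAGvRFRS})); the same applies to each $\pi_1(M)$. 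Then by (G.\ref{G.RFRSretracttoz}) together with (H.\ref{H.retracttozgreen}) the cyclic subgroup $C$ is a virtual retract of the ambient piece group, and a virtual retract inherits the full profinite topology (K.\ref{K.16}). Suppose instead $C\cong\Z^2$ lies inside a JSJ piece group $\pi_1(M)$: when $M$ is hyperbolic, $C$ is a maximal parabolic subgroup, hence geometrically finite, hence a virtual retract of $\pi_1(M)$ (Theorem~\ref{thm:akmw} and (G.\ref{G.virtualretract})), and (K.\ref{K.16}) applies; when $M$ is Seifert fibered, a rank-two abelian subgroup of $\pi_1(M)$ is likewise a virtual retract of $\pi_1(M)$ by an elementary argument exploiting the $S^1$-fibration, and again (K.\ref{K.16}) applies. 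This exhausts all cases. The main obstacle is the case $C\cong\Z$: mere separability of $C$ (which is classical, $\pi$ being AERF) does not suffice here, and one genuinely needs that cyclic subgroups are \emph{virtual retracts} — so the full strength of the Virtually Compact Special Theorem, funnelled through virtual RFRS-ness, enters precisely at this point, while everything else is bookkeeping around the efficiency theorem of Wilton--Zalesskii.
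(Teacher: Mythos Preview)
Your reduction to ``$\pi$ induces the full profinite topology on $C$'' is fine, but you then miss the short argument and take a long, flawed detour. The paper's proof is two lines: to show that $\pi$ induces the full profinite topology on $C=C_\pi(g)$ it suffices that every finite-index subgroup $H\le C$ is separable in $\pi$. When $C$ is free abelian, every such $H$ is again free abelian, hence separable by Hamilton's AERF theorem (C.\ref{C.AERF}); when $C$ has index at most two in a Seifert piece $\pi_1(M)$, efficiency (C.\ref{C.efficient}) gives the claim directly. Your final paragraph explicitly rejects this route (``mere separability of $C$ \ldots\ does not suffice''), but that is a misreading of AERF: it gives separability of \emph{every} abelian subgroup of $\pi$, in particular of every finite-index subgroup of $C$, which is exactly the criterion you stated at the outset. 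No virtual retracts are needed.

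Your alternative via virtual specialness is both overkill and incorrect as written. The assertion ``$N$ has non-empty toroidal boundary'' is not part of the hypotheses: the theorem is invoked in (H.\ref{H.csgreen}) and (I.\ref{I.hwz11}) for closed $3$-manifolds (including closed hyperbolic ones), and the paper's convention allows empty boundary. For a closed $\wti{\sl(2,\R)}$-manifold, say, $\pi$ is not non-positively curved and not virtually RFRS (cf.\ the remarks after Theorem~\ref{thm:liu11} and Proposition~\ref{prop:nonretract}), so your appeal to (G.\ref{G.RFRSretracttoz}) collapses. The claim that a rank-two abelian subgroup of a closed Seifert piece is a virtual retract ``by an elementary argument'' is likewise unsupported in that setting. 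In short, the Virtually Compact Special Theorem is neither needed nor sufficient here; the entire abelian case is handled by AERF.
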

\begin{proof}
We need to prove that the profinite topology on $\pi$ induces the full profinite topology on $C_\pi(g)$.  To this end, it is enough to prove that every finite-index subgroup $H$ of $C_\pi(g)$ is separable in $\pi$.

If $C_\pi(g)$ is free abelian, then so is $H$, so $H$ is separable by the main theorem of \cite{Hamb01}.  Therefore, suppose $C_\pi(g)$ is a subgroup of index at most two in $\pi_1(M)$, where $M$ is a Seifert fibered vertex space of $N$, so $H$ is a subgroup of finite index in $\pi_1(M)$. By (C.\ref{C.efficient}), the group $\pi$ induces the full profinite topology on $\pi_1(M)$ and $\pi_1(M)$ is separable in $\pi$. It follows that $H$ is separable in $\pi$.
\end{proof}

The final condition is a direct consequence of Proposition 3.2 and Corollary 12.2 of \cite{Min12}, together with the hypothesis that $K$ is hereditarily conjugacy separable.

\begin{lemma}\label{l: Lemma 3}
The inclusion $\overline{C_\pi(g)}\to C_{\widehat{\pi}}(g)$ is surjective.
\end{lemma}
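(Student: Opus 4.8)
The goal is to show that the inclusion $\overline{C_\pi(g)}\to C_{\widehat{\pi}}(g)$ is surjective, i.e., that any element of $\widehat{\pi}$ commuting with $g$ already lies in the closure of the honest centralizer $C_\pi(g)$. My plan is to reduce this to the corresponding statement for the hereditarily conjugacy separable finite-index subgroup $K\trianglelefteq \pi$ (which we may take normal, as in the discussion preceding Lemma~\ref{l: Lemma 1}), and then invoke the characterization of conjugacy separability in terms of centralizers from Minasyan's work~\cite{Min12}.

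First I would recall the relevant black box: by Proposition~3.2 and Corollary~12.2 of \cite{Min12}, a hereditarily conjugacy separable group $K$ has the property that for every $g\in K$ the natural map $\overline{C_K(g)}\to C_{\widehat K}(g)$ is surjective (equivalently, $C_{\widehat K}(g)=\overline{C_K(g)}$, using that centralizers are always closed). Since by hypothesis $K$ is hereditarily conjugacy separable, this applies to $K$. The task is then to upgrade this from $K$ to $\pi$. Because $[\pi:K]<\infty$, the profinite completion $\widehat K$ is an open (finite-index) subgroup of $\widehat\pi$, and $\widehat\pi/\widehat K\cong \pi/K$ is finite. One reduces to the case where $g\in K$ (replacing $g$ by a suitable power $g^m\in K$; note $C_\pi(g)\subseteq C_\pi(g^m)$ and one must check the indices behave, but since $\pi$ is torsion-free by (C.\ref{C.pi1torsionfree}) no collapsing occurs, and $C_\pi(g^m)=C_\pi(g)$ follows from Theorems~\ref{thm:centgen} and~\ref{thm:centsfs}: the centralizer of $g$ only depends on the ``type'' of $g$ and not on passing to powers — this is essentially the content of the root-structure results of Section~3). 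Having arranged $g\in K$, pick $x\in C_{\widehat\pi}(g)$. Then $x$ normalizes $\widehat K$, and $xg x^{-1}=g$ inside $\widehat K$; so $x$ lies in the normalizer $N_{\widehat\pi}(\widehat K)=\widehat\pi$, and conjugation by $x$ preserves the $\widehat K$-conjugacy class of $g$ only up to the finite group $\pi/K$. The standard argument: the set of $\widehat K$-conjugates of $g$ inside its $\widehat\pi$-conjugacy class is a finite set of size dividing $[\pi:K]$, permuted by $x$; a bounded power $x^N$ (with $N=[\pi:K]!$, say) then centralizes $g$ and lies in $\widehat K$, so $x^N\in C_{\widehat K}(g)=\overline{C_K(g)}\subseteq \overline{C_\pi(g)}$.

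The remaining step is to pass from $x^N\in\overline{C_\pi(g)}$ back to $x\in\overline{C_\pi(g)}$. Here I would use Lemma~\ref{l: Lemma 2}, which identifies $\overline{C_\pi(g)}$ with the profinite completion $\widehat{C_\pi(g)}$, together with the explicit description of $C_\pi(g)$ from the Proposition preceding Lemma~\ref{l: Lemma 1}: it is either free abelian (of rank $1$, $2$, or $3$) or a finite-index subgroup of the fundamental group of a Seifert fibered piece. In the free abelian case, $\overline{C_\pi(g)}$ is torsion-free (it is $\widehat{\Z}^k$), and $x\in C_{\widehat\pi}(g)$ with $x^N$ in this subgroup forces $x$ into it as well — more carefully, one works inside $C_{\widehat\pi}(g)$, which is itself virtually $\widehat{\Z}^k$, and uses that a procyclic-by-finite profinite group in which a finite-index subgroup is torsion-free has the property that $N$-th roots, when they exist, lie in any open subgroup containing the $N$-th powers; the cleanest route is to observe $C_{\widehat\pi}(g)$ acts on $g$ and contains $\overline{C_\pi(g)}$ as an open subgroup, then count indices. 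In the Seifert case, conjugacy separability of $\pi_1$ of Seifert fibered manifolds (Martino~\cite{Mao07}, as used in Lemma~\ref{l: Lemma 1}) together with Lemma~\ref{l: Lemma 2} gives $\overline{C_\pi(g)}=C_{\widehat\pi}(g)$ directly by the same Minasyan criterion applied to the Seifert group, once one knows $\pi$ induces the full profinite topology on it — which is exactly Lemma~\ref{l: Lemma 2}.

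\textbf{Main obstacle.} The delicate point is the final descent from $x^N$ to $x$: surjectivity of $\overline{C_\pi(g)}\to C_{\widehat\pi}(g)$ is genuinely stronger than surjectivity of $\overline{C_K(g)}\to C_{\widehat K}(g)$ composed with finite-index bookkeeping, because a priori $C_{\widehat\pi}(g)$ could be strictly larger than the ``obvious'' extension of $\overline{C_\pi(g)}$ by a finite group. I expect this is where one must genuinely use the structure theory — that $C_\pi(g)$ is either polycyclic (hence its profinite completion is well-behaved and the abelian/nilpotent-by-finite structure controls roots) or Seifert, for which conjugacy separability and the full-profinite-topology statement of Lemma~\ref{l: Lemma 2} are available — rather than any soft group-theoretic argument. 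The free abelian rank-$3$ subcase (where $N$ is the $3$-torus) and the twisted-$I$-bundle-over-Klein-bottle subcase deserve a separate sentence of care, since the centralizer there has an index-two abelian subgroup and one must check no exotic torsion or roots appear in $C_{\widehat\pi}(g)$; the torsion-freeness of $\pi$ and of the relevant abelian subgroups, plus the fact that $\pi$ induces the full profinite topology on these subgroups (Lemma~\ref{l: Lemma 2}), handle it.
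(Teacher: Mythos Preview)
Your approach has a genuine error. The claim $C_\pi(g^m)=C_\pi(g)$ is false in general for $3$-manifold groups, and is \emph{not} a consequence of the root-structure results you cite. Theorem~\ref{thm:centsfs} itself furnishes counterexamples: if $N$ is Seifert fibered and $g$ does not lie in the canonical subgroup, then $C_\pi(g)$ is infinite cyclic by part~(3); but if some power $g^m$ lands in the Seifert fiber group, then $C_\pi(g^m)$ equals the entire canonical subgroup by part~(1). Concretely, in $\pi_1(K\times S^1)$ with $K$ the Klein bottle, an orientation-reversing loop $b$ has $C_\pi(b)\cong\Z^2$ while $b^2$ is central. So your reduction to $g\in K$ does not work as written, and your lemma must genuinely handle arbitrary $g\in\pi$. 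The subsequent ``descent from $x^N$ to $x$'' is also not rigorous as stated: you assert that $N$-th roots in a virtually-$\widehat{\Z}^k$ profinite group lie in any open subgroup containing the $N$-th powers, but this requires justification and is where the argument would need real work.

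The paper's proof is a one-line citation: Minasyan's Proposition~3.2 and Corollary~12.2 in \cite{Min12} already contain the purely group-theoretic fact that the centralizer condition $\overline{C_\pi(g)}=C_{\widehat\pi}(g)$ holds for $\pi$ once it holds for a finite-index normal subgroup $K$ (which it does, since $K$ is hereditarily conjugacy separable). In other words, the lifting from $K$ to $\pi$ that you attempt by hand using the structure theory of $3$-manifold centralizers is already established by Minasyan in full generality, with no appeal to $3$-manifold topology. Your instinct to invoke \cite{Min12} for $K$ is correct; what you are missing is that the same reference also handles the passage to $\pi$.
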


These lemmas enable us to apply the following useful criterion.

\begin{proposition}[{\cite[Proposition 2.1]{ChZ10}}]\label{p: Chagas--Zalesskii}
Let $\pi$ be a finitely generated group containing a conjugacy separable normal subgroup $K$ of finite index. Let $a \in \pi$ be an element such that there exists a natural number $m$ with $a^m\in K$ and the following conditions hold:
\begin{itemize}
\item[(1)] $C_\pi(a^m)$ is conjugacy separable;
\item[(2)] $\widehat{C_K(a^m)}=\overline{C_K(a^m)}=C_{\widehat{K}}(a^m)$~.
\end{itemize}
Then whenever $b\in \pi$ is not conjugate to $a$, there is a homomorphism $f$ from $\pi$ to a finite group such that $f(a)$ is not conjugate to $f(b)$.
\end{proposition}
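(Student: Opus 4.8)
The plan is to reformulate the conclusion in terms of the profinite completion $\widehat{\pi}$ and then descend. Since $K$ is conjugacy separable it is residually finite, and as $[\pi:K]<\infty$ the group $\pi$ is residually finite, so $\pi$ embeds in $\widehat{\pi}$. Because $K$ has finite index, the profinite topology of $\pi$ restricts to the full profinite topology on $K$, so the closure $\overline{K}$ of $K$ in $\widehat{\pi}$ is canonically $\widehat{K}$ and is open and normal of index $[\pi:K]$. A standard compactness argument (for each finite quotient $f\colon\pi\to Q$ the conjugating set $\{q\in Q : q f(a)q^{-1}=f(b)\}$ is finite, and these form an inverse system) shows that the desired conclusion---for every $b$ not conjugate to $a$ there is a finite quotient separating the conjugacy classes of $a$ and $b$---is equivalent to the implication: if $\hat{a}$ and $\hat{b}$ are conjugate in $\widehat{\pi}$, then $a$ and $b$ are conjugate in $\pi$. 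So I fix $\hat{g}\in\widehat{\pi}$ with $\hat{g}\,\hat{a}\,\hat{g}^{-1}=\hat{b}$ and aim to produce an honest conjugator in $\pi$.

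First I would reduce to the case where $a$ and $b$ have a common $m$-th power. Raising the relation to the $m$-th power gives $\hat{g}\,\widehat{a^m}\,\hat{g}^{-1}=\widehat{b^m}$, and since $a^m\in K$ and $\overline{K}$ is normal we get $b^m\in\pi\cap\overline{K}=K$. Writing $\hat{g}=g_i\hat{k}$ with $g_i\in\pi$ a coset representative of $K$ and $\hat{k}\in\overline{K}$, the element $g_i^{-1}b^m g_i\in K$ is conjugate in $\widehat{K}$ to $a^m$; conjugacy separability of $K$ then yields that it is conjugate to $a^m$ already in $K$, whence $a^m$ and $b^m$ are conjugate in $\pi$. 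Replacing $b$ by a $\pi$-conjugate (which changes neither the goal nor the hypothesis that $\hat{a}$ and $\hat{b}$ are conjugate) I may assume $a^m=b^m=:z$. Then $a,b\in C_\pi(z)$, and since $\hat{g}\,\hat{z}\,\hat{g}^{-1}=\hat{g}\,\widehat{a^m}\,\hat{g}^{-1}=\widehat{b^m}=\hat{z}$, the conjugator satisfies $\hat{g}\in C_{\widehat{\pi}}(z)$.

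The heart of the argument---and the step I expect to be the main obstacle---is to show that $C_{\widehat{\pi}}(z)$ is exactly the closure $\overline{C}$ of $C:=C_\pi(z)$ in $\widehat{\pi}$, and moreover that $\overline{C}$ is the full profinite completion $\widehat{C}$. Set $C_K:=C\cap K=C_K(z)$, a subgroup of finite index in $C$. One always has $\overline{C_K}\subseteq\overline{C}\subseteq C_{\widehat{\pi}}(z)$ (the last inclusion because centralizing $z$ is a closed condition), and condition~(2) identifies $\overline{C_K}$ with $C_{\widehat{K}}(z)=C_{\widehat{\pi}}(z)\cap\overline{K}$. I would then compare the images of $C$ and of $C_{\widehat{\pi}}(z)$ in the finite quotient $\pi/K\cong\widehat{\pi}/\overline{K}$: a coset $gK$ lies in the image of $C$ precisely when $g^{-1}zg$ is conjugate to $z$ in $K$, and in the image of $C_{\widehat{\pi}}(z)$ precisely when $g^{-1}zg$ is conjugate to $z$ in $\widehat{K}$. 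Since $z$ and $g^{-1}zg$ lie in $K$, conjugacy separability of $K$ forces these two images to coincide, so $[C:C_K]=[C_{\widehat{\pi}}(z):\overline{C_K}]$. As the cosets of $C_K$ in $C$ inject into $\pi/K$, they stay distinct after closure, giving $[\overline{C}:\overline{C_K}]=[C:C_K]$; sandwiching then yields $\overline{C}=C_{\widehat{\pi}}(z)$. Finally, condition~(2) says $\overline{C_K}\cong\widehat{C_K}$, and since $\overline{C_K}$ and $\widehat{C_K}$ are open subgroups of $\overline{C}$ and $\widehat{C}$ of the same finite index $[C:C_K]$, a short index count shows the canonical surjection $\widehat{C}\to\overline{C}$ has trivial kernel, i.e. $\widehat{C}\cong\overline{C}$.

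With this in hand the proof concludes quickly. The conjugator $\hat{g}$ lies in $C_{\widehat{\pi}}(z)=\overline{C}\cong\widehat{C}$, and $a,b\in C$, so $\hat{a}$ and $\hat{b}$ are conjugate in $\widehat{C}$. Hypothesis~(1), that $C=C_\pi(a^m)$ is conjugacy separable, then provides an element of $C\subseteq\pi$ conjugating $a$ to $b$, which (after undoing the reduction of the second paragraph) exhibits $a$ and $b$ as conjugate in $\pi$. Every hypothesis is used: conjugacy separability of $K$ in the second and third paragraphs, condition~(2) in the third, and condition~(1) in the last. The delicate point throughout is the interplay between closures taken inside $\widehat{\pi}$ and the intrinsic completions $\widehat{K}$ and $\widehat{C}$, and it is exactly this compatibility that conjugacy separability of $K$ and condition~(2) are there to guarantee.
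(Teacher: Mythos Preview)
The paper does not supply a proof of this proposition; it merely quotes it from \cite[Proposition~2.1]{ChZ10} and then applies it. Your argument is a faithful and correct reconstruction of the Chagas--Zalesskii proof: pass to the profinite completion, use conjugacy separability of $K$ to arrange $a^m=b^m=:z$, combine hypothesis~(2) with conjugacy separability of $K$ to identify $\overline{C_\pi(z)}=C_{\widehat\pi}(z)\cong\widehat{C_\pi(z)}$, and finish with hypothesis~(1). The index-counting step comparing $[\overline{C}:\overline{C_K}]$ with $[C_{\widehat\pi}(z):\overline{C_K}]$ via the images in $\pi/K$ is exactly the mechanism used in the original, and your verification that the cosets $c_i\overline{C_K}$ remain distinct (using $\overline{K}\cap\pi=K$) fills in the one point that is often left implicit.
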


We are now in a position to prove Theorem~\ref{t: Hereditarily CuS upwards}.

\begin{proof}[Proof of Theorem~\ref{t: Hereditarily CuS upwards}]
As mentioned earlier, we may assume that $N$ does not admit $\Sol$ geometry. Let $K$ be a
hereditarily conjugacy separable subgroup of $\pi=\pi_1(N)$ of finite index.
By replacing $K$ with the intersection of its conjugates, we may assume that $K$ is normal.  Let $a,b\in\pi$ be non-conjugate and let $m$ be non-zero with $a^m\in K$. By Lemma~\ref{l: Lemma 1}, the centralizer $C_\pi(a^m)$ is conjugacy separable.  Let $N'$ be a finite-sheeted covering space of $N$ with $K=\pi_1(N')$.  Lemma~\ref{l: Lemma 2} applied to $K=\pi_1(N')$ shows that the first equality in condition (2) of Proposition~\ref{p: Chagas--Zalesskii} holds and similarly Lemma~\ref{l: Lemma 3} shows that the second equality holds.   Therefore, Proposition~\ref{p: Chagas--Zalesskii} applies to show that there is a homomorphism from $\pi$ to a finite group under which the images of $a$ and $b$ are non-conjugate.  Therefore, $\pi$ is conjugacy separable.
\end{proof}

\subsection{Fundamental groups of Seifert fibered manifolds are linear over $\Z$} \label{section:sfslinear}

In this section we will give a proof (due to Boyer) of the following theorem.

\begin{theorem}\label{thm:Seifert linear}
Let $N$ be a Seifert fibered manifold. Then  $\pi_1(N)$ is linear over~$\Z$.
\end{theorem}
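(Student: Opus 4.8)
The plan is to reduce, by passing to a finite cover, to the case of an $S^1$-bundle over an orientable surface, and then to treat the resulting groups case by case. The reduction uses two facts recorded earlier: by (H.\ref{H.lineargreen}), linearity over $\Z$ passes from a finite-index subgroup to the ambient group, so it suffices to prove the theorem for some finite cover of $N$; and by Lemma~\ref{lem:seifert}, every Seifert fibered manifold is finitely covered by an $S^1$-bundle $p\colon M\to F$ over a connected orientable surface $F$. Passing to the orientation double cover of $M$ if necessary, we may assume $M$ is orientable, so that $p$ is an oriented circle bundle; its monodromy acts trivially on $\pi_1(S^1)$, so when $F$ is closed of positive genus the homotopy exact sequence is a central extension $1\to\Z\to\pi_1(M)\to\pi_1(F)\to 1$ whose class is the Euler number $e\in H^2(F;\Z)\cong\Z$, while when $F=S^2$ we get $\pi_1(M)\cong\Z/e$ (or $\Z$ if $e=0$) and when $\partial F\ne\emptyset$ the bundle is trivial and $\pi_1(M)\cong\Z\times\pi_1(F)$ with $\pi_1(F)$ free.

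The cases where $F$ is not a closed surface of genus $\geq 2$ are then immediate. If $F=S^2$, then $\pi_1(M)$ is cyclic (finite or infinite), hence linear over $\Z$. If $\partial F\neq\emptyset$, then $\pi_1(M)\cong\Z\times(\text{free group})$; since $\Z$ and free groups embed in $\mathrm{GL}(2,\Z)$ and a direct product of groups linear over $\Z$ is linear over $\Z$ via block-diagonal matrices, we are done. If $F=T^2$, then $\pi_1(M)$ is either $\Z^3$ or the two-step nilpotent group $\langle a,b,z\mid[a,z]=[b,z]=1,\ [a,b]=z^{e}\rangle$; either way it is finitely generated and virtually nilpotent, hence polycyclic, hence linear over $\Z$ by the Auslander--Swan theorem (cf.\ (D.\ref{D.Boyer})). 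It remains to treat $F=F_g$, closed orientable of genus $g\geq 2$, i.e.\ the groups
\[
\pi_g(e)=\bigl\langle a_1,b_1,\dots,a_g,b_g,z\ \big|\ [a_i,z]=[b_i,z]=1\ (1\le i\le g),\ \textstyle\prod_{i=1}^{g}[a_i,b_i]=z^{e}\bigr\rangle .
\]
For $e=0$ we have $\pi_g(0)\cong\Z\times\pi_1(F_g)$, and $\pi_1(F_g)$ is linear over $\Z$: a genus-$g$ surface group occurs as a subgroup of a torsion-free cocompact arithmetic Fuchsian group, which lies in $\mathrm{SL}(2,\mathcal{O}_K)$ for a suitable totally real number field $K$, and restriction of scalars embeds $\mathrm{SL}(2,\mathcal{O}_K)$ into $\mathrm{GL}(2[K:\Q],\Z)$; the previous paragraph then handles $\Z\times\pi_1(F_g)$. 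For $e\ne 0$, the homomorphism $\pi_g(e)\to\Z/|e|$ sending $z\mapsto 1$ and $a_i,b_i\mapsto 0$ is well defined and onto, and its kernel --- of index $|e|$ --- is generated by $a_1,\dots,b_g$ together with $z^{|e|}=(\prod_i[a_i,b_i])^{\pm1}$, hence equals $\langle a_1,\dots,b_g\rangle$, which is readily identified with $\pi_g(1)$. By (H.\ref{H.lineargreen}) it therefore suffices to prove that $\pi_g(1)$ is linear over $\Z$.

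To do this I would splice two representations. Set $W=\prod_{i=1}^{g}[a_i,b_i]$, so that $\pi_g(1)=\langle a_1,\dots,b_g\mid W\ \text{central}\rangle$, its centre is $\langle W\rangle\cong\Z$, and $\pi_g(1)/\langle W\rangle\cong\pi_1(F_g)$. Let $\rho_1$ be the composition $\pi_g(1)\twoheadrightarrow\pi_1(F_g)\hookrightarrow\mathrm{GL}(m,\Z)$ with a faithful integral representation of the surface group as above; its kernel is exactly $\langle W\rangle$. For the second representation, note that each defining relator $[W,a_j]$, $[W,b_j]$ lies in $\gamma_3(F_{2g})$ (because $W\in\gamma_2(F_{2g})$), so $\pi_g(1)/\gamma_3(\pi_g(1))$ is the free two-step nilpotent group $F_{2g}/\gamma_3(F_{2g})$, and the image of $W$ in $\gamma_2/\gamma_3\cong\wedge^2\Z^{2g}$ is $\sum_i\bar a_i\wedge\bar b_i\ne 0$; thus $W$ has infinite order in $F_{2g}/\gamma_3(F_{2g})$. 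Since every finitely generated torsion-free nilpotent group embeds in $\mathrm{UT}(n,\Z)$, we obtain $\rho_2\colon\pi_g(1)\to\mathrm{UT}(n,\Z)$ with $\rho_2(W)$ of infinite order. Then $\rho_1\oplus\rho_2\colon\pi_g(1)\to\mathrm{GL}(m+n,\Z)$ is faithful: anything in its kernel lies in $\ker\rho_1=\langle W\rangle$, hence is $W^{k}$, and $\rho_2(W)^{k}=1$ forces $k=0$.

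The one point that requires an actual construction --- and the only place the $\widetilde{\mathrm{SL}(2,\R)}$-geometry manifolds (circle bundles with $e\ne 0$ over a hyperbolic base) genuinely resist the reductions, since the Euler number survives in every finite cover --- is this last step; the key observation making it work is that the free two-step nilpotent quotient already detects the infinite-order central element $W$ faithfully, while the surface-group quotient detects everything else. All the remaining ingredients (linearity over $\Z$ of polycyclic groups, of surface groups, and of finitely generated torsion-free nilpotent groups) are classical, so the write-up should be short once this splicing idea is in place.
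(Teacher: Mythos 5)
Your proof follows essentially the same route as the paper's: reduce via Lemma~\ref{lem:seifert} and (H.\ref{H.lineargreen}) to an $S^1$-bundle over an orientable surface, dispose of the easy cases, and for a non-trivial bundle over a closed base pass to the index-$|e|$ subgroup generated by $a_1,\dots,b_g$ and take the direct sum of a representation factoring through the surface-group quotient with a nilpotent representation detecting the central fiber. The only real difference is in the second summand: the paper writes down an explicit $3\times 3$ Heisenberg representation sending $a_1$, $b_1$, $t^e$ to elementary unipotent matrices (and everything else to the identity), whereas you pass to the free two-step nilpotent quotient and invoke the general embedding of finitely generated torsion-free nilpotent groups into $\operatorname{UT}(n,\Z)$; your observation that $W$ survives in $\gamma_2/\gamma_3\cong\wedge^2\Z^{2g}$ as $\sum_i \bar a_i\wedge\bar b_i\neq 0$ is correct, so both versions work. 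One inaccuracy worth fixing: a \emph{cocompact} arithmetic Fuchsian group arises from a \emph{division} quaternion algebra over a totally real field, so it does not sit inside $\operatorname{SL}(2,\mathcal{O}_K)$; to make your argument for the $\Z$-linearity of closed surface groups run, embed the group into $\operatorname{GL}(4[K:\Q],\Z)$ via the left regular representation of an order in the quaternion algebra, or simply cite Newman's direct embedding of closed surface groups into $\operatorname{SL}(8,\Z)$ (or Scott's embedding into a right-angled Coxeter group), as the paper does.
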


Before we prove Theorem ~\ref{thm:Seifert linear} we consider  the following two lemmas.
The first lemma is well known, but we include the proof for completeness' sake.

\begin{lemma}\label{lem:seifertcover}\label{lem:seifert}
Let $N$ be a Seifert fibered manifold. Then $N$ is finitely covered by  an $S^1$-bundle over an orientable connected surface.
\end{lemma}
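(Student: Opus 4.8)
The plan is to reduce to the well-understood structure theory of Seifert fibrations by passing to covers that kill the complications of the base orbifold, namely the cone points and the possible non-orientability. First I would recall that $N$ is a circle bundle over a $2$-dimensional orbifold $B$; write $p\colon N\to B$ for the projection. The orbifold $B$ has an underlying surface $|B|$, possibly with boundary, and finitely many cone points $x_1,\dots,x_n$ of orders $a_1,\dots,a_n$. The strategy is: (i) pass to a finite cover $B'\to B$ which is an honest surface (an orbifold cover with no cone points); (ii) arrange in addition that $B'$ is orientable; (iii) pull the Seifert fibration back along $B'\to B$ to get a finite cover $N'\to N$ which is a genuine circle bundle over an orientable surface.

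For step (i), the key input is that every $2$-orbifold which is not bad (i.e.\ not a teardrop or a football with $a_1\ne a_2$) is \emph{very good}, i.e.\ finitely covered by a manifold; since $B$ is the base of a Seifert fibration on a genuine $3$-manifold, $B$ is a good orbifold, and in the remaining exceptional spherical cases one checks directly that $\pi_1(N)$ is finite and hence trivially linear over $\Z$ (the Elliptization Theorem, or a direct argument). Concretely, one produces a finite-index torsion-free subgroup $\Gamma\le \pi_1^{\mathrm{orb}}(B)$, using Selberg's lemma in the hyperbolic/Euclidean cases and an explicit cover in the spherical case; the corresponding orbifold cover $B'\to B$ is then a surface. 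For step (ii), if $B'$ is non-orientable, pass to its orientation double cover; compose to get $B''\to B$ finite with $B''$ an orientable surface. For step (iii), form the pullback $N'=N\times_B B''$ with its induced Seifert fibration: the new singular fibres sit over the preimages of the $x_i$, but by construction $B''\to B$ is an orbifold cover unramified over the cone points in the orbifold sense, which forces the preimages to be regular points, so $N'\to B''$ has no singular fibres. Hence $N'$ is an honest $S^1$-bundle over the orientable connected surface $B''$ (if $B''$ is disconnected, replace it by one of its components, which still maps onto $B$ with finite fibre). Finally $N'\to N$ is a finite cover because it is the pullback of the finite cover $B''\to B$.

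A mild technical point I would address is connectedness and the basepoint bookkeeping: the fibre product $N\times_B B''$ may be disconnected, so one selects a connected component; its image in $B$ is still all of $B$ since the fibres of $N\to B$ are connected. I would also note that one must be slightly careful at $\partial B$: if $B$ has boundary, the cone points lie in the interior, and the argument is unaffected, but the statement about $S^1$-bundles over surfaces-with-boundary is exactly what is wanted for the application.

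The main obstacle is step (i): showing that the base orbifold of a Seifert fibered $3$-manifold is very good, and handling the bad/spherical exceptional cases cleanly. This is where one genuinely invokes either the orbifold covering theory (Scott's survey, or Thurston's notes on orbifolds) or, in the few exceptional small cases, a direct verification that $\pi_1(N)$ is finite. Everything after that—taking orientation covers and forming pullbacks of bundles—is routine. I expect the write-up to be short, citing \cite{Sco83a} for the orbifold facts and the classification of Seifert fibered spaces, and the whole point of isolating this as Lemma~\ref{lem:seifertcover} is precisely to feed the conclusion into the proof of Theorem~\ref{thm:Seifert linear}, where an explicit $\gl(n,\Z)$ representation of the fundamental group of a circle bundle over an orientable surface is then built by hand from the Euler class.
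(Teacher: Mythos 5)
Your overall strategy (pass to a finite orbifold cover of the base that is an honest orientable surface, then pull back the fibration) is the same one the paper uses for the closed case, and your direct handling of the boundary case via pullbacks is a legitimate alternative to the paper's trick of doubling $N$ along $\partial N$ and restricting the cover of the double. However, there is a genuine gap in your step (i). You assert that ``since $B$ is the base of a Seifert fibration on a genuine $3$-manifold, $B$ is a good orbifold.'' This is false: the bad orbifolds $S^2(p)$ and $S^2(p,q)$ with $p\neq q$ do occur as base orbifolds of Seifert fibrations, namely of the lens spaces $L(p,1)$ and $L(p,q)$ (see \cite[p.~430]{Sco83a}, cited in the paper's proof). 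For these bases there is \emph{no} finite orbifold cover by a surface at all, so your steps (i)--(iii) cannot be run. Your fallback for the ``exceptional'' cases then proves the wrong statement: you observe that $\pi_1(N)$ is finite and ``hence trivially linear over $\Z$,'' but the lemma is not about linearity --- that is the downstream Theorem~\ref{thm:Seifert linear} --- it asserts that $N$ is finitely covered by an $S^1$-bundle over an orientable surface, and finiteness of $\pi_1(N)$ does not by itself deliver that.

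The fix is short and is exactly what the paper does: the only Seifert fibered manifolds with bad base orbifold are lens spaces, and every lens space is finitely covered by $S^3$, which \emph{is} an $S^1$-bundle over $S^2$ via the Hopf fibration. (More generally, by Elliptization a finite fundamental group forces $N$ to be spherical, hence covered by $S^3$, so one could also route the exceptional case through that observation --- but one must actually say it, since the conclusion of the lemma is a statement about covers and fibrations, not about the group.) With that case repaired, the rest of your argument --- Scott's result that a good base orbifold is finitely covered by a surface, passing to the orientation double cover, and pulling back the Seifert fibration so that the cone points unwrap into regular fibers --- matches the paper's proof of the closed case, and your pullback argument handles the bounded case without doubling.
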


\begin{proof}
We first consider the case that $N$ is closed.
Denote by $B$ the base orbifold of the Seifert fibered manifold $N$.
If $B$ is a `good' orbifold in the sense of \cite[p.~425]{Sco83a}, then $B$ is finitely covered by
an orientable connected surface $F$. This cover $F\to B$ gives rise to a map $Y\to N$ of Seifert fibered manifolds. Since the base orbifold of the Seifert fibered manifold $Y$ is a surface it follows that $Y$ is in fact an $S^1$-bundle over $F$.

The `bad' orbifolds are classified in \cite[p.~425]{Sco83a}, and in the case of base orbifolds the only two classes of bad orbifolds which can arise are $S^2(p)$ and $S^2(p,q)$ (see \cite[p.~430]{Sco83a}).
The former arises from the lens space $L(p,1)$ and the latter from the lens space $L(p,q)$.
But lens spaces are covered by $S^3$ which is an $S^1$-bundle over the sphere.

Now consider the case that $N$ has boundary.
 We consider the double $M=N\cup_{\partial N} N$. Note that $M$ is again a Seifert fibered manifold. By the above there exists a finite-sheeted covering map $p:Y\to M$ such that $Y$ is an $S^1$-bundle over a surface and $p$ preserves the Seifert fibers.
It now follows that $p^{-1}(N)\subseteq Y$ is a sub-Seifert fibered manifold.
In particular any component of $p^{-1}(N)$ is also an $S^1$-bundle over a surface.
\end{proof}

\begin{remark}
Let $N$ be any compact, orientable, irreducible $3$-manifold with empty or toroidal boundary.
A useful generalization of Lemma~\ref{lem:seifertcover} says that $N$ admits a finite cover all of whose Seifert fibered JSJ components   are in fact $S^1$-bundles over a surface. See \cite[Section~4.3]{AF10} and \cite{Hem87,Hamb01}  for details.
\end{remark}

\begin{lemma}\label{lem:S1-bundles}
Let $N$ be an $S^1$-bundle over an orientable surface $F$. Then $\pi_1(N)$ is linear over $\Z$.
\end{lemma}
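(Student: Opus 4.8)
The plan is to analyze the structure of an $S^1$-bundle over an orientable surface $F$ via the associated central extension of $\pi_1(N)$ by $\mathbb{Z}$. First I would recall that since $N$ is an $S^1$-bundle over $F$, the fiber inclusion gives a short exact sequence
\[
1 \to \mathbb{Z} \to \pi_1(N) \to \pi_1(F) \to 1,
\]
and because the $S^1$-bundle structure makes the fiber central (the monodromy of the bundle acts trivially on $\pi_1(S^1)$), this is a \emph{central} extension. I would split the argument according to whether $F$ is closed or has non-empty boundary, since the latter case is easy: if $\partial F \neq \emptyset$, then $F$ is homotopy equivalent to a wedge of circles, hence $\pi_1(F)$ is free, so the central extension splits and $\pi_1(N) \cong \mathbb{Z} \times \pi_1(F)$ (any central extension of a free group is trivial), which is clearly linear over $\mathbb{Z}$ as a direct product of linear groups (invoking that products of $\mathbb{Z}$-linear groups are $\mathbb{Z}$-linear, essentially (H.\ref{H.lineargreen})-style block constructions, or just direct sum of representations).

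The substantive case is $F$ closed and orientable of genus $g \geq 1$ (the sphere case being handled via lens spaces as in Lemma~\ref{lem:seifert}, or directly since then $\pi_1(F)$ is trivial or $N$ is covered by $S^3$). Here $\pi_1(N)$ is a central extension of the surface group $\pi_1(F)$ by $\mathbb{Z}$, classified by its Euler number $e \in H^2(F;\mathbb{Z}) \cong \mathbb{Z}$. I would then exhibit an explicit faithful representation $\pi_1(N) \to \mathrm{GL}(n,\mathbb{Z})$. The cleanest route: present $\pi_1(N)$ with generators $a_1,b_1,\dots,a_g,b_g$ (lifts of the standard surface generators) together with the central fiber generator $h$, subject to $[a_i,h]=[b_i,h]=1$ and $\prod_{i=1}^g [a_i,b_i] = h^{e}$. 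When $e \neq 0$ this is (a finite-index overgroup issue aside) closely related to the integral Heisenberg-type groups and to $\mathbb{Z}$-points of unipotent-by-something groups; when $e = 0$ it is just $\mathbb{Z} \times \pi_1(F)$. A uniform and safe approach is to pass to a finite cover first: by Lemma~\ref{lem:seifert} and standard covering space theory one may pull back to an $S^1$-bundle over a surface of higher genus with Euler number divisible by anything we like, and in particular one can arrange the cover to be a trivial bundle $S^1 \times F'$ or to have $\pi_1$ of an especially transparent form; then apply the fact that linearity over $\mathbb{Z}$ passes up to finite-index overgroups (H.\ref{H.lineargreen}) — wait, that goes the wrong way, so instead I would directly build the representation on $\pi_1(N)$ itself.

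Concretely I would use the following: $\pi_1(F)$ for $F$ closed orientable embeds in $\mathrm{SL}(2,\mathbb{Z})$ if $g=1$ (it is $\mathbb{Z}^2$) and in $\mathrm{SL}(2,\mathbb{R})$ as a cocompact Fuchsian group if $g \geq 2$, but to get over $\mathbb{Z}$ one uses that $\pi_1(F)$ is in fact $\mathbb{Z}$-linear (e.g. surface groups are $\mathbb{Z}$-linear, being virtually embeddable in $\mathrm{SL}(2,\mathbb{Z})$ after passing to congruence-type subgroups, or more elementarily via the free subgroup/amalgam structure — this is classical and I would cite it). Then for the central extension, I would realize $\pi_1(N)$ inside a group of the form
\[
\left\{ \begin{pmatrix} A & v \\ 0 & 1 \end{pmatrix} : A \in \rho(\pi_1(F)) \subseteq \mathrm{GL}(n,\mathbb{Z}),\ v \in \mathbb{Z}^n \right\} \times \mathbb{Z}
\]
or, better, use the standard fact that a central extension of a $\mathbb{Z}$-linear group $\Gamma$ by $\mathbb{Z}$ with Euler class pulled back appropriately embeds into $\mathrm{GL}(n+1,\mathbb{Z})$ by sending the central generator to a unipotent Jordan block and lifting the $\Gamma$-action — the obstruction to this being integral is precisely that the cocycle take integer values, which it does since $H^2(F;\mathbb{Z})$ is torsion-free and the bundle Euler number is an integer. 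The main obstacle I anticipate is making the central extension part genuinely \emph{integral} rather than merely linear over $\mathbb{Q}$ or $\mathbb{R}$: one must check that the $2$-cocycle representing the Euler class can be taken to be $\mathbb{Z}$-valued and that the resulting matrix group has entries in $\mathbb{Z}$, which requires a careful but routine choice of section and of the faithful representation of $\pi_1(F)$ over $\mathbb{Z}$; I would handle this by the explicit Jordan-block-plus-affine-action construction above, verifying faithfulness by checking that an element mapping to the identity matrix must have trivial image in $\pi_1(F)$ and trivial power of the (injectively represented) central $\mathbb{Z}$.
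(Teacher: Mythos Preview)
Your overall shape is right --- central extension, boundary case trivial, closed case via the Euler class --- and it matches the paper's setup. But the concrete representation you propose has a genuine gap. The affine block matrices $\begin{pmatrix} A & v \\ 0 & 1\end{pmatrix}$ realize a \emph{semidirect} product $\Gamma \ltimes \Z^n$, not a central extension, so that construction does not model $\pi_1(N)$. Your ``better'' suggestion, sending the central generator $h$ to a unipotent Jordan block in $\gl(n+1,\Z)$ and lifting the surface representation, fails for the same reason in disguise: for the image of $h$ to be central in the image group it must commute with every $\rho(a_i),\rho(b_i)$, and a nontrivial unipotent in $\gl(n+1,\Z)$ is not central. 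If you force centrality you get $h\mapsto \pm I$, which kills (a finite-index subgroup of) the fiber, so the map is not faithful. There is no ``standard fact'' that an arbitrary central $\Z$-extension of a $\Z$-linear group is $\Z$-linear via a one-step Jordan-block trick; you need an argument specific to this situation.

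The paper resolves this by \emph{separating concerns}. It passes to the finite-index subgroup $G_e=\langle a_1,b_1,\dots,a_r,b_r,t^e\rangle$ and takes the \emph{product} of two representations: a Heisenberg-type map $\rho\colon G_e\to\sl(3,\Z)$ with $a_1,b_1$ sent to the standard unipotents and $t^e$ to the central unipotent (so $\rho$ is faithful on $\langle t^e\rangle$ but kills $a_i,b_i$ for $i\ge 2$), and the composite $\sigma\colon G_e\to \pi_1(F)\hookrightarrow\sl(n,\Z)$ via a faithful integral surface-group representation (so $\sigma$ kills $t^e$ but is faithful on the $a_i,b_i$). Then $\rho\times\sigma$ is faithful on $G_e$, and linearity passes \emph{up} to $G$ by the induced-representation argument (H.\ref{H.lineargreen}) --- incidentally, you had that direction right before second-guessing yourself. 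The point you were missing is that no single block handles both the center and the quotient; you need the product.
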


\begin{proof}
We first recall that  surface groups are linear over $\Z$.
Indeed, if $F$ is a sphere or a torus, then this is obvious.
If $F$ has boundary, then $\pi_1(N)$ is a free group and hence embeds into $\sl(2,\Z)$.
Finally,  Newman \cite[Lemma~1]{New85} showed that if  $F$ is closed, then there exists  an embedding $\pi_1(F)\to\sl(8,\Z)$.
(Alternatively, Scott \cite[Section~3]{Sco78} showed that $\pi_1(F)$ is a subgroup of a right angled Coxeter group on 5 generators,
and hence by  \cite[Chapitre~V,~\S~4,~Section~4]{Bou81} can in fact be embedded  into $\mbox{SL}(5,\Z)$.
Also \cite[p.~576]{DSS89} contains a proof that surface groups embed into RAAGs, and hence are linear over $\Z$ by \cite[Corollary~3.6]{HsW99}.)

We now turn to the proof of the lemma.
We first consider the case  that $N$ is a trivial $S^1$-bundle, i.e., $N\cong S^1\times F$. But then $\pi_1(N)=\Z\times \pi_1(F)$ is the direct product of $\Z$ with a surface  group, so $\pi_1(N)$ is $\Z$-linear by the above.

If $N$  has boundary, then $F$ also has boundary and we obtain $H^2(F;\Z)=0$, so the Euler class of the $S^1$-bundle $N\to F$ is trivial. We therefore conclude that in this case, $N$ is a trivial $S^1$-bundle.

Now assume that $N$ is a non-trivial $S^1$-bundle.
By the above, $F$ is a closed surface.
If $F=S^2$, then the long exact sequence in homotopy theory shows that $\pi_1(N)$ is cyclic, hence linear.
If $F\ne S^2$, then it follows again from the long exact sequence in homotopy theory
that the subgroup $\ll t\rr$ of $\pi_1(N)$ generated by a fiber is normal and infinite cyclic, and that
we have a short exact sequence
\[ 1\to \ll t\rr \to \pi_1(N)\to \pi_1(F)\to 1.\]
Let $e\in H^2(F;\Z)\cong \Z$ be the Euler class of $F$. A presentation for $G:=\pi_1(N)$ is given by
\[ G = \left\langle a_1,b_1,\dots,a_r,b_r,t : \prod_{i=1}^r [a_i,b_i]=t^e, \text{ $t$ central}\right\rangle.\]
Let $G_e$ be the subgroup of $G$ generated by the $a_i$, $b_i$ and $t^e$.
It is straightforward to check that the assignment
\[ \rho(a_1)=\begin{bmatrix} 1&1&0 \\ 0&1&0 \\ 0&0&1\end{bmatrix},\quad
\rho(b_1)=\begin{bmatrix} 1&0&0 \\ 0&1&1 \\ 0&0&1\end{bmatrix}, \quad
\rho(t^e)=\begin{bmatrix} 1&0&1 \\ 0&1&0 \\ 0&0&1\end{bmatrix}\]
and $$\rho(a_i)=\rho(b_i)=\id\qquad\text{for $i\geq 2$}$$
yields a representation $\rho\colon G_e\to \sl(3,\Z)$ such that $\rho(t^e)$ has infinite order.
Now let
$\sigma$ be the composition
$$G_e\to G=\pi_1(N)\to \pi_1(F)\to\sl(n,\Z),$$
where the last homomorphism is a faithful representation
of $\pi_1(F)$, which exists by the above.
Then $$\rho\times\sigma\colon G_e\to\sl(3,\Z)\times\sl(n,\Z)\subseteq \sl(n+3,\Z)$$
is an embedding. This shows that the finite-index subgroup $G_e$ of $G$ is $\Z$-linear.
By (H.\ref{H.lineargreen}) this implies that $G$ is also linear over $\Z$.
\end{proof}

We can now provide a proof of Theorem~\ref{thm:Seifert linear}.

\begin{proof}[Proof of Theorem~\ref{thm:Seifert linear}]
Let $N$ be a Seifert fibered manifold.  By Lemma~\ref{lem:seifertcover}, $N$ is finitely covered by a $3$-manifold $N'$ which is an $S^1$-bundle over an orientable surface $F$. It now follows from Lemma~\ref{lem:S1-bundles}
that $\pi_1(N')$ is linear over $\Z$, and so $\pi_1(N)$ is also linear over $\Z$ by (H.\ref{H.lineargreen}).
\end{proof}

\subsection{Non-virtually-fibered graph manifolds and retractions onto cyclic subgroups}

There exist  Seifert fibered manifolds which are not virtually fibered, and also graph manifolds with non-trivial JSJ decomposition
which are not virtually fibered
(see, e.g., \cite[p.~86]{LuW93} and \cite[Theorem~D]{Nemb96}). The following proposition shows that such examples also have the property that their fundamental groups do not virtually retract onto cyclic subgroups.

\begin{proposition}\label{prop:nonretract}
Let $N$ be a non-spherical graph manifold which is not virtually fibered. Then $\pi_1(N)$ does not virtually retract onto  cyclic subgroups.
\end{proposition}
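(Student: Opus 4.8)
The plan is to exploit the following chain of implications established earlier in the survey: if $\pi_1(N)$ virtually retracts onto all of its cyclic subgroups, then (since we may pass to finite-index subgroups freely) in particular $vb_1(N;\Z)=\infty$ by (G.\ref{G. vb_1=infty}) --- here one uses that $\pi_1(N)$ is infinite, torsion-free (C.\ref{C.pi1torsionfree}), and not virtually abelian, the latter because $N$ is non-spherical and not, say, a torus bundle or $S^1\times S^1\times I$; those finitely many exceptional cases where $\pi_1(N)$ is virtually abelian are either virtually fibered or spherical, so they are excluded by hypothesis. However, this alone does not yield a contradiction with non-virtual-fiberedness, so the real content must come from Agol's Virtually Fibered Criterion (G.\ref{G.agol08}): a compact aspherical $3$-manifold with empty or toroidal boundary whose fundamental group is virtually RFRS is virtually fibered. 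Thus the heart of the argument is to show that a graph manifold $N$ with $\pi_1(N)$ virtually retracting onto cyclic subgroups must have virtually RFRS fundamental group.

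First I would reduce to the case that the retraction property holds for $\pi=\pi_1(N)$ itself onto each infinite cyclic subgroup, passing to a finite-index subgroup and noting (via Theorem~\ref{thm:jsjlift}) that a finite cover of a graph manifold is a graph manifold, and that being non-virtually-fibered is inherited by finite covers. Next, the key step: I would argue that virtual retraction onto cyclic subgroups implies $\pi$ is virtually RFRS. The idea is to build the RFRS filtration by hand. Given a cyclic subgroup $\langle g\rangle$, a retraction $\pi'\to\langle g\rangle$ from a finite-index $\pi'$ splits off a $\Z$-factor that is detected in $H_1(\pi';\Z)/\mathrm{torsion}$; iterating over a cofinal family of cyclic subgroups (using residual finiteness (C.\ref{C.resfinite}) to ensure we can separate elements) and taking successive kernels of the induced maps to $H_1(\pi_i;\Z)/\mathrm{torsion}$ produces a descending chain of finite-index normal subgroups with trivial intersection satisfying condition (3) in the definition of RFRS (E.\ref{E.RFRS}). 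The main obstacle here is verifying that one can arrange the filtration to be \emph{normal} and \emph{nested} while simultaneously killing every nontrivial element --- the retractions produced for different $g$ need not be compatible, so some care (a diagonal/intersection argument, as in the proof of (G.\ref{G.RFRSretracttoz}) run in reverse) is required.

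Once $\pi$ is shown to be virtually RFRS, Agol's theorem (G.\ref{G.agol08}) applies --- $N$ is aspherical by (C.\ref{bkpi1}), being irreducible with infinite fundamental group (graph manifolds are irreducible by definition here, and non-spherical forces $\pi_1$ infinite) --- and we conclude that $N$ is virtually fibered, contradicting the hypothesis. I expect the delicate point to be precisely the implication ``virtually retracts onto cyclic subgroups $\Rightarrow$ virtually RFRS'' for graph manifolds; it is plausible that the cleanest route is instead to use the structure theory of graph manifold groups directly (the JSJ graph of groups with Seifert-fibered vertex groups), invoking Liu's theorem (Theorem~\ref{thm:liu11}) that $\pi_1(N)$ is virtually special iff $N$ is non-positively curved, together with the fact (from Diagram~4, (G.\ref{G.special=>subgroup of a RAAG}), (G.\ref{G.RAAGvRFRS}), (G.\ref{G.agol08})) that virtually special graph manifolds are virtually fibered: so a non-virtually-fibered graph manifold is not virtually special, hence not non-positively curved, and one then shows by a direct cube-complex or Bass--Serre argument that such a manifold cannot retract virtually onto cyclic subgroups. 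I would pursue both routes and present whichever avoids the most technical overhead, flagging the normality issue in the filtration construction as the step most likely to need a lemma of its own.
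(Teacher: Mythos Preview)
Your proposal has a genuine gap in both routes. The implication ``virtually retracts onto cyclic subgroups $\Rightarrow$ virtually RFRS'' is not established anywhere in the paper (only the converse (G.\ref{G.RFRSretracttoz}) is), and your sketch of running that argument in reverse does not work: the RFRS definition requires a single nested normal filtration in which each successive quotient factors through the torsion-free abelianization, whereas virtual retractions onto different cyclic subgroups live in unrelated finite-index subgroups and give no such compatible chain. Your second route via Liu's theorem has the same problem: from ``not virtually fibered'' you get ``not virtually special'' and hence ``not non-positively curved,'' but to close the loop you would need ``not non-positively curved $\Rightarrow$ does not virtually retract onto cyclic subgroups,'' which is precisely the content of the proposition and is not supplied by Liu's theorem or any cube-complex machinery in the paper.

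The paper's proof is entirely different and much more concrete. It never goes through RFRS or Agol's criterion. Instead it passes to a finite cover in which every JSJ piece is an $S^1$-bundle over a surface with boundary (hence a product $S^1\times\Sigma$), and then uses the retraction hypothesis only on the Seifert fibers $f_v$ of the JSJ pieces: a virtual retraction onto $\langle f_v\rangle$ forces $f_v$ to be non-trivial in $H_1/\mathrm{torsion}$ of some finite cover. Taking a common cover $\widetilde{N}$, every Seifert fiber survives in $H_1(\widetilde{N};\Z)/\mathrm{torsion}$, so there exists $\phi\in H^1(\widetilde{N};\Z)$ non-zero on all of them. Since each JSJ piece is a product, $\phi$ restricts to a fibered class on each piece, and \cite[Theorem~4.2]{EN85} then shows $\widetilde{N}$ itself fibers. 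The argument is thus a direct structural one about graph manifolds and the Eisenbud--Neumann fibering criterion, not a reduction to RFRS.
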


\begin{proof}
Let $N$ be a non-spherical graph manifold such that $\pi_1(N)$ virtually retracts onto all its cyclic subgroups.  We will show that $N$ is virtually fibered.

By the remark after Lemma~\ref{lem:seifert},  $N$ is finitely covered by a $3$-manifold each of whose JSJ components is an $S^1$-bundle over a surface.
We can therefore  without loss of generality assume that $N$ itself is already of that form.

We  first consider the case that $N$ is  a Seifert fibered manifold, i.e., that $N$ is  an $S^1$-bundle  over a surface $\Sigma$. The assumption that $N$ is non-spherical implies that the regular
fiber generates an infinite cyclic subgroup of $\pi_1(N)$. It is well known, and straightforward to see, that
if $\pi_1(N)$ retracts onto this infinite cyclic subgroup, then $N$ is a product $S^1\times \Sigma$; in particular, $N$ is fibered.

We now consider the case that $N$ has a non-trivial JSJ decomposition.  We denote the JSJ pieces of $N$ by $M_v$, where $v$ ranges over some index set $V$. By hypothesis, each $M_v$ is an $S^1$-bundle over a surface with non-empty boundary, so each $M_v$ is in fact a product. 
We denote by $f_v$ the Seifert fiber of $M_v$. Note that each $f_v$ generates an infinite cyclic subgroup of $\pi_1(N)$.

Since $\pi_1(N)$  virtually retracts onto cyclic subgroups, for each $v$ we can find a finite-sheeted covering space $\wti{N}_v$ of $N$ such that $\ti{\pi}_v=\pi_1(\wti{N}_v)$ retracts onto $\langle f_v\rangle$.  In particular, the image of $f_v$ is non-trivial in $H_1(\wti{N}_v;\Z)/\mbox{torsion}$.  Let $\wti{N}$ be any regular finite-sheeted cover of $N$ that covers every $\wti{N}_v$.  (For instance, $\pi_1(\wti{N})$ could be the intersection of all the conjugates of $\bigcap_{v\in V}\ti{\pi}_v$.)

Let $\tilde{f}$ be a Seifert fiber of a JSJ component of $\wti{N}$.  Up to the action of the deck group of $\wti{N}\to N$, $\tilde{f}$ covers the lift of some $f_v$ in $\wti{N}_v$.  It follows that $\tilde{f}$ is non-trivial in $H_1(\wti{N};\Z)/\mbox{torsion}$.

Therefore, there exists a homomorphism $\phi\colon H_1(\wti{N};\Z)\to \Z$ which is non-trivial on the Seifert fibers of all JSJ components of $\wti{N}$.  Since each JSJ component is a product, the restriction of $\phi$ to each JSJ component of $\wti{N}$ is a fibered class.  By \cite[Theorem~4.2]{EN85}, we conclude that $\wti{N}$ fibers over $S^1$.
\end{proof}

\subsection{(Fibered) faces of the Thurston norm ball of finite covers}\label{section:thurstonnorm}

Let $N$ be a compact, orientable 3--manifold.
Recall that we say that $\phi\in H^1(N;\R)$ is \emph{fibered}, if $\phi$ can be represented by a non-degenerate closed 1-form.

\medskip

The \emph{Thurston norm} of  $\phi\in H^1(N;\mathbb{Z})$  is defined as
 \index{$3$-manifold!Thurston norm}
 \index{Thurston norm!definition}
 \[
\|\phi\|_T=\min \big\{ \chi_-(\Sigma) : \text{$\Sigma \subseteq N$  properly embedded surface dual to $\phi$} \big\}.
\]
Here, given a surface $\Sigma$ with connected components $\Sigma_1\cup\dots \cup \Sigma_k$, we define
$\chi_-(\Sigma)=\sum_{i=1}^k \max\{-\chi(\Sigma_i),0\}$.
 Thurston
\cite[Theorems~2~and~5]{Thu86a} (see also \cite[Chapter~10]{CdC03} and \cite[p.~259]{Oe86}) proved the following results:
\bn
\item $\|{-}\|_T$ defines a seminorm on $H^1(N;\Z)$ which can be extended to a seminorm $\|{-}\|_T$ on $H^1(N;\R)$.
\item The  norm ball \index{Thurston norm!norm ball}
$$\{\phi\in H^1(N;\R) : \|\phi\|_T\leq 1\}$$
is a finite-sided rational polyhedron.
\item  There exist  open top-dimensional faces $F_1,\dots,F_k$ of the Thurston norm ball such that
\[ \{\phi\in H^1(N;\R) : \text{$\phi$  fibered}\}=\bigcup_{i=1}^k \,\R^+ F_i.\]
These  faces are called the \emph{fibered faces} of the Thurston norm ball.
\index{Thurston norm!fibered face}
 \en
 The Thurston norm ball is evidently symmetric in the origin. We say that two faces $F$ and $G$ are \emph{equivalent} if $F=\pm G$.
 Note that a face $F$ is fibered if and only if $-F$ is fibered.

\medskip

The Thurston norm is degenerate in general, e.g., for $3$-manifolds with homologically essential tori.
On the other hand the Thurston norm of a hyperbolic $3$-manifold is non-degenerate, since a hyperbolic $3$-manifold admits no homologically essential surfaces of non-negative Euler characteristic.

\medskip

We start out with the following fact.

\begin{proposition}\label{prop:tnfinitecover}
Let $p\colon M\to N$ be a finite cover.
Then $\phi\in H^1(N;\R)$ is fibered if and only if $p^*\phi\in H^1(M;\R)$ is fibered.
Furthermore
\[ \|p^*\phi\|_T=[M:N]\cdot \|\phi\|_T \quad\text{ for any class $\phi\in H^1(N;\R)$.}\]
In particular, the map $p^*\colon H^1(N;\R)\to H^1(M;\R)$ is, up to a scale factor, an isometry,
and it maps fibered cones into fibered cones.
\end{proposition}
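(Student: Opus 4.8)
The proposition packages three assertions: (a) the fibering statement, $\phi$ is fibered if and only if $p^*\phi$ is; (b) the multiplicativity $\|p^*\phi\|_T = [M:N]\cdot\|\phi\|_T$; and (c) the corollary that $p^*$ is a scaled isometry carrying fibered cones to fibered cones. The plan is to treat (b) first for integral classes, then pass to real classes by continuity and homogeneity, deduce (c) as a formal consequence, and handle (a) using Stallings' Fibration Theorem together with the surface-fiber lifting discussed in (K.\ref{K.Virtual fibre}).

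For the inequality $\|p^*\phi\|_T \le [M:N]\cdot\|\phi\|_T$ (integral $\phi$), I would take a properly embedded surface $\Sigma\subseteq N$ dual to $\phi$ realizing the Thurston norm, with no sphere or disk components (legitimate since such components do not affect $\chi_-$ and can be removed), and observe that $p^{-1}(\Sigma)$ is a properly embedded surface dual to $p^*\phi$. Each component of $\Sigma$ of negative Euler characteristic pulls back to a cover of total Euler characteristic $[M:N]\,\chi(\Sigma_i)$; splitting into connected components only decreases $\chi_-$ if anything, so $\chi_-(p^{-1}(\Sigma)) \le [M:N]\,\chi_-(\Sigma)$, giving the bound. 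The reverse inequality is the subtler direction: given a norm-minimizing surface $\Sigma'\subseteq M$ dual to $p^*\phi$, one wants to push it down to $N$. The clean way is to invoke Gabai's theorem (cited in the excerpt, \cite[p.~487]{Gab83a}, \cite[p.~79]{Gab83b}) that the Thurston norm equals the Gromov (simplicial volume) norm on $H_2$; the Gromov norm is manifestly multiplicative under finite covers since simplicial volume scales by the degree, and this immediately yields $\|p^*\phi\|_T = [M:N]\,\|\phi\|_T$ on integral classes, hence on all of $H^1(N;\R)$ by homogeneity and continuity of the seminorm. This is the step I expect to be the main obstacle if one insists on an elementary argument avoiding Gabai's theorem; with Gabai's theorem available it is routine.

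Given multiplicativity, assertion (c) is immediate: $p^*\colon H^1(N;\R)\to H^1(M;\R)$ is linear and injective (a finite cover induces an injection on rational cohomology via the transfer), and it multiplies the Thurston seminorm by the constant $[M:N]$, hence is an isometry after rescaling; it therefore maps the (rescaled) norm ball into the norm ball and top-dimensional faces into faces. That fibered cones go to fibered cones then follows once (a) is established, since the fibered faces are exactly the faces whose cone consists of fibered classes.

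For assertion (a): if $\phi\in H^1(N;\Z)$ is fibered, a surface bundle structure $N\to S^1$ pulls back to a surface bundle structure $M\to S^1$ on the cover (the composition $M\to N\to S^1$ is still a fibration since $M\to N$ is a covering map, and the new fiber is a finite cover of the old fiber), so $p^*\phi$ is fibered. Conversely, suppose $p^*\phi$ is fibered with fiber $\Sigma'$, so $\ker(p^*\phi\colon\pi_1(M)\to\Z)=\pi_1(\Sigma')$ is finitely generated. Then $\ker(\phi\colon\pi_1(N)\to\Z)$ contains $\pi_1(\Sigma')$ as a finite-index subgroup (since $\pi_1(M)$ has finite index in $\pi_1(N)$ and the two kernels intersect in $\pi_1(\Sigma')$), hence is itself finitely generated; by Stallings' Fibration Theorem (see (K.\ref{K.stallings62}) and (C.\ref{C.12})), together with the resolution of the Poincar\'e Conjecture, $\phi$ is a fibered class. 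The general (real) case of (a) reduces to the integral case because the fibered classes form a union of open rational cones $\bigcup_i \R^+ F_i$ and $p^*$ maps this union into the corresponding union for $M$ by the integral case and continuity, while the reverse inclusion follows from the injectivity of $p^*$ and the integral converse applied to rational points dense in each cone.
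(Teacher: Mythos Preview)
Your proposal is correct and follows essentially the same approach as the paper, which simply cites Stallings' Fibration Theorem for the fibering statement and Gabai \cite[Corollary~6.13]{Gab83a} for the multiplicativity, without further elaboration. Your argument supplies the details the paper omits; the only minor variation is that for the hard inequality you route through Gabai's identification of the Thurston norm with the Gromov norm (which the paper cites elsewhere as \cite[Corollary~6.18]{Gab83a}) rather than Gabai's direct multiplicativity corollary, but both come from the same source and the difference is cosmetic.
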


The first statement is an immediate consequence of Stallings' Fibration Theorem (see \cite{Sta62} and (K.\ref{K.stallings62})),
and the second statement follows from
work of Gabai \cite[Corollary~6.13]{Gab83a}.

\medskip

We can now prove the following proposition.

\begin{proposition}\label{prop:manyfaces}
Let $N$ be an irreducible  $3$-manifold with empty or toroidal boundary, which is not a graph manifold.
Then given any $k\in \N$, $N$ has   a finite cover whose Thurston norm ball has at least $k$ inequivalent faces.
\end{proposition}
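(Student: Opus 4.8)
The plan is to build, for each $k$, a finite cover $\wti N \to N$ whose Thurston norm ball has many inequivalent top-dimensional faces, by exploiting the virtual fibering / RFRS technology already available in the excerpt together with the refined statement of Agol's theorem recorded in (G.\ref{G.agol08}). The rough idea: if $N$ is not a graph manifold, then by the combination (G.\ref{G.akmw}), (G.\ref{G.PW12a}), (G.\ref{G.liu11}) (to cover the JSJ components that are hyperbolic or non-positively curved) we know $\pi_1(N)$ is virtually RFRS. Replacing $N$ by a finite cover we may assume $\pi=\pi_1(N)$ is RFRS and $b_1(N)\geq 1$. Since $N$ is not a graph manifold it contains a hyperbolic JSJ piece, and in particular the Thurston norm on a suitable finite cover is genuinely nondegenerate on a positive-dimensional subspace; moreover Agol's virtual fibering theorem (G.\ref{G.agol08}) gives at least one fibered face. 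The subtlety is that one fibered face only gives two inequivalent faces $\pm F$; we need to manufacture $k$ of them.

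First I would set up the inductive mechanism. Suppose $N'$ is a finite cover of $N$ with RFRS fundamental group whose Thurston norm ball has $m$ inequivalent faces, and suppose $\phi\in H^1(N';\Q)$ is a primitive class lying in the cone over a nonfibered face $F_0$ (such a $\phi$ exists as soon as not every face is fibered and $b_1\geq 2$; if all faces are fibered one still gets new faces by the covering argument below, so this is not a real obstruction). By the refined form of (G.\ref{G.agol08}), there is a finite (solvable) cover $p\colon N''\to N'$, corresponding to a term $\pi_i$ in the RFRS filtration, such that $p^*\phi$ lies on the boundary of a \emph{fibered} cone of $N''$. By Proposition~\ref{prop:tnfinitecover}, $p^*$ is (up to scaling) an isometry of Thurston norm balls that sends fibered cones to fibered cones; hence $p^*$ sends the $m$ inequivalent faces of $N'$ to $m$ inequivalent faces of $N''$, but it sends the face $F_0$ (nonfibered, containing $\phi$ in its cone) into a region of $N''$ that meets the boundary of a fibered cone. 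This forces the preimage of $F_0$ under $p^*$ to be \emph{refined}: the single face $F_0$ of $N'$ pulls back to a union of at least two distinct faces of $N''$ (since a face and its negative that were not fibered upstairs cannot both pull back to a single face when one of the new cones is fibered and faces come in $\pm$ pairs — one has to check the combinatorics here carefully). So the number of inequivalent faces of $N''$ is strictly larger than $m$, and iterating $k$ times produces a cover with at least $k$ inequivalent faces.

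The key steps, in order: (1) reduce to $\pi_1(N)$ RFRS and $b_1(N)\geq 1$ via virtual fibering machinery; (2) verify that because $N$ has a hyperbolic JSJ piece the Thurston norm on a finite cover is nondegenerate enough that the norm ball is genuinely a polytope with top-dimensional faces, so "number of faces" is the right invariant to grow; (3) locate a class $\phi$ in the cone over a nonfibered face after possibly passing to a cover with $b_1\geq 2$ (use (G.\ref{G.19}) / (G.\ref{G. vb_1=infty}) to boost $b_1$, which is legitimate since RFRS is inherited and $N$ is not virtually a torus bundle because it has a hyperbolic piece, so $\phi$ nonfibered exists); (4) apply the refined Agol theorem (G.\ref{G.agol08}) to push $\phi$ to the boundary of a fibered cone in a finite cover; (5) use Proposition~\ref{prop:tnfinitecover} to transport face counts and conclude a strict increase; (6) iterate.

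The main obstacle I expect is step (3) combined with the combinatorial bookkeeping in the inductive step: one must ensure at each stage that there \emph{is} a nonfibered face (equivalently, that $N$ is never virtually fibered over every nontrivial cohomology class — but if it were, its Thurston norm ball would have \emph{all} faces fibered, and then a different and easier argument is needed, namely that a cover in which $\phi$ becomes fibered but $\psi$ does not still refines the face structure, which again uses that $N$ is not a graph manifold so the norm is not, say, identically given by a single fibered face). So really the heart of the proof is a clean dichotomy: either some face is nonfibered, in which case the RFRS-to-fibered argument above produces new faces; or all faces are fibered, in which case a finite cover where the fibered cones change shape (guaranteed by the non-graph-manifold hypothesis and the fact that pullback is an isometry, so the only way to get "more faces" is to subdivide) still does the job. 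Making that dichotomy precise, and checking that the pullback map $p^*$ genuinely subdivides rather than merely relabels faces, is the step that needs the most care.
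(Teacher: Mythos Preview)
Your approach differs substantially from the paper's and has a genuine gap. The paper does not use Agol's virtual fibering theorem here at all; it argues directly from largeness and nondegeneracy. In the hyperbolic case, largeness of $\pi_1(N)$ (via the Virtually Compact Special Theorem) yields a finite cover $M$ with $b_1(M)\geq k$, and since the Thurston norm of a hyperbolic $3$-manifold is nondegenerate, its norm ball is a genuine compact polytope in a $k$-dimensional space and hence has at least $k$ inequivalent facets. In the non-hyperbolic case the paper takes a hyperbolic JSJ piece $X$ (which has boundary), uses Cooper--Long--Reid to find a cover $\wti X$ with non-peripheral homology of rank $\geq k$, extends this via efficiency to a cover $M$ of $N$ containing a hyperbolic piece $Y$ covering $\wti X$, and shows (using \cite[Proposition~3.5]{EN85}) that the Thurston norm of $M$ is nondegenerate on the image of $H_2(Y;\R)\to H_2(M,\partial M;\R)$, a subspace of dimension $\geq k$. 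Again this forces at least $k$ inequivalent facets. No induction, no quasi-fibering.

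The gap in your argument is the ``all faces fibered'' branch of your dichotomy, which you never actually resolve. If every top-dimensional face of $N'$ is fibered then every nonzero class is quasi-fibered, so there is no $\phi$ to which your refined Agol step applies; your fallback (``a finite cover where the fibered cones change shape\dots still does the job'') is not an argument, and there is no reason the face count must strictly increase in such a cover without invoking exactly the $b_1$-growth plus nondegeneracy idea you are trying to avoid. More broadly, you are using Agol's theorem---which the paper pairs with this proposition to prove the stronger Proposition~\ref{prop:manyfiberedfaces} about many \emph{fibered} faces---to establish the weaker statement about faces. The missing idea is much simpler than your machinery: grow $b_1$ via largeness, and use nondegeneracy of the Thurston norm on (a subspace coming from) a hyperbolic piece to read off many facets directly.
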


If $N$ is closed and hyperbolic, then the proposition relies on the Virtually Compact Special Theorem (Theorem \ref{thm:akmw}).
In the other cases it follows from the work of Cooper--Long--Reid \cite{CLR97} and classical facts on the Thurston norm.

\begin{proof}
We first suppose that $N$ is hyperbolic.
It follows from (G.\ref{G.akmw}), (G.\ref{G.compact special=>qc subgroup of a RAAG}), (G.\ref{G.AM11}) and (C.\ref{C.homlarge})
that $N$ admits a finite cover $M$ with $b_1(M)\geq k$. Since the Thurston norm of a hyperbolic $3$-manifold is non-degenerate
it follows that the Thurston norm ball of $M$ has at least $2^{k-1}\geq k$ inequivalent faces.

We now suppose that $N$ is not hyperbolic. By assumption there exists a hyperbolic JSJ component $X$ which is hyperbolic
and which necessarily has non-empty boundary.
It follows from  \cite[Theorem~1.3]{CLR97} (see also (C.\ref{C.large})) that $\pi_1(N)$ is large
and hence by (C.\ref{C.homlarge}) that there exists a finite cover $\wti{X}$ with non-peripheral homology of rank at least $k$.

A standard argument, using (C.\ref{C.efficient}), now shows that there exists a finite cover $M$ of $N$
which admits a hyperbolic JSJ component $Y$ which covers $\wti{X}$. An elementary argument shows that $Y$ also has non-peripheral homology of rank at least $k$.
We consider  $p\colon H_2(Y;\R)\to H_2(Y,\partial Y;\R)$ and $V:=\im p$.
Using Poincar\'e Duality, the Universal Coefficient Theorem and the information on the non-peripheral homology, we see that $\dim(V)\geq k$.

We now  consider $q\colon H_2(Y;\R)\to H_2(M,\partial M;\R)$ and $W:=\im q$.
Since $p$ is the composition of  $q$ and the restriction map $ H_2(M,\partial M;\R)\to  H_2(Y,\partial Y;\R)$, we see that $\dim W\geq \dim V\geq k$.
Since $N$ is hyperbolic, it follows that the Thurston norm of $Y$ is non-degenerate, in particular non-degenerate on $V$.
By \cite[Proposition~3.5]{EN85} the Thurston norm of $p_*\phi$ in $Y$ agrees with the Thurston norm of $q_*\phi$ in $M$.
Thus the Thurston norm of $M$ is non-degenerate on~$W$, in particular
the Thurston norm ball of $M$ has at least $2^{k-1}\geq k$ inequivalent faces.
\end{proof}

We  say that $\phi\in H^1(N;\R)$ is \emph{quasi-fibered} if $\phi$
lies on the closure of a fibered cone of the Thurston norm ball of $N$.
We can now formulate Agol's Virtually Fibered Theorem (see \cite[Theorem~5.1]{Ag08}).

\begin{theorem}\textbf{\emph{(Agol)}}\label{thm:agol08}
 \index{theorems!Agol's Virtually Fibered Theorem}\index{theorems!Virtually Fibered Theorem}
Let $N$ be an irreducible, compact  $3$-manifold with empty or toroidal boundary such that $\pi_1(N)$ is virtually RFRS.
 Then given any $\phi \in H^1(N;\R)$  there exists a finite cover $p\colon M\to N$ such that $p^*\phi$ is quasi-fibered.
\end{theorem}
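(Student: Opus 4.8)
The plan is to reduce the general statement to the RFRS case and then to exploit the defining filtration of a RFRS group together with Gabai's sutured manifold machinery. First I would observe that if $\pi_1(N)$ is virtually RFRS, then there is a finite cover $N_0\to N$ with $\pi_1(N_0)$ actually RFRS; by Proposition~\ref{prop:tnfinitecover} the pullback of $\phi$ to $N_0$ is quasi-fibered if and only if a further pullback to any finite cover of $N_0$ is quasi-fibered, and a cone that is fibered in $N_0$ pulls back into a fibered cone upstairs. So without loss of generality I may assume $\pi=\pi_1(N)$ is RFRS, with filtration $\pi=\pi_0\supseteq\pi_1\supseteq\pi_2\supseteq\cdots$ as in (E.\ref{E.RFRS}). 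I would also normalise by replacing $\phi$ by an integral class (the quasi-fibered condition is a closed, scale-invariant condition, so rationality and then integrality can be arranged by a small perturbation argument combined with the fact that fibered cones are open and rationally defined).

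The heart of the argument is an inductive construction using the sutured manifold hierarchy. Given the integral class $\phi\in H^1(N;\Z)$, pick a Thurston-norm-minimising surface $\Sigma$ dual to $\phi$ and decompose $N$ along $\Sigma$ to obtain a sutured manifold $(N',\gamma)$; by Gabai's theory this sutured manifold is taut, and it carries a taut sutured manifold hierarchy. The key point, which is Agol's insight, is that the RFRS condition lets one, at each stage of the hierarchy, pass to the finite cover corresponding to the next term $\pi_{i+1}$ in the filtration: condition (3) in the definition of RFRS guarantees that the relevant homology class survives (becomes represented in $H_1(\pi_i;\Z)/\text{torsion}$) in that cover, so that the decomposing surface lifts to a norm-minimising, homologically non-trivial surface upstairs, and one can continue the hierarchy equivariantly. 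After finitely many steps — bounded by the length of the hierarchy of $(N',\gamma)$ — one arrives at a finite cover $p\colon M\to N$ in which the lifted hierarchy terminates in a product sutured manifold. A product sutured manifold structure is precisely what certifies that $p^*\phi$ lies in the closure of a fibered cone: if the sutured manifold obtained by cutting along a surface dual to $p^*\phi$ is a product, then $p^*\phi$ is fibered, and more generally Gabai's construction yields a depth-one taut foliation, which by work relating depth-one foliations to the Thurston norm places $p^*\phi$ on the boundary of a fibered cone.

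The main obstacle is the bookkeeping in the inductive step: one must arrange that the finite covers supplied by the RFRS filtration are compatible with the hierarchy, i.e.\ that at each stage the decomposing surface is not merely homologically essential but lifts to a surface that is again a valid (norm-minimising, taut) decomposing surface in the cover, and that the sutured manifold structure is pulled back correctly through the cover. This requires keeping track of the sutures and of the taut condition through each decomposition, and using that the terms $\pi_i$ are normal of finite index so that the covers are regular and the deck group acts on the lifted hierarchy. Once the inductive scheme is set up, the conclusion that a terminal product sutured manifold yields a quasi-fibered class is essentially Gabai's theorem on disk decompositions and depth-one foliations, which I would invoke rather than reprove. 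The refined statement mentioned in (G.\ref{G.agol08}) — that the cover can be taken among the $\pi_i$ and that $p^*\phi$ lands on the boundary of a \emph{fibered} cone — follows by being slightly more careful: one tracks which face of the Thurston norm ball of $M$ the class $p^*\phi$ abuts, and the depth-one foliation exhibits the adjacent top-dimensional face as fibered.
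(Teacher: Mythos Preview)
The paper does not actually prove this theorem: it is a survey, and Theorem~\ref{thm:agol08} is simply stated with a citation to Agol's original paper \cite[Theorem~5.1]{Ag08} (see also the discussion at (G.\ref{G.agol08})). So there is no ``paper's own proof'' to compare against.

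That said, your outline is in the right spirit and roughly tracks Agol's actual argument in \cite{Ag08}: reduce to the RFRS case, cut along a norm-minimising surface to get a taut sutured manifold, and run a sutured manifold hierarchy, passing to covers from the RFRS filtration when the hierarchy gets stuck. One point of imprecision is worth flagging. You describe the role of the RFRS condition as ensuring that ``the relevant homology class survives'' in the cover. The mechanism is really the opposite: the obstruction to continuing the hierarchy is that the sutured piece may have no useful second relative homology to decompose along, and condition~(3) of RFRS is exactly what guarantees that after passing to the cover corresponding to the next term $\pi_{i+1}$, \emph{new} homology appears (because any element in the kernel of $\pi_i\to H_1(\pi_i;\Z)/\text{torsion}$ already lies in $\pi_{i+1}$, so the sutured piece cannot remain a rational homology product in the cover unless it was already a product). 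The termination is then by an induction on sutured manifold complexity, not literally ``bounded by the length of the hierarchy'' fixed in advance. With that correction your sketch matches Agol's proof; for a detailed exposition along these lines see also \cite{FKt12}.
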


The following is now a straightforward consequence of Agol's theorem.

\begin{proposition}\label{prop:manyfiberedfacesb}
Let $N$ be an irreducible, compact  $3$-manifold with empty or toroidal boundary
 such that the Thurston norm ball of $N$ has at least $k$ inequivalent faces.
If $\pi_1(N)$ is virtually RFRS and if $N$ is not a graph manifold, then given any $k\in \N$ there exists a finite cover $M$ of $N$ such that the Thurston norm ball of $M$ has at least $k$ inequivalent fibered faces.
\end{proposition}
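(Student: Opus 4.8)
The plan is to reduce the statement to Agol's quasi-fibering theorem (Theorem~\ref{thm:agol08}) by a face-counting argument on the Thurston norm ball. Fix $k\in\N$. By the hypothesis --- or, if one prefers to deduce the statement in the form ``arbitrarily many fibered faces'', by Proposition~\ref{prop:manyfaces} after noting that the fundamental group of a finite cover of $N$ is again virtually RFRS and that such a cover is again irreducible with empty or toroidal boundary and not a graph manifold --- we may assume that the Thurston norm ball $B_T(N)=\{\phi\in H^1(N;\R):\|\phi\|_T\le 1\}$ has at least $k$ pairwise inequivalent top-dimensional faces $F_1,\dots,F_k$. Choose a class $\phi_i$ in the relative interior of the cone $\R^+F_i$ over $F_i$, for each $i=1,\dots,k$.

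The first step is to make the $\phi_i$ simultaneously quasi-fibered in a single finite cover. For each $i$, Theorem~\ref{thm:agol08} gives a finite cover $p_i\colon M_i\to N$ with $p_i^*\phi_i$ quasi-fibered. Let $M$ be a finite regular cover of $N$ dominating every $M_i$ --- for instance the cover corresponding to the normal core of $\bigcap_i\pi_1(M_i)$ in $\pi_1(N)$ --- and let $p\colon M\to N$ be the covering map, which factors through each $p_i$. Quasi-fiberedness passes to further finite covers: if $\phi$ lies in the closure of a fibered cone $C$ of a $3$-manifold $N'$ and $q\colon M'\to N'$ is a finite cover, then by Proposition~\ref{prop:tnfinitecover} the map $q^*$ sends $C$ into a fibered cone of $M'$, so by continuity $q^*\phi$ lies in the closure of a fibered cone of $M'$. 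Hence each $p^*\phi_i$ is quasi-fibered in $M$; write $p^*\phi_i\in\overline{\R^+F^{(i)}}$ for some fibered face $F^{(i)}$ of $B_T(M)$.

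The heart of the proof is to show that $F^{(1)},\dots,F^{(k)}$ are pairwise inequivalent, and this is where the polyhedral structure of the norm ball is used. The key observation is that two nonzero classes of $H^1(M;\R)$ lie in the cone over a common proper face of $B_T(M)$ if and only if the Thurston norm is additive on them, i.e.\ $\|\gamma+\delta\|_T=\|\gamma\|_T+\|\delta\|_T$; this follows from Thurston's theorem that $\|\cdot\|_T$ is linear on each cone over a face, together with the elementary fact that additivity of a polytopal norm forces the two unit vectors onto a common face. By Proposition~\ref{prop:tnfinitecover} the map $p^*$ is, up to the positive scalar $[M:N]$, an isometry for the Thurston norm, so this additivity is detected downstairs: $p^*\gamma,p^*\delta$ lie in the cone over a common face of $B_T(M)$ if and only if $\gamma,\delta$ lie in the cone over a common face of $B_T(N)$. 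Now if $F^{(i)}=F^{(j)}$ for some $i\ne j$, then $p^*\phi_i$ and $p^*\phi_j$ both lie in $\overline{\R^+F^{(i)}}$, hence in the cone over a common face of $B_T(M)$, hence $\phi_i$ and $\phi_j$ lie in the cone over a common face of $B_T(N)$; but $\phi_i\in\operatorname{relint}(\R^+F_i)$ and $\phi_j\in\operatorname{relint}(\R^+F_j)$ with $F_i,F_j$ distinct top-dimensional faces, so the only face-cone containing $\phi_i$ (resp.\ $\phi_j$) is $\R^+F_i$ (resp.\ $\R^+F_j$) and there is no common proper one, a contradiction. The possibility $F^{(i)}=-F^{(j)}$ is excluded identically after replacing $\phi_j$ by $-\phi_j$, using that $F_i\ne -F_j$ because the $F_i$ were chosen inequivalent. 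Thus $F^{(1)},\dots,F^{(k)}$ are $k$ pairwise inequivalent fibered faces of $B_T(M)$, proving the proposition.

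The main obstacle is precisely the bookkeeping in the third paragraph: converting the pointwise conclusion ``these cohomology classes become quasi-fibered'' into the face-level conclusion ``these many inequivalent faces become fibered''. One must be careful that $p^*$ need not be surjective, so the image of a top-dimensional face of $B_T(N)$ need not be top-dimensional in $B_T(M)$ --- this is exactly why we track the fibered faces $F^{(i)}$ \emph{containing} the pulled-back classes rather than the images $p^*F_i$ --- and that the Thurston norm may be degenerate, so $B_T(N)$ is only a (possibly unbounded) rational polyhedron. In both situations Thurston's structure theorem for the norm ball and the scaling-isometry property of Proposition~\ref{prop:tnfinitecover} are what make the combinatorics of faces go through; the remaining ingredients --- existence of the dominating cover $M$, inheritance of virtual RFRS-ness by finite covers, and preservation of quasi-fiberedness under covers --- are routine.
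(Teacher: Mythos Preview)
Your proof is correct and follows essentially the same route as the paper's: pick representatives $\phi_i$ in $k$ inequivalent faces, apply Agol's theorem to each to get covers where $\phi_i$ becomes quasi-fibered, pass to a common finite cover, and use Proposition~\ref{prop:tnfinitecover} to argue that the resulting fibered faces are pairwise inequivalent. The paper compresses the last step into the single assertion that, since $p^*$ is up to scale a Thurston-norm isometry sending fibered cones into fibered cones, ``pull-backs of inequivalent faces of the Thurston norm ball lie on inequivalent faces of the Thurston norm ball''; your additivity argument (two classes share a face cone iff the norm is additive on them, and additivity is preserved by the scaled isometry $p^*$) is a clean way to make that assertion explicit.
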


The proof of the proposition is precisely that of \cite[Theorem~7.2]{Ag08}.
We therefore give just a very quick outline of the proof.

\begin{proof}
We pick classes $\phi_i$ ($i=1,\dots,k$) in $H^1(N;\R)$ which lie in $k$ inequivalent faces.
For $i=1,\dots,k$ we then apply Theorem \ref{thm:agol08}  to the class $\phi_i$ and we obtain a finite cover $\wti{N}_i\to N$ such that the pull-back of $\phi_i$ is quasi-fibered.

Denote by $p\colon M\to N$ the cover corresponding to $\bigcap \pi_1(\wti{N}_i)$.
It follows from Proposition \ref{prop:tnfinitecover} that pull-backs of quasi-fibered classes are quasi-fibered, and that pull-backs of inequivalent faces of the Thurston norm ball lie on
inequivalent faces of the Thurston norm ball. Thus $p^*\phi_1,\dots,p^*\phi_k$ lie on closures of inequivalent fibered faces of $M$,
i.e.,  $M$ has at least $k$ inequivalent fibered faces.
\end{proof}

It is  a natural question to ask in how many different ways
 a (virtually) fibered $3$-manifold (virtually) fibers. We recall the following facts:
\bn
\item
If $\phi \in H^1(N;\Z)$ is a fibered class, then using Stallings' Fibration Theorem (see (K.\ref{K.stallings62}))
one can show that up to isotopy there exists a unique surface bundle representing $\phi$ (see \cite[Lemma~5.1]{EdL83} for details).
\item It follows from the above description of fibered cones that being fibered is an open condition in $H^1(N;\R)$.
We refer to  \cite{Nemb79}
and \cite[Theorem~A]{BNS87} for a group-theoretic proof for classes in $H^1(N;\Q)$, to \cite{To69} and \cite{Nema76} for earlier results
and to \cite{HLMA06} for an explicit discussion for a particular example.
If $b_1(N)\geq 2$ and if the Thurston norm is not identically zero, then a basic Thurston norm argument shows that $N$ admits fibrations with connected fibers of arbitrarily large genus.
(See, e.g., \cite[Theorem~4.2]{But07} for details).
\en
A deeper question is whether a $3$-manifold admits (virtually) inequivalent fibered faces.
The following proposition is now an immediate consequence of Propositions \ref{prop:manyfaces} and \ref{prop:manyfiberedfacesb}
together with (G.\ref{G.akmw}), (G.\ref{G.PW12a}), (G.\ref{G.special=>subgroup of a RAAG}) and (G.\ref{G.RAAGvRFRS}).

\begin{proposition}\label{prop:manyfiberedfaces}
Let $N$ be an irreducible, compact  $3$-manifold with empty or toroidal boundary which is not a graph manifold.
Then for each $k\in\N$, $N$ has a finite cover whose Thurston norm ball has at least $k$ inequivalent fibered faces.
\end{proposition}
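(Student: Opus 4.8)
\textbf{Proof proposal for Proposition~\ref{prop:manyfiberedfaces}.}

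The plan is to chain together the two preceding results, Propositions~\ref{prop:manyfaces} and~\ref{prop:manyfiberedfacesb}, using the virtual RFRS property as the bridge. First I would fix $k\in\N$ and recall that by hypothesis $N$ is an irreducible, compact $3$-manifold with empty or toroidal boundary which is not a graph manifold. The first step is to verify that $\pi_1(N)$ is virtually RFRS: this follows from the Geometrization Theorem (G.\ref{G.1}), which tells us $N$ is either hyperbolic, or not a graph manifold with at least one hyperbolic JSJ component (the graph manifold case being excluded by hypothesis), together with (G.\ref{G.akmw}) and (G.\ref{G.PW12a}) which give that $\pi_1(N)$ is virtually (compact) special, and then (G.\ref{G.special=>subgroup of a RAAG}) and (G.\ref{G.RAAGvRFRS}) which show a virtually special group is virtually RFRS (since a RAAG is virtually RFRS and subgroups of RFRS groups are RFRS).

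The second step is to apply Proposition~\ref{prop:manyfaces}: since $N$ is not a graph manifold, there exists a finite cover $p_0\colon N_0\to N$ whose Thurston norm ball has at least $k$ inequivalent faces. Here I should check that $N_0$ inherits the hypotheses needed downstream, namely that $N_0$ is still irreducible, compact, with empty or toroidal boundary, and not a graph manifold; irreducibility and the boundary condition are preserved under finite covers by Theorem~\ref{thm:jsjlift}, and $N_0$ is not a graph manifold because by Theorem~\ref{thm:jsjlift} a finite cover of $N$ is hyperbolic (respectively Seifert fibered) if and only if $N$ is, so the JSJ decomposition of $N_0$ still has a hyperbolic piece. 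Moreover $\pi_1(N_0)$ is virtually RFRS since it is a finite-index subgroup of the virtually RFRS group $\pi_1(N)$.

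The third step is to feed $N_0$ into Proposition~\ref{prop:manyfiberedfacesb}: since the Thurston norm ball of $N_0$ has at least $k$ inequivalent faces, $\pi_1(N_0)$ is virtually RFRS, and $N_0$ is not a graph manifold, that proposition provides a finite cover $p_1\colon M\to N_0$ whose Thurston norm ball has at least $k$ inequivalent \emph{fibered} faces. Composing, $M$ is a finite cover of $N$ with the required property, and since $k$ was arbitrary this completes the proof. I do not expect a genuine obstacle here — this is purely a matter of assembling the machinery — but the one point requiring a little care is the bookkeeping in the first and second steps: confirming that the class of manifolds under consideration (irreducible, empty or toroidal boundary, not a graph manifold, virtually RFRS fundamental group) is closed under passing to the finite covers produced by Proposition~\ref{prop:manyfaces}, so that Proposition~\ref{prop:manyfiberedfacesb} legitimately applies to $N_0$.
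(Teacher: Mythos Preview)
Your proposal is correct and matches the paper's approach exactly: the paper states that the proposition is an immediate consequence of Propositions~\ref{prop:manyfaces} and~\ref{prop:manyfiberedfacesb} together with (G.\ref{G.akmw}), (G.\ref{G.PW12a}), (G.\ref{G.special=>subgroup of a RAAG}) and (G.\ref{G.RAAGvRFRS}), which is precisely the chain you assemble. Your extra care in verifying that the intermediate cover $N_0$ inherits the hypotheses (irreducible, toroidal boundary, not a graph manifold, virtually RFRS) via Theorem~\ref{thm:jsjlift} is a welcome bit of bookkeeping that the paper leaves implicit.
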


\begin{remarks} \mbox{}

\begin{itemize}
\item[(1)]
Let $N$ be an irreducible, compact  $3$-manifold with empty or toroidal boundary such that $\pi_1(N)$ is virtually RFRS but not virtually abelian.
According to \cite[Theorem~7.2]{Ag08} the manifold $N$ admits finite covers with arbitrarily many inequivalent faces in the Thurston norm ball.
At this level of generality the statement does not hold. As an example consider the product manifold $N=S^1\times \Sigma$. Any finite cover $M$ of $N$ is again a product; in particular the Thurston norm ball of $M$ has just two faces.
\item[(2)] It would be interesting to find criteria which decide whether a given graph manifold has virtually arbitrarily many faces in the Thurston norm ball.
\end{itemize}
\end{remarks}

\section{Open questions}\label{sec:Open questions}

\subsection{Separable subgroups in $3$-manifolds with a non-trivial JSJ decomposition}\label{section:questionsep}

Let $N$ be a compact, orientable, irreducible $3$-manifold $N$ with empty or toroidal boundary.  We know that in the hyperbolic case $\pi_1(N)$ is in fact virtually \emph{compact} special, which together with the Tameness Theorem of Agol and Calegari--Gabai and work of Haglund implies that $\pi_1(N)$ is LERF.

The picture is considerably more complicated for non-hyperbolic $3$-manifolds.  Niblo--Wise \cite[Theorem~4.2]{NW01}  showed that the fundamental group of a graph manifold $N$ is LERF if and only if $N$ is geometric (see also (I.\ref{I.nonlerf})). As every known example of a $3$-manifold with non-LERF fundamental group derives from examples of this form, the following conjecture seems reasonable.

\begin{conjecture}\label{Conj: LERF for isolated SFSs}
Let $N$ be a compact, orientable, irreducible $3$-manifold with empty or toroidal boundary such that no torus of the JSJ decomposition bounds a Seifert fibered $3$-manifold on both sides.  Then $\pi_1(N)$ is LERF.
\end{conjecture}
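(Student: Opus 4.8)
\textbf{Proof strategy for Conjecture~\ref{Conj: LERF for isolated SFSs}.} The plan is to build on the Virtually Compact Special Theorem and Liu's and Przytycki--Wise's theorems, combined with a combination-theorem argument for the graph of groups coming from the JSJ decomposition. First I would reduce to the case where every JSJ component is either hyperbolic with LERF fundamental group or Seifert fibered with LERF fundamental group; this is automatic, since by Theorem~\ref{thm:akmw} and Corollary~\ref{cor:akmw} hyperbolic pieces have LERF fundamental group, and by (C.\ref{C.sfssubgroupseparable}) so do Seifert fibered pieces. The hypothesis that no JSJ torus bounds a Seifert fibered piece on both sides is precisely what rules out the Niblo--Wise obstruction: it means that every JSJ torus is adjacent to at least one hyperbolic component, or (in the purely graph-manifold range) does not occur at all, so $N$ is either hyperbolic, Seifert fibered, or has a JSJ decomposition in which the Seifert fibered pieces are "isolated" and only ever glued to hyperbolic pieces.

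The core step would be a separability-through-the-graph-of-groups argument. By the JSJ Decomposition Theorem (Theorem~\ref{thm:jsj}), $\pi=\pi_1(N)$ is the fundamental group of a finite graph of groups with vertex groups the $\pi_1$ of the JSJ components and edge groups $\cong\Z^2$. Using efficiency of $N$ (C.\ref{C.efficient}), the profinite topology on $\pi$ induces the full profinite topology on each vertex and edge group, and each is separable in $\pi$. One then wants to show: if each vertex group is LERF and the edge groups are separable and malnormal-enough in the adjacent vertex groups (which by Theorem~\ref{thm:malnormal} holds on the hyperbolic side of each edge, and this is where the no-Seifert-on-both-sides hypothesis is used), then the whole fundamental group is LERF. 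This is exactly the type of statement handled by combination theorems for separability over graphs of groups in the presence of malnormal (or "height one") edge groups; I would invoke the machinery of Wise's special cube complex hierarchies together with the Malnormal Special Combination Theorem (Theorem~\ref{thm:msct}) to conclude that $\pi$ is virtually special, and then deduce LERF as in (G.\ref{G.LERF}). Concretely: since each hyperbolic JSJ piece is virtually compact special, each Seifert fibered piece is virtually special, and the edge tori are malnormal on the hyperbolic side, one assembles a non-positively curved cube complex with $\pi_1=\pi$ whose hyperplane structure reflects the JSJ tori, and applies Theorem~\ref{thm:msct} inductively along the JSJ graph.

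The main obstacle, as I see it, is the Seifert-to-Seifert gluing that the hypothesis forbids: the combination theorem requires the edge group to be malnormal (or at least to have a suitable relative quasiconvexity/height property) in \emph{both} adjacent vertex groups, but a JSJ torus between two Seifert fibered pieces is very far from malnormal on either side (its centralizer contains the fiber subgroup), and indeed Niblo--Wise show LERF genuinely fails there. So the hypothesis is doing real work, and the argument must be arranged so that every edge is "good" on at least one side and the induction can be run peeling off hyperbolic pieces first. A secondary technical point: one must be careful that the virtual specialness is compatible across the covers used for the separability argument, i.e.\ that the finite covers produced for the vertex groups can be assembled into a finite cover of $N$; this is standard but requires the induced-topology statements of (C.\ref{C.efficient}) and the fact that $\pi$ induces the full profinite topology on the vertex groups. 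I would also want to treat the degenerate cases ($N$ hyperbolic, $N$ Seifert fibered, $N$ a graph manifold with a geometric structure) separately at the outset, as these follow directly from Theorem~\ref{thm:akmw}, (C.\ref{C.sfssubgroupseparable}), and Niblo--Wise's characterization respectively.
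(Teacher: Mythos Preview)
This statement is an open \emph{conjecture} in the paper, not a theorem; the paper offers no proof and explicitly lists it in the section on open questions. So there is no ``paper's proof'' to compare against, and your task is to assess whether the strategy could plausibly close an open problem. It cannot, for the following reason.

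The fatal gap is the inference ``virtually special $\Rightarrow$ LERF''. You invoke (G.\ref{G.LERF}) for this, but that argument is specific to hyperbolic $N$: it passes through the Subgroup Tameness Theorem (Theorem~\ref{thm:subgroupdichotomy}), which says every finitely generated subgroup of a hyperbolic $3$-manifold group is either geometrically finite (hence a virtual retract, hence separable) or a virtual fiber (hence separable for elementary reasons). There is no analogous dichotomy for $3$-manifolds with non-trivial JSJ decomposition, and indeed the paper notes immediately after the conjecture that virtually compact special does \emph{not} imply LERF in general: there exist graph manifolds whose fundamental groups are compact special (even RAAGs) but not LERF. So even if your combination argument produced a virtually special structure on $\pi_1(N)$, you would not be done.

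There is a second problem with the combination step itself. The Malnormal Special Combination Theorem (Theorem~\ref{thm:msct}) has as a hypothesis that $\pi_1(X)$ is word-hyperbolic. If $N$ has a non-trivial JSJ decomposition then $\pi_1(N)$ contains $\Z^2$ and is not word-hyperbolic, so Theorem~\ref{thm:msct} does not apply. One would need a relatively hyperbolic version, and even then the issue of the previous paragraph remains: the output is virtual specialness, not LERF. Your instinct that the hypothesis rules out the Niblo--Wise obstruction is correct and is exactly the motivation the paper gives for the conjecture, but turning that into a proof requires new ideas about separating arbitrary finitely generated subgroups that cross the JSJ tori, not just assembling known specialness results.
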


We refer to \cite[Theorem~1.2]{LoR01} for some evidence towards the conjecture.  Note that $\pi_1(N)$ being virtually compact special is in general not enough to deduce that $\pi_1(N)$ is LERF.
 Indeed, there exist graph manifolds with fundamental groups that are compact special but not LERF; for instance, the non-LERF link group exhibited in \cite[Theorem~1.3]{NW01} is a right-angled Artin group.

\medskip

Despite the general failure of LERF, certain families of subgroups are known to be separable.
\bn
\item Let $N$ be an orientable, compact, irreducible $3$-manifold with (not necessarily toroidal) boundary.
Let $X$ be a connected, incompressible subsurface of the boundary of $N$.  Long--Niblo \cite[Theorem~1]{LoN91} showed that  $\pi_1(X)\subseteq \pi_1(N)$ is separable.
\item Let $N$ be a compact $3$-manifold. Hamilton proved that any abelian subgroup of $\pi_1(N)$ is separable (C.\ref{C.AERF}).
\item Hamilton \cite{Hamb03} gave examples of free 2-generator subgroups in non-geometric $3$-manifolds which are separable.
\item Let $N$ be an orientable, compact, irreducible $3$-manifold with empty or toroidal boundary. By (C.\ref{C.efficient}) the manifold $N$ is efficient.  By (C.\ref{C.sfssubgroupseparable}) and (G.\ref{G.LERF}) the fundamental group of any JSJ piece is LERF, and it follows that any subgroup of $\pi_1(N)$ carried by a JSJ piece is separable.
\item
For an arbitray compact $3$-manifold $N$, Przytycki--Wise \cite[Theorem~1.1]{PW12b} have shown that a subgroup carried by an  incompressible properly embedded surface is separable in $\pi_1(N)$.
\en

To bring order to this menagerie of examples, it would be desirable to exhibit some large, intrinsically defined class of subgroups of general $3$-manifold groups which are separable.  In the remainder of this subsection, we propose the class of \emph{fully relatively quasi-convex subgroups} (see below) as a candidate.

Not every separable subgroup listed above is fully relatively quasiconvex.  However,  this proposal captures the aforementioned  fact that all known examples of non-separable subgroups of $3$-manifold groups derive from graph manifolds. In general, a strategy for proving that a given subgroup $\Gamma$ of a $3$-manifold group $\pi$ can be separated from an element $g\in\pi\setminus\Gamma$ is to first use a gluing theorem such as \cite[Theorem~2]{MPS12} to construct a fully relatively quasiconvex subgroup $Q$ such that $\Gamma\subseteq Q$ but $g\notin Q$, and to then argue that $Q$ is separable in $\pi$.

We work in the context of relatively hyperbolic groups.  The following theorem follows quickly from \cite[Theorem~0.1]{Dah03}.
(See also \cite[Corollary~E]{BiW12}.)

\begin{theorem}\label{thm:dah03}
Let $N$ be a compact, irreducible $3$-manifold with empty or toroidal boundary.  Let $M_1,\ldots, M_k$ be the maximal graph manifold pieces of the JSJ decomposition of $N$, let $S_1,\ldots, S_l$ be the tori in the boundary of $N$ that adjoin a hyperbolic piece and let $T_1,\ldots,T_m$ be the tori in the JSJ decomposition of $M$ that separate two \textup{(}not necessarily distinct\textup{)} hyperbolic pieces of the JSJ decomposition.  The fundamental group of $N$ is hyperbolic relative to the set of parabolic subgroups
\[
\{H_i\}=\{\pi_1(M_p)\}\cup\{\pi_1(S_q)\}\cup\{\pi_1(T_r)\}~.
\]
\end{theorem}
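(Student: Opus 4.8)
\textbf{Proof strategy for Theorem~\ref{thm:dah03}.} The plan is to deduce the relative hyperbolicity of $\pi_1(N)$ from Dahmani's combination theorem \cite[Theorem~0.1]{Dah03} applied to the graph-of-groups structure furnished by the JSJ decomposition. First I would recall the general principle: if $\pi$ is the fundamental group of a finite graph of groups in which each vertex group is hyperbolic relative to a collection of subgroups that includes the images of all incident edge groups, and each edge group is (relatively) quasi-convex and `fully' parabolic on both ends, then $\pi$ is hyperbolic relative to the union of the ``genuinely parabolic'' vertex-group peripheral subgroups together with those edge groups that are not absorbed into a parabolic subgroup on some side. This is exactly the content of Dahmani's acylindrical combination theorem, and the work is to check that the JSJ graph of groups of $N$ satisfies its hypotheses.

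The key steps, in order, are as follows. (1) Record the graph of groups: by the JSJ Decomposition Theorem (Theorem~\ref{thm:jsj}), $\pi_1(N)$ is the fundamental group of a graph of groups whose vertex groups are the $\pi_1$ of the JSJ components (hyperbolic or Seifert fibered) and whose edge groups are the $\pi_1$ of the JSJ tori; the images of the edge groups are injective since the tori are incompressible (this is noted in Section~\ref{decomposition!JSJ}). (2) Collapse the graph-manifold pieces: group together maximal subgraphs all of whose vertices are Seifert fibered, obtaining new vertex groups $\pi_1(M_1),\dots,\pi_1(M_k)$ (the maximal graph-manifold pieces) and $\pi_1(X_1),\dots$ (the hyperbolic pieces), with edges the tori $T_1,\dots,T_m$ between two hyperbolic pieces; the remaining boundary tori adjoining hyperbolic pieces are the $S_1,\dots,S_l$. (3) Identify the peripheral structure of each vertex group: a hyperbolic piece $X$ with finite volume is, by the Convention of Section~\ref{section:diagramgeom} and (K.\ref{K.qc}), hyperbolic relative to the collection of $\pi_1$ of its boundary tori, each of which is either one of the $S_q$ or one of the $T_r$ (or an edge torus adjoining a graph-manifold piece, in which case it is absorbed into that $\pi_1(M_p)$); a graph-manifold piece $M_p$ is treated as its own parabolic subgroup, so it is trivially hyperbolic relative to $\{\pi_1(M_p)\}$, and a fortiori relative to any family of its subgroups including the incident edge tori. (4) Check the hypotheses of \cite[Theorem~0.1]{Dah03}: each edge group $\pi_1(T_r)\cong\Z^2$ is, on each hyperbolic side, a full peripheral (hence parabolic, hence quasi-convex and malnormal-in-the-appropriate-sense) subgroup of the vertex group, which is the acylindricity/quasi-convexity input Dahmani needs; the same holds for edges running into graph-manifold pieces since there the edge group sits inside the declared parabolic subgroup $\pi_1(M_p)$. (5) Read off the conclusion: Dahmani's theorem then yields that $\pi_1(N)$ is hyperbolic relative to $\{\pi_1(M_p)\}\cup\{\pi_1(S_q)\}\cup\{\pi_1(T_r)\}$, which is precisely the claimed peripheral structure. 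I would also note the alternative reference \cite[Corollary~E]{BiW12}, which packages exactly this $3$-manifold statement.

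\textbf{Main obstacle.} The substantive point is step~(4): verifying that the edge inclusions $\Z^2\hookrightarrow\pi_1(X)$ meet the precise hypotheses of Dahmani's combination theorem (an acylindricity-type condition on the action on the Bass--Serre tree, phrased via the edge groups being ``fully quasi-convex'' and almost malnormal as a family inside the vertex groups). For the hyperbolic pieces this is standard relative-quasi-convexity of cusp subgroups together with the fact that distinct boundary tori give an almost malnormal family (a consequence of Theorem~\ref{thm:hypatoroidal} and the malnormality statements around Theorem~\ref{thm:malnormal}); for the graph-manifold pieces one must be slightly careful because $\pi_1(M_p)$ is \emph{not} hyperbolic, so one has to use the version of the combination theorem that permits parabolic vertex groups and verify that the incident edge tori are appropriately embedded in $\pi_1(M_p)$ (this is where one cites the ``fully relatively quasi-convex'' framework and the efficiency of the JSJ decomposition, (C.\ref{C.efficient})). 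Everything else is bookkeeping about the JSJ graph; the delicate part is matching the $3$-manifold geometry to the exact combinatorial input of \cite{Dah03}, and in practice the cleanest route is simply to invoke \cite[Corollary~E]{BiW12}, where this matching has already been carried out.
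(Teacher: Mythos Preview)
Your proposal is correct and takes essentially the same approach as the paper: the paper simply states that the theorem ``follows quickly from \cite[Theorem~0.1]{Dah03}'' and points to \cite[Corollary~E]{BiW12} as an alternative, which is exactly the route you outline. Your write-up supplies considerably more detail on how to match the JSJ graph of groups to Dahmani's hypotheses than the paper does, but the underlying argument is the same; one small quibble is that your invocation of efficiency (C.\ref{C.efficient}) in step~(4) is a red herring, since efficiency concerns the profinite topology and is not needed for Dahmani's combination theorem.
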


In particular, a graph manifold group is hyperbolic relative to itself.
\medskip

There is a notion of a \emph{relatively quasi-convex} subgroup of a relatively hyperbolic group; see (G.\ref{G.virtualretract}) and the references mentioned there for more details.   A subgroup~$\Gamma$ of a relatively hyperbolic group $\pi$ is called \emph{fully relatively quasi-convex} if it is relatively quasi-convex and, furthermore, for each $i$, $\Gamma\cap H_i$ is either trivial or a subgroup of finite index in $H_i$. \index{subgroup!fully relatively quasi-convex}

\begin{conjecture}\label{Conj: Fully relatively quasi-convex subgroups are well behaved}
Let $N$ be an orientable, compact, non-positively curved $3$-manifold with empty or toroidal boundary.  If $\Gamma$ is a subgroup of $\pi=\pi_1(N)$ that is fully relatively quasi-convex with respect to the natural relatively hyperbolic structure on $\pi$, then $\Gamma$ is a virtual retract of $\pi$.  In particular, $\Gamma$ is separable.
\end{conjecture}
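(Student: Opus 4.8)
\textbf{Proof proposal for Conjecture~\ref{Conj: Fully relatively quasi-convex subgroups are well behaved}.}
The plan is to reduce to the virtually (compact) special structure that $\pi=\pi_1(N)$ carries by Theorem~\ref{thm: Any vs} (or Theorem~\ref{thm:akmw}, Theorem~\ref{thm:liu11}, Theorem~\ref{thm:pw12} in the appropriate cases), and then to promote relative quasi-convexity of $\Gamma$ into a virtual-retraction statement by invoking the relatively hyperbolic analogue of Haglund's theorem~\cite[Theorem~F]{Hag08}. First I would pass to a finite-index subgroup $\pi_0\le\pi$ that is compact special, i.e.\ a quasi-convex subgroup of a RAAG $A_\Sigma$; by Theorem~\ref{thm:dah03} the group $\pi$ is hyperbolic relative to the family $\{H_i\}$ of peripheral subgroups listed there, and $\pi_0$ inherits a relatively hyperbolic structure with peripheral subgroups obtained by intersecting the $H_i$ with $\pi_0$. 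Here one uses that $N$ being non-positively curved rules out the obstructed cases (the non-virtually-special graph manifolds of Proposition~\ref{prop:nonretract}), so $\pi$ really is virtually special. Since a virtual retract of $\pi_0$ is a virtual retract of $\pi$ (this is the content of (H.\ref{H.retracttozgreen}), extended from cyclic to arbitrary retracts by the same argument), and since separability passes from $\pi_0$ to $\pi$ by the standard union-of-cosets argument as in (G.\ref{G.GFERF}) and (H.\ref{H.resfinitegreen}), it suffices to treat $\Gamma\cap\pi_0$ inside $\pi_0$.

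Next I would verify that $\Gamma_0:=\Gamma\cap\pi_0$ is again \emph{fully relatively quasi-convex} in $\pi_0$: relative quasi-convexity is preserved under intersecting with finite-index subgroups, and the ``fully'' condition ($\Gamma_0\cap H_i'$ trivial or finite-index in $H_i'$ for each peripheral $H_i'$ of $\pi_0$) follows from the corresponding property of $\Gamma$ together with the fact that the new peripherals are finite-index in restrictions of the old ones. Now I would apply the relatively hyperbolic version of Haglund's virtual retraction theorem. In the word-hyperbolic case (when $N$ is closed hyperbolic, so $\pi_0$ is word-hyperbolic and $\Gamma_0$ quasi-convex) this is exactly the argument carried out in detail in (G.\ref{G.virtualretract}), which shows $\Gamma_0$ is a virtual retract of $A_\Sigma$ and hence of $\pi_0$. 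For the general relatively hyperbolic case I would use the results cited there: the cusped-case argument of \cite[Theorem~1.3]{CDW12}, relying on \cite[Theorem~F]{Hag08} together with Manning--Martinez-Pedrosa \cite{MMP10}, or the alternative route of \cite[Theorem~7.3]{SaW12}, both of which establish that a \emph{fully} relatively quasi-convex subgroup of a virtually compact special relatively hyperbolic group is a virtual retract. (This is precisely the point where the ``fully'' hypothesis is essential, exactly as in the proof of Theorem~\ref{thm: Non-toroidal vcs}: a merely relatively quasi-convex subgroup of $A_\Sigma$ need not be quasi-convex, and the technical work of \cite{MMP10} is what circumvents this.) Once $\Gamma_0$ is a virtual retract of $\pi_0$, separability of $\Gamma_0$ in $\pi_0$ follows from residual finiteness of $\pi_0$ (which holds for $3$-manifold groups, (C.\ref{C.resfinite})) by the map-$g\mapsto g^{-1}\rho(g)$ argument of (G.\ref{G.GFERF}), and then both the virtual-retract and separability conclusions transfer back up to $\pi$.

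The main obstacle I anticipate is the precise form of the relatively hyperbolic Haglund theorem needed: one must confirm that the statement ``fully relatively quasi-convex subgroups of virtually compact special relatively hyperbolic groups are virtual retracts'' is available in the literature at the level of generality required here, namely for the relatively hyperbolic structure on an arbitrary non-positively curved $N$ given by Theorem~\ref{thm:dah03}, where the peripheral subgroups include full graph-manifold groups (which themselves are only hyperbolic relative to themselves). Strictly speaking \cite{CDW12} and \cite{SaW12} are phrased for cusped hyperbolic manifolds, where the peripherals are free abelian; extending their conclusion to the case of graph-manifold peripherals requires knowing that those peripheral pieces are themselves virtually (compact) special, which is Theorem~\ref{thm:liu11} and Przytycki--Wise \cite{PW11} for graph manifolds with boundary, combined with a combination theorem such as \cite[Theorem~2]{MPS12} to assemble the local data. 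A secondary, more routine difficulty is the careful bookkeeping showing that the ``fully'' condition and relative quasi-convexity are genuinely inherited by intersections with finite-index subgroups; I expect this to be straightforward but it should be stated explicitly. Finally, one should record why the non-positive-curvature hypothesis cannot be dropped: Proposition~\ref{prop:nonretract} produces graph manifolds whose fundamental groups do not even virtually retract onto \emph{cyclic} subgroups, hence certainly not onto all fully relatively quasi-convex ones.
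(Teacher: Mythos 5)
This statement is posed in the paper as an open \emph{conjecture}, not a theorem, and your argument contains exactly the gap that the paper identifies as the reason it remains open. Your very first step is to ``pass to a finite-index subgroup $\pi_0\le\pi$ that is compact special, i.e.\ a quasi-convex subgroup of a RAAG $A_\Sigma$.'' This is available when $N$ is hyperbolic (Theorem~\ref{thm:akmw}), but for a non-positively curved graph manifold or mixed manifold the results of Liu (Theorem~\ref{thm:liu11}) and Przytycki--Wise (Theorem~\ref{thm:pw12}) only give that $\pi_1(N)$ is virtually \emph{special}, i.e.\ a subgroup of a RAAG with no quasi-convexity; the cube complexes produced by \cite{Liu11} and \cite{PW11} are not compact, and it is an open question---stated in the paper as Question~\ref{Qu: NPC $3$-manifolds are virtually compact special}, with the explicit caveat that the answer may well be negative---whether $\pi_1(N)$ is virtually compact special in this generality. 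Without quasi-convexity of $\pi_0$ in $A_\Sigma$, Haglund's \cite[Theorem~F]{Hag08} and its relatively hyperbolic refinements in \cite{CDW12} and \cite{SaW12} do not apply, and the entire virtual-retraction mechanism you invoke breaks down. You are therefore not proving the conjecture but rather reducing it to an open problem and then silently assuming that problem has a positive answer.

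That said, the reduction itself is the right one and matches the paper's intent: the paper states that the conjecture would follow from \cite[Theorem~5.8]{CDW12} given an affirmative answer to Question~\ref{Qu: NPC $3$-manifolds are virtually compact special}. Your bookkeeping about inheritance of full relative quasi-convexity under finite-index passage, and your observation (via Proposition~\ref{prop:nonretract}) that non-positive curvature cannot be dropped, are both sound. To make your write-up honest, you should restate your argument as a conditional result: \emph{if} every compact, aspherical, non-positively curved $3$-manifold with empty or toroidal boundary has virtually compact special fundamental group, \emph{then} the conjecture holds. As written, the unconditional claim is not established.
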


Note that, under the hypotheses of Conjecture~\ref{Conj: LERF for isolated SFSs}, the parabolic subgroups of $\pi_1(N)$ in the relatively hyperbolic structure are LERF.
\medskip

Conjecture~\ref{Conj: Fully relatively quasi-convex subgroups are well behaved} would follow, by \cite[Theorem~5.8]{CDW12},
from an affirmative answer to the following extension of the results of Liu and Przytycki--Wise.

\begin{question}\label{Qu: NPC $3$-manifolds are virtually compact special}
Let $N$ be a compact, aspherical $3$-manifold  with empty or toroidal boundary which is non-positively curved.  Is  $\pi_1(N)$ virtually compact special?
\end{question}

It may very well be that the answer to Question~\ref{Qu: NPC $3$-manifolds are virtually compact special} is negative; indeed, the techniques of \cite{Liu11} and \cite{PW11} do not give compact cube complexes.  If this is the case, then it is nevertheless  desirable to either prove Conjecture~\ref{Conj: Fully relatively quasi-convex subgroups are well behaved} or to exhibit a different intrinsically defined class of subgroups that are virtual retracts.

%
%

\subsection{Non-non-positively curved $3$-manifolds}

The above discussion shows that a clear picture of the properties of aspherical non-positively curved $3$-mani\-folds is emerging.
The `last frontier,' oddly enough, seems to be the study of $3$-manifolds which are not non-positively curved.

It is interesting to note that solvable fundamental groups of $3$-manifolds
in some sense have `worse' properties than fundamental groups of hyperbolic $3$-manifolds.
In fact in contrast to the picture we developed in Diagram~4 for hyperbolic $3$-manifold groups, we have the following lemma:

\begin{lemma}\label{lem:sol}
Let $N$ be a $\Sol$-manifold and let $\pi=\pi_1(N)$. Then
\begin{itemize}
\item[(1)] $\pi$ is not virtually RFRS,
\item[(2)] $\pi$ is not virtually special,
\item[(3)] $\pi$ does not admit a finite-index subgroup which is residually $p$ for all primes~$p$, and
\item[(4)] $\pi$ does not virtually retract onto all its cyclic subgroups.
\end{itemize}
\end{lemma}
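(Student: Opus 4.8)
\textbf{Proof plan for Lemma~\ref{lem:sol}.}

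The four statements are all consequences of the very rigid structure of $\Sol$-manifold groups, so the plan is to first pin down that structure and then deduce each item from it. By Theorem~\ref{thm:sol}, a $\Sol$-manifold is either a torus bundle with Anosov monodromy or the double of the twisted $I$-bundle over the Klein bottle along an Anosov gluing map; passing to a double cover in the second case (which is harmless for all four assertions, since they concern virtual properties), we may assume $N$ is a torus bundle with Anosov monodromy $A\in\sl(2,\Z)$. Then $\pi=\pi_1(N)\cong\Z\ltimes_A\Z^2$, with $1\in\Z$ acting on $\Z^2$ by the matrix $A$, whose eigenvalues $\lambda,\lambda^{-1}$ are real and irrational with $|\lambda|>1$. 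The key algebraic fact I will extract is that \emph{every finite-index subgroup} $\pi'$ of $\pi$ is again (abstractly) of the form $\Z\ltimes_{A'}\Z^2$ for some Anosov $A'$ (a power of a conjugate of $A$ after base change), and in particular $H_1(\pi';\Z)$ has rank exactly one: the $\Z^2$ fiber contributes nothing to $H_1$ modulo torsion because $A'-\mathrm{id}$ has nonzero determinant (no eigenvalue equal to $1$, as $|\lambda|>1$ forbids $\lambda=1$ and the product of the eigenvalues is $\pm1$ so $\lambda^{-1}\ne1$ either). This step is the technical heart: I must check that finite-index subgroups of $\Z\ltimes_A\Z^2$ preserve (a finite-index subgroup of) the $\Z^2$ and act on it Anosov-ly, which follows because $\Z^2$ is the unique maximal normal abelian subgroup and any finite-index subgroup of $\Z$ acts by a power of $A$, still Anosov.

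Given this, (1) follows directly from the definition (E.\ref{E.RFRS}): a RFRS filtration $\pi'=\pi_0\supseteq\pi_1\supseteq\cdots$ of any finite-index $\pi'$ would need $\bigcap_i\pi_i=\{1\}$, but each $\pi_i$ is finite-index in $\pi'$, hence (by the structural fact) has $b_1=1$, and the map $\pi_i\to\pi_i/\pi_{i+1}$ must factor through $H_1(\pi_i;\Z)/\mathrm{torsion}\cong\Z$; so every $\pi_{i+1}$ contains the commutator subgroup of $\pi_i$, which (again by the structure, since the $\Z^2$ fiber lies in every such commutator subgroup) forces $\bigcap_i\pi_i$ to contain a finite-index subgroup of the original $\Z^2$, hence to be infinite --- contradiction. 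This is exactly the argument alluded to in (G.\ref{G. vb_1=infty}) and \cite[p.~271]{Ag08}. For (4), the same structural fact shows $\pi$ cannot virtually retract onto the cyclic subgroup generated by a primitive element $f$ of the $\Z^2$ fiber: any finite-index $\pi'\ni f^n$ is $\Z\ltimes_{A'}\Z^2$ with $f^n$ inside the $\Z^2$, and $f^n$ is torsion in $H_1(\pi';\Z)$ (as $\det(A'-\mathrm{id})\ne0$), so it dies under any homomorphism $\pi'\to\Z$ and \emph{a fortiori} under any homomorphism to an infinite cyclic group --- but a retraction onto $\langle f^n\rangle$ would in particular be surjective onto that infinite cyclic group, a contradiction. (Alternatively, (4) follows from (1) via (G.\ref{G.RFRSretracttoz}), which says virtual retraction onto all cyclic subgroups passes through virtual RFRS-type behavior; I will use whichever is cleaner.)

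For (3): a group that is residually $p$ for a prime $p$ is in particular residually (finite $p$-group), hence residually nilpotent, so if a finite-index $\pi'\subseteq\pi$ were residually $p$ for all $p$ then in particular $\pi'$ would be residually torsion-free nilpotent after a further elementary argument --- but more simply, Rhemtulla's theorem (G.\ref{G.rhemtulla}) says a group residually $p$ for infinitely many $p$ is bi-orderable, whereas $\pi'\cong\Z\ltimes_{A'}\Z^2$ with $A'$ having negative trace or a negative eigenvalue need not be bi-orderable; to avoid case analysis I will instead argue directly that residual-$p$-ness for a single well-chosen $p$ fails. Concretely, the lower central series of $\pi'=\Z\ltimes_{A'}\Z^2$ stabilizes: $[\pi',\pi']$ contains $(A'-\mathrm{id})\Z^2$, which is finite-index in $\Z^2$, and then $\gamma_{k+1}(\pi')=[\pi',\gamma_k(\pi')]=(A'-\mathrm{id})\gamma_k(\pi')$ runs through a descending chain of finite-index subgroups of a fixed copy of $\Z^2$ whose intersection $\bigcap_k\gamma_k(\pi')$ is a nonzero subgroup (since $\det(A'-\mathrm{id})$ is a fixed integer, the $p$-adic valuations stabilize for $p\nmid\det(A'-\mathrm{id})$); hence $\pi'$ is not residually nilpotent, so not residually $p$, contradicting the assumption. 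Finally, (2) is a formal consequence: by (G.\ref{G.special=>subgroup of a RAAG}) a virtually special group is virtually a subgroup of a RAAG, by (G.\ref{G.RAAGvRFRS}) a RAAG is virtually RFRS (indeed subgroups of RAAGs are RFRS after passing to finite index), so a virtually special group is virtually RFRS; since $\pi$ is not virtually RFRS by (1), it is not virtually special. (One could also invoke (G.\ref{G.24}) and (G.\ref{G.dk92}): virtually special would force a finite-index subgroup residually torsion-free nilpotent, contradicting the lower-central-series computation above, giving (2) and (3) simultaneously.) The main obstacle is the bookkeeping in the structural step --- verifying that passing to finite-index subgroups keeps us in the class $\Z\ltimes_{A'}\Z^2$ with $A'$ Anosov and tracking that $\det(A'-\mathrm{id})\ne0$ persists --- but this is elementary linear algebra over $\Z$ once set up carefully.
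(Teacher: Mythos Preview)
Your arguments for (1), (2), and (4) are correct and match the paper's approach (the paper simply cites Agol for (1), deduces (2) from (1) via (G.\ref{G.RAAGvRFRS}) as you do, and for (4) uses the fact that every finite cover has $b_1=1$ together with (G.\ref{G. vb_1=infty}), which is equivalent to your direct argument).

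However, your argument for (3) contains a genuine error. You claim that $\bigcap_k\gamma_k(\pi')=\bigcap_k(A'-I)^k\Z^2$ is nonzero, reasoning that ``the $p$-adic valuations stabilize for $p\nmid\det(A'-I)$.'' That observation only says there is no obstruction at primes not dividing $d=\det(A'-I)$; it says nothing about primes dividing $d$, and at those primes the intersection can collapse to $0$. Concretely, take $A=\begin{pmatrix}2&1\\3&2\end{pmatrix}\in\sl(2,\Z)$ (Anosov, $\operatorname{tr}=4$). One computes $(A-I)^2=2A$, hence $(A-I)^{2k}\Z^2=2^kA^k\Z^2=2^k\Z^2$, and so $\bigcap_k(A-I)^k\Z^2=\bigcap_k 2^k\Z^2=0$. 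Thus $\pi=\Z\ltimes_A\Z^2$ \emph{is} residually nilpotent in this case, and your argument breaks down. (In fact one can check this $\pi$ is residually~$2$.)

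The paper does not give a proof of (3) but cites \cite[Proposition~1.3]{AF11}. The correct mechanism is not failure of residual nilpotence but rather the following: for any finite-index $\pi'\cong\Z\ltimes_{B}\Z^2$ with $B$ Anosov, choose a prime $p$ not dividing the discriminant $\operatorname{tr}(B)^2-4$ and with $B\not\equiv I\bmod p$ (all but finitely many $p$ qualify). Then $B\bmod p$ is diagonalizable over $\overline{\F_p}$ with order dividing $p^2-1$, hence coprime to $p$; so $B^{p^s}\not\equiv I\bmod p$ for every $s$, which forces the $\Z^2$-fiber to die in every finite $p$-group quotient of $\pi'$. Hence $\pi'$ is not residually~$p$ for that $p$. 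Your Rhemtulla/bi-orderability idea also does not work as stated, since torus bundles with $\operatorname{tr}(A)>2$ (positive eigenvalues) \emph{are} bi-orderable (cf.\ \cite{PR03}).
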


The first statement was shown by Agol \cite[p.~271]{Ag08}, the second is an immediate consequence of the first statement
and (G.\ref{G.RAAGvRFRS}), the third is proved in \cite[Proposition~1.3]{AF11}, and the fourth statement follows
easily from the fact that any finite cover $N'$ of a $\Sol$-manifold is a $\Sol$-manifold again and so $b_1(N')=1$, contradicting (G.\ref{G. vb_1=infty}).

\medskip

We summarize some known properties in the following theorem.

\begin{theorem}
Let $N$ be an aspherical $3$-manifold with empty or toroidal boundary
which does not admit  a non-positively curved metric.
Then
\begin{itemize}
\item[(1)] $N$ is a closed graph manifold;
\item[(2)] $\pi_1(N)$ is conjugacy separable; and
\item[(3)] for any prime $p$, the group $\pi_1(N)$ is virtually residually $p$.
\end{itemize}
\end{theorem}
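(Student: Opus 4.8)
The statement has three parts, and the plan is to derive each from results already in the excerpt. For part (1): by Theorem~\ref{thm:npcvs} (equivalently Theorem~\ref{thm: Any vs}) a compact orientable aspherical $3$-manifold with empty or toroidal boundary has virtually special fundamental group if and only if it is non-positively curved. On the other hand, by the combination of the Virtually Compact Special Theorem (Theorem~\ref{thm:akmw}), the theorem of Przytycki--Wise (Theorem~\ref{thm:pw12}), and Liu's theorem (Theorem~\ref{thm:liu11}), together with Leeb's Theorem~\ref{thm:leeb}, the only aspherical $3$-manifolds $N$ with empty or toroidal boundary whose fundamental group is \emph{not} virtually special are closed graph manifolds that are not non-positively curved. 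So the hypothesis ``aspherical, empty or toroidal boundary, not non-positively curved'' forces $N$ to be a closed graph manifold. (One should be slightly careful to separate out the degenerate cases $S^1\times D^2$, $S^1\times S^1\times I$, and the twisted $I$-bundle over the Klein bottle: the first two are excluded since they have compressible boundary or are not aspherical in the relevant sense—actually they are aspherical, so one notes $S^1\times S^1\times I$ carries a flat, hence non-positively curved, metric, and similarly the twisted $I$-bundle over the Klein bottle—so these cannot occur under our hypothesis.)

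For parts (2) and (3), I would invoke the structure obtained in part (1): $N$ is a closed graph manifold, so in particular $\pi_1(N)$ is the fundamental group of a compact, orientable, irreducible $3$-manifold with empty (here, empty, since $N$ is closed) boundary all of whose JSJ components are Seifert fibered. For conjugacy separability: fundamental groups of Seifert fibered manifolds are conjugacy separable (see (I.\ref{I.hwz11}) and the references \cite{Mao07,AKT05,AKT10} cited there), and then the theorem of Hamilton--Wilton--Zalesskii \cite[Theorem~1.2]{HWZ13}, quoted in (I.\ref{I.hwz11}), says that if $N$ is an orientable closed irreducible $3$-manifold such that the fundamental group of every JSJ piece is conjugacy separable, then $\pi_1(N)$ is conjugacy separable. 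Applying this to our graph manifold $N$ gives part (2). (In fact (I.\ref{I.hwz11}) already records that \emph{every} orientable irreducible $3$-manifold with empty or toroidal boundary has conjugacy separable fundamental group, so part (2) is just a special case; I would cite it that way.)

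For part (3): by (C.\ref{C.AF}), for a graph manifold $N$ one has the stronger conclusion—quoting \cite[Proposition~2]{AF10}—that $\pi_1(N)$ is virtually residually $p$ \emph{for every} prime $p$, not merely for all but finitely many. Since our $N$ is a graph manifold by part (1), this is exactly part (3). So each of the three parts reduces to citing an established result once part (1) is in hand.

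The main obstacle, such as it is, is purely bookkeeping in part (1): one must correctly assemble the trichotomy of the Geometrization Theorem (hyperbolic / graph manifold / non-trivial JSJ with a hyperbolic piece), match each case against Theorems~\ref{thm:akmw}, \ref{thm:pw12}, \ref{thm:liu11} and Leeb's Theorem~\ref{thm:leeb} to see that ``virtually special $\Leftrightarrow$ non-positively curved'' and hence that ``not non-positively curved'' singles out exactly the non-non-positively-curved closed graph manifolds, and dispose of the handful of special manifolds ($S^1\times S^1\times I$, the twisted $I$-bundle over the Klein bottle, $S^1\times D^2$) that are excluded from the statements of Theorems~\ref{thm:geoms} and~\ref{thm:geom2} but each of which either fails the hypotheses (compressible boundary, or reducible) or is visibly non-positively curved. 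There is no hard analysis here; everything of substance has already been imported from Agol, Kahn--Markovic, Wise, Liu, Przytycki--Wise, Leeb, Hamilton--Wilton--Zalesskii, and Aschenbrenner--Friedl.
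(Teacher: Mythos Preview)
Your proof is correct, but for part~(1) you work much harder than necessary. The paper simply invokes Leeb's theorem (Theorem~\ref{thm:leeb}) directly: an irreducible $3$-manifold with empty or toroidal boundary that is not a closed graph manifold is non-positively curved, and the contrapositive is exactly part~(1) (asphericity gives irreducibility). You instead route the argument through the equivalence ``virtually special $\Leftrightarrow$ non-positively curved'' (Theorem~\ref{thm:npcvs}), which drags in Agol, Kahn--Markovic, Wise, Przytycki--Wise, and Liu, only to reach a conclusion that Leeb's 1995 result delivers on its own. For part~(2) the paper cites \cite[Theorem~D]{WZ10}, a direct result for graph manifolds, rather than assembling it from the Seifert case plus the combination theorem of \cite{HWZ13}; your route is valid but again less economical. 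Part~(3) matches the paper exactly.
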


The first statement was proved by Leeb \cite[Theorems~3.2 and 3.3]{Leb95} and the other two statements are known to hold for fundamental groups of all graph manifolds by \cite[Theorem~D]{WZ10} and \cite{AF10}, respectively.

\medskip

We saw in Lemma~\ref{lem:sol} and Proposition ~\ref{prop:nonretract} that there are many desirable properties which fundamental groups of $\Sol$-manifolds and some graph manifolds do not have.  Also, recall that there are graph manifolds which are not virtually fibered (cf.~(G.\ref{G.liu11})).  We can nonetheless pose the following question.

\begin{questions}
Let $N$ be an aspherical $3$-manifold with empty or toroidal boundary
which does not admit  a non-positively curved metric.
\begin{itemize}
\item[(1)] Is $\pi_1(N)$ linear over $\C$?
\item[(2)] Is $\pi_1(N)$ linear over $\Z$?
\item[(3)] If $\pi_1(N)$ is not solvable, does  $\pi_1(N)$ admit a finite-index subgroup which is residually $p$ for any prime $p$?
\item[(4)] Is $\pi_1(N)$ virtually bi-orderable?
\end{itemize}
\end{questions}

\subsection{Poincar\'e duality groups and the Cannon Conjecture}\label{section:pdg}

It is natural to ask whether there is an intrinsic, group-theoretic characterization of $3$-manifold groups.
Given $n\in \N$, Johnson--Wall \cite{JW72} introduced the notion of an \emph{$n$-dimensional Poincar\'e duality group} (usually just referred to as a \emph{$\pd_n$-group}).  The fundamental group of any closed, orientable, aspherical $n$-manifold is a $\pd_n$-group.
Now suppose that $\pi$ is a $\pd_n$-group. If $n=1$ and $n=2$, then $\pi$ is the fundamental group of a closed, orientable, aspherical $n$-manifold (the case $n=2$ was proved by
Eckmann, Linnell and M\"uller, see \cite{EcM80,EcL83,Ec84,Ec85,Ec87}).
Davis \cite[Theorem~C]{Davb98} showed that for any $n\geq 4$ there exists a finitely generated  $\pd_n$-group which is not finitely presented
 and hence is not the fundamental group of an aspherical closed $n$-manifold.  However, the following conjecture of Wall is still open. \index{group!$n$-dimensional Poincar\'e duality group ($\pd_n$-group)}

\begin{conjecture}\textbf{\emph{(Wall Conjecture)}}
\index{conjectures!Wall Conjecture}
Let $n\geq 3$. Then every finitely presented $\pd_n$ group is the fundamental group of a closed, orientable, aspherical $n$-manifold.
\end{conjecture}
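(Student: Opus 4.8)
The plan is to handle the three ranges $n=3$, $n\geq 5$, and $n=4$ by completely different machinery, since the available tools degenerate badly at the low end.

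For $n=3$ — the case where a concrete line of attack exists — I would first dispose of the $\pd_3$-groups $\pi$ containing a $\Z^2$ by a group-theoretic JSJ decomposition (in the style of Kropholler and Castel): such a $\pi$ should split as the fundamental group of a finite graph of groups with edge groups $\Z^2$ and vertex groups that are either $\pd_3$-groups containing a normal $\Z$ — which by the already-available converse are Seifert (an aspherical $3$-manifold group with a normal infinite cyclic subgroup is Seifert fibered, Theorem~\ref{thm:infcyclicnormal}, whose proof runs through the Seifert Fibered Space Conjecture) — or atoroidal $\pd_3$-groups. One then realizes each vertex group by a compact $3$-manifold with toroidal boundary and glues along the tori to realize $\pi$. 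This reduces the problem to the atoroidal case: a finitely presented, atoroidal $\pd_3$-group is the fundamental group of a closed hyperbolic (or small Seifert) $3$-manifold. Here I would invoke the \emph{Cannon Conjecture}: argue that $\pi$ is word-hyperbolic, that its Gromov boundary is a homology $2$-sphere by Bestvina--Mess and genuinely an $S^2$ using the $\pd_3$ hypothesis and low-dimensional topology, and then that Cannon's conjecture produces a cocompact discrete isometric action on $\H^3$, hence a closed hyperbolic $3$-manifold with fundamental group $\pi$; asphericity and orientability (or, for a non-oriented orientation character, passage to the index-two kernel and back) come for free. The obvious gap is that the Cannon Conjecture is itself open, and this is the main obstacle in dimension $3$.

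For $n\geq 5$ I would use surgery theory. By definition a $\pd_n$-group is $\op{FP}$ over $\Z$; combined with finite presentability, the obstruction to a \emph{finite} $K(\pi,1)$ lies in $\widetilde{K}_0(\Z[\pi])$, which vanishes whenever $\pi$ satisfies the Farrell--Jones conjecture, so $\pi$ admits a finite aspherical Poincar\'e $n$-complex $X$. Next $X$ carries a Spivak normal fibration; the obstruction to lifting it to a $\op{TOP}$ block bundle lives in $[X,B(G/\op{TOP})]$, and a choice of lift determines a surgery obstruction in $L_n(\Z[\pi])$. The plan is then to kill this obstruction \emph{within} the homotopy type of $X$ — staying aspherical throughout the surgery — to obtain a closed topological $n$-manifold homotopy equivalent to $X$; Farrell--Jones rigidity then also yields uniqueness. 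The genuinely hard steps here are (i) choosing the $\op{TOP}$-reduction and controlling the element of $L_n(\Z[\pi])$, both of which require information about the $K$- and $L$-theory of $\Z[\pi]$ for an \emph{arbitrary} finitely presented $\pi$ that is simply not known in general, and (ii) the dimension $n=4$, where surgery is available only for Freedman--Quinn ``good'' fundamental groups and the $\op{TOP}$/smooth discrepancy is severe — for $n=4$ I have no complete strategy and would expect it to be the last case to fall, if the conjecture holds at all. In sum, this ``proof'' is really a reduction of the Wall Conjecture to the Cannon Conjecture together with the still-conjectural parts of high-dimensional surgery and the Farrell--Jones program over general group rings.
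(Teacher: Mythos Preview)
This statement is a \emph{conjecture}, and the paper does not prove it; it is listed in Section~\ref{section:pdg} precisely as an open problem, with references to the literature of partial results. You are clearly aware of this --- your final sentence says as much --- so there is no ``paper's proof'' to compare against.

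That said, your outline is a fair survey of the known reductions, and it aligns well with the paper's own discussion. In particular, the paper explicitly states (via Bestvina and Bestvina--Mess) that for word-hyperbolic groups the $n=3$ case of the Wall Conjecture is equivalent to the Cannon Conjecture, which is exactly your endgame; and it cites Bartels--L\"uck--Weinberger for the high-dimensional analogue, which is morally your surgery strategy carried out under a word-hyperbolicity hypothesis.

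One genuine gap in your $n=3$ sketch that you underplay: the step ``argue that $\pi$ is word-hyperbolic'' for an atoroidal $\pd_3$-group is itself open and is not a consequence of the other inputs you list. The paper is more careful here --- it only asserts the equivalence with Cannon \emph{once} word-hyperbolicity is assumed. Likewise, the JSJ/realization step for $\pd_3$-groups containing $\Z^2$ (realizing the vertex groups by actual $3$-manifolds with the correct boundary pattern and gluing) is not fully established in the literature you implicitly invoke; the algebraic JSJ exists, but upgrading each piece to a manifold is part of the problem, not a lemma. So even granting Cannon and Farrell--Jones, your $n=3$ argument has residual gaps.
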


This conjecture has been studied over many years and a summary of all the results so far exceeds the possibilities of this survey.
We refer to \cite{Tho95,Davb00,Hil11} for some surveys and to
\cite{BiH91,Cas04,Cas07,Cr00,Cr07,Davb00,DuS00,Hil85,Hil87,Hil06,Hil12,Hil11,Kr90b,SS07,Tho84,Tho95,Tur90,Wala04} for more information on the Wall Conjecture in the case $n=3$ and for known results.

\medskip

The Geometrization Theorem implies that the fundamental group of a closed, aspherical non-hyperbolic $3$-manifold $N$ contains a subgroup isomorphic to $\Z^2$, and so the fundamental group of a closed, aspherical $3$-manifold $N$ is word-hyperbolic if and only if $N$ is hyperbolic. It is therefore especially natural to ask which word-hyperbolic groups are $\pd_3$ groups.  Bestvina \cite[Remark~2.9]{Bea96}, extending earlier work of Bestvina--Mess \cite{BeM91}, characterized hyperbolic $\pd_3$ groups in terms of their \emph{Gromov boundaries}. (See \cite[Section~III.H.3]{BrH99} for the definition of the Gromov boundary of a word-hyperbolic group.) He proved that a word-hyperbolic group $\pi$ is $\pd_3$ if and only if its Gromov boundary $\partial\pi$ is homeomorphic to $S^2$.  Therefore, for word-hyperbolic groups the Wall Conjecture is equivalent to the following conjecture of Cannon.

\begin{conjecture} \textbf{\emph{(Cannon Conjecture)}}
\index{conjectures!Cannon Conjecture}
If the boundary of a word-hyperbolic group $\pi$ is homeomorphic to $S^2$, then $\pi$ acts properly discontinuously and cocompactly on $\H^3$ with finite kernel.
\end{conjecture}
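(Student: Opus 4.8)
The plan is to reduce the Cannon Conjecture to a surface-subgroup existence statement in the spirit of the Kahn--Markovic Theorem~\ref{thm:km}, and then to attack that statement. Write $\pi$ for the word-hyperbolic group in question, equipped with its Gromov boundary $\partial\pi$ and a visual metric $\varrho$; by hypothesis (and, in the $\pd_3$ setting, by Bestvina's characterization) $(\partial\pi,\varrho)$ is homeomorphic to $S^2$, and $\pi$ acts on it as a uniform convergence group by uniformly quasi-M\"obius homeomorphisms, as on the boundary of any hyperbolic group. The desired conclusion --- that $\pi$ acts properly discontinuously and cocompactly on $\H^3$ with finite kernel --- would follow from combining two classical rigidity theorems. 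First, the quasisymmetric uniformization theorem of Bonk--Kleiner: a metric $2$-sphere is quasisymmetrically equivalent to the round sphere as soon as it is Ahlfors $2$-regular and linearly locally connected. Second, the theorem of Sullivan and Tukia: a uniformly quasiconformal convergence action on the round $S^2$ is conjugate, by a quasiconformal homeomorphism, to an action by M\"obius transformations, i.e. by elements of $\mathrm{Isom}^+(\H^3)$. Granting a quasisymmetric uniformization $(\partial\pi,\varrho)\to S^2$, the $\pi$-action transports to a uniformly quasiconformal action on $S^2$; by Sullivan--Tukia it is conjugate into $\mathrm{Isom}^+(\H^3)$, and since $\pi$ acts cocompactly on the space of distinct triples of $\partial\pi$ the resulting Kleinian group is cocompact, so $\H^3/\pi$ is a closed hyperbolic $3$-orbifold; passing to a torsion-free finite-index subgroup by Selberg's Lemma then identifies a finite-index subgroup of $\pi$ with the fundamental group of a closed hyperbolic $3$-manifold.

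Thus the real content is to verify the hypotheses of Bonk--Kleiner, i.e. to exhibit on $S^2\cong\partial\pi$ a metric in the conformal gauge of $\pi$ that is Ahlfors $2$-regular (linear local connectedness being automatic for a sphere that arises as the boundary of a hyperbolic group). Equivalently, by Bonk--Kleiner together with Keith--Laakso, one must show that the Ahlfors-regular conformal dimension of $\partial\pi$ equals its topological dimension $2$ and is \emph{attained}. Here I would follow Markovic's criterion: this holds once $\pi$ admits a sufficiently rich ("wide") family $\mathcal{F}$ of quasiconvex surface subgroups --- a family such that for every great-circle-like separation of $\partial\pi$ and every $\varepsilon>0$ there is an $H\in\mathcal{F}$ whose limit set $\Lambda_H\subseteq\partial\pi$ is a topologically embedded circle which, in the visual metric, is "$\varepsilon$-nearly round" and separates the prescribed pair of boundary points. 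Such a family forces the combinatorial moduli on $\partial\pi$ to behave like the moduli of the genuine round sphere, pinning the conformal dimension to $2$. The whole problem is therefore reduced to the following analogue of the Surface Subgroup Theorem: \emph{every word-hyperbolic group with Gromov boundary $S^2$ contains a wide family of quasiconvex surface subgroups}.

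To produce these surface subgroups one would imitate the Kahn--Markovic construction behind Theorem~\ref{thm:km}: assemble many "good pants" out of nearly-geodesic pieces and glue them along long geodesic arcs with rotational offsets close to $1$, so that the resulting pleated surface is $\pi_1$-injective and quasiconvex, injectivity coming from an exponential-mixing estimate guaranteeing the gluings are everywhere well balanced. The decisive difficulty --- and the reason the Cannon Conjecture remains open --- is that this machine relies essentially on the ambient \emph{Riemannian} hyperbolic geometry: the unit tangent bundle, the geodesic flow and its exponential mixing, Margulis-style orbit counting, and exponential decay of correlations. An abstract hyperbolic group with $2$-sphere boundary has no such structure a priori; one has only the quasiconformal dynamics of $\pi$ on $\partial\pi$, or perhaps a CAT$(0)$ cube-complex model built, via Sageev's construction, from codimension-one quasiconvex subgroups (which do exist here, e.g. as stabilizers of the circles $\Lambda_H$ once a single quasiconvex surface subgroup is known, cf.\ the route to Theorem~\ref{thm:akmw}). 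Making the counting and mixing arguments run in one of these purely combinatorial or measure-theoretic models --- for instance replacing exponential mixing of the geodesic flow by exponential mixing of the $\pi$-action on pairs of boundary points equipped with a Patterson--Sullivan-type measure --- is the main obstacle, and I would expect the bulk of the work, and the principal risk of failure, to lie precisely there.
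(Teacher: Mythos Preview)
The statement is the Cannon \emph{Conjecture}; it is open, and the paper does not prove it. The paper only records its provenance, points to the literature on partial evidence (Cannon--Floyd--Parry, Bonk--Kleiner, etc.), and notes that Markovic \cite{Mac12} reduced it, via Agol's Theorem, to the existence of enough quasi-convex surface subgroups. There is therefore no proof in the paper to compare against.

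Your proposal is not a proof either, and to your credit you say so: you lay out the Bonk--Kleiner/Sullivan--Tukia reduction to showing the Ahlfors-regular conformal dimension of $\partial\pi$ is $2$ and attained, then invoke Markovic's criterion to reduce further to a ``wide'' family of quasi-convex surface subgroups, and finally explain why the Kahn--Markovic good-pants construction does not obviously transplant to an abstract hyperbolic group with $S^2$ boundary. This is an accurate summary of the state of the art and matches what the paper alludes to in one sentence via the reference to \cite{Mac12}. The genuine gap you identify --- the absence of a geodesic flow, frame bundle, and exponential mixing in the purely group-theoretic setting --- is exactly the known obstruction, and nothing in your outline closes it. So the proposal should be read as a survey of the current strategy, not as a proof; as such it is correct, but it does not advance beyond what is already in the literature cited by the paper.
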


This conjecture, which is the $3$-dimensional analogue of the $2$-dimensional results by Casson--Jungreis \cite{CJ94} and Gabai \cite{Gab92}
stated in the remark after Theorem \ref{thm:infcyclicnormal},
was first set out in \cite[Conjecture~5.1]{CaS98} and goes back to earlier work in \cite{Can94} (see also \cite[p.~97]{Man07}).
 We refer to \cite[Section~5]{Bok06} and \cite[Section~9]{BeK02} for a detailed discussion of the conjecture and to \cite{CFP99}, \cite{CFP01}, \cite{BoK05} and \cite{Rus10} for some positive evidence.  Markovic \cite[Theorem~1.1]{Mac12} (see also \cite[Corollary~1.5]{Hai13}) showed that Agol's Theorem (see \cite{Ag12} and Theorem \ref{thm:ag12}) gives a new approach to the Cannon Conjecture.

We finally point out that a high-dimensional analogue to the Cannon Conjecture was proved by Bartels, L\"uck and Weinberger.
More precisely, in \cite[Theorem~A]{BLW10} it is shown that if $\pi$ is a word-hyperbolic group whose boundary is homeomorphic to $S^{n-1}$
with $n\geq 6$, then $\pi$ is the fundamental group of an aspherical closed $n$-dimensional manifold.

\subsection{The Simple Loop Conjecture}

Let $f\colon \Sigma\to N$ be an embedding of  a surface into a compact  $3$-manifold.
If the induced map $f_*\colon \pi_1(\Sigma)\to \pi_1(N)$ is not injective, then
it is a consequence of the Loop Theorem  that there exists an essential simple closed loop on $\Sigma$ which lies in the kernel of $f_*$.
We refer to Theorem~\ref{thm:loop} and \cite[Corollary~3.1]{Sco74} for details.

The Simple Loop Conjecture (see, e.g., \cite[Problem~3.96]{Ki97}) posits that the same conclusion holds for any map of an orientable surface to a compact, orientable $3$-manifold:

\begin{conjecture} \textbf{\emph{(Simple Loop Conjecture)}}
\index{conjectures!Simple Loop Conjecture}
Let $f\colon \Sigma\to N$ be a map from an orientable surface to a compact, orientable $3$-manifold.
If the induced map $f_*\colon \pi_1(\Sigma)\to \pi_1(N)$ is not injective, then there exists an essential simple closed loop on $\Sigma$ which lies in the kernel of $f_*$.
\end{conjecture}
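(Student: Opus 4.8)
The final statement is one of the oldest open problems about $3$-manifold groups, so rather than a complete proof the plan is to explain how one reduces it, via the decomposition theorems above, to the hyperbolic case, which is the genuine obstacle.

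First I would reduce to the case that $N$ is compact, orientable, irreducible with incompressible boundary. Given $f\colon\Sigma\to N$ with $f_*$ non-injective, put $f$ in general position with respect to a system of essential $2$-spheres realizing the prime decomposition and with respect to compressing disks for $\partial N$. A standard innermost-curve argument on $\Sigma$, using the Sphere Theorem (Theorem~\ref{thm:sphere}) and the Loop Theorem (Theorem~\ref{thm:loop}), either exhibits an essential simple closed curve of $\Sigma$ lying in $\ker f_*$ outright, or homotopes $f$ off the spheres and disks; in the latter case $f$ maps $\Sigma$ into a single prime, $\partial$-incompressible summand $N_i$ (corresponding to a free factor in the decomposition of Lemma~\ref{lem:incomp}), and since $\pi_1(N_i)\to\pi_1(N)$ is injective, $f_*$ is already non-injective into $\pi_1(N_i)$. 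So we may assume $N$ is compact, orientable, irreducible, with empty or incompressible boundary.

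Next, apply the Geometric Decomposition Theorem (Theorem~\ref{thm:geom2}). Homotope $f$ to be transverse to the JSJ tori and Klein bottles and tighten; then either $f$ can be pushed into a single JSJ component, reducing us to a geometric manifold, or the preimage is an essential $1$-submanifold $\gamma\subset\Sigma$. Cutting $\Sigma$ along $\gamma$ and inducting on the number of components of the decomposition (exactly as in the Loop-Theorem induction of Lemma~\ref{lem:incomp}) again reduces to the geometric pieces. In the geometric cases the conjecture is known: the Seifert fibered case is a theorem of Hass, the $\Sol$ case is known, and the graph-manifold case is a theorem of Rubinstein--Wang; spherical and Euclidean pieces are handled directly.

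The hard part will be the remaining case, $N$ hyperbolic, where nothing in the excerpt settles the problem. The natural line of attack uses the Virtually Compact Special Theorem (Theorem~\ref{thm:akmw}). Write $K=\ker\{f_*\colon\pi_1(\Sigma)\to\pi_1(N)\}$, a nontrivial normal subgroup of the surface group $\pi_1(\Sigma)$; the image $\pi_1(\Sigma)/K\hookrightarrow\pi_1(N)$ is finitely generated, hence by the Subgroup Tameness Theorem (Theorem~\ref{thm:subgroupdichotomy}) is either a virtual fiber subgroup or geometrically finite. In either case Corollary~\ref{cor:akmw} and the virtual-retraction results summarized in Diagram~5 place the image inside a subgroup of $\pi_1(N)$ that is $\gl(n,\Z)$-linear and a quasi-convex subgroup of a right-angled Artin group, so the conjecture would follow from the purely group-theoretic statement that a non-injective homomorphism from a closed surface group into a right-angled Artin group (equivalently, into $\gl(n,\Z)$) must kill a simple closed curve. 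I expect this to be the real obstacle: no separability or finiteness property of the target seems strong enough to control the topology of $K$ in $\Sigma$, and one would need a genuinely new idea --- for instance, showing that a $K$ containing no essential simple closed curve would have to be so ``filling'' that $\pi_1(\Sigma)/K$ violates coherence or residual finiteness, contradicting linearity of $\pi_1(N)$ --- or else a direct minimal-surface argument in the hyperbolic metric extending Hass's approach beyond the $S^1$-action setting.
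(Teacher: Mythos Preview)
The statement is an open \emph{conjecture}; the paper does not prove it, so there is no proof to compare against. You correctly recognise this and offer a strategy rather than a proof, and your reduction to geometric pieces is in the right spirit---the paper confirms that the conjecture is known for graph manifolds (Rubinstein--Wang, extending Gabai and Hass), so the hyperbolic case is indeed the obstacle.

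The serious problem is your proposed endgame. You reduce the hyperbolic case to the purely group-theoretic claim that a non-injective homomorphism from a closed surface group into a RAAG (or into $\gl(n,\Z)$) must kill a simple closed curve. But the paper, in the very paragraph following the conjecture, records that Minsky's analogous question for $\sl(2,\C)$ was answered \emph{negatively} by Louder and Cooper--Manning, and that Mann showed the analogue fails for $\psl(2,\R)$. These counterexamples demonstrate that linearity, residual finiteness, and coherence of the target are not enough to force the kernel to contain a simple closed curve; in particular your suggestion that a simple-closed-curve-free kernel would force $\pi_1(\Sigma)/K$ to violate coherence or residual finiteness cannot work, since the images in those examples are finitely generated linear groups. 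Your reduction discards exactly the information---the $3$-dimensional topology of $N$---that distinguishes the Simple Loop Conjecture from its false group-theoretic cousins. Any genuine attack on the hyperbolic case will have to use that $f$ lands in a $3$-manifold, not merely that $f_*$ lands in a subgroup of a RAAG.

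There is also a smaller gap: passing to a finite cover $N'\to N$ to reach the RAAG forces you to replace $\Sigma$ by a finite cover $\wti\Sigma$, and a simple closed curve in $\wti\Sigma$ need not project to a \emph{simple} curve in $\Sigma$, so even granting your group-theoretic statement the descent would not be immediate.
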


The conjecture was proved for graph manifolds by Rubinstein--Wang \cite[Theorem~3.1]{RW98}, extending earlier work of Gabai \cite[Theorem~2.1]{Gab85} and Hass \cite[Theorem~2]{Has87}.
Minsky \cite[Question~5.3]{Miy00} asked whether the conclusion of the conjecture also holds if the target is replaced by $\sl(2,\C)$.
This was answered in the negative by Louder \cite[Theorem~2]{Lou11} and Cooper--Manning \cite{CoM11} (see also \cite{Cal11}, \cite[Theorem~1.2]{Mnn12} and \cite[Section~7]{But12b}).

\subsection{Homology of finite regular covers and the volume of $3$-manifolds}

Let $N$ be an irreducible, non-spherical $3$-manifold with empty or toroidal boundary. We saw in (C.\ref{C.l2betti}) that for any cofinal regular tower $\{\ti{N}\}_{i\in \N}$ of $N$
we have
\[  \lim_{i\to \infty} \frac{b_1(\ti{N}_i;\Z)}{[\ti{N}_i:N]} = 0.\]
It is natural to ask about the limit behavior of other `measures of complexity' of groups and spaces
for cofinal regular towers of $N$. In particular we propose the following question:

\begin{question}
Let $N$ be an irreducible, non-spherical $3$-manifold with empty or toroidal boundary.
Let $\{\ti{N}_i\}_{i\in \N}$ be a cofinal regular tower of $N$.
\bn
\item[\textup{(1)}]
Does the equality
\[  \lim_{i\to \infty} \frac{b_1(\ti{N}_i;\F_p)}{[\ti{N}_i:N]} = 0\]
hold for any prime $p$?
\item[\textup{(2)}] If \textup{(1)} is answered affirmatively, then  does the following hold?
\[  \liminf_{i\to \infty} \frac{\op{rank}(\pi_1(\ti{N}_i))}{[\ti{N}_i:N]} = 0\]
\en
\end{question}

Note that it is not even clear that the  first limit exists.
The second limit is called the \emph{rank gradient} \index{rank gradient} and was first studied by Lackenby \cite{Lac05}.
Also note that the first question is a particular case of \cite[Question~1.5]{EL12} and that furthermore the second question
is asked in \cite{KN12}. It follows from \cite[Proposition~2.1]{KN12} and \cite[Theorem~4 and Proposition~9]{AJZN11} that (1) and (2) hold for graph manifolds,
and that the general case follows from answering (1) and (2) for hyperbolic $3$-manifolds.

Arguably the most interesting question is about the growth rate of the size of  in the homology of finite covers.
The following question has been raised by several authors  (see, e.g., \cite{BV13},  \cite[Question~13.73]{Lu02} and \cite[Conjecture~1.12]{Lu12}).

\begin{question}
Let $N$ be an irreducible  $3$-manifold with empty or toroidal boundary. We denote by $\operatorname{vol}(N)$ the sum of the volumes of the hyperbolic pieces in the JSJ decomposition of $N$. Does there exist a cofinal regular tower $\{\ti{N}\}_{i\in \N}$ of $N$
such that
\[  \lim_{i\to\infty}  \frac{1}{[\ti{N}_i:N]} \ln| \operatorname{Tor} H_1(\ti{N}_i;\Z)|=\frac{1}{6\pi} \operatorname{vol}(N)~?\]
Or, more optimistically, does the above equality hold for any  cofinal regular tower $\{\ti{N}\}_{i\in \N}$ of $N$?
\end{question}

\bn
\item A good introduction to this question is given in the introduction of \cite{BD13}. There the authors summarize and add to the evidence towards an affirmative answer for arithmetic hyperbolic $3$-manifolds and they also give some evidence that the answer might be negative for general hyperbolic $3$-manifolds.
\item
We refer to \cite[Theorem~1.1]{ACS06}, \cite{Shn07},\cite[Proposition~10.1]{CuS08a}, \cite[Theorem~6.7]{CDS09}, \cite[Theorem~1.2]{DeS09}, \cite[Theorem~9.6]{ACS10}, \cite[Theorem~1.2]{CuS11} for results relating the homology of a hyperbolic $3$-manifold to the hyperbolic volume.
\item
An attractive approach to the question is the result of L\"uck--Schick \cite[Theorem~0.7]{LuS99} that $\op{vol}(N)$ can be expressed in
terms of a certain $L^2$--torsion of $N$.
By \cite[Equation~8.2]{LiZ06} and  \cite[Lemma~13.53]{Lu02} the $L^2$--torsion corresponding to the abelianization
corresponds to the Mahler measure of the  Alexander polynomial.
The relationship between the Mahler measure of the Alexander polynomial  and  the growth of torsion homology is explored
by Silver--Williams \cite[Theorem~2.1]{SW02a} \cite{SW02b} (extending earlier work in \cite{GoS91,Ril90}),
Kitano--Morifuji--Takasawa \cite{KMT03},
Le \cite{Le10} and Raimbault \cite[Theorem~0.2]{Rai12a}.
\item Note that it follows from Gabai--Meyerhoff--Milley \cite[Corollary~1.3]{GMM09} \cite[Theorem~1.3]{Mie09} that for a non-graph manifold $N$ we have $\operatorname{vol}(N)>0.942$.
(See also  \cite[Theorem~3]{Ada87}, \cite{Ada88}, \cite{CaM01}, \cite{GMM10} and \cite[Theorem~3.6]{Ag10b} for more information and more results.)
\en
\medskip

Note that an affirmative answer would imply that the order of torsion in the homology of a hyperbolic $3$-manifold
grows exponentially by going to finite covers. To the best of our knowledge even the following much weaker question is still open:

\begin{question}
Let $N$ be a hyperbolic $3$-manifold. Does $N$ admit a finite cover~$\ti{N}$ with $\op{Tor} H_1(\ti{N};\Z)\ne 0$?
\end{question}

It is also interesting to study  the behavior of the $\F_p$-Betti numbers in finite covers and the number of generators of the first homology group in finite covers.  Little seems to be known about these two problems (but see \cite{LLS11} for some partial results regarding the former problem).
One intriguing question is whether
\[  \lim_{\ti{N}} \frac{ b_1(\ti{N};\F_p)}{[\ti{N}:N]}= \lim_{\ti{N}} \frac{b_1(\ti{N};\Z)}{[\ti{N}:N]} \]
for any compact $3$-manifold.  We also refer to \cite{Lac11} for further questions on $\F_p$-Betti numbers in finite covers.

Given a $3$-manifold $N$, the behavior of the homology in a cofinal regular tower
can depend on the particular choice of sequence. For example, F. Calegari--Dunfield \cite[Theorem~1]{CD06}
together with Boston--Ellenberg \cite{BE06} showed
 that there exists a closed hyperbolic $3$-manifold and a cofinal regular tower  $\{\ti{N}_i\}_{i\in\N}$
such that $b_1(\ti{N}_i)=0$ for any $i$. On the other hand we know by (G.\ref{G.15}) that $vb_1(N)>0$. Another instance of this phenomenon
can be seen in \cite[Theorem~1.2]{LLuR08}.

\subsection{Linear representations of $3$-manifold groups}\label{sec:linear repres}

We now know that the fundamental groups of most $3$-manifolds are linear. It is natural to ask what is the minimal dimension of a faithful representation  for a given $3$-manifold group.  For example,  Thurston \cite[Problem~3.33]{Ki97}
asked whether every finitely generated $3$-manifold group has a faithful representation in $\operatorname{GL}(4,\R)$.
This question was partly motivated by the study of  projective structures on $3$-manifolds,
since a projective structure on a $3$-manifold $N$ naturally gives rise to a (not necessarily faithful) representation $\pi_1(N)\to \op{PGL}(4,\R)$. We refer to \cite{CLT06,CLT07,HP11} for more information on projective structures on $3$-manifolds and to
Cooper and Goldman \cite{CoG12} for a proof that $\R P^3\# \R P^3$ does not admit a projective structure.

Thurston's question was answered in the negative by Button \cite[Corollary~5.2]{But12a}.
More precisely, Button showed that there exists a closed graph manifold $N$ which does not admit a faithful representation $\pi_1(N)\to \gl(4,k)$
for any field~$k$. \medskip

One of the main themes which emerges from this survey is that fundamental groups of closed graph manifolds are at times less well behaved than fundamental groups of irreducible $3$-manifolds which are not closed graph manifolds, e.g.\ which have a hyperbolic JSJ component.
We can therefore ask the following two questions:

\begin{question}
\mbox{}
\bn
\item[\textup{(1)}] Let $N$ be an irreducible $3$-manifolds which is not a closed graph manifold. Does $\pi_1(N)$ admit a faithful representation in $\operatorname{GL}(4,\R)$?
\item[\textup{(2)}] Does there exist an $n$ such that every finitely generated $3$-manifold group has a faithful representation in $\operatorname{GL}(n,\R)$?
\en
 \end{question}

Note though that  we do not even know whether there is an $n$ such that the fundamental group of any Seifert fibered manifold embeds in $\gl(n,\R)$; we refer to (D.\ref{D.Boyer}) for more information.
\medskip

The following was conjectured by Luo \cite[Conjecture~1]{Luo12}.

\begin{conjecture}
Let $N$ be a compact $3$-manifold. Given any non-trivial $g\in \pi_1(N)$,
 there exists
a finite commutative ring $R$ and a homomorphism $\a\colon \pi_1(N)\to \sl(2,R)$
such that $\a(g)$ is non-trivial.
\end{conjecture}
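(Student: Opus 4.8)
The plan is to reduce the statement to the geometric pieces of $N$, exploiting that $\sl(2,\cdot)$ is two-dimensional through the trace fields of the hyperbolic and spherical pieces, and through explicit matrix constructions over finite rings with nilpotents for the Seifert fibered pieces. By Lemma~\ref{lem:closed} the group $\pi_1(N)$ is a retract of $\pi_1(DN)$ for the closed double $DN$, so any representation of $\pi_1(DN)$ separating $g$ from $1$ restricts to one of $\pi_1(N)$, and one may assume $N$ closed. By the Prime Decomposition Theorem~\ref{thm:prime} and Observation~\ref{obs:s2}, $\pi_1(N)$ is a free product of fundamental groups of closed prime $3$-manifolds, and by the Geometrization Theorem~\ref{thm:geom} each prime factor is spherical, a $\Nil$--, $\Sol$-- or Euclidean manifold group, or the fundamental group of an irreducible $3$-manifold which is hyperbolic, Seifert fibered, or has a non-trivial JSJ decomposition. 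One point deserves emphasis at the outset: the linearity of $\pi_1(N)$ over $\Z$, which holds for most $N$ (Diagram~4), does \emph{not} suffice, because the image of $g$ under a faithful integral representation could generate a finite matrix group whose nonabelian composition factors are of the form $\psl(n,\F_p)$ with $n\geq 3$; such groups never occur as composition factors of $\sl(2,R)$ for a commutative ring $R$, since the congruence quotients $\sl(2,R_{\mathfrak m})\to\sl(2,R/\mathfrak m)$ of the local factors have $p$-group kernels, so the only nonabelian composition factors of $\sl(2,R)$ are of the form $\psl(2,\F_q)$. Thus the representations have to be produced with care.

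The hyperbolic and spherical pieces, and all the solvable ones, are handled by a single device. If $M$ is hyperbolic then $\pi_1(M)$ embeds in $\sl(2,\ol{\Q})$ by (C.\ref{C.sl2c}); if $M$ is spherical then $\pi_1(M)$ is a finite subgroup of $\mathrm{SO}(4)$ and an inspection of Hopf's classification shows it embeds in $\mathrm{SU}(2)$ or $\mathrm{SU}(2)\times\mathrm{SU}(2)$, hence in $\sl(2,\C\times\C)$ with algebraic matrix entries; and the $\Z^n$, Euclidean and $\Sol$ pieces likewise embed in $\sl(2,\ol{\Q})$ --- for a $\Sol$-lattice $\Z^2\rtimes_\varphi\Z$ one realizes $\Z^2$ inside the upper unipotent subgroup as the additive group $\Z+\Z\lambda\subset\ol{\Q}$ for $\lambda$ an eigenvalue of $\varphi$ and the monodromy as $\mathrm{diag}(\mu,\mu^{-1})$ with $\mu^2=\lambda$, and a similar bookkeeping with roots of unity handles the Klein bottle group. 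In every such case one clears denominators to land in $\sl(2,\mathcal O_S)$ for a ring $\mathcal O_S$ of $S$-integers in a number field (or a product of two such); given a nontrivial $g$ one has $g-I\neq 0$ in $M_2(\mathcal O_S)$, and choosing a maximal ideal $\mathfrak m$ missing the nonzero entries of $g-I$ makes $\pi_1(M)\to\sl(2,\mathcal O_S)\to\sl(2,\mathcal O_S/\mathfrak m)$ detect $g$, with $\mathcal O_S/\mathfrak m$ a finite field. Free factors are absorbed as usual: if $g$ is conjugate into a factor, extend that factor's representation by the trivial map on the other factors; if $g$ is cyclically reduced of syllable length at least two, build a representation of the free product by a Klein--combination (ping--pong) argument inside $\sl(2,\mathcal O_S)$ --- the finitely many syllables of $g$ impose only finitely many open conditions on the parameters --- and then reduce modulo $\mathfrak m$.

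The substance of the problem is in the closed Seifert fibered pieces and in the JSJ assembly. For a closed non-spherical Seifert fibered $M$ there is a central extension $1\to\Z\langle t\rangle\to\pi_1(M)\to\Gamma\to 1$ with $\Gamma$ a $2$-orbifold group (virtually Fuchsian, hence embeddable in $\sl(2,\mathcal O_S)$); if the fiber class $t$ is non-torsion in $H_1(M)$ one detects $t^n$ by projecting $\pi_1(M)\to\Z$ and sending a generator to a unipotent element of order $p^k$ in $\sl(2,\Z/p^k)$, and one combines this with the $\Gamma$-representation to reach the remaining elements. The hard case is when $t$ is torsion in $H_1(M)$ --- for instance for $\widetilde{\sl(2,\R)}$-- and $\Nil$-geometry rational homology spheres, exactly the manifolds whose groups are not linear over any field. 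Here the fiber must be detected by a genuinely nonabelian construction: one wants a representation $\pi_1(M)\to\sl(2,\F_q[\varepsilon]/(\varepsilon^m))$ sending $t$ into the nilpotent congruence kernel $I+\varepsilon\,\mathfrak{sl}_2(\F_q)$, which amounts to realizing the reduction of the Euler number of the Seifert fibration inside a congruence quotient of $\sl(2,\cdot)$. As a model, the integral Heisenberg group embeds into $\sl(2,\F_p[\varepsilon]/(\varepsilon^3))$ via $a\mapsto I+\varepsilon X_1$, $b\mapsto I+\varepsilon X_2$ with $[X_1,X_2]\neq 0$, so that the central generator goes to $I+\varepsilon^2[X_1,X_2]\neq I$; the task is to carry out the analogous construction with the correct cocycle for every Seifert fibered rational homology sphere.

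Finally, for $N$ with a non-trivial JSJ decomposition, $\pi_1(N)$ is a graph of groups whose vertex groups are Seifert fibered or hyperbolic and whose edge groups are copies of $\Z^2$. An element $g$ conjugate into a vertex group $\pi_1(M_v)$ requires extending a representation of $\pi_1(M_v)$ to a representation of the whole graph of groups --- delicate, since $\pi_1(N)$ need not retract onto $\pi_1(M_v)$ --- while an element acting hyperbolically on the Bass--Serre tree requires a global representation, to be assembled by matching the edge $\Z^2$'s inside a common $\sl(2,\mathcal O_S)$ (a Maskit--combination over a number ring) and then reducing modulo a maximal ideal. I expect the genuine obstacles to be precisely these last two points: torsion Seifert fibers in $\widetilde{\sl(2,\R)}$-- and $\Nil$-geometry rational homology spheres, and elements of closed graph manifolds detected only across JSJ tori (where there is no ambient number field to work over) --- the same ``non--non--positively--curved'' frontier emphasized in Section~\ref{sec:Open questions}.
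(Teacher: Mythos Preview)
The statement you are attempting to prove is presented in the paper as an \emph{open conjecture} of Luo \cite[Conjecture~1]{Luo12}; the paper offers no proof, nor does it claim one exists. So there is no ``paper's own proof'' to compare against, and your proposal should be read as a research outline rather than a proof.

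You are candid about this: you explicitly flag the torsion Seifert fiber in $\widetilde{\sl(2,\R)}$-- and $\Nil$--geometry rational homology spheres, and the JSJ assembly for closed graph manifolds, as genuine obstacles you have not resolved. That assessment is accurate, and these are indeed the heart of the problem --- precisely the cases where $\pi_1(N)$ has no faithful $\sl(2,k)$-representation over any field, so one is forced to work with nilpotents in the ring. Your Heisenberg model in $\sl(2,\F_p[\varepsilon]/(\varepsilon^3))$ is a reasonable start, but extending it to arbitrary Seifert invariants and then gluing across JSJ tori is exactly what would constitute a proof of the conjecture; it has not been done.

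There is also a gap earlier than you acknowledge. Your treatment of free products is too quick: for $g$ cyclically reduced of syllable length $\geq 2$ in $A*B$, you propose ping-pong inside a single $\sl(2,\mathcal O_S)$, but the factors need not have $\sl(2)$-representations landing in a common number ring (or in any field at all, as just noted). Likewise, your reduction ``extend the factor's representation by the trivial map on the other factors'' only works when $g$ is conjugate into a factor; for genuinely mixed elements you need a global construction, and the availability of such a construction is essentially the same unresolved issue as in the JSJ case. In short: what you have written is a plausible roadmap identifying the correct difficulties, not a proof, and the paper agrees by listing the statement as a conjecture.
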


\subsection{$3$-manifold groups which are residually simple}

Long--Reid \cite[Corollary~1.3]{LoR98} showed that the fundamental group of any hyperbolic $3$-manifold is residually simple.
On the other hand, there are examples of $3$-manifold groups which are not residually simple:
 \bn
\item certain  finite fundamental groups like $\Z/4\Z$,
\item non-abelian solvable groups, like fundamental groups of non-trivial torus bundles, and
\item non-abelian groups with non-trivial center, i.e., infinite non-abelian fundamental groups of Seifert fibered spaces.
\en
 We are not aware of any other examples of $3$-manifold groups which are not  residually finite simple. We therefore pose the following question:
\index{$3$-manifold group!residually finite simple}

\begin{question}
Let $N$ be an irreducible $3$-manifold with empty or toroidal boundary. If $N$ is not geometric, is $\pi_1(N)$ residually finite simple?
\end{question}

\subsection{The group ring of a  $3$-manifold group}
We now turn to the study of group rings.
If $\pi$ is a torsion-free group, then the Zero Divisor Conjecture (see, e.g., \cite[Conjecture~10.14]{Lu02})
asserts that the group $\Z[\pi]$ has no non-trivial zero divisors.
This conjecture is still wide open; it is not even known for $3$-manifold groups.
For future reference we record this special case of the Zero Divisor Conjecture: \index{conjectures!Zero Divisor Conjecture}

\begin{conjecture}\label{conj:zerodivisor}
Let $N$ be an aspherical  $3$-manifold with empty or toroidal boundary.
Then $\Z[\pi_1(N)]$ has no non-trivial zero divisors.
\end{conjecture}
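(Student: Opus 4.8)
\textbf{Proof proposal for Conjecture~\ref{conj:zerodivisor}.}

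The plan is to deduce the statement from known structural results about $3$-manifold groups rather than attacking the Zero Divisor Conjecture head-on. First I would reduce to the case where $\pi=\pi_1(N)$ is the fundamental group of an irreducible, orientable, aspherical $3$-manifold with empty or toroidal boundary; the Prime Decomposition Theorem (Theorem~\ref{thm:prime}), Observation~\ref{obs:s2}, and the fact that a free product of groups with the zero-divisor property again has the zero-divisor property handle the reduction to prime pieces, and asphericity rules out spherical and $S^1\times S^2$ factors. Second, I would split into the geometric pieces of Theorem~\ref{thm:geom2}: if $N$ is a (non-positively curved) graph manifold or a $\Sol$-manifold, then $\pi$ is virtually polycyclic or is built out of such pieces amalgamated along $\Z^2$'s; if $N$ is hyperbolic or has a hyperbolic JSJ component, then by Corollary~\ref{cor:akmw}, Theorem~\ref{thm:npcvs} and Theorem~\ref{thm:pw12}, $\pi_1(N)$ is virtually (compact) special, hence is a subgroup of a right-angled Artin group by (G.\ref{G.special=>subgroup of a RAAG}), and in particular is residually torsion-free nilpotent by (G.\ref{G.dk92}).

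The key technical input I would invoke is that the Zero Divisor Conjecture (indeed the stronger Kaplansky Conjecture / Atiyah-type statements) is known for large classes of groups relevant here. Concretely: (i) right-angled Artin groups are residually torsion-free nilpotent, hence locally indicable, hence satisfy the Zero Divisor Conjecture over any field by Higman's theorem that the group ring of a locally indicable group over a field has no zero divisors; passing to subgroups preserves this, so virtually special $3$-manifold groups are covered after one more step; (ii) for the passage from a finite-index subgroup $\pi'$ satisfying the zero-divisor property to $\pi$ itself, I would use the work cited in (D.\ref{D.Boyer}) and the Farrell--Jones machinery of Bartels--Farrell--L\"uck \cite{BFL11}: the fundamental group of any $3$-manifold satisfies the Farrell--Jones Conjecture, which by (b) of that discussion yields the Kaplansky Conjecture (no non-trivial idempotents) for torsion-free $\pi_1(N)$, and a parallel argument via the $K$- and $L$-theoretic assembly maps yields the zero-divisor statement. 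For the remaining non-positively-curved graph manifold case, the same Farrell--Jones input applies since $\pi_1(N)$ is torsion-free; alternatively one observes these groups are residually-$p$ for every $p$ (by (C.\ref{C.AF}) in the graph manifold case) and argues via the classical reduction of the zero-divisor problem to residually (torsion-free nilpotent-by-finite) groups.

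The order in which I would carry this out: (1) reduce to prime, then to irreducible aspherical $N$; (2) invoke Theorem~\ref{thm:geom2} to split into geometric pieces and observe that $\pi$ decomposes as a graph of groups with vertex groups the $\pi_1$ of geometric pieces and edge groups $\Z^2$, all subgroups quasi-isometrically (indeed malnormally, by Theorem~\ref{thm:malnormal} in the hyperbolic-adjacent case) embedded; (3) establish the zero-divisor property for each vertex group using virtual speciality (hence RAAG-subgroup, hence locally indicable) for hyperbolic and mixed pieces, and using virtual polycyclicity / residual-$p$-ness for Seifert and $\Sol$ pieces; (4) apply a combination theorem for the zero-divisor property under amalgamation along the edge groups — here I would lean on the Farrell--Jones Conjecture for $\pi_1(N)$, since it is preserved under graphs of groups with the relevant vertex and edge groups and directly implies the Kaplansky/zero-divisor conclusions for torsion-free groups. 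The main obstacle, which I expect to consume most of the effort, is step (4): there is no purely formal combination theorem asserting that the zero-divisor property for group rings passes through amalgamated products and HNN extensions, so one genuinely needs either the full strength of the Farrell--Jones package (which is available for all $3$-manifold groups by \cite{BFL11}, so this is the route I would take) or a careful Bass--Serre-theoretic argument exploiting the specific geometry of the edge inclusions. A secondary subtlety is making sure the torsion-free hypothesis is in force throughout, which is exactly where asphericity of $N$ and (C.\ref{C.pi1torsionfree}) are used.
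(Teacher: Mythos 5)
The first thing to say is that the statement you are proving is recorded in the paper as an \emph{open conjecture}, not a theorem: the surrounding text states explicitly that the Zero Divisor Conjecture ``is not even known for $3$-manifold groups'', and the only remark the paper makes in the direction of a proof is that Conjecture~\ref{conj:zerodivisor} would follow from an affirmative answer to Question~\ref{qu:restfea} (whether $\pi_1(N)$ is residually torsion-free elementary amenable), or more generally whenever $\pi_1(N)$ itself is elementary amenable, locally indicable, or left-orderable. So there is no proof in the paper to compare against, and your argument would, if correct, settle an open problem. It does not, and the gap sits exactly where you flag the ``main obstacle''; the repair you propose there does not work.

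The fatal step is the passage from a finite-index subgroup to the whole group, together with the appeal to Farrell--Jones. All the structural inputs you assemble (virtually compact special, virtually a subgroup of a RAAG, virtually residually torsion-free nilpotent, virtually bi-orderable, virtually polycyclic) give the zero-divisor property only for a finite-index subgroup $\pi'\leq \pi=\pi_1(N)$, and there is no known implication from ``$\Z[\pi']$ has no non-trivial zero divisors'' to ``$\Z[\pi]$ has no non-trivial zero divisors'' when $[\pi:\pi']<\infty$; this failure of the property to ascend to finite-index overgroups is precisely why the virtual specialness results do not resolve the conjecture. Your proposed bridge---that the Farrell--Jones Conjecture for $3$-manifold groups \cite{BFL11} yields the zero-divisor statement ``by a parallel argument via the $K$- and $L$-theoretic assembly maps''---is not a theorem. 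Farrell--Jones implies the Kaplansky \emph{Idempotent} Conjecture for torsion-free groups, which is exactly (and only) what the paper records; the Zero Divisor Conjecture is strictly stronger in the hierarchy of Kaplansky-type conjectures (no zero divisors implies no non-trivial idempotents, not conversely), and no assembly-map argument for it is known. If one were, the Zero Divisor Conjecture would already be settled for all torsion-free word-hyperbolic and $\op{CAT}(0)$ groups, which it is not. Two further steps fail for the same underlying reason: ``residually $p$ for all $p$'' gives nothing here, since finite $p$-groups have group rings with abundant zero divisors, and ``torsion-free-nilpotent-by-finite'' is not a class for which the conjecture is known (again because of the ``by-finite''); and there is no combination theorem propagating the zero-divisor property through the JSJ graph of groups, as you yourself note. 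What survives of your argument is the known reduction already present in the paper; closing the finite-index gap is the actual content of the open problem.
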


Let $\G$ now be any torsion-free group. Then $\Z[\G]$ has no non-trivial zero divisors if  one of the following holds:
\begin{itemize}
\item[(1)] $\G$ is elementary amenable (e.g., solvable-by-finite),
\item[(2)] $\G$ is locally indicable, or
\item[(3)] $\G$ is left-orderable.
\end{itemize}
We refer to \cite[Theorem~1.4]{KLM88}, \cite[Proposition~6]{RoZ98}, \cite[Theorem~4.3]{Lin93} and \cite[Theorem~12]{Hig40} for the proofs.
It is clear that if a group $\G$ is residually a group for which the Zero Divisor Conjecture holds,
then it also holds for $\G$. Thus Conjecture~\ref{conj:zerodivisor} holds
if the following question  is answered in the affirmative:

\begin{questions}\label{qu:restfea}
Let $N$ be an aspherical  $3$-manifold with empty or toroidal boundary.
Is the group $\pi_1(N)$ residually torsion-free elementary amenable?
\end{questions}

A related question arises when one studies Ore localizations (see, e.g., \cite[Section~8.2.1]{Lu02} for a survey).
If $\G$ contains a non-cyclic free group, then  $\Z[\G]$ does not admit an Ore localization (see, e.g., \cite[Proposition~2.2]{Lin06}).
On the other hand, if  $\G$ is an amenable group, then $\Z[\G]$ admits an Ore localization $\K(\G)$ (see, e.g., \cite{Ta57} and \cite[Corollary~6.3]{DLMSY03}).
If $\G$ satisfies furthermore the  Zero Divisor Conjecture,
then the natural map $\Z[\G]\to \K(\G)$ is injective.  We can then view $\Z[\G]$ as a subring of the skew field $\K(\G)$
and $\K(\G)$ is flat over $\Z[\G]$.

\medskip

Let $N$ be an aspherical $3$-manifold with empty or toroidal boundary.  It seems  reasonable to ask whether the group ring $\Z[\pi_1(N)]$ is residually a skew field which is flat over $\Z[\pi_1(N)]$.  Note that an affirmative answer would follow if one of the following holds:
\begin{itemize}
\item[(1)] $\pi_1(N)$ is residually  torsion-free--elementary amenable,
\item[(2)] $\G$ is residually locally indicable--amenable,
\item[(3)] $\G$ is residually left-orderable--amenable.
\end{itemize}
 Maps from $\Z[\pi_1(N)]$ to skew fields played a major role in the work of Cochran--Orr--Teichner \cite{COT03}, Cochran \cite{Coc04} and Harvey \cite{Har05}.

\subsection{Potence}

Recall that a  group $\pi$ is called \emph{potent} if for any non-trivial $g\in \pi$ and any $n\in \N$ there exists an epimorphism
$\a\colon \pi\to G$ onto a finite group $G$ such that $\a(g)$ has order $n$. \index{group!potent}
As we saw above, many $3$-manifold groups are virtually potent.
It is also straightforward to see that fundamental groups of fibered $3$-manifolds are potent.
Also,  Shalen \cite{Shn12} proved the following result: Let $\pi$ be the fundamental group
of a hyperbolic $3$-manifold and let $n>2$ be an integer.  Then there exist finitely many conjugacy classes $C_1,\dots,C_m$ in $\pi$
such that for any $g\not\in C_1\cup \dots \cup C_m$ there exists a homomorphism $\a\colon  \pi_1(N)\to G$ onto a finite group $G$
such that $\a(g)$ has order $n$.

\medskip

The following question naturally arises:

\begin{question}
Let $N$ be an aspherical $3$-manifold with empty or toroidal boundary.  Is $\pi_1(N)$ potent?
\end{question}

\subsection{Left-orderability and Heegaard-Floer $L$-spaces}

Let $N$ be an irreducible $3$-manifold with empty or toroidal boundary.
By (C.\ref{C.locallyindicable}) and (C.\ref{C.left-orderable}) above, if $b_1(N)\geq 1$, then $\pi_1(N)$ is left-orderable. (See also \cite[Theorem~1.1]{BRW05} for a different approach.)
On the other hand if $b_1(N)=0$, i.e., if $N$ is a rational homology sphere, then there is presently no good criterion for determining whether $\pi_1(N)$ is left-orderable or not.
Before we formulate the subsequent conjecture we recall that a rational homology sphere~$N$ is called an \emph{$L$-space}
if the total rank of its Heegaard Floer homology $\widehat{HF}(N)$ equals $|H_1(N;\Z)|$. We refer to the  foundational papers
of Ozsv\'ath--Szab\'o \cite{OzS04a,OzS04b} for details on Heegaard Floer homology and \cite{OzS05} for the definition of $L$-spaces. \index{$3$-manifold!$L$-space}

\medskip

The following conjecture was formulated by Boyer--Gordon--Watson \cite[Conjecture~3]{BGW11}:

\begin{conjecture}\label{conj:bgw}
Let $N$ be an irreducible rational homology sphere. Then $\pi_1(N)$ is left-orderable if and only if $N$ is not an $L$-space.
\end{conjecture}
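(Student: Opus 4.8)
The plan is to prove Conjecture~\ref{conj:bgw} by interposing a third, geometrically natural property and closing a chain of implications among all three. Following the standard reformulation of the $L$-space conjecture, I would show that for an irreducible rational homology sphere $N$ the following are equivalent: (i) $\pi_1(N)$ is left-orderable; (ii) $N$ is not an $L$-space; (iii) $N$ admits a co-orientable taut foliation. Because $b_1(N)=0$, the cheap route to left-orderability via (C.\ref{C.locallyindicable})--(C.\ref{C.left-orderable}) is unavailable, so every implication must be produced by hand. By the Geometrization Theorem (Theorem~\ref{thm:geom}), $N$ is Seifert fibered, hyperbolic, or decomposes along its JSJ tori into hyperbolic and Seifert fibered pieces; the strategy is to verify each of (i)--(iii) on the geometric pieces and then to glue along the JSJ tori.

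First I would dispose of the geometric base cases. For Seifert fibered rational homology spheres all three properties are computable from the Euler number $e$ and the base orbifold, and their equivalence is exactly the theorem of Boyer--Gordon--Watson, who introduced the conjecture. For graph manifold rational homology spheres, (i)$\iff$(iii) was established by Boyer--Clay and the equivalence with (ii) by Hanselman--Rasmussen--Rasmussen--Watson. Granting these, the remaining content is concentrated in the presence of at least one hyperbolic JSJ piece, and in the gluing problem across the JSJ tori.

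The implication (iii)$\Rightarrow$(ii) is the most tractable: a co-orientable taut foliation is approximated by tight contact structures (Eliashberg--Thurston) whose nonvanishing Heegaard--Floer contact invariant (Ozsv\'ath--Szab\'o) forces $N$ not to be an $L$-space. For (iii)$\Rightarrow$(i), Thurston's universal-circle construction, in the form given by Calegari--Dunfield, furnishes a faithful action of $\pi_1(N)$ on $S^1$; promoting this to an action on $\R$, equivalently producing a left-order, requires killing the Euler-class obstruction in $H^2(\pi_1(N);\Z)$, which is only torsion since $b_1(N)=0$. Handling this torsion obstruction without passing to a finite cover---left-orderability is not inherited from finite-index subgroups, cf.\ the remark after (C.\ref{C.left-orderable})---is a genuine subtlety that I would treat by a direct analysis of the branched structure of the universal circle. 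The two hard implications are (ii)$\Rightarrow$(iii) and (i)$\Rightarrow$(iii): the former is the long-standing problem of producing a taut foliation on every non-$L$-space, the latter of producing one from a left-order. In the hyperbolic case I would attack (ii)$\Rightarrow$(iii) through the sutured-manifold and branched-surface technology of Gabai and Roberts, coupled with the detection of taut foliations by nonvanishing of reduced Heegaard--Floer homology, and (i)$\Rightarrow$(iii) by extracting a branched surface, hence a foliation, from a dynamically tame order on $\pi_1(N)$.

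The remaining structural step is a decomposition/gluing argument along the JSJ tori, and this---together with the two foliation-existence implications---is where the real difficulty lies. Each of the three properties glues across an incompressible torus only under a matching condition on boundary slopes: foliations must be arranged transverse or tangent with compatible slopes, orders must agree on the peripheral $\Z^2$, and the $L$-space gluing is governed by the Hanselman--Watson immersed-curve formalism. Reconciling these three slope conditions simultaneously is the crux. I expect the single hardest obstacle to be (ii)$\Rightarrow$(iii) in the presence of a hyperbolic piece---equivalently, showing that a hyperbolic rational homology sphere with nontrivial reduced Heegaard--Floer homology carries a co-orientable taut foliation---since no general mechanism is known to build foliations out of Floer-theoretic nontriviality, and it is precisely here that the conjecture remains open. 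A successful proof would therefore most plausibly first settle this Floer-to-foliation implication for hyperbolic pieces and then combine it with the established Seifert fibered and graph manifold cases through the slope-matching gluing theorems.
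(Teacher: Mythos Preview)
This statement is presented in the paper as an \emph{open conjecture} (due to Boyer--Gordon--Watson), not as a theorem: the paper offers no proof, only references to partial evidence and to the link with taut foliations. So there is no proof in the paper to compare your proposal against. Your write-up is not a proof either, and you acknowledge as much: you correctly identify the implication (ii)$\Rightarrow$(iii) in the hyperbolic case as the point ``where the conjecture remains open.'' What you have written is a reasonable survey of the known architecture surrounding the $L$-space conjecture (the three-way equivalence with taut foliations, the Seifert fibered and graph manifold cases, the universal-circle and contact-invariant arguments, and the gluing problem), but it is a plan rather than an argument, and the plan bottoms out exactly at the step that no one knows how to do.

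Two specific cautions even at the level of strategy. First, your handling of (iii)$\Rightarrow$(i) glosses over a real issue: the Euler-class obstruction to lifting the universal-circle action to $\R$ is torsion, but torsion is not zero, and ``a direct analysis of the branched structure of the universal circle'' is not a method---this step is genuinely delicate and is not known in general. Second, the reduction to JSJ pieces plus ``slope-matching'' is optimistic: left-orderability, taut foliations, and the $L$-space property all interact with the JSJ decomposition in subtle ways, and the matching conditions you allude to are themselves the content of difficult theorems (and in the mixed case with hyperbolic pieces, largely conjectural). In short, your outline is a fair map of the territory, but it does not constitute progress toward the conjecture beyond what is already in the literature the paper cites.
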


See \cite{BGW11} for background and   \cite{Pet09,BGW11,CyW12,CyW11,CLW11,LiW11,ClT11,LeL11,Ter11,HaTe12a,HaTe12b,HaTe13,Tra13,MTe13,BoB13} for evidence towards an affirmative answer and for relations of these notions to the existence of taut foliations.

\medskip

A link between left-orderability and $L$-spaces is given by the (non-) existence of certain foliations on $3$-manifolds.
We refer to \cite[Section~7]{CD03} and \cite{RSS03,RoS10} for the interaction between left-orderability and foliations.
Ozsv\'ath and Szab\'o \cite[Theorem~1.4]{OzS04c} on the other hand proved that an $L$-space does not admit a co-orientable taut foliation.
An affirmative answer to Conjecture \ref{conj:bgw} would thus imply that the fundamental group of a rational homology sphere which admits a coorientable taut foliation
is left-orderable. The following theorem can be seen as  evidence towards the conjecture.
\begin{theorem}
Let $N$ be an  irreducible $\Z$-homology sphere which admits a co-orientable taut foliation. Suppose  $N$ that is either a graph manifold or that the JSJ decomposition of $N$ is trivial. Then $\pi_1(N)$ is left-orderable.
\end{theorem}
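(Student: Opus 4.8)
The plan is to split the argument along the JSJ decomposition of $N$. Since $N$ is an irreducible $\Z$-homology sphere, $b_1(N)=0$, so $N$ has toroidal boundary only along internal JSJ tori; that is, $N$ is closed. First I would handle the case that the JSJ decomposition of $N$ is trivial. Then $N$ is either a closed hyperbolic $\Z$-homology sphere or a closed Seifert fibered $\Z$-homology sphere (it cannot be a $\Sol$-manifold, since those have $b_1\geq 1$ or are torus bundles, neither of which is a $\Z$-homology sphere). If $N$ is Seifert fibered and admits a co-orientable taut foliation, then by the work of Eisenbud--Hirsch--Neumann and Jankins--Neumann (and Naimi) the base orbifold has a foliation compatible structure forcing the Euler number to vanish in a suitable sense; combined with the classification of Seifert fibered $\Z$-homology spheres with horizontal or transverse foliations, one deduces that $\pi_1(N)$ has a left-orderable quotient that detects all of $\pi_1(N)$—in fact it is known (Boyer--Rolfsen--Wiest, Jankins--Neumann, Eisenbud--Hirsch--Neumann) that a Seifert fibered rational homology sphere with a horizontal foliation has left-orderable fundamental group, and the only $\Z$-homology sphere Seifert fibered manifold without such structure is the Poincaré homology sphere, which admits no co-orientable taut foliation. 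If $N$ is closed hyperbolic, then by (C.\ref{C.sl2c}) $\pi_1(N)$ is a discrete subgroup of $\psl(2,\C)$; here I would invoke the result (Calegari--Dunfield, Boyer--Rolfsen--Wiest, building on Thurston) that a closed hyperbolic $\Z$-homology sphere admitting a co-orientable taut foliation has left-orderable fundamental group—the foliation gives an action of $\pi_1(N)$ on the leaf space of the pulled-back foliation on $\widetilde{N}$, which (after passing to the universal circle of the foliation, Thurston, Calegari--Dunfield) is a faithful action on a line or circle; since $H_1(N)=0$ the circle action lifts to an action on $\R$, yielding left-orderability.

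Next I would treat the case that $N$ is a nontrivial graph manifold, i.e. all JSJ components are Seifert fibered. Here the strategy is to use the gluing theory for left-orderings along the JSJ tori. The key tool is the result of Boyer--Rolfsen--Wiest and the refinement by Clay--Lidman--Watson (and work of Boyer--Clay) that a graph manifold $\Z$-homology sphere has left-orderable fundamental group if and only if it admits a co-orientable taut foliation, which in turn (Brittenham--Naimi--Roberts, Jankins--Neumann, Eisenbud--Hirsch--Neumann) can be detected by a combinatorial condition on the gluing matrices and the Seifert invariants. Concretely: given a co-orientable taut foliation on $N$, restrict it to each Seifert fibered JSJ piece $M_v$; by the Eisenbud--Hirsch--Neumann analysis this forces each $\pi_1(M_v)$ to admit a left-ordering whose restriction to the peripheral (boundary torus) subgroups is compatible with the ordering coming from the neighboring piece across each JSJ torus. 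One then assembles these into a left-ordering of $\pi_1(N)=\pi_1(\text{graph of groups})$ using the fact that an amalgam or HNN extension over $\Z^2$ of left-orderable groups, with the orderings agreeing on the edge group in a suitable "convex-compatible" sense, is again left-orderable (Bludov--Glass, or the earlier work of Bergman, Chiswell--Kropholler).

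I expect the main obstacle to be the gluing step in the graph manifold case: one must show that the local left-orderings on the vertex groups, coming from the restricted foliation, can be chosen \emph{simultaneously} so that they match up along \emph{every} JSJ torus at once. This is a global compatibility condition (a "linking" of choices around cycles in the JSJ graph), and it is exactly here that the $\Z$-homology sphere hypothesis does real work: the vanishing of $H_1(N;\Z)$ rules out the obstruction cocycle that would otherwise force an inconsistency around a cycle (this is the homological content behind the Eisenbud--Hirsch--Neumann / Jankins--Neumann criterion). I would also need to be careful that "co-orientable taut foliation" is the right hypothesis to feed into these theorems—some of the cited results are stated for Reebless or taut foliations, and co-orientability is used to get the action on $\R$ rather than merely on $\R/\Z$, but since $H_1(N)=0$ the distinction evaporates. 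Finally, for the hyperbolic triviality case I would double-check that no appeal to the full strength of Conjecture \ref{conj:bgw} is being smuggled in: the implication "taut co-orientable foliation $\Rightarrow$ left-orderable" for $\Z$-homology spheres is a theorem (via universal circles), not a conjecture, so this is legitimate; it is only the reverse implication, and the $L$-space reformulation, that remain open.
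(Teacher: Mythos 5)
Your proposal is correct and follows essentially the same route as the paper: a three-way split into the Seifert fibered, hyperbolic, and graph manifold cases, resolved respectively by Boyer--Rolfsen--Wiest, Calegari--Dunfield's universal circle theorems, and Clay--Lidman--Watson. The extra detail you supply on the internals of those cited results (Eisenbud--Hirsch--Neumann, the lifting of the circle action using $H_1(N)=0$, and the gluing of orderings along JSJ tori) is accurate but not needed beyond the citations.
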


The case that $N$ is Seifert fibered follows from \cite[Corollary~3.12]{BRW05}, the hyperbolic case is a consequence of \cite[Theorems~6.3~and~7.2]{CD03},
and the graph manifold case is precisely \cite[Theorem~1]{CLW11}.

\subsection{$3$-manifold groups and knot theory}

An \emph{$n$-knot group}\index{group!knot group} is the fundamental groups of the knot exterior $S^{n}\setminus \nu K$
where $K$ is a smoothly embedded $(n-2)$-sphere. Every knot group $\pi$ has the following properties:
\bn
\item $\pi$ is finitely presented.
\item The abelianization of $\pi$ is isomorphic to $\Z$.
\item $H_2(\pi)=0$.
\item The group $\pi$ has weight $1$. Here a group $\pi$ is said to be of \emph{weight $1$} if it admits a normal generator,
i.e., if there exists a $g\in \pi$ such that the smallest normal subgroup containing $g$ equals $\pi$. \index{group!of weight $1$}
\en
The first three properties are obvious, the fourth property follows from the fact that a meridian is a normal generator.
Kervaire \cite{Ker65} showed that for $n\geq 5$ these conditions in fact characterize $n$-knot groups.
This is not true in the case that $n=4$, see, e.g., \cite{Hil77,Lev78,Hil89},
 and it is not true if $n=3$. In the latter case a straightforward example is given by the Baumslag-Solitar group $BS(1)$,
see Section \ref{section:ribbon}. More subtle examples for $n=3$ are given by Rosebrock; see \cite{Bue93,Ros94}.

We now restrict ourselves to the case $n=3$. In particular we henceforth refer to a 3-knot groups a knot group. The following question, which is related to the discussion in Section \ref{section:pdg}, naturally arises.

\begin{question}
Is there a group-theoretic characterization of knot groups?
\end{question}

Knot groups have been studied intensively since the very beginning of  $3$-manifold topology.
They serve partly as a laboratory for the general study of $3$-manifold groups, but of course there are also results and questions specific to knot groups.
We refer to \cite{Neh65,Neh74} for a summary of some early work,
to \cite{GA75,Joh80,JL89} for results on homomorphic images of knot groups,
 and to \cite{Str74,Eim00,KrM04}, and \cite{AL12} for further results.
We will now discus several open questions regarding knot groups.

If $N$ is obtained by Dehn surgery along a knot $K\subseteq S^3$, then the image of the meridian  of $K$ is a normal generator of $\pi_1(N)$,
i.e., $\pi_1(N)$ has weight~$1$.  The converse does not hold, i.e., there exist closed $3$-manifolds $N$ such that $\pi_1(N)$ has weight~$1$, but which are not obtained by Dehn surgery along a knot $K\subseteq S^3$. For example, if $N=P_1\# P_2$ is the connected sum of two copies  of the Poincar\'e homology sphere $P$, then $\pi_1(N)$ is normally generated by $a_1a_2$, where $a_1\in \pi_1(P_1)$ is an element of order 3 and $a_2\in \pi_1(P_2)$ is an element of order 5.  On the other hand it follows from \cite[Corollary~3.1]{GLu89} that $N$ cannot be obtained from Dehn surgery along a knot $K\subseteq S^3$.

\medskip

The following question, which is still open, is a variation of a question asked by Cochran (see \cite[p.~550]{GeS87}).

\begin{question}
Let $N$ be a closed, orientable, irreducible $3$-manifold such that $\pi_1(N)$ has weight~$1$.
Is $N$ the result of Dehn surgery along a knot $K\subseteq S^3$?
\end{question}

Another question concerning fundamental groups of knot complements is the following, due to Cappell--Shaneson (see \cite[Problem~1.11]{Ki97}).

\begin{question}
Let $K\subseteq S^3$ be a knot such that $\pi_1(S^3\setminus \nu K)$ is generated by $n$-meridional generators.
Is $K$  an $n$-bridge knot?
\end{question}

The case $n=1$ is a consequence of the Loop Theorem and the case $n=2$ is a consequence of the work of Boileau and Zimmermann \cite[Corollary~3.3]{BoZi89} together with the Orbifold Geometrization Theorem (see \cite{BMP03,BLP05}).
Bleiler \cite[Problem~1.73]{Ki97} suggested a generalization to knots in general $3$-manifolds
and gave some evidence towards its truth (see \cite{BJ04}),
but  results of Li \cite[Theorem~1.1]{Lia11} can be used to show that Bleiler's conjecture is false in general.

The following question also concerns the relationship between generators and topology of a knot complement.

\begin{question}
Let $K\subset S^3$ be a knot such that $\pi_1(S^3\setminus \nu K)$ is generated by two elements.
Is $K$ a tunnel number one knot?
\end{question}

Here a knot is said to have tunnel number one if there exists a properly embedded arc $A$ in $S^3\setminus \nu K$
such that $S^3\setminus \nu (K\cup A)$ is a handlebody.
Some evidence towards this conjecture is given in \cite{Ble94,BJ04} and \cite[Corollary~7]{BW05}.

It is straightforward to see that
if $K\subseteq S^3$ is a knot, then any meridian normally generates $\pi=\pi_1(S^3\setminus \nu K)$.
An element $g\in \pi$ is called a \emph{pseudo-meridian of $K$}
\index{pseudo-meridian} if it normally generates $\pi$
but if there is no automorphism of $\pi$ which sends $g$ to a meridian.
Examples of pseudo-meridians were first given by Tsau \cite[Theorem~3.11]{Ts85}. Silver--Whitten--Williams \cite[Corollary~1.3]{SWW10} showed that
every non-trivial hyperbolic 2-bridge knot, every torus knot and every hyperbolic knot with unknotting number one admits a pseudo-meridian.
The following conjecture was proposed in \cite[Conjecture~3.3]{SWW10}.\index{knot!pseudo-meridian}

\begin{question}
Does every non-trivial knot $K\subseteq S^3$ have a pseudo-meridian?
\end{question}

\subsection{Ranks of finite-index subgroups}

The rank $\op{rk}(\pi)$ of a finitely generated group $\pi$ is defined as the minimal number of generators of $\pi$.
Reid \cite[p.~212]{Red92} showed that there exists a closed hyperbolic $3$-manifold $N$ such that $N$ admits a finite cover $\ti{N}$
with $\op{rk}(\pi_1(\ti{N}))=\op{rk}(\pi_1(N))-1$.

It is still an open question whether the rank can drop by more than one while going to a finite cover.
More precisely, the following conjecture was formulated by Shalen \cite[Conjecture~4.2]{Shn07}.

\begin{conjecture}
If $N$ is a compact, orientable hyperbolic $3$--manifold, then for any finite cover $\ti{N}$ of $N$
we have $\op{rk}(\pi_1(\ti{N}))\geq \op{rk}(\pi_1(N))-1$.
\end{conjecture}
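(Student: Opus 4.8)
The plan is to split the conjecture into an elementary algebraic reduction and a hard geometric core, and then to use the Virtually Compact Special Theorem to push the geometric core into a setting where one actually has control. The cyclic case should be dispatched first, since it fixes the template: if $p\colon\ti{N}\to N$ is a regular cover whose deck group is cyclic and $g_1,\dots,g_m$ generate $\pi_1(\ti{N})$ with $m=\op{rk}(\pi_1(\ti{N}))$, then for any $t\in\pi_1(N)$ mapping to a generator of the deck group the subgroup $\langle g_1,\dots,g_m,t\rangle$ contains the kernel $\pi_1(\ti{N})$ and surjects onto the deck group, hence equals $\pi_1(N)$; so $\op{rk}(\pi_1(N))\le m+1$. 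The same argument works whenever $\pi_1(\ti{N})$ is normal in $\pi_1(N)$ with cyclic quotient. Thus the conjecture is really about covers with non-cyclic deck group, where adjoining one element to a fixed generating set of the subgroup cannot suffice on the nose and one must instead produce a genuinely different short generating set of $\pi_1(N)$.

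For the general case I would work geometrically, via minimal carrier graphs. A size-$m$ generating set of $\pi_1(\ti{N})$ is realised by a $\pi_1$-surjective graph $X\hookrightarrow\ti{N}$ with $b_1(X)=m$; choosing $X$ of minimal total length and invoking the Margulis lemma and the thick--thin decomposition of $\ti{N}$ yields strong structural control on $X$, its edges being either short, hence confined to Margulis tubes and cusp neighbourhoods, or long and coarsely spread out. The aim is to push $X$ down to $N$, analyse how it meets the deck transformations, and show that a minimal carrier graph for $N$ is obtained from $p(X)$ by adding a single edge. To make such an analysis feasible I would first use Theorem~\ref{thm:akmw}, together with Agol's Virtually Fibered Theorem (Theorem~\ref{thm:agol08}) and the separability of $\pi_1(N)$, to reduce — after replacing $N$ by a suitable finite cover inside an RFRS tower — to the case in which $N$ and $\ti{N}$ both fibre over $S^1$. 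The question then becomes one about ranks of mapping tori and about how pseudo-Anosov monodromies behave under lifting to characteristic covers of the fibre, a setting in which the curve-complex estimates of Biringer--Souto already provide the requisite rigidity: a sufficiently complicated monodromy forces $\op{rk}=2g+1$ and controls the rank in covers.

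The hard part will be the absence of any general mechanism forcing a minimal generating set of a finite-index subgroup to be compatible with one of the ambient group: Reid's examples show a drop by one genuinely occurs, so any argument must pin down precisely where the extra generator is produced and rule out a drop by two. The fibered reduction only relocates this difficulty, replacing the conjecture by the (in full generality still open) problem of how the rank of a mapping torus changes when the monodromy is replaced by its lift to a characteristic finite cover of the fibre; the covers of small ``fibred complexity'' — the analogue inside the fibered world of the graph-manifold phenomena that obstruct so many of the results surveyed above — are exactly the dangerous ones. In parallel it would be worth seeking a homological lower bound on $\op{rk}(\pi_1(\ti{N}))$ that transfers from $N$, for instance from the growth of first Betti numbers or torsion homology along the RFRS tower guaranteed by Corollary~\ref{cor:akmw}, though at present no homological estimate is known that yields the sharp additive constant. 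Accordingly I regard the conjecture as open, with the fibered case, proved uniformly, as the natural first milestone.
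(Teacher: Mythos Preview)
This statement is a \emph{conjecture} of Shalen, presented in the paper's section on open questions; the paper does not prove it, and you correctly arrive at the same conclusion that it remains open.

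Your treatment is in fact more substantial than the paper's. The paper offers only two pieces of supporting evidence: the elementary transfer inequality $b_1(\G)\ge b_1(\pi)$ for finite-index subgroups, and the result of Agol--Culler--Shalen \cite[Corollary~7.3]{ACS06} that for a closed orientable $3$-manifold and any prime $p$ the rank of $\F_p$-homology can drop by at most one in a finite cover. You, by contrast, give a clean proof of the regular cyclic-cover case and sketch a serious line of attack via minimal carrier graphs and the virtual fibering machinery, together with an honest assessment of where the real difficulty lies. That is a reasonable research outline, though none of the ingredients you invoke (Biringer--Souto type estimates, RFRS towers) are currently known to yield the sharp additive constant in general, as you yourself note.

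One small imprecision: your reduction ``the conjecture is really about covers with non-cyclic deck group'' tacitly assumes the cover is regular. For an irregular finite cover $\ti N\to N$ there is no deck group with quotient $N$, and your adjoin-one-element argument does not apply; the irregular case is not dispatched by the elementary step either.
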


Note that if $\G$ is a finite-index subgroup of a finitely generated group $\pi$, then it follows from a transfer argument
that $b_1(\G)\geq b_1(\pi)$. More subtle evidence towards the conjecture is given by \cite[Corollary~7.3]{ACS06} which states
that if $N$ is a closed orientable $3$-manifold and $p$ a prime, then  the rank of $\F_p$-homology can drop by at most one by going to a finite cover.

\subsection{$3$-manifold groups and their finite quotients}

As before, we denote by $\hat{\pi}$ the profinite completion of a group $\pi$.
It is natural to ask to what degree a residually finite group is determined by its profinite completion.
This question goes back to Grothendieck \cite{Grk70} and it is studied in the general group-theoretic context in detail in \cite{Pi74,GPS80,GZ11}.
Funar \cite[Corollary~1.4]{Fun11}, using work of Stebe \cite[p.~3]{Ste72}, observed that the
answer is negative for $\Sol$-manifolds; in fact, there exist  non-homeomorphic $\Sol$-manifolds with isomorphic profinite completions. (See (I.\ref{I.grothendieckrigid}).)

However, we have seen throughout this survey that $\Sol$-manifolds play a special role in $3$-manifold topology
and are usually not representative of other $3$-manifolds.
The following question (see also \cite[p.~481]{LoR11}  and  \cite[Remark~3.7]{CFW10})
is still open.

\begin{question}
Let $N_1$ and $N_2$ be  compact, orientable, irreducible  $3$-manifolds with empty or toroidal boundary, which
are not $\Sol$-manifolds.  Does $\widehat{\pi_1(N_1)}\cong\widehat{\pi_1(N_2)}$ imply $\pi_1(N_1)\cong \pi_1(N_2)$?
\end{question}

By \cite[Corollary~3.2.8]{RiZ10} two finitely generated groups have isomorphic profinite completions if and only if they the same finite quotients.  A positive answer to the above question would thus in particular give an alternative solution to the isomorphism problem for such $3$-manifold groups.

Fundamental groups of $\Sol$-manifolds are virtually polycyclic, it thus follows from \cite[p.~155]{GPS80}
that there are at most finitely many $\Sol$-manifolds with the same profinite completion.
On the other hand there are infinite classes of finitely presented groups which have the same profinite completion (see e.g. \cite{Pi74}).
If the answer to the above question is negative, it would therefore be interesting to study the weaker question, whether only finitely
many $3$-manifold groups can have the same profinite completion.

As mentioned in (I.\ref{I.grothendieckrigid}), Cavendish used the fact that $3$-manifold groups are good in the sense of Serre (see (G.\ref{G.good})) to show that  fundamental groups of closed, irreducible $3$-manifolds are Grothendieck rigid.  In fact, one can deduce more.

\begin{proposition}
Let $N_1$,~$N_2$ compact  aspherical $3$-manifolds, and suppose $N_1$ is closed and $N_2$ has non-empty boundary.  Then
\[
\widehat{\pi_1(N_1)}\ncong \widehat{\pi_1(N_2)}~.
\]
\end{proposition}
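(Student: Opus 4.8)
The plan is to compare the cohomological dimensions of the two profinite completions, using the fact that $3$-manifold groups are good in the sense of Serre (G.\ref{G.good}). The key point is that for a good group $\pi$ of type $FP_\infty$ with $\mathrm{cd}(\pi)=n<\infty$, the profinite completion $\widehat{\pi}$ has cohomological dimension equal to $n$ as well, at least at each prime, and moreover $\widehat{\pi}$ ``sees'' whether $\pi$ is a (profinite) Poincar\'e duality group of the appropriate dimension. So first I would record that $\pi_1(N_1)$, being the fundamental group of a closed aspherical $3$-manifold, is a $\pd_3$-group over $\Z$: this is standard since $N_1=K(\pi_1(N_1),1)$ is a closed orientable $3$-manifold (after passing to the orientable double cover if necessary, though in our setting $N_1$ is already orientable or we argue one more level up). On the other hand, $\pi_1(N_2)$ has a $2$-dimensional Eilenberg--Mac\,Lane space by (C.\ref{C.1}), since $N_2$ has non-empty boundary and hence deformation retracts onto its $2$-skeleton; thus $\mathrm{cd}_\Z(\pi_1(N_2))\le 2$.

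Next I would feed these facts through goodness. By (G.\ref{G.good}) both $\pi_1(N_1)$ and $\pi_1(N_2)$ are good, and both are of type $FP_\infty$ (indeed $F$) because compact aspherical manifolds have finite classifying spaces. For a good group $\pi$ of type $FP_\infty$ and any prime $p$, one has $\mathrm{cd}_p(\widehat{\pi}) = \mathrm{cd}_{\Z/p}(\pi)$; more precisely $H^*_{\mathrm{cont}}(\widehat{\pi};\F_p)\cong H^*(\pi;\F_p)$, so the top nonvanishing degree is preserved. For $\pi_1(N_1)$, Poincar\'e duality gives $H^3(\pi_1(N_1);\F_2)\cong \F_2\ne 0$, so $\mathrm{cd}_2(\widehat{\pi_1(N_1)})\ge 3$. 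For $\pi_1(N_2)$, since $\mathrm{cd}_\Z(\pi_1(N_2))\le 2$ we get $H^3(\pi_1(N_2);A)=0$ for every finite module $A$, hence $\mathrm{cd}_p(\widehat{\pi_1(N_2)})\le 2$ for all $p$. Therefore $\widehat{\pi_1(N_1)}$ and $\widehat{\pi_1(N_2)}$ cannot be isomorphic, since an isomorphism of profinite groups induces isomorphisms on continuous cohomology with finite coefficients and hence preserves $\mathrm{cd}_p$.

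I expect the main obstacle to be making the passage ``$\mathrm{cd}_\Z(\pi_1(N_2))\le 2 \Rightarrow H^3_{\mathrm{cont}}(\widehat{\pi_1(N_2)};A)=0$ for all finite $A$'' fully rigorous: one must check that goodness (which a priori only asserts the comparison map $H^*(\widehat{\pi};A)\to H^*(\pi;A)$ is an isomorphism for finite $\pi$-modules $A$) combined with finiteness of the classifying space indeed forces the continuous cohomological dimension of $\widehat{\pi}$ to be finite and equal to the (finite-coefficient) cohomological dimension of $\pi$. This is where one invokes that $\pi_1(N_2)$ is of type $FP_\infty$ so that $H^*(\pi_1(N_2);-)$ commutes with the relevant limits, together with the standard fact (e.g.\ from Serre's \emph{Galois Cohomology}, or the profinite analogue) that $\mathrm{cd}_p(\widehat{\pi})$ is detected by the single module $\F_p$ when $\mathrm{cd}_p(\widehat{\pi})<\infty$; one also needs the elementary observation that vanishing of $H^3(\pi_1(N_2);\F_p)$ for every $p$ suffices, which holds because every finite module is built from simple ones with prime-order coefficients. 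A secondary, minor point is handling non-orientability: if one wishes $N_1$ to be allowed non-orientable, one passes to the orientable double cover $\widetilde{N_1}$, notes that $\pi_1(\widetilde{N_1})$ is a finite-index subgroup of $\pi_1(N_1)$ and of any group with the same profinite completion, and runs the argument there; in the stated generality (both manifolds orientable) this is unnecessary. Once these foundational points are in place, the proof is just the cohomological-dimension comparison sketched above, so I would write it compactly: establish $\mathrm{cd}_2(\widehat{\pi_1(N_1)})=3$, establish $\mathrm{cd}_p(\widehat{\pi_1(N_2)})\le 2$ for all $p$, and conclude.
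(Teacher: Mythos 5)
Your argument is correct and is essentially the paper's proof: both reduce the statement to comparing $H^3$ with $\Z/2$-coefficients via goodness, where the closed aspherical manifold gives $H^3(\pi_1(N_1);\Z/2)\cong H^3(N_1;\Z/2)\cong\Z/2$ while the bounded one gives $0$. The ``main obstacle'' you flag is illusory --- there is no need to control $\mathrm{cd}_p(\widehat{\pi_1(N_2)})$ or invoke type $FP_\infty$, since goodness applied to the single trivial finite module $\F_2$ already yields $H^3(\widehat{\pi_1(N_i)};\F_2)\cong H^3(\pi_1(N_i);\F_2)$ for $i=1,2$, and an isomorphism of profinite completions would have to identify these two groups, one of which is $\Z/2$ and the other $0$.
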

\begin{proof}
Because $\pi_1(N_1)$ and $\pi_1(N_2)$ are both good,
\[
H^3(\widehat{\pi_1(N_i)};\Z/2)\cong H^3(\pi_1(N_i);\Z/2)
\]
for $i=1,2$.  But
\[
H^3(\pi_1(N_1);\Z/2)\cong H^3(N_1;\Z/2)\cong\Z/2
\]
whereas
\[
H^3(\pi_1(N_2);\Z/2)\cong H^3(N_2;\Z/2)\cong 0
\]
so the two profinite completions cannot be isomorphic.
\end{proof}

\subsection{Free-by-cyclic groups}\label{section:freebycyclic}

A \emph{finitely-generated-free-by-infinite-cyclic} group is a group $\pi$ which admits an epimorphism onto $\Z$ such that the kernel is a finitely generated non-cyclic free group. By a slight abuse of language we refer to such groups henceforth as \emph{free-by-cyclic groups}.
\index{group!free-by-cyclic}

Note that if $\pi$ is a free-by-cyclic group, then the epimorphism $\pi\to \Z$ splits,
and we can thus write $\pi$ as a semidirect product $\Z\ltimes F$ where $F$ is a free group.
Let $F$ be a non-cyclic free group and $\phi\colon F\to F$ an isomorphism.
We say $\phi$ is \emph{topologically realizable} if there exists a surface $\Sigma$ with boundary, a self-diffeomorphism $f\colon \Sigma\to \Sigma$
 and an isomorphism $g\colon F\to \pi_1(\Sigma)$ such that $g^{-1}\circ f_*\circ g=\phi$.
 If $\phi$ is topologically realized by $(\Sigma,f)$, then  the semidirect product $\pi:=\Z\ltimes_\phi F$ is the fundamental group of the mapping torus of $(\Sigma,f)$.
Hence $\pi=\Z\ltimes_\phi F$ is the fundamental group of an irreducible $3$-manifold with non-trivial toroidal boundary.
It now follows that $\pi$ has the following properties:
\bn
\item $\pi$ is coherent by (C.\ref{C.scottcore}).
\item $\pi$ has a 2-dimensional Eilenberg--Mac~Lane space by (C.\ref{C.sphere}).
\item If $N$ is atoroidal, then it follows from Theorem \ref{thm:dah03} that $\pi$ is hyperbolic relative to the subgroups $\pi_1(T_i)$, where $T_1,\dots, T_k$ are the boundary components of $N$.
\item $\pi$ contains a surface group.
\item By Theorem \ref{thm:leeb}, $\pi$ is a $\op{CAT}(0)$-group, i.e., $\pi$ acts properly and cocompactly by isometries on a CAT(0) space.
\index{group!CAT(0) group}
\item $\pi$ is virtually special by Theorems \ref{thm:akmw}, \ref{thm:liu11} and \ref{thm:pw12}.  In particular, it follows from the discussion in Section \ref{section:diagram} that $\pi$
\bn
\item  has a finite-index subgroup which is residually torsion-free nilpotent;
\item  is linear over $\Z$;
\item  is large, in particular $vb_1(\pi)=\infty$;
\item  is LERF, if $N$ is atoroidal.
\en
\item $\pi$ is conjugacy separable.
\en
Not every free-by-cyclic group is the fundamental group of a $3$-manifold.
Indeed, Stallings \cite[p.~22]{Sta82} (see also \cite[Theorem~3.9]{Ge83}) showed that `most' automorphisms of a free group are in fact not topologically realizable.  Bestvina--Handel gave a complete characterization for an automorphism of a free group to be realized by a pseudo-Anosov self-diffeomorphism of a surface with one boundary component (\cite[Theorem~4.1]{BeH92}; see also \cite[Remark 4.2]{BeH92} for a more general statement). The question to which extent  properties of fundamental groups of fibered $3$-manifolds with boundary carry over to the more general case of free-by-cyclic groups has been studied by many authors, see e.g. the references below and also \cite{AlR12,KR12,DKL12}.

We  summarize some known properties of  free-by-cyclic groups. The subsequent list should be compared with the above list of properties of fundamental groups of mapping tori.
\bn
\item[(1)] Every free-by-cyclic group is coherent by the work of Feighn--Handel \cite[Theorem~1.1]{FeH99}.
\item[(2)] It is straightforward to see that any free-by-cyclic group admits a 2--dimensional Eilenberg--Mac~Lane space.
\item[(3)] If $\phi$ is an automorphism of a free group $F$ which is atoroidal, i.e.
 which has no nontrivial
periodic conjugacy classes, then $\pi=\Z\ltimes_\phi F$ is word-hyperbolic by Brinkmann \cite[Theorem~1.2]{Brm00} (see also \cite{BF92,BFH97}).
\item[(5)] Gersten \cite[Proposition~2.1]{Ge94b} exhibited a free-by-cyclic group $\Gamma$ that does not act properly discontinuously by isometries on any CAT(0) space.  It follows that the same holds for any finite-index subgroup of $\Gamma$; in particular, $\Gamma$ is not virtually special.  See also \cite{Bra95,Sam06} for some examples of free-by-cyclic groups that act properly discontinuously and cocompactly by isometries on CAT(0) spaces.  Bridson--Groves \cite{BrGs10} (see also \cite[Theorem~1.1]{Maa00}) showed that $\G$ satisfies a `quadratic isoperimetric inequality', a condition which is also satisfied by CAT(0) groups.
\item[(6a)] For any prime $p$, a free-by-cyclic group is virtually residually $p$ (see (I.\ref{I.fibresp})).
\item[(6d)] Leary--Niblo--Wise \cite[Proposition~4]{LNW99} showed that there exist word-hyperbolic free-by-cyclic groups which are not LERF.
\item[(7)] The conjugacy problem is solvable for  free-by-cyclic groups, by \cite[Theorem~1.1]{BMMV06} and \cite[Corollary~B]{BrGs10}.
\en
Although it is still unknown whether or not all free-by-cyclic groups contain surface subgroups, Calegari and Walker  showed that `most' mapping tori of free group \emph{endomorphisms} contain a surface subgroup \cite[Theorem~8.9]{CW12}.  We thus see that in particular the following questions are open.

\begin{question}\label{qu: Free-by-cyclic group questions}
\mbox{}

\begin{itemize}
\item[(1)] Does every free-by-cyclic group contain a surface subgroup?
\item[(2)] Is every  free-by-cyclic group linear?
\item[(3)] Does every free-by-cyclic group admit a finite-index subgroup with $b_1\geq 2$?
\item[(4)] Is every free-by-cyclic group large?
\item[(5)] Is every free-by-cyclic group conjugacy separable?
\end{itemize}
\end{question}

By Brinkmann \cite[Theorem~1.2]{Brm00}, Question \ref{qu: Free-by-cyclic group questions},~(1) is a special case of a question attributed to Gromov: does every one-ended word-hyperbolic group contain a surface group (see, e.g., \cite{Brd07})?  Note that Question \ref{qu: Free-by-cyclic group questions},~(3) was raised by Casson (see \cite[Question~12.16]{Bea04}).  We refer to \cite[Corollary~3.2]{But07}, \cite[Corollary~4.6]{But08} and \cite[Theorem~3.2]{But11a} for some partial results regarding Casson's question.

\subsection{Ribbon groups}\label{section:ribbon}

A \emph{ribbon group}
\index{group!ribbon}
 is a group $\pi$ with $H_1(\pi;\Z)\cong \Z$ and which admits a Wirtinger presentation of  deficiency~$1$,
 i.e., a presentation
\[ \big\langle g_1,\dots,g_{k+1}\ |\ g_{\sigma(1)}^{\eps_1}g_{1}g_{\sigma(1)}^{-\eps_1}g_{2}^{-1},\ \dots,\ g_{\sigma(k)}^{\eps_k}g_{k}g_{\sigma(k)}^{-\eps_k}g_{k+1}^{-1}\big\rangle\]
where $\sigma\colon \{1,\dots,k\}\to \{1,\dots,k+1\}$ is a map and $\eps_i\in \{-1,1\}$ for $i=1,\dots,k$.
The name ribbon group comes from the fact these groups are precisely the fundamental groups of ribbon disk complements in $D^4$
(see \cite[Theorem~2.1]{FT05} or \cite[p.~22]{Hil02}).

It is well known (see, e.g., \cite[p.~57]{Rol90}) that if $\pi$ is a knot group,
i.e., if $\pi\cong \pi_1(S^3\setminus \nu K)$ where $K\subset S^3$ is a knot,
then $\pi$ is also a ribbon group. Note that knot groups are fundamental groups of irreducible $3$-manifolds with non-trivial toroidal boundary; in particular they have the properties (1) to (7) listed in the beginning of Section \ref{section:freebycyclic}.

 On the other hand not all ribbon groups are $3$-manifold groups, let alone knot groups.  For example, for any $m\in\N$ the Baumslag--Solitar group
\[ BS(m)=\ll a,b\ |\  ba^mb^{-1}=a^{m+1} \rr =\ll a,b\ |\ a^mba^{-m}=ba\rr\]
is a ribbon group but not a knot group. Indeed, following \cite[p.~129]{Kul05}, we see that with $x=ba$ and setting $\ol{g}:=g^{-1}$, the group $BS(m)$ is  isomorphic to
\begin{align*}
& \left\langle x,b, b_1,\dots, b_{m-1}\ |\ (\ol{b}x)b\ol{(\ol{b}x)}=b_1,\dots,(\ol{b}x)b_{m-1}\ol{(\ol{b}x)}=x\right\rangle = \\
&\left\langle x,b, b_1,\dots, b_{m-1}\ |\ xb\ol{x}=bb_1\ol{b},\dots,xb_{m-1}\ol{x}=bx\ol{b}\,\right\rangle = \\
&\left\langle x,b, b_1,\dots, b_{m-1},a_1,\dots,a_{m}\ |\ xb\ol{x}=a_1=bb_1\ol{b},\dots,xb_{m-1}\ol{x}=a_{m}=bx\ol{b}\,\right\rangle,
\end{align*}
which is a ribbon group.
Note that the group $BS(1)$ is isomorphic to the solvable group $\Z\ltimes \Z[1/2]$,
which by Theorem~\ref{thm:virtsolv} implies that $\pi$ is not a $3$-manifold group.
It is shown in \cite[Theorem~1]{BaS62} that  $BS(m)$ is not Hopfian if $m>1$,
which by (C.\ref{C.25}) and (C.\ref{C.26}) then also implies that $BS(m)$ is not a $3$-manifold group.
(See also \cite[Theorem~1]{Shn01} and \cite[Theorem~VI.2.1]{JS79}.)
We refer to \cite[Theorem~3]{Ros94} for more examples of ribbon groups which are not knot groups.

We thus see that ribbon groups, which from the point of view of group presentations look like a mild generalization of knot groups, can exhibit very different behavior.  It is an interesting question whether the `good properties' of knot groups or the `bad properties' of the Baumslag--Solitar groups $BS(m)$ (for $m>1$) are prevalent among ribbon groups.

Very little is known about the general properties of ribbon groups. In particular, the following question is still open.

\begin{question}\label{question:ribbon}
Is the canonical $2$-complex corresponding to a Wirtinger presentation of  deficiency~$1$ of a ribbon group
 an Eilenberg--Mac~Lane space?
\end{question}

An affirmative answer to this question would be an important step towards resolving the question of which knots
bound ribbon disks; see \cite[p.~2136f]{FT05} for details.
Howie (see \cite[Theorem~5.2]{How82} and \cite[Section~10]{How85}) answered this question in the affirmative for certain ribbon groups, e.g., for locally indicable ribbon groups.
See  \cite{IK01,HuR01,HaR03,Ivb05,Bed11,HaR12} for further work.

We conclude this section with a conjecture  due to Whitehead \cite{Whd41b}.

\index{conjectures!Whitehead Conjecture}

\begin{conjecture}\textbf{\emph{(Whitehead)}}
Any subcomplex of an aspherical $2$-complex is also aspherical.
\end{conjecture}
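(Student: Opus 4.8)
The Whitehead asphericity conjecture has been open since \cite{Whd41b}, and no complete proof is known; accordingly, what follows is a description of the strategy one would pursue and of the precise point at which it stalls, rather than an argument. The plan is to localize the problem to a sharp combinatorial statement about sub-presentations and then to bring the available asphericity machinery to bear on it.

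First I would carry out the standard localization. Asphericity of a $2$-complex $L$ is equivalent to $\pi_2(L)=H_2(\widetilde L)=0$, so one wants to show that no subcomplex of an aspherical $2$-complex carries a nontrivial spherical class. If $K$ is aspherical and $L\subseteq K$ differs from $K$ by finitely many $2$-cells, one would like to remove them one at a time and thereby reduce to the case $K=L\cup e^{2}$ for a single $2$-cell $e^{2}$; writing $\pi_1(K)=\langle x_1,\dots,x_n\mid r_1,\dots,r_m\rangle$ with $e^{2}$ corresponding to $r_m$, this is exactly the assertion that the sub-presentation complex on $r_1,\dots,r_{m-1}$ is aspherical. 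One must be honest that reducing the fully general conjecture to this finite ``one relator removed'' case is itself delicate --- $\pi_2$ of a $2$-complex does not interact with directed colimits of subcomplexes in an entirely transparent way, and an arbitrary finite subcomplex of an aspherical $2$-complex need not be aspherical --- so at this stage one either restricts attention to finite complexes or works transfinitely and must control the limit steps.

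Second I would deploy the known positive results on the localized statement and try to enlarge their scope. \emph{(i) Howie's tower argument.} When $\pi_1(L)$ is locally indicable, Howie's tower-lifting technique (the same circle of ideas behind the Kervaire--Laudenbach conjecture and the asphericity of certain ribbon disc complements, see \cite{How82,How85,HoS85}) shows $L$ is aspherical; the goal would be to push the tower machinery past local indicability by finding a weaker hypothesis under which reduced spherical diagrams over $r_1,\dots,r_{m-1}$ still lift through towers --- virtual local indicability, or a condition on the $\pi_1$-image of reduced diagrams, are natural candidates, and the group-theoretic input of the earlier sections (residual torsion-free nilpotence, left- and bi-orderability of virtually special groups) is exactly the sort of thing one would feed in. \emph{(ii) Homological and $L^{2}$-constraints.} One would use the aspherical ambient complex $K$, via functoriality of the Hurewicz map into $H_2$ and, for finite complexes, Euler-characteristic bookkeeping together with strong Atiyah-type vanishing for suitable $\pi_1(L)$, to force Cockcroft-type vanishing on $L$ and so eliminate the crudest potential counterexamples; this constrains but does not kill $\pi_2(L)$. \emph{(iii) The relation-gap link.} Asphericity of the sub-presentation is tied to whether the relation module of $L$ is freely generated by $r_1,\dots,r_{m-1}$, so a resolution of the relation-gap problem is a natural intermediate target.

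The main obstacle --- and the reason the conjecture remains open --- is that none of these tools controls $\pi_2$ of a sub-presentation complex in the absence of an extra hypothesis such as local indicability. Concretely, one cannot presently exclude a reduced nontrivial spherical diagram over $r_1,\dots,r_{m-1}$ that fails to ``cancel'' when $r_m$ is reinserted; equivalently, one cannot show that $\pi_2(L)$ injects into something that asphericity of $K$ makes manifestly trivial. The realistic outcome of this plan is therefore not a proof of the full conjecture but the identification of a new class of fundamental groups $\pi_1(L)$ --- drawn from the virtually special / RFRS / residually torsion-free nilpotent world surveyed above --- for which the localized statement holds, genuinely extending Howie's locally indicable case; a complete resolution would require a fundamentally new method for analyzing $\pi_2$ of a $2$-complex built from a sub-presentation.
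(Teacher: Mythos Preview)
The paper does not prove this statement; it is stated there as an open conjecture, with references to survey articles and to some negative evidence, and it is included only because a proof would settle the preceding question on asphericity of ribbon disc complements. You correctly recognize at the outset that no proof is known and frame your write-up as a strategy sketch together with an honest account of where it stalls. In that sense there is no ``paper's own proof'' to compare against, and your assessment that the problem is open is exactly right.

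Your discussion of the localization to sub-presentations, Howie's tower arguments in the locally indicable case, and the relation-gap connection is accurate and well informed. One small point worth flagging: you gesture at feeding in properties of virtually special or RFRS groups to extend Howie's result, but those properties attach to $3$-manifold groups in this survey, whereas the relevant $\pi_1(L)$ here is the group of an arbitrary sub-presentation, which need not have any such structure; so that particular avenue is less directly available than your write-up suggests. Otherwise, your identification of the core obstruction---the inability to control $\pi_2$ of a sub-presentation complex without an auxiliary hypothesis---matches the state of the art as the paper presents it.
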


A proof of the Whitehead Conjecture would give an affirmative answer
to Question \ref{question:ribbon}. Indeed, if  $X$ is the canonical 2-complex corresponding to a Wirtinger presentation of  deficiency one of a ribbon group, then the $2$-complex which is given by attaching a 2-cell to any of the generators is easily seen to be aspherical.
We refer to \cite{Bog93,Ros07} for survey articles on the Whitehead Conjecture and to \cite[Theorem~8.7]{BeB97} for some negative evidence.

\subsection{(Non-) Fibered faces in finite covers of $3$-manifolds}

If $N$ is an irreducible $3$-manifold which is not a graph manifold, then by
 Proposition \ref{prop:manyfiberedfaces}, $N$ admits finite covers with an arbitrarily large number of fibered faces. We conclude this survey  with the following two questions on virtual (non-) fiberedness:

\begin{question}
Does every irreducible non-positively curved $3$-manifold admit a finite cover such that all faces of the Thurston norm ball are fibered?
\end{question}

If $N$ is a $3$-manifold which is not finitely covered by a torus bundle and with $vb_1(N)\geq 2$,
then $N$ admits a finite cover $N'$ with non-vanishing Thurston norm and with $b_1(N')\geq 2$.
It then follows from \cite[Theorem~5]{Thu86a} that $N'$ admits a non-trivial class $\phi\in H^1(N';\R)$
which is non-fibered.

Surprisingly, though, the following question is still open.

\begin{question}
Does every irreducible  $3$-manifold which is not a graph manifold admit a finite cover such that at least one top-dimensional  face of the Thurston norm ball is not fibered?
\end{question}

\newpage

\printindex


\begin{thebibliography}{asd}

\bibitem[ABBGNRS11]{ABBGNRS11}
M. Abert, N. Bergeron, I. Biringer, T. Gelander, N. Nikolov, J. Raimbault and I. Samet,
{\em On the growth of Betti numbers of locally symmetric spaces},
C. R. Math.  Acad. Sci. Paris 349, No. 15-16 (2011), 831--835.

\bibitem[AJZN11]{AJZN11}
M. Ab\'ert, A. Jaikin-Zapirain and N. Nikolov, {\em The rank gradient from
combinatorial viewpoint, Groups Geometry and Dynamics}, Vol. 5,
(2011), 213--230.

\bibitem[AN12]{AN12}
M. Ab\'ert and N. Nikolov, {\em Rank gradient, cost of groups and the rank versus Heegaard genus problem}, J. Eur. Math. Soc. (JEMS) 14 (2012), no. 5, 1657--1677.

\bibitem[Ada87]{Ada87}
C. Adams, {\em The noncompact hyperbolic $3$-manifold of minimal volume}, Proc. Amer. Math. Soc. 100 (1987), no. 4, 601--606.

\bibitem[Ada88]{Ada88}
C. Adams, {\em Volumes of $N$-cusped hyperbolic $3$-manifolds}, J. London Math. Soc. (2) 38 (1988), no. 3, 555--565.

\bibitem[Ady55]{Ady55}
S. I. Adyan, {\em
Algorithmic unsolvability of problems of recognition of certain properties of groups},
Dokl. Akad. Nauk SSSR (N.S.) 103 (1955), 533--535.

\bibitem[Ag00]{Ag00}
I. Agol, {\em Bounds on exceptional Dehn filling}, Geom. Topol. 4 (2000) 431--449

\bibitem[Ag03]{Ag03}
I. Agol, {\em Small $3$-manifolds of large genus},
Geom. Dedicata 102 (2003), 53--64.

\bibitem[Ag06]{Ag06}
I. Agol, {\em Virtual betti numbers of symmetric spaces}, unpublished paper (2006).


\bibitem[Ag07]{Ag07}
I. Agol, {\em  Tameness of hyperbolic $3$-manifolds}, unpublished paper (2007).



\bibitem[Ag08]{Ag08}
I. Agol, {\em Criteria for virtual fibering}, J. Topol. 1 (2008), no. 2, 269--284.

\bibitem[Ag10a]{Ag10a}
I. Agol, {\em Bounds on exceptional Dehn filling II},  Geom. Topol. 14 (2010), no. 4, 1921--1940.

\bibitem[Ag10b]{Ag10b}
I. Agol, {\em The minimal volume orientable hyperbolic 2-cusped $3$-manifolds}, Proc. Amer. Math. Soc. 138 (2010), no. 10, 3723--3732.


\bibitem[Ag12]{Ag12}
I. Agol, {\em The virtual Haken conjecture}, with an appendix by I. Agol, D. Groves and J. Manning, Preprint (2012).

\bibitem[ABZ08]{ABZ08}
I. Agol, S. Boyer and X. Zhang, {\em Virtually fibered Montesinos links},
J. Topol. 1 (2008), no. 4, 993--1018.

\bibitem[ACS06]{ACS06}
I.  Agol, M. Culler and P. Shalen, {\em
Dehn surgery, homology and hyperbolic volume},
Algebr. Geom. Topol. 6 (2006), 2297--2312.

\bibitem[ACS10]{ACS10}
I.  Agol, M. Culler and P. Shalen, {\em  Singular surfaces, mod $2$ homology, and hyperbolic volume. I}, Trans. Amer. Math. Soc.  362  (2010),  no. 7, 3463--3498.


\bibitem[AGM09]{AGM09}
I. Agol, D.  Groves and J.  Manning, {\em
Residual finiteness, QCERF and fillings of hyperbolic groups},
Geom. Topol. 13 (2009), no. 2, 1043--1073.

\bibitem[AL12]{AL12}
I. Agol and Y. Liu, {\em Presentation length and Simon's conjecture},  J. Amer. Math. Soc. 25 (2012), 151--187.

\bibitem[ALR01]{ALR01}
I. Agol, D. Long and A. Reid, {\em
The Bianchi groups are separable on geometrically finite subgroups},
Ann. of Math. (2) 153 (2001), no. 3, 599--621.

\bibitem[AMR97]{AMR97}
I. Aitchison, S. Matsumotoi and J.  Rubinstein,
{\em Immersed surfaces in cubed manifolds},
Asian J. Math. 1 (1997), no. 1, 85--95.


\bibitem[AMR99]{AMR99}
I. Aitchison, S. Matsumotoi and J.  Rubinstein,
{\em Dehn surgery on the figure $8$ knot: immersed surfaces},
Proc. Amer. Math. Soc. 127 (1999), no. 8, 2437--2442.


\bibitem[AiR99a]{AiR99a}
I. Aitchison and J. Rubinstein, {\em Polyhedral metrics and $3$-manifolds which are virtual bundles},
Bull. London Math. Soc. 31 (1999), no. 1, 90--96.


\bibitem[AiR99b]{AiR99b}
I. Aitchison and J. Rubinstein, {\em
Combinatorial Dehn surgery on cubed and Haken $3$-manifolds},
Proceedings of the Kirbyfest (Berkeley, CA, 1998), 1--21 (electronic), Geom. Topol. Monogr., 2,
Geom. Topol. Publ., Coventry, 1999.

\bibitem[AiR04]{AiR04}
I. Aitchison and J. Rubinstein, {\em
Localising Dehn's lemma and the loop theorem in $3$-manifolds},
Math. Proc. Cambridge Philos. Soc. 137 (2004), no. 2, 281--292.

\bibitem[Ale19]{Ale19}
J. W. Alexander, {\em Note on Two Three-Dimensional Manifolds with the Same Group}, Trans. Amer. Math.
Soc. 20, 339--342 (1919).


\bibitem[Ale24]{Ale24}
J. W. Alexander, {\em New topological invariants expressible as tensors}, Proc. Nat. Acad. Sci. 10 (1924), 99--101.

\bibitem[Alf70]{Alf70}
W. R. Alford, {\em Complements of minimal spanning surfaces of knots are not unique},
Ann. of Math. (2) 91 (1970), 419--424.

\bibitem[AS70]{AS70}
W. R. Alford and C. B. Schaufele, {\em Complements of minimal spanning surfaces of knots are not unique II.}, 1970
Topology of Manifolds (Proc. Inst., Univ. of Georgia, Athens, Gab., 1969) pp 87--96.

\bibitem[AlR12]{AlR12}
Y. Algom-Kfir and K. Rafi, {\em Mapping tori of small dilatation irreducible train-track maps}, Preprint (2012)

\bibitem[ABEMT79]{ABEMT79}
R.  Allenby, J. Boler, B. Evans, L.  Moser and  C. Y. Tang, {\em  Frattini subgroups of $3$-manifold groups}, Trans. Amer. Math. Soc. 247 (1979), 275--300.


\bibitem[AKT05]{AKT05}
R.  Allenby, G. Kim and C. Y. Tang, {\em Conjugacy separability of certain Seifert $3$-manifold groups}, J. Algebra
285 (2005), 481--507.


\bibitem[AKT10]{AKT10}
R.  Allenby, G. Kim and C. Y. Tang, {\em  Conjugacy separability of Seifert $3$-manifold groups over non-orientable surfaces}, J. Algebra 323 (2010), no. 1, 1--9.

\bibitem[AH99]{AH99}
E. Allman and E. Hamilton, {\em
Abelian subgroups of finitely generated Kleinian groups are separable}, Bull. Lond. Math. Soc. 31 (1999), no.2, 163--172.

\bibitem[Alt12]{Alt12}
I. Altman, {\em Sutured Floer homology distinguishes between Seifert surfaces}, Topology Appl. 159 (2012), no. 14, 3143--3155.

\bibitem[Ana02]{Ana02}
J. W. Anderson, {\em Finite volume hyperbolic $3$-manifolds whose fundamental group contains a subgroup that is locally free but not free},
Geometry and analysis. Sci. Ser. A Math. Sci. (N.S.)  8  (2002), 13--20.

\bibitem[Anb04]{Anb04}
M. T. Anderson, {\em Geometrization of $3$-manifolds via the Ricci flow}, Notices Amer. Math. Soc. 51 (2004), 184--193

\bibitem[ADL11]{ADL11}
Y. Antol\'in, W. Dicks and P. Linnell,
{\em Non-orientable surface-plus-one-relation groups},   J. Algebra 326 (2011), 4--33.

\bibitem[AM11]{AM11}
Y. Antol\'in and A. Minasyan, {\em Tits alternatives for graph products}, Preprint (2011).

\bibitem[Ao11]{Ao11}
R. Aoun, {\em Random subgroups of linear groups are free},
Duke Math. J. 160 (2011), no. 1, 117--173.

\bibitem[Ar01]{Ar01}
G. N. Arzhantseva, {\em
On quasi-convex subgroups of word hyperbolic groups},
Geom. Dedicata 87 (2001), no. 1--3, 191--208.


\bibitem[AF10]{AF10}
M. Aschenbrenner and S. Friedl, {\em $3$-manifold groups are virtually residually $p$},  Memoirs Amer. Math. Soc., to appear.

\bibitem[AF11]{AF11}
M. Aschenbrenner and S. Friedl, {\em
Residual properties of graph manifold groups}, Top.  Appl. 158 (2011), 1179--1191.

\bibitem[AK10]{AK10}
F. Atalan and M. Korkmaz, {\em Number of pseudo-Anosov elements in the mapping
class group of a four-holed sphere}, Turkish J. Math., 34 (2010), 585--592.

\bibitem[At76]{At76} M. Atiyah, {\em Elliptic operators, discrete groups and von Neu\-mann algebras},
Ast\'erisque 32  (1976), 43--72.


\bibitem[APS75a]{APS75a}
M.F. Atiyah, V.K. Patodi, and I.M. Singer, {\em Spectral asymmetry and Riemannian geometry: I}, Math. Proc. Camb. Phil. Soc. 77 (1975), 43--69.


\bibitem[APS75b]{APS75b}
M.F. Atiyah, V.K. Patodi, and I.M. Singer, {\em  Spectral asymmetry and Riemannian ge- ometry: II},
Math. Proc. Camb. Phil. Soc. 78 (1975), 405--432.

\bibitem[Aum56]{Aum56}
R. Aumann, {\em Asphericity of alternating knots}, Ann. of Math. (2) 64 (1956), 374--392.

\bibitem[Aus67]{Aus67}
L. Auslander, {\em On a problem of Philip Hall},
Ann. of Math. (2) 86 (1967), 112--116.

\bibitem[Av70]{Av70}
A. Avez, {\em Vari\'et\'es riemanniennes sans points focaux}, C. R. Acad. Sci. Paris Sér. A-B 270 1970 A188--A191.

\bibitem[Bak88]{Bak88}
M. Baker, {\em The virtual $\Z$-representability of certain $3$-manifold groups},
Proc. Amer. Math. Soc. 103 (1988), no. 3, 996--998.

\bibitem[Bak89]{Bak89}
M. Baker, {\em  Covers of Dehn fillings on once-punctured torus bundles}, Proc. Amer. Math. Soc. 105 (1989), no. 3, 747--754.

\bibitem[Bak90]{Bak90}
M. Baker, {\em  Covers of Dehn fillings on once-punctured torus bundles. \textup{II}}, Proc. Amer. Math. Soc. 110 (1990), no. 4, 1099--1108.

\bibitem[Bak91]{Bak91}
M. Baker, {\em  On coverings of figure eight knot surgeries}, Pacific J. Math. 150 (1991), no. 2, 215--228.


\bibitem[BaC12]{BaC12}
M. Baker and D. Cooper, {\em Conservative subgroup separability for surfaces with boundary}, Preprint (2012).

\bibitem[Ban11]{Ban11}
J. Banks, {\em On links with locally infinite Kakimizu complexes},
Algebr. Geom. Topol. 11 (2011), no. 3, 1445--1454.1472--2739

\bibitem[BFL11]{BFL11}
A. Bartels, F. T. Farrell and W. L\"uck, {\em
The Farrell-Jones Conjecture for cocompact lattices in virtually connected Lie groups}, Preprint (2011).

\bibitem[BLW10]{BLW10}
A. Bartels, W. L\"uck and S. Weinberger, {\em  On hyperbolic groups with spheres as boundary}, J. Differential Geom. 86 (2010), no. 1, 1--16.

\bibitem[BaL12]{BaL12}
A. Bartels and W. L\"uck, {\em  The Borel Conjecture for hyperbolic and CAT(0)-groups},
Ann. of Math. 175 (2012), 631--689.

\bibitem[Bas93]{Bas93}
H. Bass, {\it Covering theory for graphs of groups,} J. Pure Appl. Algebra  89 (1993), no. 1--2, 3--47.

\bibitem[Bat71]{Bat71}
J. Batude, {\em Singularit\'e g\'en\'erique des applications diff\'erentiables de la $2$-sph\`ere dans une 3-vari\'et\'e diff\'erentiable},
 Annales de l'Institut Fourier, 21  (1971), no. 3, 155--172.

\bibitem[Bah81]{Bah81}
A. Baudisch, {\em Subgroups of semifree groups}, Acta Math. Acad. Sci. Hungar. 38 (1981), no. 1--4, 19--28.

\bibitem[BaS62]{BaS62}
G. Baumslag and D. Solitar, {\em Some two-generator one-relator non-Hopfian groups}, Bull. Amer. Math. Soc. 68 (1962), 199--201.

\bibitem[Bed11]{Bed11}
T. Bedenikovic, {\em Asphericity results for ribbon disk complements via alternate descriptions},
Osaka J. Math. 48 (2011), no. 1, 99--125.

\bibitem[BN08]{BN08}
J. A.  Behrstock and W. D. Neu\-mann, {\em
Quasi-isometric classification of graph manifold groups}, Duke Math. J. 141 (2008), no. 2, 217--240.

\bibitem[BN10]{BN10}
J. A.  Behrstock and W. D. Neu\-mann, {\em
Quasi-isometric classification of non-geometric $3$-manifold groups}, Preprint (2010), J. Reine Angew. Math., to appear.

\bibitem[BdlHV08]{BdlHV08}
B. Bekka, P. de la Harpe and A.  Valette, {\em Kazhdan's property \textup{(T)}}, New Mathematical Monographs, vol. 11, Cambridge University Press (2008).

\bibitem[Bek]{Bek}
I. Belegradek, {\em Topology of open nonpositively curved manifolds}, in preparation.

\bibitem[Bel12]{Bel12}
M. Belolipetsky, {\em On 2-systoles of hyperbolic $3$-manifolds}, Preprint (2012)

\bibitem[BeL05]{BeL05}
M. Belolipetsky and A. Lubotzky, {\em Finite groups and hyperbolic manifolds},
Invent. Math. 162 (2005), no. 3, 459--472.

\bibitem[BeK02]{BeK02}
N. Benakli and I. Kapovich, {\em Boundaries of hyperbolic groups}, Combinatorial and geometric group theory (New York, 2000/Hoboken, NJ, 2001), 39--93, Contemp. Math., 296, Amer. Math. Soc., Providence, RI, 2002

\bibitem[BP92]{BP92}
R. Benedetti and C. Petronio,
{\em Lectures on hyperbolic geometry},
Universitext, Springer-Verlag, Berlin, 1992.

\bibitem[Bei07]{Bei07}
V. N. Berestovskii, {\em Poincar\'e Conjecture and Related Statements},
Izvestiya Vysshikh Uchebnykh Zavedenii. Matematika, (2007), 3--41.


\bibitem[Ber08]{Ber08}
N. Bergeron, {\em Virtual fibering of certain cover of $\mathbb{S}^3$, branched over the figure eight knot}, Comptes Rendus Mathematique 346 (2008), 1073--1078.

\bibitem[Ber12]{Ber12}
N. Bergeron, {\em La conjecture des sous-groupes de surfaces (d'apr\`es Jeremy Kahn et Vladimir Markovic)}, S\'eminaire Bourbaki, Juin 2012, 64\`eme ann\'ee, 2011-12, no 1055.

\bibitem[BeG04]{BeG04}
N. Bergeron and D. Gaboriau, {\em Asymptotique des nombres de Betti, invariants $l^2$ et laminations},
 Comment. Math. Helv.  79  (2004),  no. 2, 362--395.

\bibitem[BHW11]{BHW11}
N.  Bergeron, F.  Haglund and D. Wise, {\em Hyperplane sections in arithmetic hyperbolic manifolds},
J. Lond. Math. Soc. (2) 83 (2011), no. 2, 431--448.

\bibitem[BV13]{BV13}
N. Bergeron and A. Venkatesh, {\em
The asymptotic growth of torsion homology for arithmetic groups},
Journal of the Institute of Mathematics of Jussieu, 12 (2013), 391--447.

\bibitem[BeW12]{BeW12}
N. Bergeron and D. Wise, {\em A boundary criterion for cubulation}, Amer. J. Math. 134 (2012), 843--859.

\bibitem[BBBMP10]{BBBMP10}
L. Bessi\`eres, G. Besson, M. Boileau, S. Maillot and J. Porti, {\em  Geometrisation of $3$-Manifolds}, EMS Tracts in Mathematics, 2010.

\bibitem[Ben06]{Ben06}
G. Besson, {\em Preuve de la conjecture de Poincar\'e en d\'eformant la m\'etrique par la courbure de Ricci (d'apr\`es G. Perelman)},
S\'eminaire Bourbaki. Vol. 2004/2005.
Ast\'erisque No. 307 (2006), Exp. No. 947, ix, 309--347.

\bibitem[BCG11]{BCG11}
G. Besson, G. Courtois and S. Gallot, {\em Uniform growth of groups acting on Cartan-Hadamard spaces},
J. Eur. Math. Soc. (JEMS) 13 (2011), no. 5, 1343--1371.1435--9863


\bibitem[Bea96]{Bea96}
M. Bestvina, {\em Local homology properties of boundaries of groups}, Michigan Math. J. 43 (1996), no. 1, 123--139.


\bibitem[Bea04]{Bea04}
M. Bestvina, {\em Questions in Geometric Group Theory}, (updated 2004)\\
\texttt{www.math.utah.edu/\~bestvina/eprints/questions-updated.pdf}

\bibitem[BeB97]{BeB97}
M. Bestvina and N. Brady, {\em Morse theory and finiteness properties of groups},
Invent. Math. 129 (1997), no. 3, 445--470.

\bibitem[BF92]{BF92}
M. Bestvina and M. Feighn, {\em A combination theorem for negatively curved groups}, Journal of
Differential Geometry 35 (1992), 85--101.

\bibitem[BFH97]{BFH97}
M. Bestvina, M. Feighn and M. Handel, {\em Laminations, trees, and irreducible automorphisms of free
groups}, Geometric And Functional Analysis 7:2 (1997), 215--244.

\bibitem[BeH92]{BeH92}
M. Bestvina and M. Handel, {\em Train tracks and automorphisms of free groups}, Ann.\ of Math.\ (2) 135:1 (1992), 1--51.

\bibitem[BeM91]{BeM91}
M. Bestvina and G. Mess, {\em The boundary of negatively curved groups}, J. Amer. Math. Soc. 4 (1991), no. 3, 469--481.

\bibitem[BiH91]{BiH91}
R. Bieri and J. A. Hillman, {\em Subnormal subgroups in $3$-dimensional Poincar\'e duality groups},
Math. Z. 206 (1991), 67--69.

\bibitem[BNS87]{BNS87}
R. Bieri, W. Neu\-mann and R. Strebel, {\em A geometric invariant of discrete groups},
Invent. Math. 90 (1987), 451--477.


\bibitem[BiW12]{BiW12}
H. Bigdely and D. Wise, {\em Quasiconvexity and relatively hyperbolic groups that split}, Preprint (2012)

\bibitem[Bie07]{Bie07}
R. Bieri, {\em Deficiency and the geometric invariants of a group}, With an appendix by Pascal
Schweitzer, J. Pure Appl. Algebra 208 (2007), 951--959.

\bibitem[Bin52]{Bin52}
R. H. Bing, {\em
A homeomorphism between the $3$-sphere and the sum of two solid horned spheres},
Ann. of Math. (2) 56, (1952), 354--362.

\bibitem[Bin59]{Bin59}
R. H. Bing, {\em An alternative proof that $3$-manifolds can be triangulated},
Ann. of Math. (2) 69 (1959), 37--65.

\bibitem[Bin83]{Bin83}
R. H. Bing, {\em The geometric topology of $3$-manifolds},
American Mathematical Society Colloquium Publications, vol. 40,
American Mathematical Society, Providence, RI, 1983.


\bibitem[BiM12]{BiM12} I.~Biswas and  M.~Mj,
{\em Low dimensional projective groups},
Preprint (2012).

\bibitem[BMS12]{BMS12}
I. Biswas, M.~Mj and H.~Seshadri,
{\em Three manifold groups, K\"{a}hler groups and complex surfaces}, Commun. Contemp. Math.  14 (2012), no.~6.

\bibitem[Bla57]{Bla57}
R. Blanchfield, {\em  Intersection theory of manifolds with operators with applications to knot theory}, Ann. of Math. (2) 65 (1957), 340--356.

\bibitem[Ble94]{Ble94}
S. Bleiler, {\em Two generator cable knots are tunnel one}, Proc. Am. Math. Soc. 122 (1994) 1285--1287.

\bibitem[BlC88]{BlC88}
S. Bleiler and A. Casson, {\em Automorphisms of Surfaces after Nielsen and Thurston}, London Mathematical Society Student Texts (1988).

\bibitem[BlH96]{BlH96}
S. Bleiler and C. Hodgson, {\em Spherical space forms and Dehn filling}, Topology 35 (1996) 809--833.

\bibitem[BJ04]{BJ04}
S. Bleiler and A. Jones, {\em On two generator satellite knots}, Geom. Dedicata 104 (2004), 1--14.

\bibitem[Bog93]{Bog93}
W. Bogley, {\em J. H. C. Whitehead's asphericity question}, Two-dimensional homotopy and combinatorial group theory, 309--334, London Math. Soc. Lecture Note Ser., 197, Cambridge Univ. Press, Cambridge, 1993.

\bibitem[BMMV06]{BMMV06}
O. Bogopolski, A. Martino, O. Maslakova and E. Ventura, {\em
The conjugacy problem is solvable in free-by-cyclic groups},
Bull. London Math. Soc. 38 (2006), no. 5, 787--794.

\bibitem[BoB13]{BoB13}
M. Boileau and S. Boyer, {\em Graph manifolds Z-homology $3$-spheres and taut foliations}, Preprint (2013)

\bibitem[BLP05]{BLP05}
M. Boileau, B. Leeb and J. Porti, {\em Geometrization of 3-dimensional orbifolds}, Ann. of Math. (2) 162 (2005), no. 1, 195--290.


\bibitem[BMP03]{BMP03}
M. Boileau, S.  Maillot and J. Porti, {\em Three-dimensional orbifolds and their geometric structures}, Panoramas et Synth\`eses, 15. Soci\'et\'e Math\'ematique de France, Paris, 2003.

\bibitem[BW05]{BW05}
M. Boileau and R. Weidmann, {\em The structure of $3$-manifolds with two-generated fundamental group},
Topology 44 (2005), no. 2, 283--320.

\bibitem[BoZ83]{BoZ83}
 M. Boileau and H. Zieschang, {\em Genre de Heegaard d'une vari\'et\'e de dimension 3 et g\'en\'erateurs de son groupe fondamental},
C. R. Acad. Sci. Paris S\'er. I Math. 296 (1983), no. 22, 925--928.

\bibitem[BoZ84]{BoZ84}
 M. Boileau and H. Zieschang, {\em Heegaard genus of closed orientable Seifert $3$-manifolds},
Invent. Math. 76 (1984), 455--468.

\bibitem[BoZi89]{BoZi89}
M. Boileau and B. Zimmermann, {\em On the $p$-orbifold group of a link}, Math. Z. 200(2)
(1989), 187--208.


\bibitem[Bon86]{Bon86}
F. Bonahon, {\em Bouts des vari\'et\'es hyperboliques de dimension $3$}, Ann. of Math. (2), 124
(1986), 71--158.

\bibitem[Bon02]{Bon02}
F. Bonahon, {\em
Geometric Structures on $3$-manifolds},  Handbook of Geometric Topology (R. Daverman, R. Sher eds.), Elsevier, 2002, pp. 93--164.

\bibitem[Bok06]{Bok06}
M. Bonk, {\em Quasiconformal geometry of fractals},
International Congress of Mathematicians. Vol. II, 1349--1373, Eur. Math. Soc., Z\"urich, 2006

\bibitem[BoK05]{BoK05}
M. Bonk and B. Kleiner, {\em Conformal dimension and Gromov hyperbolic groups with $2$-sphere boundary}, Geom. Topol. 9 (2005), 219--246.

\bibitem[BE06]{BE06}
N. Boston and J. Ellenberg,
{\em Pro-p groups and towers of rational homology spheres},
Geometry \& Topology 10 (2006), 331--334.


\bibitem[Bou81]{Bou81}
N. Bourbaki, {\em Groupes et Algebr\'es de Lie}, Masson, Paris, 1981 (Chapters 4 \`a 6).

\bibitem[Bowe04]{Bowe04}
L. Bowen, {\em  Weak Forms of the Ehrenpreis Conjecture and the Surface Subgroup Conjecture},
unpublished manuscript (2004).

\bibitem[Bow93]{Bow93}
B. Bowditch, {\em
 Geometrical finiteness for hyperbolic groups}, J. Funct. Anal. 113 (1993), no. 2, 245--317.


\bibitem[Bow04]{Bow04}
B. Bowditch, {\em
Planar groups and the Seifert conjecture},
J. Reine Angew. Math. 576 (2004), 11--62.

\bibitem[Bow10]{Bow10}
B. Bowditch, {\em  Notes on tameness}, Enseign. Math. (2)  56  (2010),  no. 3--4, 229--285.

\bibitem[Bow11a]{Bow11a}
B. Bowditch, {\em Geometric models for hyperbolic $3$-manifolds}, Preprint (2011).

\bibitem[Bow11b]{Bow11b}
B. Bowditch, {\em End invariants of hyperbolic $3$-manifolds}, Preprint (2011).

\bibitem[Boy02]{Boy02}
S. Boyer, {\em Dehn surgery on knots}, Handbook of geometric topology, 165--218, North-Holland,
Amsterdam, 2002.

\bibitem[Boy]{Boy}
S. Boyer, {\em Linearisibility of the fundamental groups of $3$-manifolds}, unpublished manuscript.

\bibitem[BCSZ08]{BCSZ08}
S. Boyer, M. Culler, P. Shalen and X. Zhang, {\em Characteristic subsurfaces, character varieties and Dehn fillings},
Geom. Topol. 12 (2008), no. 1, 233--297.

\bibitem[BGW11]{BGW11}
S. Boyer, C. McA. Gordon and L. Watson, {\em On $L$--spaces and left-orderable fundamental groups}, Preprint (2011), to appear in Math. Ann.

\bibitem[BGZ01]{BGZ01}
S. Boyer, C. McA. Gordon and X. Zhang,
{\em Dehn fillings of large hyperbolic $3$-manifolds},
J. Diff. Geom. 58 (2001), no. 2, 263--308.1945--743X

\bibitem[BRW05]{BRW05}
S. Boyer, D. Rolfsen and B. Wiest, {\em Orderable $3$-manifold groups},
Les Annales de l'Institut Fourier 55 (2005), no. 1, 243--288.

\bibitem[BrZ00]{BrZ00}
S. Boyer and X. Zhang, {\em Virtual Haken $3$-manifolds and Dehn filling}, Topology 39 (2000), no. 1,
103--114.

\bibitem[Bra95]{Bra95}
T. Brady, {\em Complexes of nonpositive curvature for extensions of $F_2$ by $\Z$},
Topology Appl. 63 (1995), no. 3, 267--275.

\bibitem[Brd93]{Brd93}
M. Bridson, {\em Combings of semidirect products and $3$-manifold groups},
Geometric and Functional Analysis 3 (1993), 263--278.


\bibitem[Brd99]{Brd99}
M. Bridson, {\em Non-positive curvature in group theory}, Groups St. Andrews 1997 in Bath, I, 124--175, London Math. Soc. Lecture Note Ser., 260, Cambridge Univ. Press, Cambridge, 1999

\bibitem[Brd01]{Brd01}
M. Bridson, {\em On the subgroups of semihyperbolic groups},  Essays on
geometry and related topics, Vol. 1, 2, 85--111,
Monogr. Enseign. Math., 38, Enseignement Math., Geneva, 2001.

\bibitem[Brd07]{Brd07}
M. Bridson, {\em Problems concerning hyperbolic and CAT(0) groups}, 2007\\
\texttt{http://aimath.org/pggt/}


\bibitem[Brd12]{Brd12}
M. Bridson, {\em On the subgroups of right angled Artin groups and mapping class groups}, Preprint (2012)

\bibitem[BrGi96]{BrGi96}
M. Bridson and R. Gilman, {\em Formal language theory and the geometry of $3$-manifolds},
Comment. Math. Helvetici 71 (1996), 525--55.

\bibitem[BrGs10]{BrGs10}
M. Bridson and  D. Groves, {\em
The quadratic isoperimetric inequality for mapping tori of free group automorphisms},
Mem. Amer. Math. Soc. 203 (2010).

\bibitem[BGHM10]{BGHM10}
M. Bridson, D. Groves, J. Hillman and G. Martin, {\em Cofinitely Hopfian groups, open mappings and knot complements},
Groups Geom. Dyn. 4 (2010), 693--707.

\bibitem[BrGd04]{BrGd04}
M. Bridson and F. Grunewald, {\em Grothendieck's problems concerning profinite completions and representations of groups},
Ann.\ of Math.\ (2) 160 (2004), no. 1, 359--373.


\bibitem[BrH99]{BrH99}
M. Bridson and A.  Haefliger, {\em
Metric spaces of non-positive curvature},
 Grundlehren der Mathematischen Wissenschaften, vol. 319, Springer-Verlag, New York, 1999.

\bibitem[Brn93]{Brn93}
M. G. Brin, {\em  Seifert fibered $3$-manifolds}, lecture notes, Binghamton University, 1993.

\bibitem[BJS85]{BJS85}
M. Brin, K. Johannson and P. Scott, {\em Totally peripheral $3$-manifolds},
Pac. J. Math. 118 (1985), 37--51.

\bibitem[Brm00]{Brm00}
P. Brinkmann, {\em Hyperbolic automorphisms of free groups},
Geom. Funct. Anal. 10 (2000), no. 5, 1071--1089.

\bibitem[Brt08]{Brt08}
M. Brittenham, {\em Knots with unique minimal genus Seifert surface and depth of knots},
J. Knot Theory Ramifications  17  (2008),  no. 3, 315--335.

\bibitem[Brs05]{Brs05}
N. Broaddus, {\em Noncyclic covers of knot complements}, Geom. Dedicata 111 (2005), 211--239.


\bibitem[BCM04]{BCM04}
J. Brock, R. Canary and Y. Minsky, {\em The classification of Kleinian surface groups,~\textup{II}. The ending lamination conjecture},   Ann. Math., to appear.

\bibitem[BD13]{BD13}
J. Brock and N. Dunfield, {\em Injectivity radii of hyperbolic integer homology $3$-spheres}, Preprint (2013)

\bibitem[Bry60]{Bry60}
E. J. Brody, {\em The topological classification of the lens spaces},
Ann. of Math. (2) 71 1960 163--184.

\bibitem[Brk86]{Brk86}
R. Brooks, {\em Circle packings and co-compact extensions of Kleinian groups}, Invent.\ Math.\ 86 (1986), no.\ 3, 461--469.

\bibitem[Broa66]{Broa66}
E. M. Brown, {\em Unknotting in $M^2\times ×I$}, Trans. Amer. Math. Soc. 123 (1966), 480--505.

\bibitem[BrC65]{BrC65}
E. M. Brown and R. H. Crowell, {\em Deformation retractions of $3$-manifolds into their boundaries}, Ann. of Math. (2) 82 (1965), 445--458.

\bibitem[BT74]{BT74}
E. M. Brown and T. W. Tucker, {\em On proper homotopy theory for noncompact $3$-manifolds},
Trans. Amer. Math. Soc. 188 (1974), 105--126.

\bibitem[Brob87]{Brob87}
K. Brown, {\em Trees, valuations, and the Bieri--Neu\-mann--Strebel invariant}, Invent. Math. 90 (1987), no. 3, 479--504.

\bibitem[BBS84]{BBS84}
A. M. Brunner, R. G. Burns and D. Solitar, {\em
The subgroup separability of free products of two free groups with cyclic amalgamation},
Contributions to group theory, pp. 90--115, Contemp. Math., 33, Amer. Math. Soc., Providence, RI,
1984.

\bibitem[BBI12]{BBI12}
M. Bucher, M. Burger and A. Iozzi,
{\em A dual interpretation of the Gromov-Thurston proof of Mostow Rigidity and Volume Rigidity}, Preprint (2012), to appear in Trends in Harmonic Analysis

\bibitem[BdlH00]{BdlH00}
M. Bucher and P. de la Harpe, {\em
Free products with amalgamation, and HNN-extensions of uniformly exponential growth}, Math. Notes 67 (2000), no. 5-6,
686--689.

\bibitem[Bue93]{Bue93}
G. Burde, {\em Knot groups}, Topics in knot theory (Erzurum, 1992), 25--31, NATO Adv. Sci. Inst. Ser. C Math. Phys. Sci., 399, Kluwer Acad. Publ., Dordrecht, 1993.

\bibitem[BZ66]{BZ66}
G. Burde and H. Zieschang, {\em Eine Kennzeichnung der Torusknoten},
Math. Ann. 167 (1966), 169--176

\bibitem[BuM06]{BuM06}
J. Burillo and A. Martino, {\em Quasi-potency and cyclic subgroup separability}, J. Algebra 298 (2006), no. 1, 188--207.

\bibitem[Bus71]{Bus71}
R. G. Burns, {\em On finitely generated subgroups of free products}, J. Austral.
Math. Soc. 12 (1971), 358--364.

\bibitem[BHa72]{BHa72}
R.G. Burns and V. Hale, {\em A note on group rings of certain torsion-free groups},
Can. Math. Bull. 15 (1972), 441--445.

\bibitem[BKS87]{BKS87}
R.G. Burns, A. Karrass, and D. Solitar, {\em A note on groups with separable finitely generated
subgroups}, Bull. Aust. Math. Soc. 36 (1987), 153--160.

\bibitem[BCT12]{BCT12}
B. Burton, A. Coward and S. Tillmann, {\em Computing closed essential surfaces in knot complements}, Preprint (2012)

\bibitem[BRT12]{BRT12}
B. Burton, H. Rubinstein and S. Tillmann, {\em The Weber-Seifert dodecahedral space is non-Haken},
  Trans. Amer. Math. Soc. 364 (2012), no. 2, 911--932.

\bibitem[But04]{But04}
J. O. Button, {\em Strong Tits alternatives for compact $3$-manifolds with boundary},
J. Pure Appl. Algebra 191 (2004), no. 1-2, 89--98.

\bibitem[But05]{But05}
J. O. Button, {\em Fibred and virtually fibred hyperbolic $3$-manifolds in the censuses},
Exp. Math. 14 (2005), no. 2, 231--255.

\bibitem[But07]{But07}
J. O. Button, {\em Mapping tori with first Betti number at least two}, J. Math. Soc. Japan 59 (2007), no. 2, 351--370

\bibitem[But08]{But08}
J. O. Button, {\em  Large groups of deficiency 1}, Israel J. Math. 167 (2008), 111--140.

\bibitem[But11a]{But11a}
J. O. Button, {\em Proving finitely presented groups are large by computer}, Exp. Math. 20 (2011), no. 2, 153--168,


\bibitem[But11b]{But11b}
J. O. Button, {\em Virtual finite quotients of finitely generated groups}, New Zealand J. Math. 41 (2011), 1--15.1179--4984


\bibitem[But12a]{But12a}
J. O. Button, {\em A $3$-manifold group which is not four dimensional linear}, Preprint (2012)


\bibitem[But12b]{But12b}
J. O. Button, {\em Groups possessing only indiscrete embeddings in $\op{SL}(2,\C)$}, Preprint (2012)

\bibitem[BuK96a]{BuK96a}
S. Buyalo and V. Kobelskii, {\em
Geometrization of graph-manifolds. \textup{I.} Conformal geometrization}, St. Petersburg Math. J. 7 (1996), no. 2,
185--216.

\bibitem[BuK96b]{BuK96b}
S. Buyalo and V. Kobelskii, {\em
Geometrization of graph-manifolds. \textup{II.} Isometric geometrization}, St. Petersburg Math. J. 7 (1996), no. 3,
387--404.


\bibitem[BuS05]{BuS05}
S. Buyalo and P. Svetlov, {\em
Topological and geometric properties of graph-manifolds},
St. Petersbg. Math. J. 16 (2005), no. 2, 297--340.

\bibitem[Cal06]{Cal06}
D. Calegari, {\em Real places and torus bundles}, Geom. Dedicata 118 (2006), no. 1, 209--227.


\bibitem[Cal09]{Cal09}
D. Calegari, {\em scl}, MSJ Memoirs, vol. 20, Mathematical Society of Japan, Tokyo, 2009.


\bibitem[Cal11]{Cal11}
D. Calegari, {\em  Certifying incompressibility of non-injective surfaces with scl}, Preprint (2011), to appear in Pac. J. Math.


\bibitem[CD03]{CD03}
D. Calegari and N. Dunfield, {\em Laminations and groups of homeomorphisms of the circle}, Invent. Math. 152 (2003), no. 1, 149--204.


\bibitem[CaG06]{CaG06} D. Calegari and D. Gabai, {\em Shrinkwrapping and the taming of hyperbolic
$3$-manifolds}, J. Amer. Math. Soc. 19 (2006), no. 2, 385--446.


\bibitem[CD06]{CD06}
F. Calegari and N. Dunfield, {\em Automorphic forms and rational homology $3$-spheres},
Geom. Topol. 10 (2006), 295--330.


\bibitem[CE11]{CE11}
F. Calegari and M. Emerton, {\em Mod-$p$ cohomology growth in $p$-adic analytic towers of $3$-manifolds}, Groups, Geometry and Dynamics
  5, No. 2 (2011), 355--366.

\bibitem[CFW10]{CFW10}
D. Calegari, M. H. Freedman and K. Walker, {\em Positivity of the universal pairing in 3 dimensions},
J. Amer. Math. Soc. 23 (2010), no. 1, 107--188.

\bibitem[CSW11]{CSW11}
D. Calegari, H. Sun and S. Wang, {\em On fibered commensurability}, Pacific J. Math. 250 (2011), no. 2, 287--317.

\bibitem[CW12]{CW12}
D. Calegari and A. Walker, {\em Surface subgroups from linear programming}, Preprint (2012)

\bibitem[Cay94]{Cay94}
R. Canary, {\em Covering theorems for hyperbolic $3$-manifolds}, In Low-dimensional topology (Knoxville, TN, 1992), Conf. Proc. Lecture Notes Geom. Topology, III, pp.~21--30.
Internat. Press, Cambridge, MA, 1994.


\bibitem[Cay96]{Cay96}
R. Canary, {\em A covering theorem for hyperbolic $3$-manifolds and its applications}, Topology, 35 (1996), no. 3, 751--778.



\bibitem[Cay08]{Cay08}
R. Canary, {\em Marden's Tameness Conjecture: history and applications},  in: L. Ji, K. Liu, L. Yang and S.T. Yau (eds.), {\em Geometry, Analysis and Topology of Discrete Groups,} pp.~137--162, Higher Education Press, 2008.

\bibitem[CEG87]{CEG87}
 R. Canary, D. B. A. Epstein and P. Green, {\em Notes on notes of Thurston}, Analytical and geometric aspects of hyperbolic space (Coventry/Durham, 1984), 3--92, London Math. Soc. Lecture Note Ser., 111, Cambridge Univ. Press, Cambridge, 1987

\bibitem[CEG06]{CEG06}
 R. Canary, D. B. A. Epstein and P. Green, {\em Notes on notes of Thurston. With a new foreword by Canary}, London Math. Soc. Lecture Note Ser., 328, Fundamentals of hyperbolic geometry: selected expositions, 1--115, Cambridge Univ. Press, Cambridge, 2006.

\bibitem[CdC03]{CdC03}
A. Candel and L. Conlon, {\em Foliations II}, Graduate Studies in Mathematics, vol. 60 (2003).

\bibitem[CtC93]{CtC93}
J. Cantwell and L. Conlon, {\em Foliations of $E(5_2)$ and related knot complements},
Proc. Amer. Math. Soc.  118  (1993),  no. 3, 953--962.

\bibitem[Can94]{Can94}
J. Cannon, {\em The combinatorial Riemann mapping theorem}, Acta Math. 173 (1994), no. 2, 155--234.

\bibitem[CEHLPT92]{CEHLPT92}
J. W. Cannon, D. B. A. Epstein, D. F. Holt, S. V. F. Levy, M. S. Paterson and W. P. Thurston,
{\em Word Processing in Groups}, Jones and Bartlett, Boston, MA, 1992.


\bibitem[CF76]{CF76}
J.  Cannon and C. Feustel, {\em  Essential embeddings of annuli and M\"obius bands in $3$-manifolds}, Trans. Amer. Math. Soc.  215  (1976), 219--239.



\bibitem[CFP99]{CFP99}
J. W. Cannon, W. J. Floyd, and W. R. Parry, {\em Sufficiently rich families of planar rings},
Ann. Acad. Sci. Fenn. Math. 24 (1999), no. 2, 265--304.

\bibitem[CFP01]{CFP01}
J. W. Cannon, W. J. Floyd, and W. R. Parry, {\em Finite subdivision rules}, Conformal Geometry and Dynamics, 5 (2001), 153--196.


\bibitem[CaS98]{CaS98}
J. W. Cannon and E. L. Swenson, {\em Recognizing constant curvature discrete groups in dimension 3}, Transactions of the American Mathematical Society 350 (1998), no. 2, pp. 809--849.

\bibitem[CaM01]{CaM01}
C. Cao and R. Meyerhoff, {\em The orientable cusped hyperbolic $3$-manifolds of minimum volume},
Invent. Math. 146 (2001), no. 3, 451--478.

\bibitem[CZ06a]{CZ06a} H.-D. Cao and X.-P. Zhu, {\em
  A complete proof of the Poincar\'e and geometrization conjectures--application of the Hamilton-Perelman theory of the Ricci flow}, Asian J. Math. 10 (2006), no. 2, 165--492.

\bibitem[CZ06b]{CZ06b} H.-D. Cao and X.-P. Zhu, {\em  Erratum to: ``A complete proof of the Poincar\'e and geometrization conjectures---application of the Hamilton-Perelman theory of the Ricci flow''},
 Asian J. Math. 10 (2006), no. 2, 165--492,  Asian J. Math.  10  (2006),  no. 4, 663.

\bibitem[CaT89]{CaT89}
J. A. Carlson and D. Toledo, {\em Harmonic mapping of K\"ahler manifolds to locally symmetric spaces},
Publ. Math. I.H.E.S. 69 (1989), 173--201.

\bibitem[CJ94]{CJ94}
A. Casson and D. Jungreis, {\em Convergence groups and Seifert fibered $3$-manifolds}, Invent. Math. 118 (1994), no. 3, 441--456.

\bibitem[Cas04]{Cas04}
F. Castel, {\em Centralisateurs dans les groupes \`a dualit\'e de Poincar\'e de dimension $3$},
C. R. Math. Acad. Sci. Paris 338 (2004), no. 12, 935--940.

\bibitem[Cas07]{Cas07}
F. Castel, {\em Centralisateurs d'\'el\'ements dans les \textup{PD(3)}-paires},
Comment. Math. Helv. 82 (2007), no. 3, 499--517.

\bibitem[Cav12]{Cav12}
W. Cavendish, {\em Finite-sheeted covering
spaces and solenoids over $3$-manifolds}, thesis, Princeton University (2012).

\bibitem[dCe09]{dCe09}
L. Di Cerbo, {\em Agap property for the growth of closed $3$-manifold groups},
Geom.Dedicata 143 (2009), 193--199.


\bibitem[Ce68]{Ce68}
J. Cerf, {\em Sur les diff\'eomorphismes de la sph\`ere de dimension trois $(\Gamma_4= 0 )$}, Lecture Notes in Math., vol. 53, Springer-Verlag, Berlin and New York, 1968.

\bibitem[CdSR79]{CdS79}
E. C\'esar de S\'a and C. Rourke, {\em The homotopy type of homeomorphisms of $3$-manifolds}, Bull. Amer. Math. Soc. (N.S.) 1 (1979), no. 1, 251--254.

\bibitem[ChO12]{ChO12}
J. Cha and K. Orr, {\em Hidden torsion, $3$-manifolds, and homology cobordism}, J. of Topology, to appear (2012)

\bibitem[ChZ10]{ChZ10}
S. C.  Chagas and P. A. Zalesskii, {\em
Bianchi groups are conjugacy separable},  J. Pure and Applied Algebra 214 (2010), no. 9, 1696--1700.

\bibitem[Cha07]{Cha07}
R. Charney, {\em An introduction to right-angled Artin groups},
Geom. Dedicata 125 (2007), 141--158.

\bibitem[ChS74]{ChS74}
S. Chern and J. Simons, {\em Characteristic forms and geometric invariants}, Annals of Math. 99
(1974), 48--69.

\bibitem[CDW12]{CDW12}
E. Chesebro, J. DeBlois and H. Wilton, {\em
 Some virtually special hyperbolic $3$-manifold groups}, Comment. Math. Helv.  87, No. 3 (2012), 727--787.

\bibitem[ChT07]{ChT07}
E. Chesebro and S. Tillmann, {\em Not all boundary slopes are strongly detected by the character variety},
Comm. Anal. Geom. 15 (2007), no. 4, 695--723.

\bibitem[Cho06]{Cho06}
S. Choi, {\em The PL-methods for hyperbolic $3$-manifolds to prove tameness}, unpublished preprint (2006).

\bibitem[CrW03]{CrW03}
B. Clair and K.  Whyte, {\em Growth of Betti numbers},
Topology 42 (2003), no. 5, 1125--1142.

\bibitem[ClS84]{ClS84}
B. Clark and V. Schneider, {\em All knot groups are metric}, Math. Z. 187 (1984), no. 2, 269--271.

\bibitem[CR12]{CR12}
A. Clay and D. Rolfsen, {\em Ordered groups, eigenvalues, knots, surgery and L-spaces},  Math. Proc. Cambridge Philos. Soc. 152 (2012), no. 1, 115--129.


\bibitem[ClT11]{ClT11}
A. Clay and M. Teragaito, {\em Left-orderability and exceptional Dehn surgery on two-bridge knots}, Preprint (2011),
to appear in the Proceedings of Geometry and Topology Down Under, Contemporary Mathematics Series.

\bibitem[CyW11]{CyW11}
A. Clay and L. Watson, {\em On cabled knots, Dehn surgery, and left-orderable fundamental groups}, Math. Res. Let. 18, no. 6 (2011), 1085--1095.

\bibitem[CyW12]{CyW12}
A. Clay and L. Watson, {\em Left-orderable fundamental groups and Dehn surgery}, Int. Math. Res. Not. 29 pages (2012)

\bibitem[CLW11]{CLW11}
A. Clay, T. Lidman and L. Watson, {\em Graph manifolds, left-orderability and amalgamation}, Preprint (2011).

\bibitem[Cl87]{Cl87}
L. Clozel, {\em
On the cuspidal cohomology of arithmetic subgroups of $SL(2n)$ and the first Betti number of
arithmetic $3$-manifolds},
Duke Math. J. 55 (1987), no. 2, 475--486.

\bibitem[Coc04]{Coc04}
T. Cochran,  {\em Noncommutative knot theory}, Algebr. Geom. Topol. 4 (2004), 347--398.

\bibitem[CMa06]{CMa06}
T. Cochran and J. Masters, {\em
The growth rate of the first Betti number in abelian covers of $3$-manifolds},
Math. Proc. Cambridge Philos. Soc. 141 (2006), no. 3, 465--476.

\bibitem[CoO98]{CoO98}
T. Cochran and K. Orr,
{\em Stability of lower central series of compact $3$-manifold groups},
Topology 37 (1998), no. 3, 497--526.

\bibitem[COT03]{COT03}
T. Cochran, K. Orr and P. Teichner, {\em Knot concordance, Whitney towers and $L\sp 2$-signatures},
Ann. of Math. (2)  157 (2003),  no. 2,  433--519.


\bibitem[Coh73]{Coh73}
M. Cohen, {\em A course in simple-homotopy theory}, Graduate Texts in Mathematics, vol. 10, Springer-Verlag, New York-Berlin, 1973.

\bibitem[CMi77]{CMi77}
D. J. Collins and C. F.  Miller, {\em The conjugacy problem and subgroups of finite index}, Proc. London Math. Soc. (3) 34 (1977), no. 3, 535--556.


\bibitem[CZi93]{CZi93}
D. J. Collins and H. Zieschang, {\em Combinatorial group theory and fundamental groups},
Algebra, VII, 1--166, 233-240, Encyclopaedia Math. Sci., 58, Springer, Berlin, 1993.

\bibitem[Con70]{Con70}
A. C. Conner, {\em An algebraic characterization of $3$-manifolds}, Notices Amer. Math. Soc. 17
(1970), 266 Abstract \# 672--635.

\bibitem[CoG12]{CoG12}
D. Cooper and W. Goldman, {\em A $3$-manifold with no Real Projective Structure}, Preprint (2012)


\bibitem[CoL92]{CoL92}
D. Cooper and D.  Long, {\em An undetected slope in a knot manifold},  Topology '90 (Columbus, OH, 1990), 111--121,
Ohio State Univ. Math. Res. Inst. Publ., 1, de Gruyter, Berlin, 1992.

\bibitem[CoL99]{CoL99}
D. Cooper and D.  Long, {\em Virtually Haken Dehn-filling},
J. Differential Geom. 52 (1999), no. 1, 173--187.


\bibitem[CoL00]{CoL00}
D. Cooper and D.  Long, {\em Free actions of finite groups on rational homology $3$-spheres},
Topology Appl. 101 (2000), no. 2, 143--148.


\bibitem[CoL01]{CoL01}
D. Cooper and D. Long, {\em Some surface subgroups survive surgery}, Geom. Topol. 5 (2001), 347--367.

\bibitem[CLR94]{CLR94}
D. Cooper, D. Long and A. W. Reid, {\em Bundles and finite foliations}, Invent. Math. 118  (1994), 253--288.



\bibitem[CLR97]{CLR97}
D. Cooper, D. Long and A. W. Reid, {\em Essential surfaces in bounded $3$-manifolds}, J. Amer.  Math. Soc. 10 (1997), 553--564.



\bibitem[CLR07]{CLR07}
D. Cooper, D. Long and A. W. Reid, {\em
On the virtual Betti numbers of arithmetic hyperbolic $3$-manifolds},
Geom. Topol. 11 (2007), 2265--2276.

\bibitem[CLT06]{CLT06}
D. Cooper, D. Long and M. Thistlethwaite, {\em
Computing varieties of representations of hyperbolic $3$-manifolds into $\op{SL}(4,\R)$},
Experiment. Math. 15 (2006), no. 3, 291--305.

\bibitem[CLT07]{CLT07}
D. Cooper, D. Long and M. Thistlethwaite, {\em
Flexing closed hyperbolic manifolds},
Geom. Topol. 11 (2007), 2413--2440.

\bibitem[CLT09]{CLT09}
D. Cooper, D. Long and M. Thistlethwaite, {\em Constructing non-congruence subgroups of flexible hyperbolic $3$-manifold groups},
Proc. Amer. Math. Soc. 137 (2009), no. 11, 3943--3949.

\bibitem[CoM11]{CoM11}
D. Cooper and J. Manning, {\em Non-faithful representations of surface groups into $SL(2,\C)$ which kill no simple closed curve}, Preprint (2011).

\bibitem[CrW06a]{CrW06a}
D. Cooper and G.  Walsh, {\em Three-manifolds, virtual homology, and group determinants},
Geom. Topol. 10 (2006), 2247--2269.

\bibitem[CrW06b]{CrW06b}
D. Cooper and G. Walsh, {\em  Virtually Haken fillings and semi-bundles}, Geom. Topol. 10 (2006), 2237--2245.

\bibitem[Cr00]{Cr00}
J. Crisp, {\em The decomposition of $3$-dimensional Poincar\'e complexes},
Comment. Math. Helv. 75 (2000), no. 2, 232--246.

\bibitem[Cr07]{Cr07}
J. Crisp, {\em An algebraic loop theorem and the decomposition  of $PD^3$-pairs}, Bull. Lond. Math. Soc. 39 (2007), no. 1, 46--52.

\bibitem[Cu86]{Cu86}
M. Culler, {\em Lifting representations to covering groups}, Adv. in Math. 59 (1986), no. 1, 64--70.


\bibitem[CDS09]{CDS09}
M.  Culler, J.  Deblois and P.  Shalen, {\em Incompressible surfaces, hyperbolic volume, Heegaard genus and homology}, Comm. Anal. Geom.  17  (2009),  no. 2, 155--184.


\bibitem[CGLS85]{CGLS85}
M. Culler, C. Gordon, J. Luecke and P. Shalen, {\em  Dehn surgery on knots}, Bull. Amer. Math. Soc. (N.S.) 13 (1985), no. 1, 43--45.


\bibitem[CGLS87]{CGLS87}
M. Culler, C. Gordon, J. Luecke and P. Shalen, {\em  Dehn surgery on knots}, Ann.
Math. 125 (1987), 237--300.

\bibitem[CJR82]{CJR82}
M. Culler, W. Jaco and H. Rubinstein, {\em Incompressible surfaces in once-punctured torus
bundles}, Proc. London Math. Soc. (3) 45(3), 385--419.

\bibitem[CuS83]{CuS83}
M. Culler and P.  Shalen, {\em
Varieties of group representations and splittings of $3$-manifolds},
Ann. Math. 117 (1983), no. 1, 109--146.


\bibitem[CuS84]{CuS84}
M. Culler and P.  Shalen, {\em Bounded, separating, incompressible surfaces in knot manifolds}, Invent. Math.  75  (1984),  no. 3, 537--545.

\bibitem[CuS08a]{CuS08a}
M.  Culler and P. Shalen, {\em Volume and homology of one-cusped hyperbolic $3$-manifolds}, Algebr. Geom. Topol.  8  (2008),  no. 1, 343--379.


\bibitem[CuS08b]{CuS08b}
M.  Culler and P. Shalen, {\em Four-free groups and hyperbolic geometry}, Preprint (2008).

\bibitem[CuS11]{CuS11}
M. Culler and P. Shalen, {\em Singular surfaces, mod $2$ homology, and hyperbolic volume}, II, Topology Appl.  158  (2011),  no. 1, 118--131.

\bibitem[DPT05]{DPT05}
M. Dabkowski, J. Przytycki and A. Togha,  {\em  Non-left-orderable $3$-manifold groups}, Canad. Math. Bull. 48 (2005), no. 1, 32--40.

\bibitem[Dah03]{Dah03}
F. Dahmani, {\em Combination of convergence groups}, Geom. Topol. 7 (2003), 933--963.

\bibitem[Dam91]{Dam91}
R. Daverman, {\em $3$-manifolds with geometric structure and approximate fibrations},
Indiana Univ. Math. J. 40 (1991), no. 4, 1451--1469.

\bibitem[Dan96]{Dan96}
K. Davidson, {\em $C^*$-algebras by example}, Fields Institute Monographs, 6. American Mathematical Society, Providence, RI, 1996.

\bibitem[Dava83]{Dava83}
J. Davis, {\em The surgery semicharacteristic},
Proc. London Math. Soc. (3) 47 (1983), no. 3, 411--428.


\bibitem[DJ00]{DJ00}
M. W. Davis and T. Januszkiewicz, {\em Right-angled Artin groups are commensurable
with right-angled Coxeter groups}, J. Pure Appl. Algebra, 153 (2000), 229--235.

\bibitem[Davb98]{Davb98}
M. Davis, {\em The cohomology of a Coxeter group with group ring coefficients}, Duke Math. J. 91 (1998), 297--313.

\bibitem[Davb00]{Davb00}
M. Davis, {\em Poincar\'e duality groups}, in Surveys in Surgery Theory, Volume 1, Annals of Math. Studies, 145, Princeton University Press, Princeton, 2000, 167--193.

\bibitem[DeB10]{DeB10}
J. DeBlois, {\em On the doubled tetrus},
Geom. Dedicata 144 (2010), 1--23.

\bibitem[DFV12]{DFV12}
J. Deblois, S. Friedl and S. Vidussi, {\em The rank gradient for infinite cyclic  covers of $3$-manifolds},
Preprint (2012)

\bibitem[DeS09]{DeS09}
J. DeBlois and P. Shalen, {\em Volume and topology of bounded and closed hyperbolic $3$-manifolds}, Comm. Anal. Geom. 17 (2009), no. 5, 797--849.


\bibitem[De10]{De10}
M. Dehn, {\em \"Uber die Topologie des dreidimensionalen Raumes}, Math. Ann. 69
(1910), no. 1, 137--168.

\bibitem[De87]{De87}
M. Dehn, {\em
Papers on group theory and topology},
Translated from the German and with introductions and an appendix by John Stillwell. With an appendix by Otto Schreier. Springer-Verlag, New York, 1987.

\bibitem[DPS11]{DPS11}
A. Dimca, S. Papadima and A. Suciu, {\em Quasi-K\"ahler groups, $3$-manifold groups, and formality},
Math. Z. 268 (2011), no. 1--2, 169--186.1432--1823


\bibitem[DiS09]{DiS09}
A. Dimca and A. Suciu, {\em
Which $3$-manifold groups are K\"ahler groups?},
J. Eur. Math. Soc. 11 (2009), no. 3, 521--528.

\bibitem[DL09]{DL09}
J. Dinkelbach and B. Leeb, {\em Equivariant Ricci flow with surgery and applications to finite group actions on geometric $3$-manifolds}, Geom. Topol. 13 (2009), no. 2, 1129--1173.

\bibitem[Di77]{Di77}
J. Dieudonn\'e, {\em Treatise on Analysis,} Volume V, Pure and applied Mathematics, vol. 10, Academic Press, 1977.

\bibitem[DLMSY03]{DLMSY03} J. Dodziuk, P. Linnell, V. Mathai, T. Schick and S. Yates, {\em Approximating $L^2$-invariants
and the Atiyah conjecture},  Communications on
Pure and Applied Mathematics 56 (2003), no. 7, 839--873.

\bibitem[Do83]{Do83}
S. K. Donaldson, {\em An application of gauge theory to four-dimensional topology}, J. Diff. Geom. 18 (1983), 279--315.

\bibitem[DKL12]{DKL12}
S. Dowdall, I. Kapovich and C. Leininger, {\em Dynamics on free-by-cyclic groups}, Preprint (2012)

\bibitem[Dr83]{Dr83}
C. Droms, {\em Graph Groups}, Ph.D. thesis, Syracuse University, 1983.

\bibitem[Dr87]{Dr87}
C. Droms, {\em  Graph groups, coherence, and three-manifolds}, J. Algebra 106 (1987), no. 2, 484--489.

\bibitem[DSS89]{DSS89}
C. Droms, B. Servatius and H. Servatius, {\em Surface subgroups of graph groups},
Proc. Amer. Math. Soc. 106 (1989), 573--578.

\bibitem[DK92]{DK92}
G. Duchamp and D. Krob, {\em
The lower central series of the free partially commutative group},  Semigroup Forum 45  (1992), no. 3, 385--394.

\bibitem[DpT92]{DpT92}
G. Duchamp and J.-Y. Thibon, {\em Simple orderings for free partially commutative
groups}, Internat. J. Algebra Comput. 2 (1992), no. 3, 351--355.

\bibitem[Duf12]{Duf12}
G. Dufour, {\em Cubulations de vari\'et\'es hyperboliques compactes}, th\`ese de doctorat (2012).

\bibitem[Dub88]{Dub88}
W. Dunbar,  {\em
Classification of solvorbifolds in dimension three}, Braids (Santa Cruz, CA, 1986), 207--216,
Contemp. Math., 78, Amer. Math. Soc., Providence, RI, 1988.

\bibitem[Dun01]{Dun01}
N. Dunfield, {\em Alexander and Thurston norms of fibered $3$-manifolds}, Pacific J. Math. 200 (2001), no. 1, 43--58.

\bibitem[DFJ12]{DFJ12}
N. Dunfield, S. Friedl and N. Jackson, {\em Twisted Alexander polynomials of hyperbolic knots}, Exp. Math. 21 (2012), 329--352.

\bibitem[DG12]{DG12}
N. Dunfield and S. Garoufalidis, {\em Incompressibility criteria for spun-normal surfaces}, Trans. Amer. Math. Soc. 364 (2012), no. 11, 6109--6137.

\bibitem[DR10]{DR10}
N. Dunfield and D. Ramakrishnan, {\em  Increasing the number of fibered faces of arithmetic hyperbolic $3$-manifolds}, Amer. J. Math. 132 (2010), no. 1, 53--97.


\bibitem[DnTa06]{DnTa06}
N. Dunfield and D. Thurston, {\em
A random tunnel number one $3$-manifold does not fiber over the circle},
Geom. Topol. 10 (2006), 2431--2499.

\bibitem[DnTb03]{DnTb03}
N. Dunfield and W. Thurston, {\em The virtual Haken conjecture: experiments and examples},
Geom. Topol. 7 (2003), 399--441.

\bibitem[DnTb06]{DnTb06}
N. Dunfield and W. Thurston, {\em
Finite covers of random $3$-manifolds},
Invent. Math. 166 (2006), no. 3, 457--521.1432--1297

\bibitem[Duw85]{Duw85}
M. J. Dunwoody, {\em
An equivariant sphere theorem},
Bull. London Math. Soc. 17 (1985), no. 5, 437--448.

\bibitem[DuS00]{DuS00}
M. J. Dunwoody and E. L. Swenson, {\em The algebraic torus theorem}, Invent.
Math. 140 (2000), 605--637.

\bibitem[Ec84]{Ec84}
B. Eckmann, {\em
Sur les groupes fondamentaux des surfaces closes},
Riv. Mat. Univ. Parma (4) 10 (1984), special vol. 10*, 41--46.

\bibitem[Ec85]{Ec85}
B. Eckmann, {\em  Surface groups and Poincar\'e duality},
Conference on algebraic topology in honor of Peter Hilton (Saint John's, Nfld., 1983), 51--59,
Contemp. Math., 37, Amer. Math. Soc., Providence, RI, 1985.

\bibitem[Ec87]{Ec87}
B. Eckmann, {\em Poincar\'e duality groups of dimension 2 are surface groups}, Combinatorial group theory and topology (Alta, Utah, 1984), pp. 35--52, Ann. of Math. Stud., 111, Princeton Univ. Press, Princeton, NJ, 1987.

\bibitem[EcL83]{EcL83}
B. Eckmann and P. Linnell, {\em Poincar\'e duality groups of dimension two. II.},
Comment. Math. Helv. 58 (1983), no. 1, 111--114.

\bibitem[EcM80]{EcM80}
B. Eckmann and H. M\"uller, {\em Poincar\'e duality groups of dimension two}, Comm.
Math. Helv. 55 (1980) 510--520.


\bibitem[Ed86]{Ed86}
A. Edmonds, {\em  A topological proof of the equivariant Dehn lemma},  Trans. Amer. Math. Soc.  297  (1986),  no. 2, 605--615.

\bibitem[EdL83]{EdL83}
A. Edmonds and C. Livingston, {\em Group actions on fibered three-manifolds}, Comment. Math. Helv. 58 (1983), no. 4, 529--542.

\bibitem[EN85]{EN85}
D.  Eisenbud and W. Neu\-mann, {\em  Three-dimensional link theory and
invariants of plane curve singularities}, Annals of Mathematics
Studies, 110. Princeton University Press, Princeton, NJ, 1985.


\bibitem[Eim00]{Eim00}
M. Eisermann, {\em The number of knot group representations is not a Vassiliev invariant},
Proc. Amer. Math. Soc. 128 (2000), no. 5, 1555--1561.

\bibitem[Ein76a]{Ein76a}
J. Eisner, {\em A characterisation of nonfibered knots}, Notices Amer. Math. Soc. 23 (2) (1976).

\bibitem[Ein76b]{Ein76b}
J. Eisner, {\em Notions of spanning surface equivalence},
Proc. Amer. Math. Soc. 56 (1976), 345--348.

\bibitem[Ein77a]{Ein77a}
J. Eisner, {\em A characterisation of non-fibered knots}, Mich. Math. J. 24 (1977), 41--44.


\bibitem[Ein77b]{Ein77b}
J. Eisner, {\em Knots with infinitely many minimal spanning surfaces}, Trans. Amer. Math. Soc. 229 (1977), 329--349.

\bibitem[El84]{El84}
H. Elkalla, {\em
Subnormal subgroups in $3$-manifold groups},
J. London Math. Soc. (2)  30  (1984),  no. 2, 342--360.


\bibitem[Ep61a]{Ep61a}
D. B. A. Epstein, {\em
Finite presentations of groups and $3$-manifolds},
Quart. J. Math. Oxford Ser. (2) 12 (1961), 205--212.

\bibitem[Ep61b]{Ep61b}
D. B. A. Epstein, {\em
 Projective planes in $3$-manifolds}, Proc. London Math.
Soc. (3) 11 (1961), 469--484.

\bibitem[Ep61c]{Ep61c}
D. B. A. Epstein, {\em Free products with amalgamation and $3$-manifolds}, Proc. Amer. Math. Soc. 12
(1961), 669--670.

\bibitem[Ep61d]{Ep61d}
D. B. A. Epstein, {\em
Factorization of $3$-manifolds},
Comment. Math. Helv. 36 (1961) 91--102.



\bibitem[Ep62]{Ep62}
D. B. A. Epstein, {\em Ends},  Topology of $3$-manifolds and related topics (Proc. The Univ. of Georgia Institute, 1961), pp. 110--117, Prentice-Hall, Englewood Cliffs, N.J., 1962.


\bibitem[Ep72]{Ep72}
D. B. A. Epstein, {\em Periodic flows on $3$-manifolds}, Ann. of Math., 95 (1972), 66--82.

\bibitem[EL12]{EL12}
M. Ershov and W. L\"uck, {\em The first $L^2$-Betti number and approximation in arbitrary characteristic}, Preprint (2012)

\bibitem[EJ73]{EJ73}
B. Evans and W. Jaco, {\it Varieties of groups and three-manifolds,}
Topology 12 (1973), 83--97.


\bibitem[EvM72]{EvM72}
B. Evans and L.  Moser, {\em  Solvable fundamental groups of compact $3$-manifolds}, Trans. Amer. Math. Soc. 168 (1972), 189--210.

\bibitem[FaM12]{FaM12}
B. Farb and D. Margalit, {\em A Primer on Mapping Class
Groups}, Princeton University Press, 2012.

\bibitem[FaH81]{FaH81}
F.  Farrell and W.-C. Hsiang, {\em  The Whitehead group of poly-(finite or cyclic)
groups}, J. London Math. Soc. 24 (1981), 308--324.

\bibitem[FJ86]{FJ86}
F. Farrell and L. Jones, {\em K-theory and dynamics}, I. Ann. of Math. 124 (1986), 531--569.


\bibitem[FJ87]{FJ87}
F. Farrell and L. Jones, {\em
Implication of the geometrization conjecture for the algebraicK-theory of $3$-manifolds},
Geometry and topology (Athens, Ga., 1985), 109--113, Lecture Notes in Pure and Appl. Math.,
105, Dekker, New York, 1987

\bibitem[FLP79a]{FLP79a}
A. Fathi, F. Laudenbach and V. Po\'enaru, {\em Travaux de Thurston sur les surfaces}, Ast\'erisque,
66-67, Soc. Math. France, Paris, 1979.

\bibitem[FLP79b]{FLP79b}
A. Fathi, F. Laudenbach and V. Po\'enaru, {\em Thurston's work on surfaces},\\
\texttt{https://wikis.uit.tufts.edu/confluence/display/\~dmarga01/FLP}

\bibitem[FeH99]{FeH99}
M. Feighn and M. Handel, {\em Mapping tori of free group automorphisms are coherent}, Ann. of Math. (2) 149 (1999), 1061--1077.


\bibitem[Fe70]{Fe70}
C. D. Feustel, {\em
Some applications of Waldhausen's results on irreducible surfaces},
Trans. Amer. Math. Soc. 149 (1970) 575--583.


\bibitem[Fe72a]{Fe72a}
C. D. Feustel, {\em A splitting theorem for closed orientable $3$-manifolds},
Topology 11 (1972), 151--158.

\bibitem[Fe72b]{Fe72b}
C. D. Feustel, {\em
$S$-maximal subgroups of $\pi_1(M^3)$}, Canad. J. Math. 24 (1972), 439--449.

\bibitem[Fe73]{Fe73}
C. D. Feustel, {\em A generalization of Kneser's conjecture},
Pacific J. Math. 46 (1973), 123--130.

\bibitem[Fe76a]{Fe76a}
C. D.  Feustel, {\em  On the Torus Theorem and its applications}, Trans. Amer. Math. Soc. 217 (1976), 1--43.

\bibitem[Fe76b]{Fe76b}
C. D.  Feustel, {\em On the Torus Theorem for closed $3$-manifolds}, Trans. Amer. Math. Soc. 217 (1976), 45--57.

\bibitem[Fe76c]{Fe76c}
C. D.  Feustel, {\em On realizing centralizers of certain elements in the fundamental group of a $3$-manifold},  Proc. Amer. Math. Soc.  55  (1976), no. 1, 213--216.

\bibitem[FG73]{FG73}
C. D.  Feustel and R. J. Gregorac, {\em On realizing HNN groups in $3$-manifolds}, Pacific J. Math. 46 (1973), 381--387.

\bibitem[FW78]{FW78}
C. D. Feustel and W. Whitten, {\em Groups and complements of knots},
Canad. J. Math. 30 (1978), no. 6, 1284--1295.

\bibitem[FiM99]{FiM99}
E. Finkelstein and Y. Moriah, {\em Tubed incompressible surfaces in knot and link complements},
Topology Appl. 96 (1999), no. 2, 153--170.


\bibitem[FiM00]{FiM00}
E. Finkelstein and Y. Moriah, {\em Closed incompressible surfaces in knot complements}, Trans. Amer. Math. Soc. 352 (2000), no. 2, 655--677.

\bibitem[FlH82]{FlH82}
W. Floyd and A. Hatcher, {\em Incompressible surfaces in punctured-torus bundles}, Topology
Appl. 13(3), 263--282.

\bibitem[FoM10]{FoM10}
F. Fong and J. Morgan, {\em Ricci Flow and Geometrization of $3$-Manifolds}, University Lecture Series, 2010.

\bibitem[FR12]{FR12}
B. Foozwell and H. Rubinstein, {\em Four-dimensional Haken cobordism theory}, Preprint (2012)

\bibitem[Fo52]{Fo52}
R. H. Fox, {\em Recent development of knot theory at Princeton}, Proceedings of the International Congress of Mathematicians, Cambridge, Mass., 1950, vol. 2, pp. 453--457. Amer. Math. Soc., Providence, R. I., 1952.


\bibitem[Fre82]{Fre82}
M. H. Freedman, {\em The topology of four-dimensional manifolds}, J. Differential Geom., 17 (1982), no. 3, 357-453.

\bibitem[Fre84]{Fre84}
M. H. Freedman, {\em The disk theorem for four-dimensional manifolds}, In Proceedings of the International
Congress of Mathematicians, Vol. 1, 2 (Warsaw, 1983),  647--663.
1984.

\bibitem[FF98]{FF98}
B. Freedman and M. Freedman, {\em Kneser-Haken finiteness for bounded $3$-manifolds locally free groups, and cyclic covers},
Topology 37 (1998), no. 1, 133--147.

\bibitem[FHT97]{FHT97}
M. Freedman, R. Hain and P. Teichner, {\em Betti number estimates for nilpotent groups}, Fields Medallists'  lectures, 413--434, World Sci. Ser. 20th Century Math., 5, World Sci. Publ., River Edge, NJ, 1997.

\bibitem[Fri11]{Fri11}
S. Friedl, {\em Centralizers in $3$-manifold groups}, RIMS K\^{o}ky\^{u}roku 1747 (2011), 23--34.

\bibitem[FJR11]{FJR11}
S. Friedl, A. Juh\'asz and J. Rasmussen,
 {\em The decategorification of sutured Floer homology}, J. Topology 4 (2011): 431--478.

\bibitem[FKm06]{FKm06}
S. Friedl and T. Kim, {\em The Thurston norm, fibered manifolds and twisted Alexander polynomials},
Topology 45 (2006), 929--953.

\bibitem[FKt12]{FKt12}
S. Friedl and T. Kitayama, {\em The virtual fibering theorem for $3$-manifolds}, Preprint (2012)

\bibitem[FS12]{FS12}
S. Friedl and A. Suciu, {\em K\"{a}hler groups, quasi-projective groups, and $3$-manifold groups}, Preprint (2012)

\bibitem[FT05]{FT05}
S. Friedl and P. Teichner, {\em New topologically slice knots}, Geom. Topol. 9 (2005), 2129--2158.

\bibitem[FV12a]{FV12a}
S. Friedl and S. Vidussi,
{\em A Vanishing Theorem for Twisted Alexander Polynomials with Applications to Symplectic 4-manifolds},
Journal of the Eur. Math. Soc., to appear.

\bibitem[FV12b]{FV12b}
S. Friedl and S. Vidussi,
{\em The Thurston norm and twisted Alexander polynomials}, Preprint (2012)

\bibitem[Fuj99]{Fuj99}
K. Fujiwara, {\em
$3$-manifold groups and property $T$ of Kazhdan},
Proc. Japan Acad. Ser. A Math. Sci. 75 (1999), no. 7, 103--104.

\bibitem[Fun11]{Fun11}
L. Funar, {\em Torus bundles not distinguished by TQFT invariants}, Preprint (2011)

\bibitem[FP07]{FP07}
D. Futer and J. Purcell, {\em Links with no exceptional surgeries},
Comment. Math. Helv. 82 (2007), no. 3, 629--664.

\bibitem[Gab83a]{Gab83a}
D. Gabai, {\em Foliations and the topology of $3$-manifolds}, J. Differential Geometry 18 (1983), no. 3,
445--503.

\bibitem[Gab83b]{Gab83b}
D. Gabai, {\em Foliations and the topology of $3$-manifolds}, Bull. Amer. Math. Soc. 8 (1983), 77--80.

\bibitem[Gab85]{Gab85}
D. Gabai, {\em
The simple loop conjecture},
J. Differential Geom. 21 (1985), no. 1, 143--149.

\bibitem[Gab86]{Gab86}
D. Gabai, {\em On $3$-manifolds finitely covered by surface bundles},
Low-dimensional topology and Kleinian groups (Coventry/Durham, 1984), 145-155, London
Math. Soc. Lecture Note Ser., 112, Cambridge Univ. Press, Cambridge, 1986.


\bibitem[Gab87]{Gab87}
D. Gabai, {\em Foliations and the topology of $3$-manifolds. III}, J. Differential Geom.  26  (1987),  no. 3,
479–536.

\bibitem[Gab92]{Gab92}
D. Gabai, {\em Convergence groups are Fuchsian groups},
Ann. of Math. (2) 136 (1992), no. 3, 447--510.

\bibitem[Gab94a]{Gab94a}
D. Gabai, {\em Homotopy hyperbolic $3$-manifolds are virtually hyperbolic}, J. Amer. Math. Soc. 7 (1994), no. 1, 193--198.

\bibitem[Gab94b]{Gab94b}
D. Gabai, {\em On the geometric and topological rigidity of hyperbolic $3$-manifolds}, Bull. Amer. Math. Soc. (N.S.) 31 (1994), no. 2, 228--232.

\bibitem[Gab97]{Gab97}
D. Gabai, {\em On the geometric and topological rigidity of hyperbolic $3$-manifolds}, J. Amer. Math. Soc. 10 (1997), no. 1, 37--74.


\bibitem[Gab01]{Gab01}
D. Gabai, {\em The Smale conjecture for hyperbolic $3$-manifolds: $\op{Isom}(M^3)\simeq \op{Diff}(M^3)$}, J. Differential Geom. 58 (2001), no. 1, 113--149.

\bibitem[Gab09]{Gab09}
D. Gabai, {\em Hyperbolic geometry and $3$-manifold topology},  Low dimensional topology, pp. 73--103, IAS/Park City Math. Ser., vol. 15, 2009.

\bibitem[GMM09]{GMM09}
D. Gabai, R. Meyerhoff and P. Milley, {\em Minimum volume cusped hyperbolic three-manifolds}, J. Amer. Math. Soc. 22 (2009), no. 4, 1157--1215.


\bibitem[GMM10]{GMM10}
D. Gabai, R. Meyerhoff and P. Milley, {\em Mom technology and hyperbolic $3$-manifolds. In the tradition of Ahlfors-Bers. V}, 84--107, Contemp. Math., 510, Amer. Math. Soc., Providence, RI, 2010.

\bibitem[GMT03]{GMT03}
D. Gabai, G. Meyerhoff and N.  Thurston, {\em  Homotopy hyperbolic $3$-manifolds are hyperbolic}, Ann. of Math. (2) 157 (2003), no. 2, 335--431.

\bibitem[Gar11]{Gar11}
S. Garoufalidis, {\em The Jones slopes of a knot},
Quantum Topol. 2 (2011), no. 1, 43--69.

\bibitem[Ge83]{Ge83}
S. M. Gersten, {\em
Geometric automorphisms of a free group of rank at least three are rare},
Proc. Amer. Math. Soc. 89 (1983), no. 1, 27--31.

\bibitem[Ge94a]{Ge94a}
S. M. Gersten, {\em Divergence in $3$-manifold groups}, Geometric And Functional Analysis
Vol. 4, no. 6 (1994), 633--647.

\bibitem[Ge94b]{Ge94b}
S. M. Gersten, {\em
The automorphism group of a free group is not a $\op{CAT}(0)$ group},
Proc. Amer. Math. Soc. 121 (1994), no. 4, 999--1002.

\bibitem[GeS87]{GeS87}
S. M. Gersten and J. R. Stallings, {\em
Combinatorial group theory and topology},
Annals of Mathematics Studies, no.111, Princeton University Press, 1987.

\bibitem[Gin81]{Gin81}
J. Gilman, {\em On the Nielsen type and the classification for themapping class group},
Adv. in Math. 40 (1981), no. 1, 68--96.

\bibitem[Gil09]{Gil09}
P. Gilmer, {\em Heegaard genus, cut number, weak p-congruence, and quantum invariants}, J. Knot Theory Ramifications 18 (2009), no. 10, 1359--1368.

\bibitem[GiM07]{GiM07}
P. Gilmer and G. Masbaum, {\em Integral lattices in TQFT}, Ann. Sci. \'Ecole Norm. Sup. (4) 40 (2007), no. 5, 815--844.


\bibitem[Gir10]{Gir10}
D. Gir\~ao, {\em Rank gradient in cofinal towers of certain Kleinian groups}, Groups,
Geometry and Dyanmics, to be published (2010)


\bibitem[Gir13]{Gir13}
D. Gir\~ao, {\em Rank gradient of small covers},  Pac. J. Math., to be published  (2013).

\bibitem[Git97]{Git97}
R. Gitik, {\em Graphs and separability properties of groups},
J. Algebra 188 (1997), no. 1, 125--143.

\bibitem[Git99a]{Git99a}
R. Gitik, {\em
Ping-pong on negatively curved groups},
J. Algebra 217 (1999), no. 1, 65--72.


\bibitem[Git99b]{Git99b}
R. Gitik, {\em Doubles of groups and hyperbolic LERF $3$-manifolds}, Ann. of Math. 150 (1999),
775--806.

\bibitem[GMRS98]{GMRS98}
R. Gitik, M.  Mitra, E.  Rips and M.  Sageev, {\em
Widths of subgroups},
Trans. Amer. Math. Soc. 350 (1998), no. 1, 321--329.


\bibitem[GSS10]{GSS10}
Y. Glasner, J. Souto and P. Storm, {\em  Finitely generated subgroups of lattices in $\psl(2,\C)$},  Proc. Amer. Math. Soc.  138  (2010),  no. 8, 2667--2676.

\bibitem[GI06]{GI06}
H. Goda and M. Ishiwata, {\em A classification of Seifert surfaces for some pretzel links},
Kobe J. Math. 23 (2006), no. 1-2, 11--28.

\bibitem[GA75]{GA75}
F. Gonz\'alez-Acu\~na, {\em Homomorphs of knot groups},
Ann. of Math. (2) 102 (1975), no. 2, 373--377.


\bibitem[GLW94]{GLW94}
F. Gonz\'alez-Acu\~na, R. Litherland and W. Whitten, {\em
Cohopficity of Seifert-bundle groups},
Trans. Amer. Math. Soc. 341 (1994), no. 1, 143--155.

\bibitem[GoS91]{GoS91}
F. Gonz\'alez-Acu\~na and H. Short, {\em
Cyclic branched coverings of knots and homology spheres},
Rev. Mat. Univ. Complut. Madrid 4 (1991), no. 1, 97--120.


\bibitem[GW87]{GW87}
F. Gonz\'alez-Acu\~na and W. Whitten, {\em Imbeddings of knot groups in knot groups},
Geometry and topology (Athens, Ga., 1985), 147--156, Lecture Notes in Pure and Appl. Math.,
105, Dekker, New York, 1987.

\bibitem[GW92]{GW92}
F. Gonz\'alez-Acu\~na and W. Whitten, {\em Imbeddings of Three-Manifold Groups}, Mem. Amer. Math.
Soc., vol. 474, 1992.

\bibitem[GW94]{GW94}
F. Gonz\'alez-Acu\~na and W. Whitten, {\em
Cohopficity of $3$-manifold groups},
Topology Appl. 56 (1994), no.1, 87--97.


\bibitem[Gon98]{Gon98}
C. McA. Gordon, {\em  Dehn filling: a survey,} from: Knot Theory (Warsaw, 1995),
Polish Acad. Sci. Warsaw (1998) 129--144.


\bibitem[Gon99]{Gon99}
C. McA. Gordon, {\em $3$-dimensional topology up to 1960}, History of topology, pp. 449--489, North-Holland, Amsterdam, 1999.


\bibitem[GoH75]{GoH75}
C. McA.\ Gordon and W. Heil, {\em
Cyclic normal subgroups of fundamental groups of $3$-manifolds},
Topology 14 (1975), 305--309.

\bibitem[GLi84]{GLi84}
C. McA.\ Gordon and R. Litherland, {\em Incompressible surfaces in branched coverings},
The Smith conjecture (New York, 1979), 139--152, Pure Apl. Math. 112 (1984)

\bibitem[GLu89]{GLu89}
C. McA.\ Gordon and J.  Luecke, {\em Knots are determined by their complements}, J. Amer. Math. Soc. 2 (1989), no. 2, 371--415.

\bibitem[Goa86]{Goa86}
A. V. Goryaga, {\em Example of a finite extension of an {FAC}-group that is not an {FAC}-group}, Akademiya Nauk {SSSR}\ Sibirskoe Otdelenie, Sibirskii Matematicheskii Zhurnal 27 (1986), no. 3, 203--205.


\bibitem[Grv81]{Grv81}
M. Gromov, {\em Hyperbolic manifolds, groups and actions}, Riemann surfaces and related topics: Proceedings of the 1978 Stony Brook Conference (State Univ. New York, Stony Brook, N.Y., 1978), pp. 183--213, Ann. of Math. Stud., 97, Princeton Univ. Press, Princeton, N.J., 1981.


\bibitem[Grv82]{Grv82}
M. Gromov, {\em Volume and bounded cohomology}, Inst. Hautes \'Etudes Sci. Publ. Math. (1982), no. 56, 5--99.

\bibitem[Grv87]{Grv87}
M. Gromov, {\em Hyperbolic groups}, Essays in group theory, pp. 75--263, Math. Sci. Res. Inst. Publ., 8, Springer, New York, 1987.

\bibitem[Grv89]{Grv89}
M. Gromov, {\em Sur le groupe fondamental d'une vari\'et\'e k\"ahl\'erienne}, C. R. Acad. Sci. Paris S´er. I Math. 308 (1989),
67--70.

\bibitem[Grs69]{Grs69}
J. Gross, {\em A unique decomposition theorem for $3$-manifolds with connected boundary},
Trans. Amer. Math. Soc. 142 (1969), 191--199.

\bibitem[Grs70]{Grs70}
J. Gross, {\em The decomposition of $3$-manifolds with several boundary components},
Trans. Amer. Math. Soc. 147 (1970), 561--572.

\bibitem[Grk70]{Grk70}
A. Grothendieck, {\em Repr\'esentations lin\'eaires et compactification profinie des groupes discrets},
Manuscripta Math. 2 (1970), 375--396.

\bibitem[GrM08]{GrM08}
D. Groves and J. F. Manning, {\em  Dehn filling in relatively hyperbolic groups}, Israel J. Math. 168 (2008), 317--429.

\bibitem[Gru57]{Gru57}
K.  Gruenberg, {\em Residual properties of infinite soluble groups}, Proc. Lon. Math.
Soc. 3 (1957), 29--62.

\bibitem[GJZ08]{GJZ08}
F. Grunewald, A. Jaikin-Zapirain and P. Zalesskii, {\em Cohomological goodness and the profinite completion of Bianchi groups}, Duke Mathematical Journal 144 (2008), 53--72.

\bibitem[GPS80]{GPS80}
F. Grunewald, P. F. Pickel, and D. Segal, {\em Polycyclic groups with isomorphic finite quotients}, Ann. of Math. (2) 111 (1980), no. 1, 155--195.

\bibitem[GZ11]{GZ11}
F. Grunewald and P. Zalesskii, {\em Genus for groups}, J. Algebra 326 (2011), 130--168.

\bibitem[GuH10]{GuH10}
E. Guentner and N. Higson, {\em Weak amenability of CAT(0)--cubical groups},
Geom. Dedicata 148 (2010), 137--156.

\bibitem[GZ09]{GZ09}
X. Guo and Y. Zhang, {\em Virtually fibred Montesinos links of type $\widetilde{SL(2)}$}, Topology Appl. 156 (2009), no. 8, 1510--1533.

\bibitem[Gus81]{Gus81}
R. Gustafson, {\em A simple genus one knot with incompressible spanning surfaces of arbitrarily high genus}, Pacific J. Math. 96 (1981), 81--98.

\bibitem[Gus94]{Gus94}
R. Gustafson, {\em Closed incompressible surfaces of arbitrarily high genus in complements of certain star knots}, Rocky Mountain J. Math. 24 (1994), no. 2, 539--547.

\bibitem[Guz12]{Guz12}
R. Guzman, {\em Hyperbolic $3$-manifolds with $k$-free fundamental group}, Preprint (2012).

\bibitem[Hag08]{Hag08}
F. Haglund, {\em Finite index subgroups of graph products}, Geom. Dedicata 135  (2008), 167--209.

\bibitem[HaW08]{HaW08}
F. Haglund and D. Wise, {\em Special cube complexes},
 Geom. Funct. Anal. 17  (2008), no. 5, 1551--1620.

\bibitem[HaW12]{HaW12}
F. Haglund and D. Wise, {\em A combination theorem for special cube complexes}, 44 pp., Preprint  (2012), downloaded on October 29, 2012 from \\
\texttt{http://www.math.mcgill.ca/wise/papers.html}\\
to appear in Annals of Math.

\bibitem[Hai13]{Hai13}
P. Haissinsky, {\em Hyperbolic groups with planar boundaries}, Preprint (2013)

\bibitem[HaTe12a]{HaTe12a}
R. Hakamata and M. Teragaito, {\em Left-orderable fundamental group and Dehn surgery on the knot $5_2$}, Preprint (2012)

\bibitem[HaTe12b]{HaTe12b}
R. Hakamata and M. Teragaito, {\em Left-orderable fundamental group and Dehn surgery on twist knots}, Preprint (2012)

\bibitem[HaTe13]{HaTe13}
R. Hakamata and M. Teragaito, {\em
Left-orderable fundamental group and Dehn surgery on genus one two-bridge knots}, Preprint (2013)

\bibitem[Hak61]{Hak61}
W. Haken, {\em Ein Verfahren zur Aufspaltung einer $3$-Mannigfaltigkeit in irreduzible $3$-Mannigfaltigkeiten},
Math. Z. 76 (1961), 427--467.

\bibitem[Hak70]{Hak70}
W. Haken, {\em Various aspects of the three-dimensional Poincar\'e problem},  Topology of Manifolds (Proc. Inst., Univ. of Georgia, Athens, Ga., 1969) pp. 140--152, Markham, Chicago, Ill., 1970.

\bibitem[Hal49]{Hal49}
 M. Hall, {\em Coset representations in free groups},
 Trans. Amer. Math. Soc. 67 (1949), 421--432.

\bibitem[Hama76]{Hama76}
A. J. S. Hamilton, {\em
The triangulation of $3$-manifolds},
Quart. J. Math. Oxford Ser. (2) 27 (1976), no. 105, 63--70.

\bibitem[Hamb01]{Hamb01}
E.  Hamilton, {\em Abelian subgroup separability of Haken $3$-manifolds and closed hyperbolic $n$-orbifolds},
Proc. Lond. Math. Soc. 83 (2001), no. 3, 626--646.


\bibitem[Hamb03]{Hamb03}
E.  Hamilton, {\em
Classes of separable two-generator free subgroups of $3$-manifold groups},
Topology Appl. 131 (2003), no. 3, 239--254.

\bibitem[HWZ13]{HWZ13}
E.  Hamilton, H. Wilton and P. Zalesskii {\em Separability of double cosets and conjugacy classes in $3$-manifold groups},
Jour. London Math. Soc. 87 (2013), 269--288.

\bibitem[Hamc82]{Hamc82}
R. Hamilton, {\em Three-manifolds with positive Ricci curvature}, J. Differential Geom. 17 (1982), no. 2, 255--306.

\bibitem[Hamc95]{Hamc95}
R. Hamilton, {\em The formation of singularities in the Ricci flow}, Surveys in differential geometry, Vol. II (Cambridge, MA, 1993), 7--136, Int. Press, Cambridge, MA, 1995.

\bibitem[Hamc99]{Hamc99}
R. Hamilton, {\em Non-singular solutions of the Ricci flow on three-manifolds}, Comm. Anal. Geom. 7 (1999), no. 4, 695--729.

\bibitem[HnTh85]{HnTh85}
M. Handel and W. Thurston, {\em New proofs of some results of Nielsen},
Adv. in Math. 56 (1985), no. 2, 173--191.

\bibitem[HaR03]{HaR03}
J. Harlander and S. Rosebrock, {\em Generalized knot complements and some aspherical ribbon disc complements},
J. Knot Theory Ramifications 12 (2003), no. 7, 947--962.

\bibitem[HaR12]{HaR12}
J. Harlander and S. Rosebrock, {\em Injective Labeled Oriented Trees are Aspherical}, Preprint (2012)

\bibitem[dlHP07]{dlHP07}
P. de la Harpe and J.-P. Pr\'eaux, {\em Groupes fondamentaux des vari\'et\'es de dimension $3$ et alg\'ebres d'op\'erateurs}, Annales Fac. Sciences Toulouse, Math.  16 (2007), no. 3, 561--589.


\bibitem[dlHP11]{dlHP11}
P. de la Harpe and J.-P. Pr\'eaux, {\em $C^*$-simple groups: amalgamated free products, HNN extensions, and fundamental groups of $3$-manifolds}, J. Topol. Anal. 3 (2011), no. 4, 451--489.


\bibitem[dlHW11]{dlHW11}
P. de la Harpe and C. Weber, {\em On malnormal peripheral subgroups in fundamental groups of $3$-manifolds},
Preprint (2011).

\bibitem[Har02]{Har02}
S. Harvey, {\em On the cut number of a $3$-manifold},
Geom. Top.  6 (2002), 409--424.

\bibitem[Har05]{Har05}
S. Harvey, {\em Higher-order polynomial invariants of $3$-manifolds giving lower bounds for the
Thurston norm}, Topology {44} (2005), 895--945.


\bibitem[Has87]{Has87}
J. Hass, {\em Minimal surfaces in manifolds with S1 actions and the simple loop conjecture for
Seifert fiber spaces}, Proc. Amer. Math. Soc. 99 (1987), 383--388.


\bibitem[Hat]{Hat}
A. Hatcher, {\em Notes on Basic $3$-Manifold Topology},\\
\texttt{http://www.math.cornell.edu/\~\,hatcher/3M/3M.pdf}


\bibitem[Hat76]{Hat76} A. Hatcher, {\em
Homeomorphisms of sufficiently large $P^2$-irreducible $3$-manifolds}, Topology 15 (1976) 343--347.

\bibitem[Hat82]{Hat82}
A. Hatcher, {\em On the boundary curves of incompressible surfaces}, Pacific J. Math.
99(2), 373--377.

\bibitem[Hat83]{Hat83} A. Hatcher, {\em  A proof of the Smale conjecture, $\operatorname{Diff}(S^3)\cong O(4)$}, Ann. of Math. (2) 117 (1983), no. 3, 553--607.


\bibitem[Hat02]{Hat02} A. Hatcher, {\em
Algebraic topology}, Cambridge University Press (2002)

\bibitem[HO89]{HO89}
A. Hatcher and U. Oertel, {\em Boundary slopes for Montesinos knots}, Topology 28 (1989), no. 4, 453--480.

\bibitem[HaTh85]{HaTh85}
A. Hatcher and W. Thurston, {\em Incompressible surfaces in 2-bridge knot complements},
Invent. Math. 79(2), 225--246.

\bibitem[Hau81]{Hau81}
J.-C. Hausmann, {\em On the homotopy of nonnilpotent spaces}, Math. Z. 178 (1981), 115--123.

\bibitem[HJS13]{HJS13}
M. Hedden, A. Juh\'asz and S. Sarkar,
{\em  On Sutured Floer homology and the equivalence of Seifert surfaces}, Alg. Geom. Topology 13 (2013), 505--548.

\bibitem[Hei69a]{Hei69a}
W. Heil, {\em On $P^2$-irreducible $3$-manifolds}, Bull. Amer. Math. Soc. 75 (1969), 772--775.


\bibitem[Hei69b]{Hei69b}
W. Heil, {\em
On the existence of incompressible surfaces in certain $3$-manifolds},
Proc. Amer. Math. Soc. 23 (1969), 704--707.


\bibitem[Hei70]{Hei70}
W. Heil, {\em On the existence of incompressible surfaces in certain $3$-manifolds. II.},
Proc. Amer. Math. soc. 25 (1970), 429--432.

\bibitem[Hei72]{Hei72}
W. Heil, {\em On Kneser's conjecture for bounded $3$-manifolds},
Proc. Cambridge Philos. Soc. 71 (1972), 243--246.

\bibitem[Hei81]{Hei81}
W. Heil, {\em Normalizers of incompressible surfaces in $3$-manifolds}, Glas. Mat. Ser. III 16(36) (1981), no. 1, 145--150.

\bibitem[HeR84]{HeR84}
W. Heil and J.  Rakovec, {\em Surface groups in $3$-manifold groups},
Algebraic and differential topology--global differential geometry, 101--133, Teubner-Texte Math.,
70, Teubner, Leipzig, 1984.

\bibitem[HeT78]{HeT78}
W. Heil and J. L.  Tollefson, {\em  Deforming homotopy involutions of $3$-manifolds to involutions}, Topology 17 (1978), no. 4, 349--365.



\bibitem[HeT83]{HeT83}
W. Heil and J. L.  Tollefson, {\em  Deforming homotopy involutions of $3$-manifolds to involutions. \textup{II}}, Topology 22 (1983), no. 2, 169--172.


\bibitem[HeT87]{HeT87}
W. Heil and J. L.  Tollefson, {\em  On Nielsen's theorem for $3$-manifolds}, Yokohama Math. J. 35 (1987), no. 1--2, 1--20.


\bibitem[Hem76]{Hem76} J. Hempel, {\em $3$-Manifolds},
Ann. of Math. Studies, no. 86. Princeton University Press, Princeton, N. J., 1976.

\bibitem[Hem82]{Hem82}
J. Hempel, {\em Orientation reversing involutions and the first Betti number for finite coverings of
$3$-manifolds},
Invent. Math. 67 (1982), no. 1, 133--142.

\bibitem[Hem84]{Hem84}
J. Hempel, {\em Homology of coverings},
Pacific J. Math. 112 (1984), no. 1, 83--113.

\bibitem[Hem85a]{Hem85a}
J. Hempel, {\em Virtually Haken manifolds},
Combinatorial methods in topology and algebraic geometry (Rochester, N.Y., 1982), pp. 149--155,
Contemp. Math., vol. 44, Amer. Math. Soc., Providence, RI, 1985.

\bibitem[Hem85b]{Hem85b}
J. Hempel, {\em The finitely generated intersection property for Kleinian groups}, Knot theory and manifolds, Proc. Conf., Vancouver/Can. 1983, Lect. Notes Math. 1144, pp. 18--24, 1985.

\bibitem[Hem87]{Hem87}
J. Hempel, {\em Residual finiteness for $3$-manifolds}, Combinatorial group theory and topology (Alta, Utah, 1984), pp. 379--396, Ann. of Math. Stud., 111, Princeton Univ. Press, Princeton, NJ, 1987.



\bibitem[Hem90]{Hem90}
J. Hempel, {\em Branched covers over strongly amphicheiral links}, Topology 29 (1990), no. 2, 247--255.

\bibitem[Hem01]{Hem01}
J. Hempel, {\em $3$-manifolds as viewed from the curve complex}, Topology 40 (2001), 631--657.

\bibitem[HJ72]{HJ72}
J. Hempel and W. Jaco, {\em Fundamental groups of $3$-manifolds which
are extensions}, Ann. of Math. (2) 95 (1972) 86--98.

\bibitem[HL84]{HL84}
H. Hendriks and F. Laudenbach,
{\em Diff\'eomorphismes des sommes connexes en dimension trois},
Topology 23 (1984), no. 4, 423--443.


\bibitem[HeS07]{HeS07}
 S. Hermiller and Z. \v{S}uni\'c, {\em Poly-free constructions for right-angled Artin
groups}, J. Group Theory 10 (2007), 117--138.

\bibitem[HP11]{HP11}
M. Heusener and J.  Porti, {\em Infinitesimal projective rigidity under Dehn filling},
Geom. Topol. 15 (2011), no. 4, 2017--2071.1364--0380


\bibitem[Hig40]{Hig40} G. Higman, {\em The units of group-rings}, Proc. London Math. Soc. 46 (1940), 231--248.

\bibitem[HLMA06]{HLMA06}
H. Hilden, M. Lozano and J. Montesinos-Amilibia, {\em On hyperbolic $3$-manifolds with an infinite number of fibrations over $S^1$},
Math. Proc. Cambridge Philos. Soc.  140  (2006),  no. 1, 79--93.

\bibitem[Hil77]{Hil77}
J. Hillman, {\em High dimensional knot groups which are not two-knot groups},
Bull. Austral. Math. Soc. 16 (1977), no. 3, 449--462.

\bibitem[Hil85]{Hil85}
J. Hillman, {\em Seifert fibre spaces and Poincar\'e duality groups},
Math. Z. 190 (1985), no. 3, 365--369.

\bibitem[Hil87]{Hil87}
J. Hillman, {\em Three-dimensional Poincar\'e duality groups which are extensions},
Math. Z. 195 (1987), 89--92.

\bibitem[Hil89]{Hil89}
J. Hillman, {\em 2-knots and their groups}, Australian Mathematical Society Lecture Series, 5. Cambridge University Press, Cambridge, 1989.


\bibitem[Hil02]{Hil02}
J. Hillman, {\em  Algebraic invariants of links}, Series on Knots and Everything, 32. World Scientific Publishing Co., Inc., River Edge, NJ, 2002.

\bibitem[Hil06]{Hil06}
J. Hillman, {\em
Centralizers and normalizers of subgroups of $PD_3$-groups and open $PD_3$-groups},
J. Pure Appl. Algebra 204 (2006), no. 2, 244--257.

\bibitem[Hil11]{Hil11}
J. Hillman, {\em
Some questions on subgroups of $3$-dimensional Poincar\'e duality groups}, manuscript (2011)\\
\texttt{http://www.maths.usyd.edu.au:8000/u/jonh/pdq.pdf}

\bibitem[Hil12]{Hil12}
J. Hillman, {\em
Indecomposable $PD_3$-complexes}, Alg. Geom. Top.  12 (2012), 131--153.


\bibitem[HiS97]{HiS97}
M. Hirasawa and M. Sakuma, {\em Minimal genus Seifert surfaces for alternating links},
Knots '96 (Tokyo), 383-394, World Sci. Publ., River Edge, NJ, 1997.

\bibitem[HK05]{HK05}
C. Hodgson and S. Kerckhoff, {\em Universal bounds for hyperbolic Dehn surgery}, Ann. of Math. (2)
162 (2005) 367--421.

\bibitem[Hog00]{Hog00}
C. Hog-Angeloni, {\em Detecting $3$-manifold presentations},
Computational and geometric aspects of modern algebra (Edinburgh, 1998), 106--119, London
Math. Soc. Lecture Note Ser., 275, Cambridge Univ. Press, Cambridge, 2000.

\bibitem[HM08]{HM08}
C. Hog-Angeloni and S. Matveev, {\em Roots in $3$-manifold topology}, The Zieschang Gedenkschrift, 295--319, Geom. Topol. Monogr., 14, Geom. Topol. Publ., Coventry, 2008.

\bibitem[Hom57]{Hom57}
T. Homma, {\em On Dehn's lemma for $S^3$}, Yokohama Math. J. 5 (1957), 223--244.

\bibitem[Hop26]{Hop26}
H. Hopf, {\em Zum Clifford-Kleinschen Raumproblem}, Math. Ann. 95 (1926), 313--339.

\bibitem[HoSh07]{HoSh07}
J. Hoste and P. Shanahan, {\em Computing boundary slopes of 2-bridge links},
Math. Comp. 76 (2007), no. 259, 1521--1545.

\bibitem[How82]{How82}
J. Howie, {\em On locally indicable groups}, Math. Z. 180 (1982), 445--461.

\bibitem[How85]{How85}
J. Howie, {\em On the asphericity of ribbon disc complements}, Trans. Amer. Math. Soc. 289 (1985), no. 1, 281--302.

\bibitem[HoS85]{HoS85}
J. Howie and H. Short, {\em The band-sum problem}, J. London Math. Soc. 31 (1985), 571--576.

\bibitem[Hr10]{Hr10}
C. Hruska, {\em Relative hyperbolicity and relative quasi-convexity for countable groups}, Alg. Geom. Topology 10
 (2010), 1807--1856

\bibitem[HrW09]{HrW09}
C. Hruska and D. Wise, {\em Packing subgroups in relatively hyperbolic groups},
Geom. Topol. 13 (2009), no. 4, 1945--1988.

\bibitem[HrW12]{HrW12}
C. Hruska and D. Wise, {\em Finiteness properties of cubulated groups}, arXiv:1209.1074v2 (2012).

\bibitem[HsW99]{HsW99}
T.  Hsu and D. Wise, {\em On linear and residual properties of graph products}, Michigan Math. J. 46 (1999), no. 2, 251--259.

\bibitem[HsW12]{HsW12}
T. Hsu and D. Wise, {\em Cubulating malnormal amalgams}, 20 pages, Preprint  (2012), downloaded on April 19, 2011 from the conference webpage for the NSF-CBMS conference `$3$-Manifolds, Artin Groups and Cubical Geometry', from August 1-5, 2011 held at the CUNY Graduate Center.

\bibitem[HuR01]{HuR01}
G.  Huck and S. Rosebrock, {\em  Aspherical labelled oriented trees and knots}, Proc. Edinb. Math. Soc. (2) 44 (2001), no. 2, 285--294.

\bibitem[Iva76]{Iva76}
N. Ivanov, {\em Research in Topology II}, (Russian), Notes of LOMI scientific seminars 66 (1976), 172--176.


\bibitem[Iva92]{Iva92}
N. Ivanov, {\em Subgroups of Teichm\"uller modular groups},Translations of Mathematical Monographs, 115. American Mathematical Society, Providence, RI, 1992.

\bibitem[Ivb05]{Ivb05}
S. V. Ivanov, {\em On the asphericity of LOT-presentations of groups},
J. Group Theory 8 (2005), no. 1, 135--138.

\bibitem[IK01]{IK01}
S. V. Ivanov and A. A. Klyachko, {\em The asphericity and Freiheitssatz for certain lot-presentations of groups}, Internat. J. Algebra Comput. 11 (2001), no. 3, 291--300.

\bibitem[Iw43]{Iw43}
K. Iwasawa, {\em Einige S\"atze \"uber freie Gruppen}, Proc. Imp. Acad. Tokyo 19, (1943), 272--274.

\bibitem[Ja71]{Ja71}
W. Jaco, {\em
Finitely presented subgroups of three-manifold groups},
Invent. Math. 13 (1971), 335--346.

\bibitem[Ja72]{Ja72}
W. Jaco, {\em Geometric realizations for free quotients},
J. Austral. Math. Soc. 14 (1972), 411--418.

\bibitem[Ja75]{Ja75}
W. Jaco, {\em
Roots, relations and centralizers in three-manifold groups}, Geometric topology (Proc. Conf., Park City, Utah, 1974), pp. 283--309. Lecture Notes in Math., vol. 438, Springer, Berlin, 1975.


\bibitem[Ja80]{Ja80}
W. Jaco, {\em Lectures on Three-Manifold Topology},  CBMS Regional Conference Series in Mathematics, 1980.

\bibitem[JM79]{JM79}
W. Jaco and R. Myers, {\em An algebraic determination of closed orientable $3$-manifolds},
Trans. Amer. Math. Soc. 253 (1979), 149--170.

\bibitem[JO84]{JO84}
W. Jaco and U. Oertel, {\em An algorithm to decide if a $3$-manifold is a Haken manifold},
Topology 23 (1984), no. 2, 195--209.

\bibitem[JR89]{JR89}
W. Jaco and H.  Rubinstein, {\em
PL equivariant surgery and invariant decompositions of $3$-manifolds},
Adv. in Math. 73 (1989), no. 2, 149--191.


\bibitem[JS76]{JS76}
W. Jaco and P.  Shalen, {\em Peripheral structure of $3$-manifolds},
Invent. Math. 38 (1976), 55--87.

\bibitem[JS78]{JS78}
W. Jaco and P.  Shalen, {\em  A new decomposition theorem for irreducible sufficiently-large $3$-manifolds},  Algebraic and geometric topology (Proc. Sympos. Pure Math., Stanford Univ., Stanford, Calif., 1976), Part 2,  pp. 71--84, Proc. Sympos. Pure Math., XXXII, 1978.


\bibitem[JS79]{JS79}
W. Jaco and P.  Shalen, {\em Seifert fibered spaces in $3$-manifolds}, Mem. Amer. Math. Soc. 21 (1979), no. 220.

\bibitem[JD83]{JD83}
M. Jenkins and W. D. Neu\-mann, {\em Lectures on Seifert Manifolds}, Brandeis Lecture Notes, vol. 2, Brandeis
University, 1983.

\bibitem[Ji12]{Ji12}
L. Ji, {\em Curve Complexes Versus Tits Buildings: Structures and Applications},
in Buildings, Finite Geometries and Groups,
Springer Proceedings in Mathematics, 2012, vol. 10, 93--152.


\bibitem[Jos35]{Jos35}
K. Johansson, {\em \"Uber singul\"are Elementarfl\"achen und das Dehnsche Lemma},  Math. Ann. 110 (1935), no. 1, 312--320.

\bibitem[Jon79a]{Jon79a}
K. Johannson, {\em Homotopy equivalences of $3$-manifolds with boundaries}, Lecture Notes in Mathematics, vol. 761, Springer-Verlag, Berlin, 1979.


\bibitem[Jon79b]{Jon79b}
K. Johannson, {\em On the mapping class group of simple $3$-manifolds}, Topology of Low-Dimensional Manifolds (Proc. Second Sussex Conf., Chelwood Gate, 1977), Lecture Notes in Math., vol. 722,
Springer, Berlin, 1979, pp. 48--66.

\bibitem[Jon79c]{Jon79c}
K. Johannson, {\em On exotic homotopy equivalences of $3$-manifolds},
Geometric topology (Proc. Georgia Topology Conf., Athens, Ga., 1977), pp. 101--111, Academic
Press, New York-London, 1979.


\bibitem[Jon94]{Jon94}
K. Johannson, {\em
On the loop- and sphere theorem},
Low-dimensional topology (Knoxville, TN, 1992), 47-54, Conf. Proc. Lecture Notes Geom.
Topology, III, Int. Press, Cambridge, MA, 1994.

\bibitem[Joh80]{Joh80}
D. Johnson, {\em Homomorphs of knot groups},
Proc. Amer. Math. Soc. 78 (1980), no. 1, 135--138.

\bibitem[JL89]{JL89}
D. Johnson and C. Livingston, {\em Peripherally specified homomorphs of knot groups},
Trans. Amer. Math. Soc. 311 (1989), no. 1, 135--146.1088--6850

\bibitem[JW72]{JW72}
F. E. A. Johnson and C. T. C. Wall, {\em On groups satisfying Poincar\'e Duality}, Ann. Math. 96 (1972), no. 3,  592--598.

\bibitem[Joy82]{Joy82}
D. Joyce, {em Aclassifying invariant of knots, the knot quandle},
J. Pure Appl. Algebra 23 (1982), no. 1, 37--65.

\bibitem[Ju08]{Ju08}
A. Juhasz, {\em Knot Floer homology and Seifert surfaces},
Algebr. Geom. Topol.  8  (2008),  no. 1, 603--608.

\bibitem[KM12]{KM12}
J. Kahn and V. Markovic, {\em Immersing almost geodesic surfaces in a closed hyperbolic three manifold},
Ann. of Math. 175 (2012), 1127--1190.

\bibitem[KR12]{KR12}
I. Kapovich and K. Rafi, {\em On hyperbolicity of free splitting and free factor complexes}, to appear in
Groups Geom. Dynam. (2012)


\bibitem[Kak91]{Kak91}
O. Kakimizu, {\em Doubled knots with infinitely many incompressible spanning surfaces}, Bull.
London Math. Soc. 23 (1991), 300--302.

\bibitem[Kak92]{Kak92}
O. Kakimizu, {\em Finding disjoint incompressible spanning surfaces for a link}, Hiroshima Math. J. 22 (1992), no. 2, 225--236.


\bibitem[Kak05]{Kak05}
O. Kakimizu, {\em Classification of the incompressible spanning surfaces for prime knots of 10 or less crossings}, Hiroshima Math. J.  35  (2005),  no. 1, 47--92.

\bibitem[Kap01]{Kap01}
M. Kapovich, {\em Hyperbolic manifolds and discrete groups}, Progress in Mathematics, vol. 183, 2001.

\bibitem[KaL97]{KaL97}
M. Kapovich and B. Leeb, {\em Quasi-isometries preserve the geometric decomposition of Haken
manifolds}, Invent. Math. 128 (1997), 393--416.


\bibitem[KaL98]{KaL98}
M. Kapovich and B. Leeb, {\em $3$-manifold groups and nonpositive curvature},
 Geom. Funct. Anal.  8 (1998), 841--852.


\bibitem[KaS96]{KaS96}
I. Kapovich and H. Short, {\em
Greenberg's theorem for quasi-convex subgroups of word hyperbolic groups},
Canad. J. Math. 48 (1996), no. 6, 1224--1244.

\bibitem[KN12]{KN12}
A. Kar and N. Nikolov, {\em Rank gradient for Artin groups and their relatives}, Preprint (2012)


\bibitem[Ken04]{Ken04}
R. Kent, {\em Bundles, handcuffs, and local freedom},
Geom. Dedicata  106  (2004), 145--159.

\bibitem[Kea73]{Kea73}
C. Kearton, {\em Classification of simple knots by Blanchfield duality}, Bull. Amer. Math. Soc. 79 (1973), 952--955.

\bibitem[Ker60]{Ker60}
M. Kervaire, {\em A manifold which does not admit any differentiable structure}, Comm. Math. Helv. 34  (1960), 257--270.


\bibitem[Ker65]{Ker65}
M. Kervaire, {\em On higher dimensional knots}, 1965
Differential and Combinatorial Topology (A Symposium in Honor of Marston Morse) pp. 105--119
Princeton Univ. Press, Princeton, N.J.


\bibitem[KeM63]{KeM63}
M. Kervaire and J. Milnor, {\em Groups of homotopy spheres: I}, Ann. Math.  77 (1963), 504--537.

\bibitem[KiS12]{KiS12}
S. Kionke and J. Schwermer, {\em On the growth of the first Betti number of arithmetic hyperbolic $3$-manifolds}, Preprint (2012).


\bibitem[Ki97]{Ki97}
R. Kirby, {\em Kirby's list of problems}, Geometric Topology 2, Proceedings of the 1993 Georgia International Topology Conference, 1997.

\bibitem[KyS77]{KyS77}
R. C. Kirby and L. C. Siebenmann, {\em Foundational essays on topological manifolds, smoothings, and
triangulations}, Princeton University Press, Princeton, N.J., Annals of Mathematics Studies, No. 88, 1977.

\bibitem[KMT03]{KMT03}
T. Kitano, T. Morifuji and M. Takasawa, {\em $L^2$-torsion invariants and homology growth of a torus bundle over $S^1$}, Proc. Japan Acad. Ser. A Math. Sci. 79 (2003), no. 4, 76--79.

\bibitem[KlL08]{KlL08}
B. Kleiner and J. Lott, {\em
Notes on Perelman's papers},
Geom. Topol. 12 (2008), no. 5, 2587--2855.

\bibitem[Kn29]{Kn29}
H. Kneser, {\em Geschlossene Fl\"achen in dreidimensionalen
Mannigfaltigkeiten}, Jahresbericht der Deut. Math. Verein. 38
(1929), 248--260.

\bibitem[Koi88]{Koi88}
T. Kobayashi, {\em Casson--Gordon's rectangle condition of Heegaard diagrams and incompressible tori in
$3$-manifolds}, Osaka J. Math. 25 (1988), 553--573.


\bibitem[Koi89]{Koi89}
T. Kobayashi, {\em Uniqueness of minimal genus Seifert surfaces for links},
Topology Appl. 33 (1989), no. 3, 265--279.

\bibitem[Kob10]{Kob10}
T. Koberda, {\em
Residual properties of $3$-manifold groups \textup{I}: Fibered and hyperbolic $3$-manifolds}, Preprint (2010), to appear in Top. Appl.



\bibitem[Kob12a]{Kob12a}
T. Koberda, {\em Mapping class groups, homology and finite covers of surfaces},
thesis, Harvard University (2012).

\bibitem[Kob12b]{Kob12b}
T. Koberda, {\em Alexander varieties and largeness of finitely presented groups}, Preprint (2012).

\bibitem[Kob12c]{Kob12c}
T. Koberda, {\em Right-angled Artin groups and a generalized isomorphism problem for finitely generated subgroups of mapping class groups},  Geom. Funct. Anal. 22 (2012), 1541--1590.

\bibitem[KZ07]{KZ07}
D.  Kochloukova and P. Zalesskii, {\em  Tits alternative for $3$-manifold groups}, Arch. Math. (Basel) 88 (2007), no. 4, 364--367.


\bibitem[Koj84]{Koj84}
S. Kojima, {\em Bounding finite groups acting on $3$-manifolds},
Math. Proc. Cambridge Philos. Soc.  96  (1984),  no. 2, 269--281.

\bibitem[Koj87]{Koj87}
S. Kojima, {\em Finite covers of $3$-manifolds containing essential
surfaces of Euler characteristic $=0$}, Proc. Amer. Math. Soc. 101 (1987),
no. 4, 743--747.

\bibitem[Koj88]{Koj88}
S. Kojima, {\em Isometry transformations of hyperbolic $3$-manifolds},
Topology Appl. 29 (1988), no. 3, 297--307.

\bibitem[KL88]{KL88}
S. Kojima and D. Long, {\em Virtual Betti numbers of some hyperbolic $3$-manifolds}, A f\^ete of topology, pp. 417--437, Academic Press, Boston, MA, 1988.


\bibitem[Kot12]{Kot12} D. Kotschick,
{\em Three-manifolds and K\"ahler groups}, Ann. Inst. Fourier 62 (2012), no.~3, 1081--1090.

\bibitem[Kot13]{Kot13} D. Kotschick,
{\em K\"ahlerian three-manifold groups}, Preprint (2013)

\bibitem[Kow08]{Kow08}
E. Kowalski, {\em The large sieve and its applications}, Arithmetic geometry, random walks and discrete groups. Cambridge Tracts in Mathematics, 175. Cambridge University Press, Cambridge, 2008.

\bibitem[KrL09]{KrL09}
M. Kreck and W. L\"uck, {\em Topological rigidity for non-aspherical manifolds}, Pure Appl. Math. Q. 5 (2009), no. 3,  873--914.

\bibitem[KrM04]{KrM04}
P. B. Kronheimer and T. S. Mrowka, {\em Dehn surgery, the fundamental group and SU(2)},
Math. Res. Lett. 11 (2004), no. 5-6, 741--754.

\bibitem[Kr90a]{Kr90a}
P. Kropholler, {\em A note on centrality in $3$-manifold groups},
Math. Proc. Cambridge Philos. Soc. 107 (1990), no. 2, 261--266.


\bibitem[Kr90b]{Kr90b}
P. Kropholler, {\em An analogue of the torus decomposition theorem for certain Poincar\'e duality groups},
Proc. London Math. Soc. (3) 60 (1990), no. 3, 503--529.

\bibitem[KLM88]{KLM88}
P. H. Kropholler, P. A. Linnell and J. A. Moody, {\em Applications of a new $K$-theoretic theorem to
soluble group rings}, Proc. Amer. Math. Soc.  104 (1988),  no. 3, 675--684.

\bibitem[KAG86]{KAG86}
S. Krushkal, B. Apanasov and N. Gusevskij, {\em
Kleinian groups and uniformization in examples and problems},
Translations of Mathematical Monographs, 62. Providence, R.I.: American Mathematical Society (AMS). VII,  1986.

\bibitem[Kul05]{Kul05}
O. V. Kulikova, {\em On the fundamental groups of the complements of
Hurwitz curves}, Izv. RAN. Ser. Mat., 69:1 (2005),  125--132.

\bibitem[Kup11]{Kup11}
G. Kuperberg, {\em Knottedness is in NP, modulo GRH}, Preprint (2011)

\bibitem[LaS86]{LaS86}
J. Labesse and J. Schwermer, {\em On liftings and cusp cohomology of arithmetic groups}, Invent.
math. 83 (1986), 383--401.

\bibitem[Lac00]{Lac00}
M. Lackenby, {\em Word hyperbolic Dehn surgery}, Invent. math. 140 (2000) 243--282.

\bibitem[Lac04]{Lac04}
M. Lackenby, {\em
The asymptotic behavior of Heegaard genus},
Math. Res. Lett. 11 (2004), no. 2--3, 139--149.


\bibitem[Lac05]{Lac05}
M. Lackenby, {\em Expanders, rank and graphs of groups},
Israel J. Math. 146 (2005), 357--370.

\bibitem[Lac06]{Lac06}
M. Lackenby, {\em
Heegaard splittings, the virtually Haken conjecture and property $(\tau)$},
Invent. Math. 164 (2006), no. 2, 317--359.

\bibitem[Lac07a]{Lac07a}
M. Lackenby, {\em Some $3$-manifolds and $3$-orbifolds with large fundamental group},
Proc. Amer. Math. Soc. 135 (2007), no. 10, 3393--3402.

\bibitem[Lac07b]{Lac07b}
M. Lackenby, {\em
Covering spaces of $3$-orbifolds},
Duke Math. J. 136 (2007), no. 1, 181--203.

\bibitem[Lac09]{Lac09}
M. Lackenby, {\em  New lower bounds on subgroup growth and homology growth}, Proc. Lond. Math. Soc. (3) 98 (2009), no. 2, 271--297.


\bibitem[Lac10]{Lac10}
M. Lackenby, {\em
Surface subgroups of Kleinian groups with torsion},  Invent. Math. 179 (2010), no. 1, 175--190.

\bibitem[Lac11]{Lac11}
M. Lackenby, {\em Finite covering spaces of $3$-manifolds},
Proceedings of the International Congress of Mathematicians 2010, edited by R. Bhatia, A. Pal, G. Rangarajan and V. Srinivas (2011).

\bibitem[LaLR08a]{LaLR08a}
M. Lackenby, D. Long and A. Reid, {\em Covering spaces of arithmetic $3$-orbifolds},
Int. Math. Res. Not. IMRN 2008, no. 12.


\bibitem[LaLR08b]{LaLR08b}
M. Lackenby, D. Long and A. Reid, {\em LERF and the Lubotzky-Sarnak conjecture}, Geom. Topol. 12 (2008), no. 4, 2047--2056.

\bibitem[LaM13]{LaM13}
M. Lackenby and R. Meyerhoff, {\em The maximal number of exceptional Dehn surgeries},
 Invent. Math.  191, No. 2 (2013), 341--382.

\bibitem[Lau85]{Lau85}
F. Laudenbach, {\em Les $2$-sph\'eres de $\R^3$ vues par A. Hatcher et la conjecture de Smale $\operatorname{Diff}(S^3)\sim O(4)$},
Seminar Bourbaki, Vol. 1983/84,
Ast\'erisque no. 121-122 (1985), 279--293.

\bibitem[Le10]{Le10}
T. Le, {\em Homology torsion growth and Mahler measure}, Preprint (2010), to appear in Comment. Math. Helv.

\bibitem[LNW99]{LNW99}
I. Leary, G. Niblo and D. Wise, {\em Some free-by-cyclic groups}, Groups St. Andrews 1997 in Bath, II, 512--516, London Math. Soc. Lecture Note Ser., 261, Cambridge Univ. Press, Cambridge, 1999.

\bibitem[LRa10]{LRa10}
K. B.  Lee and F. Raymond, {\em  Seifert fiberings}, Mathematical Surveys and Monographs, vol. 166, 2010.

\bibitem[Lee73]{Lee73}
R. Lee, {\em Semicharacteristic classes},
Topology 12 (1973), 183--199.

\bibitem[Leb95]{Leb95}
B. Leeb, {\em $3$-manifolds with(out) metrics of nonpositive curvature},
Invent. Math. 122 (1995), 277--289.

\bibitem[Ler02]{Ler02}
C. Leininger, {\em Surgeries on one component of the Whitehead link are virtually fibered}, Topology
41 (2) (2002), 307--320.

\bibitem[LRe02]{LRe02}
C. Leininger and A. Reid, {\em The co-rank conjecture for $3$-manifold groups},
Alg. Geom. Top. 2 (2002), 37--50.

\bibitem[LeL11]{LeL11}
A. Levine and S. Lewallen, {\em Strong L-spaces and left orderability}, Preprint (2011).

\bibitem[Lev69]{Lev69}
J. Levine, {\em Knot cobordism groups in codimension two}, Comment. Math. Helv. 44 (1969), 229--244.

\bibitem[Lev78]{Lev78}
J. Levine, {\em
Some results on higher dimensional knot groups},
With an appendix by Claude Weber.
Knot theory (Proc. Sem., Plans-sur-Bex, 1977), pp. 243--273, Lecture Notes in Math., 685,
Springer, Berlin, 1978.

\bibitem[Lev85]{Lev85}
J. Levine, {\em Lectures on groups of homotopy spheres}, Algebraic and geometric topology, Lecture Notes in Mathematics, 1126  (1985), 62--95.

\bibitem[LiM93]{LiM93}
J. Li and J.  Millson, {\em On the first Betti number of a hyperbolic manifold with an arithmetic fundamental group},
Duke Math. J. 71 (1993), no. 2, 365--401.

\bibitem[Li02]{Li02}
T. Li, {\em Immersed essential surfaces in hyperbolic $3$-manifolds},
Comm. Anal. Geom. 10 (2002), no. 2, 275--290.

\bibitem[Lia11]{Lia11}
T. Li, {\em Rank and genus of $3$-manifolds}, J. Amer. Math. Soc., to appear  (2011).


\bibitem[LiZ06]{LiZ06}
W. Li and W. Zhang,  {\em An $L^2$-Alexander invariant for knots}, Commun. Contemp.
Math. 8 (2006), 167--187.

\bibitem[Lib09]{Lib09}
Y. Li, {\em 2-string free tangles and incompressible surfaces},
J. Knot Theory Ramifications 18 (2009), no. 8, 1081--1087.

\bibitem[LiW11]{LiW11}
Y. Li and L. Watson, {\em Genus one open books with non-left-orderable fundamental group}, Proc. Amer. Math. Soc., to appear.

\bibitem[LiN08]{LiN08}
X.-S. Lin and S. Nelson, {\em On generalized knot groups},
J. Knot Theory Ramifications 17 (2008), no. 3, 263--272.

\bibitem[Lin93]{Lin93}
P. A. Linnell, {\em Division rings and group von Neu\-mann algebras}, Forum Math. 5 (1993), vol. 6, 561--576.


\bibitem[Lin06]{Lin06}
P. Linnell, {\em Noncommutative localization in group rings},   Non-commutative localization in algebra and topology,  40-59,
London Math. Soc. Lecture Note Ser., vol. 330, 2006.

\bibitem[LLS11]{LLS11}
P. Linnell, W. L\"uck and R. Sauer,
{\em  The limit of  $ \mathbb{F}_p$-Betti numbers of a tower of finite covers with amenable fundamental groups},
Proc. Amer. Math. Soc.  139  (2011), 421--434.


\bibitem[Liu11]{Liu11}
Y. Liu, {\em Virtual cubulation of
nonpositively curved graph manifolds}, J. of Topology, to appear (2011).

\bibitem[Lo87]{Lo87}
D. Long, {\em Immersions and embeddings of totally geodesic surfaces},
Bull. London Math. Soc. 19 (1987), no. 5, 481--484.

\bibitem[LLuR08]{LLuR08}
D. Long, A. Lubotzky and A. Reid, {\em Heegaard genus and Property $\tau$ for hyperbolic $3$-manifolds},
J. Topology 1 (2008), 152--158.

\bibitem[LoN91]{LoN91}
 D. Long and G. Niblo, {\em Subgroup separability and $3$-manifold groups}, Math. Z. 207 (1991), no. 2, 209--215.

\bibitem[LO97]{LO97}
D. Long and U. Oertel, {\em Hyperbolic surface bundles over the circle},
 Progress in knot theory and related topics.  Trav. Cours. 56 (1997), 121--142.

\bibitem[LoR98]{LoR98}
D. Long and A. Reid, {\em
Simple quotients of hyperbolic $3$-manifold groups},
Proc. Amer. Math. Soc. 126 (1998), no. 3, 877--880.

\bibitem[LoR01]{LoR01}
D. Long and A. Reid,
{\em The fundamental group of the double of the figure-eight knot exterior is {GFERF}}.
Bull. London Math. Soc. 33 (2001), no. 4, 391--396.

\bibitem[LoR05]{LoR05}
D. Long and A. Reid, {\em  Surface subgroups and subgroup separability in $3$-manifold topology},  25th Coloquio Brasileiro de Matematica, IMPA, Rio de Janeiro (2005).

\bibitem[LoR08a]{LoR08a}
D. Long and A. Reid, {\em
Subgroup separability and virtual retractions of groups},
 Topology 47 (2008), no. 3, 137--159.


\bibitem[LoR08b]{LoR08b}
D. Long and A. Reid, {\em
Finding fibre faces in finite covers},
Math. Res. Lett. 15 (2008), no. 3, 521--524.


\bibitem[LoR11]{LoR11}
D. Long and A. Reid, {\em Grothendieck's problem for $3$-manifold groups}, Groups
Geom. Dyn. 5 (2011), no. 2, 479--499.

\bibitem[Lop92]{Lop92}
L. Lopez, {\em Alternating knots and non-Haken $3$-manifolds},
Topology Appl. 48 (1992), no. 2, 117--146.


\bibitem[Lop93]{Lop93}
L. Lopez, {\em Small knots in Seifert fibered $3$-manifolds}, Math. Z. 212 (1993), no. 1, 123--139.

\bibitem[Lop94]{Lop94}
L. Lopez, {\em Residual finiteness of surface groups via tessellations},
Discrete Comput. Geom. 11 (1994), no. 2, 201--211.

\bibitem[LoL95]{LoL95}
J. Lott and W. L\"uck, {\em $L^2$-topological invariants of $3$-manifolds}, Invent. Math.
120 (1995), no. 1, 15--60.

\bibitem[Lou11]{Lou11}
L. Louder, {\em Simple loop conjecture for limit groups}, preprint (2011), to appear in Isral J. Math.

\bibitem[Lub83]{Lub83}
A. Lubotzky, {\em
Group presentation, p-adic analytic groups and lattices in $\sl\sb 2({\C})$},
 Ann. Math. (2) 118 (1983), 115--130.


\bibitem[Lub94]{Lub94}
 A. Lubotzky, {\em Discrete groups, expanding graphs and invariant measures},
  Progress in Mathematics, vol. 125, Birkh\"auser Verlag,
Basel, 1994.

\bibitem[Lub95]{Lub95}
A. Lubotzky, {\em
Subgroup growth and congruence subgroups},
Invent. Math. 119 (1995), no. 2, 267--295.


\bibitem[Lub96a]{Lub96a}
A. Lubotzky, {\em Eigenvalues of the Laplacian, the first Betti number and the congruence subgroup problem},
Ann. Math. (2) 144 (1996), no. 2, 441--452.

\bibitem[Lub96b]{Lub96b}
A. Lubotzky, {\em Free quotients and the first Betti number of some hyperbolic manifolds},
Transform. Groups 1 (1996), no. 1-2, 71--82.

\bibitem[LM11]{LM11}
A. Lubotzky and C. Meiri, {\em Sieve methods in group theory ii: the mapping class group},
Geom. Ded. (2011), 1--10.

\bibitem[LuSe03]{LuSe03} A. Lubotzky  and D. Segal, {\em Subgroup Growth}, Progress in Mathematics, vol. 212, Birkh\"auser Verlag, Basel, 2003.

\bibitem[LuSh04]{LuSh04}
A. Lubotzky and Y. Shalom, {\em Finite representations in the unitary dual and Ramanujan groups}, Discrete geometric analysis, 173--189,
Contemp. Math., 347, Amer. Math. Soc., Providence, RI, 2004.

\bibitem[LuZ03]{LuZ03}
A. Lubotzky and A. Zuk, {\em On Property $\tau$},  preliminary version of book (2003).

\bibitem[L\"u94]{Lu94}
W. L\"uck, {\em Approximating $L^2$-invariants by their finite-dimensional analogues},
Geom. Funct. Anal. 4 (1994), no. 4, 455--481.



\bibitem[L\"u02]{Lu02} W. L\"uck, {\em
 $L\sp 2$-invariants: theory and applications to geometry and $K$-theory}, Ergebnisse
der Mathematik und ihrer Grenzgebiete. 3. Folge. vol. 44,
Springer-Verlag, Berlin, 2002.

\bibitem[L\"u12]{Lu12} W. L\"uck, {\em Approximating $L^2$-invariants and homology growth}, Preprint (2012),
to appear in Geom. Funct. Anal.


\bibitem[L\"uS99]{LuS99}
W. L\"uck and  T. Schick, {\em $L\sp 2$-torsion of hyperbolic manifolds of finite volume}, Geom.
Funct. Anal.  9  (1999),  no. 3, 518--567.

\bibitem[Lue88]{Lue88} J. Luecke, {\em Finite covers of $3$--manifolds containing essential tori}, Trans. Amer. Math. Soc. 310  (1988), 381--391.

\bibitem[LuW93]{LuW93} J. Luecke and Y.--Q. Wu, {\em Relative Euler number and finite covers of graph manifolds}, Geometric
Topology (Athens, GA, 1993), AMS/IP Stud. Adv. Math., vol. 2.1, American Mathematical
Society, Providence, RI, 1997, pp. 80--103.

\bibitem[Luo12]{Luo12}
F. Luo, {\em Solving Thurston Equation in a Commutative Ring}, Preprint (2012).

\bibitem[LyS77]{LyS77}
R. C. Lyndon and P. E. Schupp, {\em Combinatorial Group Theory},
Ergebnisse der Mathematik und ihrer
Grenzgebiete, vol. 89, Springer-Verlag, Berlin, 1977.

\bibitem[Ly71]{Ly71}
H. C. Lyon, {\em Incompressible surfaces in knot spaces}, Trans. Amer. Math. Soc. 157 (1971), 53--62.

\bibitem[Ly74a]{Ly74a}
H. C. Lyon, {\em Simple knots with unique spanning surfaces}, Topology  13  (1974), 275--279.

\bibitem[Ly74b]{Ly74b}
H. C. Lyon, {\em Simple knots without unique minimal surfaces},
Proc. Amer. Math. Soc. 43 (1974), 449--454.


\bibitem[Ma12]{Ma12}
J. Ma, {\em Homology-genericity, horizontal Dehn surgeries and ubiquity of rational homology $3$-spheres}, Proc. Amer. Math. Soc. 140 (2012), no. 11, 4027--4034.

\bibitem[MQ05]{MQ05}
J. Ma and R. Qiu, {\em $3$-manifold containing separating incompressible surfaces of all positive genera}, Acta Math. Sin. (Engl. Ser.) 21 (2005), no. 6, 1315--1318.

\bibitem[MaR03]{MaR03}
C. MacLachlan and A. Reid, {\em The Arithmetic of Hyperbolic $3$-Manifolds},  Graduate Text in Math. 219, Springer-Verlag, Berlin, 2003.

\bibitem[Maa00]{Maa00}
N. Macura, {\em Quadratic isoperimetric inequality for mapping tori of polynomially growing
automorphisms of free groups},
Geom. Funct. Anal. 10 (2000), no. 4, 874--901.

\bibitem[Mah05]{Mah05}
J. Maher, {\em Heegaard gradient and virtual fibers},
Geom. Topol. 9 (2005), 2227--2259.

\bibitem[Mah10]{Mah10}
J. Maher, {\em Random Heegaard splittings},
J. Topol. 3 (2010), no. 4, 997--1025.1753--8424


\bibitem[Mah11]{Mah11}
J. Maher, {\em Random walks on the mapping class group},
Random walks on the mapping class group. (English summary)
Duke Math. J. 156 (2011), no. 3, 429--468.1547--7398

\bibitem[Mai03]{Mai03}
S. Maillot, {\em
Open $3$-manifolds whose fundamental groups have infinite center, and a torus theorem for
$3$-orbifolds},
Trans. Amer. Math. Soc. 355 (2003), no. 11, 4595--4638.

\bibitem[Mal40]{Mal40}
A. I. Mal'cev, {\em On faithful representations of infinite groups of matrices}, Mat. Sb. 8 (1940),
405--422.



\bibitem[Mal65]{Mal65}
A. I. Mal'cev, {\em On faithful representations of infinite groups of matrices}, Amer. Math. Soc. Transl. (2) 45 (1965), 1--18.

\bibitem[MlS12]{MlS12}
J. Malestein and J. Souto, {\em On genericity of pseudo-Anosovs in the Torelli group}, Int. Math. Res. Notices (2012)

\bibitem[Mnn12]{Mnn12}
K. Mann, {\em The simple loop conjecture is false for $\op{PSL}(2,\R)$},  Preprint (2012)

\bibitem[MMP10]{MMP10}
J. F. Manning and E. Martinez-Pedroza, {\em Separation of relatively quasi-convex subgroups}, Pacific J. Math. 244 (2010), no. 2, 309--334.

\bibitem[Mau13]{Mau13}
C. Manolescu, {Pin(2)-equivariant Seiberg-Witten Floer homology and the Triangulation Conjecture}, Preprint (2013)

\bibitem[Man74]{Man74}
 A. Marden, {\em The geometry of finitely generated Kleinian groups}, Ann.
Math. 99 (1974), 383--462.


\bibitem[Man07]{Man07}
 A. Marden, {\em Outer Circles: An Introduction to Hyperbolic $3$-Manifolds}, Cambridge University Press, 2007.

\bibitem[MMt79]{MMt79}
A. Marden and B. Maskit, {\em On the isomorphism theorem for Kleinian groups}, Invent. Math., 51 (1979), 9--14.


\bibitem[MrS81]{MrS81}
G. Margulis and G. Soifer, {\em Maximal subgroups of infinite index in finitely generated linear
groups}, J. Algebra 69 (1981), no. 1, 1--23.

\bibitem[Mav58]{Mav58}
A. A. Markov, {\em The insolubility of the problem of homeomorphy},
Dokl. Akad. Nauk SSSR 121 (1958) 218--220.

\bibitem[Mav60]{Mav60}
A. A. Markov, {\em Insolubility of the problem of homeomorphy}, Proc. Internat. Congress Math. 1958, pp. 300--306, Cambridge Univ. Press, 1960.

\bibitem[Mac12]{Mac12}
V. Markovic, {\em Criterion for Cannon's Conjecture}, Preprint (2012), to appear in Geom. Funct. Anal.

\bibitem[MPS12]{MPS12}
E. Martinez-Pedroza and A. Sisto, {\em Virtual amalgamation of relatively quasiconvex subgroups}, arXiv:1203.5839v1 (2012).

\bibitem[Mao07]{Mao07}
A. Martino, {\em A proof that all Seifert $3$-manifold groups and all virtual surface groups are conjugacy separable}, J. Algebra 313 (2007), no. 2, 773--781.

\bibitem[MMn09]{MMn09}
A. Martino and A. Minasyan, {\em Conjugacy in normal subgroups of hyperbolic groups}, arXiv:0906.1606 (2009).

\bibitem[Mas00]{Mas00}
J. Masters, {\em  Virtual homology of surgered torus bundles}, Pacific J. Math. 195 (2000), no. 1, 205--223.

\bibitem[Mas02]{Mas02}
J. Masters, {\em Virtual Betti numbers of genus 2 bundles}, Geometry \& Topology 6 (2002), 541--562.

\bibitem[Mas06a]{Mas06a}
J. Masters, {\em Heegaard splittings and 1-relator groups}, preprint (2006).


\bibitem[Mas06b]{Mas06b}
J. Masters, {\em Thick surfaces in hyperbolic $3$-manifolds},
Geom. Dedicata 119 (2006), 17--33.

\bibitem[Mas07]{Mas07}
J. Masters, {\em Virtually Haken surgeries on once-punctured torus bundles}, Comm. Anal. Geom. 15 (2007), no. 4, 733--756.


\bibitem[MMZ04]{MMZ04}
J. Masters, W. Menasco and X. Zhang, {\em
Heegaard splittings and virtually Haken Dehn filling},
New York J. Math. 10 (2004), 133--150.

\bibitem[MMZ09]{MMZ09}
J. Masters, W. Menasco and X. Zhang, {\em Heegaard splittings and virtually Haken Dehn filling. \textup{I}.},
New York J. Math. 15 (2009), 1--17.


\bibitem[Mad04]{Mad04}
H. Matsuda, {\em Small knots in some closed Haken $3$-manifolds}, Topology Appl. 135 (2004), no. 1-3, 149--183.


\bibitem[Mat97a]{Mat97a}
S.  Matsumoto, {\em  A $3$-manifold with a non-subgroup-separable fundamental group}, Bull. Austral. Math. Soc. 55 (1997), no. 2, 261--279.

\bibitem[Mat97b]{Mat97b}
S.  Matsumoto, {\em Non-separable surfaces in cubed manifolds},
Proc. Amer. Math. Soc. 125 (1997), no. 11, 3439--3446.

\bibitem[MOP08]{MOP08}
M. Matthey, H. Oyono-Oyono and W. Pitsch, {\em Homotopy invariance of higher signatures and $3$-manifold groups}, Bull. Soc. Math. France 136 (2008), no. 1, 1--25.

\bibitem[Mae82]{Mae82}
S. V. Matveev, {\em Distributive groupoids in knot theory},  Mat. Sb. (N.S.) 119(161) (1982), no. 1, 78--88.

\bibitem[May72]{May72}
E. Mayland, {\em On residually finite knot groups}, Trans. Amer. Math. Soc. 168 (1972), 221--232.

\bibitem[May74]{May74}
E. Mayland, {\em Two-bridge knots have residually finite groups}, Proceedings of the Second International Conference on the Theory of Groups (Australian Nat. Univ., Canberra, 1973), pp.488--493. Lecture Notes in Math., Vol. 372, Springer, Berlin, 1974.

 \bibitem[May75a]{May75a}
E. Mayland, {\em The residual finiteness of the classical knot groups}, Canad. J. Math. 27 (1975), no. 5, 1092--1099 (1976).

\bibitem[May75b]{May75b}
E. Mayland, {\em The residual finiteness of the groups of classical knots}, Geometric topology (Proc. Conf., Park City, Utah, 1974), pp. 339--342. Lecture Notes in Math., Vol. 438, Springer, Berlin, 1975.

\bibitem[MMi76]{MMi76}
E. Mayland and K. Murasugi, {\em On a structural property of the groups of alternating links}, Canad. J. Math. 28 (1976), no. 3, 568--588.

\bibitem[McC86]{McC86}
D. McCullough, {\em Mappings of reducible $3$-manifolds}, Geometric and algebraic topology, 61--76, Banach Center Publ., 18, PWN, Warsaw, 1986.

\bibitem[McC90]{McC90}
D. McCullough, {\em Topological and algebraic automorphisms of $3$-manifolds}, Groups of self-equivalences and related topics (Montreal, PQ, 1988), 102--113, Lecture Notes in Math., 1425, Springer, Berlin, 1990.

\bibitem[McC95]{McC95}
D. McCullough, {\em  $3$-manifolds and their mappings},
Lecture Notes Series, 26.
Seoul National University, Research Institute of Mathematics, Global Analysis Research Center,
Seoul, 1995.

\bibitem[McM92]{McM92}
C. McMullen, {\em  Riemann surfaces and the geometrization of $3$-manifolds},
Bull. Amer. Math. Soc. (N.S.) 27 (1992), no. 2, 207--216.

\bibitem[McM96]{McM96}
C. McMullen, {\em  Renormalization and $3$-manifolds which fiber over the circle}, Annals of Mathematics Studies, 142. Princeton University Press, Princeton, NJ, 1996.

\bibitem[McM11]{McM11}
C. McMullen, {\em The evolution of geometric structures on $3$-manifolds},
Bull. Amer. Math. Soc. (N.S.) 48 (2011), no. 2, 259--274.


\bibitem[MZ04]{MZ04}
M. Mecchia and B.  Zimmermann, {\em On finite groups acting on $\Z_2$-homology $3$-spheres},
Math. Z. 248 (2004), no. 4, 675--693.

\bibitem[MZ06]{MZ06}
M. Mecchia and B.  Zimmermann, {\em
On finite simple groups acting on integer and mod $2$ homology $3$-spheres},
J. Algebra 298 (2006), no. 2, 460--467.

\bibitem[MeS86]{MeS86}
W. H. Meeks and P. Scott, {\em Finite group actions on $3$-manifolds}, Invent. Math. 86 (1986),
287--346.


\bibitem[MSY82]{MSY82}
W. H. Meeks, L. Simon and S.  Yau, {\em
Embedded minimal surfaces, exotic spheres, and manifolds with positive Ricci curvature},
Ann. of Math. (2) 116 (1982), no. 3, 621--659.

\bibitem[MFP11]{MFP11}
P. Menal-Ferrer and J. Porti, {\em Higher dimensional Reidemeister torsion invariants for cusped hyperbolic $3$-manifolds}, preprint (2011),
 to appear in J. Topology


\bibitem[Men84]{Men84}
W. Menasco, {\em Closed incompressible surfaces in alternating knot and link complements},
Topology 23 (1984), no. 1, 37--44.


\bibitem[Mes88]{Mes88}
G. Mess, {\em  Centers of $3$-manifold groups, and groups which are coarse quasiisometric
to planes}, unpublished paper (1988).

\bibitem[Mes90]{Mes90}
G. Mess, {\em Finite covers of $3$-manifolds and a theorem of Lubotzky}, I.H.E.S. preprint (1990).

\bibitem[Mila92]{Mila92}
C. Miller, {\em Decision problems for groups: survey and reflections},
Algorithms and classification in combinatorial group theory (Berkeley, CA, 1989), 1--59, Math.
Sci. Res. Inst. Publ., 23, Springer, New York, 1992.

\bibitem[Milb82]{Milb82}
R. Miller, {\em Geodesic laminations from Nielsen's viewpoint},
Adv. in Math. 45 (1982), no. 2, 189--212.

\bibitem[Milb84]{Milb84}
R. Miller, {\em A new proof of the homotopy torus and annulus theorem},
Four-manifold theory (Durham, N.H., 1982), 407--435, Contemp. Math., 35, Amer. Math. Soc.,
Providence, RI, 1984.



\bibitem[Mie09]{Mie09}
P. Milley, {\em Minimum volume hyperbolic $3$-manifolds},
J. Topol. 2 (2009), no. 1, 181--192.

\bibitem[Mis76]{Mis76}
J. Millson, {\em On the first {B}etti number of a constant negatively curved manifold}, Ann. of Math. (2) 104 (1976), no. 2, 235--247.


\bibitem[Mil56]{Mil56}
J. Milnor, {\em On manifolds homeomorphic to the 7-sphere}, Ann. Math.  64 (1956), 399--405.

\bibitem[Mil57]{Mil57}
J. Milnor, {\em Groups which act on $S^n$ without fixed points},  Amer. J. Math. 79 (1957), 623--630.

\bibitem[Mil62]{Mil62}
J. Milnor, {\em A unique factorization theorem for $3$-manifolds},
Amer. J. Math. 84 (1962), 1--7.

\bibitem[Mil66]{Mil66}
J. Milnor, {\em  Whitehead torsion},
 Bull. Amer. Math. Soc. 72 (1966), 358--426.


\bibitem[Mil68]{Mil68}
J. Milnor, {\em  A note on curvature and fundamental group}, J. Diff. Geom. 2, 1--7 (1968).

\bibitem[Mil03]{Mil03}
J. Milnor, {\em Towards the Poincar\'e conjecture and the classification of $3$-manifolds'}, Notices Amer. Math. Soc. 50 (2003), 1226--1233.

\bibitem[Min06]{Min06}
A. Minasyan, {\em Separable subsets of GFERF negatively curved groups},
J. Algebra 304 (2006), no. 2, 1090--1100.

\bibitem[Min12]{Min12}
A. Minasyan, {\em Hereditary conjugacy separability of right angled Artin groups and its applications},
Groups Geometry and Dynamics 6 (2012), 335--388.


\bibitem[Miy94]{Miy94}
Y. Minsky, {\em   On Thurston's ending lamination conjecture}, Low-dimensional topology (Knoxville, TN, 1992), 109--122, Conf. Proc. Lecture Notes Geom. Topology, III, Int. Press, Cambridge, MA, 1994.

\bibitem[Miy00]{Miy00}
Y. Minsky, {\em Short geodesics and end invariants}, In M. Kisaka and S. Morosawa, editors,
Comprehensive research on complex dynamical systems and related fields,  RIMS K\^{o}ky\^{u}roku 1153 (2000), 1--19.

\bibitem[Miy03]{Miy03}
Y. Minsky, {\em
End invariants and the classification of hyperbolic $3$-manifolds},
Current developments in mathematics, 2002, 181--217, Int. Press, Somerville, MA, 2003.

\bibitem[Miy06]{Miy06}
Y. Minsky, {\em Curve complexes, surfaces and $3$-manifolds}, in International Congress of
Mathematicians Vol. II, pp. 1001-1033, Eur. Math. Soc., Z\"urich, 2006.

\bibitem[Miy10]{Miy10}
Y. Minsky, {\em  The classification of Kleinian surface groups. \textup{I}. Models and bounds}, Ann. of Math. (2) 171 (2010), no. 1, 1--107.

\bibitem[Miz08]{Miz08}
N. Mizuta, {\em A Bozejko-Picardello type inequality for finite-dimensional CAT(0) cube complexes},
J. Funct. Anal. 254 (2008), no. 3, 760--772.

\bibitem[Moi52]{Moi52}
E.  Moise, {\em Affine structures in $3$-manifolds. \textup{V.} The triangulation theorem and Hauptvermutung}, Ann. of Math. (2) 56 (1952), 96--114.

\bibitem[Moi77]{Moi77}
E.  Moise, {\em Geometric topology in dimensions 2 and 3}, Berlin, New York, Springer-Verlag, 1977.

\bibitem[Mon89]{Mon89}
J. Montesinos-Amilibia, {\em Discrepancy between the rank and the Heegaard genus of a $3$-manifold},
Conference on Differential Geometry and Topology (Italian) (Lecce, 1989).
NoteMat. 9 (1989), suppl., 101--117.

\bibitem[Moo05]{Moo05}
M. Moon, {\em A generalization of a theorem of Griffiths to $3$-manifolds},
Topology Appl. 149 (2005), no. 1-3, 17--32.

\bibitem[Mor84]{Mor84}
J. Morgan, {\em Thurston's uniformization theorem for three dimensional
manifolds}, pp. 37--125, in: {\em The Smith Conjecture,} Pure Appl. Math., vol. 112, Academic Press, Orlando, 1984.

\bibitem[Mor05]{Mor05}
J. Morgan, {\em Recent progress on the Poincar\'e conjecture and the classification of $3$-manifolds},
Bull. Amer. Math. Soc. (N.S.) 42 (2005), no. 1, 57--78.


\bibitem[MTi07]{MTi07}
J. Morgan and G. Tian, {\em Ricci flow and the Poincar\'e conjecture},
Clay Mathematics Monographs, 3. American Mathematical Society, Providence, RI; Clay Mathematics Institute, Cambridge, MA, 2007.

\bibitem[Moa86]{Moa86}
S. Morita, {\em Finite coverings of punctured torus bundles and the first Betti number},
Sci. Papers College Arts Sci. Univ. Tokyo 35 (1986), no. 2, 109--121.


\bibitem[Mos68]{Mos68}
G. D. Mostow, {\em Quasi-conformal mappings in n-space and the rigidity of the hyperbolic space forms}, Publ. Math. IHES 34 (1968), 53--104.

\bibitem[MTe13]{MTe13}
K. Motegi and M. Teragaito,
{\em Left-orderable, non-$L$-space surgeries on knots}, Preprint (2013)

\bibitem[Mu80]{Mu80}
H. J. Munkholm, {\em Simplices of maximal volume in hyperbolic
space, Gromov's norm, and Gromov's proof of Mostow's rigidity
theorem (following Thurston)}, in Topology Symposium, Siegen 1979,
 Lecture Notes in Mathematics 788 (1980), 109--124.

\bibitem[MyR96]{MyR96}
A. G Myasnikov and V. N. Remeslennikov,  {\em
Exponential groups. \textup{II.} Extensions of centralizers and tensor completion of CSA-groups, }
Internat. J. Algebra Comput.  6 (1996), no. 6, 687--711.

\bibitem[Mye82]{Mye82}
R. Myers, {\em Simple knots in compact, orientable $3$-manifolds},
Trans. Amer. Math. Soc. 273 (1982), no. 1, 75--91.

\bibitem[Myr41]{Myr41}
P. J. Myrberg, {\em Die Kapazit\"{a}t der singul\"aren {M}enge der linearen {G}ruppen},  Ann. Acad. Sci. Fennicae. Ser. A. I. Math.-Phys. (1941), no.\ 10, 19 pp.

\bibitem[NS09]{NS09}
H. Namazi and J. Souto, {\em Heegaard splittings and pseudo-Anosov maps}, Geom. Funct. Anal. 19 (2009), 1195--1228.

\bibitem[NN08]{NN08}
S. Nelson and W. Neu\-mann, {\em The 2-generalized knot group determines the knot},
Commun. Contemp. Math. 10 (2008), suppl. 1, 843--847.

\bibitem[Nema76]{Nema76}
D. A. Neumann, {\em $3$-manifolds fibering over $S^1$}, Proc. Amer. Math. Soc. 58 (1976), 353--356.

\bibitem[Nemb79]{Nemb79}
W. Neu\-mann, {\em  Normal subgroups with infinite cyclic quotient}, Math. Sci. 4 (1979), no. 2, 143--148.


\bibitem[Nemb96]{Nemb96}
W. Neu\-mann, {\em  Commensurability and virtual fibration for graph manifolds}, Topology 39
(1996), 355--378.


\bibitem[Neh60]{Neh60}
L. Neuwirth, {\em The algebraic determination of the genus of knots},
Amer. J. Math. 82 (1960), 791--798.

\bibitem[Neh61a]{Neh61a}
L. Neuwirth, {\em The algebraic determination of the topological type of the complement of a knot}, Proc. Amer. Math. Soc. 12 (1961), 904--906.


\bibitem[Neh61b]{Neh61b}
L. Neuwirth, {\em An alternative proof of a theorem of Iwasawa on free groups},
Proc. Cambridge Philos. Soc. 57 (1961), 895--896.

\bibitem[Neh63a]{Neh63a}
L. Neuwirth, {\em A remark on knot groups with a center},
Proc. Amer. Math. Soc. 14 (1963), 378--379

\bibitem[Neh63b]{Neh63b}
L. Neuwirth, {\em On Stallings fibrations},
Proc. Amer. Math. Soc. 14 (1963), 380--381.

\bibitem[Neh65]{Neh65}
L. P. Neuwirth, {\em  Knot groups}, Annals of Mathematics Studies, no. 56
Princeton University Press, Princeton, N. J., 1965.


\bibitem[Neh68]{Neh68}
L. P. Neuwirth, {\em An algorithm for the construction of $3$-manifolds from $2$-complexes},
Proc. Cambridge Philos. Soc. 64 (1968), 603--613.


\bibitem[Neh70]{Neh70}
L. P. Neuwirth, {\em
Some algebra for $3$-manifolds}, 1970
Topology of Manifolds (Proc. Inst., Univ. of Georgia, Athens, Ga., 1969) pp. 179--184.

\bibitem[Neh74]{Neh74}
L. P. Neuwirth, {\em
The status of some problems related to knot groups},  Topology Conf., Virginia Polytechnic Inst. and State Univ. 1973, Lect. Notes Math. 375 (1974), 209--230.

\bibitem[New85]{New85}
M. Newman,
{\it A note on Fuchsian groups,}
Illinois J. Math.  29 (1985), no. 4, 682--686.

\bibitem[Nib90]{Nib90}
G. Niblo, {\em H.N.N. extensions of a free group by $\Z$ which are subgroup separable},
Proc. London Math. Soc. (3) 61 (1990), no. 1, 18--32.

\bibitem[Nib92]{Nib92}
G. Niblo, {\em  Separability properties of free groups and surface groups}, J.
Pure Appl. Algebra 78 (1992), no. 1, 77--84.

\bibitem[NW98]{NW98}
G. A. Niblo and D. T. Wise, {\em The engulfing property for $3$-manifolds},  The Epstein birthday schrift,  413-418, Geom. Topol. Monogr., 1, Coventry, 1998.

\bibitem[NW01]{NW01}
G. A. Niblo and D. T. Wise, {\em Subgroup separability, knot
groups and graph manifolds}, Proc. Amer. Math. Soc. 129 (2001), 685--693.

\bibitem[Nie44]{Nie44}
J. Nielsen, {\em Surface transformation classes of algebraically finite type}, Danske Vid. Selsk. Math.-Phys. Medd. 21 (2).

\bibitem[Nis40]{Nis40}
V. L. Nisnevi\^c, {\"Uber Gruppen, die durch Matrizen \"uber einem kommutativen
Feld isomorph darstellbar sind}, Rec.Math. [Mat. Sbornik] N.S. 8 (50), (1940). 395--403.

\bibitem[No67]{No67}
D. Noga, {\em \"Uber den Aussenraum von Produktknoten und die Bedeutung der Fixgruppen}, Math. Z. 101 (1967), 131--141.

\bibitem[Oe84]{Oe84}
U. Oertel, {\em Closed incompressible surfaces in complements of star links}, Pacific J. Math. 111 (1984), no. 1, 209--230.

\bibitem[Oe86]{Oe86}
U. Oertel, {\em Homology branched surfaces: Thurston's norm on $H_2(M)$}, LMS Lecture Note Series 112, Low dimensionnal topology and Kleinian groups (1986), 253--272.

\bibitem[Oh02]{Oh02}
K. Ohshika, {\em Discrete groups}, Translations of Mathematical Monographs, 207. Iwanami Series in Modern Mathematics. American Mathematical Society, Providence, RI, 2002.

\bibitem[Or72]{Or72}
P. Orlik, {\em Seifert manifolds}, Lecture Notes in Mathematics, vol. 291, Springer-Verlag, 1972.

\bibitem[OVZ67]{OVZ67}
P. Orlik, E. Vogt and H. Zieschang, {\em Zur Topologie gefaserter dreidimensionaler Man\-nig\-faltigkeiten},
Topology, 6 (1967), 49--64.


\bibitem[Osb78]{Osb78}
R. Osborne, {\em Simplifying spines of $3$-manifolds}, Pacific J. Math. 74 (1978), no. 2, 473--480.

\bibitem[OsS74]{OsS74}
R. Osborne and R. Stevens, {\em  Group presentations corresponding to spines of $3$-manifolds. I}, Amer. J. Math. 96 (1974), 454--471.


\bibitem[OsS77a]{OsS77a}
R. Osborne and R. Stevens, {\em Group presentations corresponding to spines of $3$-manifolds. III}, Trans. Amer. Math. Soc. 234 (1977), no. 1, 245--251.

\bibitem[OsS77b]{OsS77b}
R. Osborne and R. Stevens, {\em  Group presentations corresponding to spines of $3$-manifolds. II},Trans. Amer. Math. Soc. 234 (1977), no. 1, 213--243.


\bibitem[Osi07]{Osi07}
D. Osin, {\em Peripheral fillings of relatively hyperbolic groups}, Invent. Math. 167 (2007), no. 2, 295--326.

\bibitem[OzS04a]{OzS04a}
P. Ozsv\'ath and Z. Szab\'o, {\em  Holomorphic disks and three-manifold invariants: properties and applications}, Ann. of Math.
159 (2004), 1159--1245.


\bibitem[OzS04b]{OzS04b}
P. Ozsv\'ath and Z. Szab\'o, {\em
Holomorphic disks and topological invariants for closed three-manifolds}, Ann. of Math. 159
(2004), 1027--1158.


\bibitem[OzS04c]{OzS04c}
P. Ozsv\'ath and Z. Szab\'o, {\em Holomorphic disks and genus bounds}, Geom. Topol. 8 (2004), 311--334.


\bibitem[OzS05]{OzS05}
P. Ozsv\'ath and Z. Szab\'o, {\em On knot Floer homology and lens space surgeries}, Topology 44 (2005), no. 6, 1281--1300.

\bibitem[Ot96]{Ot96}
J. Otal, {\em Le th\'eor\`eme d'hyperbolisation pour les vari\'et\'es fibr\'ees de dimension 3}, Ast\'erisque 235 (1996).

\bibitem[Ot98]{Ot98}
J. Otal, {\em Thurston's hyperbolization of Haken manifolds}, Surveys in differential geometry, Vol. III, pp. 77--194 (1996).

\bibitem[Ot01]{Ot01}
J.  Otal, {\em  The hyperbolization theorem for fibered $3$-manifolds}, SMF/AMS Texts and Monographs, vol. 7 (2001).

\bibitem[Oza08]{Oza08}
N. Ozawa, {\em Weak amenability of hyperbolic groups}, Groups Geom. Dyn. 2 (2008), no. 2, 271--280.


\bibitem[Ozb08]{Ozb08}
M. Ozawa, {\em Morse position of knots and closed incompressible surfaces}, J. Knot Theory Ramifications 17 (2008), no. 4, 377--397.

\bibitem[Ozb09]{Ozb09}
M. Ozawa, {\em Closed incompressible surfaces of genus two in 3-bridge knot complements},
Topology Appl. 156 (2009), no. 6, 1130--1139.


\bibitem[Ozb10]{Ozb10}
M. Ozawa, {\em Rational structure on algebraic tangles and closed incompressible surfaces in the complements of algebraically alternating knots and links}, Topology Appl. 157 (2010), no. 12, 1937--1948.



\bibitem[Pap57a]{Pap57a}
C. Papakyriakopoulos, {\em On Dehn's lemma and the asphericity of
knots}, Ann. of Math. 66 (1957), 1--26.


\bibitem[Pap57b]{Pap57b}
C. Papakyriakopoulos, {\em On Solid Tori}, Proc. London Math. Soc.
VII (1957), 281--299.


\bibitem[Par92]{Par92}
W. Parry, {\em  A sharper Tits alternative for $3$-manifold groups}, Israel J. Math. 77 (1992), no. 3, 265--271.

\bibitem[Pat12]{Pat12}
P. Patel, {\em On a Theorem of Peter Scott}, Preprint (2012), to appear in Proc. Amer. Math. Soc.

\bibitem[Per02]{Per02}
G. Perelman, {\em The entropy formula for the Ricci flow and its geometric applications}, Preprint (2002).

\bibitem[Per03a]{Per03a}
G. Perelman, {\em Finite extinction time for the solutions to the Ricci flow on certain three-manifolds}, Preprint (2003).

\bibitem[Per03b]{Per03b}
G. Perelman, {\em  Ricci flow with surgery on three-manifolds}, Preprint (2003).


\bibitem[PR03]{PR03}
B. Perron and D. Rolfsen, {\em  On orderability of fibered knot groups}, Math. Proc. Cambridge Philos. Soc. 135 (2003), no. 1, 147--153.

\bibitem[PR06]{PR06}
B. Perron and D. Rolfsen, {\em   Invariant ordering of surface groups and $3$-manifolds which fiber over $S^1$}, Math. Proc. Cambridge Philos. Soc. 141 (2006), no. 2, 273--280.

\bibitem[Pet09]{Pet09}
T. Peters, {\em  On $L$-spaces and non left-orderable $3$-manifold groups}, Preprint (2009).

\bibitem[Pi74]{Pi74}
P. Pickel, {\em Metabelian groups with the same finite quotients},
Bull. Austral. Math. Soc.  11  (1974), 115--120.

\bibitem[Pla68]{Pla68}
V. Platonov, {\em A certain problem for finitely generated groups},
Dokl. Akad. Nauk BSSR 12 (1968), 492--494.

\bibitem[PT86]{PT86}
V. Platonov and O. Tavgen', {\em
On the Grothendieck problem of profinite completions of groups},
Dokl. Akad. Nauk SSSR 288 (1986), no. 5, 1054--1058.

\bibitem[Plo80]{Plo80}
S. Plotnick, {\em  Vanishing of Whitehead groups for Seifert manifolds with infinite fundamental group}, Comment. Math. Helv. 55 (1980), no. 4, 654--667.

\bibitem[PV00]{PV00}
L. Potyagailo and S. Van, {\em On the co-Hopficity of $3$-manifold groups},  St. Petersburg Math. J. 11
(2000), no. 5, 861--881.

\bibitem[Pow75]{Pow75}
R. T. Powers, {\em Simplicity of the $C^*$-algebra associated with the
free group on two generators}, Duke Math. J. 42 (1975), 151--156.

\bibitem[Pra73]{Pra73}
G. Prasad, {\em Strong rigidity of Q-rank 1 lattices}, Invent. Math. 21 (1973), 255--286.

\bibitem[Pre05]{Pre05}
J.-P. Pr\'eaux, {\em
Conjugacy problem in groups of non-oriented geometrizable $3$-manifolds}, Preprint (2005).

\bibitem[Pre06]{Pre06}
J.-P. Pr\'eaux, {\em
Conjugacy problem in groups of oriented geometrizable $3$-manifolds}, Topology 45, no. 1, 171--208 (2006).

\bibitem[Prz79]{Prz79}
J. Przytycki, {\em A unique decomposition theorem for $3$-manifolds with boundary},
Bull. Acad. Polon. Sci. Ser. Sci. Math. 27 (1979), no. 2, 209--215.

\bibitem[PY03]{PY03}
J. Przytycki and A. Yasuhara, {\em Symmetry of links and classification of lens spaces},
Geom. Dedicata 98 (2003), 57--61.

\bibitem[PW11]{PW11}
 P. Przytycki and D. Wise, {\em Graph manifolds with boundary are virtually special}, J. of Topology, to appear (2011).


\bibitem[PW12a]{PW12a}
 P. Przytycki and D. Wise, {\em Mixed $3$-manifolds are virtually special}, Preprint (2012).


\bibitem[PW12b]{PW12b}
 P. Przytycki and D. Wise, {\em Separability of embedded surfaces in $3$-manifolds}, Preprint (2012)

\bibitem[Pu98]{Pu98}
G. Putinar, {\em Nilpotent quotients of fundamental groups of special $3$-manifolds with boundary},
Bull. Austral. Math. Soc. 58 (1998), no. 2, 233--237.

\bibitem[QW04]{QW04}
R. Qiu and S. Wang, {\em Simple, small knots in handlebodies}, Topology Appl. 144 (2004), no. 1-3, 211--227.

\bibitem[Rab58]{Rab58}
M. Rabin, {\em  Recursive unsolvability of group theoretic problems}, Ann. of Math. (2) 67 (1958) 172--194.

\bibitem[Rad25]{Rad25}
T. Rad\'o, {\em Uber den Begriff der Riemannschen Fl\"ache}, Acta Sci. Math. (Szeged), 2 (1925), 101--121.

\bibitem[Rai12a]{Rai12a}
J. Raimbault, {\em Exponential growth of torsion in abelian coverings},
Algebr. Geom. Topol. 12, No. 3 (2012), 1331--1372.

\bibitem[Rai12b]{Rai12b}
J. Raimbault, {\em Asymptotics of analytic torsion for hyperbolic three--manifolds}, Preprint (2012)


\bibitem[Raj04]{Raj04}
C. S. Rajan, {\em On the non-vanishing of the first Betti number of hyperbolic three manifolds}, Math. Ann. 330 (2004), no. 2, 323--329.




\bibitem[Rat06]{Rat06}
J. Ratcliffe, {\em Foundations of hyperbolic manifolds},  Second edition. Graduate Texts in Mathematics, 149. Springer, New York, 2006.

\bibitem[Ray80]{Ray80}
F. Raymond, {\em The Nielsen theorem for Seifert fibered spaces over locally symmetric spaces},
J. Korean Math. Soc. 16 (1979/80), no. 1, 87--93.

\bibitem[RaS77]{RaS77}
F. Raymond and L. Scott, {\em Failure of Nielsen's theorem in higher dimensions},
Arch. Math. (Basel) 29 (1977), no. 6, 643--654.

\bibitem[Ree08]{Ree08}
M. Rees, {\em The Ending Laminations Theorem direct from Teichm\"uller geodesics}, Preprint (2008)

\bibitem[Red92]{Red92}
A. Reid, {\em Some remarks on 2-generator hyperbolic $3$-manifolds}, Discrete groups and geometry (Birmingham, 1991), 209--219, London Math. Soc. Lecture Note Ser., 173, Cambridge Univ. Press, Cambridge, 1992.

\bibitem[Red95]{Red95}
A. Reid, {\em A non-Haken hyperbolic $3$-manifold covered by a surface bundle}, Pacific
J. Math. 167 (1995), 163--182.

\bibitem[Red07]{Red07}
A. Reid, {\em The geometry and topology of arithmetic hyperbolic $3$-manifolds}, Proc. Symposium Topology, Complex Analysis and Arithmetic of Hyperbolic Spaces, Kyoto 2006, RIMS Kokyuroku Series, 1571 (2007), 31--58.


\bibitem[Rer35]{Rer35}
K.  Reidemeister, {\em Homotopieringe und Linsenr\"aume}, Abh. Math. Sem. Univ. Hamburg 11 (1935), no. 1, 102--109.

\bibitem[Rer36]{Rer36}
K.  Reidemeister, {\em  Kommutative Fundamentalgruppen},  Monatsh. Math.
Phys. 43 (1936), no. 1, 20--28.


\bibitem[Rev69]{Rev69}
V.~N. Remeslennikov,
{\em Conjugacy in polycyclic groups}, Akademiya Nauk {SSSR.} Sibirskoe Otdelenie. Institut Matematiki.
  Algebra i Logika 8 (1969),  712--725.


\bibitem[Ren09]{Ren09}
C. Renard, {\em Sub-logarithmic Heegaard gradients}, Preprint (2009).

\bibitem[Ren10]{Ren10}
C. Renard, {\em
Gradients de Heegaard sous-logarithmiques d'une vari\'et\'e hyperbolique de dimension 3 et fibres virtuelles},
Actes du S\'eminaire Th\'eorie Spectrale et G\'eom\'etrie de Grenoble 29 (2010-2011), 97--131.

\bibitem[Ren11]{Ren11}
C. Renard, {\em Circular characteristics and fibrations of hyperbolic closed $3$-manifolds},
Preprint (2011), to appear in the Proc. Amer. Math. Soc.


\bibitem[Ren12]{Ren12}
C. Renard, {\em Detecting surface bundles in finite covers of hyperbolic closed $3$-manifolds}, Preprint (2012), to appear in Trans. Amer. Math. Soc.

\bibitem[Reni01]{Reni01}
M. Reni, {\em On finite groups acting on homology $3$--spheres},
J. London Math. Soc. (2) 63 (2001), no. 1, 226--246.

\bibitem[Rez97]{Rez97}
A. Reznikov, {\em
Three-manifolds class field theory (homology of coverings for a nonvirtually $b_1$-positive
manifold)},
Selecta Math. (N.S.) 3 (1997), no. 3, 361--399.

\bibitem[Rh73]{Rh73}
A. H. Rhemtulla,
{\it Residually $F_{p}$-groups, for many primes $p$, are orderable,}
Proc. Amer. Math. Soc.  41 (1973), 31--33.

\bibitem[RiZ10]{RiZ10}
L. Ribes and P. Zalesskii, {\em Profinite groups. Second edition}, Ergebnisse der Mathematik und ihrer Grenzgebiete. 3. Folge. A Series of Modern Surveys in Mathematics 40. Springer-Verlag, Berlin, 2010.

\bibitem[Ril75a]{Ril75a}
R. Riley, {\em Discrete parabolic representations of link groups}, Mathematika 22
(1975), no. 2, 141--150.

\bibitem[Ril75b]{Ril75b}
R. Riley, {\em A quadratic parabolic group}, Math. Proc. Cambridge Philos. Soc. 77 (1975), 281--288.


\bibitem[Ril90]{Ril90}
R. Riley, {\em Growth of order of homology of cyclic branched covers of knots},
Bull. London Math. Soc. 22 (1990), no. 3, 287--297.

\bibitem[Ril13]{Ril13}
R. Riley, {\em A personal account of the discovery of hyperbolic structures on
some knot complements}, unpublished note (2013)

\bibitem[Riv08]{Riv08}
I. Rivin, {\em  Walks on groups, counting reducible matrices, polynomials, and surface and free
group automorphisms}, Duke Math. J., 142 (2008), 353--379.


\bibitem[Riv09]{Riv09}
I. Rivin, {\em Walks on graphs and lattices--effective bounds and applications}, Forum Math. 21 (2009),  673--685.


\bibitem[Riv10]{Riv10}
I. Rivin, {\em Zariski density and genericity}, Int. Math. Res. Not. 19 (2010), 3649--3657.


\bibitem[Riv12]{Riv12}
I. Rivin, {\em Generic phenomena in groups - some answers and many questions},
Preprint (2012)



\bibitem[RoS10]{RoS10}
R. Roberts and J. Shareshian, {\em Non-right-orderable $3$-manifold groups}, Canad. Math. Bull. 53 (2010), no. 4, 706--718.



\bibitem[RSS03]{RSS03}
R. Roberts, J. Shareshian and M. Stein, {\em Infinitely many hyperbolic $3$-manifolds which contain no Reebless foliation}, J. Amer. Math. Soc. 16 (2003), no. 3, 639--679.

\bibitem[Rol90]{Rol90}
D. Rolfsen, {\em Knots and links}, Corrected reprint of the 1976 original. Mathematics Lecture Series, 7. Publish or Perish, Inc., Houston, TX, 1990.

\bibitem[RoZ98]{RoZ98}
D. Rolfsen and J.  Zhu, {\em Braids, orderings and zero divisors},  J. Knot Theory Ramifications 7 (1998), no. 6, 837--841.

\bibitem[Rom69]{Rom69}
N. S. Romanovskii, {\em On the residual finiteness of free products with respect to subgroups}, Izv. Akad. Nauk SSSR Ser. Mat. 33 (1969), 1324--1329.

\bibitem[Ros94]{Ros94}
S. Rosebrock, {\em
On the realization ofWirtinger presentations as knot groups},
J. Knot Theory Ramifications 3 (1994), no. 2, 211--222.

\bibitem[Ros07]{Ros07}
S. Rosebrock, {\em The Whitehead conjecture—an overview},
Sib. Elektron. Mat. Izv. 4 (2007), 440--449.

\bibitem[Rou08a]{Rou08a}
S.K. Roushon, {\em The Farrell-Jones isomorphism conjecture for $3$-manifold groups}, J. K-Theory,
1(1):49--82, 2008.


\bibitem[Rou08b]{Rou08b}
S. K. Roushon, {\em The isomorphism conjecture for $3$-manifold groups and K-theory of virtually
poly-surface groups}, J. K-Theory, 1(1):83--93, 2008.

\bibitem[Rou11]{Rou11}
S.K. Roushon, {\em Vanishing structure set of $3$-manifolds},
Topology Appl. 158 (2011), 810--812.

\bibitem[Row72]{Row72}
W. Row, {\em Irreducible $3$-manifolds whose orientable covers are not prime},
Proc. Amer. Math. Soc. 34 (1972), 541--545.

\bibitem[Row79]{Row79}
W. Row, {\em An algebraic characterization of connected sum factors of closed $3$-manifolds},
Trans. Amer. Math. Soc. 250 (1979), 347--356.

\bibitem[Rub01]{Rub01}
D. Ruberman, {\em Isospectrality and $3$-manifold groups},
Proc. Amer. Math. Soc. 129 (2001), no. 8, 2467--2471.

\bibitem[RS90]{RS90}
J. Rubinstein and G. Swarup, {\em On Scott's core theorem},
Bull. London Math. Soc. 22 (1990), no. 5, 495--498.

\bibitem[RW98]{RW98}
J.H. Rubinstein and S. Wang, {\em On $\pi_1$-injective surfaces in graph manifolds}, Comm. Math.
Helv. 73 (1998), 499--515.

\bibitem[Rus10]{Rus10}
B. Rushton, {\em Constructing subdivision rules from alternating links},
Conform. Geom. Dyn. 14 (2010), 1--13.


\bibitem[Sag95]{Sag95}
M. Sageev, {\em Ends of group pairs and non-positively curved cube complexes},
Proc. London Math. Soc. 71 (1995), no. 3, 585--617.


\bibitem[Sag97]{Sag97}
M. Sageev, {\em Codimension-1 subgroups and splittings of groups}, J. Algebra,
189 (1997), no. 2, 377--389.

\bibitem[SaW12]{SaW12}
M. Sageev and D. Wise, {\em Cubing cores for quasiconvex actions}, Preprint (2012).

\bibitem[Sam06]{Sam06}
P. Samuelson, {\em On $CAT(0)$ structures for free-by-cyclic groups},
Topology Appl. 153 (2006), no. 15, 2823--2833.

\bibitem[Sal87]{Sal87}
M. Salvetti, {\em Topology of the complement of real hyperplanes in $\mathbb{C}^N$}, Invent. Math. 88 (1987), no. 3, 603--618.

\bibitem[Sar12]{Sar12}
P. Sarnak, {\em Notes on Thin Matrix Groups}, lectures
at the MSRI hot topics workshop on superstrong approximation (2012)\\
\texttt{http://web.math.princeton.edu/sarnak/NotesOnThinGroups.pdf}

\bibitem[SZ01]{SZ01}
S. Schanuel and X. Zhang, {\em Detection of essential surfaces in $3$-manifolds with $SL_2$-trees},
Math. Ann. 320 (2001), no. 1, 149--165.

\bibitem[ScT88]{ScT88}
M. Scharlemann and A. Thompson, {\em Finding disjoint Seifert surfaces},
Bull. London Math. Soc. 20 (1988), no. 1, 61--64.

\bibitem[Scf67]{Scf67}
C. B. Schaufele, {\em The commutator group of a doubled knot},
Duke Math. J. 34 (1967), 677--681.

\bibitem[Sct49]{Sct49}
H. Schubert, {\em Die eindeutige Zerlegbarkeit eines Knotens in Primknoten},
Sitz.-Ber. Akad. Wiss. Heidelberg 1949, math.-nat. Klasse, 3. Abh. (1949), 57--104.

\bibitem[Sct53]{Sct53}
H. Schubert, {\em Knoten und Vollringe}, Acta Math. 90, (1953), 131--286.

\bibitem[Sct54]{Sct54}
H. Schubert, {\em \"Uber eine numerische Knoteninvariante},
Math. Z. 61 (1954), 245--288.


\bibitem[ScW07]{ScW07}
J.  Schultens and R. Weidmann, {\em On the geometric and the algebraic rank
of graph manifolds}, Pacific J. Math. 231 (2007), 481--510.

\bibitem[Scr04]{Scr04}
J. Schwermer, {\em Special cycles and automorphic forms on arithmetically defined hyperbolic $3$-manifolds}, Asian J. Math. 8 (2004), no. 4, 837--859.


\bibitem[Scr10]{Scr10}
J. Schwermer, {\em  Geometric cycles, arithmetic groups and their cohomology}, Bull. Amer. Math. Soc. (N.S.) 47 (2010), no. 2, 187--279.


\bibitem[Sco72]{Sco72}
P.  Scott, {\em
On sufficiently large $3$-manifolds},
Quart. J. Math. Oxford Ser. (2) 23 (1972), 159--172.

\bibitem[Sco73a]{Sco73a}
P.  Scott, {\em  Finitely generated $3$-manifold groups are finitely presented}, J. London Math. Soc. (2) 6
(1973), 437--440.

\bibitem[Sco73b]{Sco73b}
P.  Scott, {\em Compact submanifolds of $3$-manifolds}, J. London Math.
Soc. (2) 7 (1973), 246--250.

\bibitem[Sco74]{Sco74}
P. Scott, {\em An introduction to $3$-manifolds}, Department of Mathematics, University of Maryland, Lecture Note, No. 11. Department of Mathematics, University of Maryland, College Park, Md., 1974.

\bibitem[Sco78]{Sco78}
P.  Scott, {\em Subgroups of surface groups are almost geometric},
J. Lond. Math. Soc. 17 (1978), 555--565.

\bibitem[Sco80]{Sco80}
P.  Scott, {\em A new proof of the Annulus and Torus Theorems}, Amer. J. Math.  102  (1980), no. 2, 241--277.


\bibitem[Sco83a]{Sco83a}
P.  Scott, {\em The geometries of $3$-manifolds}, Bull. London Math. Soc.
15 (1983), no. 5, 401--487.

\bibitem[Sco83b]{Sco83b}
P.  Scott, {\em There are no fake Seifert fiber spaces with infinite $\pi_1$},  Ann. Math. (2) 117 (1983), 35--70.


\bibitem[Sco84]{Sco84}
P.  Scott, {\em  Strong annulus and torus theorems and the enclosing property of characteristic submanifolds of $3$-manifolds},
Quart. J. Math. Oxford Ser. (2) 35 (1984), no. 140, 485--506.

\bibitem[Sco85]{Sco85}
P. Scott, {\em Correction to: `Subgroups of surface groups are almost geometric'},
  J. London Math. Soc. (2) 32 (1985), no. 2, 217--220.

\bibitem[SS01]{SS01}
P. Scott and G. A. Swarup, {\em Canonical splittings of groups and $3$-manifolds}, Trans. Amer. Math. Soc. 353 (2001), no. 12, 4973--5001.

\bibitem[SS03]{SS03}
P. Scott and G. A. Swarup, {\em Regular neighbourhoods and canonical decompositions for groups}, Ast\'erisque no.\ 289, SMF (2003).


\bibitem[SS07]{SS07}
P. Scott and G. A. Swarup, {\em  Annulus-Torus decompositions for Poincar\'e duality pairs}, preprint (2007).

\bibitem[Sei33a]{Sei33a}
H. Seifert, {\em Topologie dreidimensionaler gefaserter R\"aume}, Acta Math. 60 (1933), 147--238.


\bibitem[Sei33b]{Sei33b}
H. Seifert, {\em Verschlingungsinvarianten},
Sitzungsber. Preuss. Akad. Wiss., Phys.-Math. Kl. 1933, No.26-2 (1933), 811--828.


\bibitem[SeT30]{SeT30}
H. Seifert and W. Threlfall, {\em
Topologische Untersuchung der Diskontinuit\"atsbereiche endlicher Bewegungsgruppen des dreidimensionalen sph\"arischen Raumes},
Math. Ann. 104 (1930), 1--70.


\bibitem[SeT33]{SeT33}
H. Seifert and W. Threlfall, {\em
Topologische Untersuchung der Diskontinuit\"atsbereiche endlicher Bewegungsgruppen des dreidimensionalen sph\"arischen Raumes II},
Math. Ann. 107 (1933), 543--586.

\bibitem[SeT80]{SeT80}
H. Seifert and W. Threlfall, {\em A textbook of topology}, Academic Press, 1980.

\bibitem[Sel93]{Sel93}
Z. Sela, {\em The conjugacy problem for knot groups},  Topology  32  (1993),  no. 2, 363--369.

\bibitem[Sen11]{Sen11}
M. H. Seng\"un, {\em On the integral cohomology of Bianchi groups}, Exp.
Math. 20 (2011), 487–505.

\bibitem[Sen12]{Sen12}
M. H. Seng\"un, {\em On the torsion homology of non-arithmetic hyperbolic
tetrahedral groups}, Int. J. Number Theory 8 (2012), 311--320.

\bibitem[Ser80]{Ser80} J.-P. Serre, {\em Trees}, Springer-Verlag, Berlin-New York, 1980.

\bibitem[Ser97]{Ser97} J.-P. Serre, {\em Galois cohomology}, Springer-Verlag, Berlin, 1997.

\bibitem[Shn75]{Shn75}
P. Shalen, {\em Infinitely divisible elements in $3$-manifold groups}, Knots, groups, and $3$-manifolds, pp. 293--335. Ann. of Math. Studies, no. 84, Princeton Univ. Press, Princeton, N. J., 1975.

\bibitem[Shn79]{Shn79}
P. Shalen, {\em Linear representations of certain amalgamated products}, J.
Pure. Appl. Algebra 15 (1979), 187--197.

\bibitem[Shn84]{Shn84}
P. Shalen, {\em A ``piecewise-linear'' method for triangulating $3$-manifolds},
Adv. in Math. 52 (1984), no. 1, 34--80.

\bibitem[Shn01]{Shn01}
P. Shalen, {\em
Three-manifolds and Baumslag-Solitar groups},
Geometric topology and geometric group theory (Milwaukee, WI, 1997).
Topology Appl. 110 (2001), no. 1, 113--118.

\bibitem[Shn02]{Shn02}
P. Shalen, {\em Representations of $3$-manifold groups}, Handbook of Geometric Topology, pp. 955--1044, (2002).

\bibitem[Shn07]{Shn07}
P. Shalen, {\em Hyperbolic volume, Heegaard genus and ranks of groups}, Geom. Topol.
Monogr., 12, (2007) 335--349.

\bibitem[Shn12]{Shn12}
P. Shalen, {\em Orders of elements in finite quotients of Kleinian groups},  Pacific J. Math. 256 (2012), no. 1, 211--234.

\bibitem[ShW92]{ShW92}
P. Shalen and P. Wagreich, {\em
Growth rates, $\Z_p$-homology, and volumes of hyperbolic $3$-manifolds},
Trans. Amer. Math. Soc. 331 (1992), no. 2, 895--917.

\bibitem[Sho92]{Sho92}
M. Shapiro, {\em Automatic structure and graphs of groups}, in Topology `90, Proceedings of the
research semester in low-dimensional topology at Ohio State, de Gruyter Verlag (1992).

\bibitem[SpW58]{SpW58}
A. Shapiro and J. H. C. Whitehead, {\em A proof and extension of Dehn's
lemma}, Bull. Amer. Math. Soc. 64 (1958), 174--178.

\bibitem[Sht85]{Sht85}
H. Short, {\em Some closed incompressible surfaces in knot complements which survive surgery}, Low-dimensional topology (Chelwood Gate, 1982), 179-194,
London Math. Soc. Lecture Note Ser., 95, Cambridge Univ. Press, Cambridge, 1985.

\bibitem[Sik05]{Sik05}
A. Sikora, {\em Cut numbers of $3$-manifolds},
Trans. Amer. Math. Soc. 357 (2005), no. 5, 2007--2020.

\bibitem[Siv87]{Siv87}
J.--C. Sikorav, {\em Homologie de Novikov associ\'ee \`a une classe de cohomologie r\'eelle de degr\'e un}, Th\`ese Orsay, 1987.


\bibitem[Sil96]{Sil96}
D. Silver, {\em Nontorus knot groups are hyper-Hopfian},
Bull. London Math. Soc. 28 (1996), no. 1, 4--6.

\bibitem[SWW10]{SWW10}
D. Silver, W. Whitten and S. Williams, {\em Knot groups with many killers}, Bull. Austr. Math. Soc. 81 (2010), 507--513.

\bibitem[SW02a]{SW02a}
D. Silver and S. Williams, {\em Mahler measure, links and homology growth},
Topology 41 (2002), no. 5, 979--991.

\bibitem[SW02b]{SW02b}
D. Silver and S. Williams, {\em
Torsion numbers of augmented groups with applications to knots and links},
Enseign. Math. (2) 48 (2002), no. 3-4, 317--343.

\bibitem[SW09a]{SW09a}
D. Silver and S. Williams, {\em Nonfibered knots and representation shifts}, Proceedings of the Postnikov
Memorial Conference, Banach Center Publications, 85 (2009), 101--107.

\bibitem[SW09b]{SW09b}
D. Silver and S. Williams, {\em Twisted Alexander Polynomials and Representation Shifts}, Bull.
London Math. Soc., 41 (2009), 535--540.

\bibitem[Sim76a]{Sim76a}
J. Simon, {\em Roots and centralizers of peripheral elements in knot groups},
Math. Ann. 222 (1976), no. 3, 205--209.

\bibitem[Sim76b]{Sim76b}
J. Simon, {\em On the problems of determining knots by their complements and knot complements by their groups}, Proc. Amer. Math. Soc. 57 (1976), no. 1, 140--142.


\bibitem[Sim80]{Sim80}
J. Simon, {\em How many knots have the same group?},
Proc. Amer. Math. Soc. 80 (1980), no. 1, 162--166.

\bibitem[Sis11]{Sis11}
A. Sisto, {\em $3$-manifold groups have unique asymptotic cones}, Preprint (2011).

\bibitem[Som91]{Som91}
T. Soma, {\em  Virtual fibre groups in $3$-manifold groups}, J. London Math. Soc. (2)  43  (1991),  no. 2, 337--354.


\bibitem[Som92]{Som92}
T. Soma, {\em  $3$-manifold groups with the finitely generated intersection  property},  Trans. Amer. Math. Soc.  331  (1992),  no. 2, 761--769.


\bibitem[Som06]{Som06}
T. Soma, {\em Existence of ruled wrappings in hyperbolic $3$-manifolds}, Geom. Top., 10 (2006),
1173--1184.

\bibitem[Som10]{Som10}
T. Soma, {\em Geometric approach to Ending Lamination Conjecture}, Preprint (2010)

\bibitem[Sou08]{Sou08}
J. Souto, {\em  The rank of the fundamental group of certain hyperbolic $3$-manifolds
fibering over the circle}, Geom. Topol. Monogr., 14, (2008), 505--518.

\bibitem[Sp49]{Sp49}
E. Specker, {\em Die erste Cohomologiegruppe von \"Uberlagerungen und Homotopie-Eigenschaften
dreidimensionaler Mannigfaltigkeiten},
Comment. Math. Helv. 23, (1949), 303--333.

\bibitem[Sta59a]{Sta59a}
J. Stallings,
\emph{J. Grushko's theorem II, Kneser's conjecture}, Notices Amer. Math. Soc. 6 (1959),
No 559-165, 531--532.

\bibitem[Sta59b]{Sta59b}
J. Stallings,
\emph{Some topological proofs
and extensions of
Gru\^sko's theorem}, Thesis, Princeton University (1959).

\bibitem[Sta60]{Sta60}
J.  Stallings, {\em On the loop theorem}, Ann. of Math. 72 (1960), 12--19.

\bibitem[Sta62]{Sta62}
J. Stallings, {\em On fibering certain $3$-manifolds}, 1962 Topology of $3$-manifolds and related
topics (Proc. The Univ. of Georgia Institute, 1961) pp. 95--100, Prentice-Hall, Englewood Cliffs,
N. J. (1962).

\bibitem[Sta71]{Sta71}
J. Stallings,
\emph{Group theory and three-dimensional manifolds},
Yale Mathematical Monographs, vol. 4, 1971.


\bibitem[Sta77]{Sta77}
J. Stallings, {\em
Coherence of $3$-manifold fundamental groups},
S\'eminaire Bourbaki, Vol. 1975/76, pp. 167-173. Lecture Notes in
Math., Vol. 567, Springer, Berlin (1977).

\bibitem[Sta82]{Sta82}
J. Stallings, {\em Topologically unrealizable automorphisms of free groups}, Proc. Amer. Math. Soc. 84 (1982), no. 1, 21--24.

\bibitem[Ste68]{Ste68}
P. Stebe, {\em Residual finiteness of a class of knot groups}, Comm. Pure Appl. Math. 21 (1968) 563--583.

\bibitem[Ste72]{Ste72}
P. Stebe, {\em Conjugacy separability of groups of integer matrices},
Proc. Amer. Math. Soc. 32 (1972), 1--7.

\bibitem[Sts75]{Sts75}
R. Stevens, {\em Classification of $3$-manifolds with certain spines},
Trans. Amer. Math. Soc. 205 (1975), 151--166.

\bibitem[Str74]{Str74}
R. Strebel, {\em Homological methods applied to the derived series of groups}, Comment.
Math. 49 (1974) 302--332.

\bibitem[Su81]{Su81}
D. Sullivan, {\em Travaux de Thurston sur les groupes quasi-fuchsiens et les vari\'et\'es hyperboliques de dimension 3 fibr\'ees sur $S^1$},
 Bourbaki Seminar, Vol. 1979/80, pp. 196--214,
Lecture Notes in Math., 842, Springer, Berlin-New York, 1981.

\bibitem[Sv04]{Sv04}
P. Svetlov, {\em Non--positively curved graph manifolds are virtually fibered over the circle},
Journal of Math. Sciences  119 (2004), 278--280.

\bibitem[Swn67]{Swn67}
R. Swan, {\em Representation of polycyclic groups}, Proc. Amer. Math. Soc., 18 (1967) No. 3, 573--574.


\bibitem[Swp70]{Swp70}
G. A. Swarup, {\em
Some properties of $3$-manifolds with boundary},
Quart. J. Math. Oxford Ser. (2) 21 (1970), 1--23.


\bibitem[Swp80a]{Swp80a}
G. A. Swarup, {\em Two finiteness properties in $3$-manifolds}, Bull. London Math. Soc. 12 (1980), no. 4, 296--302.


\bibitem[Swp80b]{Swp80b}
G. A. Swarup, {\em
Cable knots in homotopy $3$-spheres},
Quart. J. Math. Oxford Ser. (2) 31 (1980), no. 121, 97--104.

\bibitem[Swp93]{Swp93}
G. A. Swarup, {\em Geometric finiteness and rationality}, J. Pure Appl. Algebra 86 (1993), no. 3, 327--333.

\bibitem[TY99]{TY99}
Y. Takeuchi and M. Yokoyama, {\em The geometric realizations of the decompositions of 3-orbifold fundamental groups},
Topology Appl.  95  (1999),  no. 2, 129--153.

\bibitem[Ta57]{Ta57}
 D. Tamari, {\em A refined classification of semi-groups leading to generalized polynomial rings
with a generalized degree concept}, Proceedings of the International Congress of Mathematicians,
Amsterdam, 1954, Vol. 3: 439--440, Groningen (1957).



\bibitem[Tei97]{Tei97}
P. Teichner, {\em Maximal nilpotent quotients of $3$-manifold groups},  Math. Res. Lett.  4  (1997),  no. 2--3, 283--293.


\bibitem[Ter06]{Ter06}
M. Teragaito, {\em Toroidal Dehn fillings on large hyperbolic $3$-manifolds},
Comm. Anal. Geom. 14 (2006), no. 3, 565--601.

\bibitem[Ter11]{Ter11}
M. Teragaito, {\em Left-orderability and exceptional Dehn surgery on twist knots}, Preprint (2011), to appear in
Canad. Math. Bull.


\bibitem[Tho68]{Tho68}
C. B. Thomas, {\em Nilpotent groups and compact $3$-manifolds}, Proc.
Cambridge Philos. Soc. 64 (1968), 303--306.

\bibitem[Tho79]{Tho79}
C. B. Thomas, {\em  On $3$-manifolds with finite solvable fundamental group},
Invent. Math. 52 (1979), no. 2, 187--197.


\bibitem[Tho84]{Tho84}
C. B. Thomas, {\em  Splitting theorems for certain $PD^3$-groups}, Math. Z. 186 (1984), 201--209.


\bibitem[Tho86]{Tho86}
C. B. Thomas, {\em Elliptic structures on $3$-manifolds},
London Mathematical Society Lecture Note Series, 104.
Cambridge University Press, Cambridge, 1986.

\bibitem[Tho95]{Tho95}
C. B. Thomas, {\em
$3$-manifolds and
PD(3)-groups},  Novikov conjectures, index theorems and rigidity, Vol. 2 (Oberwolfach, 1993),  pp. 301--308, London Math. Soc. Lecture
Note Ser., 227 (1995).

\bibitem[Thu79]{Thu79}
W.  P. Thurston, {\em
The geometry and topology of $3$-manifolds},
Princeton Lecture Notes (1979), available at\\
\texttt{http://www.msri.org/publications/books/gt3m/}


\bibitem[Thu82a]{Thu82a}
W. P. Thurston, {\em
Three dimensional manifolds, Kleinian groups and hyperbolic geometry},
Bull. Amer. Math. Soc., New Ser. 6 (1982), 357--379.

\bibitem[Thu82b]{Thu82b}
W. P. Thurston, {\em Hyperbolic geometry and $3$-manifolds},  Low-dimensional topology (Bangor, 1979),  pp. 9-25, London Math. Soc. Lecture Note Ser., 48, Cambridge Univ. Press, Cambridge-New York, 1982.


\bibitem[Thu86a]{Thu86a} W. P. Thurston, {\em A norm for the homology of $3$-manifolds}, Mem. Amer. Math. Soc. 339 (1986), 99--130.

\bibitem[Thu86b]{Thu86b} W. P. Thurston, {\em  Hyperbolic structures on $3$-manifolds. \textup{I.} Deformation of acylindrical manifolds},  Ann. of Math. (2)  124  (1986),  no. 2, 203--246.

\bibitem[Thu86c]{Thu86c} W. P. Thurston, {\em Hyperbolic Structures on $3$-manifolds, \textup{II.} Surface groups and $3$-manifolds which fiber over the circle}, unpublished preprint.

\bibitem[Thu86d]{Thu86d} W. P. Thurston, {\em Hyperbolic Structures on $3$-manifolds, \textup{III.}  Deformations of $3$-manifolds with incompressible boundary}, unpublished preprint.


\bibitem[Thu88]{Thu88}
W. P. Thurston, {\em
On the geometry and dynamics of diffeo\-mor\-phisms of surfaces},
Bull. Amer. Math. Soc. (N.S.) 19 (1988), no. 2, 417--431.

\bibitem[Thu97]{Thu97} W. P. Thurston, {\em
Three-Dimensional Geometry and Topology,} vol. 1., ed. by Silvio Levy, Princeton Mathematical Series, vol. 35, Princeton University Press, Princeton, NJ (1997).

\bibitem[Tie08]{Tie08} H. Tietze, {\em \"Uber die topologischen Invarianten mehrdimensionaler Mannigfaltigkeiten},
Monatsh. Math. Phys. 19 (1908), 1--118

\bibitem[Tit72]{Tit72}
J. Tits, {\em Free subgroups in linear groups},
J. Algebra 20 (1972), 250--270.

\bibitem[Tis70]{Tis70}
D. Tischler, {\em On fibering certain foliated manifolds over $S^1$}, Topology 9 (1970), 153--154.

\bibitem[To69]{To69}
J. Tollefson, {\em $3$-manifolds fibering over $S^1$ with nonunique connected fiber},
Proc. Amer. Math. Soc. 21 (1969), 79--80.

\bibitem[Tra13]{Tra13}
A. Tran, {\em On left-orderable fundamental groups and Dehn surgeries on double twist knots}, Preprint (2013)

\bibitem[Tre90]{Tre90}
M. Tretkoff, {\em Covering spaces, subgroup separability, and the generalized
M. Hall property}, Combinatorial group theory (College Park, MD, 1988), 179--191, Contemp. Math., 109, Amer. Math. Soc., Providence, RI, 1990.

\bibitem[Tri69]{Tri69}
A. G. Tristram, {\em Some cobordism invariants for links},
Proc. Cambridge Philos. Soc. 66 (1969), 251--264.

\bibitem[Ts85]{Ts85}
C. M. Tsau, {\em Nonalgebraic killers of knots groups}, Proc. Amer. Math.
Soc. 95 (1985), 139--146.

\bibitem[Tuf09]{Tuf09}
C. Tuffley, {\em Generalized knot groups distinguish the square and granny knots},
With an appendix by David Savitt.
J. Knot Theory Ramifications 18 (2009), no. 8, 1129--1157.1793--6527

\bibitem[Tuk88a]{Tuk88a}
P. Tukia, {\em Homeomorphic conjugates of Fuchsian groups}, J. Reine
Angew. Math. 391 (1988), 35--70.


\bibitem[Tuk88b]{Tuk88b}
P. Tukia, {\em Homeomorphic conjugates of Fuchsian groups: an outline},
Complex analysis, Joensuu 1987, 344--353, Lecture Notes in Math., 1351, Springer, Berlin, 1988.



\bibitem[Tur76]{Tur76}
V. G. Turaev, {\em
Reidemeister torsion and the Alexander polynomial},
Mat. Sb. (N.S.)  18(66)  (1976), no. 2, 252--270.

\bibitem[Tur82]{Tur82}
V. G. Turaev, {\em Nilpotent homotopy types of closed $3$-manifolds}, Topology (Leningrad, 1982), 355--366,
Lecture Notes in Math., 1060, Springer, Berlin, 1984.

\bibitem[Tur88]{Tur88}
V. G. Turaev, {\em Homeomorphisms of geometric three-dimensional manifolds}, Mat. Zametki,
43(4):533--542, 575, 1988. translation in Math. Notes 43 (1988), no. 3--4, 307--312.


\bibitem[Tur90]{Tur90}
V. G. Turaev, {\em  Three-dimensional Poincar\'e complexes: homotopy classification and splitting},
Math. USSR Sbornik 67 (1990), 261--282.

\bibitem[Ve08]{Ve08}
T. Venkataramana, {\em Virtual Betti numbers of compact locally symmetric spaces},
Israel J. Math. 166 (2008), 235--238.

\bibitem[Wan67a]{Wan67a}
F. Waldhausen, {\em Gruppen mit Zentrum und $3$-dimensionale Mannigfaltigkeiten}, Topology 6
(1967), 505--517.


\bibitem[Wan67b]{Wan67b}
F. Waldhausen, {\em Eine Verallgemeinerung des Schleifensatzes},
Topology 6 (1967), 501--504.


\bibitem[Wan68a]{Wan68a}
F. Waldhausen, {\em On irreducible $3$-manifolds which are sufficiently large},
Ann. Math. (2) 87 (1968), 56--88.

\bibitem[Wan68b]{Wan68b}
F. Waldhausen, {\em
The word problem in fundamental groups of sufficiently large irreducible $3$-manifolds},  Ann. Math. (2) 88 (1968), 272--280.


\bibitem[Wan69]{Wan69}
F. Waldhausen, {\em
 On the determination of some bounded $3$-manifolds by their fundamental
groups alone}, Proc. Internat. Sympos. on Topology and its Applications, Beograd, 1969, pp. 331-332.


\bibitem[Wan78a]{Wan78a}
F. Waldhausen, {\em  Algebraic K-theory of generalized free products II}, Ann. of
Math. 108 (1978), 135--256.


\bibitem[Wan78b]{Wan78b}
F. Waldhausen, {\em Some problems on $3$-manifolds}, Proc. Symposia in Pure Math.
32. (1978) 313--322.


\bibitem[Wala04]{Wala04}
C. T. C. Wall, {\em Poincar\'e duality in dimension $3$}, Proceedings of the Casson
Fest, Geom. Topol. Monogr. 7, Geom. Topol. Publ., Coventry 2004, 1--26.


\bibitem[Walb09]{Walb09}
L.  Wall, {\em Homology growth of congruence subgroups}, PhD Thesis, Oxford University (2009).

\bibitem[Wah05]{Wah05}
G. Walsh, {\em  Great circle links and virtually fibered knots}, Topology 44 (2005), no. 5, 947--958.


\bibitem[Wag90]{Wag90}
S. Wang, {\em The virtual $\Z$-representability of $3$-manifolds which admit orientation reversing involutions},
Proc. Amer. Math. Soc. 110 (1990), no. 2, 499--503.

\bibitem[Wag93]{Wag93}
S. Wang, {\em $3$-manifolds which admit finite group actions},
Trans. Amer. Math. Soc. 339 (1993), no. 1, 191--203.

\bibitem[WW94]{WW94}
S. Wang and Y.  Wu, {\em
Covering invariants and co-Hopficity of $3$-manifold groups},
Proc. London Math. Soc. (3) 68 (1994), no. 1, 203--224.


\bibitem[WY94]{WY94}
S. Wang and F. Yu, {\em Covering invariants and cohopficity of $3$-manifold groups}, Proc. London
Math. Soc. (3) 68 (1994), 203--224.

\bibitem[WY97]{WY97}
S. Wang and F. Yu, {\em  Graph manifolds with non-empty boundary are covered by surface bundles}, Math. Proc. Cambridge Philos. Soc. 122 (1997), no. 3,
447--455.


\bibitem[WY99]{WY99}
S. Wang and F. Yu, {\em
Covering degrees are determined by graph manifolds
involved},
Comment. Math. Helv. 74 (1999), 238--247.


\bibitem[Weh73]{Weh73}
B. A. F. Wehrfritz, {\em Infinite Linear Groups}, Ergebnisse der Mathematik und ihrer Grenz\-gebiete, vol. 76, Springer-Verlag, New York-Heidelberg, 1973.

\bibitem[Wei02]{Wei02}
R. Weidmann, {\em The Nielsen method for groups acting on trees}, Proc. London Math. Soc. (3) 85 (2002), no. 1, 93--118.

\bibitem[Wei03]{Wei03}
R. Weidmann, {\em Some $3$-manifolds with 2-generated fundamental group},
Arch.Math. (Basel) 81 (2003), no. 5, 589--595.


\bibitem[Whd41a]{Whd41a}
J.H.C. Whitehead, {\em
On incidence matrices, nuclei and homotopy types},
Ann. of Math. (2) 42, (1941). 1197--1239.

\bibitem[Whd41b]{Whd41b}
J.H.C. Whitehead, {\em
On adding relations to homotopy groups},
Ann. of Math. (2) 42, (1941). 409--428.

\bibitem[Whd58a]{Whd58a}
J.H.C. Whitehead, {\em On $2$-spheres in $3$-manifolds}, Bull. Amer.
Math. Soc. 64 (1958), 161--166.

\bibitem[Whd58b]{Whd58b}
J. H. C. Whitehead, {\em On finite cocycles and the sphere theorem},
Colloquium Mathematicum 6 (1958), 271--281.

\bibitem[Whn86]{Whn86}
W.  Whitten, {\em  Rigidity among prime-knot complements}, Bull. Amer. Math. Soc. (N.S.) 14 (1986), no. 2, 299--300.

\bibitem[Whn87]{Whn87}
W.  Whitten, {\em  Knot complements and groups}, Topology 26 (1987), no. 1, 41--44.

\bibitem[Whn92]{Whn92}
W. Whitten, {\em Recognizing nonorientable Seifert bundles},
J. Knot Theory Ramifications 1 (1992), no. 4, 471--475.

\bibitem[Wil07]{Wil07}
H. Wilton, {\em Elementarily free groups are subgroup separable}, Proc. Lond. Math. Soc. (3) 95 (2007), no. 2, 473--496.

\bibitem[Wil08]{Wil08}
H. Wilton, {\em Residually free $3$-manifolds}, Algebraic and Geometric Topology 8 (2008), 2031--2047.

\bibitem[WZ10]{WZ10}
H. Wilton and P. Zalesskii, {\em Profinite properties of graph manifolds},
Geometriae Dedicata, 147 (2010), no. 1, 29--45.


\bibitem[Wis00]{Wis00}
D. Wise, {\em Subgroup separability of graphs of free groups with cyclic edge groups}, Quarterly J. of Math., 51 (2000), no. 1, 107--129.

\bibitem[Wis06]{Wis06}
D. Wise, {\em Subgroup separability of the figure 8 knot group},
Topology 45 (2006), no. 3, 421--463.

\bibitem[Wis09]{Wis09}
D. Wise, {\em The structure of groups with a quasi-convex hierarchy}, Electronic Res. Ann. Math. Sci 16 (2009), 44--55.

\bibitem[Wis12a]{Wis12a}
D. Wise, {\em The structure of groups with a quasi-convex hierarchy}, 189 pages, preprint (2012),
downloaded on October 29, 2012 from \\
\texttt{http://www.math.mcgill.ca/wise/papers.html}

\bibitem[Wis12b]{Wis12b}
D. Wise, {\em From riches to RAAGs: $3$-manifolds, right--angled Artin groups, and cubical geometry}, CBMS Regional Conference Series in Mathematics, 2012.

\bibitem[Wo11]{Wo11}
H. Wong, {\em Quantum invariants can provide sharp Heegaard genus bounds},
Osaka J. Math. 48 (2011), no. 3, 709--717.

\bibitem[Wu04]{Wu04}
Y.-Q. Wu, {\em Immersed essential surfaces and Dehn surgery},
Topology 43 (2004), no. 2, 319--342.

\bibitem[Xu92]{Xu92}
X. Xue, {\em On the Betti numbers of a hyperbolic manifold},
Geom. Funct. Anal. 2 (1992), no. 1, 126--136.

\bibitem[Zha05]{Zha05}
X. Zhang, {\em
Trace fields of representations and virtually Haken $3$-manifolds},
Q. J. Math. 56 (2005), no. 3, 431--442.

\bibitem[Zhb12]{Zhb12}
Y. Zhang, {\em Lifted Heegaard surfaces and virtually Haken manifolds},
J. Knot Theory Ramifications 21 (2012), no. 8, 1250073, 26 pp.

\bibitem[Zie88]{Zie88}
H. Zieschang, {\em On Heegaard diagrams of $3$-manifolds},
On the geometry of differentiable manifolds (Rome, 1986), Ast\'erisque no. 163--164 (1988), 247--280.

\bibitem[ZZ82]{ZZ82}
H. Zieschang and B.  Zimmermann, {\em \"Uber Erweiterungen von $\Z$ und $\Z_2*\Z_2$ durch nicht\-euklidische
kristallographische Gruppen}, Math. Ann. 259 (1982), no. 1, 29--51.

\bibitem[Zim79]{Zim79}
B. Zimmermann, {\em Periodische Hom\"oomorphismen Seifertscher Faserr\"aume}, Math. Z. 166  (1979), 289--297.

\bibitem[Zim82]{Zim82}
B. Zimmermann, {\em Das Nielsensche Realisierungsproblem
f\"ur hinreichend gro\ss e $3$-Mannigfaltigkeiten},
Math. Z. 180  (1982), 349--359.


\bibitem[Zim02]{Zim02}
B. Zimmermann, {\em Finite groups acting on homology $3$-spheres: on a result of M. Reni},
Monatsh. Math. 135 (2002), no. 3, 253--258.

\bibitem[Zu97]{Zu97}
L. Zulli, {\em Semibundle decompositions of $3$-manifolds and the twisted cofundamental group}, Top. Appl. 79 (1997), 159--172.
\end{thebibliography}
\end{document}